\providecommand{\U}[1]{\protect\rule{.1in}{.1in}}
\newtheorem{theorem}{Theorem}
\theoremstyle{plain}
\newtheorem{corollary}[theorem]{Corollary}
\newtheorem{example}[theorem]{Example}
\newtheorem{lemma}[theorem]{Lemma}
\newtheorem{proposition}[theorem]{Proposition}
\newtheorem{remark}[theorem]{Remark}
\numberwithin{equation}{section}
\numberwithin{theorem}{section}
\begin{document}

\title[Topological radicals, II.]{Topological radicals, II. Applications to spectral theory of multiplication operators}
\author{Victor S. Shulman}

\address{Department of Mathematics, Vologda State Technical University, 15 Lenina str.,
Vologda 160000, Russian Federation}
\email{shulman.victor80@gmail.com}

\author{Yurii V. Turovskii}

\address{Institute of Mathematics and Mechanics, National Academy of Sciences of
Azerbaijan, 9 F. Agayev Street, Baku AZ1141, Azerbaijan}
\email{yuri.turovskii@gmail.com}
\thanks{This paper is in final form and no version of it will be submitted for
publication elsewhere.}

\begin{abstract}
We develop the tensor spectral radius technique and the theory of the tensor
radical. Basing on them we obtain several results on spectra of multiplication
operators on Banach bimodules and indicate some applications to the spectral
theory of elementary and multiplication operators on Banach algebras and
modules with various compactness properties.

\end{abstract}

\maketitle
\tableofcontents

\section{ Introduction\label{s1}}

The localization of spectrum of an elementary operator in terms of spectra of
its coefficients is one of the most popular subjects in the theory of
elementary operators. The strongest results in this area were obtained for
operators with commutative coefficient families because this allows one to use
the theory of joint spectra (see \cite{Curto}).

Here we consider the less restrictive conditions than commutativity. For
instance it is not known for us whether an operator $Tx=\sum_{k=1}^{n}%
a_{k}xb_{k}$ on a Banach algebra $A$ is quasinilpotent if $a_{1},...,a_{n}$
belong to a radical closed subalgebra of $A$. However, if all $a_{i}$ are
compact operators, the answer is positive (see for example \cite[Lemma
5.10]{ST2005}) and may be obtained by using the joint spectral radius
technique. We consider multiplication operators of more general type than
elementary ones as well as more general classes of coefficient algebras than
algebras of compact operators. As a main technical tool we present the theory
of tensor spectral radius initiated in \cite{TR1} in the framework of the
general theory of topological radicals. Basing on it we obtain several results
on spectra of multiplication operators on Banach bimodules and indicate their
applications to spectral theory of elementary operators on Banach algebras
with various compactness properties.

Recall that an element $a$ of a normed algebra $A$ is called \textit{compact}
if the elementary operator $x\longmapsto axa$ on $A$ is compact. The reason
for such a definition is a well known theorem of Vala \cite{Vala} which states
that \textit{a bounded operator on a Banach space }$X$\textit{ is compact iff
it is a compact element of the algebra }$\mathcal{B}(X)$\textit{ of all
bounded operators on }$X$\textit{.}

If all elements $a\in A$ are compact then $A$ is called \textit{compact}. If,
more strongly, for all $a,b\in A$, the operator $x\longmapsto axb$ on $A$ is
compact then $A$ is called \textit{bicompact}. A less restrictive condition is
that $A$ is generated as a normed algebra by the semigroup of all its compact
elements. The most wide class of algebras of this kind is the class of
hypocompact algebras. A normed algebra $A$ is called \textit{hypocompact} if
each non-zero quotient of $A$ by a closed ideal has a non-zero compact
element. One may realize a hypocompact algebra as a result of a transfinite
sequence of extensions of bicompact algebras. This class has some resemblance
with the class of $\mathrm{GCR}$-algebras in the $C^{\ast}$-algebras. Note for
example that \textit{the image of each strictly irreducible representation of
a hypocompact Banach algebra contains a non-zero finite rank operator.}

We show that elementary operators on hypocompact Banach algebras commutative
modulo the Jacobson radical are \textit{spectrally computable}, that is
\[
\sigma(T+S)\subset\sigma(T)+\sigma(S)\text{ and }\sigma(TS)\subset
\sigma(T)\sigma(S)
\]
for all elementary operators $T,S$. Moreover, if all operators $L_{a}-R_{a}$
are quasinilpotent on $A$ (we call such algebras \textit{Engel}) then the
spectra of elementary operators satisfy the inclusion
\[
\sigma(\sum_{k}L_{a_{k}}R_{b_{k}})\subset\sigma(\sum_{k}a_{k}b_{k}).
\]
Among other applications we mention results on the structure of closed ideals
in a radical compact Banach algebra. We prove that if such an algebra is
infinite dimensional then it has infinite chains of ideals. As a consequence,
we get that there is an infinite chain of closed operator ideals in the sense
of Pietsch \cite{P78} intermediate between the ideals of approximable and
compact operators.

In the last section we consider the applications of the theory to spectral
subspaces of multiplication operators. In 1978 Wojty\'{n}ski, working on the
problem of the existence of a closed two-sided ideal in a radical Banach
algebra, proved the following result on linear operator equations with compact coefficients.

\begin{lemma}
\label{WW}\cite{W78} Let all coefficients $a,b,a_{i},b_{i}$ of the linear
operator equation
\begin{equation}
ax+xb+\sum_{i=1}^{n}a_{i}xb_{i}=\lambda x \label{wojt}%
\end{equation}
be compact operators on a Banach space $X$. If $\lambda\neq0$ then each
bounded solution $x$ of \emph{(\ref{wojt})} is a nuclear operator.
\end{lemma}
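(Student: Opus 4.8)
\medskip

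\noindent\emph{Proof proposal.} The plan is to read (\ref{wojt}) as an eigenvalue problem for a multiplication operator, to obtain compactness of the solution for free, and then to bootstrap from compactness to nuclearity by iterating the equation and truncating the coefficients through their finite-rank spectral projections.

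First I would rewrite (\ref{wojt}) as $Tx=\lambda x$, where
\[
T=L_{a}+R_{b}+\sum_{i=1}^{n}L_{a_{i}}R_{b_{i}},\qquad L_{c}y=cy,\quad R_{c}y=yc,
\]
is the elementary operator on $\mathcal{B}(X)$. Because $a,b,a_{i},b_{i}$ are compact and $\mathcal{K}(X)$ is a closed two-sided ideal of $\mathcal{B}(X)$, the operator $T$ maps $\mathcal{B}(X)$ into $\mathcal{K}(X)$; hence $x=\lambda^{-1}Tx\in\mathcal{K}(X)$, so a bounded solution of (\ref{wojt}) is automatically a compact operator. (By a theorem of Vala the sandwiched part $\sum L_{a_{i}}R_{b_{i}}$ is even a compact operator on $\mathcal{B}(X)$, which is convenient for organising the spectral picture below.)

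Next I would iterate: $\lambda^{k}x=T^{k}x$ for every $k$, expanding $T^{k}$ into a finite sum of words $c\,x\,d$, where $c$ is a product of factors from $\{a,a_{1},\dots,a_{n}\}$ and $d$ a product of factors from $\{b,b_{1},\dots,b_{n}\}$, the combined length of $c$ and $d$ being at least $k$. The model case $a=b=0$, $n=1$ exhibits the mechanism cleanly. Assuming $x\neq0$ one has $|\lambda|\le r(a_{1})r(b_{1})$, where $r(\cdot)$ denotes spectral radius; fix $\varepsilon$ with $0<\varepsilon<\min\{|\lambda|/r(a_{1}),|\lambda|/r(b_{1})\}$ — possible since $\lambda\neq0$ and compact operators have finite spectral radius — and let $P$, $Q$ be the finite-rank Riesz projections of $a_{1}$, $b_{1}$ onto the parts of the spectra lying outside the disc of radius $\varepsilon$. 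Then for every $k$
\[
\begin{aligned}
x&=\lambda^{-k}a_{1}^{k}xb_{1}^{k}\\
&=\lambda^{-k}a_{1}^{k}Pxb_{1}^{k}Q+\lambda^{-k}a_{1}^{k}Pxb_{1}^{k}(I-Q)+\lambda^{-k}a_{1}^{k}(I-P)xb_{1}^{k}.
\end{aligned}
\]
The two remainder terms have operator norm $O\bigl((r(a_{1})\varepsilon/|\lambda|)^{k}\bigr)$ and $O\bigl((\varepsilon\,r(b_{1})/|\lambda|)^{k}\bigr)$, hence tend to $0$, while the first term has rank at most $\operatorname{rank}P$ for every $k$; since operators of rank $\le\operatorname{rank}P$ form a norm-closed subset of $\mathcal{B}(X)$, letting $k\to\infty$ forces $x$ to have rank at most $\operatorname{rank}P$, so in particular $x$ is nuclear. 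For the general equation one would run the same scheme on the full word expansion and, keeping track at each level of the nuclear norms of the finitely many truncated words, extract a uniform bound on the nuclear norm of $x$.

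The main obstacle is the case $n\ge2$. Then the words of length $k$ number about $n^{k}$, and the factors standing on the left of $x$ are products $a_{i_{1}}\cdots a_{i_{k}}$ of \emph{distinct} compact operators, which possess no common spectral projection; one needs a single finite-rank truncation that works simultaneously for the whole multiplicative semigroup generated by $\{a_{1},\dots,a_{n}\}$ (and for $\{b_{1},\dots,b_{n}\}$) and whose complementary part has joint spectral radius small enough to absorb the factor $|\lambda|^{-k}$. Producing such truncations — equivalently, controlling products of long words in a precompact family of compact operators quantitatively — is precisely the joint/tensor spectral-radius estimate that the machinery of this paper supplies, and it is what keeps the lemma from reducing to a one-line resolvent computation. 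A secondary point to settle is that the one-sided summands $ax$ and $xb$, whose left and right multiplications are \emph{not} themselves compact operators on $\mathcal{B}(X)$, fit into the same framework; this can be arranged by the block substitution replacing $X$ by $X\oplus X$, $x$ by $\bigl(\begin{smallmatrix}0&x\\0&0\end{smallmatrix}\bigr)$ and $a,b$ by $\bigl(\begin{smallmatrix}a&0\\0&b\end{smallmatrix}\bigr)$, which turns $ax+xb$ into a genuine two-sided term while keeping all coefficients compact.
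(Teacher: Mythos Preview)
Your model-case argument ($a=b=0$, $n=1$) is correct and actually proves more than the lemma claims: the solution is of finite rank, not merely nuclear. But as you yourself recognize, the spectral-projection scheme does not survive the passage to $n\ge 2$, because the words $a_{i_{1}}\cdots a_{i_{k}}$ have no common Riesz projection. At that point your proposal stops being a proof and becomes a pointer to ``the machinery of this paper''; so as a stand-alone argument it has a genuine gap precisely where the difficulty lies.

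The paper does not prove Lemma~\ref{WW} directly (it is quoted from \cite{W78}), but its general results recover it, and the route is rather different from yours. Instead of truncating individual coefficients by their spectral projections, the paper shows that the whole operator $T$ is quasinilpotent modulo the ideal of \emph{semifinite} elementary operators: for every $\varepsilon>0$ there exist $m$ and a semifinite $S$ with $\Vert T^{m}-S\Vert<\varepsilon^{m}$ in a suitable norm. This is obtained in two independent ways. The first (Corollaries~\ref{semi}, \ref{QQ}, Theorem~\ref{cond}) uses the tensor-radical theory: $\mathcal{K}(X)/\mathcal{A}(X)$ is a radical bicompact algebra, hence tensor radical, and this forces $\widehat{\mathcal{K}}_{\frac{1}{2}}(U)/\overline{\mathcal{F}_{\frac{1}{2}}(U)}$ to be radical. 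The second (Lemmas~\ref{subspace}--\ref{products-oper}, Theorem~\ref{power}), specific to elementary operators, controls long products of compact operators via finite chains of common invariant subspaces rather than spectral projections; this is the correct replacement for the joint truncation you were looking for, and it is what lets the conclusion be upgraded from nuclear to any quasi-Banach ideal. Once the approximation $T^{m}\approx S$ is in hand, the abstract Theorem~\ref{spectrclos} on operators acting on an ordered pair $(\mathcal{N}(X,Y),\mathcal{B}(X,Y))$ finishes the job via a geometric-series argument close in spirit to your iteration, but with $S$ playing the role your finite-rank truncation was meant to play.

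Two minor remarks. First, your block-matrix reduction is unnecessary here: in the paper's framework $L_{a}+R_{b}$ with $a,b$ compact is already a semicompact elementary operator (write it as $L_{a}R_{1}+L_{1}R_{b}$), and the relevant estimates never require $L_{a}$ or $R_{b}$ themselves to be compact on $\mathcal{B}(X)$. Second, note that for $n\ge 2$ the solution genuinely need not have finite rank (Fong--Radjavi \cite{FR} exhibit solutions whose singular numbers decay faster than any geometric sequence but are not of finite rank), so any extension of your model-case argument must at some point trade ``bounded rank'' for ``summable in nuclear norm''; this is another indication that the projection scheme cannot be pushed through unchanged.
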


The presence of nuclear operators gives a possibility to use trace for proving
the quasinilpotence of some multiplication operators. Using this,
Wojty\'{n}ski proved in \cite{W78} that \textit{every radical Banach algebra
having non-zero compact elements is not topologically simple }(if dimension of
the algebra is larger than $1$). In \cite{W78-2} he applied the same argument
to Banach Lie algebras and proved that \textit{if all adjoint operators of a
Banach Lie algebra }$\mathfrak{L}$\textit{ are compact and quasinilpotent,
then} $\mathfrak{L}$ \textit{has a non-trivial closed Lie ideal}. Both results
are now obtained in a more general setting with using another technique
\cite{T1998, ST2005}, but Wojty\'{n}ski's approach itself is interesting and
still helpful.

Several years after \cite{W78} Fong and Radjavi \cite{FR} considered a more
general class of equations
\begin{equation}
\sum a_{i}xb_{i}=\lambda x, \label{FoRa}%
\end{equation}
where the sum is finite and for each $i$ at least one of operators
$a_{i},b_{i}$ is compact. They worked only in the case of Hilbert space
operators but proved much more, namely that all solutions of (\ref{FoRa})
belong to each Shatten class $C_{p}$, $p>0$. On the other hand, they showed
that solutions of (\ref{FoRa}) are not necessarily finite rank operators: each
operator $x$ whose singular numbers decrease more quickly than every geometric
progression is a solution of an equation of the form (\ref{FoRa}).

We will show here that the main results of Wojty\'{n}ski and of Fong and
Radjavi extend to multiplication operators with infinite number of summands.
Furthermore, we will see that not only eigenspaces with non-zero eigenvalues
consist of nuclear operators but that the same holds for spectral subspaces
corresponding to components of spectra non-containing $0$ (or stronger, for
all invariant subspaces on which the operator is surjective). Moreover, the
ideal of nuclear operator here can be changed by any quasi-Banach ideal in the
case of elementary operators. (i.e. when the number of summands is finite).

We prove also that the results extend to \textquotedblleft integral
multiplication operators\textquotedblright. In particular, if an operator $x$
satisfies the condition
\[
\int_{\alpha}^{\beta}(a(t)xu(t)+v(t)xb(t))d\mu=\lambda x\text{\quad with\quad
}\lambda\neq0,
\]
where $a(t)$ and $b(t)$ are continuous operator valued functions, $u(t)$ and
$v(t)$ are continuous compact operator valued functions, then $x$ is nuclear.

We also extend the results to systems of equations. A simple example is the
following: Let $x_{1},...,x_{n}$ satisfy a system of equations
\[
\sum_{k=1}^{n}a_{ik}x_{k}b_{ik}=\lambda x_{i},\text{ }i=1,...,n,\text{
}\lambda\neq0.
\]
If for each pair $(i,k)$ at least one of operators $a_{ik}$, $b_{ik}$ is
compact then all $x_{i}$ are nuclear (moreover belong to each quasi-Banach
operator ideal of $\mathcal{B}(X)$). Such systems of equations arise, for
example, in the study of subgraded Lie algebras \cite{KST}.

Apart of tensor radical technique our approach is based on a general result
(Theorem \ref{spectrclos}) which is not restricted by multiplication operators
but deals with bounded operators on an ordered pair of Banach spaces.

We would like to express our heartfelt gratitude to Niels Gr\o nb\ae k for a
very helpful discussion of the results of the paper \cite{Wil} and to the
referee for his patient and attentive reading of the manuscript and for
numerous useful suggestions.

\section{Preliminary results\label{s2}}

\subsection{Notation}

All spaces are assumed to be complex. If a normed algebra $A$ is not unital,
denote by $A^{1}$ the normed algebra obtained by adjoining the identity
elementt to $A$, and if $A$ is already unital, let $A^{1}=A$. We denote by
$\widehat{A}$ the completion of $A$. The term \textit{ideal} always means a
two-sided ideal. If $A$ is a normed algebra and $I$ is an ideal of $A$ then
the term $A/\overline{I}$ always denotes the quotient of $A$ by the closure of
$I$ \textit{in the norm of }$A$ (even if $I$ is supplied with its own norm).
It is convenient to write $a/\overline{I}$ for $a+\overline{I}\in
A/\overline{I}$. Also, by a \textit{quotient} of a normed algebra $A$ we
always mean any quotient of $A$ by a closed ideal.

Let $A$ be a normed algebra. A norm $\left\Vert \cdot\right\Vert _{A}$ on $A$
is called an \textit{algebra (}or\textit{ submultiplicative) norm }if
\[
\left\Vert ab\right\Vert _{A}\leq\left\Vert a\right\Vert _{A}\left\Vert
b\right\Vert _{A}%
\]
for all $a,b\in A$. If $A$ has another norm (or seminorm), say $\left\Vert
\cdot\right\Vert $, we write $\left(  A,\left\Vert \cdot\right\Vert \right)  $
to indicate that $A$ is considered with respect to $\left\Vert \cdot
\right\Vert $. The norm $\left\Vert \cdot\right\Vert $ is \textit{equivalent}
to $\left\Vert \cdot\right\Vert _{A}$ on $A$ if there are constants $s,t>0$
such that
\[
s\left\Vert \cdot\right\Vert \leq\left\Vert \cdot\right\Vert _{A}\leq
t\left\Vert \cdot\right\Vert
\]
on $A$. Assume now that $A$ is unital. Then $\left\Vert \cdot\right\Vert _{A}$
is called \textit{unital} if $\left\Vert 1\right\Vert _{A}=1$. It is well
known that every algebra norm on $A$ is equivalent to a unital one.

\subsection{Quasinilpotents and the radical modulo an ideal}

An element $a$ of a normed algebra $A$ is called \textit{quasinilpotent} if
\[
\inf_{n}\left\Vert a^{n}\right\Vert ^{1/n}=0.
\]
Let $Q\left(  A\right)  $ denote the set of all quasinilpotent elements of $A$.

Let $A$ be a normed algebra, and let $\mathrm{dist}_{A}\left(  a,E\right)  $,
or simply $\mathrm{dist}\left(  a,E\right)  $, denote the distance from $a\in
A$ to $E\subset A$, that is
\[
\mathrm{dist}\left(  a,E\right)  =\inf\left\{  \left\Vert a-b\right\Vert :b\in
E\right\}  .
\]
Let $Q_{E}\left(  A\right)  $ be the set of all elements $a\in A$ such that
\[
\inf_{n}\mathrm{dist}_{A}\left(  a^{n},E\right)  ^{1/n}=0.
\]
If $E=J$ is an ideal of $A$, then $\mathrm{dist}_{A}\left(  a,J\right)  $ is
simply a quotient norm of $q\left(  a\right)  $ in the quotient algebra
$A/\overline{J}$, where $\overline{J}$ denotes the closure of $J$ in $A$ and
$q:A\longrightarrow A/\overline{J}$ is the standard quotient map. In other
words, $Q_{J}\left(  A\right)  $ is the set of all $a\in A$ quasinilpotent
modulo $J$.

Let $\mathrm{rad}\left(  A\right)  $ denote the Jacobson radical of $A$, and
let $\mathrm{rad}_{J}\left(  A\right)  $ denote the Jacobson radical of $A$
modulo $J$, that is the preimage in $A$ of the radical of the quotient algebra
$A/\overline{J}$. If $A$ is complete, write $\mathrm{Rad}\left(  A\right)  $
instead of $\mathrm{rad}\left(  A\right)  $.

\begin{proposition}
\label{predv1} Let $A$ be a Banach algebra and $J$ be an ideal of $A$. Then

\begin{itemize}
\item[$\mathrm{(i)}$] $Q_{J}\left(  A\right)  $ contains the intersection of
all primitive ideals of $A$ containing $J$ ($=\mathrm{Rad}_{J}\left(
A\right)  $).

\item[$\mathrm{(ii)}$] If a closed subalgebra $B\subset A$ is such that
$J\subset B$ and $B/\overline{J}$ is radical then $B\subset Q_{J}\left(
A\right)  $.

\item[$\mathrm{(iii)}$] An element $a\in A$ belongs to $Q_{J}\left(  A\right)
$ if and only if for each $\varepsilon>0$ there is $n\in\mathbb{N}$ such that
$\mathrm{dist}(a^{m},J)<\varepsilon^{m}$ for all $m\geq n$.
\end{itemize}
\end{proposition}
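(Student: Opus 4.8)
The plan is to prove the three items in sequence, deriving (ii) and (iii) from the basic description of $Q_J(A)$ and using standard radical-theoretic facts for (i). Throughout, write $B = A/\overline{J}$, let $q : A \to B$ be the quotient map, and note that for any $a \in A$ and $m \in \mathbb{N}$ one has $\mathrm{dist}_A(a^m, J) = \mathrm{dist}_A(a^m, \overline{J}) = \|q(a)^m\|_{B}$, since $q$ is the quotient map onto $B$ with quotient norm and $q(a^m) = q(a)^m$. Hence $a \in Q_J(A)$ if and only if $\inf_m \|q(a)^m\|_B^{1/m} = 0$, i.e. if and only if $q(a) \in Q(B)$. This reformulation is the engine for everything that follows, and establishing it cleanly is the one genuinely load-bearing step; the rest is bookkeeping.

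For (iii): the condition $\inf_m \mathrm{dist}(a^m,J)^{1/m} = 0$ says $\inf_m \|q(a)^m\|_B^{1/m} = 0$. For an element $b = q(a)$ of the Banach algebra $\widehat{B}$ (the completion of $B$), the sequence $\|b^m\|^{1/m}$ converges to the spectral radius $r(b)$, so the infimum being $0$ forces $r(b) = 0$, i.e. $\|b^m\|^{1/m} \to 0$. Thus for each $\varepsilon > 0$ there is $n$ with $\|b^m\|^{1/m} < \varepsilon$, hence $\mathrm{dist}(a^m, J) = \|b^m\| < \varepsilon^m$, for all $m \ge n$. Conversely the stated condition trivially implies $\inf_m \mathrm{dist}(a^m,J)^{1/m} = 0$. (One must be mildly careful that $B$ may be incomplete, but the spectral radius formula $\lim_m \|b^m\|^{1/m} = r_{\widehat{B}}(b)$ only needs submultiplicativity of the norm, which holds on $B$, so the limit exists and equals its infimum regardless of completeness.)

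For (ii): if $B_0 := B/\overline{J} \subset A/\overline{J}$ — more precisely, if the closed subalgebra $q(B) \subset A/\overline{J}$ equals the image of $B$ and $B/\overline{J}$ is radical — then every element of $B/\overline{J}$ is quasinilpotent, since a radical Banach algebra consists of quasinilpotents (the spectral radius of any element of a radical algebra is $0$). Hence for $a \in B$, $q(a) \in B/\overline{J} \subset A/\overline{J}$ is quasinilpotent in $A/\overline{J}$ (spectral radius is computed in the larger algebra and the closure of $B/\overline J$ in $A/\overline J$ is still radical, so it cannot raise the spectral radius above $0$), which by the reformulation gives $a \in Q_J(A)$.

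For (i): the intersection of all primitive ideals of $A$ containing $J$ is, by definition of the Jacobson radical of a quotient, exactly $\mathrm{Rad}_J(A) = q^{-1}(\mathrm{Rad}(A/\overline{J}))$; here one uses that primitive ideals of $A$ containing $\overline J$ correspond bijectively to primitive ideals of $A/\overline J$, and the Jacobson radical of a Banach algebra is the intersection of its primitive ideals. An element $a$ of this intersection satisfies $q(a) \in \mathrm{Rad}(A/\overline{J})$, and every element of the Jacobson radical of a Banach algebra is quasinilpotent. Again by the reformulation, $q(a) \in Q(A/\overline{J})$ gives $a \in Q_J(A)$. I expect the only subtlety worth a sentence in the write-up is the passage between $J$ and $\overline{J}$ and the fact that spectral radius (hence quasinilpotence) is insensitive to whether one works in $B$ or in its completion; once the identity $\mathrm{dist}_A(a^m,J) = \|q(a)^m\|_{A/\overline J}$ is in hand, all three parts are short.
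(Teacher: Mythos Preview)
Your proof is correct and follows essentially the same approach as the paper: both reduce everything to the identity $\mathrm{dist}_A(a^m,J)=\|q(a)^m\|_{A/\overline{J}}$ and then invoke standard facts about the Jacobson radical and the spectral radius formula. Two cosmetic points worth cleaning up: you introduce $B=A/\overline{J}$ at the outset and then reuse the letter $B$ for the closed subalgebra in part (ii), which creates a notational clash; and since $A$ is assumed Banach, $A/\overline{J}$ is already complete, so your parenthetical worry about completion is unnecessary.
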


\begin{proof}
(i) As is known the Jacobson radical $\mathrm{Rad}(A)$ of a Banach algebra $A$
is the largest ideal consisting of quasinilpotents. Hence $Q_{J}\left(
A\right)  $ contains the Jacobson radical of $A$ modulo $\overline{J}$. The
last, as well known, is the intersection of all primitive ideals containing
$J$.
(ii) Straightforward.
(iii) Follows immediately from the equality $\mathrm{dist}(a,J)=\Vert
a/\overline{J}\Vert$.
\end{proof}

An algebra is usually said to be \textit{radical} if it is Jacobson radical.
The following lemma slightly improves the respective classical result.

\begin{lemma}
\label{quotBan} Let $A$ be a Banach algebra, and let $I$ be an ideal of $A$.
If $I$ is radical and $A/\overline{I}$ is radical then $A$ is radical.
\end{lemma}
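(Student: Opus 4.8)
The plan is to reduce the statement to Proposition \ref{predv1}, specifically to part (ii), which already says that if a closed subalgebra $B \subset A$ contains an ideal $J$ with $B/\overline{J}$ radical, then $B \subset Q_J(A)$. Applying this with $B = A$ and $J = I$ (noting $\overline{I}$ is still an ideal and $A/\overline{I}$ radical), we obtain immediately that $A = Q_I(A)$; that is, every element of $A$ is quasinilpotent modulo $\overline{I}$. So the real content is to pass from ``$A = Q_I(A)$ together with $\overline{I}$ radical'' to ``$A$ is radical''. Equivalently, we must show: if every $a \in A$ satisfies $\inf_n \operatorname{dist}(a^n, \overline{I})^{1/n} = 0$ and $\overline{I}$ consists of quasinilpotents, then every $a \in A$ is quasinilpotent.

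First I would fix $a \in A$ and $\varepsilon > 0$, and use Proposition \ref{predv1}(iii) to find $n$ with $\operatorname{dist}(a^m, \overline{I}) < \varepsilon^m$ for all $m \geq n$. Fix such an $m$ and pick $b \in \overline{I}$ with $\|a^m - b\| < \varepsilon^m$. Since $\overline{I}$ is radical, $b$ is quasinilpotent, so $\|b^k\|^{1/k} \to 0$; choose $k$ large so that $\|b^k\|^{1/k} < \varepsilon$. Now the idea is to estimate $\|a^{mk}\|$ by expanding $a^{mk} = ((a^m - b) + b)^k$: this is a sum of $2^k$ products of $k$ factors, each factor being either $a^m - b$ (of norm $< \varepsilon^m$) or $b$. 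The one term with all factors equal to $b$ contributes $b^k$, with $\|b^k\| < \varepsilon^k$; every other term has at least one factor $a^m - b$ and can be bounded crudely by $\varepsilon^m (\|a^m\| + \|b\|)^{k-1}$ or similar. This gives $\|a^{mk}\| \leq \varepsilon^k + (2^k - 1)\,\varepsilon^m (\|a^m\| + \|b\|)^{k-1}$, and hence $\|a^{mk}\|^{1/(mk)}$ is controlled by a quantity tending to $0$ as $\varepsilon \to 0$ — but one has to be careful about the order of quantifiers, since $\|a^m\|$, $\|b\|$ and $k$ all depend on $\varepsilon$.

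The main obstacle, then, is precisely this quantifier bookkeeping: the naive binomial expansion carries a factor $2^k$ and powers of $\|a^m\| + \|b\|$ that grow with $k$, and $k$ was chosen in terms of $\varepsilon$, so a direct estimate is circular. The cleaner route is to avoid the $b$-expansion entirely and instead argue inside the quotient and the ideal simultaneously using the submultiplicativity of $\operatorname{dist}(\cdot, \overline{I})$ together with the spectral-radius formula $\rho(a) = \inf_n \|a^n\|^{1/n}$: since $\overline{I}$ is a closed ideal with $A/\overline{I}$ radical, the image of $a$ in $A/\overline{I}$ is quasinilpotent, so for the computation of $\rho(a)$ one may, after raising to a high power $a^m$, replace $a^m$ by an element $b \in \overline{I}$ at cost $\varepsilon^m$; then one invokes the classical fact (this is the ``respective classical result'' the lemma refers to) that a closed ideal $\overline{I}$ which is radical satisfies $\overline{I} \subset \operatorname{Rad}(A)$, equivalently that quasinilpotence is preserved under such perturbations. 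In practice I would phrase it as: $\operatorname{dist}(a^m, \overline{I}) < \varepsilon^m$ and $b \in \overline{I}$ with $\|a^m - b\| \leq \varepsilon^m$ give, via the standard estimate $\rho(a^m) \leq \rho(b) + $ (perturbation term that is $o(1)$ as the relevant norm is controlled) — but the genuinely clean statement is that $\rho$ is continuous on $\overline{I}$-cosets in the sense needed, so $\rho(a^m) \leq 2\varepsilon^m \cdot C$ for a constant $C$ independent of $\varepsilon$, whence $\rho(a) = \rho(a^m)^{1/m} \to 0$. I expect the write-up to consist of (1) the reduction via Proposition \ref{predv1}(ii)–(iii), (2) a one-line appeal to the fact that a radical closed ideal lies in $\operatorname{Rad}(A)$, and (3) the perturbation estimate showing $\rho(a) = 0$, with the care concentrated in step (3).
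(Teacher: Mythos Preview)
Your plan has a genuine gap and also diverges substantially from the paper's argument. The paper's proof is purely representation-theoretic and avoids spectral-radius estimates entirely: if $\pi$ is any strictly irreducible representation of $A$, then its restriction to the ideal $I$ is either zero or again strictly irreducible; radicality of $I$ rules out the latter, so $\pi|_I = 0$, continuity gives $\overline{I}\subset\ker\pi$, and hence $\pi$ factors through the radical algebra $A/\overline{I}$ and must be trivial. That is the whole proof.

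The gap in your outline is the repeated, unjustified passage from ``$I$ is radical'' to ``$\overline{I}$ consists of quasinilpotents'' (you invoke this both in the summary and when choosing $b\in\overline{I}$). The hypothesis gives only that $I$ is radical; that the lemma does \emph{not} assume $\overline{I}$ radical is precisely the ``slight improvement'' over the classical extension result. Your step~(3) is meant to compensate, but as you already suspect the quantifiers do not close: with $a^m=u+b$, $\|u\|<\varepsilon^m$, $b$ quasinilpotent, the expansion of $(u+b)^k$ only yields $\|a^{mk}\|^{1/(mk)}$ bounded by a quantity of order $\|a\|$ as $k\to\infty$. Your approach does salvage, but not via step~(3): since $I$ is radical, every $a\in I$ has $\sigma_I(a)\subset\{0\}$, and Remark~\ref{sp2} then gives $\sigma_A(a)\cup\{0\}=\sigma_I(a)\cup\{0\}=\{0\}$, so $I$ is an ideal of quasinilpotents in the Banach algebra $A$, hence $I\subset\mathrm{Rad}(A)$ and therefore $\overline{I}\subset\mathrm{Rad}(A)$. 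Now $A/\mathrm{Rad}(A)$ is a quotient of the radical Banach algebra $A/\overline{I}$, hence radical, hence zero. This makes both your step~(1) (which merely restates the hypothesis on $A/\overline{I}$) and your step~(3) unnecessary.
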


\begin{proof}
Let $\pi$ be a strictly irreducible representation of $A$ on $X$. We may
assume that $X$ is a Banach space and that $\pi$ is continuous. If the
restriction of $\pi$ to $I$ is non-zero then it is a strictly irreducible
representation of $I$, in contradiction with the radicality of $I$. Therefore
$\pi|_{I}=0$ and, by continuity, $\overline{I}\subset\ker\pi$. Hence $\pi$
defines a strictly irreducible representation of $A/\overline{I}$. Since
$A/\overline{I}$ is radical, this means that $\dim X=1$ and $\pi=0$.
\end{proof}


\subsection{Normed subalgebras and flexible ideals}

\subsubsection{Spectrum with respect to a Banach subalgebra}

Let $A,B$ be normed algebras with norms $\left\Vert \cdot\right\Vert _{A}$ and
$\left\Vert \cdot\right\Vert _{B}$ respectively, and let $B$ be a subalgebra
of $A$. We say that $B$ is a \textit{normed subalgebra} if $\left\Vert
\cdot\right\Vert _{A}\leq\left\Vert \cdot\right\Vert _{B}$ on $B$. Every
complete (with respect to $\left\Vert \cdot\right\Vert _{B}$) normed
subalgebra $B$ is called a \textit{Banach subalgebra}.

Let $\sigma_{A}\left(  a\right)  $, or simply $\sigma\left(  a\right)  $,
denote the spectrum of $a\in A$ with respect to $A^{1}$. Recall that this
definition of spectrum coincides with Definition 5.1 in \cite{BD} in virtue of
\cite[Lemma 5.2]{BD}. Let $\widehat{\sigma}_{A}\left(  a\right)  $ denote the
polynomially convex hull of $\sigma_{A}\left(  a\right)  $, and let $\rho
_{A}\left(  a\right)  $, or simply $\rho\left(  a\right)  $, denote the
spectral radius of $a$ defined as $\inf_{n}\left\Vert a^{n}\right\Vert
_{A}^{1/n}$.

If $A$ is a Banach algebra then $\rho_{A}\left(  a\right)  =\sup\left\{
\left\vert \lambda\right\vert :\lambda\in\sigma_{A}\left(  a\right)  \right\}
$ (Gelfand's formula), and $\widehat{\sigma}_{A}\left(  a\right)  $ is
received from $\sigma_{A}\left(  a\right)  $ by filling the holes of
$\sigma_{A}\left(  a\right)  $. The term \textquotedblleft
clopen\textquotedblright\ means \textquotedblleft closed\ and
open\ simultaneously\textquotedblright.

\begin{proposition}
\label{sp}Let $A$ be a unital Banach algebra, and let $B$ be a unital Banach
subalgebra of $A$ (i.e. the units for $A$ and $B$ coincide). Then

\begin{itemize}
\item[$\mathrm{(i)}$] $\sigma_{A}\left(  a\right)  \subset\sigma_{B}\left(
a\right)  $ for every $a\in B$, and each clopen subset of $\sigma_{B}(a)$ has
a non-void intersection with the polynomial hull $\widehat{\sigma}_{A}\left(
a\right)  $ of $\sigma_{A}(a)$.

\item[$\mathrm{(ii)}$] If $\sigma_{B}(A)$ is finite or countable then
$\sigma_{A}(a)=\sigma_{B}(a)$.
\end{itemize}
\end{proposition}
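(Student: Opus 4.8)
The plan is to prove (i) first and then to deduce (ii) from it by a zero-dimensionality argument. The inclusion $\sigma_{A}(a)\subseteq\sigma_{B}(a)$ in (i) is immediate: if $\lambda\notin\sigma_{B}(a)$, then the inverse of $a-\lambda1$ taken in $B$ lies in $B\subseteq A$ and, multiplication in $B$ being the restriction of that in $A$, it is also a two-sided inverse of $a-\lambda1$ in $A$; hence $\lambda\notin\sigma_{A}(a)$.

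For the clopen assertion, let $U$ be a non-empty clopen subset of $\sigma_{B}(a)$. Working with the holomorphic functional calculus in the Banach algebra $B$, form the Riesz idempotent
\[
p=\frac{1}{2\pi i}\int_{\Gamma}(z1-a)^{-1}\,dz\ \in\ B,
\]
where $\Gamma$ is a cycle separating $U$ from $\sigma_{B}(a)\setminus U$. Then $p^{2}=p$, $p$ commutes with $a$, $p\neq0$ (since $U\neq\emptyset$), and the standard properties of the Riesz decomposition give $\sigma_{pBp}(pa)=U$, the spectrum being computed in the unital Banach algebra $pBp$ with unit $p$. Now $pBp\subseteq pAp$ are unital Banach algebras with the common unit $p$ and $\|pxp\|_{A}\leq\|pxp\|_{B}$, so the inclusion already proved, applied inside the corner, gives $\sigma_{pAp}(pa)\subseteq\sigma_{pBp}(pa)=U$. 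On the other hand $\sigma_{pAp}(pa)\neq\emptyset$ because $pAp$ is a unital Banach algebra; and if $\lambda\notin\sigma_{A}(a)$ then $(a-\lambda1)^{-1}$ commutes with $p$, so that $p(a-\lambda1)^{-1}p$ is a two-sided inverse of $pa-\lambda p$ in $pAp$, which shows $\sigma_{pAp}(pa)\subseteq\sigma_{A}(a)\subseteq\widehat{\sigma}_{A}(a)$. Combining these, $\emptyset\neq\sigma_{pAp}(pa)\subseteq U\cap\widehat{\sigma}_{A}(a)$, as required.

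For (ii), suppose $\sigma_{B}(a)$ is finite or countable. Being a countable metrizable space, $\sigma_{B}(a)$ is zero-dimensional, so its clopen subsets form a base of its topology: for each $\lambda\in\sigma_{B}(a)$ and each $\varepsilon>0$ there is a clopen $U\subseteq\sigma_{B}(a)$ with $\lambda\in U\subseteq\{\mu:|\mu-\lambda|<\varepsilon\}$. Moreover $\sigma_{A}(a)\subseteq\sigma_{B}(a)$ is countable, hence $\mathbb{C}\setminus\sigma_{A}(a)$ is connected and $\widehat{\sigma}_{A}(a)=\sigma_{A}(a)$. By (i) every such $U$ meets $\widehat{\sigma}_{A}(a)=\sigma_{A}(a)$, so $\lambda$ is a limit of points of $\sigma_{A}(a)$; as $\sigma_{A}(a)$ is closed, $\lambda\in\sigma_{A}(a)$. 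Thus $\sigma_{B}(a)\subseteq\sigma_{A}(a)$, and together with the first inclusion of (i) this yields $\sigma_{A}(a)=\sigma_{B}(a)$.

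The only step that needs care is the clopen assertion in (i): one must set up the corner algebras $pBp\subseteq pAp$ correctly and invoke the spectral-mapping property of the holomorphic functional calculus in $B$ to know that the Riesz idempotent $p$ cuts the spectrum of $a$ down to exactly $U$ inside $pBp$, and one must use the centrality of $p$ for $a$ to locate $\sigma_{pAp}(pa)$ inside $\sigma_{A}(a)$. Once this bookkeeping is done, the non-emptiness of the spectrum of $pa$ in the genuine Banach algebra $pAp$ is precisely what forces $U$ to meet $\widehat{\sigma}_{A}(a)$, and everything else is routine.
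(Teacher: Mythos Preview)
Your proof is correct, and for part~(ii) it is essentially identical to the paper's argument. For part~(i), however, you take a genuinely different route. The paper argues by contradiction: assuming a clopen set $\sigma_1$ misses $\widehat{\sigma}_A(a)$, it chooses the contour $\Gamma$ inside $\mathbb{C}\setminus\widehat{\sigma}_A(a)$ and observes that the \emph{same} integral $\frac{1}{2\pi i}\int_\Gamma(z-a)^{-1}\,dz$, read in $A$ rather than in $B$, must vanish because $\Gamma$ encloses no point of $\sigma_A(a)$; this contradicts $p\neq 0$. You instead pass to the corner algebras $pBp\subset pAp$ and use non-emptiness of $\sigma_{pAp}(pa)$ together with the inclusions $\sigma_{pAp}(pa)\subset\sigma_{pBp}(pa)=U$ and $\sigma_{pAp}(pa)\subset\sigma_A(a)$.

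Your corner-algebra argument actually yields more than the paper states: you show that every nonempty clopen $U\subset\sigma_B(a)$ meets $\sigma_A(a)$ itself, not merely its polynomial hull $\widehat{\sigma}_A(a)$. (In fact the paper's contradiction argument also gives this, since $\sigma_A(a)\subset\sigma_B(a)\setminus\sigma_1$ already forces the winding number of $\Gamma$ around $\sigma_A(a)$ to be zero; the polynomial hull is not really needed.) The paper's approach is shorter and avoids setting up the corners, while yours makes the mechanism more transparent and directly exhibits a point of $\sigma_A(a)$ inside $U$ as an element of $\sigma_{pAp}(pa)$.
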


\begin{proof}
It is evident that $\sigma_{A}\left(  a\right)  \subset\sigma_{B}\left(
a\right)  $. To prove the second statement, suppose that $\sigma_{1}$ is a
clopen subset of $\sigma_{B}(a)$ which doesn't intersect $\widehat{\sigma}%
_{A}\left(  a\right)  $. Let $p$ be the corresponding Riesz projection in
$B$,
\[
p=\frac{1}{2\pi i}\int_{\Gamma}(\lambda-a)^{-1}d\lambda,
\]
where $\Gamma$ surrounds $\sigma_{1}$ and doesn't intersect $\widehat{\sigma
}_{A}\left(  a\right)  $. Then $p\neq0$.
On the other hand, $p=0$ because it can be regarded as a Riesz projection of
$a$ in $A$ and there are no points of $\sigma_{A}(a)$ inside $\Gamma$. The
obtained contradiction proves (i).
To show (ii), note that if $\sigma_{B}(a)$ is countable then $\sigma_{A}(a)$
is countable hence $\widehat{\sigma}_{A}\left(  a\right)  =\sigma_{A}(a)$.
Thus each clopen subset of $\sigma_{B}(a)$ intersects $\sigma_{A}(a)$. Any
point $\lambda\in\sigma_{B}(a)$ is clearly the intersection of a sequence of
clopen subsets of $\sigma_{B}(a)$. Since all of them intersects $\sigma
_{A}(a)$ we get that $\lambda\in\sigma_{A}(a)$. Thus $\sigma_{B}%
(a)\subset\sigma_{A}(a)$ and we are done.
\end{proof}

When the subalgebra $B$ is closed in $A$, the result is related to
\cite[Theorem 10.18]{Rud}. The situation is especially simple if $B$ is a
(non-necessarily closed) ideal of $A$.

\begin{remark}
\label{sp2}Let $I$ be an ideal of an algebra $A$ and $a\in I$. It is easy to
check that if $\left(  a-\lambda\right)  b=1$ or $b\left(  a-\lambda\right)
=1$ for $b\in A^{1}$ and $\lambda\neq0$, then $b+\lambda^{-1}\in I$. Hence, in
virtue of \cite[Lemma 5.2]{BD}, $\left\{  0\right\}  \cup\sigma_{I}\left(
a\right)  =\left\{  0\right\}  \cup\sigma_{A}\left(  a\right)  $.
\end{remark}

\subsubsection{Flexible ideals}

Let $A,I$ be normed algebras with norms $\left\Vert \cdot\right\Vert _{A}$ and
$\left\Vert \cdot\right\Vert _{I}$ respectively, and let $I$ be an ideal of
$A$ such that
\[
\left\Vert x\right\Vert _{A}\leq\left\Vert x\right\Vert _{I}\text{ and
}\left\Vert axb\right\Vert _{I}\leq\left\Vert a\right\Vert _{A}\left\Vert
x\right\Vert _{I}\left\Vert b\right\Vert _{A}%
\]
for all $x\in I$ and $a,b\in A^{1}$. Such an algebra norm $\left\Vert
\cdot\right\Vert _{I}$ on $I$ is called \textit{flexible }(with respect to
$\left\Vert \cdot\right\Vert _{A}$ or $\left(  A,\left\Vert \cdot\right\Vert
_{A}\right)  $, naturally). An ideal having a flexible norm is called a
\textit{flexible ideal}. Every ideal $I$ of a normed algebra $A$ with
$\left\Vert \cdot\right\Vert _{I}=\left\Vert \cdot\right\Vert _{A}$ on $I$ is
of course flexible.

By definition, a Banach ideal $I$ of a normed algebra $A$ is an ideal which is
a Banach subalgebra of $A$.

\begin{lemma}
Every Banach ideal of a Banach algebra is flexible with respect to an
equivalent algebra norm.
\end{lemma}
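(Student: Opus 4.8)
The plan is to replace $\|\cdot\|_I$ by an explicit multiplier norm on $I$. The only substantial point is a uniform estimate: I claim there is a constant $M$ with $\|axb\|_I\le M\|a\|_A\|x\|_I\|b\|_A$ for all $x\in I$ and $a,b\in A^1$ (note $axb\in I$ since $I$ is an ideal). Granted this, checking that a suitable rescaling of the multiplier norm is an algebra norm, is equivalent to $\|\cdot\|_I$, and is flexible, is routine.

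To obtain the uniform estimate I would apply the closed graph theorem twice and then the uniform boundedness principle. First, for fixed $a\in A^1$ the map $x\mapsto ax$ sends $I$ into $I$ and is $\|\cdot\|_I$-bounded: if $x_n\to x$ and $ax_n\to y$ in $\|\cdot\|_I$, then both sequences also converge in $\|\cdot\|_A$ (because $\|\cdot\|_A\le\|\cdot\|_I$ on $I$), and continuity of multiplication in $A$ forces $ax_n\to ax$ in $\|\cdot\|_A$, hence $y=ax$; so the graph is closed. Second, for fixed $x\in I$ the map $a\mapsto ax$ is bounded from $A^1$ into $(I,\|\cdot\|_I)$, again by the closed graph theorem together with the same norm comparison. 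Thus the bilinear map $A^1\times I\to I$, $(a,x)\mapsto ax$, is separately continuous between Banach spaces and is therefore jointly bounded by Banach--Steinhaus: there is $M_1$ with $\|ax\|_I\le M_1\|a\|_A\|x\|_I$. The same argument for right multiplication yields $M_2$ with $\|xb\|_I\le M_2\|x\|_I\|b\|_A$, and composing gives the estimate with $M=M_1M_2$.

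Now put $c_0=\|1\|_{A^1}$ (which equals $1$ for the usual unitization, and is $\ge 1$ in general), set
\[
N(x)=\sup\{\,\|axb\|_I : a,b\in A^1,\ \|a\|_A\|b\|_A\le 1\,\},
\]
and define $\|x\|'=c_0^{2}N(x)$. From the estimate above $N(x)\le M\|x\|_I$, so $\|\cdot\|'$ is finite, while the choice $a=b=c_0^{-1}1$ gives $N(x)\ge c_0^{-2}\|x\|_I$, hence $\|x\|'\ge\|x\|_I\ge\|x\|_A$; this yields both equivalence to $\|\cdot\|_I$ and the inequality $\|x\|_A\le\|x\|'$. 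For submultiplicativity I would write $axyb=(ax)(yb)$ with both factors in $I$, use submultiplicativity of $\|\cdot\|_I$, and note that $\|ax\|_I\le c_0\|a\|_A N(x)$ (test the supremum defining $N(x)$ with the pair $(a,(c_0\|a\|_A)^{-1}1)$); the factor $c_0^{2}$ in the definition of $\|\cdot\|'$ is exactly what makes the constants telescope to $\|xy\|'\le\|x\|'\|y\|'$. For the last flexibility inequality, $c(axb)d=(ca)x(bd)$ and $\|ca\|_A\|bd\|_A\le(\|c\|_A\|d\|_A)\|a\|_A\|b\|_A$, so each admissible pair $(c,d)$ in the supremum defining $N(axb)$ gives rise to a pair whose product of $A$-norms is at most $\|a\|_A\|b\|_A$; hence $N(axb)\le\|a\|_A\|b\|_A N(x)$ and $\|axb\|'\le\|a\|_A\|x\|'\|b\|_A$.

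The main obstacle is the uniform boundedness step. Pointwise boundedness of the multiplication operators is immediate from the closed graph theorem, but upgrading it to a single constant $M$ valid for all $a,b$ in the unit balls requires the Banach--Steinhaus argument for separately continuous bilinear maps, and it is there that completeness of both $(A^1,\|\cdot\|_A)$ and $(I,\|\cdot\|_I)$ is genuinely used. The bookkeeping with $c_0$ is a minor technicality and disappears entirely if one first replaces the algebra norm of $A$ by an equivalent unital one (as recalled above).
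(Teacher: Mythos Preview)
Your proof is correct and follows essentially the same route as the paper's. The paper obtains the uniform estimate $\|axb\|_I\le s\|a\|_A\|x\|_I\|b\|_A$ by citing Barnes' theorem, then defines the new norm $\|x\|_I'=\sup\{\|axb\|_I:\|a\|_A,\|b\|_A\le 1,\ a,b\in A^1\}$ (implicitly assuming a unital norm on $A^1$) and checks flexibility; you instead supply the Barnes-type estimate yourself via the closed graph theorem and Banach--Steinhaus, define the same multiplier norm (your $N$ coincides with the paper's $\|\cdot\|_I'$ by homogeneity), and track the constant $c_0=\|1\|_{A^1}$ explicitly rather than first passing to a unital norm.
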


\begin{proof}
Let $I$ be a Banach ideal of a Banach algebra $A$. By\ \cite[Theorem
2.3]{Bar1}, there is $s>0$ such that $\left\Vert axb\right\Vert _{I}\leq
s\left\Vert a\right\Vert _{A}\left\Vert x\right\Vert _{I}\left\Vert
b\right\Vert _{A}$ for all $x\in I$ and $a,b\in A^{1}$. Define $\left\Vert
\cdot\right\Vert _{I}^{\prime}$ on $I$ by
\[
\left\Vert x\right\Vert _{I}^{\prime}=\sup\left\{  \left\Vert axb\right\Vert
_{I}:\left\Vert a\right\Vert _{A},\left\Vert b\right\Vert _{A}\leq1,a,b\in
A^{1}\right\}
\]
for every $x\in I$. It is easy to check that $\left\Vert \cdot\right\Vert
_{I}^{\prime}$ is an algebra norm on $I$,
\[
\left\Vert \cdot\right\Vert _{I}\leq\left\Vert \cdot\right\Vert _{I}^{\prime
}\leq s\left\Vert \cdot\right\Vert _{I}.
\]
and
\[
\left\Vert axb\right\Vert _{I}^{\prime}\leq\left\Vert a\right\Vert
_{A}\left\Vert x\right\Vert _{I}^{\prime}\left\Vert b\right\Vert _{A}%
\]
for every $a,b\in A^{1}$. As $\left\Vert \cdot\right\Vert _{A}\leq\left\Vert
\cdot\right\Vert _{I}^{\prime}$ on $I$, we obtain that $I$ is a flexible ideal
with respect to $\left\Vert \cdot\right\Vert _{I}^{\prime}$.
\end{proof}

\subsubsection{Completion of normed subalgebras and ideals}

If $B$ is a normed subalgebra in a normed algebra $A$ then the
\textquotedblleft identity\textquotedblright\ homomorphism $\mathfrak{i}%
:(B,\Vert\cdot\Vert_{B})\rightarrow A$ is continuous and therefore extends by
continuity to the homomorphism $\mathfrak{\hat{\imath}}:\widehat{B}%
\rightarrow\widehat{A}$. The proof of the following result is straightforward
and we omit it.

\begin{lemma}
\label{flex1}Let $B$ be a normed subalgebra of a normed algebra $A$. Then

\begin{itemize}
\item[$\mathrm{(i)}$] The image $\mathfrak{\hat{\imath}}(\widehat{B})$ of the
completion $\widehat{B}$ of $B$ in the completion $\widehat{A}$ of $A$ is a
Banach subalgebra of $\widehat{A}$ with respect to the norm $\left\Vert
\cdot\right\Vert _{\ast}$ of the quotient $\widehat{B}/\ker\mathfrak{\hat
{\imath}}$.

\item[$\mathrm{(ii)}$] If $B$ is a flexible ideal of $A$ then $\mathfrak{\hat
{\imath}}(\widehat{B})$ is a Banach ideal of $\widehat{A}$ and its norm
$\left\Vert \cdot\right\Vert _{\ast}$ is flexible.
\end{itemize}
\end{lemma}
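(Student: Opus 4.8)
The statement has two parts, (i) and (ii). For (i), the map $\mathfrak{\hat\imath}\colon\widehat B\to\widehat A$ factors through $\widehat B/\ker\mathfrak{\hat\imath}$, yielding an \emph{injective} continuous homomorphism $\overline{\mathfrak{\imath}}\colon\widehat B/\ker\mathfrak{\hat\imath}\to\widehat A$ onto $\mathfrak{\hat\imath}(\widehat B)$; I would simply transport the quotient norm $\|\cdot\|_*$ along this bijection to give $\mathfrak{\hat\imath}(\widehat B)$ a norm under which it is a Banach algebra. The point to check is that this makes $\mathfrak{\hat\imath}(\widehat B)$ a normed subalgebra of $\widehat A$, i.e.\ $\|\cdot\|_{\widehat A}\le\|\cdot\|_*$ on it. This follows because $\|\mathfrak{i}(x)\|_{\widehat A}=\|x\|_A\le\|x\|_B$ on $B$, and both sides pass to the completion and then descend to the quotient: for $u\in\widehat B$, $\|\mathfrak{\hat\imath}(u)\|_{\widehat A}\le\|u\|_{\widehat B}$, and taking the infimum over the coset $u+\ker\mathfrak{\hat\imath}$ (noting $\mathfrak{\hat\imath}$ is constant on that coset) gives $\|\mathfrak{\hat\imath}(u)\|_{\widehat A}\le\|u+\ker\mathfrak{\hat\imath}\|_*$.

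For (ii), I would first verify that $\mathfrak{\hat\imath}(\widehat B)$ is an \emph{ideal} of $\widehat A$. Since $B$ is an ideal of $A$, the map $(a,x,b)\mapsto axb$ is defined on $A^1\times B\times A^1$ with values in $B$, and the flexibility inequality $\|axb\|_B\le\|a\|_A\|x\|_B\|b\|_A$ shows it is jointly continuous for the relevant norms; hence it extends to $\widehat A^1\times\widehat B\times\widehat A^1\to\widehat B$, and composing with $\mathfrak{\hat\imath}$ shows $\widehat A^1\cdot\mathfrak{\hat\imath}(\widehat B)\cdot\widehat A^1\subset\mathfrak{\hat\imath}(\widehat B)$. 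Since $\mathfrak{\hat\imath}(\widehat B)$ is dense-closed as the image of a Banach space under a bounded map with closed range only when the range is genuinely closed---here it is automatically a Banach subalgebra by (i), hence closed in $\widehat A$ as a complete subspace---it is a closed ideal, so a Banach ideal.

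It remains to see that $\|\cdot\|_*$ is flexible with respect to $\|\cdot\|_{\widehat A}$. The first flexibility condition, $\|\cdot\|_{\widehat A}\le\|\cdot\|_*$, was already established in (i). For the second, $\|aub\|_*\le\|a\|_{\widehat A}\|u\|_*\|b\|_{\widehat A}$ for $a,b\in\widehat A^1$ and $u\in\mathfrak{\hat\imath}(\widehat B)$: start from the flexibility of $\|\cdot\|_B$ on $B$, pass to $\widehat B$ by continuity of multiplication (the inequality is preserved under limits), and then descend to the quotient $\widehat B/\ker\mathfrak{\hat\imath}$---here one uses that multiplication by elements of $\widehat A^1$ maps $\ker\mathfrak{\hat\imath}$ into itself (because $\mathfrak{\hat\imath}$ is a homomorphism, $\mathfrak{\hat\imath}(avb)=\mathfrak{\hat\imath}(a)\mathfrak{\hat\imath}(v)\mathfrak{\hat\imath}(b)=0$ when $\mathfrak{\hat\imath}(v)=0$, using that $a,b$ can be approximated from $A^1$), so the bilinear action is well-defined on the quotient and the norm estimate is inherited by passing to infima over cosets.

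\textbf{Main obstacle.} The only genuinely delicate point is the bookkeeping around $\ker\mathfrak{\hat\imath}$: one must check that the $A^1$-bimodule structure on $B$ extends to an $\widehat A^1$-bimodule structure on $\widehat B$ that leaves $\ker\mathfrak{\hat\imath}$ invariant, so that all the inequalities---which hold on the dense subalgebra $B$---survive first the completion and then the quotient. This is why the authors call the proof straightforward and omit it; each step is a routine density/continuity argument, but there are several of them and they must be chained in the right order.
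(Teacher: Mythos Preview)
The paper omits the proof entirely, calling it ``straightforward,'' so your argument is the only one on the table. Your overall strategy---pass to completions, factor through the kernel, transport the quotient norm, and push the flexibility inequalities through by density---is the natural one and is essentially correct.

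There is, however, one genuine slip. In part~(ii) you write that $\mathfrak{\hat\imath}(\widehat B)$ is ``automatically a Banach subalgebra by (i), hence closed in $\widehat A$ as a complete subspace---it is a closed ideal, so a Banach ideal.'' This inference is wrong: completeness under the stronger norm $\|\cdot\|_*$ does \emph{not} force closedness in $\widehat A$ under $\|\cdot\|_{\widehat A}$ (think of $\ell^1\subset c_0$). Fortunately the conclusion does not need it: in the paper's terminology a \emph{Banach ideal} is simply an ideal that is a Banach subalgebra (i.e.\ complete in its own norm with $\|\cdot\|_{\widehat A}\le\|\cdot\|_*$), and there is no requirement that it be closed in the ambient algebra. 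So once you have shown (a)~$\mathfrak{\hat\imath}(\widehat B)$ is an ideal of $\widehat A$ and (b)~it is a Banach subalgebra under $\|\cdot\|_*$, you are done---drop the false closedness claim.

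A minor notational point: when you justify that $\ker\mathfrak{\hat\imath}$ is invariant under the $\widehat A^1$-action, you write $\mathfrak{\hat\imath}(avb)=\mathfrak{\hat\imath}(a)\mathfrak{\hat\imath}(v)\mathfrak{\hat\imath}(b)$, but $a,b\in\widehat A^1$ are not in the domain of $\mathfrak{\hat\imath}$. What you mean is that the extended bimodule action $\mu\colon\widehat A^1\times\widehat B\times\widehat A^1\to\widehat B$ satisfies $\mathfrak{\hat\imath}(\mu(a,v,b))=a\,\mathfrak{\hat\imath}(v)\,b$ in $\widehat A$ (by density and continuity), so $\mathfrak{\hat\imath}(v)=0$ forces $\mu(a,v,b)\in\ker\mathfrak{\hat\imath}$. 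With that correction your kernel-invariance argument and the descent of the flexibility inequality to the quotient are fine.
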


When it cannot lead to a misunderstanding, we will write $\widehat{B}%
^{{}\left(  A\right)  }$ or, simply, $\widehat{B}^{{}\left(  \cdot\right)  }$
instead of $\mathfrak{\hat{\imath}}(\widehat{B})$.

\subsubsection{Sums and intersections of Banach ideals}

The following extends the class of examples of flexible ideals.

\begin{proposition}
\label{fl2}Let $I$ and $J$ be flexible ideals of a normed algebra $A$. Then

\begin{itemize}
\item[$\mathrm{(i)}$] $I\cap J$ is a flexible ideal of $A$ with respect to the
norm $\left\Vert \cdot\right\Vert _{I\cap J}=\max\left\{  \left\Vert
\cdot\right\Vert _{I},\left\Vert \cdot\right\Vert _{J}\right\}  $.

\item[$\mathrm{(ii)}$] $I+J$ is a flexible ideal of $A$ with respect to the
norm
\[
\left\Vert z\right\Vert _{I+J}=\inf\left\{  \left\Vert x\right\Vert
_{I}+\left\Vert y\right\Vert _{J}:z=x+y,x\in I,y\in J\right\}
\]
for every $z\in I+J$.

\item[$\mathrm{(iii)}$] $I\cap J$ is a flexible ideal of $I+J$.

\item[$\mathrm{(iv)}$] If $I$ and $J$ are Banach ideals then $I\cap J$ and
$I+J$ are Banach ideals with flexible norms $\left\Vert \cdot\right\Vert
_{I\cap J}$ and $\left\Vert \cdot\right\Vert _{I+J}$ respectively.
\end{itemize}
\end{proposition}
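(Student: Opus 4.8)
The plan is to verify, in each of the four parts, the three defining properties of a flexible norm --- domination of $\|\cdot\|_{A}$ (resp.\ of $\|\cdot\|_{I+J}$ in part (iii)), submultiplicativity, and the sandwich estimate $\|azb\|\le\|a\|_{A}\|z\|\|b\|_{A}$ for $a,b\in A^{1}$ --- together with a completeness check in part (iv). Two elementary observations are used throughout: since $I$ and $J$ are flexible, $\|\cdot\|_{A}\le\|\cdot\|_{I}$ on $I$ and $\|\cdot\|_{A}\le\|\cdot\|_{J}$ on $J$; and since $I,J$ are ideals of $A$, any product of two elements of $A$ having at least one factor in $I$ (resp.\ in $J$) again lies in $I$ (resp.\ in $J$).

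Part (i) is immediate: $I\cap J$ is an ideal of $A$, and $\|\cdot\|_{I\cap J}=\max\{\|\cdot\|_{I},\|\cdot\|_{J}\}$ inherits each required inequality coordinatewise. Thus $\|x\|_{A}\le\|x\|_{I}\le\|x\|_{I\cap J}$; for $x,y\in I\cap J$, $\|xy\|_{I}\le\|x\|_{I}\|y\|_{I}\le\|x\|_{I\cap J}\|y\|_{I\cap J}$ and the same holds with $J$, so the maximum is submultiplicative; and $\|axb\|_{I}\le\|a\|_{A}\|x\|_{I}\|b\|_{A}\le\|a\|_{A}\|x\|_{I\cap J}\|b\|_{A}$, again also for $J$, gives the sandwich estimate.

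Part (ii) carries the real content. Here $I+J$ is an ideal of $A$, and the first point is that $\|\cdot\|_{I+J}$ is genuinely a norm dominating $\|\cdot\|_{A}$: for any decomposition $z=x+y$ one has $\|z\|_{A}\le\|x\|_{A}+\|y\|_{A}\le\|x\|_{I}+\|y\|_{J}$, whence $\|z\|_{A}\le\|z\|_{I+J}$ (so also $\|z\|_{I+J}=0\Rightarrow z=0$). For submultiplicativity the key step is the grouping $zw=(xx'+xy'+yx')+yy'$ for $z=x+y$, $w=x'+y'$: the bracket lies in $I$ and $yy'\in J$, so $\|zw\|_{I+J}\le\|xx'+xy'+yx'\|_{I}+\|yy'\|_{J}\le(\|x\|_{I}+\|y\|_{J})(\|x'\|_{I}+\|y'\|_{J})$ after applying flexibility of $I$, of $J$, and the domination $\|\cdot\|_{A}\le\|\cdot\|_{I},\|\cdot\|_{J}$; taking infima over decompositions of $z$ and of $w$ finishes it. The sandwich estimate is analogous and simpler: for $a,b\in A^{1}$ write $azb=axb+ayb$ with $axb\in I$ and $ayb\in J$. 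This grouping is the only place needing a moment's thought, and it is exactly the standard argument that a sum of two ideals is an ideal; I do not expect a genuine obstacle anywhere in the proof.

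For part (iii): $I\cap J$ is an ideal of $I+J$ (if $z\in I+J\subset A$ and $w\in I\cap J$ then $zw,wz\in I$ and $zw,wz\in J$), and I again use the norm $\|\cdot\|_{I\cap J}$. The trivial decompositions give $\|w\|_{I+J}\le\min\{\|w\|_{I},\|w\|_{J}\}\le\|w\|_{I\cap J}$, so $\|\cdot\|_{I\cap J}$ dominates $\|\cdot\|_{I+J}$; and $\|zwz'\|_{I}\le\|z\|_{A}\|w\|_{I}\|z'\|_{A}\le\|z\|_{I+J}\|w\|_{I\cap J}\|z'\|_{I+J}$ (using $\|\cdot\|_{A}\le\|\cdot\|_{I+J}$ from part (ii)), and likewise with $J$, yield the sandwich estimate for $z,z'\in(I+J)^{1}$. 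For part (iv), if $I,J$ are complete then a sequence Cauchy in $(I\cap J,\|\cdot\|_{I\cap J})$ is Cauchy in both $\|\cdot\|_{I}$ and $\|\cdot\|_{J}$, hence converges in each; the two limits coincide because both norms dominate $\|\cdot\|_{A}$, and the common limit lies in $I\cap J$ and is the limit in $\|\cdot\|_{I\cap J}$. For $I+J$, the sum map $(x,y)\mapsto x+y$ identifies $(I+J,\|\cdot\|_{I+J})$ isometrically with the quotient of the Banach space $I\oplus_{1}J$ (norm $\|x\|_{I}+\|y\|_{J}$) by $N=\{(x,-x):x\in I\cap J\}$, which is closed in $I\oplus_{1}J$ because both coordinate norms dominate $\|\cdot\|_{A}$; hence $I+J$ is complete. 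Since in all cases the norms are algebra norms dominating $\|\cdot\|_{A}$, the resulting objects are Banach ideals, as claimed.
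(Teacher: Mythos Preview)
Your proof is correct and follows essentially the same approach as the paper's. The only minor differences are that you use the grouping $zw=(xx'+xy'+yx')+yy'$ in (ii) while the paper uses $(xx'+xy')+(yx'+yy')$ (both work equally well), and you supply the completeness arguments for (iv) explicitly whereas the paper simply cites \cite[Lemma~2.3.1]{BL76}.
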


\begin{proof}
It follows from \cite[Lemma 2.3.1]{BL76} that $\left\Vert \cdot\right\Vert
_{I\cap J}$ and $\left\Vert \cdot\right\Vert _{I+J}$ are norms and (iv) holds
if (i) and (ii) hold. So it suffices to show that $\left\Vert \cdot\right\Vert
_{I\cap J}$ and $\left\Vert \cdot\right\Vert _{I+J}$ are flexible. It is easy
to see that $\left\Vert \cdot\right\Vert _{I\cap J}$ is flexible and
$\left\Vert \cdot\right\Vert _{A}\leq$ $\left\Vert \cdot\right\Vert _{I+J}$ on
$I+J$.
For $z=x+y$ and $z^{\prime}=x^{\prime}+y^{\prime}$ with $x,x^{\prime}\in I$
and $y,y^{\prime}\in J$, we obtain that%
\begin{align*}
\left\Vert zz^{\prime}\right\Vert _{I+J}  &  \leq\left\Vert xx^{\prime
}+xy^{\prime}\right\Vert _{I}+\left\Vert yx^{\prime}+yy^{\prime}\right\Vert
_{J}\leq\left\Vert xx^{\prime}\right\Vert _{I}+\left\Vert xy^{\prime
}\right\Vert _{I}+\left\Vert yx^{\prime}\right\Vert _{J}+\left\Vert
yy^{\prime}\right\Vert _{J}\\
&  \leq\left\Vert x\right\Vert _{I}\left\Vert x^{\prime}\right\Vert
_{I}+\left\Vert x\right\Vert _{I}\left\Vert y^{\prime}\right\Vert
_{A}+\left\Vert y\right\Vert _{J}\left\Vert x^{\prime}\right\Vert
_{A}+\left\Vert y\right\Vert _{J}\left\Vert y^{\prime}\right\Vert _{J}\\
&  \leq\left\Vert x\right\Vert _{I}\left\Vert x^{\prime}\right\Vert
_{I}+\left\Vert x\right\Vert _{I}\left\Vert y^{\prime}\right\Vert
_{J}+\left\Vert y\right\Vert _{J}\left\Vert x^{\prime}\right\Vert
_{I}+\left\Vert y\right\Vert _{J}\left\Vert y^{\prime}\right\Vert _{J}\\
&  \leq\left(  \left\Vert x\right\Vert _{I}+\left\Vert y\right\Vert
_{J}\right)  \left(  \left\Vert x^{\prime}\right\Vert _{I}+\left\Vert
y^{\prime}\right\Vert _{J}\right)
\end{align*}
and
\begin{align*}
\left\Vert azb\right\Vert _{I+J}  &  \leq\left\Vert axb\right\Vert
_{I}+\left\Vert ayb\right\Vert _{J}\leq\left\Vert a\right\Vert _{A}\left\Vert
x\right\Vert _{I}\left\Vert b\right\Vert _{A}+\left\Vert a\right\Vert
_{A}\left\Vert y\right\Vert _{J}\left\Vert b\right\Vert _{A}\\
&  =\left\Vert a\right\Vert _{A}\left(  \left\Vert x\right\Vert _{I}%
+\left\Vert y\right\Vert _{J}\right)  \left\Vert b\right\Vert _{A}%
\end{align*}
for every $a,b\in A^{1}$. Hence $\left\Vert \cdot\right\Vert _{I+J}$ is
clearly a flexible norm.
(iii) It is clear that $I\cap J$ is an ideal of $I+J$, $\left\Vert
\cdot\right\Vert _{I+J}\leq\left\Vert \cdot\right\Vert _{I\cap J}$ on $I\cap
J$, and flexibility of $\left\Vert \cdot\right\Vert _{I\cap J}$ with respect
to $I+J$ follows from one with respect to $A$.
\end{proof}

In conditions of Proposition \ref{fl2} it is convenient to call $I\cap J$ and
$I+J$ with their flexible norms a \textit{flexible intersection} and a
\textit{flexible sum} of ideals $I$ and $J$, respectively.

An important class of examples of flexible ideals may be obtained by using the
notion of normed operator ideals \cite{P78, DF93}. Note that normed operator
ideals in \cite{P78} are the same as Banach operator ideals in \cite{DF93},
and we prefer the terminology in \cite{DF93}.

\begin{example}
\label{fle}Let $\mathcal{I}$ be a Banach operator ideal. Then $\mathcal{I}%
\left(  X\right)  $ is a Banach ideal of $\mathcal{B}\left(  X\right)  $ for
every Banach space $X$ and its norm is flexible.
\end{example}

\subsection{Projective tensor products}

\subsubsection{ Tensor products of normed algebras\label{ss1}}

Let $A_{1}\otimes A_{2}$ denote the algebraic tensor product of normed
algebras $A_{1}$ and $A_{2}$, and let $A_{1}\otimes_{\gamma}A_{2}$ denote
$\left(  A_{1}\otimes A_{2},\gamma\right)  $, where $\gamma$ is the projective
crossnorm. Recall that $\gamma$ is defined by
\[
\gamma\left(  c\right)  =\inf\left\{  \sum\left\Vert a_{i}\right\Vert _{A_{1}%
}\left\Vert b_{i}\right\Vert _{A_{2}}:\sum a_{i}\otimes b_{i}=c\right\}
\]
for every $c\in A_{1}\otimes_{\gamma}A_{2}$. Then $A=A_{1}\otimes_{\gamma
}A_{2}$ is a normed algebra and $\gamma$ is its algebra norm. To underline
that the projective norm $\gamma$ is considered in $A_{1}\otimes A_{2}$, we
write $\gamma=\gamma_{A_{1},A_{2}}$ or $\gamma=\gamma_{A}$. We also write
$\gamma=\gamma_{\left\Vert \cdot\right\Vert _{A_{1}},\left\Vert \cdot
\right\Vert _{A_{2}}}$ to indicate which norm are considered in $A_{i}$.

Let $A_{1}$ and $A_{2}$ be normed algebras. By $A_{1}\widehat{\otimes}%
_{\gamma}A_{2}$, or simply $A_{1}\widehat{\otimes}A_{2}$, we denote the
projective tensor product of $A_{1}$ and $A_{2}$ that is the completion of
$A_{1}\otimes_{\gamma}A_{2}$. By definition, it is a Banach algebra, and
clearly it coincides with the projective tensor product of the completions of
$A_{i}$. The elements of $A_{1}\widehat{\otimes}A_{2}$ can be written in the
form
\begin{equation}
c=\sum_{k=1}^{\infty}a_{k}\otimes b_{k}\text{ with }\sum_{k}\Vert a_{k}%
\Vert\Vert b_{k}\Vert<\infty, \label{proje}%
\end{equation}
where $a_{k}\in A_{1},b_{k}\in A_{2}$. Moreover, the norm $\left\Vert
\cdot\right\Vert =\gamma\left(  \cdot\right)  $ in $A_{1}\widehat{\otimes
}A_{2}$ is given by%
\[
\Vert c\Vert=\inf\sum_{k}\Vert a_{k}\Vert\Vert b_{k}\Vert,
\]
where $\inf$ is taken over representations of $c$ in form (\ref{proje}).


\subsubsection{Tensor products of normed subalgebras and ideals}

If $B_{i}$ is a subalgebra of an algebra $A_{i}$ for $i=1,2$, then
$B_{1}\otimes_{\gamma}B_{2}$ is a subalgebra of $A_{1}\otimes_{\gamma}A_{2}$
(see \cite[Section 3.3.1]{B98}). If $I_{i}$ is an ideal of $A_{i}$ for
$i=1,2$, then $I_{1}\otimes_{\gamma}I_{2}$ is clearly an ideal of
$A_{1}\otimes_{\gamma}A_{2}$.

\begin{proposition}
\label{tenf}Let $A_{1}$ and $A_{2}$ be normed algebras, and $A=A_{1}%
\otimes_{\gamma}A_{2}$. Then

\begin{itemize}
\item[$\mathrm{(i)}$] If $B_{i}$ is a normed subalgebra of $A_{i}$ for
$i=1,2$, then $B:=B_{1}\otimes_{\gamma}B_{2}$ with $\gamma_{B}=\gamma
_{\left\Vert \cdot\right\Vert _{B_{1}},\left\Vert \cdot\right\Vert _{B_{2}}}$
is a normed subalgebra of $A$.

\item[$\mathrm{(ii)}$] If $I_{i}$ is a flexible ideal of $A_{i}$ for $i=1,2$,
then $I:=I_{1}\otimes_{\gamma}I_{2}$ with $\gamma_{I}=\gamma_{\left\Vert
\cdot\right\Vert _{I_{1}},\left\Vert \cdot\right\Vert _{I_{2}}}$ is a flexible
ideal of $A$ and $\gamma_{I}\left(  azb\right)  \leq\gamma_{A}\left(
a\right)  \gamma_{I}\left(  z\right)  \gamma_{A}\left(  b\right)  $ for every
$a,b\in A^{1}$ and $z\in I$.
\end{itemize}
\end{proposition}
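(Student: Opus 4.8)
The plan is to verify each of the three assertions by reducing everything to the defining inequalities of flexible norms and to the elementary behaviour of the projective crossnorm $\gamma$.

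\textbf{Part (i).} First I would observe that a normed subalgebra relation is inherited by projective tensor products. Since $\left\Vert\cdot\right\Vert_{A_{i}}\leq\left\Vert\cdot\right\Vert_{B_{i}}$ on $B_{i}$, for any $c\in B_{1}\otimes B_{2}$ and any representation $c=\sum a_{k}\otimes b_{k}$ with $a_{k}\in B_{1}$, $b_{k}\in B_{2}$, we have $\sum\left\Vert a_{k}\right\Vert_{A_{1}}\left\Vert b_{k}\right\Vert_{A_{2}}\leq\sum\left\Vert a_{k}\right\Vert_{B_{1}}\left\Vert b_{k}\right\Vert_{B_{2}}$; taking the infimum over representations using elements of $B_{i}$ on the right-hand side gives $\gamma_{A}(c)\leq\gamma_{B}(c)$ (noting that every representation of $c$ by elements of $B_{1}\otimes B_{2}$ is in particular a representation by elements of $A_{1}\otimes A_{2}$, so the infimum defining $\gamma_{A}$ is over a larger family). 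That $B_{1}\otimes_{\gamma}B_{2}$ is a subalgebra is already recalled in the text, so (i) is done.

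\textbf{Part (ii).} This is the substantive part. I need three things: that $I_{1}\otimes_{\gamma}I_{2}$ is an ideal of $A$ (already noted in the text), that $\left\Vert\cdot\right\Vert_{A}\leq\gamma_{I}$ on $I$, and the flexibility inequality $\gamma_{I}(azb)\leq\gamma_{A}(a)\gamma_{I}(z)\gamma_{A}(b)$ for $a,b\in A^{1}$, $z\in I$. The first inequality follows as in (i) from $\left\Vert x\right\Vert_{A_{i}}\leq\left\Vert x\right\Vert_{I_{i}}$ on $I_{i}$. For the flexibility inequality, the key step is to prove it first for elementary tensors $a=a_{1}\otimes a_{2}$, $b=b_{1}\otimes b_{2}$ in $A$ (with $a_{i},b_{i}\in A_{i}^{1}$) and $z=\sum_{k}x_{k}\otimes y_{k}\in I$ with $x_{k}\in I_{1}$, $y_{k}\in I_{2}$. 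Then $azb=\sum_{k}(a_{1}x_{k}b_{1})\otimes(a_{2}y_{k}b_{2})$, and using the flexibility of $\left\Vert\cdot\right\Vert_{I_{i}}$ with respect to $\left\Vert\cdot\right\Vert_{A_{i}}$ termwise,
\[
\gamma_{I}(azb)\leq\sum_{k}\left\Vert a_{1}x_{k}b_{1}\right\Vert_{I_{1}}\left\Vert a_{2}y_{k}b_{2}\right\Vert_{I_{2}}\leq\left\Vert a_{1}\right\Vert_{A_{1}}\left\Vert b_{1}\right\Vert_{A_{1}}\left\Vert a_{2}\right\Vert_{A_{2}}\left\Vert b_{2}\right\Vert_{A_{2}}\sum_{k}\left\Vert x_{k}\right\Vert_{I_{1}}\left\Vert y_{k}\right\Vert_{I_{2}}.
\]
Taking the infimum over representations of $z$ gives $\gamma_{I}(azb)\leq\left\Vert a_{1}\right\Vert_{A_{1}}\left\Vert a_{2}\right\Vert_{A_{2}}\left\Vert b_{1}\right\Vert_{A_{1}}\left\Vert b_{2}\right\Vert_{A_{2}}\,\gamma_{I}(z)$. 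To pass from elementary tensors $a,b$ to general ones I would use a standard bilinearity/infimum argument: write $a=\sum_{i}a_{1}^{(i)}\otimes a_{2}^{(i)}$ and $b=\sum_{j}b_{1}^{(j)}\otimes b_{2}^{(j)}$, expand $azb=\sum_{i,j}(a_{1}^{(i)}\otimes a_{2}^{(i)})z(b_{1}^{(j)}\otimes b_{2}^{(j)})$, apply the triangle inequality for $\gamma_{I}$ and the elementary-tensor estimate to each summand, then take infima over the representations of $a$ and of $b$ to obtain $\gamma_{I}(azb)\leq\gamma_{A}(a)\gamma_{A}(b)\gamma_{I}(z)$. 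One small care point is the presence of the adjoined unit: if $A$ is non-unital then $A^{1}=A\oplus\mathbb{C}1$, and an element of $A^{1}$ has the form $a+\lambda 1$; the action on $I$ is $(a+\lambda 1)z(b+\mu 1)=azb+\lambda zb+\mu az+\lambda\mu z$, and one checks the estimate survives because the norm on $A^{1}$ dominates $|\lambda|$ and $\left\Vert a\right\Vert_{A}$ separately. Since $\gamma_{I}$ is clearly an algebra norm on $I$ (it is the projective crossnorm of two algebra norms), flexibility is established and (ii) follows.

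\textbf{Main obstacle.} The only genuinely delicate point is the reduction from elementary tensors to arbitrary elements of $A=A_{1}\otimes_{\gamma}A_{2}$ when multiplying on both sides, because one must interchange two independent infima (over representations of $a$ and of $b$) with the infimum defining $\gamma_{I}(azb)$; the bilinearity of the triple product $a\mapsto azb$ and the subadditivity of $\gamma_{I}$ make this routine but it is where one has to be careful about finiteness of sums. Handling the adjoined identity is a bookkeeping nuisance rather than a real difficulty. Everything else is a direct unwinding of definitions, and the case $I_{i}$ Banach in a hypothetical part is not asked here — only parts (i) and (ii) of this proposition.
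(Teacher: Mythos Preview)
Your proposal is correct and follows exactly the approach the paper has in mind: the paper's own proof simply notes that $\gamma_{B}$ majorizes $\gamma_{A}$ for (i) and declares (ii) ``Straightforward.'' You have carried out the routine verification the paper omits --- reducing the flexibility inequality to elementary tensors via the flexibility of each $\Vert\cdot\Vert_{I_i}$, then extending by bilinearity and infima, with the bookkeeping for $A^{1}$ handled at the end --- and there is nothing to add.
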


\begin{proof}
(i) Indeed, the norm $\gamma_{B}$ on $B$ majorizes $\gamma_{A}$ (and the
equality does not hold in general even if $\Vert\cdot\Vert_{B_{i}}=\Vert
\cdot\Vert_{A_{i}}$ on $B_{i}$).
(ii) Straightforward.
\end{proof}

The natural embedding $\mathfrak{i}$ of $B_{1}{\otimes}_{\gamma}B_{2}$ into
$A_{1}\otimes_{\gamma}A_{2}$ extends by continuity to a continuous
homomorphism $\mathfrak{\hat{\imath}}$ of $B_{1}\widehat{\otimes}_{\gamma
}B_{2}$ into $A=A_{1}\widehat{\otimes}_{\gamma}A_{2}$. Let $\mathfrak{\hat
{\imath}}(B_{1}\widehat{\otimes}_{\gamma}B_{2})$ be supplied with the norm
inherited from the quotient $\left(  B_{1}\widehat{\otimes}_{\gamma}%
B_{2}\right)  /\ker\mathfrak{\hat{\imath}}$. We denote this subalgebra by
$B_{1}\widehat{\otimes}^{{}\left(  A\right)  }B_{2}$ or simply $B_{1}%
\widehat{\otimes}^{{}\left(  \cdot\right)  }B_{2}$.

Taking into account Lemma \ref{flex1} and Proposition \ref{tenf}, we obtain
the following result.

\begin{corollary}
Let $A_{1}$ and $A_{2}$ be normed algebras, and $A=A_{1}\widehat{\otimes
}_{\gamma}A_{2}$. Then

\begin{itemize}
\item[$\mathrm{(i)}$] If $B_{i}$ is a normed subalgebra of $A_{i}$ for
$i=1,2$, then $B:=B_{1}\widehat{\otimes}^{\left(  \cdot\right)  }B_{2}$ is a
Banach subalgebra of $A$.

\item[$\mathrm{(ii)}$] If $I_{i}$ is a flexible ideal of $A_{i}$ for $i=1,2$,
then $I:=I_{1}\widehat{\otimes}^{(\cdot)}I_{2}$ is a Banach ideal of $A$ and
its norm (inherited from the respective quotient) is flexible.
\end{itemize}
\end{corollary}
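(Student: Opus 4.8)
The plan is to obtain the corollary by combining Proposition \ref{tenf} with Lemma \ref{flex1}, with essentially no new computation. Throughout, write $A_{0}=A_{1}\otimes_{\gamma}A_{2}$ for the algebraic tensor product equipped with the projective crossnorm $\gamma_{A_{0}}=\gamma_{\Vert\cdot\Vert_{A_{1}},\Vert\cdot\Vert_{A_{2}}}$, so that by the definition of the projective tensor product $\widehat{A_{0}}=A_{1}\widehat{\otimes}_{\gamma}A_{2}=A$; moreover, for a normed subalgebra $C_{0}\subset A_{0}$ the continuous extension $\mathfrak{\hat{\imath}}\colon\widehat{C_{0}}\to\widehat{A_{0}}=A$ of the inclusion is exactly the homomorphism used in the definition of the symbol $\widehat{\otimes}^{(\cdot)}$.

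For (i), I would set $B_{0}=B_{1}\otimes_{\gamma}B_{2}$ with $\gamma_{B_{0}}=\gamma_{\Vert\cdot\Vert_{B_{1}},\Vert\cdot\Vert_{B_{2}}}$. By Proposition \ref{tenf}(i), $B_{0}$ is a normed subalgebra of $A_{0}$, and its completion is by definition $\widehat{B_{0}}=B_{1}\widehat{\otimes}_{\gamma}B_{2}$. Applying Lemma \ref{flex1}(i) to the pair $B_{0}\subset A_{0}$ shows that the image $\mathfrak{\hat{\imath}}(\widehat{B_{0}})$ in $\widehat{A_{0}}=A$, carrying the norm inherited from $\widehat{B_{0}}/\ker\mathfrak{\hat{\imath}}$, is a Banach subalgebra of $A$. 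But this image with this norm is precisely $B=B_{1}\widehat{\otimes}^{(\cdot)}B_{2}$ by the definition of the notation, which proves (i).

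For (ii), the steps are identical with $I_{0}=I_{1}\otimes_{\gamma}I_{2}$ carrying $\gamma_{I_{0}}=\gamma_{\Vert\cdot\Vert_{I_{1}},\Vert\cdot\Vert_{I_{2}}}$. Proposition \ref{tenf}(ii) gives that $I_{0}$ is a flexible ideal of $A_{0}$ (that statement furnishes both $\gamma_{A_{0}}\leq\gamma_{I_{0}}$ on $I_{0}$ and the inequality $\gamma_{I_{0}}(azb)\leq\gamma_{A_{0}}(a)\gamma_{I_{0}}(z)\gamma_{A_{0}}(b)$ for all $a,b\in A_{0}^{1}$ and $z\in I_{0}$, which together are the definition of a flexible ideal). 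Lemma \ref{flex1}(ii) applied to $I_{0}\subset A_{0}$ then shows that $\mathfrak{\hat{\imath}}(\widehat{I_{0}})=\mathfrak{\hat{\imath}}(I_{1}\widehat{\otimes}_{\gamma}I_{2})$ in $\widehat{A_{0}}=A$, with the norm inherited from $\widehat{I_{0}}/\ker\mathfrak{\hat{\imath}}$, is a Banach ideal of $A$ with flexible norm; and again this object is by definition $I=I_{1}\widehat{\otimes}^{(\cdot)}I_{2}$.

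I do not expect any genuine obstacle, since the proof merely chains two already established results; the only point that needs care is the bookkeeping of completions, namely the identifications $\widehat{A_{1}\otimes_{\gamma}A_{2}}=A$, $\widehat{B_{1}\otimes_{\gamma}B_{2}}=B_{1}\widehat{\otimes}_{\gamma}B_{2}$ and $\widehat{I_{1}\otimes_{\gamma}I_{2}}=I_{1}\widehat{\otimes}_{\gamma}I_{2}$, together with the observation that the homomorphism $\mathfrak{\hat{\imath}}$ occurring in Lemma \ref{flex1} is the very one used to define $\widehat{\otimes}^{(\cdot)}$. All of these are immediate from the definitions given just before the statement, so once they are pointed out the corollary follows verbatim from the cited results.
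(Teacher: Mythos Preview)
Your proposal is correct and follows precisely the approach the paper intends: the corollary is stated immediately after the sentence ``Taking into account Lemma \ref{flex1} and Proposition \ref{tenf}, we obtain the following result,'' with no further proof given. Your write-up simply makes explicit the identifications of completions and the definition of $\widehat{\otimes}^{(\cdot)}$ that the paper leaves implicit.
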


\subsubsection{Quotients of tensor products}

\begin{proposition}
\label{quo}Let $A_{1}$ and $A_{2}$ be normed algebras and $A=A_{1}%
\widehat{\otimes}A_{2}$. Let $J_{i}$ be ideals of $A_{i}$ for $i=1,2$, and let
$J=J_{1}\otimes A_{2}+A_{1}\otimes J_{2}$. Then

\begin{itemize}
\item[$\mathrm{(i)}$] The closure of $J$ in $A$ is an ideal of $A$, and
$A/\overline{J}$ is topologically isomorphic to $B=\left(  A_{1}%
/\overline{J_{1}}\right)  \widehat{\otimes}\left(  A_{2}/\overline{J_{2}%
}\right)  $.

\item[$\mathrm{(ii)}$] If $I_{i}$ are closed ideals of $A_{i}$ containing
$J_{i}$, and if $I:=I_{1}\widehat{\otimes}^{{}\left(  \cdot\right)  }%
A_{2}+A_{1}\widehat{\otimes}^{{}\left(  \cdot\right)  }I_{2}$ is a flexible
sum of Banach ideals in $A$, then the closure of $J$ in $I$ is an ideal of $I$
and $I/\overline{J}$ is topologically isomorphic to the algebra
\[
Q=\left(  I_{1}/\overline{J_{1}}\right)  \widehat{\otimes}^{{}\left(
\cdot\right)  }\left(  A_{2}/\overline{J_{2}}\right)  +\left(  A_{1}%
/\overline{J_{1}}\right)  \widehat{\otimes}^{{}\left(  \cdot\right)  }\left(
I_{2}/\overline{J_{2}}\right)
\]
taken with the norm of the flexible sum of Banach ideals in $\left(
A_{1}/\overline{J_{1}}\right)  \widehat{\otimes}\left(  A_{2}/\overline{J_{2}%
}\right)  $.
\end{itemize}
\end{proposition}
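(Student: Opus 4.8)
The plan is to prove (i) first as the workhorse and then reduce (ii) to it. For (i), the key fact is the standard identification of the projective tensor product of quotients. The natural surjection $q_1\widehat{\otimes} q_2 : A_1\widehat{\otimes} A_2 \to (A_1/\overline{J_1})\widehat{\otimes}(A_2/\overline{J_2})$, where $q_i:A_i\to A_i/\overline{J_i}$ are the quotient maps, is a bounded algebra homomorphism (it is the continuous extension of the algebraic tensor map $q_1\otimes q_2$, which has norm $\le 1$ on elementary tensors and hence on $A_1\otimes_\gamma A_2$), it is surjective because elementary tensors span a dense subset of the target, and its kernel is exactly $\overline{J}$: the algebraic kernel of $q_1\otimes q_2$ on $A_1\otimes A_2$ is $J_1\otimes A_2 + A_1\otimes J_2 = J$, and for projective tensor products the kernel of the completed map is the closure of the algebraic kernel (this is \cite[Section 2.2, the exact-sequence property of $\widehat{\otimes}$]{DF93} or the classical result that $\widehat{\otimes}$ is right exact; alternatively one checks directly that $(A_1/\overline{J_1})\widehat{\otimes}(A_2/\overline{J_2})$ has the universal property of $A/\overline{J}$ for bounded bilinear maps killed by $J$). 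Consequently $q_1\widehat{\otimes} q_2$ factors through a continuous algebra isomorphism $A/\overline{J}\to B$; that $J$ is an ideal of $A$ (hence so is $\overline{J}$) is immediate from $J_i$ being ideals of $A_i$, and topological isomorphism follows from the open mapping theorem since both spaces are Banach.

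For (ii) the strategy is to restrict the same map. By hypothesis $I = I_1\widehat{\otimes}^{(\cdot)}A_2 + A_1\widehat{\otimes}^{(\cdot)}I_2$ is a flexible sum of Banach ideals in $A$ in the sense of Proposition \ref{fl2}, hence a Banach ideal of $A$, and $q_1\widehat{\otimes} q_2$ maps each summand $I_1\widehat{\otimes}^{(\cdot)}A_2$ into $(I_1/\overline{J_1})\widehat{\otimes}^{(\cdot)}(A_2/\overline{J_2})$ and $A_1\widehat{\otimes}^{(\cdot)}I_2$ into $(A_1/\overline{J_1})\widehat{\otimes}^{(\cdot)}(I_2/\overline{J_2})$ (using that $q_i$ maps $I_i$ onto $I_i/\overline{J_i}$ and is contractive for the ideal norms); so the restriction $\Phi := (q_1\widehat{\otimes} q_2)|_I$ is a bounded homomorphism of $I$ onto $Q$, where $Q$ carries the flexible-sum norm inside $(A_1/\overline{J_1})\widehat{\otimes}(A_2/\overline{J_2})$. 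It remains to identify $\ker\Phi$ with $\overline{J}^{\,I}$, the closure of $J$ \emph{taken in $I$}. Clearly $J\subset I$ and $\Phi$ kills $J$, so $\overline{J}^{\,I}\subset\ker\Phi$. For the reverse inclusion one argues summand-wise: given $z\in\ker\Phi$ write $z = u + v$ with $u\in I_1\widehat{\otimes}^{(\cdot)}A_2$, $v\in A_1\widehat{\otimes}^{(\cdot)}I_2$, approximate $u$ and $v$ in their respective ideal norms by finite sums of elementary tensors, and use that an elementary tensor $x\otimes y$ (with $x\in I_1$, $q_1(x)$ controlled, etc.) lying in the kernel can be pushed into $J$ modulo a small ideal-norm error, invoking the flexibility of the norms to keep the multiplication by approximants of $1\in A_i^1$ bounded — this is exactly the place where the flexible-ideal hypothesis is used, and it is the analogue for $I$ of the "kernel = closure of algebraic kernel" statement used in (i). Finally, $J$ being an ideal of $I$ is clear, $\overline{J}^{\,I}$ is then a closed ideal, and $I/\overline{J}^{\,I}\cong Q$ as topological algebras by the open mapping theorem.

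The main obstacle I anticipate is the last identification, $\ker\Phi=\overline{J}^{\,I}$: unlike in (i), one is not working inside a full projective tensor product but inside the flexible sum $I$, whose norm is strictly larger than the one inherited from $A$, so the density/approximation argument must be carried out entirely with respect to the ideal norms $\|\cdot\|_{I_1}$ and $\|\cdot\|_{I_2}$ and their flexible-sum combination; the flexibility inequalities $\gamma_{I_i}(a_i z_i b_i)\le\gamma_{A_i}(a_i)\gamma_{I_i}(z_i)\gamma_{A_i}(b_i)$ from Proposition \ref{tenf}(ii) are what make the approximations stay inside $I$ with controlled norm. Everything else — that $q_1\widehat{\otimes} q_2$ is a contractive homomorphism, surjectivity onto $B$ and onto $Q$, and the final appeal to the open mapping theorem — is routine.
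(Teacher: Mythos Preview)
Your approach to (i) is essentially correct but runs in the opposite direction to the paper's: you compute the kernel of $q_1\widehat{\otimes}q_2$ directly by citing right-exactness of the projective tensor product, whereas the paper constructs the inverse map $\phi:(A_1/J_1)\widehat{\otimes}(A_2/J_2)\to A/\overline{J}$, $\phi((a_1/J_1)\otimes(a_2/J_2))=q_J(a_1\otimes a_2)$, and proves bijectivity by a commutative-diagram chase using surjectivity of $q_{J_1}\otimes q_{J_2}$ (this is self-contained and avoids invoking the kernel identification as a black box). One small slip: you should first reduce to the Banach/closed-ideal case before claiming that the algebraic kernel of $q_1\otimes q_2$ equals $J$; as stated $J_i$ need not be closed, so the algebraic kernel is $\overline{J_1}\otimes A_2+A_1\otimes\overline{J_2}$, and a density step is needed (the paper does this explicitly).

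For (ii) there is a genuine gap. Your plan is to show $\ker\Phi=\overline{J}^{\,I}$ directly, and you propose to argue ``summand-wise'': write $z=u+v\in\ker\Phi$ with $u\in I_1\widehat{\otimes}^{(\cdot)}A_2$, $v\in A_1\widehat{\otimes}^{(\cdot)}I_2$, then approximate each piece. But $\Phi(z)=0$ does \emph{not} give $\Phi(u)=0$ and $\Phi(v)=0$ separately, since the two summands of $Q$ overlap; so the elementary-tensor argument you sketch, which treats kernel elements one summand at a time, does not get off the ground. The flexibility hypothesis controls norms of products but does not by itself split the kernel. The paper sidesteps exactly this difficulty by again building the map in the reverse direction: it defines $\psi:Q\to I/\overline{J}$ on elementary tensors by $\psi((p_1/\overline{J_1})\otimes(a_2/\overline{J_2})+(a_1/\overline{J_1})\otimes(p_2/\overline{J_2}))=q_J(p_1\otimes a_2+a_1\otimes p_2)$ and checks well-definedness and boundedness (``existence of $\psi$ is evident''). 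Surjectivity of $\psi$ comes from surjectivity of $q_J$, and injectivity from surjectivity of the forward map $q:I\to Q$ together with $J\subset\ker q$ --- the same diagram chase as in (i). This way one never needs to prove the hard inclusion $\ker\Phi\subset\overline{J}^{\,I}$ directly.
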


\begin{proof}
(i) The first statement follows from the fact that $J$ is an ideal of the
algebraic tensor product $A_{1}\otimes A_{2}$. To show the second one, assume
first that $A_{1}$ and $A_{2}$ are Banach algebras, and that $J_{i}$ is a
closed ideal of $A_{i}$ for $i=1,2$.
As usual, we denote by $q_{J_{i}}$ the standard epimorphisms from $A_{i}$ to
$A_{i}/J_{i}$ and by $q_{J}$ the standard epimorphism from $A=A_{1}%
\widehat{\otimes}A_{2}$ to $A/\overline{J}$. Setting
\[
\phi((a_{1}+J_{1}){\otimes}(a_{2}+J_{2}))=q_{J}(a_{1}{\otimes}a_{2}),
\]
we obtain a bounded homomorphism $\phi:(A_{1}/J_{1})\widehat{\otimes}%
(A_{2}/J_{2})\rightarrow A/\overline{J}$ such that the diagram
\[
\xymatrix{
&A/\overline{J}\\
A\ar[rr]^{q_{J_1}\otimes q_{J_2}}\ar[ur]^{q_J} &&(A_1/J_1)\widehat{\otimes}(A_2/J_2)\ar[ul] _\phi
}
\]
is commutative.
Since $q_{J}$ is surjective, $\phi$ is surjective. On the other hand, it is
easy to see that $q_{J_{1}}\otimes q_{J_{2}}$ is surjective. So, if
$\phi(z)=0$ for some $z\in(A_{1}/J_{1})\widehat{\otimes}(A_{2}/J_{2})$ then
$z=\left(  q_{J_{1}}\otimes q_{J_{2}}\right)  (a)$ for some $a\in A$. In fact,
we have that $a\in\overline{J}$ by the commutativity of the diagram. But
$J\subset\ker(q_{J_{1}}\otimes q_{J_{2}})$, whence $z=0$. This implies that
$\phi$ is injective. Thus $\phi$ establishes a bounded isomorphism of $B$ and
$A/\overline{J}$. By the Banach Theorem, this isomorphism is topological.
In the general case, passing to completions of $A_{i}$ and to closures of
$J_{i}$ and applying Proposition \ref{flex1} and simple identifications in
completions of quotients of normed algebras, we get the result.
(ii) Follows from a similar analysis of the commutative diagram
\[
\xymatrix{
&I/\overline{J}\\
I\ar[rr]^{q}\ar[ur]^{q_J} && Q\ar[ul] _\psi
}
\]
where $q$ is the map sending $p_{1}{\otimes}a_{2}+a_{1}{\otimes}p_{2}$ to
$(p_{1}/\overline{J_{1}}){\otimes}(a_{2}/\overline{J_{2}})+(a_{1}%
/\overline{J_{1}}){\otimes}(p_{2}/\overline{J_{2}})$ for all $p_{i}\in I_{i}$,
$a_{i}\in A_{i}$ ($i=1,2$). The existence of $\psi$ is evident, surjectivity
of $q$ can be verified in a standard way.
\end{proof}

\section{Tensor radical\label{ss}}


\subsection{Tensor spectral radius of a summable family\label{sss1}}

Let $A$ be a normed algebra. We will call by \textit{families} arbitrary
sequences of elements of $A$; two families are \textit{equivalent} (write
$\left\{  a_{n}\right\}  _{1}^{\infty}\simeq\left\{  b_{n}\right\}
_{1}^{\infty}$) if one of them can be obtained from the other by renumbering.
The equivalence classes can be considered as countable \textit{generalized
subsets} \cite{TR1}: to characterize the class determined by a sequence one
have only to indicate which elements of $A$ come into the sequence and how
many times.

By definition \cite[Section 3.4]{TR1}, a \textit{generalized subset} $S$ of
$A$ is a cardinal valued function $\varkappa_{S}$ defined on $A$. The set
$\left\{  a\in A:\varkappa_{S}\left(  a\right)  >0\right\}  $ is called a
\textit{support }of $S$. One can regard usual subsets $N\subset A$ as
generalized ones, identifying the indicator $\varkappa_{N}$ of $N$ with $N$. A
generalized subset $S$ of $A$ is \textit{countable }if its support and
$\varkappa_{S}\left(  a\right)  $ are (finite or) countable for every $a$ from
the support of $S$.

Let $S$ and $P$ be generalized subsets of $A$. The \textit{inclusion}
$S\subset P$ means
\[
\varkappa_{S}(a)\leq\varkappa_{P}(a)
\]
for every $a\in A$.

We define the \textit{disjoint union} $S\sqcup P$ of generalized subsets of
$A$ by
\[
\varkappa_{S\sqcup P}(a)=\varkappa_{S}(a)+\varkappa_{P}(a)
\]
for every $a\in A$. Disjoint union of a collection of generalized subsets is
defined similarly. In particular, for an integer $n>0$, the disjoint union of
$n$ copies of $S$ will be denoted by $n\bullet S$.

We define the product $SP$ of generalized subsets of $A$ by
\[
\varkappa_{SP}(a)=\sum_{(b,c)\in A\times A,\,bc=a}\varkappa_{S}(b)\varkappa
_{P}(c)
\]
for every $a\in A$.

Given a generalized subset $S$ of $A$, put
\[
\eta(S)=\sum_{a\in A}\varkappa_{S}(a)\Vert a\Vert
\]
and%
\[
\left\Vert S\right\Vert =\sup_{\varkappa_{S}(a)>0}\left\{  \left\Vert
a\right\Vert :a\in A,\text{ }\varkappa_{S}(a)>0\right\}  .
\]
If $\eta(S)<\infty$ then $S$ is called \textit{summable}, and if $\left\Vert
S\right\Vert <\infty$ then $S$ is called \textit{bounded.}

To each sequence $M=\{a_{n}\}_{n=1}^{\infty}$ in $A$ there corresponds a
countable generalized subset $S=S_{\left(  M\right)  }$ by the rule
\[
\varkappa_{S}(a)=\mathrm{card}\{n:a_{n}=a\}
\]
for every $a\in A$. We say that $M$ is a \textit{representative} of $S$. In
terms of representatives $M=\{a_{n}\}_{1}^{\infty}$ and $N=\{b_{n}%
\}_{1}^{\infty}$ the family $MN$ corresponds to the two-index sequence
$\{a_{n}b_{m}\}_{n,m=1}^{\infty}$ which can be renumbered in an arbitrary way,
while $M\sqcup N$ corresponds to the sequence $\{c_{n}\}_{1}^{\infty}$ with
$c_{2k-1}=a_{k}$, $c_{2k}=b_{k}$. It is obvious in this context that
\[
MN\simeq\sqcup_{i=1}^{\infty}a_{i}N\simeq\sqcup_{j=1}^{\infty}Mb_{j},
\]
where $aN=\left\{  ab_{n}\right\}  _{1}^{\infty}$ and $Mb=\left\{
a_{n}b\right\}  _{1}^{\infty}$ as usual. In particular,
\[
M\left(  N_{1}\sqcup N_{2}\right)  \simeq MN_{1}\sqcup MN_{2}\text{ and
}\left(  M_{1}\sqcup M_{2}\right)  N\simeq M_{1}N\sqcup M_{2}N
\]
for any families $M_{i}$ and $N_{i}$, $i=1,2$, and
\begin{equation}
\left(  MN\right)  K\simeq M(NK) \label{f39}%
\end{equation}
for any families in $A$. Set $M^{1}\simeq M$, $M^{n}\simeq M^{n-1}M$ for every
$n>0$. By (\ref{f39})
\begin{equation}
M^{n+m}\simeq M^{n}M^{m} \label{f41}%
\end{equation}
for every $n,m\in\mathbb{N}$.

For two families $M$ and $N$ in $A$, we say that $M$ is a \textit{subfamily}
of $N$ (write $M\sqsubset N$) if $S_{\left(  M\right)  }\subset S_{\left(
N\right)  }$ for corresponding generalized subsets of $A$.

Now let $S$ be a summable generalized subset of $A$. This is equivalent to the
condition that $S$ has a representative $M$ in $\ell_{1}(A)$, i.e.
$S=S_{\left(  M\right)  }$ for some $M\in\ell_{1}(A)$. Using this and setting
$\eta(M)=\eta\left(  S_{\left(  M\right)  }\right)  $, we simply write
\textquotedblleft a family $M=\{a_{n}\}_{1}^{\infty}$ in $A$ is
summable\textquotedblright. Moreover,
\[
\Vert M\Vert_{\ell_{1}(A)}=\eta(M)=\eta(N)
\]
for every $N\simeq M$.

Let $M$ and $N$ be summable families in $A$. It is evident that
\[
\eta(M\sqcup N)=\eta(M)+\eta(N)
\]
and
\begin{equation}
\eta(MN)\leqslant\eta(M)\eta(N). \label{f40}%
\end{equation}
We obtain by (\ref{f41}) and (\ref{f40}) that
\begin{equation}
\eta(M^{n+m})\leqslant\eta(M^{n})\eta(M^{m}) \label{f58}%
\end{equation}
for every $n,m\in\mathbb{N}$. It follows from (\ref{f58}) that, for every
summable family $M$, there exists a limit
\[
\rho_{t}(M)=\lim(\eta\left(  M^{n}\right)  )^{1/n}=\inf(\eta(M^{n}))^{1/n}.
\]
The number $\rho_{t}(M)$ is called a \textit{tensor spectral radius} of $M$.

As $(M^{m})^{n}\simeq M^{mn}$ for every $n,m\in\mathbb{N}$, then
\begin{equation}
\rho_{t}(M^{m})^{1/m}=(\lim_{n}(\eta((M^{m})^{n}))^{1/n})^{1/m}=\lim_{n}%
(\eta(M^{mn}))^{1/nm}=\rho_{t}(M). \label{f62}%
\end{equation}

Now let $B$ be a normed algebra, and let $S$ be a bounded countable
generalized subset of $B$. Then $S$ has a representative $L$ in $\ell_{\infty
}(B)$, i.e. $S=S_{\left(  L\right)  }$ for some $L\in\ell_{\infty}(B)$.
Setting $\left\Vert L\right\Vert =\left\Vert S_{\left(  L\right)  }\right\Vert
$, we write \textquotedblleft a family $L=\{b_{n}\}_{1}^{\infty}$ in $B$ is
bounded\textquotedblright. Moreover,
\[
\Vert L\Vert_{\ell_{\infty}(B)}=\left\Vert L\right\Vert =\left\Vert
K\right\Vert
\]
for every $K\simeq L$. A usual countable subset $N$ of $B$ is bounded if and
only if $\sup_{b\in N}\Vert b\Vert<\infty$.

Let $L$ and $K$ be bounded families in $K$. It is evident that
\[
\left\Vert L\sqcup K\right\Vert =\max\left\{  \left\Vert L\right\Vert
,\left\Vert K\right\Vert \right\}  \text{ and }\left\Vert LK\right\Vert
\leq\left\Vert L\right\Vert \left\Vert K\right\Vert .
\]
It follows as above that, for every bounded family $L$, there is a limit
\[
\rho(L)=\lim(\left\Vert L^{n}\right\Vert )^{1/n}=\inf(\left\Vert
L^{n}\right\Vert )^{1/n}.
\]
The number $\rho(L)$ is called a \textit{joint spectral radius} of $L$. It is
clear that $\rho(L^{m})^{1/m}=\rho(L)$ for every $m\in\mathbb{N}$.

Let $A$ and $B$ be normed algebras, $M=\left\{  a_{n}\right\}  _{1}^{\infty
}\in\ell_{1}(A)$ and $L=\left\{  b_{n}\right\}  _{1}^{\infty}\in\ell_{\infty
}(B)$. Let $M_{\otimes}L$ denote an element of $A\widehat{\otimes}B$ which is
equal to $\sum_{n=1}^{\infty}a_{n}\otimes b_{n}$. It is clear (see also
Section \ref{ss1}) that for every element $z\in A\widehat{\otimes}B$ there are
$M\in\ell_{1}(A)$ and $L\in\ell_{\infty}(B)$ such that $z=M_{\otimes}L$.

The following theorem justifies the term \textquotedblleft tensor spectral
radius\textquotedblright.

\begin{theorem}
\label{tsr}Let $A$ be a normed algebra. Then

\begin{itemize}
\item[$\mathrm{(i)}$] $\rho\left(  M_{\otimes}L\right)  \leq\rho_{t}%
(M)\rho(L)$ for every normed algebra $B$, $M\in\ell_{1}(A)$ and $L\in
\ell_{\infty}(B)$.

\item[$\mathrm{(ii)}$] There are a unital Banach algebra $B$, $L\in
\ell_{\infty}(B)$, and a bounded linear operator $\ T:M\longmapsto M_{\otimes
}L$ from $\ell_{1}(A)$ into $A\widehat{\otimes}B$ such that $\left\Vert
M_{\otimes}L\right\Vert =\eta(M)$ and $\rho(M_{\otimes}L)=\rho_{t}(M)$ for
every $M\in\ell_{1}(A)$.
\end{itemize}
\end{theorem}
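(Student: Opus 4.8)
The plan is to prove (i) by a direct estimate on powers, and (ii) by constructing an explicit ``universal'' algebra $B$ built from the shift on $\ell_1$-type weighted sequences, so that the tensor norm exactly recaptures $\eta$.

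For part (i), the key observation is that $(M_{\otimes}L)^n$ can be expanded term by term. Writing $M=\{a_n\}$, $L=\{b_n\}$, one has
\[
(M_{\otimes}L)^n=\Bigl(\sum_i a_i\otimes b_i\Bigr)^n=\sum_{i_1,\dots,i_n} a_{i_1}\cdots a_{i_n}\otimes b_{i_1}\cdots b_{i_n}.
\]
Applying the projective crossnorm $\gamma$ and the triangle inequality gives
\[
\gamma\bigl((M_{\otimes}L)^n\bigr)\le \sum_{i_1,\dots,i_n}\Vert a_{i_1}\cdots a_{i_n}\Vert\,\Vert b_{i_1}\cdots b_{i_n}\Vert\le \Vert L^n\Vert\sum_{i_1,\dots,i_n}\Vert a_{i_1}\cdots a_{i_n}\Vert=\Vert L^n\Vert\,\eta(M^n),
\]
where I used $\Vert b_{i_1}\cdots b_{i_n}\Vert\le \Vert L^n\Vert$ for every multi-index (this is exactly the definition of $\Vert L^n\Vert$ as the sup over the support of $S_{(L^n)}$, together with submultiplicativity of $\Vert\cdot\Vert_B$) and the fact that $\eta(M^n)=\sum \Vert a_{i_1}\cdots a_{i_n}\Vert$ by definition of the product of generalized subsets. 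Taking $n$-th roots and letting $n\to\infty$, the right side tends to $\rho(L)\rho_t(M)$, while the left side tends to $\rho(M_{\otimes}L)$; since $\rho$ is the limit (equivalently the inf) of these roots, the inequality $\rho(M_{\otimes}L)\le\rho_t(M)\rho(L)$ follows.

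For part (ii) the idea is to choose $B$ so that powers of the $b_i$ never collapse, i.e.\ so that $\Vert b_{i_1}\cdots b_{i_n}\Vert=1$ for all multi-indices, and so that the projective tensor norm of $M_\otimes L$ equals $\eta(M)$ on the nose. A natural candidate is to take $B$ to act on the free semigroup: let $W$ be the free unital semigroup on countably many generators $g_1,g_2,\dots$, and let $B$ be the closed unital subalgebra of $\mathcal B(\ell_1(W))$ generated by the operators $b_i$ of left translation by $g_i$ (together with the identity). Then $b_{i_1}\cdots b_{i_n}$ is left translation by the word $g_{i_1}\cdots g_{i_n}$, hence an isometry of $\ell_1(W)$, so $\Vert b_{i_1}\cdots b_{i_n}\Vert=1$ for every multi-index and $\Vert L^n\Vert=1$ for all $n$, giving $\rho(L)=1$. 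For the norm identity one needs $\gamma\bigl(\sum_n a_n\otimes b_n\bigr)=\sum_n\Vert a_n\Vert$; the inequality $\le$ is immediate, and for $\ge$ one produces a norm-one functional on $A\widehat\otimes B$ that pairs $\sum a_n\otimes b_n$ to $\sum\Vert a_n\Vert$ — concretely, pick norm-one functionals $\varphi_n\in A^*$ with $\varphi_n(a_n)=\Vert a_n\Vert$ and a single functional $\psi$ on $B$ detecting the coefficient of each distinct generator $g_i$ in the action on the point mass at the empty word (using linear independence of the translations), then combine them into a bounded bilinear form of norm $1$. The same argument applied to $M^n$ in place of $M$, using $(M_\otimes L)^n=(M^n)_\otimes(L^n)$ and $\Vert L^n\Vert=1$, yields $\Vert (M_\otimes L)^n\Vert=\eta(M^n)$, whence $\rho(M_\otimes L)=\lim\eta(M^n)^{1/n}=\rho_t(M)$. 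Boundedness and linearity of $T:M\mapsto M_\otimes L$ from $\ell_1(A)$ into $A\widehat\otimes B$ are then clear, with $\Vert TM\Vert=\eta(M)=\Vert M\Vert_{\ell_1(A)}$.

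The main obstacle is the lower bound $\gamma(M_\otimes L)\ge\eta(M)$ in part (ii): one must verify carefully that the translation operators $b_i$ on $\ell_1(W)$ are ``independent enough'' for a functional on $B$ to isolate each $b_i$, so that the bilinear form $\sum_n\varphi_n\otimes(\text{coefficient of }g_i)$ is well defined and bounded by $1$; handling the infinitely-many-summands case (as opposed to finite sums) requires a short absolute-convergence argument. Everything else — the expansion of powers, the root asymptotics, the identities $\eta(M^{n})=\Vert L^n\Vert^{-1}$-free estimates, and the passage to completions — is routine given the machinery already set up in Section \ref{sss1}.
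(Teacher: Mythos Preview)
Your proof of (i) is correct and matches the paper's argument exactly.

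For (ii), your construction is essentially the paper's, viewed through the left regular representation: since left translations by words on $\ell_1(W)$ are isometries and $W$ has an identity, the closed unital algebra they generate is isometrically isomorphic to the semigroup algebra $\ell_1(W)$, which is precisely the paper's choice of $B$. The difference is in how the norm identity $\Vert (M_\otimes L)^n\Vert=\eta(M^n)$ is verified. You propose a duality argument: for each multi-index pick a Hahn--Banach functional $\varphi_{i_1\cdots i_n}\in A^*$ attaining the norm of $a_{i_1}\cdots a_{i_n}$, pair it with the coefficient functional $\psi_{i_1\cdots i_n}(b)=(b\delta_e)(g_{i_1}\cdots g_{i_n})$, and observe that $\beta(a,b)=\sum\varphi_{i_1\cdots i_n}(a)\psi_{i_1\cdots i_n}(b)$ is a bilinear form of norm at most one (because $\sum_w|\psi_w(b)|\le\Vert b\delta_e\Vert_{\ell_1}\le\Vert b\Vert$) with $\beta((M_\otimes L)^n)=\eta(M^n)$. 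This works, though your phrase ``a single functional $\psi$'' is misleading --- you need the whole family $\{\psi_w\}$. The paper avoids this machinery entirely by invoking the standard isometric identification $A\widehat\otimes\ell_1(W)\cong\ell_1(W,A)$: under it, $(M_\otimes L)^n$ becomes the $A$-valued function $w_{k_1}\cdots w_{k_n}\mapsto a_{k_1}\cdots a_{k_n}$, whose $\ell_1(W,A)$-norm is \emph{by definition} $\sum\Vert a_{k_1}\cdots a_{k_n}\Vert=\eta(M^n)$, since distinct multi-indices give distinct words in the free semigroup. This structural identification gives both inequalities at once, uniformly in $n$, without choosing any functionals depending on $M$.
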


\begin{proof}
(i) Let $L=\left\{  b_{n}\right\}  _{1}^{\infty}\in\ell_{\infty}(B)$. Then for
every $M=\left\{  a_{n}\right\}  _{1}^{\infty}\in\ell_{1}(A)$ we have that%
\[
\left\Vert \left(  M_{\otimes}L\right)  ^{k}\right\Vert =\gamma\left(  \left(
M_{\otimes}L\right)  ^{k}\right)  \leq\sum_{n_{1},\ldots,n_{k}}\left\Vert
a_{n_{1}}\cdots a_{n_{k}}\right\Vert \left\Vert b_{n_{1}}\cdots b_{n_{k}%
}\right\Vert \leq\eta(M^{k})\left\Vert L^{k}\right\Vert
\]
Taking $k$-roots and passing to limits, we obtain that $\rho\left(
M_{\otimes}L\right)  \leq\rho_{t}(M)\rho(L)$.
(ii) Let $G$ be the free unital semigroup with a countable set $W=\{w_{k}%
\}_{k\geqslant1}$ of generators. That is $G=\cup_{m\geqslant0}W_{m}$, where
$W_{0}={1}$, $W_{m}$ is the direct product of $m$ copies of $W$ realized as
the set of `words' $w_{k_{1}}w_{k_{2}}...w_{k_{m}}$ of the length $m$, and the
multiplication is lexical. Let $B=\ell_{1}(G)$ be the corresponding semigroup algebra.
Let $L=\left\{  w_{n}\right\}  _{1}^{\infty}$. For any $M=\left\{
a_{n}\right\}  _{1}^{\infty}\in\ell_{1}(A)$, we have that
\[
M_{\otimes}L=\sum_{k=1}^{\infty}a_{k}{\otimes}w_{k}.
\]
Then $T:M\longmapsto M_{\otimes}L$ is a bounded linear operator from $\ell
_{1}(A)$ into $A\widehat{\otimes}B$ and
\[
T\left(  M\right)  ^{n}=\sum_{k_{1},\ldots k_{n}}a_{k_{1}}...a_{k_{n}}%
{\otimes}w_{k_{1}}...w_{k_{n}}.
\]
Since $A{\widehat{\otimes}}\ell_{1}(G)$ is isometrically isomorphic via the
map defined by $(a{\otimes}f)(g)\longmapsto f(g)a$ to the Banach algebra
$\ell_{1}(G,A)$ of all summable $A$-valued functions on $G$, then
\[
\Vert T\left(  M\right)  ^{n}\Vert=\sum_{k_{1},\ldots k_{n}}\Vert a_{k_{1}%
}...a_{k_{n}}\Vert=\eta(M^{n}).
\]
It follows that
\[
\rho(M_{\otimes}L)=\rho_{t}(M).
\]
\end{proof}

We write $\eta_{\Vert\cdot\Vert}(M)$ instead of $\eta(M)$ if there is a
necessity to indicate which norm in $A$ is meant.

\begin{proposition}
Let $M$ be a summable family in a normed algebra $A$. Then $\rho_{t}\left(
M\right)  $ doesn't change if the norm on $A$ is changed by an equivalent norm.
\end{proposition}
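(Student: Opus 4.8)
The plan is to reduce the statement to a rescaling estimate for the functionals $\eta$ attached to two equivalent norms. Suppose $\Vert\cdot\Vert_{1}$ and $\Vert\cdot\Vert_{2}$ are equivalent norms on $A$, with $s\Vert\cdot\Vert_{1}\le\Vert\cdot\Vert_{2}\le t\Vert\cdot\Vert_{1}$ on $A$ for some constants $s,t>0$. For a summable family $M=\{a_{n}\}_{1}^{\infty}$ (note that summability with respect to one norm is equivalent to summability with respect to the other, since $\eta_{\Vert\cdot\Vert_{2}}(M)=\sum_{n}\Vert a_{n}\Vert_{2}$ is finite iff $\sum_{n}\Vert a_{n}\Vert_{1}$ is, by the two-sided bound) the obvious pointwise inequality $\Vert a\Vert_{2}\le t\Vert a\Vert_{1}$ summed against the cardinal function $\varkappa_{S_{(M)}}$ gives $\eta_{\Vert\cdot\Vert_{2}}(M)\le t\,\eta_{\Vert\cdot\Vert_{1}}(M)$, and symmetrically $\eta_{\Vert\cdot\Vert_{1}}(M)\le s^{-1}\eta_{\Vert\cdot\Vert_{2}}(M)$. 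So far this only gives a comparison of the ``zeroth'' terms; the tensor spectral radius is the asymptotic root quantity $\rho_{t}(M)=\lim_{n}\eta(M^{n})^{1/n}$, so the point is to propagate the comparison through powers.

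The key step is therefore to apply the comparison not to $M$ itself but to each power $M^{n}$: since the definition of $M^{n}$ as a generalized subset (disjoint union over all products $a_{k_{1}}\cdots a_{k_{n}}$) does not refer to any norm, the same family $M^{n}$ is being measured by $\eta_{\Vert\cdot\Vert_{1}}$ and $\eta_{\Vert\cdot\Vert_{2}}$. Hence for every $n\in\mathbb{N}$,
\[
s\,\eta_{\Vert\cdot\Vert_{1}}(M^{n})\le\eta_{\Vert\cdot\Vert_{2}}(M^{n})\le t\,\eta_{\Vert\cdot\Vert_{1}}(M^{n}).
\]
Taking $n$-th roots gives $s^{1/n}\,\eta_{\Vert\cdot\Vert_{1}}(M^{n})^{1/n}\le\eta_{\Vert\cdot\Vert_{2}}(M^{n})^{1/n}\le t^{1/n}\,\eta_{\Vert\cdot\Vert_{1}}(M^{n})^{1/n}$, and since $s^{1/n}\to1$ and $t^{1/n}\to1$ as $n\to\infty$, letting $n\to\infty$ yields $\rho_{t,\Vert\cdot\Vert_{1}}(M)=\rho_{t,\Vert\cdot\Vert_{2}}(M)$ (the limits exist by the submultiplicativity \eqref{f58}, which holds for each of the two algebra norms separately). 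This is exactly the classical ``spectral radius is independent of equivalent norm'' argument, transported from $\Vert a^{n}\Vert^{1/n}$ to $\eta(M^{n})^{1/n}$.

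There is essentially no hard part: the only thing to be careful about is that ``equivalent norm'' in the statement should be read as ``equivalent \emph{algebra} norm'', so that $\eta(M^{n})$ is finite and the limit defining $\rho_{t}$ exists in both norms — this is why I invoke \eqref{f58} for each norm. One could phrase the whole argument a shade more slickly by noting that $\eta_{\Vert\cdot\Vert_{2}}$ and $\eta_{\Vert\cdot\Vert_{1}}$ are themselves equivalent norms on $\ell_{1}(A)$ and that $M\mapsto M^{n}$ is the $n$-fold family product, but the direct computation above is shortest. I would write it in two or three lines.
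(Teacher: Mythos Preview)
Your proof is correct and follows essentially the same approach as the paper: both bound $\eta_{\Vert\cdot\Vert_2}(M^n)$ by a constant times $\eta_{\Vert\cdot\Vert_1}(M^n)$, take $n$-th roots, and pass to the limit so the constant disappears. The paper's version is just the one-line compression of your argument (it only writes the one-sided inequality and appeals to symmetry), and your extra remark about needing algebra norms for the limit to exist is a reasonable clarification.
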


\begin{proof}
If $\left\Vert \cdot\right\Vert \leq t\left\Vert \cdot\right\Vert ^{\prime}$
for some $t>0$, then $\lim\mathbf{\mathrm{\eta}}_{\left\Vert \cdot\right\Vert
}\left(  M^{m}\right)  ^{1/m}\leq\lim\mathbf{\mathrm{\eta}}_{\left\Vert
\cdot\right\Vert ^{\prime}}\left(  M^{m}\right)  ^{1/m}$, so that the opposite
inequality for norms implies the equality of limits.
\end{proof}

For summable families $M=\left\{  a_{n}\right\}  $ and $N=\left\{
b_{n}\right\}  _{1}^{\infty}$, let $M\ast N=\left\{  c_{n}\right\}  $ denote
the convolution of $M$ and $N$: $c_{n}=\sum_{i+j=n+1}a_{i}b_{j}$ for every
$n>0$.

\begin{proposition}
\label{change}If $M=\left\{  a_{n}\right\}  _{1}^{\infty}$ and $N=\left\{
b_{n}\right\}  _{1}^{\infty}$ are summable families in $A$ then $\rho
_{t}\left(  M\ast N\right)  \leq\rho_{t}\left(  MN\right)  =\rho_{t}\left(
NM\right)  $ and $\rho_{t}\left(  M+N\right)  \leq\rho_{t}\left(  M\sqcup
N\right)  $.
\end{proposition}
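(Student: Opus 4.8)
The statement has three parts: the inequality $\rho_t(M*N)\le\rho_t(MN)$, the symmetry $\rho_t(MN)=\rho_t(NM)$, and the inequality $\rho_t(M+N)\le\rho_t(M\sqcup N)$. I would handle them in that order, and I expect the first to be the one requiring the most care.

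For the symmetry $\rho_t(MN)=\rho_t(NM)$, the plan is to exploit associativity of the family product (\ref{f39}). One has $(MN)^{n+1}\simeq M(NM)^{n}N$, so by submultiplicativity (\ref{f40}), $\eta((MN)^{n+1})\le\eta(M)\,\eta((NM)^{n})\,\eta(N)$. Taking $(n+1)$-th roots and letting $n\to\infty$ gives $\rho_t(MN)\le\rho_t(NM)$; the reverse inequality follows by symmetry, so equality holds. (If either family is not summable the statement is vacuous in the relevant sense, so I assume throughout that $M,N\in\ell_1(A)$, which guarantees all products are summable by (\ref{f40}).)

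For $\rho_t(M*N)\le\rho_t(MN)$, the key observation is that the convolution $M*N$, as a generalized subset, is \emph{contained} in $MN$ (after allowing multiplicities): the $n$-th term $c_n=\sum_{i+j=n+1}a_ib_j$ is a sum of products $a_ib_j$, each of which appears as an entry of the two-index family $MN\simeq\{a_ib_j\}$, and $\eta$ is subadditive with respect to grouping entries into finite sums, i.e. $\eta(M*N)\le\eta(MN)$ since $\sum_n\|c_n\|\le\sum_n\sum_{i+j=n+1}\|a_i\|\|b_j\|=\eta(M)\eta(N)\ge\eta(MN)$... more precisely, I would show directly that for any summable families $P,Q$ with $\eta$-control, $\eta(P*Q)\le\eta(PQ)$, because regrouping the terms of $PQ$ into the convolution sums only merges entries, and $\|x+y\|\le\|x\|+\|y\|$. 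Then I need this to persist under powers: the point is that $(M*N)^{n}$ relates to $(MN)^{n}$ by the same regrouping phenomenon — convolution powers are obtained from product powers by summing entries with a fixed total "degree" — so $\eta((M*N)^{n})\le\eta((MN)^{n})$ for all $n$. Taking $n$-th roots and limits then yields $\rho_t(M*N)\le\rho_t(MN)$. \textbf{This step is the main obstacle}: one must set up the bookkeeping of multi-indices carefully so that the regrouping inequality is uniform in $n$, rather than just treating $n=1$.

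Finally, $\rho_t(M+N)\le\rho_t(M\sqcup N)$ follows by the same mechanism as the convolution case: writing $M+N=\{a_n+b_n\}$, each entry $a_n+b_n$ is a sum of two entries of $M\sqcup N$, so $\eta(M+N)\le\eta(M)+\eta(N)=\eta(M\sqcup N)$, and more importantly $(M+N)^{k}$ has entries that are sums of entries of $(M\sqcup N)^{k}$ (expand the product $(a_{n_1}+b_{n_1})\cdots(a_{n_k}+b_{n_k})$ into $2^k$ terms, each a product of entries of $M\sqcup N$, hence an entry of $(M\sqcup N)^{k}$), whence $\eta((M+N)^{k})\le\eta((M\sqcup N)^{k})$ by the triangle inequality. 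Taking $k$-th roots and passing to the limit gives the claim.
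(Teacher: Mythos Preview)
Your proposal is correct and follows essentially the same route as the paper: for each of the three claims one shows the inequality $\eta((\,\cdot\,)^k)\le\eta((\,\cdot\,)^k)$ at the level of powers and then takes $k$-th roots. Your symmetry argument via $(MN)^{n+1}\simeq M(NM)^nN$ is verbatim the paper's, and your expansion of $(a_{n_1}+b_{n_1})\cdots(a_{n_k}+b_{n_k})$ into $2^k$ words of $(M\sqcup N)^k$ is exactly how the paper handles $M+N$. One small slip to clean up: in your first pass at the convolution case you write $\sum_{i+j=n+1}\|a_i\|\|b_j\|$, which lands you at $\eta(M)\eta(N)\ge\eta(MN)$, the wrong side; the fix (which you immediately gesture at with ``more precisely'') is to keep $\|a_ib_j\|$ unsplit, so that $\sum_n\|c_n\|\le\sum_n\sum_{i+j=n+1}\|a_ib_j\|=\eta(MN)$, and this is precisely the multi-index computation the paper carries out for general $k$.
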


\begin{proof}
Note that $\mathbf{\mathrm{\eta}}\left(  \left(  MN\right)  ^{n+1}\right)
\leq\mathbf{\mathrm{\eta}}\left(  M\right)  \mathbf{\mathrm{\eta}}\left(
\left(  NM\right)  ^{n}\right)  \mathbf{\mathrm{\eta}}\left(  N\right)  $ for
every $n$. This implies that $\rho_{t}\left(  MN\right)  \leq\rho_{t}\left(
NM\right)  $. Changing $M$ and $N$ by places, we have the equality.
We have that
\begin{align*}
\mathbf{\mathrm{\eta}}\left(  \left(  M\ast N\right)  ^{k}\right)   &
=\sum_{n_{1},\ldots,n_{k}}\left\Vert \left(  \sum_{i_{i}+j_{i}=n_{1}%
+1}a_{i_{1}}b_{j_{1}}\right)  \cdots\left(  \sum_{i_{k}+j_{k}=n_{k}+1}%
a_{i_{k}}b_{j_{k}}\right)  \right\Vert \\
&  \leq\sum_{n_{1},\ldots,n_{k}}\sum_{i_{i}+j_{i}=n_{1}+1}\cdots\sum
_{i_{k}+j_{k}=n_{k}+1}\left\Vert a_{i_{1}}b_{j_{1}}\cdots a_{i_{k}}b_{j_{k}%
}\right\Vert \\
&  =\sum_{n_{1},\ldots,n_{2k}}\left\Vert a_{n_{1}}b_{n_{2}}\cdots a_{n_{2k-1}%
}b_{n_{2k}}\right\Vert =\mathbf{\mathrm{\eta}}\left(  \left(  MN\right)
^{k}\right)
\end{align*}
for every $k>0$, whence $\rho_{t}\left(  M\ast N\right)  \leq\rho_{t}\left(
MN\right)  $.
Further, $M+N=\left\{  a_{n}+b_{n}\right\}  _{1}^{\infty}$ and
\begin{align*}
\left(  M+N\right)  ^{k}  &  \simeq\left\{  \left(  a_{n_{1}}+b_{n_{1}%
}\right)  \cdots\left(  a_{n_{k}}+b_{n_{k}}\right)  \right\}  _{n_{1}%
,\ldots,n_{k}=1}^{\infty}\\
&  =\left\{  a_{n_{1}}\cdots a_{n_{k}}+b_{n_{1}}a_{n_{2}}\cdots a_{n_{k}%
}+\ldots+b_{n_{1}}\cdots b_{n_{k}}\right\}  _{n_{1},\ldots,n_{k}=1}^{\infty}.
\end{align*}
Then%
\begin{align*}
\mathbf{\mathrm{\eta}}\left(  \left(  M+N\right)  ^{k}\right)   &
=\sum_{n_{1},\ldots,n_{k}}\left\Vert a_{n_{1}}\cdots a_{n_{k}}+b_{n_{1}%
}a_{n_{2}}\cdots a_{n_{k}}+\ldots+b_{n_{1}}\cdots b_{n_{k}}\right\Vert \\
&  \leq\sum_{n_{1},\ldots,n_{k}}\left(  \left\Vert a_{n_{1}}\cdots a_{n_{k}%
}\right\Vert +\left\Vert b_{n_{1}}a_{n_{2}}\cdots a_{n_{k}}\right\Vert
+\ldots+\left\Vert b_{n_{1}}\cdots b_{n_{k}}\right\Vert \right) \\
&  =\sum_{n_{1},\ldots,n_{k}}\left\Vert a_{n_{1}}\cdots a_{n_{k}}\right\Vert
+\sum_{n_{1},\ldots,n_{k}}\left\Vert b_{n_{1}}a_{n_{2}}\cdots a_{n_{k}%
}\right\Vert +\\
&  \ldots+\sum_{n_{1},\ldots,n_{k}}\left\Vert b_{n_{1}}\cdots b_{n_{k}%
}\right\Vert \\
&  =\mathbf{\mathrm{\eta}}\left(  M^{k}\right)  +\mathbf{\mathrm{\eta}}\left(
NM^{k-1}\right)  +\ldots+\mathbf{\mathrm{\eta}}\left(  N^{k}\right) \\
&  =\mathbf{\mathrm{\eta}}\left(  M^{k}\sqcup NM^{k-1}\sqcup\cdots\sqcup
N^{k}\right)  =\mathbf{\mathrm{\eta}}\left(  \left(  M\sqcup N\right)
^{k}\right)
\end{align*}
for every $k>0$, whence $\rho_{t}\left(  M+N\right)  \leq\rho_{t}\left(
M\sqcup N\right)  $.
\end{proof}

As $A$ is embedded into $A^{1}$, let $M^{0}$ be $\left\{  x_{n}\right\}
_{1}^{\infty}$ with $x_{n}=0$ for every $n>1$ and $x_{1}=1$, the identity
element of $A^{1}$. Note that $M^{0}M^{0}\simeq M^{0}\simeq N^{0}$,
$M^{0}N\simeq NM^{0}\simeq N$ for any family $N$ in $A$, and
$\mathbf{\mathrm{\eta}}\left(  M^{0}\right)  =1$.

We say that families $M$ and $N$ in $A$ \textit{commute} if $MN\simeq NM$.
This of course doesn't mean that elements of $M$ commute with elements of $N$.
But the reverse statement clearly keeps: if each element of $M$ commutes with
each element of $N$ then $M$ and $N$ commute. In particular, if $N$ consists
of elements of the center of $A$ then $M$ and $N$ commute.

\begin{proposition}
\label{commute}Let $M$ and $N$ be summable families in $A$. If $M$ and $N$
commute then $\rho_{t}\left(  MN\right)  \leq\rho_{t}\left(  M\right)
\rho_{t}\left(  N\right)  $ and $\rho_{t}\left(  M\sqcup N\right)  \leq
\rho_{t}\left(  M\right)  +\rho_{t}\left(  N\right)  $.
\end{proposition}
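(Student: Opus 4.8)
The plan is to exploit the commutation relation $MN \simeq NM$ to control the growth of $\eta((MN)^n)$ and $\eta((M\sqcup N)^n)$ by rearranging products so that all $M$-factors are grouped together and all $N$-factors are grouped together, then applying submultiplicativity \eqref{f40}.

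For the first inequality, note that since $M$ and $N$ commute, an easy induction using $(MN)K \simeq M(NK)$ and $MN \simeq NM$ gives $(MN)^n \simeq M^n N^n$ for every $n \in \mathbb{N}$. Indeed, $(MN)^{n+1} \simeq (MN)^n (MN) \simeq M^n N^n M N$, and then one moves the single $M$ leftward past $N^n$ using commutativity $n$ times (each step replaces a block $NM$ by $MN$ up to equivalence, and by $M(NK)\simeq (MN)K$ the bracketing is immaterial), obtaining $M^{n+1}N^{n+1}$. Once this is established, \eqref{f40} yields $\eta((MN)^n) = \eta(M^n N^n) \leq \eta(M^n)\eta(N^n)$. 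Taking $n$-th roots and letting $n \to \infty$, the right-hand side converges to $\rho_t(M)\rho_t(N)$, so $\rho_t(MN) \leq \rho_t(M)\rho_t(N)$.

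For the second inequality, expand $(M \sqcup N)^n$ using distributivity of product over disjoint union (stated in the excerpt: $M(N_1 \sqcup N_2) \simeq MN_1 \sqcup MN_2$ and similarly on the left). This gives $(M\sqcup N)^n \simeq \bigsqcup_{\epsilon \in \{M,N\}^n} \epsilon_1 \epsilon_2 \cdots \epsilon_n$, a disjoint union over all $2^n$ words of length $n$ in the two "letters" $M$ and $N$. For a word with exactly $k$ occurrences of $M$ (and $n-k$ of $N$), commutativity of $M$ and $N$ lets us rearrange the corresponding product to $M^k N^{n-k}$ up to equivalence. Since there are $\binom{n}{k}$ such words, we get $(M\sqcup N)^n \simeq \bigsqcup_{k=0}^{n} \binom{n}{k} \bullet (M^k N^{n-k})$. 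Then additivity of $\eta$ over disjoint unions and \eqref{f40} give
\[
\eta((M\sqcup N)^n) = \sum_{k=0}^n \binom{n}{k}\, \eta(M^k N^{n-k}) \leq \sum_{k=0}^n \binom{n}{k}\, \eta(M^k)\eta(N^{n-k}).
\]
Fixing $\varepsilon > 0$, there is a constant $C$ with $\eta(M^j) \leq C(\rho_t(M)+\varepsilon)^j$ and $\eta(N^j) \leq C(\rho_t(N)+\varepsilon)^j$ for all $j$; substituting and applying the binomial theorem bounds the sum by $C^2(\rho_t(M)+\rho_t(N)+2\varepsilon)^n$. Taking $n$-th roots, passing to the limit, and letting $\varepsilon \to 0$ gives $\rho_t(M\sqcup N) \leq \rho_t(M)+\rho_t(N)$.

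**The main obstacle** is the careful bookkeeping in the two combinatorial rearrangements — establishing $(MN)^n \simeq M^n N^n$ and the binomial decomposition of $(M\sqcup N)^n$ — making sure that the moves invoked (reassociation \eqref{f39}, the commutation hypothesis, and distributivity over $\sqcup$) are exactly the ones recorded earlier in the section, since "commute" here means only $MN \simeq NM$ as generalized subsets, not elementwise commutativity. Once the equivalences of generalized subsets are in hand, everything else is a routine application of $\eta(M\sqcup N) = \eta(M)+\eta(N)$, inequality \eqref{f40}, and the definition of $\rho_t$ as $\lim \eta(M^n)^{1/n}$.
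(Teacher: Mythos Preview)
Your proof is correct and follows essentially the same approach as the paper's: establish $(MN)^n \simeq M^nN^n$ from commutativity and use submultiplicativity of $\eta$ for the first inequality, then obtain the binomial decomposition $(M\sqcup N)^n \simeq \bigsqcup_{k=0}^n \binom{n}{k}\bullet(M^kN^{n-k})$ and bound via $\eta(M^j)\leq C(\rho_t(M)+\varepsilon)^j$ for the second. The only cosmetic difference is that the paper simply asserts $(NM)^n\simeq N^nM^n$ without writing out the induction, and explicitly takes the constant $s\geq 1$ (so that the bound also covers the $j=0$ term, where $\eta(M^0)=1$); you should likewise ensure $C\geq 1$.
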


\begin{proof}
Indeed, $\left(  NM\right)  ^{n}\simeq N^{n}M^{n}$ and $\mathbf{\mathrm{\eta}%
}\left(  \left(  NM\right)  ^{n}\right)  =\mathbf{\mathrm{\eta}}\left(
N^{n}M^{n}\right)  \leq\mathbf{\mathrm{\eta}}\left(  N^{n}\right)
\mathbf{\mathrm{\eta}}\left(  M^{n}\right)  $ for every $n.$ Taking $n$-roots
and passing to limits, we obtain that
\[
\rho_{t}\left(  MN\right)  \leq\rho_{t}\left(  M\right)  \rho_{t}\left(
N\right)  .
\]
It is easy to see that
\[
\left(  M\sqcup N\right)  ^{n}\simeq\sqcup_{i=0}^{n}\left(  C_{n}^{i}%
\bullet\left(  M^{i}N^{n-i}\right)  \right)
\]
for every $n>0$, whence
\[
\mathbf{\mathrm{\eta}}\left(  \left(  M\sqcup N\right)  ^{n}\right)
=\sum_{i=0}^{n}C_{n}^{i}\mathbf{\mathrm{\eta}}\left(  M^{i}N^{n-i}\right)
\leq\sum_{i=0}^{n}C_{n}^{i}\mathbf{\mathrm{\eta}}\left(  M^{i}\right)
\mathbf{\mathrm{\eta}}\left(  N^{n-i}\right)  .
\]
Let $\varepsilon>0$, and take $s\geq1$ such that $\mathbf{\mathrm{\eta}%
}\left(  M^{i}\right)  \leq s\left(  \rho_{t}\left(  M\right)  +\varepsilon
\right)  ^{i}$ and $\mathbf{\mathrm{\eta}}\left(  N^{i}\right)  \leq s\left(
\rho_{t}\left(  N\right)  +\varepsilon\right)  ^{i}$ for every $i\in
\mathbb{N}$. Then
\begin{align*}
\mathbf{\mathrm{\eta}}\left(  \left(  M\sqcup N\right)  ^{n}\right)   &  \leq
s^{2}\sum_{i=0}^{n}C_{n}^{i}\left(  \rho_{t}\left(  M\right)  +\varepsilon
\right)  ^{i}\left(  \rho_{t}\left(  N\right)  +\varepsilon\right)  ^{n-i}\\
&  =s^{2}\left(  \rho_{t}\left(  M\right)  +\rho_{t}\left(  N\right)
+2\varepsilon\right)  ^{n}%
\end{align*}
for every $n>0$. Taking $n$-roots and passing to limits, we obtain that%
\[
\rho_{t}\left(  M\sqcup N\right)  \leq\rho_{t}\left(  M\right)  +\rho
_{t}\left(  N\right)  +2\varepsilon
\]
As $\varepsilon$ is arbitrary, we have that $\rho_{t}\left(  M\sqcup N\right)
\leq\rho_{t}\left(  M\right)  +\rho_{t}\left(  N\right)  $.
\end{proof}

\subsection{Absolutely convex hulls and tensor quasinilpotent families}

Let $A$ be a normed algebra. A summable family $M$ of elements of $A$ is
called \textit{tensor quasinilpotent }if $\rho_{t}\left(  M\right)  =0$.

The following result is an immediate consequence of Theorem \ref{tsr}(i).

\begin{corollary}
\label{explains} If a family $M$ in $A$ is tensor quasinilpotent then for each
bounded family $L$ in a normed algebra $B$ the element $M_{\otimes}L$ is
quasinilpotent in $A\widehat{\otimes}B$.
\end{corollary}

For a summable family $M=\left\{  a_{n}\right\}  _{1}^{\infty}$, let
$\operatorname*{abs}_{t}\left(  M\right)  $ denote the set of all families
$N=\left\{  b_{n}\right\}  _{1}^{\infty}$ such that $b_{m}=\sum_{n=1}^{\infty
}t_{nm}a_{n}$, where the sequences $\left\{  t_{nm}\right\}  _{m=1}^{\infty}%
$of complex numbers satisfy the condition $\sum_{m=1}^{\infty}\left\vert
t_{nm}\right\vert \leq1$. We call $\operatorname*{abs}_{t}\left(  M\right)  $
the \textit{absolutely convex hull} of $M$. To justify the term, note that
$\operatorname*{abs}_{t}\left(  M\right)  $ is a closed absolutely convex
subset of $\ell_{1}\left(  A\right)  $.

\begin{proposition}
\label{abs}If $M=\left\{  a_{n}\right\}  _{1}^{\infty}$ is a summable family
of elements of a normed algebra $A$ then $\rho_{t}\left(  N\right)  \leq
\rho_{t}\left(  M\right)  $ for any $N=\left\{  b_{n}\right\}  _{1}^{\infty
}\in\operatorname*{abs}_{t}\left(  M\right)  $.
\end{proposition}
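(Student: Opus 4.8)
The plan is to reduce the statement to an estimate on $\eta\big((N)^k\big)$ in terms of $\eta\big((M)^k\big)$ by expanding the powers and tracking the coefficients $t_{nm}$. Write $N=\{b_m\}_1^\infty$ with $b_m=\sum_{n=1}^\infty t_{nm}a_n$ and $\sum_{m=1}^\infty |t_{nm}|\le 1$ for every $n$. First I would compute, for a fixed $k>0$,
\begin{align*}
\eta\big(N^k\big) &= \sum_{m_1,\ldots,m_k=1}^\infty \big\Vert b_{m_1}\cdots b_{m_k}\big\Vert
= \sum_{m_1,\ldots,m_k} \Big\Vert \sum_{n_1,\ldots,n_k} t_{n_1 m_1}\cdots t_{n_k m_k}\, a_{n_1}\cdots a_{n_k}\Big\Vert\\
&\le \sum_{m_1,\ldots,m_k}\ \sum_{n_1,\ldots,n_k} |t_{n_1 m_1}|\cdots |t_{n_k m_k}|\,\big\Vert a_{n_1}\cdots a_{n_k}\big\Vert .
\end{align*}
Now interchange the order of summation (legitimate since all terms are nonnegative) and sum first over $m_1,\ldots,m_k$: for each fixed tuple $(n_1,\ldots,n_k)$,
\[
\sum_{m_1,\ldots,m_k} |t_{n_1 m_1}|\cdots |t_{n_k m_k}| = \prod_{j=1}^k \Big(\sum_{m_j=1}^\infty |t_{n_j m_j}|\Big)\le 1 .
\]
Hence $\eta\big(N^k\big)\le \sum_{n_1,\ldots,n_k}\Vert a_{n_1}\cdots a_{n_k}\Vert = \eta\big(M^k\big)$.

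**Conclusion.** Taking $k$-th roots and passing to the limit $k\to\infty$ gives $\rho_t(N)=\lim_k \eta\big(N^k\big)^{1/k}\le \lim_k \eta\big(M^k\big)^{1/k}=\rho_t(M)$, which is the assertion. (One should also note that the hypothesis $\sum_m |t_{nm}|\le 1$ together with summability of $M$ guarantees $N\in\ell_1(A)$, so that $\eta(N^k)$ and $\rho_t(N)$ are well defined; this is already implicit in the definition of $\operatorname{abs}_t(M)$ as a subset of $\ell_1(A)$.)

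**Main obstacle.** The only delicate point is the interchange of the (generally infinite) sums over the $m$-indices and the $n$-indices, and the use of the triangle inequality for the infinite series $b_{m_1}\cdots b_{m_k}=\sum_{n_1,\ldots,n_k} t_{n_1 m_1}\cdots t_{n_k m_k} a_{n_1}\cdots a_{n_k}$; both are justified by absolute convergence, which follows from $\sum_m|t_{nm}|\le 1$ and $\sum_n\Vert a_n\Vert<\infty$ (so that $\sum_{n_1,\ldots,n_k}\Vert a_{n_1}\cdots a_{n_k}\Vert=\eta(M^k)<\infty$ and the double-indexed family of products is summable). Everything else is a routine manipulation of nonnegative series. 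Note that $k=1$ already yields $\eta(N)\le\eta(M)$, the special case that makes $\operatorname{abs}_t(M)\subset\ell_1(A)$ transparent.
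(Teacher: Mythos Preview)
Your proof is correct and follows essentially the same route as the paper: expand $\eta(N^k)$, apply the triangle inequality, interchange the $m$- and $n$-summations, and use $\sum_m|t_{nm}|\le 1$ to bound by $\eta(M^k)$, then take $k$-th roots. Your added remarks on absolute convergence and on $N\in\ell_1(A)$ are a bit more explicit than the paper, but the argument is the same.
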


\begin{proof}
Indeed,%
\begin{align*}
\eta(N^{k})  &  =\sum_{m_{1},\ldots,m_{k}}\left\Vert b_{m_{1}}\cdots b_{m_{k}%
}\right\Vert =\sum_{m_{1},\ldots,m_{k}}\left\Vert \sum_{n_{1},\ldots,n_{k}%
}t_{n_{1}m_{1}}\cdots t_{n_{k}m_{k}}a_{n_{1}}\cdots a_{n_{k}}\right\Vert \\
&  \leq\sum_{m_{1},\ldots,m_{k}}\sum_{n_{1},\ldots,n_{k}}\left\vert
t_{n_{1}m_{1}}\cdots t_{n_{k}m_{k}}\right\vert \left\Vert a_{n_{1}}\cdots
a_{n_{k}}\right\Vert \\
&  =\sum_{n_{1},\ldots,n_{k}}\sum_{m_{1}}\left\vert t_{n_{1}m_{1}}\right\vert
\cdots\sum_{m_{k}}\left\vert t_{n_{k}m_{k}}\right\vert \left\Vert a_{n_{1}%
}\cdots a_{n_{k}}\right\Vert \\
&  \leq\sum_{n_{1},\ldots,n_{k}}\left\Vert a_{n_{1}}\cdots a_{n_{k}%
}\right\Vert =\eta(M^{k})
\end{align*}
for every $k$, whence $\rho_{t}\left(  N\right)  \leq\rho_{t}\left(  M\right)
$.
\end{proof}

Recall that a set $K$ in a normed space $X$ is called \textit{absolutely
convex }if $t_{1}x_{1}+t_{2}x_{2}\in K$ for every integer $n>0$ and for any
$x_{1},x_{2}\in K$ and $t_{1},t_{2}\in\mathbb{C}$ with $\left\vert
t_{1}\right\vert +\left\vert t_{2}\right\vert \leq1$. If $K$ is a compact set
in $X$, then the number $\max\left\{  \left\Vert x-y\right\Vert :x,y\in
K\right\}  $ is called the \textit{diameter} of $K$ and denoted by
$\operatorname*{diam}\left(  K\right)  $.

\begin{lemma}
\label{general} Let $X$ be a Banach space and let $\left\{  K_{n}\right\}  $
be a sequence of absolutely convex compact sets in $X$ such that
$\sum\operatorname*{diam}\left(  K_{n}\right)  <\infty$. Then $\sum K_{n}$ is
an absolutely convex compact set.
\end{lemma}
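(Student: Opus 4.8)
The plan is to reduce everything to the compactness criterion via total boundedness, and to use the summability hypothesis $\sum\operatorname{diam}(K_n)<\infty$ to control the tails. Write $S_N=\sum_{n=1}^N K_n$ and $R_N=\sum_{n>N}K_n$ (the latter understood, for now, as a formal object; the first thing to check is that it makes sense as a bounded subset of $X$). The key numerical observation is that if $0\in K_n$ for each $n$ then $\operatorname{diam}(K_n)=\max\{\|x\|:x\in K_n\}$ as well (pick $y=0$ in the definition of diameter), but since the $K_n$ are merely absolutely convex — and $0\in K_n$ automatically, taking $x_1=x_2=0$ or rather $t_1=t_2=0$ — we do get $0\in K_n$, hence $\sup\{\|x\|:x\in K_n\}\le\operatorname{diam}(K_n)$. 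Therefore $\sum_{n}\sup_{x\in K_n}\|x\|<\infty$, which makes the (absolutely convergent) "infinite sum" $\sum_n K_n:=\{\sum_n x_n: x_n\in K_n\}$ a well-defined nonempty bounded subset of $X$, and $\operatorname{diam}(R_N)\le 2\sum_{n>N}\operatorname{diam}(K_n)\to 0$.

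Next I would verify the three defining properties in turn. \emph{Absolute convexity} of $\sum_n K_n$ is inherited coordinatewise: given $z=\sum x_n$, $z'=\sum x'_n$ with $x_n,x'_n\in K_n$ and $|t_1|+|t_2|\le 1$, one has $t_1 z+t_2 z'=\sum_n(t_1 x_n+t_2 x'_n)$ with each summand in $K_n$ by absolute convexity of $K_n$, and the series still converges absolutely by the same tail bound; so $t_1z+t_2z'\in\sum_n K_n$. \emph{Closedness and compactness} I would get together from total boundedness plus completeness of $X$. Given $\varepsilon>0$, choose $N$ with $\operatorname{diam}(R_N)<\varepsilon/2$ (so also $\sup_{x\in R_N}\|x\|<\varepsilon/2$, as $0\in R_N$). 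The finite Minkowski sum $S_N=K_1+\cdots+K_N$ is compact, being the image of the compact set $K_1\times\cdots\times K_N$ under the continuous addition map; hence $S_N$ is totally bounded, so it has a finite $\varepsilon/2$-net $F$. Then $F$ is a finite $\varepsilon$-net for $\sum_n K_n=S_N+R_N$: any point $z=s+r$ with $s\in S_N$, $r\in R_N$ has $\|z-f\|\le\|s-f\|+\|r\|<\varepsilon/2+\varepsilon/2$ for a suitable $f\in F$. Thus $\sum_n K_n$ is totally bounded. It remains to see it is closed. If $z_k\to z$ with $z_k=\sum_n x_n^{(k)}$, $x_n^{(k)}\in K_n$, then for each fixed $n$ the sequence $(x_n^{(k)})_k$ lies in the compact set $K_n$; by a diagonal argument extract a subsequence along which $x_n^{(k)}\to x_n\in K_n$ for every $n$ simultaneously. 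The uniform tail bound $\|\sum_{n>N}x_n^{(k)}\|\le\sum_{n>N}\operatorname{diam}(K_n)$ (independent of $k$) lets one pass to the limit inside the sum and conclude $z=\sum_n x_n\in\sum_n K_n$. A totally bounded closed subset of a Banach space is compact, completing the proof.

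The step I expect to require the most care is the interchange of limits in the closedness/compactness argument — justifying that the coordinatewise limits $x_n$ assemble back into an element of $X$ whose sum is the given limit $z$. The right way to package this is via the uniform summability estimate $\sup_k\|\sum_{n>N}x_n^{(k)}\|\le\sum_{n>N}\operatorname{diam}(K_n)$, which is exactly a dominated-convergence-type bound and is the only place the hypothesis $\sum\operatorname{diam}(K_n)<\infty$ is used in an essential, non-cosmetic way. Everything else — absolute convexity, the finite-sum compactness, the $\varepsilon$-net bookkeeping — is routine. One could alternatively present the whole thing more slickly by realizing $\sum_n K_n$ as the continuous image, under the summation map $\ell_1$-type series $\mapsto$ sum, of the compact product $\prod_n K_n$ taken in the product topology (compact by Tychonoff, since each $K_n$ is compact), observing that the summation map is continuous from the product topology to the norm topology precisely because of the uniform tail bound; but the total-boundedness argument above is elementary and self-contained, so that is the route I would write up.
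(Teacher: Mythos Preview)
Your proposal is correct. Interestingly, the route you label as the ``alternative, slicker'' presentation at the very end is precisely the paper's proof: it observes that $0\in K_n$ gives $\|a\|\le\operatorname{diam}(K_n)/2$, notes that $K=\prod_n K_n$ is compact by Tychonoff, and checks that the summation map $\varphi:K\to X$, $\varphi(\{a_n\})=\sum_n a_n$, is continuous from the product topology to the norm topology (exactly by the uniform tail bound you identify), so that $\sum_n K_n=\varphi(K)$ is compact. Your primary route---total boundedness via the decomposition $S_N+R_N$ together with a diagonal extraction for closedness---is a genuinely different, more elementary argument that avoids invoking Tychonoff and makes the role of the tail estimate very explicit; the trade-off is length, since the paper's Tychonoff argument is three lines. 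Both buy the same conclusion, and your identification of the uniform tail bound as the essential use of the hypothesis is on the mark in either version.
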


\begin{proof}
It is clear that $\sum K_{n}$ is absolutely convex. To see that it is compact,
note that the direct product $K=\Pi_{n=1}^{\infty}K_{n}$ is compact. Since
$0\in K_{n}$, we get that $\Vert a\Vert\leq\operatorname*{diam}(K_{n})/2$ for
$a\in K_{n}$. So there are numbers $\alpha_{n}>0$ with $\Vert a\Vert\leq
\alpha_{n}$ for $a\in K_{n}$, such that $\sum\alpha_{n}<\infty$. It follows
that the map $\varphi:K\longrightarrow X$ defined by the formula
\[
\varphi(\{a_{n}\}_{n=1}^{\infty})=\sum_{n=1}^{\infty}a_{n},
\]
is continuous. As $\sum K_{n}=\varphi(K)$, it is compact.
\end{proof}

Let $M=\left\{  a_{n}\right\}  _{1}^{\infty}$ be a summable family in a Banach
algebra $A$, and let $\Omega\left(  M\right)  $ denote the set of all elements
of the form $\sum t_{n}a_{n}$ for complex numbers $\left\vert t_{n}\right\vert
\leq1$.

\begin{corollary}
If $M$ is a summable family in a Banach algebra $A$ then $\Omega\left(
M\right)  $ is an absolutely convex compact set such that $\rho\left(
a\right)  \leq\rho_{t}\left(  M\right)  $ for any $a\in\Omega\left(  M\right)
$. Moreover, $\Omega\left(  M\right)  =\left\{  \sum b_{n}:\left\{
b_{n}\right\}  _{1}^{\infty}\in\operatorname*{abs}_{t}\left(  M\right)
\right\}  $.
\end{corollary}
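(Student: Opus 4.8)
The plan is to prove the three assertions about $\Omega(M)$ in sequence, leaning on the results already established. First I would show that $\Omega(M)$ is absolutely convex and compact by realizing it as a sum of absolutely convex compact sets and invoking Lemma \ref{general}. Writing $M=\{a_n\}_1^\infty$, set $K_n=\{t a_n:\left|t\right|\leq1\}$, the closed disc of radius $\Vert a_n\Vert$ times the unit vector $a_n/\Vert a_n\Vert$ (and $K_n=\{0\}$ if $a_n=0$); each $K_n$ is absolutely convex and compact, with $\operatorname*{diam}(K_n)=2\Vert a_n\Vert$, so $\sum\operatorname*{diam}(K_n)=2\eta(M)<\infty$ since $M$ is summable. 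Then Lemma \ref{general} gives that $\sum K_n$ is absolutely convex and compact, and by definition $\Omega(M)=\sum K_n$, so we are done with the first claim.

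Next I would identify $\Omega(M)$ with the set of sums $\{\sum b_n:\{b_n\}\in\operatorname*{abs}_t(M)\}$. For one inclusion: given $a=\sum t_n a_n\in\Omega(M)$ with $\left|t_n\right|\leq1$, define $N=\{b_m\}$ by $b_1=a$ and $b_m=0$ for $m>1$; this is obtained from $M$ via the matrix $t_{n1}=t_n$, $t_{nm}=0$ for $m>1$, which satisfies $\sum_m\left|t_{nm}\right|=\left|t_n\right|\leq1$, so $N\in\operatorname*{abs}_t(M)$ and $\sum_m b_m=a$. Conversely, if $N=\{b_m\}\in\operatorname*{abs}_t(M)$ with $b_m=\sum_n t_{nm}a_n$ and $\sum_m\left|t_{nm}\right|\leq1$, then (after checking absolute summability — the double sum $\sum_{n,m}\left|t_{nm}\right|\Vert a_n\Vert\leq\sum_n\Vert a_n\Vert=\eta(M)<\infty$ justifies interchanging the order of summation) we get $\sum_m b_m=\sum_n\left(\sum_m t_{nm}\right)a_n$, and setting $t_n=\sum_m t_{nm}$ gives $\left|t_n\right|\leq\sum_m\left|t_{nm}\right|\leq1$, so $\sum_m b_m\in\Omega(M)$.

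Finally, for the spectral radius estimate $\rho(a)\leq\rho_t(M)$ for $a\in\Omega(M)$: by the identification just proved, write $a=\sum b_n$ with $N=\{b_n\}\in\operatorname*{abs}_t(M)$. By Proposition \ref{abs}, $\rho_t(N)\leq\rho_t(M)$. It remains to compare $\rho(a)$, the ordinary spectral radius of the single element $a=\sum b_n$, with $\rho_t(N)$. Here I would observe that $\Vert a^k\Vert=\Vert(\sum b_n)^k\Vert=\Vert\sum_{n_1,\ldots,n_k}b_{n_1}\cdots b_{n_k}\Vert\leq\sum_{n_1,\ldots,n_k}\Vert b_{n_1}\cdots b_{n_k}\Vert=\eta(N^k)$, so taking $k$-th roots and passing to the limit yields $\rho(a)\leq\rho_t(N)\leq\rho_t(M)$.

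The main obstacle I expect is purely bookkeeping rather than conceptual: making sure all the rearrangements of infinite sums are legitimate. The summability of $M$ (i.e.\ $\eta(M)<\infty$) together with the $\ell_1$-type constraint $\sum_m\left|t_{nm}\right|\leq1$ controls everything — it guarantees absolute convergence of the double series $\sum_{n,m}t_{nm}a_n$ and of all the products $b_{n_1}\cdots b_{n_k}$ summed over multi-indices — but one should state this explicitly so that Fubini-type interchanges and the manipulation $(\sum b_n)^k=\sum b_{n_1}\cdots b_{n_k}$ are rigorously justified in the Banach algebra $A$. Everything else is a direct application of Lemma \ref{general} and Proposition \ref{abs}.
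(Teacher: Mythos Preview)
Your proof is correct and follows essentially the same approach as the paper: the compactness comes from Lemma \ref{general} applied to the discs $K_n=\{ta_n:|t|\le1\}$, and the spectral-radius bound comes from the same norm estimate $\Vert a^k\Vert\le\eta(\cdot^k)$. The only cosmetic difference is that the paper bounds $\Vert a^k\Vert$ directly by $\eta(M^k)$ using $|t_n|\le1$, without detouring through Proposition \ref{abs}; your route via $N\in\operatorname*{abs}_t(M)$ and $\rho_t(N)\le\rho_t(M)$ is a minor rearrangement of the same computation, and your explicit verification of the identification $\Omega(M)=\{\sum b_n:\{b_n\}\in\operatorname*{abs}_t(M)\}$ is what the paper dismisses as ``well known properties of absolutely summable series.''
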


\begin{proof}
Indeed, $\Omega\left(  M\right)  $ is the countable sum of absolutely convex
compact sets $\left\{  ta_{n}:\left\vert t\right\vert \leq1\right\}  $, the
sum of whose diameters is finite. So $\Omega\left(  M\right)  $ is a convex
compact set in $A$. Further, for any $a=\sum t_{n}a_{n}\in\Omega\left(
M\right)  $, we have that
\[
\left\Vert a^{k}\right\Vert \leq\sum_{n_{1},\ldots,n_{k}}\left\Vert t_{n_{1}%
}\cdots t_{n_{k}}a_{n_{1}}\cdots a_{n_{k}}\right\Vert \leq\sum_{n_{1}%
,\ldots,n_{k}}\left\Vert a_{n_{1}}\cdots a_{n_{k}}\right\Vert
=\mathbf{\mathrm{\eta}}\left(  M^{k}\right)
\]
for every $k\in\mathbb{N}$. So $\rho\left(  a\right)  \leq\rho_{t}\left(
M\right)  $.
The last assertion easily follows from well known properties of absolutely
summable series.
\end{proof}

\begin{corollary}
\label{gen}Let $M=\left\{  a_{n}\right\}  _{1}^{\infty}$ be a tensor
quasinilpotent family in a Banach algebra $A$. Then every element of
$\Omega\left(  M\right)  $ is quasinilpotent.
\end{corollary}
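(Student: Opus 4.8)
The plan is to derive Corollary \ref{gen} immediately from the preceding corollary together with the definition of tensor quasinilpotence. Recall that $M$ is tensor quasinilpotent precisely when $\rho_{t}(M)=0$. The preceding corollary asserts that $\Omega(M)$ is an absolutely convex compact set and that $\rho(a)\leq\rho_{t}(M)$ for every $a\in\Omega(M)$. Substituting $\rho_{t}(M)=0$ gives $\rho(a)\leq 0$, hence $\rho(a)=0$ since the spectral radius is nonnegative. An element of a Banach algebra with $\rho(a)=\inf_{n}\Vert a^{n}\Vert^{1/n}=0$ is by definition quasinilpotent, so every $a\in\Omega(M)$ is quasinilpotent.

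So the proof is a one-line specialization: invoke the previous corollary with the hypothesis $\rho_{t}(M)=0$. There is essentially no obstacle here; the real content was already carried out in the previous corollary, whose proof in turn rested on Lemma \ref{general} (to see $\Omega(M)$ is compact) and on the estimate $\Vert a^{k}\Vert\leq\eta(M^{k})$ for $a\in\Omega(M)$ (to control the spectral radius). If one wished to make the argument self-contained rather than citing the previous corollary, one would reprove that estimate: for $a=\sum t_{n}a_{n}$ with $|t_{n}|\leq 1$, expanding $a^{k}$ and using the triangle inequality together with $|t_{n_{1}}\cdots t_{n_{k}}|\leq 1$ yields $\Vert a^{k}\Vert\leq\eta(M^{k})$, and then $\rho(a)=\lim_{k}\Vert a^{k}\Vert^{1/k}\leq\lim_{k}\eta(M^{k})^{1/k}=\rho_{t}(M)=0$.

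I do not anticipate any difficulty; the statement is a direct corollary in the literal sense. The only thing to be careful about is that the conclusion is about quasinilpotence of individual elements $a\in\Omega(M)$, not about quasinilpotence of some tensor product element — but that is exactly what the spectral radius bound $\rho(a)=0$ delivers.

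=== PROOF ===

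\begin{proof}
Apply the previous corollary: $\Omega(M)$ is an absolutely convex compact set and $\rho(a)\leq\rho_{t}(M)$ for every $a\in\Omega(M)$. Since $M$ is tensor quasinilpotent, $\rho_{t}(M)=0$, so $\rho(a)=0$, i.e. $a$ is quasinilpotent.
\end{proof}
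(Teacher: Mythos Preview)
Your proof is correct and matches the paper's approach exactly: the paper states Corollary~\ref{gen} without proof, as an immediate consequence of the preceding corollary (which gives $\rho(a)\leq\rho_t(M)$ for all $a\in\Omega(M)$) combined with the definition $\rho_t(M)=0$.
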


\begin{lemma}
\label{ppp}If $M=\left\{  a_{n}\right\}  _{1}^{\infty}$ is a summable family
in $A$ such that $\rho_{t}\left(  M\right)  <1$ then the family $\sqcup
_{m=1}^{\infty}M^{m}$ is summable and $\rho_{t}\left(  \sqcup_{m=1}^{\infty
}M^{m}\right)  =\rho_{t}\left(  M\right)  \left(  1-\rho_{t}\left(  M\right)
\right)  ^{-1}$.
\end{lemma}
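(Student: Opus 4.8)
The plan is to compute the powers of the family $P:=\sqcup_{m=1}^{\infty}M^{m}$ explicitly and then read off $\rho_{t}(P)$ from a classical power-series identity. Put $r=\rho_{t}(M)$. First I would record the elementary fact, already exploited in the proof of Proposition~\ref{commute}, that submultiplicativity $\eta(M^{n+k})\leq\eta(M^{n})\eta(M^{k})$ together with $\inf_{k}\eta(M^{k})^{1/k}=r<1$ furnishes, for each $\rho\in(r,1)$, a constant $C_{\rho}>0$ with $\eta(M^{k})\leq C_{\rho}\,\rho^{k}$ for all $k\geq1$. In particular $\eta(P)=\sum_{m\geq1}\eta(M^{m})\leq C_{\rho}\sum_{m\geq1}\rho^{m}<\infty$, so $P$ is summable.

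The key step is the combinatorial identity
\[
P^{n}\simeq\sqcup_{k\geq n}\left(  \binom{k-1}{n-1}\bullet M^{k}\right)  ,\qquad n\geq1 .
\]
To obtain it I would expand $P^{n}$ by distributivity of the family product over disjoint unions together with associativity (\ref{f39}), getting $P^{n}\simeq\sqcup_{m_{1},\dots,m_{n}\geq1}M^{m_{1}}\cdots M^{m_{n}}$, then collapse each summand by (\ref{f41}) to $M^{m_{1}+\cdots+m_{n}}$, and finally group the summands according to the value $k=m_{1}+\cdots+m_{n}$: the number of $n$-tuples of positive integers with sum $k$ is $\binom{k-1}{n-1}$. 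Applying $\eta$, which turns a (countable) disjoint union into the sum of the $\eta$'s of its pieces and $c\bullet S$ into $c\,\eta(S)$, yields
\[
\eta(P^{n})=\sum_{k\geq n}\binom{k-1}{n-1}\eta(M^{k}).
\]

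Both inequalities now come from the negative binomial identity $\sum_{k\geq n}\binom{k-1}{n-1}x^{k}=\left(  x/(1-x)\right)  ^{n}$, valid for $\left\vert x\right\vert <1$. For the lower bound, $r=\inf_{k}\eta(M^{k})^{1/k}$ forces $\eta(M^{k})\geq r^{k}$ for all $k$, hence $\eta(P^{n})\geq\left(  r/(1-r)\right)  ^{n}$ and therefore $\rho_{t}(P)\geq r/(1-r)$ (trivially so if $r=0$). For the upper bound, substituting $\eta(M^{k})\leq C_{\rho}\rho^{k}$ gives $\eta(P^{n})\leq C_{\rho}\left(  \rho/(1-\rho)\right)  ^{n}$; taking $n$-th roots and letting $n\to\infty$ gives $\rho_{t}(P)\leq\rho/(1-\rho)$, and then letting $\rho\downarrow r$ (using continuity and monotonicity of $x\mapsto x/(1-x)$ on $[0,1)$) gives $\rho_{t}(P)\leq r/(1-r)$. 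Combining the two bounds yields $\rho_{t}(P)=r/(1-r)$.

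The only place needing genuine care is the identity for $P^{n}$: one must check that the distributive and associative laws for $\sqcup$ and the product really do apply to the countable unions in play — they do, since all the generalized subsets involved remain countable and all the resulting series have nonnegative terms, so every rearrangement is legitimate — and that the multiplicity with which $M^{k}$ appears in $P^{n}$ is exactly $\binom{k-1}{n-1}$. Everything else reduces to the two routine estimates on $\eta(M^{k})$ and a standard power-series identity.
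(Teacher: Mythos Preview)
Your proof is correct. The lower bound argument is essentially identical to the paper's: both compute $\eta(P^{n})=\sum_{k\geq n}\binom{k-1}{n-1}\eta(M^{k})$ (the paper writes the coefficients as $1,\,n,\,\tfrac{n(n+1)}{2},\ldots$) and bound $\eta(M^{k})$ below by $\rho_{t}(M)^{k}$ via (\ref{f62}).

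The upper bound, however, is obtained differently. The paper argues algebraically: writing $M_{(1)}=\sqcup_{m\geq1}M^{m}$, it observes that $M_{(1)}\simeq M(M^{0}\sqcup M_{(1)})$ and that $M$ commutes with $M^{0}\sqcup M_{(1)}$, then invokes Proposition~\ref{commute} to get $\rho_{t}(M_{(1)})\leq\rho_{t}(M)(1+\rho_{t}(M_{(1)}))$, which rearranges to the desired bound. Your approach instead reuses the same combinatorial identity with the direct estimate $\eta(M^{k})\leq C_{\rho}\rho^{k}$, plugs it into the negative-binomial series, and lets $\rho\downarrow r$. Your route is more self-contained (it does not rely on Proposition~\ref{commute}) and pleasingly symmetric, since the same expansion handles both inequalities; the paper's route illustrates how the algebra of commuting families encodes the geometric-series relation.
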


\begin{proof}
Let $M_{\left(  k\right)  }\simeq\sqcup_{n=k}^{\infty}M^{n}$ for any $k$. Take
$t$ satisfying $\rho_{t}\left(  M\right)  <t<1$. Then there is an integer $p$
such that $\mathbf{\mathrm{\eta}}\left(  M^{n}\right)  \leq t^{n}$ for every
$n\geq p$. Then%
\[
\sum_{n=1}^{\infty}\mathbf{\mathrm{\eta}}\left(  M^{n}\right)  <\sum
_{n=1}^{p-1}\mathbf{\mathrm{\eta}}\left(  M^{n}\right)  +\sum_{n=p}^{\infty
}t^{n}=\sum_{n=1}^{p-1}\mathbf{\mathrm{\eta}}\left(  M^{n}\right)
+\frac{t^{p}}{1-t}<\infty.
\]
As $\mathbf{\mathrm{\eta}}\left(  \sqcup_{n=1}^{\infty}M^{n}\right)
=\sum_{n=1}^{\infty}\mathbf{\mathrm{\eta}}\left(  M^{n}\right)  <\infty$,
$M_{\left(  1\right)  }$ is summable.
Since $M_{\left(  2\right)  }\simeq MM_{\left(  1\right)  }\simeq M_{\left(
1\right)  }M$ and $M_{\left(  1\right)  }\simeq M\sqcup M_{\left(  2\right)
}\simeq M\left(  M^{0}\sqcup M_{\left(  1\right)  }\right)  $, we obtain that
\[
\rho_{t}\left(  M_{\left(  1\right)  }\right)  \leq\rho_{t}\left(  M\right)
\rho_{t}\left(  M^{0}\sqcup M_{\left(  1\right)  }\right)  \leq\rho_{t}\left(
M\right)  \left(  \rho_{t}\left(  M_{\left(  1\right)  }\right)  +1\right)
\]
by Proposition \ref{commute}, whence $\rho_{t}\left(  M_{\left(  1\right)
}\right)  \leq\rho_{t}\left(  M\right)  \left(  1-\rho_{t}\left(  M\right)
\right)  ^{-1}$.
On the other hand, we have, using (\ref{f62}), that
\begin{align*}
\rho_{t}\left(  M\right)  ^{n}\left(  1-\rho_{t}\left(  M\right)  \right)
^{-n}  &  =\left(  \rho_{t}\left(  M\right)  +\rho_{t}\left(  M\right)
^{2}+\rho_{t}\left(  M\right)  ^{3}+\ldots\right)  ^{n}\\
&  =\rho_{t}\left(  M\right)  ^{n}+n\rho_{t}\left(  M\right)  ^{n+1}%
+\frac{n(n+1)}{2}\rho_{t}\left(  M\right)  ^{n+2}+\ldots\\
&  \overset{(\ref{f62})}{=}\rho_{t}\left(  M^{n}\right)  +n\rho_{t}\left(
M^{n+1}\right)  +\frac{n(n+1)}{2}\rho_{t}\left(  M^{n+2}\right)  +\ldots\\
&  \leq\mathbf{\mathrm{\eta}}\left(  M^{n}\right)  +n\mathbf{\mathrm{\eta}%
}\left(  M^{n+1}\right)  +\frac{n(n+1)}{2}\mathbf{\mathrm{\eta}}\left(
M^{n+2}\right)  +\ldots\\
&  =\mathbf{\mathrm{\eta}}\left(  M^{n}\sqcup n\bullet M^{n+1}\sqcup
\frac{n(n+1)}{2}\bullet M^{n+2}\sqcup\cdots\right) \\
&  =\mathbf{\mathrm{\eta}}\left(  \left(  M\sqcup M^{2}\sqcup M^{3}%
\sqcup\cdots\right)  ^{n}\right)  =\mathbf{\mathrm{\eta}}\left(  M_{\left(
1\right)  }^{n}\right)
\end{align*}
for every $n>0$, whence $\rho_{t}\left(  M\right)  \left(  1-\rho_{t}\left(
M\right)  \right)  ^{-1}\leq\rho_{t}\left(  M_{\left(  1\right)  }\right)  $.
\end{proof}

\begin{corollary}
If $M=\left\{  a_{n}\right\}  _{1}^{\infty}$ is a tensor quasinilpotent family
in a normed algebra $A$ then the subalgebra generated by $M$ consists of quasinilpotents.
\end{corollary}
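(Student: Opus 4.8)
The plan is to feed not $M$ but the family $M_{(1)}:=\sqcup_{m=1}^{\infty}M^{m}$ of \emph{all} products of elements of $M$ into Corollary \ref{gen}. First observe that every element $b$ of the subalgebra $B$ generated by $M$ is a finite linear combination $b=\sum_{i=1}^{N}\lambda_{i}w_{i}$ of words $w_{i}=a_{n_{i,1}}\cdots a_{n_{i,k_{i}}}$ ($k_{i}\geq 1$, $a_{n_{i,j}}\in M$); after collecting terms we may assume the multi-indices of the $w_{i}$ are pairwise distinct. Each such word is one of the entries of a representative of $M_{(1)}$.

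Since $M$ is tensor quasinilpotent, $\rho_{t}(M)=0<1$, so Lemma \ref{ppp} applies and shows that $M_{(1)}$ is a summable family with $\rho_{t}(M_{(1)})=\rho_{t}(M)\bigl(1-\rho_{t}(M)\bigr)^{-1}=0$; thus $M_{(1)}$ is itself tensor quasinilpotent. Replacing $A$ by its completion $\widehat{A}$ if necessary -- this changes neither $\eta$ (hence $\rho_{t}$) nor the spectral radius $\inf_{n}\Vert b^{n}\Vert^{1/n}$ of an element $b\in A$, and $B$ sits inside the subalgebra of $\widehat{A}$ generated by $M$ -- I may assume $A$ is complete and apply Corollary \ref{gen} to $M_{(1)}$: every element of $\Omega(M_{(1)})$ is quasinilpotent. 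By definition $\Omega(M_{(1)})$ consists of the (absolutely convergent) sums $\sum_{j}t_{j}d_{j}$ with $|t_{j}|\leq 1$, where $\{d_{j}\}$ is a representative of $M_{(1)}$; since each word $w_{i}$ occurs among the $d_{j}$ at the position indexed by its multi-index, every finite sum $\sum_{i=1}^{N}t_{i}w_{i}$ with $|t_{i}|\leq 1$ belongs to $\Omega(M_{(1)})$.

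It remains only to absorb the unbounded scalars. Given $b=\sum_{i=1}^{N}\lambda_{i}w_{i}\in B$, put $\Lambda=\max_{i}|\lambda_{i}|$. If $\Lambda=0$ then $b=0$ is quasinilpotent; otherwise $b/\Lambda=\sum_{i}(\lambda_{i}/\Lambda)w_{i}\in\Omega(M_{(1)})$ is quasinilpotent, hence so is $b=\Lambda\cdot(b/\Lambda)$. As $b\in B$ was arbitrary, $B$ consists of quasinilpotents. I do not anticipate a genuine obstacle: the one idea is that every element of $B$ is a scalar multiple of an element of $\Omega(\sqcup_{m}M^{m})$, after which Lemma \ref{ppp} and Corollary \ref{gen} carry the argument; the reduction to the complete case and the identification of finite word-combinations with elements of $\Omega(\sqcup_{m}M^{m})$ are routine bookkeeping. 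Alternatively one can avoid completions and $\Omega$ altogether by estimating $\Vert b^{k}\Vert$ directly: grouping the $N^{k}$ products $w_{i_{1}}\cdots w_{i_{k}}$ by their total length $r$ (with $k\leq r\leq kL$, $L=\max_{i}k_{i}$), using $\Vert w_{i_{1}}\cdots w_{i_{k}}\Vert\leq\eta(M^{r})$ and the estimate $\eta(M^{r})\leq s\varepsilon^{r}$ (valid for all $r$, with $s=s(\varepsilon)$, since $\rho_{t}(M)=0$), one finds $\Vert b^{k}\Vert\leq\Lambda^{k}N^{k}s\,\varepsilon^{k}(1-\varepsilon)^{-1}$ for $0<\varepsilon<1$, whence $\rho(b)\leq\Lambda N\varepsilon$ for every such $\varepsilon$, i.e. $\rho(b)=0$.
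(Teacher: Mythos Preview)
Your proof is correct and follows essentially the same route as the paper: apply Lemma \ref{ppp} to get $\rho_t(\sqcup_{m\geq 1}M^m)=0$, then observe that every element of the subalgebra lies in $\cup_{t>0}\,t\,\Omega(\sqcup_{m\geq 1}M^m)$ and invoke Corollary \ref{gen}. You have simply filled in the bookkeeping (the explicit identification of finite word-combinations with points of $\Omega(M_{(1)})$ and the passage to the completion needed for Corollary \ref{gen}) that the paper leaves implicit, and your alternative direct estimate of $\Vert b^{k}\Vert$ is a valid self-contained variant.
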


\begin{proof}
Indeed, $\rho_{t}\left(  \sqcup_{m=1}^{\infty}M^{m}\right)  =0$ by Lemma
\ref{ppp}, and every element of the subalgebra lies in $\cup_{t>0}%
\,t\,\Omega\left(  \sqcup_{m=1}^{\infty}M^{m}\right)  $ which consists of
quasinilpotents by Corollary \ref{gen}.
\end{proof}

\subsection{Upper semicontinuity and subharmonicity of the tensor spectral
radius}

Let $A$ be a normed algebra. Since sequences in $\ell_{1}\left(  {A}\right)  $
determine summable families, the tensor spectral radius can be considered as a
function on $\ell_{1}(A)$. We are going to show that this function is upper
semicontinuous and subharmonic.

If $G$ is a subset of $A$, let $\mathrm{F}_{1}\left(  G\right)  $ be the set
of all summable families $M=\left\{  a_{n}\right\}  _{1}^{\infty}$ with all
$a_{n}\in G$. In particular, $\mathrm{F}_{1}(A)=\ell_{1}(A)$. Clearly
$\mathrm{F}_{1}\left(  G\right)  $ is a metric space with respect to the
metric $\mathrm{d}(M,N)=\eta(M-N)$ induced by the norm on $\ell_{1}(A)$. If
$A$ is a Banach algebra and $G$ is a closed subset of $A$ then $\mathrm{F}%
_{1}\left(  G\right)  $ is a complete metric space.

Now we are able to establish the upper semicontinuity of the tensor spectral radius.

\begin{proposition}
Let $M\in\ell_{1}\left(  A\right)  $. For every $\varepsilon>0$, there is
$\delta>0$ such that $\rho_{t}\left(  N\right)  \leq\rho_{t}\left(  M\right)
+\varepsilon$ for every $N\in\ell_{1}\left(  A\right)  $ satisfying
$\mathrm{d}\left(  N,M\right)  <\delta$.
\end{proposition}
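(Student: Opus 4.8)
The plan is to exhibit $\rho_{t}$ as a pointwise infimum, over $n\in\mathbb{N}$, of continuous functions on $\ell_{1}(A)$, and then to invoke the elementary fact that an infimum of continuous functions is upper semicontinuous. For $n\in\mathbb{N}$ set $\phi_{n}(M)=\eta(M^{n})^{1/n}$. Since $\rho_{t}(M)=\inf_{n}\eta(M^{n})^{1/n}$ by definition, we have $\rho_{t}=\inf_{n}\phi_{n}$ on $\ell_{1}(A)$, so it suffices to show that each $\phi_{n}$ is continuous with respect to the metric $\mathrm{d}(M,N)=\eta(M-N)$; in fact I would show that $M\mapsto\eta(M^{n})$ is locally Lipschitz, after which $\phi_{n}$ is continuous as the composition with the continuous map $t\mapsto t^{1/n}$ on $[0,\infty)$.

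The key step is a telescoping estimate. Fixing a bijection $\mathbb{N}^{n}\cong\mathbb{N}$ and compatible representatives $M=\{a_{k}\}_{1}^{\infty}$, $N=\{b_{k}\}_{1}^{\infty}$, the families $M^{n}$ and $N^{n}$ are represented by the sequences $\{a_{k_{1}}\cdots a_{k_{n}}\}$ and $\{b_{k_{1}}\cdots b_{k_{n}}\}$, and the scalar identity $x_{1}\cdots x_{n}-y_{1}\cdots y_{n}=\sum_{j=1}^{n}x_{1}\cdots x_{j-1}(x_{j}-y_{j})y_{j+1}\cdots y_{n}$, applied coordinatewise, together with submultiplicativity of the norm on $A$ and the inequalities $\eta(M^{j})\leq\eta(M)^{j}$ (which follow from (\ref{f58})), gives
\[
\eta(M^{n}-N^{n})\leq\sum_{j=1}^{n}\eta(M)^{j-1}\,\eta(M-N)\,\eta(N)^{n-j}.
\]
Hence, if $\mathrm{d}(M,N)<1$, so that $\eta(N)\leq\eta(M)+1$, we obtain $\eta(M^{n}-N^{n})\leq n(\eta(M)+1)^{n-1}\,\mathrm{d}(M,N)$, and therefore, by the reverse triangle inequality for the $\ell_{1}$-norm $\eta$,
\[
\bigl|\eta(M^{n})-\eta(N^{n})\bigr|\leq\eta(M^{n}-N^{n})\leq n(\eta(M)+1)^{n-1}\,\mathrm{d}(M,N).
\]
Thus $M\mapsto\eta(M^{n})$ is Lipschitz on the open unit ball around $M$, and $\phi_{n}$ is continuous there.

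Finally I would assemble the $\varepsilon$--$\delta$ statement. Given $\varepsilon>0$, choose $n$ with $\phi_{n}(M)<\rho_{t}(M)+\varepsilon/2$, which is possible since $\rho_{t}(M)=\inf_{n}\phi_{n}(M)$; then, using continuity of $\phi_{n}$ at $M$, choose $\delta\in(0,1)$ so that $\phi_{n}(N)<\phi_{n}(M)+\varepsilon/2$ whenever $\mathrm{d}(N,M)<\delta$. For such $N$ we get $\rho_{t}(N)\leq\phi_{n}(N)<\rho_{t}(M)+\varepsilon$, as required. The only slightly delicate point is the bookkeeping with representatives of families that makes the telescoping identity literally meaningful inside $\ell_{1}(A)$; once that is in place the argument is soft, so I do not expect a genuine obstacle here.
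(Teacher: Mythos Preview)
Your proof is correct and follows a genuinely different route from the paper's. You work directly: writing $\rho_{t}=\inf_{n}\phi_{n}$ with $\phi_{n}(M)=\eta(M^{n})^{1/n}$, and showing via a telescoping estimate that each $\phi_{n}$ is continuous on $\ell_{1}(A)$, so that $\rho_{t}$ is upper semicontinuous as a pointwise infimum of continuous functions. The paper instead invokes Theorem~\ref{tsr}(ii), which produces a Banach algebra $B$ and an isometric linear map $T:\ell_{1}(A)\to A\widehat{\otimes}B$ with $\rho(T(M))=\rho_{t}(M)$ and $\Vert T(M)-T(N)\Vert=\eta(M-N)$; upper semicontinuity of $\rho_{t}$ then follows from the classical upper semicontinuity of the ordinary spectral radius $\rho$ on $A\widehat{\otimes}B$. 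Your argument is more elementary and self-contained (it does not rely on the free-semigroup construction behind Theorem~\ref{tsr}), and in fact mirrors the standard proof of upper semicontinuity of the ordinary spectral radius; the paper's argument is shorter and illustrates its running theme that $\rho_{t}$ behaves exactly like a spectral radius after the embedding $M\mapsto M_{\otimes}L$. The ``bookkeeping'' caveat you flag is harmless: fixing one bijection $\mathbb{N}^{n}\cong\mathbb{N}$ for both $M^{n}$ and $N^{n}$ makes $M^{n}-N^{n}$ a well-defined element of $\ell_{1}(A)$, and $\eta$ is insensitive to the particular choice of bijection.
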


\begin{proof}
Let $T:M\longmapsto M_{\otimes}L$ be the map defined in Theorem \ref{tsr}(ii).
As the usual spectral radius is upper semicontinuous, to any $\varepsilon>0$,
there corresponds $\delta>0$ such that $\rho\left(  T\left(  N\right)
\right)  \leq\rho\left(  T\left(  M\right)  \right)  +\varepsilon$ for every
$N\in\ell_{1}(A)$ satisfying $\left\Vert T\left(  N\right)  -T\left(
M\right)  \right\Vert <\delta$. By Theorem \ref{tsr}(ii), we get that
$\rho\left(  T\left(  N\right)  \right)  =\rho_{t}\left(  N\right)  $,
$\rho\left(  T\left(  M\right)  \right)  =\rho_{t}\left(  M\right)  $ and
$\left\Vert T\left(  N\right)  -T\left(  M\right)  \right\Vert =\left\Vert
T\left(  N-M\right)  \right\Vert =\eta(M-N)=\mathrm{d}(M,N)$.
\end{proof}

Let $\digamma$ be an analytic function on an open set $\mathcal{D}%
\subset\mathbb{C}$ with values in $\ell_{1}\left(  A\right)  $. This means
that for every $\lambda\in\mathcal{D}$ there is $\digamma^{\prime}\left(
\lambda\right)  \in\ell_{1}\left(  A\right)  $ such that
\[
\digamma^{\prime}\left(  \lambda\right)  =\lim_{\mu\rightarrow\lambda}%
\frac{\digamma\left(  \mu\right)  -\digamma\left(  \lambda\right)  }%
{\mu-\lambda}%
\]
in the norm of $\ell_{1}\left(  A\right)  $. Clearly $\digamma$ as an
$\ell_{1}\left(  A\right)  $-valued function induces the family $\left\{
f_{n}\right\}  _{1}^{\infty}$ of $A$-valued functions on $\mathcal{D}$:
$\digamma=\left\{  f_{n}\right\}  _{1}^{\infty}$. These functions are analytic
since $\lambda\longmapsto\digamma\left(  \lambda\right)  =\left\{
f_{n}\left(  \lambda\right)  \right\}  _{1}^{\infty}$ is analytic on
$\mathcal{D}$. So one can write $\digamma^{\prime}=\left\{  f_{n}^{\prime
}\right\}  _{1}^{\infty}$, etc.

Let now $\digamma=\left\{  f_{n}\right\}  _{1}^{\infty}$ and $\Psi=\left\{
\psi_{n}\right\}  _{1}^{\infty}$ be $\ell_{1}\left(  A\right)  $-valued
functions on $\mathcal{D}$. Let $\digamma\Psi$ denote some sequence $\left\{
\varphi_{k}\right\}  _{1}^{\infty}$ of functions on $\mathcal{D}$ which is
obtained from $\left\{  f_{n}\psi_{m}\right\}  _{n,m=1}^{\infty}$ by
renumbering. We will assume that the renumbering for this operation is fixed,
so $\digamma\Psi$ is defined correctly. In such a case the functions
$\varphi_{k}$ are $A$-valued functions on $\mathcal{D}$, and we write that
$\digamma\Psi=\left\{  \varphi_{k}\right\}  $ is an $\ell_{1}\left(  A\right)
$-valued function on $\mathcal{D}$.

\begin{lemma}
\label{anal}If $\digamma=\left\{  f_{n}\right\}  _{1}^{\infty}$ and
$\Psi=\left\{  \psi_{n}\right\}  _{1}^{\infty}$ are analytic $\ell_{1}\left(
A\right)  $-valued function on $\mathcal{D}$ then $\digamma\Psi$ is an
analytic $\ell_{1}\left(  A\right)  $-valued function on $\mathcal{D}$.
\end{lemma}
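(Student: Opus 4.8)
The plan is to exhibit $\digamma\Psi$ as the composition of the pair $(\digamma,\Psi)$ with a fixed bounded bilinear map, and then to apply the product rule for analytic Banach-space-valued functions.

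First I would observe that, once the renumbering $\mathbb{N}\times\mathbb{N}\to\mathbb{N}$ used to form the product of families is fixed, the map
\[
\mathrm{m}\colon\ell_{1}(A)\times\ell_{1}(A)\longrightarrow\ell_{1}(A),\qquad \mathrm{m}(M,N)=MN,
\]
is well defined and \emph{bilinear}: scaling either family scales the product by the same scalar, and since the \emph{same} bijection is used to reindex $\{a_{n}b_{m}\}$ for every pair, termwise addition in either slot carries over to $MN$ termwise. Boundedness is precisely inequality (\ref{f40}), i.e. $\eta(MN)\le\eta(M)\eta(N)$, so $\|\mathrm{m}(M,N)\|_{\ell_{1}(A)}\le\|M\|_{\ell_{1}(A)}\|N\|_{\ell_{1}(A)}$; in particular $\digamma(\lambda)\Psi(\lambda)\in\ell_{1}(A)$ for every $\lambda\in\mathcal{D}$, so $\digamma\Psi$ is indeed an $\ell_{1}(A)$-valued function whose value at $\lambda$ is $\mathrm{m}(\digamma(\lambda),\Psi(\lambda))$.

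Next I would record (or cite) the elementary fact that if $\mathrm{b}\colon E\times F\to G$ is a bounded bilinear map of Banach spaces and $f\colon\mathcal{D}\to E$, $g\colon\mathcal{D}\to F$ are analytic, then $\lambda\mapsto\mathrm{b}(f(\lambda),g(\lambda))$ is analytic, with derivative $\mathrm{b}(f'(\lambda),g(\lambda))+\mathrm{b}(f(\lambda),g'(\lambda))$. The argument is the usual product-rule manipulation: using bilinearity,
\[
\frac{\mathrm{b}(f(\mu),g(\mu))-\mathrm{b}(f(\lambda),g(\lambda))}{\mu-\lambda}
=\mathrm{b}\!\left(f(\mu),\frac{g(\mu)-g(\lambda)}{\mu-\lambda}\right)+\mathrm{b}\!\left(\frac{f(\mu)-f(\lambda)}{\mu-\lambda},g(\lambda)\right),
\]
and then one lets $\mu\to\lambda$, using that $f$ is continuous (hence norm-bounded near $\lambda$), that the difference quotients converge to $f'(\lambda)$ and $g'(\lambda)$, and that $\mathrm{b}$ is jointly continuous because it is bounded. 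Applying this with $E=F=G=\ell_{1}(A)$, $\mathrm{b}=\mathrm{m}$, $f=\digamma$, $g=\Psi$ yields the lemma, and in addition gives $(\digamma\Psi)'=\digamma'\Psi+\digamma\Psi'$ (with the same fixed renumbering throughout). There is no serious obstacle here; the only points that deserve care are verifying that the reindexing convention makes $\mathrm{m}$ genuinely bilinear rather than merely ``bilinear up to renumbering'' — which is exactly why the renumbering must be fixed once and for all — and making the passage to the limit in the product-rule computation rigorous, which is routine given the boundedness of $\mathrm{m}$.
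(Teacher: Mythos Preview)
Your proof is correct and takes essentially the same approach as the paper: both establish the Leibniz rule for $(\digamma\Psi)'$. The paper's proof simply asserts that $(\digamma\Psi)'$ exists with the expected form $\{f_n'\psi_m+f_n\psi_m'\}_{n,m}$ under the fixed renumbering and then checks directly that $\eta((\digamma\Psi)'(\lambda))\le\eta(\digamma'(\lambda))\eta(\Psi(\lambda))+\eta(\digamma(\lambda))\eta(\Psi'(\lambda))<\infty$; your argument supplies the justification the paper leaves implicit by recognising the product as a bounded bilinear map on $\ell_1(A)$ and invoking the standard product rule for differentiable vector-valued functions.
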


\begin{proof}
Indeed, the derivation $\left(  \digamma\Psi\right)  ^{\prime}$ exists and is
clearly obtained from two-index sequence $\left\{  f_{n}^{\prime}\psi
_{m}+f_{n}\psi_{m}^{\prime}\right\}  _{n,m=1}^{\infty}$ by the same
renumbering as $\digamma\Psi$ from $\left\{  f_{n}\psi_{m}\right\}
_{n,m=1}^{\infty}$. Moreover,
\begin{align*}
\mathbf{\mathrm{\eta}}\left(  \left(  \digamma\Psi\right)  ^{\prime}\left(
\lambda\right)  \right)   &  =\sum_{n,m}\left\Vert f_{n}^{\prime}\left(
\lambda\right)  \psi_{m}\left(  \lambda\right)  +f_{n}\left(  \lambda\right)
\psi_{m}^{\prime}\left(  \lambda\right)  \right\Vert \\
&  \leq\sum_{n,m}\left(  \left\Vert f_{n}^{\prime}\left(  \lambda\right)
\right\Vert \left\Vert \psi_{m}\left(  \lambda\right)  \right\Vert +\left\Vert
f_{n}\left(  \lambda\right)  \right\Vert \left\Vert \psi_{m}^{\prime}\left(
\lambda\right)  \right\Vert \right) \\
&  =\sum_{n}\left\Vert f_{n}^{\prime}\left(  \lambda\right)  \right\Vert
\sum_{m}\left\Vert \psi_{m}\left(  \lambda\right)  \right\Vert +\sum
_{n}\left\Vert f_{n}\left(  \lambda\right)  \right\Vert \sum_{m}\left\Vert
\psi_{m}^{\prime}\left(  \lambda\right)  \right\Vert \\
&  =\mathbf{\mathrm{\eta}}\left(  \digamma^{\prime}\left(  \lambda\right)
\right)  \mathbf{\mathrm{\eta}}\left(  \Psi\left(  \lambda\right)  \right)
+\mathbf{\mathrm{\eta}}\left(  \digamma\left(  \lambda\right)  \right)
\mathbf{\mathrm{\eta}}\left(  \Psi^{\prime}\left(  \lambda\right)  \right)
<\infty.
\end{align*}
\end{proof}

For an $\ell_{1}\left(  A\right)  $-valued function $\digamma=\left\{
f_{n}\right\}  _{1}^{\infty}$, let $\digamma^{1}=\digamma$ and $\digamma
^{m}=\digamma^{m-1}\digamma$ for $m>1$, so that $\digamma^{m}=\left\{
\phi_{n}\right\}  _{1}^{\infty}$ is an $\ell_{1}\left(  A\right)  $-valued
function for some $A$-valued functions $\phi_{n}$ on $\mathcal{D}$. It follows
by induction from the definition of product of two functions that
\begin{equation}
\digamma^{m}\left(  \lambda\right)  \simeq\digamma\left(  \lambda\right)  ^{m}
\label{f65}%
\end{equation}
for every $\lambda\in\mathcal{D}$.

\begin{corollary}
\label{anals}If $\digamma$ is an analytic $\ell_{1}\left(  A\right)  $-valued
function on $\mathcal{D}$ then $\digamma^{m}$ is an analytic $\ell_{1}\left(
A\right)  $-valued function on $\mathcal{D}$ for every $m>0$.
\end{corollary}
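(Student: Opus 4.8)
The plan is a straightforward induction on $m$, with Lemma \ref{anal} as the engine. For $m=1$ there is nothing to prove, since $\digamma^{1}=\digamma$ is analytic by hypothesis. Assume that $\digamma^{m-1}$ is an analytic $\ell_{1}(A)$-valued function on $\mathcal{D}$ for some $m>1$. By the definition of the power, $\digamma^{m}=\digamma^{m-1}\digamma$, where the product is formed by some fixed renumbering of the two-index sequence obtained from the component functions of $\digamma^{m-1}$ and of $\digamma$. Applying Lemma \ref{anal} with the first factor $\digamma^{m-1}$ (analytic by the inductive hypothesis) and the second factor $\Psi=\digamma$ (analytic by hypothesis), we conclude that $\digamma^{m}=\digamma^{m-1}\digamma$ is an analytic $\ell_{1}(A)$-valued function on $\mathcal{D}$. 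This completes the induction.

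I expect no real obstacle here: the content is entirely carried by Lemma \ref{anal}, whose proof already supplies both the existence of the derivative of a product (obtained from the Leibniz two-index sequence by the same renumbering) and the finiteness of $\eta$ of that derivative. The only point worth a word is that the renumbering used to define $\digamma^{m}$ from $\digamma^{m-1}$ and $\digamma$ is fixed in advance, but Lemma \ref{anal} is stated for an arbitrary (fixed) renumbering of the product, so it applies verbatim at each step of the induction. One may optionally also record, via \eqref{f65}, that $\digamma^{m}(\lambda)\simeq\digamma(\lambda)^{m}$ for every $\lambda\in\mathcal{D}$, which is what makes $\digamma^{m}$ the natural object for computing the tensor spectral radius of $\digamma(\lambda)$ along analytic families.
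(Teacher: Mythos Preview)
Your proof is correct and follows exactly the same approach as the paper, which simply states that the result ``follows from Lemma \ref{anal} by induction.'' Your write-up merely spells out the base case and the inductive step explicitly, and your remark about the fixed renumbering is a harmless clarification already implicit in the setup preceding Lemma \ref{anal}.
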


\begin{proof}
Follows from Lemma \ref{anal} by induction.
\end{proof}

For an $\ell_{1}\left(  A\right)  $-valued function $\digamma$, the function
$\lambda\longmapsto\mathbf{\mathrm{\eta}}\left(  \digamma\left(
\lambda\right)  ^{m}\right)  $ doesn't depend on a renumbering. Moreover, it
follows from (\ref{f65}) that
\begin{equation}
\mathbf{\mathrm{\eta}}\left(  \digamma\left(  \lambda\right)  ^{m}\right)
=\mathbf{\mathrm{\eta}}\left(  \digamma^{m}\left(  \lambda\right)  \right)
\label{f66}%
\end{equation}
for every $\lambda\in\mathcal{D}$.

\begin{lemma}
If $\digamma$ is an analytic $\ell_{1}\left(  A\right)  $-valued function on
$\mathcal{D}$ then the functions $\lambda\longmapsto\mathbf{\mathrm{\eta}%
}\left(  \digamma\left(  \lambda\right)  ^{m}\right)  $ and $\lambda
\longmapsto\log\left(  \mathbf{\mathrm{\eta}}\left(  \digamma\left(
\lambda\right)  ^{m}\right)  \right)  $ are subharmonic on $\mathcal{D}$ for
all $m$.
\end{lemma}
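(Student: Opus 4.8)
The plan is to reduce the statement for general $m$ to the case $m=1$ and then invoke the classical fact that the norm of a Banach-space-valued analytic function is logarithmically subharmonic. By (\ref{f66}) we have $\eta(\digamma(\lambda)^{m})=\eta(\digamma^{m}(\lambda))$ for every $\lambda\in\mathcal{D}$, and by Corollary \ref{anals} the function $\digamma^{m}$ is again an analytic $\ell_{1}(A)$-valued function on $\mathcal{D}$; moreover $\eta(\digamma^{m}(\lambda))=\Vert\digamma^{m}(\lambda)\Vert_{\ell_{1}(A)}$. Hence it suffices to prove: if $\Psi$ is an analytic $\ell_{1}(A)$-valued function on $\mathcal{D}$, then $\lambda\longmapsto\log\Vert\Psi(\lambda)\Vert$ and $\lambda\longmapsto\Vert\Psi(\lambda)\Vert$ are subharmonic on $\mathcal{D}$.

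First I would observe that for every bounded linear functional $\phi$ on $\ell_{1}(A)$ the scalar function $\lambda\longmapsto\phi(\Psi(\lambda))$ is holomorphic on $\mathcal{D}$ (apply $\phi$, which is linear and continuous, to the norm difference quotients defining $\Psi^{\prime}$), so $\lambda\longmapsto\log\vert\phi(\Psi(\lambda))\vert$ is subharmonic on $\mathcal{D}$, with the value $-\infty$ allowed at the zeros. Next, by the Hahn--Banach theorem $\Vert\Psi(\lambda)\Vert=\sup\{\vert\phi(\Psi(\lambda))\vert:\phi\in\ell_{1}(A)^{\ast},\ \Vert\phi\Vert\leq1\}$, whence
\[
\log\Vert\Psi(\lambda)\Vert=\sup\{\log\vert\phi(\Psi(\lambda))\vert:\phi\in\ell_{1}(A)^{\ast},\ \Vert\phi\Vert\leq1\}.
\]
Since $\Psi$ is norm-continuous, the left-hand side is continuous as a map into $[-\infty,\infty)$, in particular upper semicontinuous, so this pointwise supremum of subharmonic functions is itself subharmonic. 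This gives subharmonicity of $\lambda\longmapsto\log\eta(\digamma(\lambda)^{m})$. Finally, composition with the convex nondecreasing function $t\longmapsto e^{t}$ (extended by $e^{-\infty}=0$) preserves subharmonicity, and $e^{\log\Vert\Psi(\lambda)\Vert}=\Vert\Psi(\lambda)\Vert=\eta(\digamma(\lambda)^{m})$, which yields the remaining assertion; so a single argument handles both functions.

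The argument is largely routine, and the only points needing genuine care — the ``main obstacle'' such as it is — are the verifications that legitimize the passage to the supremum: that $\log\Vert\Psi(\cdot)\Vert$ is upper semicontinuous, so that $\sup_{\Vert\phi\Vert\leq1}\log\vert\phi\circ\Psi\vert$ is genuinely subharmonic and not merely a function satisfying the sub-mean-value inequality, and that the value $-\infty$ occurring at common zeros of $\Psi$ causes no trouble in either the $\log$-subharmonicity step or the $e^{t}$-composition step. I would also note that no completeness of $A$, hence of $\ell_{1}(A)$, is required: both the Hahn--Banach theorem and the identity $\Vert x\Vert=\sup_{\Vert\phi\Vert\leq1}\vert\phi(x)\vert$ hold in any normed space; alternatively one may replace $\ell_{1}(A)$ by its completion $\ell_{1}(\widehat{A})$ at the outset, which alters neither $\eta$ nor the analyticity of $\digamma^{m}$.
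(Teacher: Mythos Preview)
Your proof is correct but proceeds in a different order from the paper. The paper first establishes subharmonicity of $\lambda\mapsto\eta(\digamma(\lambda)^m)$ by simply citing Vesentini's result that the norm of a Banach-space-valued analytic function is subharmonic, and then obtains the log-subharmonicity by a separate argument: it observes that for each $\beta\in\mathbb{C}$ the function $\lambda\mapsto|e^{\beta\lambda}|\,\eta(\digamma(\lambda)^m)$ is again the $\ell_1(A)$-norm of an analytic function (namely $\{e^{\beta\lambda}\phi_n(\lambda)\}$), hence subharmonic, and then invokes Rado's theorem to conclude that $\log\eta(\digamma(\lambda)^m)$ is subharmonic. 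You go the other way: you prove log-subharmonicity first, essentially reproving Vesentini's statement via the Hahn--Banach representation $\log\Vert\Psi(\lambda)\Vert=\sup_{\Vert\phi\Vert\le1}\log|\phi(\Psi(\lambda))|$ and the supremum-of-subharmonic-functions criterion, and then recover plain subharmonicity by composing with the convex increasing map $t\mapsto e^t$. Your route is more self-contained (no appeal to Rado's theorem, and the Vesentini citation is unpacked), while the paper's is shorter given the citations; the multiplication-by-$|e^{\beta\lambda}|$ trick used in the paper is exactly the standard device that Rado's theorem converts into log-subharmonicity, so the two arguments are dual to one another.
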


\begin{proof}
As \textbf{$\mathrm{\eta}$}$\left(  \cdot\right)  $ is a norm on $\ell
_{1}\left(  A\right)  $, it is well known (see \cite{Ves}) that $\lambda
\longmapsto\mathbf{\mathrm{\eta}}\left(  \digamma\left(  \lambda\right)
\right)  $ is a subharmonic function. Then it follows from Corollary
\ref{anals} and (\ref{f66}) that $\lambda\longmapsto\mathbf{\mathrm{\eta}%
}\left(  \digamma\left(  \lambda\right)  ^{m}\right)  $ is subharmonic for
every $m$.
Let $\digamma^{m}=\left\{  \phi_{n}\right\}  _{1}^{\infty}$ and $\beta
\in\mathbb{C}$. As
\[
\left\vert \exp\left(  \beta\lambda\right)  \right\vert \mathbf{\mathrm{\eta}%
}\left(  \digamma\left(  \lambda\right)  ^{m}\right)  =\mathbf{\mathrm{\eta}%
}\left(  \left\{  \exp\left(  \beta\lambda\right)  \phi_{n}\left(
\lambda\right)  \right\}  _{1}^{\infty}\right)
\]
by (\ref{f66}) and $\lambda\longmapsto\left\{  \exp\left(  \beta
\lambda\right)  \phi_{n}\left(  \lambda\right)  \right\}  _{1}^{\infty}$
determines an analytic $\ell_{1}\left(  A\right)  $-valued function on
$\mathcal{D}$, then, by above, $\lambda\longmapsto\left\vert \exp\left(
\beta\lambda\right)  \right\vert \mathbf{\mathrm{\eta}}\left(  \digamma\left(
\lambda\right)  ^{m}\right)  $ is subharmonic for every $\beta\in\mathbb{C}$.
It follows from Rado's theorem \cite[Appendix 2, Theorem 9]{Aup} that the
function $\lambda\longmapsto\log\left(  \mathbf{\mathrm{\eta}}\left(
\digamma\left(  \lambda\right)  ^{m}\right)  \right)  $ is subharmonic on
$\mathcal{D}$.
\end{proof}

We use Vesentini's argument for subharmonicity of the usual spectral radius
\cite{Ves} in the following

\begin{theorem}
\label{subhar}If $\digamma$ is an analytic $\ell_{1}\left(  A\right)  $-valued
function on $\mathcal{D}$ then the functions $\log\left(  \rho_{t}\left(
\digamma\right)  \right)  :\lambda\longmapsto\log\left(  \rho_{t}\left(
\digamma\left(  \lambda\right)  \right)  \right)  $ and $\rho_{t}\left(
\digamma\right)  :\lambda\longmapsto\rho_{t}\left(  \digamma\left(
\lambda\right)  \right)  $ are subharmonic on $\mathcal{D}$.
\end{theorem}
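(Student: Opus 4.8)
The plan is to imitate Vesentini's classical argument for the subharmonicity of the ordinary spectral radius, replacing the algebra norm by $\eta$ on $\ell_1(A)$ and using the functions $\eta(\digamma(\lambda)^m)$ that we have just shown to be subharmonic with subharmonic logarithm. The key point is that, by the very definition of $\rho_t$,
\[
\log\rho_t(\digamma(\lambda)) = \inf_m \tfrac{1}{m}\log\eta(\digamma(\lambda)^m),
\]
so $\log\rho_t(\digamma)$ is a pointwise infimum of subharmonic functions; such an infimum is automatically upper semicontinuous as soon as it is, and it is locally bounded above (for instance by $\log\eta(\digamma(\lambda))$, which is continuous). The decreasing-in-$m$ behaviour is not literally monotone, but $\eta(\digamma(\lambda)^{mk})\le \eta(\digamma(\lambda)^m)^k$ gives that the sequence $\tfrac1m\log\eta(\digamma(\lambda)^m)$ has infimum equal to its limit, which is the standard Fekete situation and suffices.

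First I would fix a closed disc $\overline{D}(\lambda_0,r)\subset\mathcal D$ and verify the sub-mean-value inequality
\[
\log\rho_t(\digamma(\lambda_0)) \le \frac{1}{2\pi}\int_0^{2\pi}\log\rho_t\bigl(\digamma(\lambda_0+re^{i\theta})\bigr)\,d\theta .
\]
For each fixed $m$, the function $\tfrac1m\log\eta(\digamma(\lambda)^m)$ is subharmonic by the previous lemma, hence
\[
\tfrac1m\log\eta(\digamma(\lambda_0)^m) \le \frac{1}{2\pi}\int_0^{2\pi}\tfrac1m\log\eta\bigl(\digamma(\lambda_0+re^{i\theta})^m\bigr)\,d\theta .
\]
Now I would pass to the limit/infimum over $m$: on the right-hand side use that $\tfrac1m\log\eta(\digamma(\lambda)^m)\ge \log\rho_t(\digamma(\lambda))$ for every $m$, so the right side is $\ge \frac{1}{2\pi}\int_0^{2\pi}\log\rho_t(\digamma(\lambda_0+re^{i\theta}))\,d\theta$ is false in that direction — instead I must take the infimum over $m$ on the \emph{left}, obtaining $\log\rho_t(\digamma(\lambda_0))$ on the left, and bound the right side from above. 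The correct route is: for the left side take $\inf_m$, getting exactly $\log\rho_t(\digamma(\lambda_0))$; for the right side, since $\tfrac1m\log\eta(\digamma(\lambda)^m)\downarrow\log\rho_t(\digamma(\lambda))$ (in the Fekete sense, the infimum is the limit) and these functions are bounded above locally, the monotone convergence / dominated convergence theorem for the circular integral yields
\[
\lim_m \frac{1}{2\pi}\int_0^{2\pi}\tfrac1m\log\eta\bigl(\digamma(\lambda_0+re^{i\theta})^m\bigr)\,d\theta
= \frac{1}{2\pi}\int_0^{2\pi}\log\rho_t\bigl(\digamma(\lambda_0+re^{i\theta})\bigr)\,d\theta .
\]
Combining these gives the sub-mean-value inequality for $\log\rho_t(\digamma)$; together with upper semicontinuity (from the previous proposition, since $\lambda\mapsto\digamma(\lambda)$ is continuous into $\ell_1(A)$ and $\rho_t$ is upper semicontinuous there) this proves $\log\rho_t(\digamma)$ is subharmonic. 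Finally $\rho_t(\digamma)=\exp(\log\rho_t(\digamma))$ is subharmonic because $\exp$ is convex and increasing, so it preserves subharmonicity.

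The main obstacle is the interchange of $\inf_m$ with the circular integral, i.e. justifying that the integral of the limit equals the limit of the integrals. This needs a uniform-in-$\lambda$ bound on $\tfrac1m\log\eta(\digamma(\lambda)^m)$ on $\overline{D}(\lambda_0,r)$: such a bound follows from $\eta(\digamma(\lambda)^m)\le \eta(\digamma(\lambda))^m$, hence $\tfrac1m\log\eta(\digamma(\lambda)^m)\le \log\eta(\digamma(\lambda))\le \max_{\overline D}\log\eta(\digamma)<\infty$ by continuity of $\eta\circ\digamma$ and compactness. A uniform lower bound is not needed for the monotone convergence argument because we only need the one-sided (decreasing) passage; one must merely note that the functions are bounded above by a constant and decrease to $\log\rho_t(\digamma(\cdot))$, so Lebesgue's monotone convergence theorem (applied to the constant minus the function) applies. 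Care is also needed at points where $\rho_t(\digamma(\lambda))=0$, where $\log\rho_t=-\infty$; there the inequality holds trivially and the integral may legitimately be $-\infty$, which is consistent with the definition of subharmonic function allowing the value $-\infty$.
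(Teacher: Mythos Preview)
Your approach is the same Vesentini-style argument the paper uses, and the conclusion for $\rho_t$ via convex increasing $\exp$ is identical. There is, however, a genuine gap in your limit step. You explicitly note that the sequence $\tfrac1m\log\eta(\digamma(\lambda)^m)$ is \emph{not} monotone in $m$ (only Fekete-subadditive), yet in the last paragraph you invoke Lebesgue's monotone convergence theorem, which requires actual monotonicity. Fekete's lemma tells you that the infimum equals the limit pointwise, but it does not supply the monotone passage under the integral that you need.

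The paper sidesteps this cleanly by restricting to the subsequence $m=2^k$: from $\eta(M^{2n})\le\eta(M^n)^2$ one gets that $2^{-k}\log\eta(\digamma(\lambda)^{2^k})$ is genuinely decreasing in $k$, and its limit is still $\log\rho_t(\digamma(\lambda))$. Then one simply quotes the standard theorem that a decreasing pointwise limit of subharmonic functions is subharmonic (or $\equiv-\infty$), and no integral interchange needs to be argued by hand. Alternatively, your direct sub-mean-value argument can be rescued by using reverse Fatou (valid since all the functions are dominated above by the continuous, hence integrable, $\log\eta(\digamma(\cdot))$ on the circle) instead of monotone convergence; but the powers-of-two trick is the tidier fix and is what the paper does.
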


\begin{proof}
As $\mathbf{\mathrm{\eta}}\left(  \digamma\left(  \lambda\right)  ^{2^{m+1}%
}\right)  \leq\mathbf{\mathrm{\eta}}\left(  \digamma\left(  \lambda\right)
^{2m}\right)  ^{2}$, the function $\lambda\longmapsto\log\left(  \rho
_{t}\left(  \digamma\left(  \lambda\right)  \right)  \right)  $ is a pointwise
limit of the decreasing sequence $\left\{  \lambda\longmapsto2^{-m}\log\left(
\mathbf{\mathrm{\eta}}\left(  \digamma\left(  \lambda\right)  ^{2m}\right)
\right)  \right\}  $ of subharmonic functions and is therefore a subharmonic
function by Theorem 1 of \cite[Appendix 2]{Aup}. Since the function
$t\longmapsto\exp\left(  t\right)  $ is convex and positive for $t\in
\mathbb{R}$, then the function $\lambda\longmapsto\exp\left(  \log\left(
\rho_{t}\left(  \digamma\left(  \lambda\right)  \right)  \right)  \right)
=\rho_{t}\left(  \digamma\left(  \lambda\right)  \right)  $ is also
subharmonic by the same theorem.
\end{proof}

\subsection{The ideal $\mathcal{R}_{t}\left(  A\right)  $ for a normed algebra
$A$\label{sid}}

Let $a$ be an element of a normed algebra $A$, and let $M=\left\{
a_{n}\right\}  _{n=1}^{\infty}$ be a summable family in $A$. Let $\left\{
a\right\}  \sqcup M$ denote the family $\left\{  x_{n}\right\}  _{1}^{\infty}$
with $x_{1}=a$ and $x_{n}=a_{n-1}$ for $n>1$. Note that the useful relation
\begin{equation}
\left(  \left\{  a\right\}  \sqcup M\right)  \left(  \left\{  b\right\}
\sqcup N\right)  \simeq\left\{  ab\right\}  \sqcup aN\sqcup Mb\sqcup MN
\label{fdeco}%
\end{equation}
is valid by our conventions, for any $a,b\in A$ and $M,N\in\ell_{1}\left(
A\right)  $.

Let $\mathcal{R}_{t}\left(  A\right)  $ be the set of all $a\in A$ such that
$\rho_{t}\left(  \left\{  a\right\}  \sqcup M\right)  =\rho_{t}\left(
M\right)  $ for every $M\in\ell_{1}\left(  A\right)  $. It is evident that
$\mathcal{R}_{t}\left(  A\right)  $ consists of quasinilpotent elements of $A$.

\begin{lemma}
\label{ox}Let $a\in A$. If there is $s>0$ such that $\rho_{t}\left(  \left\{
a\right\}  \sqcup M\right)  \leq s\rho_{t}\left(  M\right)  $ for every
$M\in\ell_{1}\left(  A\right)  $ then $a\in\mathcal{R}_{t}\left(  A\right)  $.
\end{lemma}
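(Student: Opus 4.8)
The plan is to fix a family $M\in\ell_{1}(A)$ and prove that $\rho_{t}(\{a\}\sqcup M)=\rho_{t}(M)$. Since $M$ is a subfamily of $\{a\}\sqcup M$ and $\eta$ (hence $\rho_{t}$) is monotone under the subfamily relation, the inequality $\rho_{t}(M)\le\rho_{t}(\{a\}\sqcup M)$ is automatic, so only the reverse one carries content. The idea is to let the coefficient $a$ vary analytically and apply a Liouville-type principle to the resulting subharmonic function, exploiting the scale invariance of the hypothesis.

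First I would consider the $\ell_{1}(A)$-valued function $\digamma(\lambda)=\{\lambda a\}\sqcup M$ on $\mathbb{C}$. It is affine in $\lambda$, hence an analytic $\ell_{1}(A)$-valued function on $\mathbb{C}$, so by Theorem \ref{subhar} the function $g(\lambda):=\rho_{t}(\{\lambda a\}\sqcup M)$ is subharmonic on all of $\mathbb{C}$. The crucial elementary observation is that $g$ depends on $\lambda$ only through $|\lambda|$: in the expansion of $\eta\bigl((\{\lambda a\}\sqcup M)^{n}\bigr)$ as a sum of norms of products of the entries $\lambda a,m_{1},m_{2},\dots$, a product using the entry $\lambda a$ exactly $k$ times equals $\lambda^{k}$ times the product obtained by replacing each $\lambda a$ by $a$, so $\eta\bigl((\{\lambda a\}\sqcup M)^{n}\bigr)=\eta\bigl((\{|\lambda|\,a\}\sqcup M)^{n}\bigr)$ for every $n$, and therefore $g(\lambda)=\rho_{t}(\{|\lambda|\,a\}\sqcup M)$.

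Next I would note that the hypothesis is stable under rescaling of $a$ by a positive constant: for $c>0$ one has $\{ca\}\sqcup M'=c\bigl(\{a\}\sqcup(c^{-1}M')\bigr)$, whence $\rho_{t}(\{ca\}\sqcup M')=c\,\rho_{t}(\{a\}\sqcup(c^{-1}M'))\le cs\,\rho_{t}(c^{-1}M')=s\,\rho_{t}(M')$; the case $c=0$ being trivial since a zero entry contributes nothing, so $\rho_{t}(\{0\}\sqcup M')=\rho_{t}(M')$. Taking $c=|\lambda|$ and $M'=M$ gives $g(\lambda)=\rho_{t}(\{|\lambda|\,a\}\sqcup M)\le s\,\rho_{t}(M)$ for every $\lambda\in\mathbb{C}$. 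Thus $g$ is a nonnegative subharmonic function on $\mathbb{C}$ that is bounded above, hence constant by a Liouville-type theorem for subharmonic functions on the plane (see, e.g., \cite{Aup}); alternatively, $g$ is radial, so $r\longmapsto g(re^{i\theta})$ is a convex function of $\log r$ bounded above and therefore constant. In either case $g(1)=g(0)$, that is $\rho_{t}(\{a\}\sqcup M)=\rho_{t}(\{0\}\sqcup M)=\rho_{t}(M)$. Since $M\in\ell_{1}(A)$ was arbitrary, $a\in\mathcal{R}_{t}(A)$.

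The step needing the most care is the passage from the pointwise-in-$M$ hypothesis to a global statement on $\mathbb{C}$: one must check both that $\digamma$ is entire as an $\ell_{1}(A)$-valued map (so that Theorem \ref{subhar} applies on the whole plane rather than merely on a disc) and that each $|\lambda|a$ inherits the hypothesis with the same constant $s$. Both are the elementary computations indicated above, and granting them the proof reduces to a single application of Theorem \ref{subhar} followed by the Liouville principle.
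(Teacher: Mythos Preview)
Your proof is correct and follows essentially the same approach as the paper: apply Theorem \ref{subhar} to the analytic family $\lambda\mapsto\{\lambda a\}\sqcup M$, observe that the resulting subharmonic function is bounded on $\mathbb{C}$, and conclude by Liouville that it is constant, hence equal to its value at $\lambda=0$. The paper's proof is terse and asserts boundedness without comment; your scaling argument $\{ca\}\sqcup M=c(\{a\}\sqcup c^{-1}M)$ together with the radial dependence on $\lambda$ is exactly the computation that justifies this step.
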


\begin{proof}
As the function $\mu\longmapsto\rho_{t}\left(  \left\{  \mu a\right\}  \sqcup
M\right)  $ is subharmonic by Theorem \ref{subhar} and bounded on $\mathbb{C}%
$, it is constant, whence $\rho_{t}\left(  \left\{  a\right\}  \sqcup
M\right)  =\rho_{t}\left(  M\right)  $.
\end{proof}

\begin{lemma}
\label{unital}$\mathcal{R}_{t}\left(  A\right)  =A\cap\mathcal{R}_{t}\left(
A^{1}\right)  $. If $A$ is complete then $\mathcal{R}_{t}\left(  A\right)
=\mathcal{R}_{t}\left(  A^{1}\right)  $.
\end{lemma}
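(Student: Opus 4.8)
The plan is to prove the first equality $\mathcal{R}_t(A)=A\cap\mathcal{R}_t(A^1)$ in full generality, and then to deduce the completeness refinement from it. If $A$ is already unital there is nothing to do, so I would assume $A$ is non-unital and work with $A^1=A\oplus\mathbb{C}1$ equipped with the unital algebra norm $\|d+\lambda 1\|=\|d\|_A+|\lambda|$, which embeds $A$ isometrically. The inclusion $A\cap\mathcal{R}_t(A^1)\subseteq\mathcal{R}_t(A)$ is immediate: for a family $M\in\ell_1(A)$ all products of its entries stay in $A$, so the numbers $\eta(M^k)$ (hence $\rho_t(M)$) coincide whether computed in $A$ or in $A^1$, and likewise for $\{a\}\sqcup M$ when $a\in A$; thus if $a\in A$ and $\rho_t(\{a\}\sqcup M)=\rho_t(M)$ for every $M\in\ell_1(A^1)$, the same holds for every $M\in\ell_1(A)$, i.e.\ $a\in\mathcal{R}_t(A)$.

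The substance is the reverse inclusion $\mathcal{R}_t(A)\subseteq\mathcal{R}_t(A^1)$. Fix $a\in\mathcal{R}_t(A)$ and an arbitrary $N=\{c_n\}_1^\infty\in\ell_1(A^1)$; the goal is $\rho_t(\{a\}\sqcup N)=\rho_t(N)$. Writing $c_n=d_n+\lambda_n1$ with $d_n\in A$, I set $N_A=\{d_n\}_1^\infty\in\ell_1(A)$, $N_0=\{\lambda_n1\}_1^\infty$, and $c=\eta(N_0)=\sum_n|\lambda_n|<\infty$ (finite because the character $A^1\to\mathbb{C}$ is bounded). For $\mu\in\mathbb{C}$ one has the elementwise identity of families $\{\mu a\}\sqcup N=(\{\mu a\}\sqcup N_A)+(\{0\}\sqcup N_0)$, so Proposition \ref{change} gives $\rho_t(\{\mu a\}\sqcup N)\le\rho_t\big((\{\mu a\}\sqcup N_A)\sqcup(\{0\}\sqcup N_0)\big)$; since $\{0\}\sqcup N_0$ consists of central elements of $A^1$ it commutes with $\{\mu a\}\sqcup N_A$, and Proposition \ref{commute} bounds this by $\rho_t(\{\mu a\}\sqcup N_A)+\rho_t(\{0\}\sqcup N_0)=\rho_t(\{\mu a\}\sqcup N_A)+c$. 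Finally, using the homogeneity $\rho_t(\mu X)=|\mu|\,\rho_t(X)$, that $\mu^{-1}N_A\in\ell_1(A)$, and that $a\in\mathcal{R}_t(A)$, I get $\rho_t(\{\mu a\}\sqcup N_A)=|\mu|\,\rho_t(\{a\}\sqcup\mu^{-1}N_A)=|\mu|\,\rho_t(\mu^{-1}N_A)=\rho_t(N_A)$ for $\mu\neq0$, and trivially the same value at $\mu=0$. Hence $\rho_t(\{\mu a\}\sqcup N)\le\rho_t(N_A)+c$ for all $\mu\in\mathbb{C}$.

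Now $\mu\mapsto\{\mu a\}\sqcup N$ is an (affine, hence) analytic $\ell_1(A^1)$-valued function on $\mathbb{C}$, so by Theorem \ref{subhar} the map $\mu\mapsto\rho_t(\{\mu a\}\sqcup N)$ is subharmonic on $\mathbb{C}$; being bounded above by the constant $\rho_t(N_A)+c$, it is constant, exactly as in the proof of Lemma \ref{ox}. Evaluating at $\mu=0$ and using $\rho_t(\{0\}\sqcup N)=\rho_t(N)$ (the zero entry contributes nothing to any $\eta((\{0\}\sqcup N)^k)=\eta(N^k)$) yields $\rho_t(\{a\}\sqcup N)=\rho_t(N)$, so $a\in\mathcal{R}_t(A^1)$, completing the first equality. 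For the completeness statement, assume $A$ is a Banach algebra; if $A$ is non-unital it is a proper closed ideal of the Banach algebra $A^1$, so every $d\in A$ is non-invertible in $A^1$ and hence $0\in\sigma_{A^1}(d)$; thus any $x=d+\lambda1\in A^1$ with $\lambda\neq0$ satisfies $\lambda\in\lambda+\sigma_{A^1}(d)=\sigma_{A^1}(x)$ and so is not quasinilpotent. Since $\mathcal{R}_t(A^1)$ consists of quasinilpotents, $\mathcal{R}_t(A^1)\subseteq A$, whence $\mathcal{R}_t(A^1)=A\cap\mathcal{R}_t(A^1)=\mathcal{R}_t(A)$.

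I expect the only real obstacle to be the uniform bound in the second paragraph: the naive estimate $\rho_t(\{a\}\sqcup N)\le\rho_t(N_A)+c$ obtained from splitting off the scalar part is genuinely too weak to prove the claimed equality directly (its right-hand side can exceed $\rho_t(N)$). The trick that makes it work is to not aim for a sharp bound at all, but to use this crude estimate only as an a priori boundedness statement for the subharmonic function $\mu\mapsto\rho_t(\{\mu a\}\sqcup N)$ and then extract the conclusion from the maximum principle — the same device already exploited in Lemma \ref{ox}. Everything else is routine bookkeeping with the operations $\sqcup$, $+$, and scalar multiplication of families.
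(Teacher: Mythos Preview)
Your proof is correct and follows essentially the same route as the paper's: decompose a family in $A^{1}$ into its $A$-part and its scalar part, use Propositions \ref{change} and \ref{commute} together with $a\in\mathcal{R}_{t}(A)$ to bound $\rho_{t}(\{\mu a\}\sqcup N)$ uniformly in $\mu$, and then invoke subharmonicity (Theorem \ref{subhar}) to force the function to be constant. Your treatment is slightly more explicit in spelling out the homogeneity step $\rho_{t}(\{\mu a\}\sqcup N_{A})=\rho_{t}(N_{A})$ and in the completeness argument, but the structure is the same.
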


\begin{proof}
Let $A$ be non-unital and $a\in\mathcal{R}_{t}\left(  A\right)  $. As
$A^{1}=A\oplus\mathbb{C}$, for every summable family $M$ in $A^{1}$ there are
$N\in\ell_{1}\left(  A\right)  $ and $K\in\ell_{1}\left(  \mathbb{C}\right)  $
such that $M=N+K$. Since $N$ and $K$ commute, then
\[
\rho_{t}\left(  M\right)  =\rho_{t}\left(  N+K\right)  \leq\rho_{t}\left(
N\sqcup K\right)  \leq\rho_{t}\left(  N\right)  +\rho_{t}\left(  K\right)
\]
by Propositions \ref{change} and \ref{commute}. Hence, as $\left\{  \mu
a\right\}  \sqcup N$ and $\left\{  0\right\}  \sqcup K$ commute,
\begin{align*}
\rho_{t}\left(  \left\{  \mu a\right\}  \sqcup M\right)   &  \leq\rho
_{t}\left(  \left\{  \mu a\right\}  \sqcup N+\left\{  0\right\}  \sqcup
K\right) \\
&  \leq\rho_{t}\left(  \left\{  \mu a\right\}  \sqcup N\right)  +\rho
_{t}\left(  \left\{  0\right\}  \sqcup K\right)  =\rho_{t}\left(  N\right)
+\rho_{t}\left(  K\right)  <\infty
\end{align*}
for every $\mu\in\mathbb{C}$. Therefore $\mu\longmapsto\rho_{t}\left(
\left\{  \mu a\right\}  \sqcup M\right)  $ is constant, and as a consequence,
\[
\rho_{t}\left(  \left\{  a\right\}  \sqcup M\right)  =\rho_{t}\left(
M\right)  .
\]
As $M$ is arbitrary, $a\in\mathcal{R}_{t}\left(  A^{1}\right)  $. So
$\mathcal{R}_{t}\left(  A\right)  \subset\mathcal{R}_{t}\left(  A^{1}\right)
$.
On the other hand, $A\cap\mathcal{R}_{t}\left(  A^{1}\right)  \subset
\mathcal{R}_{t}\left(  A\right)  $ by definition. So we obtain that
\[
\mathcal{R}_{t}\left(  A\right)  =A\cap\mathcal{R}_{t}\left(  A^{1}\right)  .
\]
Let now $A$ be complete. We show that $\mathcal{R}_{t}\left(  A^{1}\right)
\subset A$. Indeed, if $a-\lambda\in\mathcal{R}_{t}\left(  A^{1}\right)  $
with $a\in A$ and $\lambda\in\mathbb{C}$ then $a-\lambda$ is a quasinilpotent
element of $A^{1}$. This means that the spectrum $\sigma\left(  a\right)  $ of
$a$ is equal to $\left\{  \lambda\right\}  $. But, as $a\in A$ and $A$ is not
unital, $\sigma\left(  a\right)  $ contains zero. Therefore $\lambda=0$.
So, if $A$ is complete, $\mathcal{R}_{t}\left(  A^{1}\right)  \subset A$,
whence $\mathcal{R}_{t}\left(  A^{1}\right)  =\mathcal{R}_{t}\left(  A\right)
$ by above.
\end{proof}

\begin{theorem}
$\mathcal{R}_{t}\left(  A\right)  $ is a closed ideal of $A$.
\end{theorem}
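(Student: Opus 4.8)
The plan is to show three things: $\mathcal{R}_t(A)$ is closed under scalar multiples (trivial), closed under addition and multiplication by arbitrary elements of $A$ (the ideal property), and topologically closed. I would treat the algebraic properties first, then closedness, and reduce to the unital case at the outset using Lemma \ref{unital}, so that the identity family $M^0$ is available and decompositions like (\ref{fdeco}) are fully usable.

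\emph{Ideal property.} Let $a,b\in\mathcal{R}_t(A)$ and let $M\in\ell_1(A)$ be arbitrary. To handle additivity, the key observation is that $\{a+b\}\sqcup M$ should be compared with $\{a\}\sqcup(\{b\}\sqcup M)$; more precisely, $M+N$ type estimates via Proposition \ref{change} give $\rho_t(\{a+b\}\sqcup M)\le\rho_t(\{a\}\sqcup\{b\}\sqcup M)$, and then applying the defining property of $a$ (with the family $\{b\}\sqcup M$) and of $b$ (with the family $M$) in turn yields $\rho_t(\{a\}\sqcup\{b\}\sqcup M)=\rho_t(\{b\}\sqcup M)=\rho_t(M)$. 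For absorption, I want to show $xa\in\mathcal{R}_t(A)$ and $ax\in\mathcal{R}_t(A)$ for every $x\in A$; by Lemma \ref{ox} it suffices to find a single constant $s>0$ (depending on $x$) with $\rho_t(\{xa\}\sqcup M)\le s\,\rho_t(M)$ for all $M$. Here the natural device is to exploit (\ref{f62}) and the commutation tricks in Proposition \ref{change}: from $\left(\{xa\}\sqcup M\right)^{n+1}$ one can, using (\ref{fdeco}) repeatedly and the cyclic estimate $\eta((PQ)^{n+1})\le\eta(P)\eta((QP)^n)\eta(Q)$, push the factor $x$ around until one sees a family of the form $\{a\}\sqcup(\text{something built from }x,M)$, whose tensor spectral radius is controlled by $\rho_t$ of that something, which in turn is $\le C\,\rho_t(M)$ since prepending finitely many fixed elements and multiplying $M$ by a fixed element scales $\rho_t$ by a constant. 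I expect this absorption step, i.e.\ producing the uniform constant $s$ cleanly, to be the main obstacle; the cleanest route is probably: $xa\in\mathcal{R}_t(A)$ iff $x\mathcal{R}_t(A)\subset\mathcal{R}_t(A)$, and to prove the latter directly from $\rho_t(\{xa\}\sqcup M)\le\rho_t(\{a\}\sqcup\{x\}M\sqcup Mx\sqcup \cdots)$-type expansions combined with Lemma \ref{ox}.

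\emph{Closedness.} Suppose $a_k\to a$ in $A$ with $a_k\in\mathcal{R}_t(A)$, and fix $M\in\ell_1(A)$. For each $k$ write $a=a_k+c_k$ with $\|c_k\|\to 0$. Then $\{a\}\sqcup M$ and $\{a_k\}\sqcup(\{c_k\}\sqcup M)$ are comparable via Proposition \ref{change}, so
\[
\rho_t(\{a\}\sqcup M)\le\rho_t\bigl(\{a_k\}\sqcup\{c_k\}\sqcup M\bigr)=\rho_t\bigl(\{c_k\}\sqcup M\bigr),
\]
using that $a_k\in\mathcal{R}_t(A)$ applied to the family $\{c_k\}\sqcup M$. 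Now $\{c_k\}\sqcup M\to M$ in $\ell_1(A)$ (since $\mathrm d(\{c_k\}\sqcup M,\{0\}\sqcup M)=\|c_k\|\to 0$ and $\{0\}\sqcup M\simeq M$), so upper semicontinuity of $\rho_t$ on $\ell_1(A)$ (the Proposition preceding Theorem \ref{subhar}) gives $\limsup_k\rho_t(\{c_k\}\sqcup M)\le\rho_t(M)$. Hence $\rho_t(\{a\}\sqcup M)\le\rho_t(M)$, and the reverse inequality is automatic since $M\sqsubset\{a\}\sqcup M$ forces $\eta(M^n)\le\eta((\{a\}\sqcup M)^n)$ for all $n$. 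Therefore $a\in\mathcal{R}_t(A)$.

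Putting these together, $\mathcal{R}_t(A)$ is a linear subspace absorbing multiplication by $A$ on both sides and closed in norm, i.e.\ a closed ideal.
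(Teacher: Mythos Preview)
Your arguments for additivity and closedness are fine, and in fact your closedness argument via upper semicontinuity is a nice alternative to the paper's route (the paper instead writes $\mu c=a+x$ with $a\in\mathcal{R}_t(A)$, $\|x\|\le 1$ for each $\mu$, bounds $\rho_t(\{\mu c\}\sqcup M)\le 2+\eta(M)$, and applies subharmonicity). For additivity the paper uses Proposition~\ref{abs} with $\{2^{-1}(a+b)\}\sqcup M\in\operatorname{abs}_t(\{a\}\sqcup\{b\}\sqcup M)$ rather than your $M+N$ device; both work.

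The real gap is in your absorption step. Your plan is to invoke Lemma~\ref{ox} by producing a constant $s>0$ (depending only on $x$) with $\rho_t(\{xa\}\sqcup M)\le s\,\rho_t(M)$ for \emph{all} $M$. No such $s$ can exist: take $M$ consisting of zeros, so $\rho_t(M)=0$, while $\{xa\}\sqcup M$ contains $xa$ and need not be tensor quasinilpotent a priori. More tellingly, your intuition that ``prepending finitely many fixed elements \dots\ scales $\rho_t$ by a constant'' is false for exactly this reason: adjoining $1$ (or any non-quasinilpotent element) to a family with $\rho_t=0$ pushes $\rho_t$ up to at least $1$. So the route through a uniform $s$ and Lemma~\ref{ox} is blocked.

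What the paper does instead is bypass Lemma~\ref{ox} and argue directly with subharmonicity using an $M$-\emph{dependent} bound. The key observation (which your ``push the factor $x$ around'' discussion circles but never lands on) is the subfamily inclusion
\[
\{\mu a x\}\sqcup M \;\sqsubset\; \bigl(\{\mu a\}\sqcup\{1\}\sqcup\{x\}\sqcup M\bigr)^{2},
\]
valid by (\ref{fdeco}) since $\mu ax=(\mu a)\cdot x$ and each $m_i=1\cdot m_i$ appear among the products. Together with (\ref{f62}) and $a\in\mathcal{R}_t(A)$ this gives
\[
\rho_t(\{\mu a x\}\sqcup M)\;\le\;\rho_t\bigl(\{\mu a\}\sqcup\{1\}\sqcup\{x\}\sqcup M\bigr)^{2}
\;=\;\rho_t\bigl(\{1\}\sqcup\{x\}\sqcup M\bigr)^{2},
\]
a bound independent of $\mu$. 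Subharmonicity (Theorem~\ref{subhar}) then forces $\mu\mapsto\rho_t(\{\mu ax\}\sqcup M)$ to be constant, so $ax\in\mathcal{R}_t(A)$; the argument for $xa$ is symmetric. Once you have this one-line subfamily trick, the rest of your outline goes through.
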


\begin{proof}
Consider first the case when $A$ has the identity element $1$. Let
$a,b\in\mathcal{R}_{t}\left(  A\right)  $. As
\[
\rho_{t}\left(  \left\{  \mu a\right\}  \sqcup M\right)  =\left\vert
\mu\right\vert \rho_{t}\left(  \left\{  a\right\}  \sqcup\mu^{-1}M\right)
=\left\vert \mu\right\vert \rho_{t}\left(  \mu^{-1}M\right)  =\rho_{t}\left(
M\right)
\]
for every $M\in\ell_{1}\left(  A\right)  $ and non-zero $\mu\in\mathbb{C}$,
then $\mu a\in\mathcal{R}_{t}\left(  A\right)  $ for every $\mu\in\mathbb{C}$.
Since $\left\{  2^{-1}\left(  a+b\right)  \right\}  \sqcup M\in
\operatorname*{abs}_{t}\left(  \left\{  a\right\}  \sqcup\left\{  b\right\}
\sqcup M\right)  $ for every $M\in\ell_{1}\left(  A\right)  $, then, by
Proposition \ref{abs},
\[
\rho_{t}\left(  \left\{  2^{-1}\left(  a+b\right)  \right\}  \sqcup M\right)
\leq\rho_{t}\left(  \left\{  a\right\}  \sqcup\left\{  b\right\}  \sqcup
M\right)  =\rho_{t}\left(  M\right)  ,
\]
whence $a+b\in\mathcal{R}_{t}\left(  A\right)  $ by Lemma \ref{ox}. So
$\mathcal{R}_{t}\left(  A\right)  $ is a subspace of $A$.
Let $x\in A$. Then
\[
\rho_{t}\left(  \left\{  \mu a\right\}  \sqcup\left\{  1\right\}
\sqcup\left\{  x\right\}  \sqcup M\right)  \leq t_{0}:=\rho_{t}\left(
\left\{  1\right\}  \sqcup\left\{  x\right\}  \sqcup M\right)  <\infty
\]
for every $\mu\in\mathbb{C}$. Therefore
\[
\rho_{t}\left(  \left(  \left\{  \mu a\right\}  \sqcup\left\{  1\right\}
\sqcup\left\{  x\right\}  \sqcup M\right)  ^{2}\right)  \leq t_{0}^{2}%
\]
by (\ref{f62}). Since $\left\{  \mu ax\right\}  \sqcup M$ is a subfamily of
$\left(  \left\{  \mu a\right\}  \sqcup\left\{  1\right\}  \sqcup\left\{
x\right\}  \sqcup M\right)  ^{2}$ in virtue of (\ref{fdeco}), we obtain that
\[
\rho_{t}\left(  \left\{  \mu ax\right\}  \sqcup M\right)  \leq t_{0}^{2}%
\]
for every $\mu\in\mathbb{C}$ and for every $M\in\ell_{1}\left(  A\right)  $
with $t_{0}$ depending only on $x$ and $M$. Therefore $\mu\longmapsto\rho
_{t}\left(  \left\{  \mu ax\right\}  \sqcup M\right)  $ is bounded on
$\mathbb{C}$. As this function is subharmonic, it is constant, whence
$ax\in\mathcal{R}_{t}\left(  A\right)  $, and, similarly, $xa\in
\mathcal{R}_{t}\left(  A\right)  $. Thus $\mathcal{R}_{t}\left(  A\right)  $
is an ideal of $A$.
Note that
\begin{align*}
\rho_{t}\left(  \left\{  a+x\right\}  \sqcup M\right)   &  \leq\rho_{t}\left(
\left\{  2a\right\}  \sqcup\left\{  2x\right\}  \sqcup M\right)  =\rho
_{t}\left(  \left\{  2x\right\}  \sqcup M\right) \\
&  \leq\mathbf{\mathrm{\eta}}\left(  \left\{  2x\right\}  \sqcup M\right)
=\left\Vert 2x\right\Vert +\mathbf{\mathrm{\eta}}\left(  M\right)
\end{align*}
for every $a\in\mathcal{R}_{t}\left(  A\right)  $ and $x\in A$. Now if $c$ is
in the closure of $\mathcal{R}_{t}\left(  A\right)  $, then for every $\mu
\in\mathbb{C}$ there are $a\in\mathcal{R}_{t}\left(  A\right)  $ and $x\in A$
with $\left\Vert x\right\Vert \leq1$ such that $\mu c=a+x$. Hence
\[
\rho_{t}\left(  \left\{  \mu c\right\}  \sqcup M\right)  =\rho_{t}\left(
\left\{  a+x\right\}  \sqcup M\right)  \leq2+\mathbf{\mathrm{\eta}}\left(
M\right)
\]
for every $M\in\ell_{1}\left(  A\right)  $. So $\mu\longmapsto\rho_{t}\left(
\left\{  \mu c\right\}  \sqcup M\right)  $ is bounded and therefore constant,
whence $c\in\mathcal{R}_{t}\left(  A\right)  $. Thus $\mathcal{R}_{t}\left(
A\right)  $ is a closed ideal of $A$.
Now assume that $A$ is not unital. We already have proved that $\mathcal{R}%
_{t}\left(  A^{1}\right)  $ is a closed ideal of $A^{1}$. Then Lemma
\ref{unital} shows that $\mathcal{R}_{t}\left(  A\right)  $ is a closed ideal
of $A$.
\end{proof}

\begin{corollary}
\label{rad}If $A$ is a Banach algebra then $\mathcal{R}_{t}\left(  A\right)
\subset\mathrm{Rad}\left(  A\right)  $.
\end{corollary}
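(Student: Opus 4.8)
The plan is to deduce this directly from the theorem just established together with the classical description of the Jacobson radical of a Banach algebra. Recall that, as already used in the proof of Proposition~\ref{predv1}(i), $\mathrm{Rad}(A)$ is the largest (two-sided) ideal of $A$ all of whose elements are quasinilpotent. By the preceding theorem, $\mathcal{R}_t(A)$ is a closed ideal of $A$; and, as was observed immediately after its definition, every element of $\mathcal{R}_t(A)$ is quasinilpotent (apply the defining identity $\rho_t(\{a\}\sqcup M)=\rho_t(M)$ with $M$ the zero family, for which $\rho_t(\{a\}\sqcup M)=\rho(a)$). Hence $\mathcal{R}_t(A)$ is an ideal consisting of quasinilpotents, and therefore $\mathcal{R}_t(A)\subset\mathrm{Rad}(A)$.

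If one prefers to avoid the maximality statement, here is the self-contained version of the same argument: for $a\in\mathcal{R}_t(A)$ and arbitrary $x\in A$, the product $ax$ again lies in the ideal $\mathcal{R}_t(A)$, hence is quasinilpotent, i.e.\ $\rho(ax)=0$; since $x$ was arbitrary, the standard characterization of the radical gives $a\in\mathrm{Rad}(A)$. When $A$ is non-unital one can, if desired, first pass to $A^{1}$ using Lemma~\ref{unital} (which gives $\mathcal{R}_t(A)=\mathcal{R}_t(A^{1})$) and then use $A\cap\mathrm{Rad}(A^{1})=\mathrm{Rad}(A)$.

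There is essentially no genuine obstacle here: the statement is a formal corollary once the theorem and the quasinilpotence of the elements of $\mathcal{R}_t(A)$ are in hand. The only point needing a moment's attention is making sure the chosen characterization of $\mathrm{Rad}(A)$ is applied to a possibly non-unital algebra, which is handled as indicated above.
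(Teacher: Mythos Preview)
Your proof is correct and matches the paper's approach exactly: the paper also simply notes that $\mathcal{R}_t(A)$ is an ideal consisting of quasinilpotents, hence is contained in $\mathrm{Rad}(A)$. Your additional self-contained variant and the remark on the non-unital case are fine but unnecessary here.
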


\begin{proof}
Indeed, $\mathcal{R}_{t}\left(  A\right)  $ is an ideal of $A$ consisting of
quasinilpotents. So we have that $\mathcal{R}_{t}\left(  A\right)
\subset\mathrm{Rad}\left(  A\right)  $.
\end{proof}

\begin{theorem}
\label{multi}If $\rho_{t}\left(  aM\right)  =0$ for every $M\in\ell_{1}\left(
A\right)  $ then $a\in\mathcal{R}_{t}\left(  A\right)  $.
\end{theorem}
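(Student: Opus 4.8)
The plan is to prove the equivalent inequality $\rho_t(\{a\}\sqcup M)\le\rho_t(M)$ for every $M=\{b_n\}_1^\infty\in\ell_1(A)$; since $M$ is a subfamily of $\{a\}\sqcup M$ we always have $\rho_t(M)\le\rho_t(\{a\}\sqcup M)$, so this forces $\rho_t(\{a\}\sqcup M)=\rho_t(M)$, i.e.\ $a\in\mathcal R_t(A)$. First I would pass to the tensor model of Theorem \ref{tsr}(ii): let $B=\ell_1(G)$ be the semigroup algebra of the free unital semigroup $G$ on generators $w_1,w_2,\dots$, with $L=\{w_k\}_1^\infty$, and identify $A\widehat\otimes B$ with $\ell_1(G,A)$ as in its proof. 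Put $\alpha=a\otimes w_1$ and $y=\sum_{n\ge1}b_n\otimes w_{n+1}$. Then $(\{a\}\sqcup M)_\otimes L=\alpha+y$, so $\rho_t(\{a\}\sqcup M)=\rho(\alpha+y)$, while the computation of Theorem \ref{tsr}(ii) applied to the free set $\{w_2,w_3,\dots\}$ gives $\rho(y)=\rho_t(M)$. Moreover $\rho(\alpha)=\rho(a)=0$, since applying the hypothesis to $M=\{a\}$ gives $\rho(a^2)=\rho_t(a\{a\})=0$. Thus the whole problem reduces to showing $\rho(\alpha+y)\le\rho(y)$.

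The engine is the following use of the hypothesis. Let $D$ be the closed subalgebra of $A\widehat\otimes B$ generated by $y$; as an element of $\ell_1(G,A)$, each $c\in D$ is supported on the sub-semigroup $G'$ generated by $w_2,w_3,\dots$, say $c=\sum_{g\in G'}c_g\otimes g$ with $\{c_g\}_g\in\ell_1(A)$. Then $c\alpha=\sum_{g\in G'}(c_g a)\otimes(gw_1)$, and since no $g\in G'$ contains the letter $w_1$, the products $g_1w_1g_2w_1\cdots g_kw_1$ are pairwise distinct, so there is no cancellation in the powers and $\|(c\alpha)^k\|=\eta(\Gamma^k)$, where $\Gamma=\{c_g a\}_g=\{c_g\}_g\{a\}\in\ell_1(A)$. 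Hence, by Proposition \ref{change} (cyclicity of $\rho_t$ under products of families),
\[
\rho(c\alpha)=\rho_t\bigl(\{c_g\}_g\,\{a\}\bigr)=\rho_t\bigl(\{a\}\,\{c_g\}_g\bigr)=\rho_t\bigl(a\cdot\{c_g\}_g\bigr)=0 ,
\]
the last equality since $\{c_g\}_g\in\ell_1(A)$.

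Now fix $\lambda$ with $|\lambda|>\rho(y)$. Then $\lambda-y$ is invertible with $(\lambda-y)^{-1}=\lambda^{-1}\mathbf 1+q$, where $q=\sum_{j\ge1}\lambda^{-j-1}y^j\in D$, so
\[
\lambda-(\alpha+y)=(\lambda-y)\bigl(\mathbf 1-\lambda^{-1}\alpha-q\alpha\bigr)=(\lambda-y)(\mathbf 1-\lambda^{-1}\alpha)\bigl(\mathbf 1-u\,q\alpha\bigr),
\]
with $u=(\mathbf 1-\lambda^{-1}\alpha)^{-1}$, which exists because $\alpha$ is quasinilpotent. It remains to see $\rho(u\,q\alpha)<1$; in fact it is $0$. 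Writing $u=\mathbf 1+\sum_{i\ge1}\lambda^{-i}\alpha^i$ gives $uq\alpha=q\alpha+\sum_{i\ge1}\lambda^{-i}\alpha^i q\alpha$, which is supported on the words $w_1^{\,i}gw_1$ ($i\ge0$, $g\in G'\setminus\{1\}$); these again factor uniquely under multiplication, so $\|(uq\alpha)^k\|=\eta(V^k)$ for a summable family of the shape $V=\bigsqcup_{i\ge0}\lambda^{-i}a^i\cdot(Na)$, where $N\in\ell_1(A)$ is assembled from the powers $M^j$ (a convergence estimate using $\rho(a)=0$ and $|\lambda|>\rho_t(M)$ shows $V$ and $N$ are genuinely summable). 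Cycling one factor $a$ to the front via Proposition \ref{change} gives $\rho_t(V)=\rho_t(aN')$ with $N'=\bigsqcup_{i\ge0}\lambda^{-i}a^iN\in\ell_1(A)$, hence $=0$. So $\mathbf 1-uq\alpha$ is invertible, $\lambda\notin\sigma(\alpha+y)$, and letting $|\lambda|\downarrow\rho(y)$ yields $\rho(\alpha+y)\le\rho(y)=\rho_t(M)$, completing the argument.

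The hard part will be the bookkeeping behind the ``no cancellation'' assertions — making precise that multiplication by $\alpha$ codes the extra summand by inserting isolated copies of the letter $w_1$, so that norms of powers in the tensor model equal $\eta$ of honest families in $\ell_1(A)$ — together with the convergence estimates ensuring that $\Gamma$, $q\alpha$, $V$, $N$, $N'$ really are summable. A more delicate point is that the scalar (constant) terms of $(\lambda-y)^{-1}$ and of $u$ must be peeled off before the hypothesis can be invoked, because the hypothesis speaks only about $\ell_1(A)$, not $\ell_1(A^1)$; this is handled by absorbing those terms through the invertibility of $\mathbf 1-\lambda^{-1}\alpha$, which is also what lets the proof cover a non-unital $A$ without a separate reduction.
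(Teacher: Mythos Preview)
Your argument is correct and takes a genuinely different route from the paper's proof. The paper works directly with $\eta$: assuming $A$ unital and $\eta(M)<1$, it forms $N=\sqcup_{i\ge 0}M^{i}$ (with $M^{0}$ containing the unit), applies the hypothesis once to get $\rho_t(aN)=0$, and then bounds $\eta\bigl((\{\mu a\}\sqcup M)^{n}\bigr)$ by a combinatorial decomposition in which the terms containing $i$ copies of $a$ are dominated by $2^{n}|\mu|^{i}\eta\bigl((aN)^{i}\bigr)$. This yields a bound independent of $\mu$, after which subharmonicity of $\mu\mapsto\rho_t(\{\mu a\}\sqcup M)$ (Theorem~\ref{subhar}) forces the function to be constant, hence equal to $\rho_t(M)$. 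The non-unital case is deduced from the unital one via Lemma~\ref{unital}.

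You instead pass to the tensor model $A\widehat\otimes\ell_1(G)$, where the free semigroup makes the ``no cancellation'' bookkeeping transparent, and replace the subharmonicity step by a resolvent factorization $\lambda-(\alpha+y)=(\lambda-y)(\mathbf 1-\lambda^{-1}\alpha)(\mathbf 1-uq\alpha)$. The crucial input $\rho(uq\alpha)=0$ comes from the same use of the hypothesis (after cycling one factor $a$ via Proposition~\ref{change}) that the paper uses for $\rho_t(aN)=0$; your family $N'=\sqcup_{i\ge 0}\lambda^{-i}a^{i}N$ plays the role of the paper's $N$. Your approach buys independence from Theorem~\ref{subhar} and handles the non-unital case uniformly, at the cost of the tensor model overhead and the unique-factorization verifications (which are straightforward since each $g_j$ is nonempty and $w_1$-free). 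Note that your first ``engine'' paragraph about $\rho(c\alpha)=0$ for $c\in D$ is only motivational: it does not apply directly because $u\notin D$, and the real work is the second computation with $uq\alpha$.
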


\begin{proof}
Let first $A$ have the identity element $1$. Let $M\in\ell_{1}\left(
A\right)  $. Multiplying $M$ by a scalar, one can assume that%
\begin{equation}
\mathbf{\mathrm{\eta}}\left(  M\right)  <1. \label{f70}%
\end{equation}
Then $N:=\sqcup_{i=0}^{\infty}M^{i}$ is a summable family in $A$ by Lemma
\ref{ppp}, where $M^{0}=\left\{  x_{n}\right\}  _{n=1}^{\infty}$ with
$x_{1}=1$ and $x_{i}=0$ for every $i>1$ as usual. By condition, we have that
$\rho_{t}\left(  aN\right)  =0$. Let $\mu\in\mathbb{C}$ be non-zero and take
$\varepsilon>0$ such that $\varepsilon\left\vert \mu\right\vert <2^{-1}.$ Then
there is $t>0$ such that
\begin{equation}
\mathbf{\mathrm{\eta}}\left(  \left(  aN\right)  ^{n}\right)  \leq
t\varepsilon^{n} \label{f71}%
\end{equation}
for every $n>0$. We have that
\begin{align*}
\mathbf{\mathrm{\eta}}\left(  \left(  \left\{  \mu a\right\}  \sqcup M\right)
^{n}\right)   &  =\sum_{i=0}^{n}\left\vert \mu\right\vert ^{i}\sum_{%
{\textstyle\sum_{k=0}^{i}}
m_{k}=n-i}\mathbf{\mathrm{\eta}}\left(  M^{m_{0}}aM^{m_{1}}\cdots aM^{m_{i}%
}\right) \\
&  \overset{(\ref{f70})}{\leq}\mathbf{\mathrm{\eta}}\left(  M^{n}\right)
+\sum_{i=1}^{n}\left\vert \mu\right\vert ^{i}\sum_{%
{\textstyle\sum_{k=0}^{i}}
m_{k}=n-i}\mathbf{\mathrm{\eta}}\left(  aM^{m_{1}}\cdots aM^{m_{i}}\right)
\end{align*}
for every $n>0$. As, for every $i>0$, the number of summands
$\mathbf{\mathrm{\eta}}\left(  aM^{m_{1}}\cdots aM^{m_{i}}\right)  $ is less
than or equal to $2^{n}$ and every such summand is less than or equal to
$\mathbf{\mathrm{\eta}}\left(  \left(  aN\right)  ^{i}\right)  $, then we
obtain that%
\begin{align*}
\mathbf{\mathrm{\eta}}\left(  \left(  \left\{  \mu a\right\}  \sqcup M\right)
^{n}\right)   &  \leq\mathbf{\mathrm{\eta}}\left(  M^{n}\right)  +2^{n}%
\sum_{i=1}^{n}\left\vert \mu\right\vert ^{i}\mathbf{\mathrm{\eta}}\left(
\left(  aN\right)  ^{i}\right)  \overset{(\ref{f71})}{\leq}%
\mathbf{\mathrm{\eta}}\left(  M^{n}\right)  +2^{n}t_{0}\sum_{i=1}%
^{n}\left\vert \mu\right\vert ^{i}\varepsilon^{i}\\
&  \leq\mathbf{\mathrm{\eta}}\left(  M^{n}\right)  +2^{n}t\leq2\max\left\{
\mathbf{\mathrm{\eta}}\left(  M^{n}\right)  ,2^{n}t\right\}
\end{align*}
Taking $n$-roots and passing to limits, we get $\rho_{t}\left(  \left\{  \mu
a\right\}  \sqcup M\right)  \leq\max\left\{  \rho_{t}\left(  M\right)
,2\right\}  $ for every $\mu\in\mathbb{C}$. As the function $\mu
\longmapsto\rho_{t}\left(  \left\{  \mu a\right\}  \sqcup M\right)  $ is
bounded and subharmonic, it is constant. Therefore
\[
\rho_{t}\left(  \left\{  a\right\}  \sqcup M\right)  =\rho_{t}\left(
M\right)  .
\]
As $M$ is arbitrary, $a\in\mathcal{R}_{t}\left(  A\right)  $.
Now assume that $A$ is not unital. Then, for each $M\in\ell_{1}\left(
A^{1}\right)  $, the family $K=MaM$ belongs to $\ell_{1}\left(  A\right)  $,
and $\rho_{t}((aM)^{2})=\rho_{t}(aK)=0$ by condition. By (\ref{f62}), we
obtain that $\rho_{t}(aM)=0$ for every $M\in\ell_{1}\left(  A^{1}\right)  $,
whence $a\in R_{t}\left(  A^{1}\right)  $ by the proof above. Now the result
follows from Lemma \ref{unital}.\textbf{ }
\end{proof}

\subsection{Tensor quasinilpotent algebras and ideals\label{stqi}}

Let $A$ be a normed algebra. A subset $G$ of $A$ is called a \textit{tensor
quasinilpotent set }if all summable families with elements in $G$ are tensor
quasinilpotent. A \textit{tensor quasinilpotent ideal} in $A$ is an ideal
which is a tensor quasinilpotent subset of $A$.

\begin{theorem}
\label{inv}$\rho_{t}\left(  M\sqcup N\right)  =\rho_{t}\left(  N\right)  $ for
every $M\in\ell_{1}\left(  \mathcal{R}_{t}\left(  A\right)  \right)  $ and
$N\in\ell_{1}\left(  A\right)  $.
\end{theorem}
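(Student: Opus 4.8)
The plan is to establish the two inequalities $\rho_{t}(N)\leq\rho_{t}(M\sqcup N)$ and $\rho_{t}(M\sqcup N)\leq\rho_{t}(N)$ separately; the first is the easy, purely combinatorial one. Since $N$ is a subfamily of $M\sqcup N$, i.e. $S_{(N)}\subset S_{(M\sqcup N)}$, and since both the product of generalized subsets and the functional $\eta$ are monotone with respect to inclusion, one gets $N^{k}\sqsubset(M\sqcup N)^{k}$ and hence $\eta(N^{k})\leq\eta((M\sqcup N)^{k})$ for all $k$; taking $k$-th roots and passing to the limit yields $\rho_{t}(N)\leq\rho_{t}(M\sqcup N)$.

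For the reverse inequality I would first dispose of the case of a \emph{finitely supported} family with terms in $\mathcal{R}_{t}(A)$. If $M\simeq\{a_{1}\}\sqcup\cdots\sqcup\{a_{p}\}$ with all $a_{i}\in\mathcal{R}_{t}(A)$, then for any $K\in\ell_{1}(A)$ the family $\{a_{2}\}\sqcup\cdots\sqcup\{a_{p}\}\sqcup K$ is summable, so by the very definition of $\mathcal{R}_{t}(A)$ applied to the element $a_{1}$ we get $\rho_{t}(\{a_{1}\}\sqcup(\{a_{2}\}\sqcup\cdots\sqcup\{a_{p}\}\sqcup K))=\rho_{t}(\{a_{2}\}\sqcup\cdots\sqcup\{a_{p}\}\sqcup K)$; an obvious induction on $p$ (the base case being exactly the definition) then shows that $\rho_{t}(M\sqcup K)=\rho_{t}(K)$ for every $K\in\ell_{1}(A)$ whenever $M$ is finitely supported in $\mathcal{R}_{t}(A)$.

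Now for a general $M=\{a_{n}\}_{1}^{\infty}\in\ell_{1}(\mathcal{R}_{t}(A))$ I would, for each $p$, split $M\simeq H_{p}\sqcup T_{p}$, where $H_{p}=\{a_{1},\dots,a_{p}\}$ is finitely supported and $T_{p}=\{a_{p+1},a_{p+2},\dots\}$ is the tail, so that $\eta(T_{p})=\sum_{n>p}\Vert a_{n}\Vert\to0$ as $p\to\infty$. Applying the finitely supported case to $H_{p}$ with $K=T_{p}\sqcup N$ gives $\rho_{t}(M\sqcup N)=\rho_{t}(H_{p}\sqcup T_{p}\sqcup N)=\rho_{t}(T_{p}\sqcup N)$ for \emph{every} $p$; in other words the function $p\mapsto\rho_{t}(T_{p}\sqcup N)$ is the constant $\rho_{t}(M\sqcup N)$. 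The last ingredient is the upper semicontinuity of $\rho_{t}$ on $\ell_{1}(A)$ (established just above in this section): representing $N=\{b_{n}\}$ by the sequence $(0,b_{1},0,b_{2},0,b_{3},\dots)$ — which changes neither $\eta$ of any power nor $\rho_{t}$, since every product involving a zero entry contributes $0$ and the remaining products are in bijection with those of $N$ — and representing $T_{p}\sqcup N$ by $(a_{p+1},b_{1},a_{p+2},b_{2},a_{p+3},b_{3},\dots)$, these two representatives differ, in the $\ell_{1}(A)$-metric $\mathrm{d}$, by exactly $\eta(T_{p})\to0$. Hence for every $\varepsilon>0$ upper semicontinuity gives $\rho_{t}(T_{p}\sqcup N)\leq\rho_{t}(N)+\varepsilon$ for all large $p$; since the left side is the constant $\rho_{t}(M\sqcup N)$ and $\varepsilon$ is arbitrary, $\rho_{t}(M\sqcup N)\leq\rho_{t}(N)$, and together with the first inequality this proves the theorem.

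The one genuinely delicate point — and the step I would be most careful about — is this passage from the single-element defining property of $\mathcal{R}_{t}(A)$ to an arbitrary summable family, handled by the finite-then-limit scheme rather than by any new estimate. The remaining verifications (invariance of $\rho_{t}$ under inserting or deleting zero entries, additivity $\eta(T_{p}\sqcup N)=\eta(T_{p})+\eta(N)$, and that the two chosen representatives are at $\mathrm{d}$-distance precisely $\eta(T_{p})$) are routine and use only facts already recorded for $\eta$ and $\rho_{t}$.
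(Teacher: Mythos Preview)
Your proof is correct and shares its skeleton with the paper's: both strip off finitely many head entries using the defining property of $\mathcal{R}_{t}(A)$ to obtain $\rho_{t}(M\sqcup N)=\rho_{t}(M_{k}\sqcup N)$ for every $k$, and then control the tail. The divergence is in how the tail is handled. You invoke the upper semicontinuity of $\rho_{t}$ on $\ell_{1}(A)$: with the zero-padding trick, $T_{p}\sqcup N\to N$ in the $\ell_{1}$-metric, so $\rho_{t}(T_{p}\sqcup N)\leq\rho_{t}(N)+\varepsilon$ for large $p$, and since the left side is constant in $p$ you are done. The paper instead introduces a complex parameter $\mu$, bounds $\rho_{t}(\mu M\sqcup N)\leq\eta(N)+\varepsilon$ via the crude estimate $\rho_{t}\leq\eta$ applied to the tail, and then appeals to the subharmonicity Theorem~\ref{subhar}: a bounded subharmonic function on $\mathbb{C}$ is constant, so evaluating at $\mu=0$ gives $\rho_{t}(N)$. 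Your route is the more elementary of the two, since upper semicontinuity (proved in the paper via Theorem~\ref{tsr}) is a lighter tool than the full subharmonic Liouville argument; the paper's route, on the other hand, does not require the bookkeeping about representatives and zero-padding. Both are short, and your care about the representative-level formulation of the $\ell_{1}$-metric is warranted and correctly handled.
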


\begin{proof}
Let $N\in\ell_{1}\left(  A\right)  $, $M=\left\{  a_{n}\right\}
_{n=1}^{\infty}\in\ell_{1}\left(  \mathcal{R}_{t}\left(  A\right)  \right)  $
and $M_{k}=\left\{  a_{n}\right\}  _{n=k}^{\infty}$ for every integer $k>0$.
For every $\varepsilon>0$ and $\mu\in\mathbb{C}$, there is $k>0$ such that
$\mathbf{\mathrm{\eta}}\left(  \mu M_{k}\right)  <\varepsilon$. Then
$\mu\longmapsto\mu M\sqcup N$ is an analytic function and%
\begin{align*}
\rho_{t}\left(  \mu M\sqcup N\right)   &  =\rho_{t}\left(  \mu M_{2}\sqcup
N\right)  =\cdots=\rho_{t}\left(  \mu M_{k}\sqcup N\right)  \leq
\mathbf{\mathrm{\eta}}\left(  \mu M_{k}\sqcup N\right) \\
&  =\mathbf{\mathrm{\eta}}\left(  \mu M_{k}\right)  +\mathbf{\mathrm{\eta}%
}\left(  N\right)  <\mathbf{\mathrm{\eta}}\left(  N\right)  +\varepsilon.
\end{align*}
So $\mu\longmapsto\rho_{t}\left(  \mu M\sqcup N\right)  $ is bounded and
therefore constant. Hence we obtain that $\rho_{t}\left(  M\sqcup N\right)
=\rho_{t}\left(  N\right)  $.
\end{proof}

As a consequence, we obtain the following

\begin{corollary}
\label{tq}$\mathcal{R}_{t}\left(  A\right)  $ is a tensor quasinilpotent ideal.
\end{corollary}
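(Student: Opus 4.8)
The plan is to read off Corollary~\ref{tq} directly from Theorem~\ref{inv}. Recall that a tensor quasinilpotent ideal is an ideal $J$ of $A$ with the property that $\rho_{t}(M)=0$ for every summable family $M$ whose entries all lie in $J$, and that the preceding theorem has already established that $\mathcal{R}_{t}(A)$ is a closed ideal of $A$. So the only thing left to verify is that $\rho_{t}(M)=0$ for an arbitrary $M\in\ell_{1}(\mathcal{R}_{t}(A))$.

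First I would take $N$ to be the zero family $Z=\{0\}_{n=1}^{\infty}$, which plainly belongs to $\ell_{1}(A)$ and satisfies $\eta(Z^{n})=0$ for all $n$, hence $\rho_{t}(Z)=0$. Next I would observe that adjoining zeros to a family affects none of the quantities $\eta(\,\cdot^{\,n})$: by distributivity of the product of families over disjoint unions one has $(M\sqcup Z)^{n}\simeq M^{n}\sqcup(\text{a family of zeros})$, and a family all of whose entries are $0$ contributes nothing to $\eta$; since $\eta$ is additive over $\sqcup$ this gives $\eta((M\sqcup Z)^{n})=\eta(M^{n})$ for every $n$, and therefore $\rho_{t}(M\sqcup Z)=\rho_{t}(M)$. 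On the other hand Theorem~\ref{inv}, applied with $M\in\ell_{1}(\mathcal{R}_{t}(A))$ and $N=Z\in\ell_{1}(A)$, gives $\rho_{t}(M\sqcup Z)=\rho_{t}(Z)=0$. Combining the two computations yields $\rho_{t}(M)=0$, as desired.

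Since $M$ was an arbitrary summable family with entries in $\mathcal{R}_{t}(A)$, this shows that every such family is tensor quasinilpotent, i.e. $\mathcal{R}_{t}(A)$ is a tensor quasinilpotent ideal. I do not expect any genuine obstacle here: the whole substance is contained in Theorem~\ref{inv}, and the remaining points---that $Z$ and $M\sqcup Z$ are bona fide summable generalized subsets and that zero entries are invisible to $\eta$---are immediate from the definitions in Section~\ref{sss1}.
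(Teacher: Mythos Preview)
Your argument is correct and is exactly the natural way to unpack the paper's ``as a consequence'': you specialize Theorem~\ref{inv} with the trivial family $N=Z=\{0\}_{n\ge 1}$, note that $\rho_t(Z)=0$, and observe that adjoining zeros leaves every $\eta((\cdot)^n)$ unchanged so that $\rho_t(M\sqcup Z)=\rho_t(M)$. This is the same approach the paper has in mind; there is nothing to add.
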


\begin{corollary}
\label{tq1}$\rho_{t}\left(  M+N\right)  =\rho_{t}\left(  N\right)  $ and
$\rho_{t}\left(  M\ast N\right)  =\rho_{t}\left(  MN\right)  =0$ for every
$M\in\ell_{1}\left(  \mathcal{R}_{t}\left(  A\right)  \right)  $ and $N\in
\ell_{1}\left(  A\right)  $.
\end{corollary}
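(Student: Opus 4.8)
The plan is to derive everything from three results already established in this subsection: Theorem~\ref{inv} (adding a family from $\mathcal{R}_t(A)$ to an arbitrary family via disjoint union does not change the tensor spectral radius), Corollary~\ref{tq} ($\mathcal{R}_t(A)$ is a tensor quasinilpotent ideal), and Proposition~\ref{change} (the comparisons $\rho_t(M+N)\le\rho_t(M\sqcup N)$ and $\rho_t(M\ast N)\le\rho_t(MN)$). No new estimate should be needed.

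First I would prove the additive identity. For the inequality $\rho_t(M+N)\le\rho_t(N)$ I would chain Proposition~\ref{change} with Theorem~\ref{inv}: $\rho_t(M+N)\le\rho_t(M\sqcup N)=\rho_t(N)$. For the reverse inequality the key point is that $\mathcal{R}_t(A)$ is a linear subspace of $A$, so $-M\in\ell_1(\mathcal{R}_t(A))$ as well. Writing $N=(M+N)+(-M)$ with $M+N\in\ell_1(A)$, and applying the same two results but now with $-M$ playing the role of the $\mathcal{R}_t(A)$-family and $M+N$ playing the role of the arbitrary family, I get $\rho_t(N)\le\rho_t\bigl((M+N)\sqcup(-M)\bigr)=\rho_t\bigl((-M)\sqcup(M+N)\bigr)=\rho_t(M+N)$, where the middle equality just uses that $\sqcup$ is symmetric. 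Combining the two inequalities gives $\rho_t(M+N)=\rho_t(N)$.

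Next I would handle the products. Because $\mathcal{R}_t(A)$ is a (two-sided) ideal, every term $a_ib_j$ of the family $MN$ lies in $\mathcal{R}_t(A)$, and $MN$ is summable since $\eta(MN)\le\eta(M)\eta(N)<\infty$ by~(\ref{f40}); thus $MN$ is a summable family with all elements in $\mathcal{R}_t(A)$, and Corollary~\ref{tq} gives $\rho_t(MN)=0$. Then Proposition~\ref{change} yields $\rho_t(M\ast N)\le\rho_t(MN)=0$, so $\rho_t(M\ast N)=\rho_t(MN)=0$, completing the proof. I do not expect any genuine obstacle: the only step demanding a moment's care is the reverse inequality for the sum, where one must observe that $\mathcal{R}_t(A)$ is closed under negation (so Theorem~\ref{inv} applies verbatim to $-M$) and exploit the symmetry of the disjoint union to subtract $M$ back off; everything else is a direct citation.
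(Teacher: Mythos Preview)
Your proof is correct. The multiplicative part coincides with the paper's argument. For the additive identity, however, you take a different route: the paper shows $\rho_t(\mu M+N)\le\rho_t(\mu M\sqcup N)=\rho_t(N)$ for all $\mu\in\mathbb{C}$, observes that $\mu\mapsto\rho_t(\mu M+N)$ is subharmonic (Theorem~\ref{subhar}) and bounded, hence constant, and evaluates at $\mu=0$. You instead argue purely algebraically, subtracting $M$ back off via $N=(M+N)+(-M)$ and invoking Theorem~\ref{inv} a second time with $-M\in\ell_1(\mathcal{R}_t(A))$. Your approach is more elementary---it avoids the analytic machinery of subharmonicity---while the paper's approach follows the pattern used repeatedly elsewhere in Section~\ref{sid} (bounded subharmonic $\Rightarrow$ constant). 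Both yield the equality with equal ease here.
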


\begin{proof}
Let $\mu\in\mathbb{C}$. Then $\rho_{t}\left(  \mu M+N\right)  \leq\rho
_{t}\left(  \mu M\sqcup N\right)  =\rho_{t}\left(  N\right)  $ by Proposition
\ref{change} and Theorem \ref{inv}. As $\mu\longmapsto\rho_{t}\left(  \mu
M+N\right)  $ is subharmonic and bounded on $\mathbb{C}$, it is constant,
whence $\rho_{t}\left(  M+N\right)  =\rho_{t}\left(  N\right)  $.
Since $MN\in\ell_{1}\left(  \mathcal{R}_{t}\left(  A\right)  \right)  $, then
$\rho_{t}\left(  MN\right)  =0$ by Corollary \ref{tq}. Then we obtain that
$\rho_{t}\left(  M\ast N\right)  =0$ by Proposition \ref{change}.
\end{proof}

\begin{corollary}
\label{largest}Let $A\mathcal{\ }$be a normed algebra and $a\in A$. The
following conditions are equivalent:

\begin{itemize}
\item[$\mathrm{(i)}$] $a\in\mathcal{R}_{t}\left(  A\right)  $.

\item[$\mathrm{(ii)}$] $\rho_{t}\left(  aM\right)  =0$ for every $M\in\ell
_{1}\left(  A\right)  $.
\end{itemize}
\end{corollary}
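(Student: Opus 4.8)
The plan is to recognize that this corollary merely packages together two results already proved above, so almost no new work is required. The implication $\mathrm{(ii)}\Rightarrow\mathrm{(i)}$ is precisely Theorem \ref{multi}: if $\rho_{t}(aM)=0$ for every $M\in\ell_{1}(A)$, then $a\in\mathcal{R}_{t}(A)$. So the only thing to organize is the forward implication.

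For $\mathrm{(i)}\Rightarrow\mathrm{(ii)}$, I would argue as follows. Suppose $a\in\mathcal{R}_{t}(A)$ and let $M=\{a_{n}\}_{1}^{\infty}\in\ell_{1}(A)$ be arbitrary. First note that $aM=\{aa_{n}\}_{1}^{\infty}$ is again summable, since $\sum_{n}\Vert aa_{n}\Vert\leq\Vert a\Vert\,\eta(M)<\infty$; this is the one routine verification, and it is immediate. Since $\mathcal{R}_{t}(A)$ is a (closed) ideal of $A$ by the theorem preceding Corollary \ref{rad}, each element $aa_{n}$ lies in $\mathcal{R}_{t}(A)$, i.e. $aM\in\ell_{1}(\mathcal{R}_{t}(A))$. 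Now invoke Corollary \ref{tq}, which asserts that $\mathcal{R}_{t}(A)$ is a tensor quasinilpotent ideal: every summable family with entries in $\mathcal{R}_{t}(A)$ is tensor quasinilpotent. Applying this to $aM$ gives $\rho_{t}(aM)=0$, which is exactly $\mathrm{(ii)}$.

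There is no real obstacle here; the substance of the statement lives entirely in Theorem \ref{multi} (the hard direction) and in the ideal property together with Corollary \ref{tq} (which in turn rests on Theorem \ref{inv}). The corollary is stated separately because condition $\mathrm{(ii)}$ is often the most convenient characterization to check in applications, so I would keep the write-up to a few lines: cite Theorem \ref{multi} for one direction, and for the other direction combine the ideal property of $\mathcal{R}_{t}(A)$ with Corollary \ref{tq}.
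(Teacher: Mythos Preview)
Your proposal is correct and follows essentially the same approach as the paper. The paper's proof is even terser: for $\mathrm{(i)}\Rightarrow\mathrm{(ii)}$ it simply cites Corollary~\ref{tq1} (which already packages the ideal property together with tensor quasinilpotence of $\mathcal{R}_{t}(A)$), and for $\mathrm{(ii)}\Rightarrow\mathrm{(i)}$ it cites Theorem~\ref{multi}, exactly as you do.
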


\begin{proof}
Indeed, $(\mathrm{i})\Longrightarrow\left(  \mathrm{ii}\right)  $ follows from
Corollary \ref{tq1}, and $(\mathrm{ii})\Longrightarrow\left(  \mathrm{i}%
\right)  $ was proved in Theorem \ref{multi}.
\end{proof}

We will prove now that $\mathcal{R}_{t}(A)$ is the \textit{largest} tensor
quasinilpotent ideal.

\begin{theorem}
\label{tq2}If $I$ is a tensor quasinilpotent (possible, one-sided) ideal of
$A$ then $I\subset\mathcal{R}_{t}\left(  A\right)  $.
\end{theorem}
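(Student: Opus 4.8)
The plan is to use the multiplicativity characterization of $\mathcal{R}_t(A)$ provided by Corollary \ref{largest}: to show $I \subset \mathcal{R}_t(A)$ it suffices to fix $a \in I$ and prove $\rho_t(aM) = 0$ for every $M \in \ell_1(A)$. Since $I$ is assumed to be a tensor quasinilpotent ideal (possibly one-sided), the natural thing is to manufacture, from $a$ and $M$, a single summable family whose elements all lie in $I$ and whose tensor spectral radius dominates $\rho_t(aM)$. Hypothesis then forces that radius to be $0$.

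First I would reduce to the unital case. If $A$ is not unital, pass to $A^1$; by Lemma \ref{unital} we have $\mathcal{R}_t(A) = A \cap \mathcal{R}_t(A^1)$, and since $a \in I \subset A$, it is enough to show $a \in \mathcal{R}_t(A^1)$. Note $I$ remains an ideal of $A$ (one-sided of the same side), and any family with entries in $I$ is still tensor quasinilpotent. Next, fix $a \in I$ and $M = \{m_n\}_1^\infty \in \ell_1(A^1)$. The key observation is the relation \eqref{f62}, $\rho_t((aM)^2)^{1/2} = \rho_t(aM)$, together with the fact that $(aM)^2 \simeq a(MaM)$, and $MaM \in \ell_1(A)$ (indeed $MaM$ has all entries of the form $m_i a m_j \in I$ if, say, $I$ is a left ideal we instead use $aMaM$, entries $a m_i a m_j$, and if a right ideal we bracket the other way). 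So, replacing $M$ by $K := MaM$ (or the appropriate variant), we have arranged that $\rho_t(aM) = \rho_t(aK)^{1/2}$ with \emph{every entry of $aK$ lying in $I$}. The family $aK$ is then tensor quasinilpotent by hypothesis, so $\rho_t(aK) = 0$, whence $\rho_t(aM) = 0$.

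Having shown $\rho_t(aM) = 0$ for all $M \in \ell_1(A^1)$, Corollary \ref{largest} (applied in $A^1$) gives $a \in \mathcal{R}_t(A^1)$, and then Lemma \ref{unital} gives $a \in \mathcal{R}_t(A)$ in the non-unital case as well; in the unital case there is nothing further to do. Since $a \in I$ was arbitrary, $I \subset \mathcal{R}_t(A)$.

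The main point requiring care is the one-sidedness of $I$: a two-index product like $MaM$ need not have entries in $I$ when $I$ is merely a left (or right) ideal, so one must choose the bracketing and the extra factor of $a$ to match the side. If $I$ is a left ideal, the entries of $aMaM$ are $a m_i a m_j$, which lie in $I$ because $a \in I$ and $I$ absorbs left multiplication; and $aMaM \simeq (aM)^2$, so $\rho_t(aM) = \rho_t(aMaM)^{1/2} = 0$. If $I$ is a right ideal, bracket as $MaMa$ (entries $m_i a m_j a$) and use $\rho_t(Ma)=\rho_t(aM)$ from Proposition \ref{change}; if $I$ is two-sided, either works. Beyond this bookkeeping the argument is routine, relying only on \eqref{f62}, Corollary \ref{largest}, and Lemma \ref{unital}.
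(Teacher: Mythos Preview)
Your overall strategy --- prove $\rho_t(aM)=0$ for every $M$ and invoke Theorem~\ref{multi}/Corollary~\ref{largest} --- is exactly the paper's, but you have the sidedness reversed in the one-sided cases. If $I$ is a \emph{left} ideal (so $AI\subset I$) and $a\in I$, the entries $a m_i a m_j$ of $aMaM$ need not lie in $I$: one has $a m_i a=(am_i)\cdot a\in A\cdot I\subset I$, but then $(a m_i a)m_j\in I\cdot A$, which a left ideal does not absorb. (A $2\times2$ matrix check with $I$ the matrices of vanishing second column already gives a counterexample.) The correct assignment is the opposite: for a \emph{right} ideal the family $aM$ (and hence $(aM)^2$) lies in $\ell_1(I)$; for a \emph{left} ideal it is $Ma$ (and $(Ma)^2$) that lies in $\ell_1(I)$. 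Your claim ``$I$ absorbs left multiplication'' for the left-ideal case is precisely the wrong direction.

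With left and right swapped your argument works, but it is more elaborate than needed. The paper's proof dispenses with both the passage to $A^1$ and the squaring: if $I$ is a right ideal and $a\in I$, then $aM\in\ell_1(I)$ \emph{directly} for every $M\in\ell_1(A)$, so $\rho_t(aM)=0$ by the hypothesis on $I$, and Theorem~\ref{multi} gives $a\in\mathcal{R}_t(A)$. If $I$ is a left ideal, then $Ma\in\ell_1(I)$, so $\rho_t(Ma)=0$, and Proposition~\ref{change} gives $\rho_t(aM)=\rho_t(Ma)=0$; again Theorem~\ref{multi} applies. The reduction to $A^1$ is unnecessary because Theorem~\ref{multi} already requires only $M\in\ell_1(A)$ and handles the non-unital case internally.
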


\begin{proof}
Let $I$ be a right ideal of $A$, and let $a\in I$. Then $aM\in\ell_{1}\left(
I\right)  $ for every $M\in\ell_{1}\left(  A\right)  $. As $I$ is tensor
quasinilpotent then $\rho_{t}\left(  aM\right)  =0$ for every $M\in\ell
_{1}\left(  A\right)  $. By Theorem \ref{multi}, $a\in\mathcal{R}_{t}\left(
A\right)  $. So $I\subset\mathcal{R}_{t}\left(  A\right)  $.
If $I$ is a left ideal of $A$ and $a\in I$, then $\rho_{t}\left(  aM\right)
=\rho_{t}\left(  Ma\right)  $ by Proposition \ref{change}. So $\rho_{t}\left(
aM\right)  =0$ for every $M\in\ell_{1}\left(  A\right)  $. We have again that
$a\in\mathcal{R}_{t}\left(  A\right)  $.
\end{proof}

\begin{lemma}
\label{above} Let $M=\left\{  a_{n}\right\}  _{1}^{\infty}$ be a summable
family in $A$ and $g:A\longrightarrow B$ be a bounded homomorphism of normed
algebras. Then $g\left(  M\right)  :=\left\{  g\left(  a_{n}\right)  \right\}
_{1}^{\infty}$ is a summable family of $B$ and $\rho_{t}\left(  g\left(
M\right)  \right)  \leq\rho_{t}\left(  M\right)  $.
\end{lemma}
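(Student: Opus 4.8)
The plan is entirely elementary: first check that $g(M)$ is summable, then compare $\eta(g(M)^{n})$ with $\eta(M^{n})$ term by term using that $g$ is multiplicative, and finally take $n$-th roots and pass to the limit.

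First I would use boundedness of $g$ to write
\[
\eta(g(M))=\sum_{n}\Vert g(a_{n})\Vert_{B}\leq\Vert g\Vert\sum_{n}\Vert a_{n}\Vert_{A}=\Vert g\Vert\,\eta(M)<\infty,
\]
so that $g(M)\in\ell_{1}(B)$, i.e.\ $g(M)$ is a summable family in $B$. The same computation applied to $M^{n}$ (which is summable for every $n$ by (\ref{f40}) and (\ref{f41})) shows that $g(M^{n})$ is summable as well.

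The key step is the observation that, since $g$ is a homomorphism, $g(M)^{n}\simeq g(M^{n})$ as generalized subsets of $B$: a representative of $M^{n}$ consists of the products $a_{n_{1}}\cdots a_{n_{n}}$ over all multi-indices, and $g(a_{n_{1}}\cdots a_{n_{n}})=g(a_{n_{1}})\cdots g(a_{n_{n}})$ is precisely a representative of $g(M)^{n}$ (with the renumbering conventions fixed in Section~\ref{sss1}). Consequently, for every $n$,
\[
\eta(g(M)^{n})=\sum\Vert g(a_{n_{1}})\cdots g(a_{n_{n}})\Vert_{B}=\sum\Vert g(a_{n_{1}}\cdots a_{n_{n}})\Vert_{B}\leq\Vert g\Vert\sum\Vert a_{n_{1}}\cdots a_{n_{n}}\Vert_{A}=\Vert g\Vert\,\eta(M^{n}).
\]

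Finally I would take $n$-th roots and let $n\to\infty$. If $g=0$ the assertion is trivial; otherwise $\Vert g\Vert^{1/n}\to1$, so
\[
\rho_{t}(g(M))=\lim_{n}\eta(g(M)^{n})^{1/n}\leq\lim_{n}\Vert g\Vert^{1/n}\,\eta(M^{n})^{1/n}=\rho_{t}(M).
\]
The only point requiring any care is the identification $g(M)^{n}\simeq g(M^{n})$ and the attendant bookkeeping with the fixed renumberings; there is no analytic difficulty here, and the estimate is a direct analogue of those in the proofs of Theorem~\ref{tsr}(i) and Proposition~\ref{abs}.
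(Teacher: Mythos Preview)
Your proof is correct and follows exactly the paper's approach: the paper's proof simply notes that $\eta(g(M)^{n})=\eta(g(M^{n}))\leq\Vert g\Vert\,\eta(M^{n})$ for every $n$, which is precisely your key inequality. You have merely spelled out the identification $g(M)^{n}\simeq g(M^{n})$ and the passage to the limit in more detail.
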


\begin{proof}
Indeed, it suffices to note that
\[
\mathbf{\mathrm{\eta}}\left(  g\left(  M\right)  ^{n}\right)
=\mathbf{\mathrm{\eta}}\left(  g\left(  M^{n}\right)  \right)  \leq\left\Vert
g\right\Vert \mathbf{\mathrm{\eta}}\left(  M^{n}\right)
\]
for every $n$.
\end{proof}

\begin{theorem}
\label{hom}Let $A$ and $B$ be a normed algebras, and let $g:A\longrightarrow
B$ be an open bounded epimorphism. Then $g\left(  \mathcal{R}_{t}\left(
A\right)  \right)  \subset\mathcal{R}_{t}\left(  B\right)  $.
\end{theorem}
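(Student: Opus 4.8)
The plan is to reduce the statement to the characterization of $\mathcal{R}_{t}$ given in Corollary \ref{largest} and then transport tensor spectral radii along $g$ by means of Lemma \ref{above}. So fix $a\in\mathcal{R}_{t}\left(  A\right)  $; we must show $g(a)\in\mathcal{R}_{t}\left(  B\right)  $, and by Corollary \ref{largest} applied to $B$ it suffices to prove that $\rho_{t}\left(  g(a)M^{\prime}\right)  =0$ for every $M^{\prime}\in\ell_{1}\left(  B\right)  $.

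The key intermediate step is a lifting observation: since $g$ is an open bounded epimorphism, the image $g\left(  U_{A}\right)  $ of the open unit ball of $A$ is an open set containing $0$, hence contains some ball $rU_{B}$ with $r>0$, and a standard rescaling argument then yields a constant $C>0$ such that every $b\in B$ admits a preimage $a_{b}\in A$ with $g\left(  a_{b}\right)  =b$ and $\left\Vert a_{b}\right\Vert _{A}\leq C\left\Vert b\right\Vert _{B}$. Applying this coordinatewise to a given $M^{\prime}=\left\{  b_{n}\right\}  _{1}^{\infty}\in\ell_{1}\left(  B\right)  $ produces a family $M=\left\{  a_{n}\right\}  _{1}^{\infty}$ in $A$ with $g\left(  a_{n}\right)  =b_{n}$ and $\eta(M)=\sum_{n}\left\Vert a_{n}\right\Vert _{A}\leq C\sum_{n}\left\Vert b_{n}\right\Vert _{B}=C\,\eta\left(  M^{\prime}\right)  <\infty$, so that $M\in\ell_{1}\left(  A\right)  $, and by construction $g\left(  M\right)  =M^{\prime}$ as families (equivalently, as generalized subsets, up to renumbering).

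Now $aM\in\ell_{1}\left(  A\right)  $ since $\eta(aM)\leq\left\Vert a\right\Vert _{A}\,\eta(M)<\infty$, and $g\left(  aM\right)  =\left\{  g\left(  aa_{n}\right)  \right\}  _{1}^{\infty}=\left\{  g(a)g\left(  a_{n}\right)  \right\}  _{1}^{\infty}=\left\{  g(a)b_{n}\right\}  _{1}^{\infty}=g(a)M^{\prime}$. Because $a\in\mathcal{R}_{t}\left(  A\right)  $, Corollary \ref{largest} gives $\rho_{t}\left(  aM\right)  =0$, and then Lemma \ref{above} yields $\rho_{t}\left(  g(a)M^{\prime}\right)  =\rho_{t}\left(  g\left(  aM\right)  \right)  \leq\rho_{t}\left(  aM\right)  =0$. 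As $M^{\prime}\in\ell_{1}\left(  B\right)  $ was arbitrary, Corollary \ref{largest} gives $g(a)\in\mathcal{R}_{t}\left(  B\right)  $, which proves $g\left(  \mathcal{R}_{t}\left(  A\right)  \right)  \subset\mathcal{R}_{t}\left(  B\right)  $.

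The only nonroutine point is the lifting with uniform norm control, and this is precisely where the hypothesis that $g$ is \emph{open} is essential: for a merely bounded epimorphism of normed (not necessarily complete) algebras the open mapping theorem is unavailable, so one genuinely needs openness to pass from $\ell_{1}\left(  B\right)  $ back to $\ell_{1}\left(  A\right)  $. Everything else is a direct combination of Lemma \ref{above} with the two implications of Corollary \ref{largest}.
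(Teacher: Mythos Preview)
Your proof is correct and follows essentially the same route as the paper's own proof: lift a summable family in $B$ to a summable family in $A$ via openness of $g$, apply Corollary~\ref{largest} to get $\rho_{t}(aM)=0$, push forward via Lemma~\ref{above}, and conclude again by Corollary~\ref{largest}. Your exposition merely spells out the lifting constant more explicitly than the paper does.
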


\begin{proof}
Let $N=\left\{  b_{n}\right\}  _{1}^{\infty}$ be a summable family of $B$. As
$g$ is open, there is a summable family $M=\left\{  a_{n}\right\}  $ in $A$
such that $g\left(  M\right)  =N$. It follows from Lemma \ref{above} that
$\rho_{t}\left(  g\left(  K\right)  \right)  \leq\rho_{t}\left(  K\right)  $
for every $K\in\ell_{1}\left(  A\right)  $. So if $a\in\mathcal{R}_{t}\left(
A\right)  $ then, by Corollary \ref{largest}, we obtain that $\rho_{t}\left(
g\left(  a\right)  N\right)  \leq\rho_{t}\left(  aM\right)  =0$ for an
arbitrary $N\in\ell_{1}\left(  B\right)  $. Hence $g\left(  a\right)
\in\mathcal{R}_{t}\left(  B\right)  $ by Corollary \ref{largest}.
\end{proof}

Let $A$ be a normed algebra. Recall that if $I$ is a closed ideal of $A$, then
by $a/I$ (and also by $q_{I}(a)$) we denote the element $a+I$ of the algebra
$A/I$. By $M/I$ we denote the family $\left\{  a_{n}/I\right\}  $, for every
$M=\left\{  a_{n}\right\}  \in\ell_{1}\left(  A\right)  $.

\begin{theorem}
\label{tenquot} Let $M=\left\{  a_{n}\right\}  _{1}^{\infty}$ be a summable
family in a normed algebra $A$. Then $\rho_{t}\left(  M\right)  =\rho
_{t}(M/I)$ for each closed tensor quasinilpotent ideal $I$. In particular,
$\rho_{t}\left(  M\right)  =\rho_{t}\left(  M/\mathcal{R}_{t}\left(  A\right)
\right)  $.
\end{theorem}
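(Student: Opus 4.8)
The plan is to prove the two inequalities $\rho_t(M/I)\le\rho_t(M)$ and $\rho_t(M)\le\rho_t(M/I)$ separately, after which the final "in particular" statement is immediate from Corollary \ref{tq} (which asserts that $\mathcal{R}_t(A)$ is a tensor quasinilpotent ideal). The first inequality is free: the quotient map $q_I:A\to A/I$ is a bounded homomorphism (of norm $\le 1$), so Lemma \ref{above} applied to $g=q_I$ gives $\rho_t(M/I)=\rho_t(q_I(M))\le\rho_t(M)$ directly. So the substance of the theorem is the reverse inequality, and the hypothesis that $I$ is tensor quasinilpotent will have to be used there.

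For the inequality $\rho_t(M)\le\rho_t(M/I)$, I would argue as follows. Fix $\varepsilon>0$. Since $\eta(M^n/I)=\eta((M/I)^n)$ is controlled by $\rho_t(M/I)$ in the sense that $\eta((M/I)^n)\le s(\rho_t(M/I)+\varepsilon)^n$ for a suitable $s\ge 1$ and all $n$, and since $\eta(M^n/I)=\inf\{\eta(M^n - K): K\in\ell_1(I)\}$ by the formula $\mathrm{dist}(a,I)=\|a/\overline I\|$ applied coordinatewise, I can for each $n$ choose $K_n\in\ell_1(I)$ with $\eta(M^n - K_n) \le s(\rho_t(M/I)+\varepsilon)^n$. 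The idea is then to compare $M^n$ with a family built from $M$ and from the $K_n$'s, exploiting that all the $K_n$ lie in the tensor quasinilpotent ideal $I$, so any summable family assembled from their elements (together with boundedly many factors of $M$) has tensor spectral radius zero, and in particular contributes a quasinilpotent — hence negligible — term via Theorem \ref{inv} and Corollary \ref{tq1}. Concretely I expect to write $M = (M - K) \sqcup$ (something in $\ell_1(I)$) is not quite available for a single $K$; instead the cleanest route is to use the decomposition that for a single well-chosen $K\in\ell_1(I)$ one has $M^m \simeq (\text{approx. part})\sqcup(\text{part in }\ell_1(I))$ for a fixed large $m$, then use $\rho_t(M)=\rho_t(M^m)^{1/m}$ by (\ref{f62}) together with the fact (Theorem \ref{inv}) that disjointly adjoining a family from $\mathcal{R}_t(A)\supseteq$-style ideal — here $I$ — does not change $\rho_t$.

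Let me refine the key step, which I expect to be the main obstacle: producing, for a given $m$, a single family $K\in\ell_1(I)$ and a "remainder" family $R$ with $M^m\simeq R\sqcup K$ and $\eta(R)$ close to $\eta((M/I)^m)$. This is exactly the coordinatewise statement that each product $a_{n_1}\cdots a_{n_m}$ can be split as $r_{n_1\ldots n_m}+\kappa_{n_1\ldots n_m}$ with $\kappa_{n_1\ldots n_m}\in I$ and $\|r_{n_1\ldots n_m}\|$ approximating $\mathrm{dist}(a_{n_1}\cdots a_{n_m}, I)=\|(a_{n_1}/I)\cdots(a_{n_m}/I)\|$, with the approximation summing to something finite — which requires a little care since there are infinitely many index tuples, but a geometric choice of tolerances $2^{-(n_1+\cdots+n_m)}\varepsilon$ makes $\sum\|r\| \le \eta((M/I)^m)+\varepsilon$ and keeps $R$ summable. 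Once this split is in hand, $\rho_t(M^m)=\rho_t(R\sqcup K)$; since $K\in\ell_1(I)$ and $I$ is tensor quasinilpotent, $I\subset\mathcal{R}_t(A)$ by Theorem \ref{tq2}, so Theorem \ref{inv} gives $\rho_t(R\sqcup K)=\rho_t(R)\le\eta(R)\le\eta((M/I)^m)+\varepsilon$. Taking $m$-th roots, letting $m\to\infty$ and then $\varepsilon\to 0$ yields $\rho_t(M)\le\rho_t(M/I)$. Combined with the easy direction, $\rho_t(M)=\rho_t(M/I)$, and the final assertion follows by taking $I=\mathcal{R}_t(A)$, which is a closed tensor quasinilpotent ideal by Corollary \ref{tq}.
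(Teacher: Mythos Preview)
Your approach is essentially the paper's: split each entry of $M^m$ as an approximating element plus an element of $I$, use $I\subset\mathcal{R}_t(A)$ (Theorem \ref{tq2}) and then Theorem \ref{inv}/Corollary \ref{tq1} to discard the $I$-part, and bound $\rho_t(M)^m$ by $\eta((M/I)^m)$ up to an arbitrarily small error. Two small slips to fix.

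First, the coordinatewise splitting you describe does \emph{not} give $M^m\simeq R\sqcup K$; it gives $M^m=R+K$ (the element-wise sum of families, as in Proposition \ref{change}). The families $R\sqcup K$ and $R+K$ are genuinely different objects, and $\rho_t(M^m)=\rho_t(R\sqcup K)$ is false as written. What you need is $\rho_t(M^m)=\rho_t(R+K)\le\rho_t(R\sqcup K)=\rho_t(R)$, the inequality coming from Proposition \ref{change} and the last equality from Theorem \ref{inv}; alternatively, $\rho_t(R+K)=\rho_t(R)$ directly from Corollary \ref{tq1}. This is exactly what the paper does (with the notation $N,S$ in place of your $R,K$).

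Second, your order of limits at the end is unsafe: from $\rho_t(M)^m\le\eta((M/I)^m)+\varepsilon$, taking $m$-th roots and letting $m\to\infty$ with $\varepsilon>0$ fixed can give only $\rho_t(M)\le\max(\rho_t(M/I),1)$ when $\rho_t(M/I)<1$, since the constant $\varepsilon$ dominates. Send $\varepsilon\to 0$ first (for each fixed $m$) to obtain $\rho_t(M)^m\le\eta((M/I)^m)$, and only then take $m$-th roots and pass to the limit; this is the order the paper uses.
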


\begin{proof}
As clearly $\rho_{t}\left(  M/\mathcal{R}_{t}\left(  A\right)  \right)
\leq\rho_{t}\left(  M\right)  $, it suffices to show the reverse inequality.
Let $\delta>0$, $n\in\mathbb{N}$ and $M^{n}=\left\{  b_{m}\right\}
_{1}^{\infty}$. Then for every $m$ there are $c_{m}\in A$ and $d_{m}\in I$
such that $b_{m}=c_{m}+d_{m}$ and
\[
\left\Vert c_{m}\right\Vert \leq\left\Vert b_{m}/I\right\Vert +2^{-m}\delta.
\]
Let $N=\left\{  c_{m}\right\}  _{1}^{\infty}$ and $S=\left\{  d_{m}\right\}
_{1}^{\infty}$. Since $\mathbf{\mathrm{\eta}}\left(  N\right)  \leq
\mathbf{\mathrm{\eta}}\left(  \ M^{n}/I\right)  +\delta$, then $N\in\ell
_{1}\left(  A\right)  .$ As $S=M^{n}-N$, we have that $S\in\ell_{1}\left(
I\right)  $ and that
\[
\rho_{t}\left(  N+S\right)  \leq\rho_{t}\left(  N\sqcup S\right)
\]
by Proposition \ref{change}. As $I\subset\mathcal{R}_{t}\left(  A\right)  $,
we have that
\[
\rho_{t}\left(  N+S\right)  \leq\rho_{t}\left(  N\sqcup S\right)  =\rho
_{t}\left(  N\right)  .
\]
by Theorem \ref{inv}. Therefore we obtain that
\[
\rho_{t}\left(  M\right)  ^{n}\overset{(\ref{f62})}{=}\rho_{t}\left(
M^{n}\right)  =\rho_{t}\left(  N+S\right)  \leq\rho_{t}\left(  N\right)
\leq\mathbf{\mathrm{\eta}}\left(  N\right)  \leq\mathbf{\mathrm{\eta}}\left(
M^{n}/I\right)  +\delta.
\]
Since $\delta$ is arbitrary, then $\rho_{t}\left(  M\right)  ^{n}%
\leq\mathbf{\mathrm{\eta}}\left(  M^{n}/I\right)  $ for every $n>0$. Taking
$n$-roots and passing to limits, we obtain that $\rho_{t}\left(  M\right)
\leq\rho_{t}\left(  M/I\right)  $.
\end{proof}

\begin{corollary}
\label{tenquo}Let $A$ be a normed algebra. Then $\mathcal{R}_{t}\left(
A/\mathcal{R}_{t}\left(  A\right)  \right)  =0$.
\end{corollary}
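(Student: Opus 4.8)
The statement to prove is that $\mathcal{R}_t(A/\mathcal{R}_t(A)) = 0$ for any normed algebra $A$. The natural approach is to reduce this to a transitivity-style property of the functor $\mathcal{R}_t$ together with the invariance of the tensor spectral radius under passage to the quotient by $\mathcal{R}_t$, which is exactly Theorem~\ref{tenquot}.

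First I would write $B = A/\mathcal{R}_t(A)$ and let $q: A \to B$ be the quotient map. Suppose $b \in \mathcal{R}_t(B)$; I want to show $b = 0$. Pick $a \in A$ with $q(a) = b$. The key is to use Corollary~\ref{largest}: since $b \in \mathcal{R}_t(B)$, we have $\rho_t(bN) = 0$ for every $N \in \ell_1(B)$. Given an arbitrary $M = \{a_n\}_1^\infty \in \ell_1(A)$, consider $N = q(M) = \{q(a_n)\}_1^\infty \in \ell_1(B)$; then $q(aM) = bN$, so $\rho_t(q(aM)) = \rho_t(bN) = 0$. But $q$ is the quotient map onto $A/\mathcal{R}_t(A)$ and $\mathcal{R}_t(A)$ is a closed tensor quasinilpotent ideal (Corollary~\ref{tq}), so Theorem~\ref{tenquot} applied to the summable family $aM$ gives $\rho_t(aM) = \rho_t(aM/\mathcal{R}_t(A)) = \rho_t(q(aM)) = 0$. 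Since $M \in \ell_1(A)$ was arbitrary, Corollary~\ref{largest} (or directly Theorem~\ref{multi}) yields $a \in \mathcal{R}_t(A)$, i.e. $b = q(a) = 0$. Hence $\mathcal{R}_t(B) = 0$.

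I expect the main (minor) obstacle to be the bookkeeping of what $q(aM)$ means as an element of $\ell_1(B)$ and making sure Theorem~\ref{tenquot} applies to the family $aM$ rather than to $M$ itself — but this is purely formal, since $aM \in \ell_1(A)$ whenever $M \in \ell_1(A)$. One should also note that the inclusion $q(aM) = bq(M)$ uses only that $q$ is an algebra homomorphism applied componentwise, which is immediate from the definition of the product of a single element by a family. No subharmonicity or deeper machinery is needed here beyond the results already established; the corollary is essentially a formal consequence of Theorem~\ref{tenquot} and Corollary~\ref{largest}.
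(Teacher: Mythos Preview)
Your proof is correct and follows essentially the same approach as the paper: both arguments reduce the claim to Theorem~\ref{tenquot}, which says that $\rho_t$ does not change upon passing to the quotient by $\mathcal{R}_t(A)$. The only cosmetic difference is that the paper works with the original definition of $\mathcal{R}_t(A)$ (showing $\rho_t(\{a\}\sqcup M)=\rho_t(M)$ for all $M$), whereas you use the equivalent characterization from Corollary~\ref{largest} (showing $\rho_t(aM)=0$ for all $M$); either route is a one-line application of Theorem~\ref{tenquot}.
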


\begin{proof}
Let $a/\mathcal{R}_{t}\left(  A\right)  \in\mathcal{R}_{t}\left(
A/\mathcal{R}_{t}\left(  A\right)  \right)  $. Then it follows from Theorem
\ref{tenquot} that $\rho_{t}\left(  \left\{  a\right\}  \sqcup M\right)
=\rho_{t}\left(  M\right)  $ for every $M\in\ell_{1}\left(  A\right)  $. Hence
$a\in\mathcal{R}_{t}\left(  A\right)  $, and therefore $\mathcal{R}_{t}\left(
A/\mathcal{R}_{t}\left(  A\right)  \right)  =0$.
\end{proof}

\begin{theorem}
Let $A$ be a normed algebra. If $I$ is an ideal of $A$ then $\mathcal{R}%
_{t}\left(  I\right)  =\mathcal{R}_{t}\left(  A\right)  \cap I$.
\end{theorem}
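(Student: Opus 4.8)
The plan is to prove the two inclusions $\mathcal{R}_{t}(A)\cap I\subset\mathcal{R}_{t}(I)$ and $\mathcal{R}_{t}(I)\subset\mathcal{R}_{t}(A)\cap I$ separately, where $I$ is understood to carry the norm inherited from $A$. The one observation used throughout is that for a summable family $M=\{a_{n}\}_{1}^{\infty}$ with all $a_{n}\in I$, every power $M^{k}$ again has its entries in the subalgebra $I$, and $\|\cdot\|_{I}=\|\cdot\|_{A}$ there, so $\eta(M^{k})$ is the same whether $M$ is regarded as a family in $I$ or in $A$; consequently $\rho_{t}(M)$ does not depend on which of $I$, $A$ we work in. In particular Corollary \ref{largest} may be applied both inside $I$ and inside $A$, and this is really the only place where any care is needed.

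First I would treat $\mathcal{R}_{t}(A)\cap I\subset\mathcal{R}_{t}(I)$. The set $\mathcal{R}_{t}(A)\cap I$ is an ideal of $A$ contained in $I$, hence an ideal of $I$ (if $J\subset I$ is an ideal of $A$ and $x\in J$, $y\in I\subset A$, then $xy,yx\in J$). By Corollary \ref{tq}, every summable family with entries in $\mathcal{R}_{t}(A)$ — a fortiori every one with entries in $\mathcal{R}_{t}(A)\cap I$ — has tensor spectral radius $0$ in $A$, hence $0$ in $I$ by the norm remark above. Thus $\mathcal{R}_{t}(A)\cap I$ is a tensor quasinilpotent ideal of $I$, and Theorem \ref{tq2} applied to $I$ yields $\mathcal{R}_{t}(A)\cap I\subset\mathcal{R}_{t}(I)$.

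For the reverse inclusion, note first that $\mathcal{R}_{t}(I)\subset I$ trivially, so it remains to show $\mathcal{R}_{t}(I)\subset\mathcal{R}_{t}(A)$. Fix $a\in\mathcal{R}_{t}(I)$; by Corollary \ref{largest} (the direction $(\mathrm{ii})\Rightarrow(\mathrm{i})$, i.e.\ Theorem \ref{multi}) it suffices to check that $\rho_{t}(aM)=0$ for an arbitrary $M=\{b_{n}\}_{1}^{\infty}\in\ell_{1}(A)$. Since $I$ is a two-sided ideal, the family $MaM$ has all its entries $b_{n_{1}}ab_{n_{2}}$ in $I$, and $\eta(MaM)\leq\eta(M)^{2}\|a\|<\infty$, so $MaM\in\ell_{1}(I)$. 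As $a\in\mathcal{R}_{t}(I)$, Corollary \ref{largest} applied inside $I$ gives $\rho_{t}\bigl(a\,(MaM)\bigr)=0$. By associativity of the product of families, $(\ref{f39})$, one has $(aM)^{2}\simeq a(MaM)$, hence $\rho_{t}\bigl((aM)^{2}\bigr)=0$, and therefore $\rho_{t}(aM)=\rho_{t}\bigl((aM)^{2}\bigr)^{1/2}=0$ by $(\ref{f62})$. Since $M\in\ell_{1}(A)$ was arbitrary, Corollary \ref{largest} gives $a\in\mathcal{R}_{t}(A)$, completing the proof.

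The only genuinely non-formal step is the passage to the square $(aM)^{2}\simeq a(MaM)$, which is what moves the ``outer'' factor $M\in\ell_{1}(A)$ into the ideal $I$ so that the hypothesis $a\in\mathcal{R}_{t}(I)$ becomes applicable — exactly the device already used for the non-unital case in the proof of Theorem \ref{multi}. Everything else is bookkeeping with the semigroup of families and the identification of $\rho_{t}$ computed in $I$ with $\rho_{t}$ computed in $A$.
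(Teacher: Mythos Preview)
Your proof is correct and follows essentially the same approach as the paper. The reverse inclusion $\mathcal{R}_{t}(I)\subset\mathcal{R}_{t}(A)$ is argued identically via the squaring trick $(aM)^{2}\simeq a(MaM)$ to push the outer $M$ into the ideal; for the easy inclusion $\mathcal{R}_{t}(A)\cap I\subset\mathcal{R}_{t}(I)$ the paper simply says it is ``clear'' (it follows at once from the definition of $\mathcal{R}_{t}$, since $\ell_{1}(I)\subset\ell_{1}(A)$), whereas you route through Corollary~\ref{tq} and Theorem~\ref{tq2}, which also works but is more than necessary.
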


\begin{proof}
It is clear that $\mathcal{R}_{t}\left(  A\right)  \cap I\subset
\mathcal{R}_{t}\left(  I\right)  $.
Let $a\in\mathcal{R}_{t}\left(  I\right)  $. For every $M\in\ell_{1}\left(
A\right)  $, we have that $MaM\in\ell_{1}\left(  I\right)  $ and then
\[
\rho_{t}\left(  aM\right)  ^{2}=\rho_{t}\left(  aMaM\right)  =0
\]
by (\ref{f62}) and Corollary \ref{largest}. Therefore $a\in\mathcal{R}%
_{t}\left(  A\right)  $ and $\mathcal{R}_{t}\left(  I\right)  \subset
\mathcal{R}_{t}\left(  A\right)  \cap I$.
\end{proof}

Note that this result contains Lemma \ref{unital} and implies that
\[
\mathcal{R}_{t}\left(  \mathcal{R}_{t}\left(  A\right)  \right)
=\mathcal{R}_{t}\left(  A\right)
\]
for every normed algebra $A$.

\subsection{Tensor radical algebras and ideals}

A normed algebra $A$ is called \textit{tensor radical} if the projective
tensor product $A\widehat{\otimes}_{\gamma}B$ is radical for every normed
algebra $B$. It is evident that $A$ is tensor radical if and only if its
completion $\widehat{A}$ is tensor radical. If $A$ is tensor radical then its
opposite algebra $A^{\mathrm{op}}$ is also tensor radical. An ideal of a
normed algebra is called \textit{tensor radical} if it is a tensor radical algebra.

The following result is an easy consequence of associativity and
distributivity of tensor product.

\begin{proposition}
\label{tensum}Let $A$ and $B$ be normed algebras.

\begin{itemize}
\item[$\mathrm{(i)}$] If $A$ is tensor radical then $A\widehat{\otimes}B$ is
tensor radical.

\item[$\mathrm{(ii)}$] If $A$ and $B$ are tensor radical then $A\oplus B$ is
tensor radical.
\end{itemize}
\end{proposition}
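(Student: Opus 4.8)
The plan is to reduce both parts to two structural facts about the projective tensor product that hold already at the level of algebraic tensor products and survive completion: \emph{associativity}, that $(A\widehat{\otimes}B)\widehat{\otimes}C$ is isometrically isomorphic to $A\widehat{\otimes}(B\widehat{\otimes}C)$, and \emph{distributivity over finite direct sums}, that $(A\oplus B)\widehat{\otimes}C$ is topologically isomorphic to $(A\widehat{\otimes}C)\oplus(B\widehat{\otimes}C)$. I would first record these, noting that they are inherited from the evident identities for algebraic tensor products by passing to completions (using, as in Section \ref{ss1}, that $A\widehat{\otimes}B$ coincides with the projective tensor product of the completions $\widehat{A},\widehat{B}$), and that on elementary tensors the natural linear homeomorphisms respect multiplication, so they are isomorphisms of Banach algebras. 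Essentially the same kind of identification was already exploited in Proposition \ref{quo}.

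For (i), I would fix an arbitrary normed algebra $C$ and consider $(A\widehat{\otimes}B)\widehat{\otimes}C$. By associativity it is isomorphic, as a Banach algebra, to $A\widehat{\otimes}(B\widehat{\otimes}C)$. Since $B\widehat{\otimes}C$ is again a normed (indeed Banach) algebra and $A$ is tensor radical, $A\widehat{\otimes}(B\widehat{\otimes}C)$ is radical; radicality of a Banach algebra is preserved under algebra isomorphism, so $(A\widehat{\otimes}B)\widehat{\otimes}C$ is radical. As $C$ was arbitrary, $A\widehat{\otimes}B$ is tensor radical.

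For (ii), I would again fix an arbitrary normed algebra $C$ and use distributivity to identify $(A\oplus B)\widehat{\otimes}C$ with $(A\widehat{\otimes}C)\oplus(B\widehat{\otimes}C)$ up to topological isomorphism. Since $A$ and $B$ are tensor radical, both $A\widehat{\otimes}C$ and $B\widehat{\otimes}C$ are radical Banach algebras. It remains to see that the direct sum of two radical Banach algebras is radical: regarding $A\widehat{\otimes}C$ as a closed ideal of the Banach algebra $(A\widehat{\otimes}C)\oplus(B\widehat{\otimes}C)$ whose quotient is topologically isomorphic to the radical algebra $B\widehat{\otimes}C$, Lemma \ref{quotBan} gives radicality of the direct sum. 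Hence $(A\oplus B)\widehat{\otimes}C$ is radical for every $C$, i.e. $A\oplus B$ is tensor radical.

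The only point needing genuine care — and hence the (mild) ``main obstacle'' — is the verification that the associativity and distributivity isomorphisms are isomorphisms of algebras and not merely of Banach spaces, together with the bookkeeping that tensoring a normed algebra with $C$ agrees with tensoring its completion; once these are settled, the rest is formal. As an alternative one may replace the last step by the elementary fact $\mathrm{Rad}(R\oplus S)=\mathrm{Rad}(R)\oplus\mathrm{Rad}(S)$ instead of invoking Lemma \ref{quotBan}.
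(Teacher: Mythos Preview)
Your proof is correct and is exactly the approach the paper has in mind: the paper gives no detailed argument and simply states that the result ``is an easy consequence of associativity and distributivity of tensor product.'' Your extra care with the algebra-isomorphism nature of the identifications and with showing that a direct sum of radical Banach algebras is radical (via Lemma~\ref{quotBan} or the formula for $\mathrm{Rad}(R\oplus S)$) just fills in the routine details the paper omits.
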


The study of deeper properties is based on the following theorem.

\begin{theorem}
\label{perrad} For a normed algebra $A$ the following conditions are equivalent.

\begin{itemize}
\item[$\mathrm{(i)}$] $A$ is tensor radical.

\item[$\mathrm{(ii)}$] $A$ is tensor quasinilpotent.
\end{itemize}
\end{theorem}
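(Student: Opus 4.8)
The plan is to obtain both implications directly from the machinery already in place: Theorem \ref{tsr}(ii), which realizes the tensor spectral radius of a summable family as an ordinary spectral radius inside a suitable projective tensor product, and Corollary \ref{explains}. The only ingredient coming from outside this section is the classical fact recalled in the proof of Proposition \ref{predv1}(i): for a Banach algebra the Jacobson radical is the largest ideal consisting of quasinilpotents, so that a Banach algebra all of whose elements are quasinilpotent coincides with its radical and is therefore radical.

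For $\mathrm{(ii)}\Rightarrow\mathrm{(i)}$, let $B$ be an arbitrary normed algebra. Since $A\widehat{\otimes}_{\gamma}B=A\widehat{\otimes}_{\gamma}\widehat{B}$, I may assume $B$ is complete, so that $A\widehat{\otimes}B$ is a Banach algebra. Given $z\in A\widehat{\otimes}B$, I write it in the form (\ref{proje}), $z=\sum_{k}a_{k}\otimes b_{k}$ with $\sum_{k}\Vert a_{k}\Vert\Vert b_{k}\Vert<\infty$, discard the terms with $b_{k}=0$, and renormalize by replacing $a_{k}$ with $\Vert b_{k}\Vert a_{k}$ and $b_{k}$ with $b_{k}/\Vert b_{k}\Vert$; this produces $M=\{a_{k}\}\in\ell_{1}(A)$ and $L=\{b_{k}\}\in\ell_{\infty}(B)$ with $z=M_{\otimes}L$. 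As $A$ is tensor quasinilpotent, $\rho_{t}(M)=0$, so by Corollary \ref{explains} the element $z=M_{\otimes}L$ is quasinilpotent in $A\widehat{\otimes}B$. Hence every element of the Banach algebra $A\widehat{\otimes}B$ is quasinilpotent, so the whole algebra, being an ideal consisting of quasinilpotents, coincides with its Jacobson radical; that is, $A\widehat{\otimes}B$ is radical. Since $B$ was arbitrary, $A$ is tensor radical.

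For $\mathrm{(i)}\Rightarrow\mathrm{(ii)}$, I invoke Theorem \ref{tsr}(ii) to fix a unital Banach algebra $B$ and a bounded family $L\in\ell_{\infty}(B)$ with $\rho(M_{\otimes}L)=\rho_{t}(M)$ for every $M\in\ell_{1}(A)$. Since $A$ is tensor radical, $A\widehat{\otimes}B$ is radical, so all its elements are quasinilpotent; in particular $\rho(M_{\otimes}L)=0$ for every $M$. Therefore $\rho_{t}(M)=0$ for every $M\in\ell_{1}(A)$, i.e. $A$ is tensor quasinilpotent.

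I do not expect a real obstacle here: once Theorem \ref{tsr} and Corollary \ref{explains} are available, both directions are bookkeeping. The only point worth a careful phrase is the step "every element quasinilpotent $\Rightarrow$ radical", handled by citing the characterization of the Jacobson radical as the largest quasinilpotent ideal; the reduction to complete $B$ and the normalization of a projective-tensor expansion into an $\ell_{1}$–$\ell_{\infty}$ pairing are routine.
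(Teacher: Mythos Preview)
Your proof is correct and follows essentially the same route as the paper's: the implication $\mathrm{(ii)}\Rightarrow\mathrm{(i)}$ via Corollary~\ref{explains} together with the representation $z=M_{\otimes}L$, and $\mathrm{(i)}\Rightarrow\mathrm{(ii)}$ via Theorem~\ref{tsr}(ii). Your version simply spells out the normalization producing the $\ell_{1}$--$\ell_{\infty}$ pair and the passage from ``all elements quasinilpotent'' to ``radical'', both of which the paper leaves implicit.
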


\begin{proof}
$(\mathrm{ii})\Rightarrow(\mathrm{i})$ follows from Corollary \ref{explains},
taking into account that every element of $A\widehat{\otimes}B$ can be
represented as $M_{\otimes}L$ for some $M\in\ell_{1}\left(  A\right)  $ and
$L\in\ell_{\infty}\left(  B\right)  $.
$(\mathrm{i})\Rightarrow(\mathrm{ii})$ follows from Theorem \ref{tsr}. Indeed,
by this theorem, there are a Banach algebra $B$ and $L\in\ell_{\infty}\left(
B\right)  $ such that $\rho\left(  M_{\otimes}L\right)  =\rho_{t}\left(
M\right)  $ for every $M\in\ell_{1}\left(  A\right)  $. If $A$ is tensor
radical then $A\widehat{\otimes}B$ is radical, whence $\rho\left(  M_{\otimes
}L\right)  =0$ for every $M\in\ell_{1}\left(  A\right)  $. Then $\rho
_{t}\left(  M\right)  =0$ for every $M\in\ell_{1}\left(  A\right)  $, i.e. $A$
is tensor quasinilpotent.
\end{proof}

\begin{corollary}
\label{den}Every subalgebra of a tensor radical normed algebra is tensor radical.
\end{corollary}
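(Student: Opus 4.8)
The plan is to deduce the statement directly from Theorem~\ref{perrad}, which identifies tensor radicality with tensor quasinilpotence. So let $A$ be tensor radical and let $B$ be a subalgebra of $A$, taken with the norm inherited from $A$; since submultiplicativity of the norm is inherited, $B$ is again a normed algebra and Theorem~\ref{perrad} applies to it. By that theorem it suffices to prove that $B$ is tensor quasinilpotent, i.e. that $\rho_t(M)=0$ for every summable family $M\in\ell_1(B)$.

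First I would fix such a family $M=\{a_n\}_{n=1}^{\infty}$ in $B$ and note that, because the norms of $A$ and $B$ agree on $B$, we have $\eta(M)=\sum_n\|a_n\|<\infty$ whether $\eta$ is computed in $B$ or in $A$; hence $M$ is also a summable family in $A$. The key (and entirely routine) observation is the same for powers: for each $k$ the generalized subset $M^k$ is built from the products $a_{n_1}\cdots a_{n_k}$, which lie in $B\subseteq A$ and have the same norm in either algebra, so $\eta(M^k)$ is the same number whether interpreted in $B$ or in $A$. Taking $k$-th roots and passing to the limit therefore shows that the tensor spectral radius of $M$ computed in $B$ equals the tensor spectral radius of $M$ computed in $A$.

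Since $A$ is tensor radical, Theorem~\ref{perrad} gives $\rho_t(M)=0$ for this family when computed in $A$, hence also when computed in $B$. As $M$ was an arbitrary summable family in $B$, the algebra $B$ is tensor quasinilpotent, and a second application of Theorem~\ref{perrad} shows that $B$ is tensor radical, as required. There is no genuine obstacle here; the only point worth flagging is that the coincidence of the two tensor spectral radii should be argued directly from the coincidence of the norms, rather than via Lemma~\ref{above} applied to the inclusion $B\hookrightarrow A$, which by itself yields only one of the two inequalities (namely that the tensor spectral radius in $A$ is at most that in $B$).
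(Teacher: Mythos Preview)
Your proof is correct and follows exactly the same approach as the paper: both arguments reduce to Theorem~\ref{perrad} and the observation that a subalgebra (with the inherited norm) of a tensor quasinilpotent algebra is tensor quasinilpotent. The paper dismisses this last observation as ``obvious'' while you spell out why $\eta(M^k)$ and hence $\rho_t(M)$ coincide in $B$ and in $A$; this is a welcome clarification but not a different idea.
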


\begin{proof}
Follows from Theorem \ref{perrad}, since subalgebras of a tensor radical
algebra are obviously tensor quasinilpotent.
\end{proof}

\begin{corollary}
\label{dense}Let $A$ be a normed algebra. If there is a tensor quasinilpotent
dense subalgebra $B$ of $A$ then $A$ is tensor quasinilpotent.
\end{corollary}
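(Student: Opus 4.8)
The plan is to transport the problem to the completion of $A$ and to invoke Theorem~\ref{perrad} (the equivalence of tensor radicality with tensor quasinilpotence) at both ends; no quantitative estimate on $\rho_{t}$ is needed.

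First I would note that, since $B$ carries the norm induced from $A$ (and if $B$ is equipped with a larger norm, replacing it by the induced one keeps it dense and, as $\rho_{t}$ only decreases when the norm does, keeps it tensor quasinilpotent), for every summable family $M$ with terms in $B$ the numbers $\eta(M^{n})$ are given by the same formula whether $M$ is read in $B$ or in $A$; hence $\rho_{t}(M)$ is the same in either reading. Thus the hypothesis ``$B$ is a tensor quasinilpotent subset of $A$'' is precisely the statement that the normed algebra $B$ is tensor quasinilpotent, and Theorem~\ref{perrad} then yields that $B$ is tensor radical. Next, since $B$ is dense in $A$, the inclusions $B\hookrightarrow A\hookrightarrow\widehat{A}$ compose to an isometric homomorphism with dense range, so $\widehat{A}$ is a completion of $B$ and is therefore isometrically isomorphic to $\widehat{B}$. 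Using the observation recorded above that a normed algebra is tensor radical if and only if its completion is, we pass from ``$B$ tensor radical'' to ``$\widehat{B}$ tensor radical'', that is, ``$\widehat{A}$ tensor radical'', and then back to ``$A$ tensor radical''. Finally, applying Theorem~\ref{perrad} once more, now to $A$, we conclude that $A$ is tensor quasinilpotent, which is the assertion. (One may instead finish the argument, once $\widehat{A}$ is seen to be tensor radical, by invoking Corollary~\ref{den} for the subalgebra $A\subset\widehat{A}$.)

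This argument is quite short and there is no genuine difficulty to overcome; the only point meriting care is the bookkeeping in the first step---checking that ``tensor quasinilpotent'' is literally the same property for $B$ seen as a subset of $A$ and for $B$ seen as a normed algebra in its own right---together with the standard fact, used above, that a dense subalgebra and its ambient algebra share the same completion. Everything else reduces to two applications of Theorem~\ref{perrad} and to the invariance of tensor radicality under completion.
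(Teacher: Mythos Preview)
Your proof is correct and follows essentially the same route as the paper's own argument: apply Theorem~\ref{perrad} to pass from tensor quasinilpotence of $B$ to tensor radicality, identify $\widehat{B}$ with $\widehat{A}$ via density, transfer tensor radicality through the completion, and apply Theorem~\ref{perrad} once more to conclude that $A$ is tensor quasinilpotent. The only difference is that you spell out in more detail the bookkeeping on norms in the first step, which the paper leaves implicit.
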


\begin{proof}
Indeed, as $B$ is tensor radical by Theorem \ref{perrad}, the completion
$\widehat{B}$ is also tensor radical. As $\widehat{B}$ and $\widehat{A}$ are
identified, the algebra $\widehat{A}$ is tensor quasinilpotent by Theorem
\ref{perrad}. Therefore $A$ is tensor quasinilpotent.
\end{proof}

As a consequence of Corollary \ref{tq} and Theorem \ref{perrad}, for every
normed algebra $A$, $\mathcal{R}_{t}\left(  A\right)  $ is the largest tensor
radical ideal of $A$.

\begin{theorem}
\label{har}Let $A$ be a normed algebra and $a\in A$. Then $a\in\mathcal{R}%
_{t}\left(  A\right)  $ if and only if $a\otimes b\in\mathrm{Rad}\left(
A\widehat{\otimes}B\right)  $, for every normed algebra $B$ and $b\in B$.
\end{theorem}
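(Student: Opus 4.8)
The plan is to prove the two implications separately. The first uses the ideal $\mathcal{R}_t(A)$ and its tensor-product behaviour; the second uses the universal realization in Theorem \ref{tsr}(ii) together with the criterion of Theorem \ref{multi}.

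For the implication $a\in\mathcal{R}_t(A)\Rightarrow a\otimes b\in\mathrm{Rad}(A\widehat{\otimes}B)$ I would show that the whole Banach ideal $\mathcal{R}_t(A)\widehat{\otimes}^{(\cdot)}B$ lies inside $\mathrm{Rad}(A\widehat{\otimes}B)$. Since $\mathcal{R}_t(A)$ is a closed ideal of $A$ it is flexible with the inherited norm, and $B$ is a flexible ideal of itself, so by the Corollary to Proposition \ref{tenf} the algebra $\mathcal{R}_t(A)\widehat{\otimes}^{(\cdot)}B$ is an honest Banach ideal of $A\widehat{\otimes}B$, equal to the bounded epimorphic image $\mathfrak{\hat{\imath}}(\mathcal{R}_t(A)\widehat{\otimes}B)$. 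By Corollary \ref{tq} and Theorem \ref{perrad}, $\mathcal{R}_t(A)$ is tensor radical, hence $\mathcal{R}_t(A)\widehat{\otimes}B$ is a radical Banach algebra; its bounded epimorphic image $\mathcal{R}_t(A)\widehat{\otimes}^{(\cdot)}B$ is therefore radical too (the image of the radical under a bounded epimorphism is an ideal consisting of quasinilpotents). Now a radical ideal $I$ of a Banach algebra $D$ is contained in $\mathrm{Rad}(D)$: every $x\in I$ is quasinilpotent in $I$, so $\{0\}\cup\sigma_D(x)=\{0\}\cup\sigma_I(x)=\{0\}$ by Remark \ref{sp2}, whence $I$ is an ideal consisting of quasinilpotents of $D$ and the standard description of the Jacobson radical (used already in the proof of Proposition \ref{predv1}(i)) gives $I\subset\mathrm{Rad}(D)$. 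Since $a\otimes b\in\mathcal{R}_t(A)\otimes B\subset\mathcal{R}_t(A)\widehat{\otimes}^{(\cdot)}B$, this direction follows.

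For the converse, assume $a\otimes b\in\mathrm{Rad}(A\widehat{\otimes}B)$ for every normed algebra $B$ and every $b\in B$. Apply Theorem \ref{tsr}(ii) to obtain a \emph{unital} Banach algebra $B$, a family $L=\{b_n\}_1^{\infty}\in\ell_{\infty}(B)$, and the operator $T\colon M\mapsto M_{\otimes}L$ satisfying $\rho(M_{\otimes}L)=\rho_t(M)$ for all $M\in\ell_1(A)$. Write $1_B$ for the unit of $B$; by hypothesis $a\otimes 1_B\in\mathrm{Rad}(A\widehat{\otimes}B)$. The key identity is that, for every $M=\{a_n\}_1^{\infty}\in\ell_1(A)$, one has $aM\in\ell_1(A)$ (because $\eta(aM)\leq\|a\|\,\eta(M)$) and
\[
(aM)_{\otimes}L=(a\otimes 1_B)(M_{\otimes}L),
\]
both sides being $\sum_n(aa_n)\otimes b_n$, a series convergent in $A\widehat{\otimes}B$. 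As $\mathrm{Rad}(A\widehat{\otimes}B)$ is an ideal, $(aM)_{\otimes}L$ is quasinilpotent, so $\rho_t(aM)=\rho((aM)_{\otimes}L)=0$. Since $M$ was arbitrary, Theorem \ref{multi} yields $a\in\mathcal{R}_t(A)$.

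I expect the first implication to be the delicate part: one must work with the genuine ideal $\mathcal{R}_t(A)\widehat{\otimes}^{(\cdot)}B$, equipped with its flexible norm so that the tensor-ideal machinery of Section \ref{s2} applies, rather than with the mere closure of $\mathcal{R}_t(A)\otimes B$ in $A\widehat{\otimes}B$ (whose radicality is not obvious, since quasinilpotence need not be preserved under closures). Once that point is handled, both directions reduce to facts already established: Theorem \ref{perrad} for radicality of the tensor product, and Theorems \ref{tsr} and \ref{multi} for the converse.
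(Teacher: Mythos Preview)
Your proof is correct and follows essentially the same route as the paper's own argument: for the forward direction you show $\mathcal{R}_t(A)\widehat{\otimes}^{(\cdot)}B$ is a radical ideal of $A\widehat{\otimes}B$ (hence contained in its Jacobson radical), and for the converse you invoke the universal algebra of Theorem~\ref{tsr}(ii) together with the identity $(aM)_{\otimes}L=(a\otimes 1_B)(M_{\otimes}L)$ and Theorem~\ref{multi}. The only cosmetic difference is that the paper argues directly that the image of a radical algebra under a bounded homomorphism consists of quasinilpotents, whereas you route this through Remark~\ref{sp2}; both are valid.
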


\begin{proof}
Let $a\in\mathcal{R}_{t}\left(  A\right)  $. Then $a\otimes b\in
\mathcal{R}_{t}\left(  A\right)  \widehat{\otimes}^{{}\left(  \cdot\right)
}B$ for an arbitrary normed algebra $B$ and for every $b\in B$. In the same
time, $\mathcal{R}_{t}\left(  A\right)  \widehat{\otimes}B$ is a radical
Banach algebra. Being the image of a bounded homomorphism from $\mathcal{R}%
_{t}\left(  A\right)  \widehat{\otimes}B$, $\mathcal{R}_{t}\left(  A\right)
\widehat{\otimes}^{{}\left(  \cdot\right)  }B$ consists of quasinilpotent
elements of $A\widehat{\otimes}B$. But it is also an ideal of $A\widehat
{\otimes}B$. So $\mathcal{R}_{t}\left(  A\right)  \widehat{\otimes}^{{}\left(
\cdot\right)  }B\subset\mathrm{Rad}\left(  A\widehat{\otimes}B\right)  $ and
therefore $a\otimes b\in\mathrm{Rad}\left(  A\widehat{\otimes}B\right)  $.
Suppose that $a\otimes b\in\mathrm{Rad}\left(  A\widehat{\otimes}B\right)  $
for every normed algebra $B$ and $b\in B$. Take $B$ as in Theorem
\ref{tsr}(ii). Then $B$ has the identity element $1$ and there is a family
$L\in\ell_{\infty}\left(  B\right)  $ such that $\rho\left(  M_{\otimes
}L\right)  =\rho_{t}\left(  M\right)  $ for every $M\in\ell_{1}\left(
A\right)  $. Since $a\otimes1\in\mathrm{Rad}\left(  A\widehat{\otimes
}B\right)  $ then $\rho\left(  \left(  a\otimes1\right)  \left(  M_{\otimes
}L\right)  \right)  =0$. As $\left(  a\otimes1\right)  \left(  M_{\otimes
}L\right)  =aM_{\otimes}L$, we have that $\rho_{t}\left(  aM\right)  =0$ for
every $M\in\ell_{1}\left(  A\right)  $. By Theorem \ref{multi}, $a\in
\mathcal{R}_{t}\left(  A\right)  $.
\end{proof}

\begin{proposition}
\label{tr}Let $A$ and $B$ be normed algebras. Then
\[
\mathcal{R}_{t}\left(  A\right)  \widehat{\otimes}^{{}\left(  \cdot\right)
}B\subset\mathcal{R}_{t}\left(  A\widehat{\otimes}B\right)  \subset
\mathrm{Rad}\left(  A\widehat{\otimes}B\right)  .
\]

\end{proposition}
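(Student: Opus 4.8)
The plan is to treat the two inclusions separately. The right‑hand inclusion $\mathcal{R}_{t}(A\widehat{\otimes}B)\subset\mathrm{Rad}(A\widehat{\otimes}B)$ is immediate: $A\widehat{\otimes}B$ is a Banach algebra, so this is exactly Corollary \ref{rad}. So the whole content is the left‑hand inclusion $\mathcal{R}_{t}(A)\widehat{\otimes}^{(\cdot)}B\subset\mathcal{R}_{t}(A\widehat{\otimes}B)$, which I would split into two steps.

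\emph{Step 1: the algebra $I:=\mathcal{R}_{t}(A)\widehat{\otimes}^{(\cdot)}B$, taken with its own norm $\Vert\cdot\Vert_{\ast}$ (inherited from the quotient of $\mathcal{R}_{t}(A)\widehat{\otimes}B$), is tensor radical.} By Corollary \ref{tq} and Theorem \ref{perrad}, $\mathcal{R}_{t}(A)$ is tensor radical, hence $\mathcal{R}_{t}(A)\widehat{\otimes}B$ is tensor radical by Proposition \ref{tensum}(i); since $\mathcal{R}_{t}(\cdot)$ is always the largest tensor radical ideal, a tensor radical algebra $R$ satisfies $\mathcal{R}_{t}(R)=R$, so $\mathcal{R}_{t}\bigl(\mathcal{R}_{t}(A)\widehat{\otimes}B\bigr)=\mathcal{R}_{t}(A)\widehat{\otimes}B$. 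By construction $I$ is isometrically isomorphic to $\bigl(\mathcal{R}_{t}(A)\widehat{\otimes}B\bigr)/\ker\mathfrak{\hat{\imath}}$, so the canonical map $\mathcal{R}_{t}(A)\widehat{\otimes}B\to I$ is an open bounded epimorphism, and Theorem \ref{hom} gives $I=\mathfrak{\hat{\imath}}(\mathcal{R}_{t}(A)\widehat{\otimes}B)\subset\mathcal{R}_{t}(I)$, i.e. $I$ is tensor quasinilpotent with respect to $\Vert\cdot\Vert_{\ast}$. (Alternatively one may deduce tensor radicality of $I$ from Proposition \ref{quo}(i), identifying $I\widehat{\otimes}B'$ with a quotient of the radical algebra $\mathcal{R}_{t}(A)\widehat{\otimes}(B\widehat{\otimes}B')$ for each normed algebra $B'$; either route works.)

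\emph{Step 2: pass to $A\widehat{\otimes}B$.} Since $\mathcal{R}_{t}(A)$ is an ideal of $A$ carrying the inherited norm, it is flexible, and $B$ is trivially a flexible ideal of $B$; hence by the corollary to Proposition \ref{tenf}, $I$ is a Banach ideal of $A\widehat{\otimes}B$ whose norm $\Vert\cdot\Vert_{\ast}$ is flexible with respect to the norm of $A\widehat{\otimes}B$. Now fix $z\in I$ and $M=\{w_{n}\}_{1}^{\infty}\in\ell_{1}(A\widehat{\otimes}B)$; all elements $zw_{n}$ of the family $zM$ lie in $I$, and flexibility bounds $\sum_{n}\Vert zw_{n}\Vert_{\ast}$ by $c\,\Vert z\Vert_{\ast}\sum_{n}\Vert w_{n}\Vert$ for a constant $c$, so $zM$ is a summable family of $(I,\Vert\cdot\Vert_{\ast})$. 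By Step 1 its tensor spectral radius computed in $(I,\Vert\cdot\Vert_{\ast})$ vanishes, and since the norm of $A\widehat{\otimes}B$ is dominated by $\Vert\cdot\Vert_{\ast}$ on $I$, this forces $\rho_{t}(zM)=0$ in $A\widehat{\otimes}B$. As $M$ is arbitrary, Corollary \ref{largest} gives $z\in\mathcal{R}_{t}(A\widehat{\otimes}B)$, and as $z$ is arbitrary the inclusion follows.

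The point I expect to be delicate is the last passage. It is tempting to argue "$I$ is tensor radical, hence a tensor quasinilpotent subset of $A\widehat{\otimes}B$", but a family with elements in $I$ that is summable for the ambient norm need not be summable for the stronger ideal norm $\Vert\cdot\Vert_{\ast}$, so that shortcut is not available. What rescues the argument is that Corollary \ref{largest} only asks for control of the products $zM$, and flexibility of $\Vert\cdot\Vert_{\ast}$ is exactly what restores summability of $zM$ inside $(I,\Vert\cdot\Vert_{\ast})$. Everything else is a matter of sequencing the already established facts about $\mathcal{R}_{t}$, tensor radical algebras, open epimorphisms, and flexible tensor products in the right order.
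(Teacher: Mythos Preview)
Your proof is correct, but it takes a different route from the paper's. The paper argues elementwise via Theorem~\ref{har}: if $a\in\mathcal{R}_{t}(A)$ then $a\otimes b\otimes c\in\mathrm{Rad}(A\widehat{\otimes}B\widehat{\otimes}C)$ for all $C$ and $c\in C$, so by the same theorem $a\otimes b\in\mathcal{R}_{t}(A\widehat{\otimes}B)$; closedness of $\mathcal{R}_{t}(A\widehat{\otimes}B)$ then absorbs all of $\mathcal{R}_{t}(A)\widehat{\otimes}^{(\cdot)}B$. Your argument instead establishes once that $I=\mathcal{R}_{t}(A)\widehat{\otimes}^{(\cdot)}B$ is tensor radical in its own (flexible) norm, and then uses flexibility to push summability of $zM$ from the ambient algebra into $(I,\Vert\cdot\Vert_{\ast})$, invoking Corollary~\ref{largest}. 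The paper's route is shorter because Theorem~\ref{har} already packages the ``tensor with one more factor'' trick; your route is more self-contained in the tensor-spectral-radius machinery and, as a by-product, isolates a reusable fact (a flexible Banach ideal that is tensor radical in its own norm lies in $\mathcal{R}_{t}$ of the ambient algebra). Your caution about the gap between ``$I$ tensor radical in $\Vert\cdot\Vert_{\ast}$'' and ``$I$ a tensor quasinilpotent subset of $A\widehat{\otimes}B$'' is well placed, and the flexibility argument you give is exactly the right bridge.
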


\begin{proof}
If $a\in\mathcal{R}_{t}\left(  A\right)  $ then it follows from Theorem
\ref{har} that
\[
a\otimes b\otimes c\in\mathrm{Rad}\left(  A\widehat{\otimes}B\widehat{\otimes
}C\right)  ,
\]
for every $b\in B$ and for every normed algebra $C$ and $c\in C$. By the same
theorem, $a\otimes b\in\mathcal{R}_{t}\left(  A\widehat{\otimes}B\right)  $
for every $b\in B$. So the closure of $\mathcal{R}_{t}\left(  A\right)
\otimes B$ in $A\widehat{\otimes}B$ is contained into $\mathcal{R}_{t}\left(
A\widehat{\otimes}B\right)  $ since $\mathcal{R}_{t}\left(  A\widehat{\otimes
}B\right)  $ is closed. But $\mathcal{R}_{t}\left(  A\right)  \widehat
{\otimes}^{{}\left(  \cdot\right)  }B$ is generated as a normed algebra by
elements of $\mathcal{R}_{t}\left(  A\right)  \otimes B$. Hence $\mathcal{R}%
_{t}\left(  A\right)  \widehat{\otimes}^{{}\left(  \cdot\right)  }%
B\subset\mathcal{R}_{t}\left(  A\widehat{\otimes}B\right)  $.
As $A\widehat{\otimes}B$ is a Banach algebra, $\mathcal{R}_{t}\left(
A\widehat{\otimes}B\right)  \subset\mathrm{Rad}\left(  A\widehat{\otimes
}B\right)  $ by Corollary \ref{rad}.
\end{proof}

\begin{proposition}
\label{tr7}Let $A$ be a normed algebra and $I$ be an ideal of $A$. If $I$ and
$A/\overline{I}$ are tensor radical then $A$ is tensor radical.
\end{proposition}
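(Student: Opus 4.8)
The plan is to pass, via Theorem \ref{perrad}, from the property of being tensor radical to the more handy property of being tensor quasinilpotent: it suffices to prove that $\rho_{t}(M)=0$ for every $M\in\ell_{1}(A)$.

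First I would exploit the hypothesis on $I$. Being tensor radical, $I$ is tensor quasinilpotent by Theorem \ref{perrad}, so $\rho_{t}(N)=0$ for every summable family $N$ with elements in $I$; since the norm of $I$ dominates the norm of $A$ on $I$, Lemma \ref{above} (applied to the inclusion homomorphism $I\hookrightarrow A$) shows this remains true when $\rho_{t}$ is computed in $A$. Thus $I$ is a tensor quasinilpotent ideal of $A$, and Theorem \ref{tq2} gives $I\subset\mathcal{R}_{t}(A)$. As $\mathcal{R}_{t}(A)$ is closed we get $\overline{I}\subset\mathcal{R}_{t}(A)$, and since $\mathcal{R}_{t}(A)$ is itself a tensor quasinilpotent ideal (Corollary \ref{tq}), the closed ideal $\overline{I}$ is a closed tensor quasinilpotent ideal of $A$.

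Now I would apply Theorem \ref{tenquot} to the closed tensor quasinilpotent ideal $\overline{I}$: for every $M\in\ell_{1}(A)$ we get $\rho_{t}(M)=\rho_{t}(M/\overline{I})$, and $M/\overline{I}$ does belong to $\ell_{1}(A/\overline{I})$ because $\|a_{n}/\overline{I}\|\le\|a_{n}\|$. Finally, $A/\overline{I}$ is tensor radical by hypothesis, hence tensor quasinilpotent by Theorem \ref{perrad}, so $\rho_{t}(M/\overline{I})=0$; therefore $\rho_{t}(M)=0$ for all $M\in\ell_{1}(A)$, i.e.\ $A$ is tensor quasinilpotent and so tensor radical. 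I do not expect a real obstacle: the argument is the exact tensor analogue of Lemma \ref{quotBan} (with $\mathrm{Rad}$ replaced by $\mathcal{R}_{t}$ and Theorem \ref{tenquot} playing the role of the implication ``$\overline{I}\subset\ker\pi$''), and the only points to watch are that one must pass to the \emph{closure} of $I$ before invoking Theorem \ref{tenquot} (which needs a closed ideal) and that the two meanings of ``tensor quasinilpotent'' — for the algebra $I$ and for the subset $I\subset A$ — coincide, which is precisely what Lemma \ref{above} secures.
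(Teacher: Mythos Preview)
Your proof is correct and follows essentially the same route as the paper: reduce via Theorem~\ref{perrad} to tensor quasinilpotence, show that $\overline{I}$ is a closed tensor quasinilpotent ideal, invoke Theorem~\ref{tenquot} to get $\rho_{t}(M)=\rho_{t}(M/\overline{I})$, and conclude from the hypothesis on $A/\overline{I}$. The only difference is cosmetic: where you route through $I\subset\mathcal{R}_{t}(A)$ and Corollary~\ref{tq} to conclude that $\overline{I}$ is tensor quasinilpotent, the paper simply observes (via Corollary~\ref{dense}) that the closure of a tensor radical algebra is tensor radical, hence tensor quasinilpotent.
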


\begin{proof}
Let $M\in\ell_{1}\left(  A\right)  $. As $I$ is tensor radical then, as we
mentioned above, the closure $\overline{I}$ of $I$ in $A$ is also tensor
radical. Then $\rho_{t}\left(  M\right)  =\rho_{t}\left(  M/\overline
{I}\right)  $ by Theorem \ref{tenquot}, but $\rho_{t}\left(  M/\overline
{I}\right)  =0$ by Theorem \ref{perrad}. So $A$ is tensor radical.
\end{proof}

\begin{proposition}
\label{sirano} Let $A$ be a normed algebra and $I$ be a flexible ideal of $A$.
If $A$ is tensor radical then $\left(  I,\left\Vert \cdot\right\Vert
_{I}\right)  $ is tensor radical.
\end{proposition}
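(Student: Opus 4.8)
The plan is to reduce both ends of the statement to tensor quasinilpotence via Theorem \ref{perrad} and, in between, to use flexibility to dominate the tensor spectral radius computed in $\left\Vert \cdot \right\Vert_{I}$ by the one computed in $\left\Vert \cdot \right\Vert_{A}$. Since $\left(I,\left\Vert \cdot \right\Vert_{I}\right)$ is a normed algebra, Theorem \ref{perrad} reduces the problem to showing it is tensor quasinilpotent, i.e. $\rho_{t}^{I}(M)=0$ for every summable family $M=\left\{a_{n}\right\}_{1}^{\infty}$ in $\left(I,\left\Vert \cdot \right\Vert_{I}\right)$; here I write $\rho_{t}^{I},\eta_{I}$ and $\rho_{t}^{A},\eta_{A}$ for the quantities formed with $\left\Vert \cdot \right\Vert_{I}$ and $\left\Vert \cdot \right\Vert_{A}$ respectively. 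Fixing such an $M$, note first that $\left\Vert \cdot \right\Vert_{A}\leq\left\Vert \cdot \right\Vert_{I}$ on $I$, so the same $M$ is a summable family of $\left(A,\left\Vert \cdot \right\Vert_{A}\right)$; as $A$ is tensor radical, Theorem \ref{perrad} gives $\rho_{t}^{A}(M)=0$, that is, $\eta_{A}(M^{p})^{1/p}\to 0$ as $p\to\infty$.

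The heart of the argument is a three-term estimate obtained from flexibility and applied to the powers of $M$. For each $p\geq 1$ write $M^{p}=\left\{c_{m}\right\}_{1}^{\infty}$; this is a summable family of $\left(I,\left\Vert \cdot \right\Vert_{I}\right)$ because $\left\Vert \cdot \right\Vert_{I}$ is submultiplicative. In any product $c_{m_{1}}c_{m_{2}}c_{m_{3}}$ the middle factor lies in $I$ and the outer ones lie in $A^{1}$, so flexibility of $\left\Vert \cdot \right\Vert_{I}$ gives $\left\Vert c_{m_{1}}c_{m_{2}}c_{m_{3}}\right\Vert_{I}\leq\left\Vert c_{m_{1}}\right\Vert_{A}\left\Vert c_{m_{2}}\right\Vert_{I}\left\Vert c_{m_{3}}\right\Vert_{A}$; summing over all triples of indices yields $\eta_{I}\left(\left(M^{p}\right)^{3}\right)\leq\eta_{A}(M^{p})^{2}\,\eta_{I}(M^{p})$. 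Combining this with the scaling identity $\rho_{t}^{I}(M)=\rho_{t}^{I}(M^{p})^{1/p}$ of (\ref{f62}) and the trivial bound $\rho_{t}^{I}(M^{p})\leq\eta_{I}\left(\left(M^{p}\right)^{3}\right)^{1/3}$ gives
\[
\rho_{t}^{I}(M)\leq\left(\eta_{A}(M^{p})^{2}\,\eta_{I}(M^{p})\right)^{1/(3p)}=\left(\eta_{A}(M^{p})^{1/p}\right)^{2/3}\left(\eta_{I}(M^{p})^{1/p}\right)^{1/3}
\]
for every $p\geq 1$. As $p\to\infty$ the right-hand side tends to $\left(\rho_{t}^{A}(M)\right)^{2/3}\left(\rho_{t}^{I}(M)\right)^{1/3}$, which equals $0$ since $\rho_{t}^{A}(M)=0$ and $\rho_{t}^{I}(M)\leq\eta_{I}(M)<\infty$. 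Hence $\rho_{t}^{I}(M)=0$, so $\left(I,\left\Vert \cdot \right\Vert_{I}\right)$ is tensor quasinilpotent and, by Theorem \ref{perrad}, tensor radical.

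I expect the only delicate point to be the bookkeeping that makes flexibility useful: one must arrange that the factor which can only be estimated by $\left\Vert \cdot \right\Vert_{I}$ (and hence cannot be controlled by the tensor-radicality of $A$) is carried with the vanishing exponent $1/(3p)$ after taking roots, which is precisely why the three-term estimate is applied to $M^{p}$ rather than to $M$ itself and is then coupled with the identity $\rho_{t}(M^{p})=\rho_{t}(M)^{p}$. All the remaining steps are routine manipulations of the definitions of $\eta$ and $\rho_{t}$ together with the facts collected in Section \ref{sss1}.
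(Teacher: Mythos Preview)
Your proof is correct and follows the same strategy as the paper's: reduce via Theorem \ref{perrad} to tensor quasinilpotence, use flexibility to bound $\eta_{I}$ of a power of $M$ by a product in which most factors are $\eta_{A}$, then take roots and invoke $\rho_{t}^{A}(M)=0$. The paper's execution is shorter, however: from $\|a_{i_{1}}\cdots a_{i_{n}}\|_{I}\le\|a_{i_{1}}\cdots a_{i_{n-1}}\|_{A}\,\|a_{i_{n}}\|_{I}$ one gets the two-term estimate $\eta_{I}(M^{n})\le\eta_{A}(M^{n-1})\,\eta_{I}(M)$ directly, and taking $n$-th roots already yields $\rho_{t}^{I}(M)\le\rho_{t}^{A}(M)=0$, so your passage through $M^{p}$, the three-term bound, and the limit in $p$ is unnecessary.
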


\begin{proof}
Let $M=\left(  a_{n}\right)  _{n=1}^{\infty}$ be a summable family in $I$.
Then $M$ is summable in $A$ and
\begin{align*}
\mathbf{\mathrm{\eta}}_{\left\Vert \cdot\right\Vert _{A}}\left(  M^{n}\right)
&  \leq\mathbf{\mathrm{\eta}}_{\left\Vert \cdot\right\Vert _{I}}\left(
M^{n}\right)  =\sum_{i_{1},\ldots,i_{n}}\left\Vert a_{i_{1}}\cdots a_{i_{n-1}%
}a_{i_{n}}\right\Vert _{I}\\
&  \leq\sum_{i_{1},\ldots,i_{n-1}}\left\Vert a_{i_{1}}\cdots a_{i_{n-1}%
}\right\Vert _{A}\sum_{i_{n}}\left\Vert a_{i_{n}}\right\Vert _{I}%
\leq\mathbf{\mathrm{\eta}}_{\left\Vert \cdot\right\Vert _{A}}\left(
M^{n-1}\right)  \mathbf{\mathrm{\eta}}_{\left\Vert \cdot\right\Vert _{I}%
}\left(  M\right)  .
\end{align*}
Hence $\rho_{t|I}\left(  M\right)  =\rho_{t}\left(  M\right)  =0$, where
$\rho_{t|I}$ is the tensor spectral radius in $\left(  I,\left\Vert
\cdot\right\Vert _{I}\right)  $.
\end{proof}

\begin{corollary}
\label{tr9}Let $I$ and $J$ be flexible ideals of a normed algebra $A$. If $I$
and $J$ are tensor radical then $I\cap J$ and $I+J$ are tensor radical with
respect to their flexible norms (see Proposition \emph{\ref{fl2}}).
\end{corollary}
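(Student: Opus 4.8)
The plan is to show first that $(I+J,\Vert\cdot\Vert_{I+J})$ is tensor radical, and then to deduce the same for $(I\cap J,\Vert\cdot\Vert_{I\cap J})$ almost for free by regarding $I\cap J$ as a flexible ideal of $I+J$.

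For $I+J$ I will prove that $\mathcal{R}_{t}(I+J)=I+J$. Since $\mathcal{R}_{t}(I+J)$ is a tensor quasinilpotent ideal (Corollary \ref{tq}), this forces $I+J$ to be a tensor quasinilpotent algebra, hence tensor radical by Theorem \ref{perrad}. Because $\mathcal{R}_{t}(I+J)$ is a linear subspace of $I+J$, it is enough to check $I\subseteq\mathcal{R}_{t}(I+J)$ and, symmetrically, $J\subseteq\mathcal{R}_{t}(I+J)$; I treat only $I$. Fix $a\in I$. By Corollary \ref{largest} it suffices to see that $\rho_{t}(aK)=0$, the tensor spectral radius here being taken in $(I+J,\Vert\cdot\Vert_{I+J})$, for every summable family $K=\{d_{n}\}$ in $(I+J,\Vert\cdot\Vert_{I+J})$. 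The key point: since $I$ is a two-sided ideal of $A$, the family $aK=\{ad_{n}\}$ and all of its powers lie in $I$, and the flexibility of $\Vert\cdot\Vert_{I}$ with respect to $A$, combined with the inequality $\Vert\cdot\Vert_{A}\leq\Vert\cdot\Vert_{I+J}$ on $I+J$ (recorded in the proof of Proposition \ref{fl2}), yields a constant $C$ (the norm of the identity of $A^{1}$) with
\[
\eta_{\Vert\cdot\Vert_{I}}(aK)=\sum_{n}\Vert ad_{n}\Vert_{I}\leq C\,\Vert a\Vert_{I}\sum_{n}\Vert d_{n}\Vert_{I+J}=C\,\Vert a\Vert_{I}\,\eta_{\Vert\cdot\Vert_{I+J}}(K)<\infty,
\]
so $aK$ is a summable family in $(I,\Vert\cdot\Vert_{I})$. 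As $I$ is tensor radical it is tensor quasinilpotent (Theorem \ref{perrad}), so $\rho_{t|I}(aK)=0$; and since $\Vert\cdot\Vert_{I+J}\leq\Vert\cdot\Vert_{I}$ on $I$, hence $\eta_{\Vert\cdot\Vert_{I+J}}\leq\eta_{\Vert\cdot\Vert_{I}}$ on the families $(aK)^{n}$ (which all sit inside $I$), we get $\rho_{t}(aK)\leq\rho_{t|I}(aK)=0$. Thus $a\in\mathcal{R}_{t}(I+J)$, so $\mathcal{R}_{t}(I+J)=I+J$ and $I+J$ is tensor radical.

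For $I\cap J$ I will appeal to Proposition \ref{sirano}: by Proposition \ref{fl2}(iii), $I\cap J$ equipped with $\Vert\cdot\Vert_{I\cap J}=\max\{\Vert\cdot\Vert_{I},\Vert\cdot\Vert_{J}\}$ is a flexible ideal of $I+J$, and $I+J$ has just been shown tensor radical, so $(I\cap J,\Vert\cdot\Vert_{I\cap J})$ is tensor radical. (One can also bypass $I+J$ here: a short verification shows $I\cap J$ is a flexible ideal of $(I,\Vert\cdot\Vert_{I})$ — the $J$-part of $\Vert\cdot\Vert_{I\cap J}$ being controlled by $\Vert axb\Vert_{J}\leq\Vert a\Vert_{A}\Vert x\Vert_{J}\Vert b\Vert_{A}\leq\Vert a\Vert_{I}\Vert x\Vert_{I\cap J}\Vert b\Vert_{I}$ — after which Proposition \ref{sirano} applies to the tensor radical algebra $(I,\Vert\cdot\Vert_{I})$.)

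There is no genuine conceptual hurdle; the only care required is the bookkeeping of norms. Every comparison of tensor spectral radii must be underwritten by a comparison of the functionals $\eta$ on precisely the set of elements that appears, and that is legitimate here only because the families $aK$ and their powers are absorbed into $I$, where $\Vert\cdot\Vert_{I+J}\leq\Vert\cdot\Vert_{I}$. Once that is noticed, the flexibility estimates do the rest.
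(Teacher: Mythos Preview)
Your proof is correct and rests on the same core idea as the paper: flexibility makes $aK$ summable in $(I,\Vert\cdot\Vert_I)$ whenever $K$ is summable in the larger algebra, so tensor radicality of $I$ forces $\rho_t(aK)=0$ and hence $a\in\mathcal{R}_t(\cdot)$ via Corollary~\ref{largest}. The only organizational difference is that the paper runs this argument in $A$ rather than in $I+J$ (obtaining $I,J\subset\mathcal{R}_t(A)$, hence $I\cap J,\,I+J$ tensor radical in $\Vert\cdot\Vert_A$) and then applies Proposition~\ref{sirano} to both ideals to pass to their flexible norms, whereas you get $I+J$ directly and use Proposition~\ref{sirano} only for $I\cap J$.
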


\begin{proof}
By Corollary \ref{largest}, $I$ and $J$ are contained in $\mathcal{R}_{t}(A)$.
Hence the same is true for the ideals $I\cap J$ and $I+J$. It follows that
they are tensor radical with respect to the norm $\left\Vert \cdot\right\Vert
_{A}$. By Proposition \ref{sirano}, they are tensor radical with respect to
their flexible norms.
\end{proof}

The following result will be often applied in the further sections.

\begin{lemma}
\label{project} Let $A_{1},A_{2}$ be normed algebras, $A=A_{1}\widehat{\otimes
}A_{2}$, and let $J_{i}\subset I_{i}$ be ideals of $A_{i}$, for $i=1,2$.
Denote by $J$ the ideal of $A$ generated by $J_{1}\otimes A_{2}+A_{1}\otimes
J_{2}$, and let $I$ be the ideal of $A$ generated by $I_{1}\otimes A_{2}%
+A_{1}\otimes I_{2}$. If $\overline{I_{i}}/\overline{J_{i}}$ are tensor
radical then $\overline{I}\subset Q_{J}\left(  A\right)  $.
\end{lemma}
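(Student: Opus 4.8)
The plan is to push everything down to the quotient $A/\overline{J}$, identify that quotient with a projective tensor product of quotient algebras by Proposition~\ref{quo}(i), and then apply Proposition~\ref{tr}.

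First I would record that $\overline{I}\subset Q_J(A)$ is equivalent to the statement that every element of $\overline{I}/\overline{J}$ is quasinilpotent in $A/\overline{J}$: indeed $\overline{J}\subset\overline{I}$, and $\mathrm{dist}_A(a^n,J)=\Vert(a/\overline{J})^n\Vert$ for $a\in A$. Next, since the set $S_J:=J_1\otimes A_2+A_1\otimes J_2$ is already an ideal of the dense subalgebra $A_1\otimes A_2$ of $A$, the ideal of $A$ it generates has the same closure as $S_J$ itself, so $\overline{J}=\overline{S_J}$; likewise $\overline{I}=\overline{S_I}$ with $S_I:=I_1\otimes A_2+A_1\otimes I_2$. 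Write $B_i:=A_i/\overline{J_i}$, $B:=B_1\widehat{\otimes}B_2$, and let $\Phi:A\to B$ be the continuous epimorphism extending $q_{\overline{J_1}}\otimes q_{\overline{J_2}}$; by Proposition~\ref{quo}(i), $\Phi$ is open with $\ker\Phi=\overline{J}$, hence it induces a topological isomorphism $A/\overline{J}\cong B$. I would then check that $\Phi(\overline{I})$ equals the closure $K$ in $B$ of $\widetilde{I}_1\otimes B_2+B_1\otimes\widetilde{I}_2$, where $\widetilde{I}_i:=q_{\overline{J_i}}(I_i)$ is an ideal of $B_i$: one inclusion is continuity of $\Phi$ applied to $\overline{I}=\overline{S_I}$, and the reverse holds because $\Phi(\overline{I})$ is closed (its preimage under the open map $\Phi$ is $\overline{I}+\ker\Phi=\overline{I}$, using $\overline{J}\subset\overline{I}$) and contains $\Phi(S_I)=\widetilde{I}_1\otimes B_2+B_1\otimes\widetilde{I}_2$. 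So it remains to prove that $K$ consists of quasinilpotents of the Banach algebra $B$.

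For the main step, note that since $q_{\overline{J_i}}$ is a quotient map and $\overline{J_i}\subset\overline{I_i}$, the closure of $\widetilde{I}_i$ in $B_i$ is $q_{\overline{J_i}}(\overline{I_i})=\overline{I_i}/\overline{J_i}$, which is tensor radical by hypothesis, hence tensor quasinilpotent by Theorem~\ref{perrad}, hence contained in $\mathcal{R}_t(B_i)$ by Theorem~\ref{tq2}; in particular $\widetilde{I}_i\subset\mathcal{R}_t(B_i)$. Proposition~\ref{tr} now gives $\mathcal{R}_t(B_1)\widehat{\otimes}^{(\cdot)}B_2\subset\mathcal{R}_t(B)$ and, applied with the factors interchanged, $B_1\widehat{\otimes}^{(\cdot)}\mathcal{R}_t(B_2)\subset\mathcal{R}_t(B)$. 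Therefore $\widetilde{I}_1\otimes B_2\subset\mathcal{R}_t(B_1)\otimes B_2\subset\mathcal{R}_t(B)$ and, symmetrically, $B_1\otimes\widetilde{I}_2\subset\mathcal{R}_t(B)$, so $\widetilde{I}_1\otimes B_2+B_1\otimes\widetilde{I}_2\subset\mathcal{R}_t(B)$. Since $\mathcal{R}_t(B)$ is a closed ideal, $K\subset\mathcal{R}_t(B)\subset\mathrm{Rad}(B)$ by Corollary~\ref{rad}, so $K$ consists of quasinilpotents; transporting this back through the isomorphism $A/\overline{J}\cong B$ yields $\overline{I}\subset Q_J(A)$.

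The substantive content is the one-line appeal to Proposition~\ref{tr}; the part I expect to require the most care is the identification carried out in the second paragraph — namely that replacing ``the ideal generated by'' with its spanning set does not change closures, that $\Phi(\overline{I})$ is exactly $K$, and that the closure of $\widetilde{I}_i$ in $B_i$ is genuinely $\overline{I_i}/\overline{J_i}$. All three points reduce to the fact that $q_{\overline{J_i}}$ and $\Phi$ are open together with the containments $\overline{J_i}\subset\overline{I_i}$ and $\overline{J}\subset\overline{I}$, so they are routine but must be set up correctly before Proposition~\ref{tr} can be used.
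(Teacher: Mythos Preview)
Your proof is correct, but it follows a different route from the paper's. The paper argues via representations: take any strictly irreducible representation $\pi$ of $A$ annihilating $J$, and show directly that $\pi(I_1\otimes A_2)=0$ and $\pi(A_1\otimes I_2)=0$; the point is that if $\pi|_{\overline{I_1}\widehat{\otimes}^{(\cdot)}A_2}\neq 0$ then it would induce a strictly irreducible representation of $(\overline{I_1}/\overline{J_1})\widehat{\otimes}A_2$, contradicting radicality of the latter (which follows from the \emph{definition} of tensor radicality). Then $\overline{I}$ lies in every primitive ideal containing $J$, and Proposition~\ref{predv1}(i) finishes.

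Your approach instead passes to the quotient $A/\overline{J}\cong B_1\widehat{\otimes}B_2$ via Proposition~\ref{quo}(i), identifies the image of $\overline{I}$ there, and invokes Proposition~\ref{tr} together with Theorem~\ref{tq2} to land inside $\mathcal{R}_t(B_1\widehat{\otimes}B_2)$. This is more ``internal'' to the tensor-radical machinery and in fact yields the sharper statement $\overline{I}/\overline{J}\subset\mathcal{R}_t(A/\overline{J})$, not merely containment in $Q_J(A)$. The paper's argument, by contrast, is shorter and uses only the bare definition of tensor radicality plus elementary representation theory, avoiding the bookkeeping you flag in your last paragraph (identifying $\Phi(\overline{I})$ with $K$, etc.). Both arguments ultimately show $\overline{I}/\overline{J}\subset\mathrm{Rad}(A/\overline{J})$; yours does so via $\mathcal{R}_t$, the paper's via primitive ideals.
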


\begin{proof}
Let $\pi$ be a strictly irreducible representation of $A$ such that
$\pi\left(  J\right)  =0$. First we show that $\pi(I_{1}\otimes A_{2})=0$ and
$\pi(A_{1}\otimes I_{2})=0$.
Assume, to the contrary, that $\pi\left(  I_{1}\otimes A_{2}\right)  \neq0$.
Hence the restriction $\tau$ of $\pi$ to $\overline{I_{1}}\widehat{\otimes
}^{{}\left(  \cdot\right)  }A_{2}$ is strictly irreducible. As $\tau\left(
J_{1}\otimes A_{2}\right)  =0$ then $\tau\left(  \overline{J_{1}}\otimes
A_{2}\right)  =0$ because one may assume that $\tau$ is continuous.
Moreover, $\tau$ induces a strictly irreducible representation of the algebra
$C:=(\overline{I_{1}}/\overline{J_{1}})\widehat{\otimes}A_{2}$, because the
composition of natural maps
\[
(\overline{I_{1}}/\overline{J_{1}})\widehat{\otimes}A_{2}\longrightarrow
\left(  \overline{I_{1}}\widehat{\otimes}A_{2}\right)  /\overline{J_{1}\otimes
A_{2}}\longrightarrow\left(  \overline{I_{1}}\widehat{\otimes}^{{}\left(
\cdot\right)  }A_{2}\right)  /\overline{J_{1}\otimes A_{2}}^{\prime}%
\]
is a contractive epimorphism, where $\overline{J_{1}\otimes A_{2}}$ and
$\overline{J_{1}\otimes A_{2}}^{\prime}$ are the closures of $J_{1}\otimes
A_{2}$ in $\overline{I_{1}}\widehat{\otimes}A_{2}$ and $\overline{I_{1}%
}\widehat{\otimes}^{{}\left(  \cdot\right)  }A_{2}$, respectively, and clearly
$\overline{J_{1}\otimes A_{2}}\subset\overline{J_{1}\otimes A_{2}}^{\prime}$.
As there exists a non-zero strictly irreducible representation of $C$ then $C$
is not radical, but $C$ is radical in virtue of tensor radicality of
$\overline{I_{1}}/\overline{J_{1}}$, a contradiction. Hence $\pi\left(
I_{1}\otimes A_{2}\right)  =0$ and, similarly, $\pi(A_{1}\otimes I_{2})=0$. As
$I$ lies in the intersection of of all primitive ideals of $A$ containing $J$,
so does $\overline{I}$. By Proposition \ref{predv1}(i), $\overline{I}\subset
Q_{J}\left(  A\right)  $.
\end{proof}

\subsection{Algebras commutative modulo the tensor radical}

We say that a normed algebra $A$ is \textit{commutative modulo the tensor
radical} if the algebra $A/\mathcal{R}_{t}(A)$ is commutative. An equivalent
condition is $[a,b]\in\mathcal{R}_{t}(A)$ for all $a,b\in A$.

\begin{theorem}
\label{cmtr} If normed algebras $A_{1}$ and $A_{2}$ are commutative modulo the
tensor radical then the same is true for $A:=A_{1}\widehat{\otimes}A_{2}$.
\end{theorem}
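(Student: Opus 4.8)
The plan is to show that the closed ideal $\mathcal{R}_t(A)$ contains every commutator $[x,y]$ of $A$, i.e. that the quotient $A/\mathcal{R}_t(A)$ is commutative. Write $q:A\to A/\mathcal{R}_t(A)$ for the quotient map (a continuous surjective homomorphism). Since $A_1\otimes A_2$ is dense in $A=A_1\widehat{\otimes}A_2$, the elements $q(a\otimes b)$ with $a\in A_1$, $b\in A_2$ span a dense subspace of $A/\mathcal{R}_t(A)$. Because multiplication in a normed algebra is jointly continuous and bilinear, a routine two-step density argument (fix one argument, use continuity and linearity in the other, then swap) reduces the whole statement to the single assertion that $q(a\otimes b)$ and $q(a'\otimes b')$ commute for all $a,a'\in A_1$ and $b,b'\in A_2$.

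First I would record the algebraic identity in $A_1\otimes A_2$,
\[
[a\otimes b,\,a'\otimes b'] \;=\; aa'\otimes bb' - a'a\otimes b'b \;=\; [a,a']\otimes bb' \;+\; a'a\otimes [b,b'].
\]
By hypothesis $[a,a']\in\mathcal{R}_t(A_1)$ and $[b,b']\in\mathcal{R}_t(A_2)$, so the first summand lies in $\mathcal{R}_t(A_1)\widehat{\otimes}^{(\cdot)}A_2$ and the second in $A_1\widehat{\otimes}^{(\cdot)}\mathcal{R}_t(A_2)$ (both are legitimate Banach ideals of $A$ by the corollary to Proposition \ref{tenf}). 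Thus it suffices to show that each of these two ideals is contained in $\mathcal{R}_t(A)$; then $[a\otimes b,a'\otimes b']\in\mathcal{R}_t(A)=\ker q$, which is exactly what is needed.

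For $\mathcal{R}_t(A_1)\widehat{\otimes}^{(\cdot)}A_2\subseteq\mathcal{R}_t(A)$ this is precisely the first inclusion of Proposition \ref{tr}. For $A_1\widehat{\otimes}^{(\cdot)}\mathcal{R}_t(A_2)\subseteq\mathcal{R}_t(A)$ I would apply Proposition \ref{tr} again, but after transporting everything through the flip isometric algebra isomorphism $\theta:A_1\widehat{\otimes}A_2\xrightarrow{\ \sim\ }A_2\widehat{\otimes}A_1$, $a\otimes b\mapsto b\otimes a$. Since $\mathcal{R}_t(\cdot)$ is the largest tensor radical ideal and tensor radicality (equivalently, tensor quasinilpotence, by Theorem \ref{perrad}) is preserved by topological algebra isomorphisms, $\theta$ carries $A_1\widehat{\otimes}^{(\cdot)}\mathcal{R}_t(A_2)$ onto $\mathcal{R}_t(A_2)\widehat{\otimes}^{(\cdot)}A_1$, which is contained in $\mathcal{R}_t(A_2\widehat{\otimes}A_1)=\theta(\mathcal{R}_t(A))$ by Proposition \ref{tr}; applying $\theta^{-1}$ gives the desired inclusion. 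Combining the two, $q(a\otimes b)$ and $q(a'\otimes b')$ commute, and the reduction of the first paragraph completes the proof.

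I expect the only genuinely delicate point to be this last inclusion $A_1\widehat{\otimes}^{(\cdot)}\mathcal{R}_t(A_2)\subseteq\mathcal{R}_t(A)$: Proposition \ref{tr} is stated only for the left-hand tensor factor, so one must exploit the symmetry of the projective tensor product to cover the right-hand factor. Everything else — the commutator identity, the density/continuity reduction to elementary tensors, and the identification of the two summands as elements of the respective Banach ideals — is routine.
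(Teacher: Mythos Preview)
Your proof is correct and follows essentially the same route as the paper: the same commutator identity $[a\otimes b,a'\otimes b']=[a,a']\otimes bb'+a'a\otimes[b,b']$, the same appeal to Proposition~\ref{tr} to place both summands in $\mathcal{R}_t(A)$, and the same density/continuity extension to general elements. The paper simply writes ``By Proposition~\ref{tr}, $\mathcal{R}_{t}(A_{1})\widehat{\otimes}^{(\cdot)}A_{2}+A_{1}\widehat{\otimes}^{(\cdot)}\mathcal{R}_{t}(A_{2})\subset\mathcal{R}_{t}(A)$'' without comment, tacitly using the flip symmetry you spelled out; your explicit treatment of that point is more careful than the paper's, not different from it.
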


\begin{proof}
By Proposition \ref{tr}, $\mathcal{R}_{t}(A_{1})\widehat{\otimes}A_{2}%
+A_{1}\widehat{\otimes}^{{}\left(  \cdot\right)  }\mathcal{R}_{t}%
(A_{2})\subset\mathcal{R}_{t}(A)$. Then
\[
\lbrack a_{1}{\otimes}b_{1},a_{1}{\otimes}b_{2}]=[a_{1},a_{2}]{\otimes}%
b_{1}b_{2}+a_{2}a_{1}{\otimes}[b_{1},b_{2}]\in\mathcal{R}_{t}(A)
\]
for all $a_{1},a_{2}\in A_{1}$ and $b_{1},b_{2}\in A_{2}$. Hence $[c_{1}%
,c_{2}]\in\mathcal{R}_{t}(A)$ for all $c_{1},c_{2}\in A$.
\end{proof}

\begin{theorem}
Let $A_{1}$ be a normed algebra, and let $A_{2}$ be a Banach algebra. If
$A_{1}$ is commutative modulo the tensor radical and $A_{2}$ is radical then
$A_{1}\widehat{\otimes}A_{2}$ is radical.
\end{theorem}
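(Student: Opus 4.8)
The plan is to reduce to the commutative case and then rule out strictly irreducible representations by a Schur-type argument. Write $A=A_1\widehat{\otimes}A_2$ and $R=\mathcal{R}_t(A_1)$, a closed ideal of $A_1$ for which $A_1/R$ is commutative. Applying Proposition~\ref{quo}(i) with $J_1=R$ and $J_2=\{0\}$, the quotient $A/\overline{R\otimes A_2}$ is topologically isomorphic to $(A_1/R)\widehat{\otimes}A_2$. On the other hand, $R$ (carrying the norm of $A_1$) is a flexible ideal of $A_1$, so $R\widehat{\otimes}^{(\cdot)}A_2$ is a Banach ideal of $A$ containing $R\otimes A_2$, and by Proposition~\ref{tr} it lies in $\mathcal{R}_t(A)\subseteq\mathrm{Rad}(A)$. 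Hence $\overline{R\otimes A_2}$ is a closed ideal of $A$ contained in $\mathrm{Rad}(A)$; being complete and consisting of quasinilpotents, it is itself radical. By Lemma~\ref{quotBan} it therefore suffices to prove that $(A_1/R)\widehat{\otimes}A_2$ is radical, and, passing to the completion of $A_1/R$ (the completion of a commutative normed algebra is a commutative Banach algebra), we are reduced to the statement: \emph{if $C$ is a commutative Banach algebra and $D$ is a radical Banach algebra, then $C\widehat{\otimes}D$ is radical.}

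To prove this I would show that $C\widehat{\otimes}D$ admits no strictly irreducible representation, so that $\mathrm{Rad}(C\widehat{\otimes}D)=C\widehat{\otimes}D$. We may assume $D\neq\{0\}$, so $D$ has no identity; then the exact sequences $0\to D\to D^1\to\mathbb{C}\to 0$ and $0\to C\to C^1\to\mathbb{C}\to 0$ split as sequences of Banach spaces, and consequently $C\widehat{\otimes}D$ is (topologically isomorphic to) a closed ideal of $C^1\widehat{\otimes}D^1$. Let $\pi$ be a strictly irreducible, continuous representation of $C\widehat{\otimes}D$ on a Banach space $X$; being nondegenerate, it extends to a strictly irreducible representation $\tilde\pi$ of $C^1\widehat{\otimes}D^1$ on $X$. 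Since $C^1$ is commutative and unital, each element $c\otimes 1_{D^1}$ is central in $C^1\widehat{\otimes}D^1$, so by Schur's lemma \cite{BD} $\tilde\pi(c\otimes 1_{D^1})=\chi(c)\,1_X$ for a character $\chi$ of $C^1$ with $\chi(1_{C^1})=1$. Hence $\tilde\pi\bigl(\sum_n c_n\otimes d_n\bigr)=\tilde\pi\bigl(1_{C^1}\otimes\sum_n\chi(c_n)d_n\bigr)$ (the inner series converging in $D^1$ since $\sum_n|\chi(c_n)|\,\|d_n\|\le\sum_n\|c_n\|\,\|d_n\|<\infty$), so $\tilde\pi(C^1\widehat{\otimes}D^1)=\pi'(D^1)$, where $\pi'\colon d\longmapsto\tilde\pi(1_{C^1}\otimes d)$ is a unital representation of $D^1$; it is strictly irreducible because it has the same image, hence the same invariant subspaces, as $\tilde\pi$.

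Now $D$ is an ideal of $D^1$, so $\pi'|_D$ is either zero or strictly irreducible; the latter is impossible since a radical algebra has no strictly irreducible representation. Thus $\pi'|_D=0$, whence $\tilde\pi(c\otimes d)=\chi(c)\pi'(d)=0$ for all $c\in C^1$, $d\in D$, and by density $\tilde\pi$ vanishes on $C^1\widehat{\otimes}D$, in particular on $C\widehat{\otimes}D$; so $\pi=0$, contradicting strict irreducibility. Therefore $C\widehat{\otimes}D$ has no strictly irreducible representation and is radical.

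I expect the main obstacle to be the commutative case, and within it the passage to strictly irreducible representations of $C^1\widehat{\otimes}D^1$: this relies on the splitting of the unitization sequences to realize $C\widehat{\otimes}D$ as a genuine closed ideal of $C^1\widehat{\otimes}D^1$, together with the standard extension of a nondegenerate strictly irreducible representation of an ideal. A secondary point requiring care is the bookkeeping in the reduction step over merely normed (not necessarily complete) algebras $A_1$ — identifying $\overline{R\otimes A_2}$ with a radical closed ideal via Proposition~\ref{tr} and $A/\overline{R\otimes A_2}$ with $(A_1/R)\widehat{\otimes}A_2$ via Proposition~\ref{quo}. The Schur-type extraction of $\chi$ and the zero/irreducible dichotomy for $\pi'|_D$ are then routine.
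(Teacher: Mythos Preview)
Your reduction step is exactly the paper's: set $I=\mathcal{R}_t(A_1)\widehat{\otimes}^{(\cdot)}A_2$ (whose closure in $A$ coincides with your $\overline{R\otimes A_2}$), observe via Proposition~\ref{tr} that it is a radical ideal, identify $A/\overline{I}$ with $(A_1/\mathcal{R}_t(A_1))\widehat{\otimes}A_2$ by Proposition~\ref{quo}, and conclude by Lemma~\ref{quotBan}. The only difference is in the commutative case: the paper simply invokes \cite[Theorem 4.4.2]{Aup} for ``commutative $\widehat{\otimes}$ radical is radical'', whereas you supply a direct Schur-type argument (embed $C\widehat{\otimes}D$ as a closed ideal of $C^1\widehat{\otimes}D^1$, extend a strictly irreducible representation, use centrality of $C^1\otimes 1$ to extract a character, and reduce to a strictly irreducible representation of $D^1$ that must kill $D$). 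Your argument is correct and self-contained; the paper's is shorter by outsourcing this step to Aupetit.
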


\begin{proof}
Let $A=A_{1}\widehat{\otimes}A_{2}$ and $I=\mathcal{R}_{t}(A_{1}%
)\widehat{\otimes}^{{}\left(  \cdot\right)  }A_{2}$. The Banach ideal $I$ of
$A$ is radical, being isometric to the quotient of the radical algebra
$\mathcal{R}_{t}(A_{1})\widehat{\otimes}A_{2}$. On the other hand, the
quotient $A/\overline{I}$ is topologically isomorphic to $(A/\mathcal{R}%
_{t}(A_{1}))\widehat{\otimes}A_{2}$ by Proposition \ref{quo}. Since
$A/\mathcal{R}_{t}(A_{1})$ is commutative and $A_{2}$ is radical, the algebra
$(A/\mathcal{R}_{t}(A_{1}))\widehat{\otimes}A_{2}$ is radical by \cite[Theorem
4.4.2]{Aup}. Thus $A/\overline{I}$ is radical, and $A$ is radical by Lemma
\ref{quotBan}.
\end{proof}

\subsection{Relation with joint spectral radius}

In 1960 Rota and Strang \cite{RS} defined a notion of spectral radius for
bounded subsets of a Banach algebra. This definition holds for normed algebras
(and we already introduced it for countable bounded subsets in Section
\ref{sss1}). Namely, if $K$ is a bounded subset of a normed algebra $A$ then
its\textit{ joint spectral radius} $\rho(K)$ is defined by
\[
\rho(K)=\inf_{n\in\mathbb{N}}\Vert K^{n}\Vert^{1/n},
\]
where the norm of a set is defined as the supremum of the norms of its
elements, and the products of sets are defined by $KN=\{ab:a\in K,b\in N\}$.
Since $\Vert K^{n+k}\Vert\leq\Vert K^{n}\Vert\Vert K^{k}\Vert$ for every
$n,k>0$, one has that
\begin{equation}
\rho(K)=\underset{n\rightarrow\infty}{\lim}\Vert K^{n}\Vert^{1/n}.
\label{nach2}%
\end{equation}
Taking $n=mk$ in (\ref{nach2}) for $m=1,2,\ldots$, we observe that
\[
\rho(K^{k})=\rho(K)^{k}%
\]
for every bounded $K\subset A$ and integer $k>0$.

It was proved in \cite[Theorem 3.5]{ST2000} that the joint spectral radius is
a subharmonic function. This means that if $\lambda\rightarrow K(\lambda)$ is
an analytic map in a natural sense from a domain $\mathcal{D}\subset
\mathbb{C}$ into the set of bounded subsets of $A$ then the function
$\lambda\rightarrow\rho(K(\lambda))$ is subharmonic.

Let $K$ be a subset of $A$, and let $\mathrm{F}_{\infty}\left(  K\right)  $ be
the set of all bounded families $N=\left\{  a_{n}\right\}  _{1}^{\infty}$ with
$a_{n}\in K$ for every $n>0$. Clearly $\mathrm{F}_{\infty}\left(  K\right)  $
is a metric space with respect to the metric induced by the norm of
$\ell_{\infty}\left(  A\right)  $. In particular, we have that $\mathrm{F}%
_{\infty}\left(  A\right)  =\ell_{\infty}\left(  A\right)  $. If $K$ is
bounded, it follows from \cite[Proposition 2.2]{ST2000} that there is a family
$L=\left\{  b_{n}\right\}  _{1}^{\infty}\in\mathrm{F}_{\infty}\left(
K\right)  $ such that $\rho(K)=\rho(L)$. So
\[
\rho(K)=\max_{N\in\mathrm{F}_{\infty}\left(  K\right)  }\rho(N)
\]
and this allows to obtain some results on joint spectral radius of bounded
subsets considering bounded families.

The following property is important for our applications: \textit{If}
$\rho(K)=0$\textit{ then the linear span of }$K$ \textit{consists of
quasinilpotent elements.} This result from \cite{Sh84} can be easily proved by
the direct evaluation of the norms of powers for $\sum_{i=1}^{n}\lambda
_{i}a_{i}$, where $a_{i}\in K.$ The following result is similar.

\begin{lemma}
\label{comb} Let $K$ be a bounded subset of a normed algebra $A$. If
$\rho(K)=0$ then $\rho(\sum_{n=1}^{\infty}\lambda_{n}a_{n})=0$ for each
sequence $a_{n}\in K$ and each summable sequence $\lambda_{n}$ of complex
numbers, where $\sum_{n=1}^{\infty}\lambda_{n}a_{n}$ are elements of the
completion $\widehat{A}$ as usual.
\end{lemma}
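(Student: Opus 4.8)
The plan is to imitate the power–expansion estimate that underlies Proposition \ref{abs} and the statement bounding $\rho$ on the set $\Omega(M)$: expand a power of the series $\sum\lambda_{n}a_{n}$ and control each resulting monomial by means of the hypothesis $\rho(K)=0$, i.e. of the fact (via (\ref{nach2})) that $\Vert K^{k}\Vert^{1/k}\to0$.

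First I would put $C=\sum_{n=1}^{\infty}|\lambda_{n}|<\infty$, and note that $\Vert K\Vert:=\sup_{b\in K}\Vert b\Vert<\infty$ because $K$ is bounded; hence the series $a=\sum_{n}\lambda_{n}a_{n}$ converges absolutely in the Banach algebra $\widehat{A}$. Fix $k\in\mathbb{N}$ and consider the finite truncations $s_{N}=\sum_{n=1}^{N}\lambda_{n}a_{n}\in A$. Since $s_{N}\to a$ in $\widehat{A}$ and multiplication is continuous there, $s_{N}^{k}\to a^{k}$. Expanding the finite sum,
\[
s_{N}^{k}=\sum_{n_{1},\dots,n_{k}=1}^{N}\lambda_{n_{1}}\cdots\lambda_{n_{k}}\,a_{n_{1}}\cdots a_{n_{k}},
\]
and as every product $a_{n_{1}}\cdots a_{n_{k}}$ belongs to $K^{k}$ we obtain
\[
\Vert s_{N}^{k}\Vert\le\Big(\sum_{n=1}^{N}|\lambda_{n}|\Big)^{k}\Vert K^{k}\Vert\le C^{k}\,\Vert K^{k}\Vert .
\]
Letting $N\to\infty$ gives $\Vert a^{k}\Vert\le C^{k}\Vert K^{k}\Vert$ for every $k$.

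Taking $k$-th roots and passing to the limit then yields
\[
\rho(a)=\lim_{k\to\infty}\Vert a^{k}\Vert^{1/k}\le C\,\lim_{k\to\infty}\Vert K^{k}\Vert^{1/k}=C\,\rho(K)=0 ,
\]
so $a$ is quasinilpotent, which is the assertion. There is no real obstacle here; the only point that needs a moment of care is the passage to the completion --- namely that one is entitled to expand $a^{k}$ as the (suitably rearranged) absolutely convergent multiple series above --- and this is precisely why I route the argument through the finite truncations $s_{N}$ and the continuity of the product in $\widehat{A}$ rather than manipulating the infinite series directly. Replacing $\lambda_{n}$ by $\lambda_{n}/C$ at the outset would reduce everything to the normalized case $\sum|\lambda_{n}|\le1$, but this is only cosmetic.
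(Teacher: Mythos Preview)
Your proof is correct. The estimate $\Vert a^{k}\Vert\le C^{k}\Vert K^{k}\Vert$ is exactly what is needed, and routing it through the truncations $s_{N}$ handles the passage to $\widehat{A}$ cleanly.

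The paper's proof is different in packaging, though not in substance: it sets $M=\{\lambda_{n}\}\in\ell_{1}(\mathbb{C})$ and $L=\{a_{n}\}\in\ell_{\infty}(A)$, identifies $\mathbb{C}\,\widehat{\otimes}\,A$ with $\widehat{A}$, and then invokes Theorem~\ref{tsr}(i) to get $\rho(\sum\lambda_{n}a_{n})=\rho(M_{\otimes}L)\le\rho_{t}(M)\rho(L)\le\rho_{t}(M)\rho(K)=0$. If one unwinds the proof of Theorem~\ref{tsr}(i) in this special case $A_{1}=\mathbb{C}$, one finds precisely your expansion, since $\eta(M^{k})=(\sum_{n}|\lambda_{n}|)^{k}=C^{k}$ and $\Vert L^{k}\Vert\le\Vert K^{k}\Vert$. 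So your argument is the direct, self-contained version of the same computation, while the paper's is a one-line corollary of the tensor-spectral-radius inequality it has already set up. Your route has the virtue of not needing that machinery; the paper's route illustrates that Lemma~\ref{comb} is really just Theorem~\ref{tsr}(i) specialized to scalar left coefficients.
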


\begin{proof}
Let $M=\left\{  \lambda_{n}\right\}  _{1}^{\infty}$ and $L=\left\{
a_{n}\right\}  _{1}^{\infty}$. As $\mathbb{C}\widehat{\mathbb{\otimes}}A$ is
identified with $\widehat{A}$ under the identification $\lambda\widehat
{\mathbb{\otimes}}x$ with $\lambda x$, we obtain that
\[
\rho(\sum_{n=1}^{\infty}\lambda_{n}a_{n})\leq\rho_{t}\left(  M\right)
\rho(L)\leq\rho_{t}\left(  M\right)  \rho(K)=0
\]
by Theorem \ref{tsr}(i).
\end{proof}

We say that a normed algebra $A$ is \textit{compactly quasinilpotent}
\cite{TR1} if $\rho(K)=0$ for each precompact subset $K$ of $A$. The following
statement improves \cite[Theorem 4.29]{TR1} which was proved for Banach algebras.

\begin{theorem}
\label{comptens} Every compactly quasinilpotent normed algebra is tensor radical.
\end{theorem}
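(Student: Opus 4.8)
The plan is to use the equivalence of Theorem~\ref{perrad}: it suffices to prove that $A$ is tensor quasinilpotent, i.e.\ that $\rho_{t}(M)=0$ for every summable family $M=\{a_{n}\}_{1}^{\infty}\in\ell_{1}(A)$. Fix such an $M$. The crude bound $\eta(M^{k})\le(\sum_{n}\Vert a_{n}\Vert)^{k}$ only yields $\rho_{t}(M)\le\eta(M)$, which is useless, so the one genuine idea is to redistribute the ``mass'' of $M$ before estimating: choose positive scalars $\lambda_{n}$ with $\Lambda:=\sum_{n}\lambda_{n}<\infty$ and yet $\Vert a_{n}\Vert/\lambda_{n}\to 0$. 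Such a choice exists by a routine grouping argument: since $\sum_{n}\Vert a_{n}\Vert<\infty$, split $\mathbb{N}$ into consecutive (nonempty) blocks $B_{1},B_{2},\dots$ with $\sum_{n\in B_{j}\cup B_{j+1}\cup\cdots}\Vert a_{n}\Vert<4^{-j}$ and set $\lambda_{n}=2^{j}\Vert a_{n}\Vert$ for those $n\in B_{j}$ with $a_{n}\neq 0$ (and, say, $\lambda_{n}=2^{-n}$ for the remaining indices); then $\Lambda<\sum_{j}2^{-j}+1<\infty$, while for $n\in B_{j}$ one has $\Vert a_{n}\Vert/\lambda_{n}\in\{0,2^{-j}\}$, which tends to $0$ as $n\to\infty$.

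Now put $b_{n}=a_{n}/\lambda_{n}$. Since $\Vert b_{n}\Vert=\Vert a_{n}\Vert/\lambda_{n}\to 0$, the set $K=\{b_{n}:n\in\mathbb{N}\}\cup\{0\}$ is compact, hence precompact, so the hypothesis that $A$ is compactly quasinilpotent forces $\rho(K)=0$; that is, $\Vert K^{k}\Vert^{1/k}\to 0$, where $\Vert K^{k}\Vert=\sup_{n_{1},\dots,n_{k}}\Vert b_{n_{1}}\cdots b_{n_{k}}\Vert$. The two ingredients combine in one line: for every $k$,
\[
\eta(M^{k})=\sum_{n_{1},\dots,n_{k}}\lambda_{n_{1}}\cdots\lambda_{n_{k}}\,\Vert b_{n_{1}}\cdots b_{n_{k}}\Vert
\le\Vert K^{k}\Vert\sum_{n_{1},\dots,n_{k}}\lambda_{n_{1}}\cdots\lambda_{n_{k}}=\Vert K^{k}\Vert\,\Lambda^{k},
\]
so $\eta(M^{k})^{1/k}\le\Lambda\,\Vert K^{k}\Vert^{1/k}\to\Lambda\,\rho(K)=0$, whence $\rho_{t}(M)=\inf_{k}\eta(M^{k})^{1/k}=0$. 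Since $M\in\ell_{1}(A)$ was arbitrary, $A$ is tensor quasinilpotent, and Theorem~\ref{perrad} gives that $A$ is tensor radical.

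I expect the only step needing care to be the construction of the weights $\lambda_{n}$: it is elementary, but no single universal recipe (e.g.\ $\lambda_{n}=\Vert a_{n}\Vert^{1/2}$) works for every summable sequence, so the grouping must be done by hand. Everything else is the product estimate above together with the already established equivalence between tensor radicality and tensor quasinilpotence. Note in particular that the argument never passes to the completion of $A$, so it applies verbatim to normed (not just Banach) algebras, which is exactly the improvement over \cite[Theorem~4.29]{TR1}.
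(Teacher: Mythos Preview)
Your proof is correct and follows essentially the same approach as the paper: both arguments rescale the summable family so that the rescaled elements form a null sequence (hence a precompact set), apply the compactly quasinilpotent hypothesis to that set, and then recover the original estimate by summing the scalar weights; the only cosmetic difference is that you conclude via Theorem~\ref{perrad} and the tensor spectral radius, whereas the paper works directly in $A\widehat{\otimes}B$ and invokes Lemma~\ref{comb}. One trivial quibble: your block construction literally requires $\sum_{n}\Vert a_{n}\Vert<4^{-1}$ for the $j=1$ condition, but since $\rho_{t}(cM)=|c|\,\rho_{t}(M)$ you may assume this without loss of generality.
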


\begin{proof}
Let $A$ be a compactly quasinilpotent normed algebra, $B$ a Banach algebra,
and let $x=\sum_{n=1}^{\infty}a_{n}\otimes b_{n}\in A\widehat{\otimes}B$. One
can assume that $\left\{  a_{n}\right\}  _{1}^{\infty}$ consists of elements
of $A$ (see Section \ref{ss1}), the sequence $\alpha_{n}=\Vert a_{n}\Vert$ is
summable, while $\Vert b_{n}\Vert\leq1$ for every $n$. It is obvious that
there exists a sequence $\varepsilon_{n}\rightarrow0$ such that $\lambda
_{n}:=\alpha_{n}/\varepsilon_{n}$ is summable. Set $c_{n}=\lambda_{n}%
^{-1}a_{n}$ for $n>0$. Then $\Vert c_{n}\Vert\rightarrow0$ as $n\rightarrow
\infty$, so the set $N:=\{c_{n}:n=1,2,\ldots\}$ is precompact and $\rho(N)=0$
by our assumption. For $K=\{c_{n}\otimes b_{n}:n=1,2,\ldots\}$, it is easy to
check that
\[
\Vert K^{k}\Vert\leq\Vert N^{k}\Vert,
\]
whence $\rho(K)\leq\rho(N)=0$. Applying Lemma \ref{comb}, we obtain that
\[
\rho(x)=\rho(\sum_{n=1}^{\infty}\lambda_{n}c_{n}\otimes b_{n})=0.
\]
Therefore $A\widehat{\otimes}B$ consists of quasinilpotent elements.
\end{proof}

It was proved in \cite{TR1} that each normed algebra has the largest compactly
quasinilpotent ideal $\mathcal{R}_{c}(A)$.

\begin{corollary}
\label{inclC-T} Let $A$ be a normed algebra. Then $\mathcal{R}_{c}%
(A)\subset\mathcal{R}_{t}(A)\subset A\cap\mathrm{Rad}(\widehat{A})$.
\end{corollary}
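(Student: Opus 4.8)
The statement just collects facts established above, so the plan is to derive each of the two inclusions from results already proved.

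\emph{First inclusion.} By the result of \cite{TR1} quoted above, $\mathcal{R}_{c}(A)$ is an ideal of $A$ which, with the norm inherited from $A$, is a compactly quasinilpotent normed algebra: a precompact subset of $\mathcal{R}_{c}(A)$ is simply a precompact subset of $A$ contained in $\mathcal{R}_{c}(A)$, hence has joint spectral radius zero. By Theorem \ref{comptens}, $\mathcal{R}_{c}(A)$ is therefore tensor radical, so it is a tensor radical ideal of $A$; since $\mathcal{R}_{t}(A)$ is the largest tensor radical ideal of $A$ (equivalently, by Theorem \ref{perrad}, $\mathcal{R}_{c}(A)$ is a tensor quasinilpotent ideal and Theorem \ref{tq2} applies), we obtain $\mathcal{R}_{c}(A)\subset\mathcal{R}_{t}(A)$.

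\emph{Second inclusion.} The containment $\mathcal{R}_{t}(A)\subset A$ is immediate, so it remains to prove $\mathcal{R}_{t}(A)\subset\mathrm{Rad}(\widehat{A})$. The plan is to apply Proposition \ref{tr} with $B=\mathbb{C}$. Under the identification $a\otimes\lambda\leftrightarrow\lambda a$ the projective crossnorm on $A\otimes\mathbb{C}$ coincides with the norm of $A$, so $A\widehat{\otimes}\mathbb{C}$ is isometrically $\widehat{A}$; under the same identification $\mathcal{R}_{t}(A)\widehat{\otimes}^{(\cdot)}\mathbb{C}$ is a subalgebra of $\widehat{A}$ containing the image of the algebraic tensor product $\mathcal{R}_{t}(A)\otimes\mathbb{C}$, that is, containing $\mathcal{R}_{t}(A)$. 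Proposition \ref{tr} then gives
\[
\mathcal{R}_{t}(A)\subset\mathcal{R}_{t}(A)\widehat{\otimes}^{(\cdot)}\mathbb{C}\subset\mathrm{Rad}\bigl(A\widehat{\otimes}\mathbb{C}\bigr)=\mathrm{Rad}(\widehat{A}),
\]
and combining this with the first inclusion yields $\mathcal{R}_{c}(A)\subset\mathcal{R}_{t}(A)\subset A\cap\mathrm{Rad}(\widehat{A})$.

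Alternatively, for the second inclusion one can argue directly that $\mathcal{R}_{t}(A)$ is tensor radical, so its completion $\mathcal{R}_{t}(A)\widehat{\otimes}\mathbb{C}=\widehat{\mathcal{R}_{t}(A)}$ is a radical Banach algebra whose isometric image in $\widehat{A}$ is the closure of the ideal $\mathcal{R}_{t}(A)$, hence a closed ideal of $\widehat{A}$ consisting of quasinilpotents and therefore contained in $\mathrm{Rad}(\widehat{A})$. Either way there is no genuine obstacle; the only point that needs a moment's care is the routine identification $A\widehat{\otimes}\mathbb{C}\cong\widehat{A}$ together with the bookkeeping that places $\mathcal{R}_{t}(A)$ inside $\mathcal{R}_{t}(A)\widehat{\otimes}^{(\cdot)}\mathbb{C}$.
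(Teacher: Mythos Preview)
Your argument is correct. The first inclusion is handled exactly as in the paper, via Theorem~\ref{comptens} (plus Theorem~\ref{tq2} or the characterization of $\mathcal{R}_t(A)$ as the largest tensor radical ideal). For the second inclusion the paper argues slightly more directly: it invokes Corollary~\ref{dense} to see that the closure $\overline{\mathcal{R}_t(A)}$ in $\widehat{A}$ is tensor quasinilpotent, hence a closed ideal of $\widehat{A}$ consisting of quasinilpotents, and therefore contained in $\mathrm{Rad}(\widehat{A})$. Your main route through Proposition~\ref{tr} with $B=\mathbb{C}$ is a perfectly valid repackaging of the same idea (note that $\mathcal{R}_t(A)\widehat{\otimes}^{(\cdot)}\mathbb{C}$ is exactly this closure), and your ``alternative'' paragraph is essentially the paper's proof---the only slip is the word \emph{isometric}, since the natural map $\widehat{\mathcal{R}_t(A)}\to\widehat{A}$ need not be injective; but this does not affect the conclusion, as the image is still a radical algebra.
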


\begin{proof}
The first inclusion follows by Theorem \ref{comptens}.
By Corollary \ref{dense}, the closure $\overline{\mathcal{R}_{t}(A)}$ in the
completion $\widehat{A}$ of $A$ is tensor quasinilpotent. Then $\overline
{\mathcal{R}_{t}(A)}$ consists of quasinilpotent elements of $\widehat{A}$. As
$\overline{\mathcal{R}_{t}(A)}$ is an ideal of $\widehat{A}$ consisting of
quasinilpotents, $\overline{\mathcal{R}_{t}(A)}\subset\mathrm{Rad}(\widehat
{A})$. Therefore we obtain that $\mathcal{R}_{t}(A)\subset A\cap
\mathrm{Rad}(\widehat{A})$.
\end{proof}

\subsection{Compactness conditions}

It is still an open problem if any radical Banach algebra is tensor radical.
We will show here that the answer is positive if $A$ has some compactness properties.

In the well known paper of Vala \cite{Vala} it was shown that

\begin{itemize}
\item[$\mathrm{(i)}$] For compact operators $a,b$ on a Banach space $X$ the
multiplication operator $x\longmapsto axb$ is compact on $\mathcal{B}(X)$.

\item[$\mathrm{(ii)}$] If the operator $x\longmapsto axa$ is compact then the
operator $a$ is compact.
\end{itemize}

This gave a possibility to introduce a notion of a compact element of a normed
algebra: an element $a$ of $A$ is \textit{compact} if
\[
W_{a}:=L_{a}R_{a}%
\]
is a compact operator, where $L_{a}$ and $R_{a}$ are defined by $L_{a}x=ax$
and $R_{a}x=xa$ for every $x\in A$. Similarly, one says that $a$ is a
\textit{finite rank element} of $A$ if $W_{a}$ has finite rank. Basing on
these definitions there were introduced various Banach-algebraic analogues of
the class of algebras of compact operators. The most popular one is the class
of compact algebras: $A$ is \textit{compact} if all its elements are compact.
Slightly more narrow but much more convenient is the class of bicompact
algebras: $A$ is \textit{bicompact} if $L_{a}R_{b}$ are compact for all
$a,b\in A$. Furthermore, $A$ is called an \textit{approximable algebra} if the
set of finite rank elements is dense in $A$. These classes are closed under
passing to ideals and quotients, but in general not stable under extensions.

To overcome this obstacle and considerably extend the class of algebras in
consideration, let us say that a normed algebra $A$ is \textit{hypocompact}
(respectively, \textit{hypofinite}) if each non-zero quotient of $A$ has a
non-zero compact (respectively, finite rank) element. It is not difficult to
check that the class of all hypocompact algebras is closed under extensions,
as well as under passing to ideals and quotients. It is not known if it is
closed under passing to subalgebras. One can realize a hypocompact algebra as
a result of a transfinite sequence of extensions of bicompact algebras, but we
will need a close result, see Proposition \ref{trans} below.

Since a quotient of a quotient of $A$ is isomorphic to a quotient of $A$, the
following result is an immediate consequence of the definition of hypocompact algebras.

\begin{corollary}
\label{hypquot} A quotient of a hypocompact normed algebra (by a closed ideal)
is hypocompact.
\end{corollary}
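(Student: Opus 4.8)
The plan is to reduce any iterated quotient of $A$ to a single quotient of $A$ and then invoke the defining property of hypocompactness directly. Fix a hypocompact normed algebra $A$, a closed ideal $I$ of $A$, and put $B=A/\overline{I}$ (so $B=A/I$ since $I$ is closed). To prove $B$ hypocompact I must exhibit, for every closed ideal $J$ of $B$ with $B/J\neq 0$, a non-zero compact element of $B/J$.

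First I would lift $J$ to $A$: let $q:A\to B$ be the quotient epimorphism and set $K=q^{-1}(J)$. Since $q$ is a continuous epimorphism, $K$ is a closed ideal of $A$ with $I\subset K$, and $q$ induces a natural map $A/K\to B/J$. The standard isomorphism theorem for normed spaces then identifies $B/J=(A/\overline{I})/(K/\overline{I})$ with $A/\overline{K}$ via an isometric algebra isomorphism $\psi:A/K\to B/J$. In particular $A/K\neq 0$ because $B/J\neq 0$.

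Next, since $A$ is hypocompact and $K$ is a closed ideal of $A$, the quotient $A/K$ has a non-zero compact element $c$; that is, $W_{c}=L_{c}R_{c}$ is a compact operator on $A/K$. Finally I would transport compactness across $\psi$: for a topological algebra isomorphism one has $W_{\psi(c)}=\psi\,W_{c}\,\psi^{-1}$ as operators on $B/J$, so $W_{\psi(c)}$ is compact, being conjugate to a compact operator, and $\psi(c)\neq 0$ since $\psi$ is injective. Hence $B/J$ has a non-zero compact element, and $B$ is hypocompact.

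The only two ingredients that need (entirely routine) checking are the iterated-quotient identification $(A/\overline{I})/(K/\overline{I})\cong A/\overline{K}$ and the invariance of compactness of an element under a topological algebra isomorphism; neither is a real obstacle, which is exactly why the statement is immediate from the definition. The same argument applies verbatim to the hypofinite case, replacing \emph{compact element} by \emph{finite rank element} and noting that $\psi\,W_{c}\,\psi^{-1}$ has finite rank whenever $W_{c}$ does.
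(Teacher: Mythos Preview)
Your proof is correct and follows exactly the approach the paper indicates: the paper simply notes that ``a quotient of a quotient of $A$ is isomorphic to a quotient of $A$'' and calls the corollary an immediate consequence of the definition, which is precisely what you have unpacked in detail. Your elaboration of the iterated-quotient identification and the transport of compactness via the isometric isomorphism is the routine content behind that one-line justification.
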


A similar result is valid for hypofinite normed algebras.

We also need the following result.

\begin{proposition}
\label{trans}Let $A$ be a normed algebra. Then $A$ is hypocompact
(respectively, hypofinite) if and only if there is an increasing transfinite
chain $\left(  J_{\alpha}\right)  _{\alpha\leq\beta}$ of closed ideals of $A$
such that $J_{0}=0$, $J_{\beta}=A$, $J_{\alpha}=\overline{\cup_{\alpha
^{\prime}<\alpha}J_{\alpha^{\prime}}}$ for every limit ordinal $\alpha
\leq\beta$ and $J_{\alpha+1}/J_{\alpha}$ is a non-zero ideal of $A/J_{\alpha}$
having a dense set of compact (respectively, finite rank) elements of
$A/J_{\alpha}$ for every ordinal $\alpha$ between $0$ and $\beta$.
\end{proposition}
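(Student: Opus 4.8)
The plan is to prove the two implications separately; both reduce to Corollary \ref{hypquot} together with one algebraic observation about compact (respectively, finite rank) elements. Assume first that $A$ is hypocompact; I would build $(J_\alpha)$ by transfinite recursion. Put $J_0=0$. If $J_\alpha$ is a closed ideal with $J_\alpha\ne A$, then $A/J_\alpha$ is again hypocompact by Corollary \ref{hypquot}, so it has a non-zero compact element $a$; let $J_{\alpha+1}$ be the preimage in $A$ of the closed ideal $\overline{B^{1}aB^{1}}$ of $B:=A/J_\alpha$ generated by $a$, so that $J_{\alpha+1}/J_\alpha=\overline{B^{1}aB^{1}}$ is a non-zero closed ideal of $B$ which, moreover, strictly enlarges $J_\alpha$. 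At a limit ordinal put $J_\alpha=\overline{\cup_{\alpha'<\alpha}J_{\alpha'}}$. Since a strictly increasing chain of distinct closed ideals of $A$ cannot be indexed by all ordinals, the recursion stops, necessarily at some least $\beta$ with $J_\beta=A$, and at limit stages the prescribed equality holds by construction.

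The heart of the matter is that $J_{\alpha+1}/J_\alpha=\overline{B^{1}aB^{1}}$ carries a dense set of compact (resp. finite rank) elements \emph{of $B$}. For $u=xay$ and $v=x'ay'$ with $x,y,x',y'\in B^{1}$ one has, for all $c\in B$,
\[
u\,c\,v=x\,a\,(ycx')\,a\,y'=x\,W_{a}(ycx')\,y',
\]
so the multiplication operator $c\mapsto ucv$ equals $L_{x}R_{y'}\circ W_{a}\circ L_{y}R_{x'}$ and is therefore compact (resp. of finite rank) because $W_{a}$ is. By bilinearity the same holds for all $u,v$ in the ideal $B^{1}aB^{1}$, and taking $u=v$ shows that every element of $B^{1}aB^{1}$ is a compact (resp. finite rank) element of $B$. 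In the compact case one passes to norm limits via $\|L_{u_{k}}R_{v_{k}}-L_{u}R_{v}\|\to 0$, so the whole closed ideal $\overline{B^{1}aB^{1}}$ consists of compact elements; in the finite rank case the dense subset $B^{1}aB^{1}$ itself is the required set (finite rank need not survive norm limits, which is exactly why the chain, but not its members, is taken ``finite-rank'').

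Conversely, suppose a chain as stated is given and let $I\subsetneq A$ be a closed ideal; I must produce a non-zero compact element in $A/I$. Since $J_\beta=A\not\subseteq I$, there is a least $\alpha$ with $J_\alpha\not\subseteq I$; it is neither $0$ (as $J_0=0$) nor a limit ordinal (at a limit $J_\alpha=\overline{\cup_{\alpha'<\alpha}J_{\alpha'}}\subseteq\overline{I}=I$), so $\alpha=\gamma+1$ with $J_\gamma\subseteq I\subsetneq A$ and $J_{\gamma+1}\not\subseteq I$. Because $J_\gamma\subseteq I$, the quotient map factors as $A\to A/J_\gamma\xrightarrow{\psi}A/I$. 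By hypothesis $J_{\gamma+1}/J_\gamma$ has a dense set $D$ of compact (resp. finite rank) elements of $A/J_\gamma$; its image $\psi(J_{\gamma+1}/J_\gamma)=(J_{\gamma+1}+I)/I$ is non-zero because $J_{\gamma+1}\not\subseteq I$, and since $\psi$ is continuous and $\overline{D}=J_{\gamma+1}/J_\gamma$, the closure of $\psi(D)$ contains this non-zero ideal, so $\psi(D)\ne 0$. A short lemma then finishes it: a bounded epimorphism of normed algebras, being a quotient map, sends the unit ball onto a set dense in the target's unit ball, and from $W^{A/I}_{\psi(b)}\circ\psi=\psi\circ W^{A/J_\gamma}_{b}$ one reads off that $W^{A/I}_{\psi(b)}$ maps the unit ball of $A/I$ into the compact set $\overline{\psi\bigl(W^{A/J_\gamma}_{b}(\text{unit ball})\bigr)}$ (resp. into a finite-dimensional subspace); hence every element of $\psi(D)$ is a compact (resp. finite rank) element of $A/I$, and a non-zero one of them is the element sought.

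The main obstacle is the second paragraph: showing that the closed ideal generated by a single compact element still consists of (norm limits of) compact elements. Everything else — the transfinite recursion, its termination, and the image lemma — is routine, but that step genuinely uses the displayed identity together with the stability of compact/finite-rank operators under composition with bounded operators, under finite sums, and (only in the compact case) under norm limits.
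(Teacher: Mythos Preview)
Your proof is correct and follows essentially the same approach as the paper's: build the chain by transfinite recursion using the closed ideal generated by a compact element at each successor step, and for the converse find the least $\alpha$ with $J_\alpha\not\subseteq I$ and pass to the quotient. You are, however, more explicit than the paper on two points it leaves unproved: that the closed ideal generated by a single compact (finite rank) element has a dense set of compact (finite rank) elements (your displayed identity $ucv=xW_a(ycx')y'$), and that compact/finite-rank elements survive passage to a further quotient. One minor remark: invoking Corollary~\ref{hypquot} in the forward direction is superfluous, since the definition of hypocompact already says that every non-zero quotient $A/J_\alpha$ has a non-zero compact element; you do not need $A/J_\alpha$ itself to be hypocompact.
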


\begin{proof}
$\Rightarrow$ Let us use the transfinite induction. Let $J_{0}=0$. If we
constructed $J_{\alpha}$ and $J_{\alpha}\neq A$ then take a non-zero compact
(finite rank) element $b$ in $A/J_{\alpha}$ and denote by $K$ the closed ideal
of $A/J_{\alpha}$ generated by $b$. Let us define $J_{\alpha+1}$ as the
preimage of $K$ in $A$:
\[
J_{\alpha+1}=\left\{  c\in A:c/J_{\alpha}\in K\right\}  .
\]
Then $J_{\alpha+1}/J_{\alpha}$ is topologically isomorphic to $K$. It remains
to note that the chain is stabilized at some step $\beta$ because $A$ has a
definite cardinality. So $J_{\beta}=A$.
$\Leftarrow$ Let $I$ be a closed ideal of $A$ and $I\neq A$. Then there is the
first ordinal $\alpha^{\prime}<\beta$ such that $I$ is not contained into
$J_{\alpha^{\prime}}$. Then $J_{\alpha}\subset I$ for every $\alpha
<\alpha^{\prime}$, whence $\overline{\cup_{\alpha<\alpha^{\prime}}J_{\alpha}%
}\subset I$. So $\alpha^{\prime}$ has a precessor $\alpha^{\prime\prime}$:
$\alpha^{\prime}=\alpha^{\prime\prime}+1$. Take $a\in J_{\alpha^{\prime}%
}\backslash I$, and let $G=\left\{  x\in A:\left\Vert x-a\right\Vert
<\mathrm{dist}\left(  a,I\right)  \right\}  $. Then $G/J_{\alpha^{\prime
\prime}}:=\left\{  x/J_{\alpha^{\prime\prime}}\in A/J_{\alpha^{\prime\prime}%
}:x\in G\right\}  $ is an open neighbourhood of $a/J_{\alpha^{\prime\prime}%
}\in J_{\alpha^{\prime}}/J_{\alpha^{\prime\prime}}$ and therefore has a
compact (finite rank) element $b/J_{\alpha^{\prime\prime}}$ of $A/J_{\alpha
^{\prime\prime}}$. It is clear that $b/I$ is a non-zero compact (finite rank)
element of $A/I$. So $A$ is hypocompact (respectively, hypofinite).
\end{proof}

\begin{corollary}
\label{tran}Let $B$ be a normed algebra, and let $A$ be a hypocompact
(respectively, hypofinite) dense subalgebra of $B$. Then $B$ is hypocompact
(respectively, hypofinite).
\end{corollary}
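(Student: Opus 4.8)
The plan is to verify the defining property directly: for every proper closed ideal $I$ of $B$, the quotient $B/I$ has a non-zero compact (respectively, finite rank) element. Two elementary facts about compact and finite rank elements should be isolated first. (a) If $\varphi\colon C\to D$ is an isometric homomorphism of normed algebras with dense range and $x\in C$ is compact (resp. finite rank) in $C$, then $\varphi(x)$ is compact (resp. finite rank) in $D$. Indeed, from the identity $\varphi\circ W_{x}=W_{\varphi(x)}\circ\varphi$ (where $W_{x}$ is the elementary operator $c\longmapsto xcx$) the left-hand side is a compact (resp. finite rank) operator from $C$ into $D$, so $W_{\varphi(x)}$ carries $\varphi(\text{closed unit ball of }C)$ — which, by isometry of $\varphi$, is dense in the closed unit ball of $D$ — into a totally bounded (resp. finite dimensional) set, and continuity of $W_{\varphi(x)}$ transfers this to the whole closed unit ball of $D$. (b) If $J$ is a closed ideal of a normed algebra $D$ and $x\in D$ is compact (resp. finite rank), then $x+J$ is compact (resp. finite rank) in $D/J$; this is immediate from $W_{x+J}\circ q=q\circ W_{x}$ together with openness of the quotient map $q\colon D\to D/J$.

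Now fix a proper closed ideal $I$ of $B$. The essential device is the intermediate closed ideal $I_{0}:=\overline{I\cap A}$ (closure taken in $B$). It is an ideal of $B$ because $A$ is dense in $B$, it satisfies $I_{0}\subseteq I$, and — since $I\cap A$ is already closed in $A$ and $A$ carries the subspace topology of $B$ — it satisfies $I_{0}\cap A=I\cap A$. Because $A$ carries the norm of $B$ and $I\cap A$ is dense in $I_{0}$, for every $a\in A$ one has $\mathrm{dist}(a,I_{0})=\mathrm{dist}(a,I\cap A)$, so the natural homomorphism $\varphi\colon A/(I\cap A)\to B/I_{0}$ is isometric; and its range, being the image of the dense set $A$ under the quotient map $B\to B/I_{0}$, is dense in $B/I_{0}$.

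Since $A\subseteq I$ would give $B=\overline{A}\subseteq I$, contrary to $I\neq B$, the ideal $I\cap A$ is a proper closed ideal of $A$. Hypocompactness (resp. hypofiniteness) of $A$ then furnishes a non-zero compact (resp. finite rank) element $a+(I\cap A)$ of $A/(I\cap A)$ with $a\in A$ and $a\notin I$. Applying (a) to $\varphi$ shows that $a+I_{0}$ is a non-zero compact (resp. finite rank) element of $B/I_{0}$; applying (b) to the closed ideal $I/I_{0}$ of $B/I_{0}$, together with $(B/I_{0})/(I/I_{0})\cong B/I$, shows that $a+I$ is a compact (resp. finite rank) element of $B/I$, non-zero since $a\notin I$. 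As $I$ was arbitrary, $B$ is hypocompact (resp. hypofinite).

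The one point that needs care is that the obvious homomorphism $A/(I\cap A)\to B/I$ is only contractive, so (a) cannot be applied to it directly; the passage through $B/I_{0}$, where the embedding of $A/(I\cap A)$ really is isometric, followed by collapsing $I_{0}$ to $I$ via (b), is exactly what repairs this gap. (One could equally derive the corollary from Proposition \ref{trans}: given a chain $(J_{\alpha})_{\alpha\le\beta}$ of closed ideals of $A$ witnessing hypocompactness of $A$, the same distance computation shows that the closures $\overline{J_{\alpha}}$ in $B$ form a chain witnessing hypocompactness of $B$.)
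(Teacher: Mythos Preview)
Your proof is correct, and your main argument takes a different route from the paper's. The paper invokes Proposition~\ref{trans}: given a transfinite chain $(J_\alpha)_{\alpha\le\beta}$ witnessing the hypocompactness of $A$, it sets $I_\alpha=\overline{J_\alpha}$ (closure in $B$) and asserts that this chain satisfies the conditions of Proposition~\ref{trans} for $B$. You instead verify the defining property directly, using the intermediate ideal $I_0=\overline{I\cap A}$ to make the natural map $A/(I\cap A)\to B/I_0$ isometric with dense range, so that compact (finite rank) elements transfer via your fact~(a), and then collapse to $B/I$ via your fact~(b). Your argument is more explicit and self-contained; the paper's is terser but tacitly relies on the same isometric-embedding observation (your distance computation, applied to $A/J_\alpha\hookrightarrow B/\overline{J_\alpha}$) to push compact elements of the successive quotients of $A$ to those of $B$. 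You in fact mention the paper's approach in your closing parenthetical remark, so you have recognised both routes.
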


\begin{proof}
let $\left(  J_{\alpha}\right)  _{\alpha\leq\beta}$ be a transfinite chain of
ideals of $A$ described in Proposition \ref{trans}. Let $I_{\alpha}%
=\overline{J_{\alpha}}$, the closure of $J_{\alpha}$ in $B$, for every ordinal
$\alpha\leq\beta$. Then the chain $\left(  I_{\alpha}\right)  _{\alpha
\leq\beta}$ satisfies the conditions of Proposition \ref{trans}. So $B$ is
hypocompact (respectively, hypofinite) by Proposition \ref{trans}.
\end{proof}

The following result of \cite{STFaa} will be very useful in Section \ref{s5}.

\begin{theorem}
\label{scat} If a Banach algebra is hypocompact then spectra of its elements
are (finite or) countable.
\end{theorem}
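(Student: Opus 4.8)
The plan is to reduce the statement to the structure of hypocompact algebras provided by Proposition~\ref{trans} and then to run a transfinite induction, the analytic core being the classical fact that compact elements of a Banach algebra are Riesz elements. Let $A$ be the given hypocompact Banach algebra; it is harmless to assume $A$ unital.

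First I would record two facts about compact elements. \emph{(a)} The set of compact elements of a normed algebra is norm closed: if $a_{n}\to a$ with each $W_{a_{n}}=L_{a_{n}}R_{a_{n}}$ compact, then $W_{a}-W_{a_{n}}=L_{a}(R_{a}-R_{a_{n}})+(L_{a}-L_{a_{n}})R_{a_{n}}$ has norm at most $2\Vert a-a_{n}\Vert\max(\Vert a\Vert,\Vert a_{n}\Vert)\to0$, so $W_{a}$ is a norm limit of compact operators, hence compact. \emph{(b)} A compact element $a$ of a Banach algebra has $\sigma(a)$ finite or a sequence converging to $0$; this is the classical ``compact $\Rightarrow$ Riesz'' theorem, proved as in Riesz--Fredholm theory for Banach algebras by showing that each non-zero $\lambda\in\sigma(a)$ is isolated, using compactness of $W_{a}$ and the Riesz functional calculus. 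Feeding \emph{(a)} into Proposition~\ref{trans}: in the transfinite chain $(J_{\alpha})_{\alpha\le\beta}$, the dense set of compact elements of $A/J_{\alpha}$ lying in $J_{\alpha+1}/J_{\alpha}$ is, by \emph{(a)}, all of $J_{\alpha+1}/J_{\alpha}$; hence every element of $J_{\alpha+1}/J_{\alpha}$ is a compact element of $A/J_{\alpha}$, and so, by \emph{(b)} together with Remark~\ref{sp2}, has spectrum which is finite or a null sequence.

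The next ingredient is an extension lemma that preserves this property: if $I$ is a closed ideal of a Banach algebra $B$ such that $\sigma_{B}(x)$ is finite or a null sequence for every $x\in I$, and $b\in B$ is such that $\sigma_{B/I}(b/I)$ is finite or a null sequence, then $\sigma_{B}(b)$ is finite or a null sequence. The mechanism is the Fredholm theory relative to the ``Riesz ideal'' $I$: on each connected component $V$ of $\mathbb{C}\setminus\sigma_{B/I}(b/I)$ the element $b-\mu$ is invertible modulo $I$ and depends analytically on $\mu$, so by the analytic Fredholm alternative the set $\{\mu\in V:b-\mu\text{ is not invertible in }B\}$ is either all of $V$ (excluded, since $V$ meets the resolvent set of $b$) or discrete in $V$, each such $\mu$ having its Riesz idempotent in $I$; combined with $\sigma_{B/I}(b/I)$ itself being a null sequence, this forces $\sigma_{B}(b)$ to be a null sequence. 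Granting this, the successor step of the induction on $\alpha$, with induction hypothesis ``every element of $J_{\alpha}$ has finite or null-sequence $A$-spectrum'', is immediate: work inside the closed ideal $J_{\alpha+1}$, where $\sigma_{J_{\alpha+1}}$ and $\sigma_{A}$ agree up to $\{0\}$ by Remark~\ref{sp2}, and apply the extension lemma with $B=J_{\alpha+1}$, $I=J_{\alpha}$ and $B/I=J_{\alpha+1}/J_{\alpha}$.

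For a limit ordinal $\alpha$ one has $J_{\alpha}=\overline{\bigcup_{\gamma<\alpha}J_{\gamma}}$, so for $c\in J_{\alpha}$ the distance $\mathrm{dist}(c,J_{\gamma})=\Vert c/J_{\gamma}\Vert$ tends to $0$; hence for every $\lambda\neq0$ there is $\gamma<\alpha$ with $\lambda\notin\sigma_{J_{\alpha}/J_{\gamma}}(c/J_{\gamma})$, i.e. $c-\lambda$ is invertible modulo $J_{\gamma}$. Applying the Fredholm-theoretic part of the extension lemma to the ideal $J_{\gamma}\triangleleft J_{\alpha}$, whose elements have null-sequence spectrum by the induction hypothesis, every $\lambda\in\sigma_{J_{\alpha}}(c)$ with $|\lambda|$ bounded away from $0$ is isolated in $\sigma_{J_{\alpha}}(c)$; letting $\gamma\to\alpha$ shows that $\sigma_{J_{\alpha}}(c)\setminus\{0\}$ is discrete, hence countable. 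Taking $\alpha=\beta$ gives the theorem (in fact with the stronger conclusion that each spectrum is finite or a null sequence). The main obstacle is squarely fact \emph{(b)} and the extension lemma: these constitute the Riesz/Fredholm-theoretic heart of the matter, and are where the argument of \cite{STFaa} is concentrated, the transfinite bookkeeping being routine once they are in hand.
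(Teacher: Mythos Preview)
The paper does not supply a proof of Theorem~\ref{scat}; it merely records the statement and attributes it to \cite{STFaa}. So there is no in-paper argument to compare your proposal against.

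On its own merits your outline is sound and is in fact the natural strategy one expects in \cite{STFaa}: a transfinite induction along the chain of Proposition~\ref{trans}, with Alexander's theorem (compact elements of a Banach algebra are Riesz elements, \cite[Theorem~4.4]{A68}) handling the successor gaps, and a Fredholm/Riesz extension lemma propagating the property through the chain. Two remarks. First, your observation that $\mathbb{C}\setminus\sigma_{B/I}(b/I)$ is \emph{connected} when the quotient spectrum is a null sequence is what makes the ``either all of $V$ or discrete'' dichotomy immediately usable; you used it implicitly but it is worth stating, since otherwise one would have to rule out the ``all of $V$'' alternative on bounded components. Second, you are right to flag the extension lemma as the real content: the analytic Fredholm alternative relative to an ideal $I$ needs precisely the hypothesis that $I$ be an inessential ideal (every element of $I$ has spectrum a null sequence), and this is the standard Barnes--Smyth--West machinery. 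Your limit-ordinal step is also correct: for $|\lambda|>\varepsilon$ one chooses $\gamma$ with $\Vert c/J_{\gamma}\Vert<\varepsilon$, so that $\sigma_{J_{\alpha}/J_{\gamma}}(c/J_{\gamma})$ lies in the disc of radius $\varepsilon$ and the Fredholm alternative relative to $J_{\gamma}$ shows $\sigma_{J_{\alpha}}(c)\cap\{|\mu|>\varepsilon\}$ is discrete.

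In short: your proposal is correct, and since the paper offers no proof of its own, there is nothing to contrast it with beyond noting that you have reconstructed what is presumably the line of \cite{STFaa}.
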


Our main aim here is to show that for hypocompact Banach algebras the ideal
$\mathcal{R}_{t}(A)$ coincides with the Jacobson radical $\mathrm{Rad}(A)$.

For a bounded subset $M$ of a Banach algebra, set
\[
r(M)=\underset{n\rightarrow\infty}{\lim\sup}\left(  \sup\left\{  \rho(a):a\in
M^{n}\right\}  \right)  ^{1/n}.
\]
Clearly $r(M)\leq\rho(M)$.

This spectral characteristic, introduced (for sets of matrices) in 1992 by M.
A. Berger and Y. Wang \cite{BW}, turned out to be very useful in operator
theory. It was proved in \cite{BW} that $r(M)=\rho(M)$ for any bounded set $M$
of matrices. In \cite{ST2000} the authors showed that the same is true if $M$
is a precompact set of compact operators on a Banach space. In the further
works \cite{STFaa, ST2001-S, GBWF} there were obtained several extensions of
this result. Here we need the following consequence of \cite[Theorem
4.11]{GBWF} (where only Banach algebras were considered).

\begin{corollary}
\label{hypoBW} Let $A$ be a hypocompact normed algebra. Then $r(M)=\rho(M)$
for each precompact subset $M$ of $A$.
\end{corollary}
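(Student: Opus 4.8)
The plan is to deduce the statement from its Banach-algebra counterpart \cite[Theorem 4.11]{GBWF} by passing to the completion. First I would replace $A$ by $\widehat{A}$: since $A$ is a dense subalgebra of $\widehat{A}$, Corollary \ref{tran} guarantees that $\widehat{A}$ is again hypocompact, so \cite[Theorem 4.11]{GBWF} is available for it. Next I would observe that a precompact subset $M$ of $A$ remains precompact when regarded as a subset of $\widehat{A}$, since total boundedness depends only on the metric, which is unchanged.

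The key point is that both $r(M)$ and $\rho(M)$ are intrinsic to $A$, i.e.\ they do not change when $M$ is viewed inside $\widehat{A}$. Indeed, $M^{n}\subset A$ for every $n$ because $A$ is a subalgebra, and for $a\in A$ the spectral radius $\rho(a)=\inf_{k}\Vert a^{k}\Vert^{1/k}$ is the same whether $a$ is taken in $A$ or in $\widehat{A}$; hence $\sup\{\rho(a):a\in M^{n}\}$ is unchanged, and so is the limit superior defining $r(M)$. Likewise $\Vert M^{n}\Vert=\sup\{\Vert a\Vert:a\in M^{n}\}$ is the same in $A$ and in $\widehat{A}$, so $\rho(M)=\inf_{n}\Vert M^{n}\Vert^{1/n}$ is unchanged. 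Thus $r_{A}(M)=r_{\widehat{A}}(M)$ and $\rho_{A}(M)=\rho_{\widehat{A}}(M)$.

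Applying \cite[Theorem 4.11]{GBWF} to the hypocompact Banach algebra $\widehat{A}$ and the precompact subset $M$ gives $r_{\widehat{A}}(M)=\rho_{\widehat{A}}(M)$; combining this with the previous paragraph yields $r_{A}(M)=\rho_{A}(M)$, which is the assertion.

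Since the substantive content is supplied by \cite[Theorem 4.11]{GBWF}, the only genuine issue is organizational: one must know that the completion of a hypocompact normed algebra is hypocompact (this is exactly Corollary \ref{tran}), and one must be comfortable feeding $M$ itself — rather than its closure — into the Banach-algebra result. The latter is what makes the reduction clean, because $M^{n}$ always lands back in $A$, so no completion artefacts enter $r(M)$ or $\rho(M)$. (If one insisted on working with $\overline{M}\subset\widehat{A}$, one would additionally need $\overline{M}^{n}=\overline{M^{n}}$, the closure being compact as a continuous image of a compact set, together with continuity of the spectral radius on $\widehat{A}$, which would follow from the countability of spectra in hypocompact Banach algebras (Theorem \ref{scat}) via Newburgh's theorem; the route through $M$ avoids this altogether.)
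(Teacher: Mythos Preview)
Your proposal is correct and follows exactly the paper's approach: pass to the completion $\widehat{A}$, invoke Corollary \ref{tran} to retain hypocompactness, note that $r(M)$ and $\rho(M)$ are unchanged under this passage, and then apply \cite[Theorem 4.11]{GBWF}. The paper's proof is a three-line version of what you wrote, asserting without detail that ``$r(M)$ and $\rho(M)$ don't change if pass to the completion''; your added justification of this point and your parenthetical remark on the $\overline{M}$ route are sound elaborations but not departures.
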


\begin{proof}
It is clear that $r(M)$ and $\rho(M)$ don't change if pass to the completion
$\widehat{A}$. Moreover, $\widehat{A}$ is hypocompact by Corollary \ref{tran}.
Now, appying \cite[Theorem 4.11]{GBWF}, we get the result.
\end{proof}

\begin{theorem}
\label{hyphypura} If $A$ is a hypocompact normed algebra then
\[
\mathcal{R}_{c}(A)=\mathcal{R}_{t}(A)=A\cap\mathrm{Rad}(\widehat{A}).
\]

\end{theorem}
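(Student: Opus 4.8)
Looking at this theorem, I need to prove that for a hypocompact normed algebra $A$, we have $\mathcal{R}_{c}(A)=\mathcal{R}_{t}(A)=A\cap\mathrm{Rad}(\widehat{A})$.

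The plan is to close the circle of inclusions already established in Corollary \ref{inclC-T}, namely $\mathcal{R}_{c}(A)\subset\mathcal{R}_{t}(A)\subset A\cap\mathrm{Rad}(\widehat{A})$, by proving the reverse inclusion $A\cap\mathrm{Rad}(\widehat{A})\subset\mathcal{R}_{c}(A)$. Set $J=A\cap\mathrm{Rad}(\widehat{A})$. Since $\mathrm{Rad}(\widehat{A})$ is a closed ideal of $\widehat{A}$ and $A$ is a subalgebra of $\widehat{A}$, the set $J$ is an ideal of $A$; moreover every element of $J$ is quasinilpotent, being an element of the Jacobson radical of the Banach algebra $\widehat{A}$. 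Because $\mathcal{R}_{c}(A)$ is, by the result of \cite{TR1}, the largest compactly quasinilpotent ideal of $A$, it suffices to verify that $J$ is compactly quasinilpotent, i.e. that $\rho(K)=0$ for every precompact subset $K$ of $J$.

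This is where the Berger--Wang equality enters. Fix a precompact $K\subset J$. Then $K$ is in particular a precompact subset of the hypocompact algebra $A$, so Corollary \ref{hypoBW} gives $\rho(K)=r(K)$, where $r(K)=\limsup_{n}(\sup\{\rho(a):a\in K^{n}\})^{1/n}$. Now $K\subset J\subset\mathrm{Rad}(\widehat{A})$ and $\mathrm{Rad}(\widehat{A})$ is an ideal, hence $K^{n}\subset\mathrm{Rad}(\widehat{A})$ for every $n$; consequently $\rho(a)=0$ for each $a\in K^{n}$ (the spectral radius being the same whether computed in $A$ or in $\widehat{A}$, as completion does not change norms of elements of $A$). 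Therefore $r(K)=0$, and so $\rho(K)=0$. Thus $J$ is compactly quasinilpotent, whence $J\subset\mathcal{R}_{c}(A)$.

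Combining $A\cap\mathrm{Rad}(\widehat{A})=J\subset\mathcal{R}_{c}(A)$ with the chain $\mathcal{R}_{c}(A)\subset\mathcal{R}_{t}(A)\subset A\cap\mathrm{Rad}(\widehat{A})$ from Corollary \ref{inclC-T} forces equality throughout, which is the assertion. The only substantive ingredient is Corollary \ref{hypoBW}, the Berger--Wang phenomenon for precompact subsets of hypocompact normed algebras; once that is available, the argument reduces to the observation that the Berger--Wang radius $r$ automatically vanishes on any subset of the Jacobson radical, since all iterated products consist of quasinilpotents. I expect no real difficulty beyond correctly invoking that corollary and the elementary fact that $A\cap\mathrm{Rad}(\widehat{A})$ is an ideal of $A$ consisting of quasinilpotents.
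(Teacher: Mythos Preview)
Your proof is correct and follows essentially the same approach as the paper: reduce to showing $A\cap\mathrm{Rad}(\widehat{A})\subset\mathcal{R}_{c}(A)$ via Corollary~\ref{inclC-T}, then use the Berger--Wang equality (Corollary~\ref{hypoBW}) together with the fact that all products in $K^{n}$ are quasinilpotent to conclude $\rho(K)=r(K)=0$. The only cosmetic difference is that the paper first observes $A\cap\mathrm{Rad}(\widehat{A})$ is itself hypocompact (as an ideal of a hypocompact algebra, Corollary~\ref{ihf}) before invoking Corollary~\ref{hypoBW}, whereas you apply Corollary~\ref{hypoBW} directly in the ambient algebra $A$; both are valid.
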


\begin{proof}
Taking into account Corollary \ref{inclC-T}, we have to prove only the
inclusion $A\cap\mathrm{Rad}(\widehat{A})\subset\mathcal{R}_{c}(A)$. Since
$A\cap\mathrm{Rad}(\widehat{A})$ is an ideal of a hypocompact algebra $A$, it
is hypocompact (see Corollary \ref{ihf}). If $M$ is a precompact subset of
$A\cap\mathrm{Rad}(\widehat{A})$ then $r(M)=0$ because all elements in
$\cup_{n=1}^{\infty}M^{n}$ are quasinilpotent. By Corollary \ref{hypoBW},
$\rho(M)=0$. So $A\cap\mathrm{Rad}(\widehat{A})$ is a compactly quasinilpotent
ideal of $A$ which implies that $A\cap\mathrm{Rad}(\widehat{A})\subset
\mathcal{R}_{c}(A)$.
\end{proof}

\begin{corollary}
\label{tensrad} Each radical hypocompact normed algebra is tensor radical.
\end{corollary}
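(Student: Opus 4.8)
The plan is to deduce Corollary~\ref{tensrad} from Theorem~\ref{comptens}, which asserts that every compactly quasinilpotent normed algebra is tensor radical; so it suffices to prove that a radical hypocompact normed algebra $A$ is compactly quasinilpotent, i.e. that $\rho(K)=0$ for every precompact subset $K$ of $A$.

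The first step records the well-known fact that every element of a radical normed algebra is quasinilpotent. Indeed, if $a\in A=\mathrm{rad}(A)$, then $\mu a\in\mathrm{rad}(A)$ for every $\mu\in\mathbb{C}$, so $1-\mu a$ is invertible in $A^{1}$, and hence in the completion of $A^{1}$ (a unital Banach algebra), for all $\mu$. Thus the spectrum of $a$ in that Banach algebra is contained in $\{0\}$, and Gelfand's formula gives $\rho(a)=\inf_{n}\Vert a^{n}\Vert^{1/n}=0$. Since $A$ is closed under multiplication, $K^{n}\subset A$ for every $n$, so all elements of $\bigcup_{n\geq1}K^{n}$ are quasinilpotent; consequently the Berger--Wang spectral quantity $r(K)$ introduced before Corollary~\ref{hypoBW} satisfies $r(K)=0$.

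Now hypocompactness of $A$ enters: by Corollary~\ref{hypoBW}, $\rho(K)=r(K)$ for each precompact $K\subset A$, so $\rho(K)=0$ by the previous step. As $K$ was an arbitrary precompact subset, $A$ is compactly quasinilpotent, and Theorem~\ref{comptens} then shows that $A$ is tensor radical. (Equivalently, the same computation shows $\mathcal{R}_{c}(A)=A$, which by Theorem~\ref{hyphypura} is formally stronger than $\mathcal{R}_{t}(A)=A$.)

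I do not anticipate a genuine obstacle. The only point needing a little care is the passage from ``$A$ radical'' to ``every element of $A$ is quasinilpotent'', which is settled by the completion argument above; the remainder is a direct application of Corollary~\ref{hypoBW} and Theorem~\ref{comptens}. An alternative would be to invoke Theorem~\ref{hyphypura} after first showing that $\widehat{A}$ is radical, but a radical normed algebra need not have radical completion, so that route appears more fragile and I would not follow it.
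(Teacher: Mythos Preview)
Your proof is correct and mirrors the paper's intended argument: the corollary appears immediately after Theorem~\ref{hyphypura} without proof, and the point is to replay that theorem's proof (quasinilpotence of all elements gives $r(K)=0$, hypocompactness gives $\rho(K)=r(K)=0$ via Corollary~\ref{hypoBW}, then Theorem~\ref{comptens} finishes). Your caution about the alternative route through $\mathrm{Rad}(\widehat{A})$ is well placed, since invoking the bare statement of Theorem~\ref{hyphypura} would require $A\subset\mathrm{Rad}(\widehat{A})$, which is not automatic for radical normed algebras.
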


The following result \cite[Corollary 6.2]{A68} supplies us with an important
class of examples of bicompact radical Banach algebras. Our proof of
radicality of $\mathcal{K}(X)/\mathcal{A}(X)$ differs from the proof in
\cite{A68}.

\begin{lemma}
\label{quot} Let $\mathcal{K}(X)$ be the algebra of all compact operators on a
Banach space $X$, $\mathcal{A}(X)$ be the closure in $\mathcal{K}(X)$ of the
ideal $\mathcal{F}\left(  X\right)  $ of finite rank operators. Then
$\mathcal{K}(X)/\mathcal{A}(X)$ is a radical bicompact Banach algebra.
\end{lemma}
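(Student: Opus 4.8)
The plan is to check three things in turn: that $\mathcal{K}(X)/\mathcal{A}(X)$ is a Banach algebra, that it is bicompact, and that it is radical. The first is immediate, since $\mathcal{F}(X)$ is an ideal of $\mathcal{K}(X)$, hence so is its closure $\mathcal{A}(X)$, and a quotient of a Banach algebra by a closed ideal is again a Banach algebra. For bicompactness I would invoke Vala's result (i) recalled above: for compact $a,b$ the operator $x\longmapsto axb$ is compact on $\mathcal{B}(X)$, so its restriction to the invariant subspace $\mathcal{K}(X)$ is compact, and thus $\mathcal{K}(X)$ is bicompact; since bicompactness passes to quotients by closed ideals, so is $\mathcal{K}(X)/\mathcal{A}(X)$. (Equivalently, for $a,b\in\mathcal{K}(X)$ the operator $L_aR_b$ on $\mathcal{K}(X)$ is compact and leaves $\mathcal{A}(X)$ invariant, hence induces a compact operator on the quotient, which is exactly how $L_{\tilde a}R_{\tilde b}$ acts on $\mathcal{K}(X)/\mathcal{A}(X)$.)

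The heart of the matter is radicality. Recall (as used in the proof of Proposition \ref{predv1}) that the Jacobson radical of a Banach algebra is its largest ideal consisting of quasinilpotents; hence a Banach algebra all of whose elements are quasinilpotent is radical. So I would prove radicality by showing directly that \emph{every} element of $\mathcal{K}(X)/\mathcal{A}(X)$ is quasinilpotent. Fix $T\in\mathcal{K}(X)$ and $\varepsilon>0$. Since $T$ is a compact operator, the set $\sigma_\varepsilon=\{\lambda\in\sigma(T):|\lambda|>\varepsilon\}$ is finite and avoids $0$, hence is a clopen subset of $\sigma(T)$; let $P$ be the associated Riesz idempotent. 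By the Riesz theory of compact operators the range of $P$ is the finite direct sum of the generalized eigenspaces of $T$ attached to the points of $\sigma_\varepsilon$, so $P$ has finite rank; moreover $P$ commutes with $T$, and $\sigma(T-TP)\subset\{\lambda:|\lambda|\le\varepsilon\}$, so $\rho(T-TP)\le\varepsilon$ in $\mathcal{B}(X)$.

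Now for each $n$ we have $T^n=(TP)^n+(T-TP)^n$ with $(TP)^n$ of finite rank, so $\mathrm{dist}(T^n,\mathcal{A}(X))\le\mathrm{dist}(T^n,\mathcal{F}(X))\le\|(T-TP)^n\|$. Taking $n$th roots and letting $n\to\infty$, the left side tends to the spectral radius of $T+\mathcal{A}(X)$ in $\mathcal{K}(X)/\mathcal{A}(X)$ (Gelfand's formula in the quotient) and the right side to $\rho(T-TP)\le\varepsilon$; as $\varepsilon$ was arbitrary, $T+\mathcal{A}(X)$ is quasinilpotent. Since $T$ was arbitrary, every element of $\mathcal{K}(X)/\mathcal{A}(X)$ is quasinilpotent, which finishes the proof. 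The only genuinely non-formal ingredient is the Riesz-projection step of the second paragraph — producing, for each $\varepsilon>0$, a \emph{finite-rank} idempotent commuting with $T$ that cuts $\sigma(T)$ down to the disc of radius $\varepsilon$; everything else is bookkeeping with the quotient norm and Gelfand's formula.
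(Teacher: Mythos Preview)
Your proof is correct and follows essentially the same approach as the paper's: both use Vala's theorem for bicompactness and, for radicality, the Riesz projection $P$ onto the spectral part outside a small disc (noting $P$ has finite rank, hence $TP\in\mathcal{A}(X)$) together with $\rho(T(1-P))\le\varepsilon$. The paper phrases the last step slightly more succinctly as $q(T)=q((1-P)T)$ and $\rho(q((1-P)T))\le\rho((1-P)T)$, whereas you compute $T^n=(TP)^n+(T-TP)^n$ explicitly, but this is the same argument.
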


\begin{proof}
The fact that $\mathcal{K}(X)$ is bicompact follows from the mentioned result
of Vala \cite{Vala}. Since the quotient of a bicompact algebra is obviously
bicompact, $\mathcal{K}(X)/\mathcal{A}(X)$ is bicompact.
To see that $\mathcal{K}(X)/\mathcal{A}(X)$ is radical, note that all
projections in $\mathcal{K}(X)$ are of finite rank and therefore belong to
$\mathcal{A}(X)$. It follows that for each $a\in\mathcal{K}(X)$ and each
spectral (= Riesz) projection $p$ of $a$ corresponding to a subset
$\alpha\subset\mathbb{C}$ non-containing $0$, we have that
\[
q((1-p)a)=q(a),
\]
where $q$ is the quotient map from $\mathcal{K}(X)$ to $\mathcal{K}%
(X)/\mathcal{A}(X)$. Since
\[
\rho(q((1-p)a))\leq\rho((1-p)a)
\]
and $\rho((1-p)a)$ can be made arbitrary small by an appropriate choice of
$p$, we conclude that $\rho(q(a))=0$. So $\mathcal{K}(X)/\mathcal{A}(X)$
consists of quasinilpotent elements.
\end{proof}

Recall that \textit{Riesz operators} are defined \cite{Ruston} as operators
that are quasinilpotent modulo the compact operators.

\begin{corollary}
For every Riesz operator $a\in\mathcal{B}\left(  X\right)  $ and for every
$\varepsilon>0$, there is $m\in\mathbb{N}$ such that $\mathrm{dist}%
_{\left\Vert \cdot\right\Vert _{\mathcal{B}}}\left(  a^{n},\mathcal{F}\left(
X\right)  \right)  <\varepsilon^{n}$ for each $n>m$.
\end{corollary}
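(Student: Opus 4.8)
The plan is to reduce the corollary to the single assertion that $a$ is quasinilpotent modulo the norm-closed ideal $\mathcal{A}(X)=\overline{\mathcal{F}(X)}$ of $\mathcal{B}(X)$, that is, that $a\in Q_{\mathcal{A}(X)}(\mathcal{B}(X))$, and then to read off the estimate from Proposition \ref{predv1}(iii). This reduction is immediate: since $\mathcal{A}(X)$ is the closure of $\mathcal{F}(X)$ in the operator norm, one has $\mathrm{dist}_{\|\cdot\|_{\mathcal{B}}}(b,\mathcal{F}(X))=\mathrm{dist}_{\|\cdot\|_{\mathcal{B}}}(b,\mathcal{A}(X))$ for every $b\in\mathcal{B}(X)$, so that $Q_{\mathcal{F}(X)}(\mathcal{B}(X))=Q_{\mathcal{A}(X)}(\mathcal{B}(X))$; and once $a$ belongs to this set, Proposition \ref{predv1}(iii) applied to the ideal $\mathcal{F}(X)$ of $\mathcal{B}(X)$ states exactly that for each $\varepsilon>0$ there is $m$ with $\mathrm{dist}_{\|\cdot\|_{\mathcal{B}}}(a^{n},\mathcal{F}(X))<\varepsilon^{n}$ for all $n>m$.

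To obtain $a\in Q_{\mathcal{A}(X)}(\mathcal{B}(X))$ I would apply Proposition \ref{predv1}(ii) to a suitable closed subalgebra. Let $B$ be the closed subalgebra of $\mathcal{B}(X)$ generated by $\mathcal{K}(X)\cup\{a\}$. Then $\mathcal{A}(X)\subset\mathcal{K}(X)\subset B$, and $\mathcal{A}(X)$ and $\mathcal{K}(X)$ are closed ideals of the Banach algebra $B$, being ideals of $\mathcal{B}(X)$ contained in $B$. Since $\mathcal{K}(X)$ is an ideal, all generators coming from $\mathcal{K}(X)$ vanish in $B/\mathcal{K}(X)$, so $B/\mathcal{K}(X)$ is the (commutative) Banach algebra generated by the single element $a+\mathcal{K}(X)$; moreover this element is quasinilpotent there, because $a$ is a Riesz operator and the quotient norm of $a^{n}+\mathcal{K}(X)$ in $B/\mathcal{K}(X)$ coincides with $\mathrm{dist}_{\|\cdot\|_{\mathcal{B}}}(a^{n},\mathcal{K}(X))$ (using $\mathcal{K}(X)\subset B$). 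As a commutative Banach algebra generated by one quasinilpotent element, $B/\mathcal{K}(X)$ is radical. On the other hand $\mathcal{K}(X)/\mathcal{A}(X)$ is radical by Lemma \ref{quot}. Hence, applying Lemma \ref{quotBan} to the Banach algebra $B/\mathcal{A}(X)$ and its closed ideal $\mathcal{K}(X)/\mathcal{A}(X)$, whose associated quotient is canonically $B/\mathcal{K}(X)$, we get that $B/\mathcal{A}(X)$ is radical.

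Proposition \ref{predv1}(ii) now applies with $A=\mathcal{B}(X)$, the ideal $J=\mathcal{A}(X)$, and the closed subalgebra $B\supset\mathcal{A}(X)$ having radical quotient $B/\mathcal{A}(X)$; it yields $B\subset Q_{\mathcal{A}(X)}(\mathcal{B}(X))$, and in particular $a\in Q_{\mathcal{A}(X)}(\mathcal{B}(X))$, which together with the first paragraph finishes the proof. The one step that is more than bookkeeping, and the place I expect the only real care to be needed, is the analysis of $B/\mathcal{K}(X)$: one has to see that this quotient is singly generated by $a+\mathcal{K}(X)$ and that this generator is quasinilpotent in it, the latter being the precise point where the hypothesis that $a$ is Riesz enters, via the identity of the $B/\mathcal{K}(X)$-quotient norm with $\mathrm{dist}_{\|\cdot\|_{\mathcal{B}}}(\cdot,\mathcal{K}(X))$ on the powers of $a$. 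Everything else is routine manipulation of closed ideals and iterated quotients, together with the standard fact that in a commutative Banach algebra the quasinilpotents form the radical.
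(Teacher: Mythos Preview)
Your proof is correct and follows essentially the same approach as the paper, which simply states that ``it follows from Lemma~\ref{quot} that every Riesz operator is quasinilpotent modulo the approximable operators.'' You have carefully spelled out the extension argument (via the auxiliary subalgebra $B$, Lemma~\ref{quotBan}, and Proposition~\ref{predv1}(ii)) that the paper leaves implicit, but the key ingredient in both is Lemma~\ref{quot}.
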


\begin{proof}
Indeed, it follows from Lemma \ref{quot} that every Riesz operator is
quasinilpotent modulo the approximable operators.
\end{proof}

Our aim now is to show that the class of hypocompact algebras is stable under
tensor products. Let $\mathrm{ball}(A)$ denote the closed unit ball of $A$.
Recall that $W_{a}=L_{a}R_{a}$ for every $a\in A$, and if $M\subset A$ and
$N\subset B$ are not subspaces then $M{\otimes N}$ means only the set
$\left\{  a{\otimes b:a\in M,b\in N}\right\}  $, not its linear span.
Moreover, if $I$ is a closed ideal of $A$, then $M/I$ means the set $\left\{
a/I:a\in M\right\}  \subset A/I$.

\begin{lemma}
\label{tenzcomp} Let $A,B$ be unital Banach algebras, $J$ a closed ideal in
$A{\hat{\otimes}}B$, $I_{1}$ and $I_{2}$ closed ideals in $A$ and $B$
respectively. Let elements $a\in A$, $b\in B$ satisfy the conditions
$a{\otimes}I_{2}\subset J$ and $I_{1}{\otimes}b\subset J$. If $a/I_{1}$ and
$b/I_{2}$ are compact elements of $A/I_{1}$ and $B/I_{2}$ respectively then
$(a{\otimes}b)/J$ is a compact element of $(A{\widehat{\otimes}}B)/J$.
\end{lemma}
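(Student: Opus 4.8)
The plan is to verify directly that the multiplication operator $W_{(a\otimes b)/J}=L_{(a\otimes b)/J}R_{(a\otimes b)/J}$ is compact on the Banach algebra $(A\widehat{\otimes}B)/J$. Since $q_{J}\circ W_{a\otimes b}=W_{(a\otimes b)/J}\circ q_{J}$ and $q_{J}(\mathrm{ball}(A\widehat{\otimes}B))$ is dense in the unit ball of $(A\widehat{\otimes}B)/J$, it is enough to show that $q_{J}\bigl(W_{a\otimes b}(\mathrm{ball}(A\widehat{\otimes}B))\bigr)$ is precompact. The closed unit ball of $A\widehat{\otimes}B$ is the closed absolutely convex hull of $\{\,c\otimes d:c\in\mathrm{ball}(A),\ d\in\mathrm{ball}(B)\,\}$, so, $W_{a\otimes b}$ being bounded and linear, $q_{J}\bigl(W_{a\otimes b}(\mathrm{ball}(A\widehat{\otimes}B))\bigr)$ lies in the closed absolutely convex hull of $K:=\{\,(aca\otimes bdb)/J:c\in\mathrm{ball}(A),\ d\in\mathrm{ball}(B)\,\}$. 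As the closed absolutely convex hull of a precompact subset of a Banach space is compact, the whole lemma reduces to showing that $K$ is precompact in $(A\widehat{\otimes}B)/J$.

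Two elementary consequences of the hypotheses will do the job. Using that $B$ is unital and that $J$ is an ideal containing $a\otimes I_{2}$, for every $c\in A$ and $j'\in I_{2}$ one has $aca\otimes j'=(a\otimes j')(ca\otimes 1)\in J$. Symmetrically, since $A$ is unital and $J\supset I_{1}\otimes b$, for every $d'\in B$ and $j\in I_{1}$ one has $j\otimes bd'b=(j\otimes b)(1\otimes d'b)\in J$ (here the two $1$'s are the units of $B$ and of $A$).

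Now the compactness of $a/I_{1}$ and $b/I_{2}$ enters: the operators $W_{a/I_{1}}$ and $W_{b/I_{2}}$ are compact, hence the sets $W_{a/I_{1}}(q_{I_{1}}(\mathrm{ball}(A)))=\{\,q_{I_{1}}(aca):c\in\mathrm{ball}(A)\,\}$ and $\{\,q_{I_{2}}(bdb):d\in\mathrm{ball}(B)\,\}$ are totally bounded. I would then fix $\varepsilon>0$ and pick $c_{1},\dots,c_{p}\in\mathrm{ball}(A)$ and $d_{1},\dots,d_{q}\in\mathrm{ball}(B)$ so that $\{q_{I_{1}}(ac_{i}a)\}_{i}$ and $\{q_{I_{2}}(bd_{k}b)\}_{k}$ are $\varepsilon$-nets of the respective sets; put $u_{i}=ac_{i}a$, $v_{k}=bd_{k}b$, so $\|u_{i}\|\le\|a\|^{2}$ and $\|v_{k}\|\le\|b\|^{2}$. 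For arbitrary $c\in\mathrm{ball}(A)$, $d\in\mathrm{ball}(B)$ one chooses $i,k$ and writes $aca=u_{i}+j+r_{1}$, $bdb=v_{k}+j'+r_{2}$ with $j\in I_{1}$, $j'\in I_{2}$, $\|r_{1}\|,\|r_{2}\|<\varepsilon$. Then
\[
aca\otimes bdb=u_{i}\otimes v_{k}+aca\otimes j'+j\otimes v_{k}+r_{1}\otimes v_{k}+aca\otimes r_{2},
\]
and by the two facts above $aca\otimes j'\in J$ and $j\otimes v_{k}=j\otimes bd_{k}b\in J$; hence $\|(aca\otimes bdb)/J-(u_{i}\otimes v_{k})/J\|\le(\|a\|^{2}+\|b\|^{2})\varepsilon$. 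So $K$ is covered by the finitely many $(\|a\|^{2}+\|b\|^{2})\varepsilon$-balls centred at the $(u_{i}\otimes v_{k})/J$, i.e. $K$ is totally bounded, which finishes the proof.

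The point that needs care, and which I expect to be the real obstacle, is the choice of the finite nets: the approximating elements must be taken of the \emph{special form} $ac_{i}a$ and $bd_{k}b$, so that the cross term $j\otimes v_{k}$ becomes $j\otimes bd_{k}b\in J$ and the other cross term $aca\otimes j'$ also falls into $J$. For a generic lift $v_{k}$ of a net point this term need not lie in $J$, because $J$ is only assumed to contain $I_{1}\otimes b$ (not $I_{1}\otimes B$) and $a\otimes I_{2}$ (not $A\otimes I_{2}$); it is precisely the freedom to select net points inside $W_{a/I_{1}}(q_{I_{1}}(\mathrm{ball}(A)))$ and $W_{b/I_{2}}(q_{I_{2}}(\mathrm{ball}(B)))$ themselves that makes the argument go through.
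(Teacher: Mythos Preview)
Your proof is correct and follows essentially the same approach as the paper's: both reduce to showing that the set $\{(aca\otimes bdb)/J:c\in\mathrm{ball}(A),\,d\in\mathrm{ball}(B)\}$ is totally bounded, choose net points of the special form $ac_{i}a$, $bd_{k}b$ so that the cross terms $j\otimes bd_{k}b$ and $aca\otimes j'$ fall into $J$, and obtain the same estimate $(\|a\|^{2}+\|b\|^{2})\varepsilon$. The paper organises the difference as $(W_{a}x-W_{a}x_{i})\otimes W_{b}y+W_{a}x_{i}\otimes(W_{b}y-W_{b}y_{k})$ whereas you expand $aca\otimes bdb$ first in the second factor and then in the first, but this is only a cosmetic difference.
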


\begin{proof}
Note that $aAa{\otimes}I_{2}=(a{\otimes}I_{2})(Aa{\otimes}1)\subset J$ and,
similarly, $I_{1}{\otimes}bBb\subset J$. Choose $\varepsilon>0$. Let
$x_{1},...,x_{n}\in\mathrm{ball}(A)$ be such that $\{W_{a}x_{i}/I_{1}:1\leq
i\leq n\}$ is an {$\varepsilon$}-net in $W_{a}(\mathrm{ball}(A/I_{1}))$. This
means that for any $x\in\mathrm{ball}(A)$ there are $i\leq n$ and $x^{\prime
}\in I_{1}$ with
\[
\Vert W_{a}x-W_{a}x_{i}-x^{\prime}\Vert<\varepsilon.
\]
In the same way one finds $y_{1},...,y_{m}\in\mathrm{ball}(B)$ such that for
each $y\in\mathrm{ball}(B)$ there are $k\leq m$ and $y^{\prime}\in I_{2}$
with
\[
\Vert W_{b}y-W_{b}y_{k}-y^{\prime}\Vert<\varepsilon.
\]
Let us check that the set $\{(W_{a}x_{i}{\otimes}W_{b}y_{k})/J:i\leq
n,\,\,k\leq m\}$ is a $\delta$-net for the set $(W_{a}(\mathrm{ball}%
(A)){\otimes}W_{b}(\mathrm{ball}(B)))/J$, where $\delta=(\Vert a\Vert
^{2}+\Vert b\Vert^{2})\varepsilon$. Indeed, for $x\in\mathrm{ball}(A)$,
$y\in\mathrm{ball}(B)$, choose $i,k$ as above. Then we obtain that
\begin{align*}
z  &  :=W_{a}x{\otimes}W_{b}y-W_{a}x_{i}{\otimes}W_{b}y_{k}\\
&  =(W_{a}x-W_{a}x_{i}){\otimes}W_{b}y+W_{a}x_{i}{\otimes}(W_{b}y-W_{b}%
y_{k})\\
&  =x^{\prime}{\otimes}W_{b}y+W_{a}x_{i}{\otimes}y^{\prime}+{u\otimes}%
W_{b}y+W_{a}x_{i}{\otimes v}%
\end{align*}
where $\Vert u\Vert<\varepsilon$, $\Vert v\Vert<\varepsilon$. Since the first
two summands belong to $J$, we conclude that the norm of $z/J$ in
$A\widehat{{\otimes}}B/J$ is less than $\delta$.
We proved that $(W_{a}(\mathrm{ball}(A)){\otimes}W_{b}(\mathrm{ball}(B)))/J$
is precompact in $\left(  A\widehat{{\otimes}}B\right)  /J$. Since
$\mathrm{ball}(A\widehat{{\otimes}}B)$ is the closed convex hull of the set
$\mathrm{ball}(A){\otimes}\mathrm{ball}(B)$, then the set $(W_{a}{\otimes
}W_{b})(\mathrm{ball}(A\widehat{{\otimes}}B))/J$ is precompact, whence
$W_{(a{\otimes}b)/J}$ is compact.
\end{proof}

Let us say, for brevity, that an element $a$ is \textit{compact modulo a
closed ideal} $J$ if $a/J$ is a compact element of $A/J$.

\begin{theorem}
\label{tenz} If normed algebras $A$ and $B$ are hypocompact then
$A\widehat{{\otimes}}B$ is hypocompact.
\end{theorem}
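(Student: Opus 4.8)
The plan is to verify the definition of hypocompactness for $C:=A\widehat{\otimes}B$ directly: given a proper closed ideal $L$ of $C$, I will produce a nonzero compact element of $C/L$, with Lemma~\ref{tenzcomp} doing the real work. First I reduce to unital Banach algebras. Since $A\widehat{\otimes}B=\widehat{A}\widehat{\otimes}\widehat{B}$ and, by Corollary~\ref{tran}, the completions $\widehat{A}$ and $\widehat{B}$ are again hypocompact, it suffices to prove the theorem for Banach algebras $A,B$ (and we may assume $A,B\neq 0$). Moreover $A^{1}$ has the closed ideal $A$ with $A^{1}/A\cong\mathbb{C}$, both hypocompact, so $A^{1}$ is hypocompact, and likewise $B^{1}$; and $A\widehat{\otimes}B$ is a closed ideal of $A^{1}\widehat{\otimes}B^{1}$ — this follows from the distributivity of the projective tensor product over $\ell_{1}$-direct sums together with a one-line computation of the products involved. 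Hence, once the theorem is known for unital Banach algebras, the general case follows because a closed ideal of a hypocompact algebra is hypocompact. So from now on $A,B$ are unital Banach algebras.

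Now fix a proper closed ideal $L\subsetneq C$ and put $\mathcal{J}:=\{a\in A:\ a\otimes B\subseteq L\}$, which is readily checked to be a closed ideal of $A$; it is proper, since $\mathcal{J}=A$ would give $A\otimes B\subseteq L$ and hence $L=C$. The quotient $A/\mathcal{J}$ is nonzero and hypocompact, so it contains a nonzero compact element $a_{0}/\mathcal{J}$. Since $a_{0}\notin\mathcal{J}$ we have $a_{0}\otimes B\not\subseteq L$, so $I:=\{b\in B:\ a_{0}\otimes b\in L\}$ is a proper closed ideal of $B$; the nonzero hypocompact quotient $B/I$ then contains a nonzero compact element $b_{0}/I$, and in particular $a_{0}\otimes b_{0}\notin L$. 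I then invoke Lemma~\ref{tenzcomp} with $J:=L$, with the closed ideals $I_{1}:=\mathcal{J}$ in $A$ and $I_{2}:=I$ in $B$, and with the elements $a_{0},b_{0}$: the hypothesis $a_{0}\otimes I_{2}\subseteq L$ is exactly the definition of $I$, the hypothesis $I_{1}\otimes b_{0}\subseteq L$ holds because every $j\in\mathcal{J}$ satisfies $j\otimes b_{0}\in L$, and $a_{0}/I_{1}$, $b_{0}/I_{2}$ are the compact elements chosen above. The lemma yields that $(a_{0}\otimes b_{0})/L$ is a compact element of $C/L$, nonzero because $a_{0}\otimes b_{0}\notin L$. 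Since $L$ was arbitrary, $C$ is hypocompact.

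The step I expect to need the most care is the choice of $\mathcal{J}$ and the matching of the two compatibility hypotheses of Lemma~\ref{tenzcomp}. One must pass to $A/\mathcal{J}$ rather than work in $A$ directly, so that the compact element produced is not absorbed into $L$ after tensoring with all of $B$ — this absorption is precisely what lying in $\mathcal{J}$ means, and it would put $a_{0}\otimes b_{0}$ into $L$. Choosing $\mathcal{J}$ to be exactly this ``bad'' set is what simultaneously secures $a_{0}\otimes b_{0}\notin L$ and $\mathcal{J}\otimes b_{0}\subseteq L$, which is what lets Lemma~\ref{tenzcomp} run with $I_{1}=\mathcal{J}$. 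The remaining ingredients — the completion and unitization reductions and the stability of hypocompactness under quotients, extensions and closed ideals — are routine and have all been provided in the preceding sections.
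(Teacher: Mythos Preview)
Your proof is correct and follows essentially the same approach as the paper's own proof: the same reduction to unital Banach algebras via completions and unitalizations, the same choice of the ideals $I_{1}=\{a\in A:a\otimes B\subset J\}$ and $I_{2}=\{b\in B:a_{0}\otimes b\in J\}$, and the same application of Lemma~\ref{tenzcomp} to conclude that $(a_{0}\otimes b_{0})/J$ is a nonzero compact element. The only cosmetic difference is notation; the structure and the key idea are identical.
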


\begin{proof}
By Corollary \ref{tran}, one can assume that $A$ and $B$ are complete.
Suppose first that $A$ and $B$ are unital. Let $J$ be a proper closed ideal of
$A\widehat{{\otimes}}B$. We have to prove that $\left(  A\widehat{{\otimes}%
}B\right)  /J$ has non-zero compact elements.
Set $I_{1}=\{x\in A:x{\otimes}B\subset J\}$. By our assumption, $I_{1}\neq A$
(indeed, otherwise $J=A\widehat{{\otimes}}B$), so there exists an element
$a\in A\backslash I_{1}$ which is compact modulo $I_{1}$. Set $I_{2}=\{y\in
B:a{\otimes}y\in J\}$. Since $a\notin I_{1}$ then $I_{2}\neq B$. Let $b\in
B\backslash I_{2}$ be an element of $B$ compact modulo $I_{2}$.
By the definition of $I_{2}$, $a{\otimes}I_{2}\subset J$. Furthermore,
$I_{1}{\otimes}b\subset J$ because $I_{1}{\otimes}B\subset J$. Hence the
assumptions of Lemma \ref{tenzcomp} are satisfied, therefore $a{\otimes}b$ is
an element of $A\widehat{{\otimes}}B$ compact modulo $J$. It is clear that
$a{\otimes}b\notin J$ by the choice of $b$.
In general it suffices to note that the unitalization $A^{1}$ of a hypocompact
algebra $A$ is hypocompact and $A\widehat{{\otimes}}B$, being a closed ideal
of $A^{1}\widehat{{\otimes}}B^{1}$, is hypocompact by Corollary \ref{ihf}.
\end{proof}


\subsection{Topological radicals\label{toprad}}

A map $P$ which associates with every normed algebra $A$ a closed ideal
$P\left(  A\right)  $ of $A$ is called a \textit{topological radical } if
$P\left(  P\left(  A\right)  \right)  =P\left(  A\right)  $, $P\left(
A/P\left(  A\right)  \right)  =0$, $P\left(  I\right)  $ is an ideal of $A$
and $P\left(  I\right)  \subset P\left(  A\right)  $ for every ideal $I$ of
$A$, and $f\left(  P\left(  A\right)  \right)  \subset P\left(  B\right)  $
for every morphism $f:A\longrightarrow B$. The meaning of the later
requirement depends on the specification of morphisms in the different
categories whose objects are normed algebras. In the applications below,
\textit{open bounded epimorphisms} are included in the class of morphisms of
any such category. The study of topological radicals was initiated by
\cite{Dix}.

A topological radical $P$ is called \textit{hereditary} if $P\left(  I\right)
=I\cap P\left(  A\right)  $ for any ideal $I$ of each normed algebra $A$. A
normed algebra $A$ is called $P$\textit{-radical} if $A=P\left(  A\right)  $
and $P$\textit{-semisimple} if $P\left(  A\right)  =0$.

Note \cite{Dix} that the Jacobson radical $\mathrm{rad}:A\longmapsto
\mathrm{rad}(A)$ is not a topological radical on the class of all normed
algebras, but its restriction $\mathrm{Rad}$ to the class of all Banach
algebras is a hereditary topological radical. This radical admits different
extensions to the class of all normed algebras which are topological radicals
(see for instance \cite[Section 2.6]{TR1}). One of them is the regular
extension $\mathrm{Rad}^{r}:A\longmapsto A\cap\mathrm{Rad}\left(
\widehat{A}\right)  $ (see \cite[Section 2.8]{TR1}) where $\widehat{A}$ is the
completion of $A$. We already met this radical in Corollary \ref{inclC-T} and
Theorem \ref{hyphypura}.

Let $\mathcal{R}_{t}$ denote the map $A\longmapsto$ $\mathcal{R}_{t}\left(
A\right)  $ for every normed algebra $A$.

\begin{theorem}
\label{rt}$\mathcal{R}_{t}$ is a hereditary topological radical in the
category of normed algebras morphisms of which are open bounded epimorphisms.
\end{theorem}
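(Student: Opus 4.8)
The statement packages together several properties that have already been established piecemeal in the preceding subsections, so the proof is mostly a matter of assembling them and checking the one axiom not yet addressed — the behaviour of $\mathcal{R}_t$ under ideals. Concretely, to verify that $\mathcal{R}_t$ is a topological radical I must check: (1) $\mathcal{R}_t(A)$ is a closed ideal of $A$; (2) $\mathcal{R}_t(\mathcal{R}_t(A)) = \mathcal{R}_t(A)$; (3) $\mathcal{R}_t(A/\mathcal{R}_t(A)) = 0$; (4) for every ideal $I$ of $A$, $\mathcal{R}_t(I)$ is an ideal of $A$ and $\mathcal{R}_t(I) \subset \mathcal{R}_t(A)$; and (5) $f(\mathcal{R}_t(A)) \subset \mathcal{R}_t(B)$ for every open bounded epimorphism $f : A \to B$. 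The \emph{hereditary} clause additionally requires $\mathcal{R}_t(I) = I \cap \mathcal{R}_t(A)$ for every ideal $I$.

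The plan is to cite the relevant earlier results in order. Property (1) is exactly the theorem ``$\mathcal{R}_t(A)$ is a closed ideal of $A$''. Property (5) is Theorem \ref{hom}. The hereditary identity $\mathcal{R}_t(I) = I \cap \mathcal{R}_t(A)$ is the last theorem of Section \ref{sid} (``If $I$ is an ideal of $A$ then $\mathcal{R}_t(I) = \mathcal{R}_t(A) \cap I$''), and it immediately gives the two halves of (4): $\mathcal{R}_t(I) = I \cap \mathcal{R}_t(A) \subset \mathcal{R}_t(A)$, and $\mathcal{R}_t(I)$, being the intersection of the ideal $I$ with the ideal $\mathcal{R}_t(A)$, is itself an ideal of $A$. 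Taking $I = \mathcal{R}_t(A)$ in that same theorem yields $\mathcal{R}_t(\mathcal{R}_t(A)) = \mathcal{R}_t(A) \cap \mathcal{R}_t(A) = \mathcal{R}_t(A)$, which is (2); this is exactly the remark recorded after that theorem. Property (3) is Corollary \ref{tenquo}. So every required clause has already been proved, and the proof consists in listing these references.

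There is no serious obstacle here; the only thing worth a sentence is making explicit that ``morphisms are open bounded epimorphisms'' is precisely the generality in which Theorem \ref{hom} was stated, so the functoriality axiom $f(\mathcal{R}_t(A)) \subset \mathcal{R}_t(B)$ holds on the nose for this category, and that — because $\mathcal{R}_t(A)$ consists of quasinilpotents (hence is a proper ideal whenever $A \neq 0$ is, say, unital) — $\mathcal{R}_t$ is a genuine (non-trivial) radical rather than the constant map $A \mapsto A$. If anything, the ``hard part'' is purely expository: one should remark that the hereditary property is strictly stronger than clause (4) of the definition of topological radical and that it is what makes the inclusions $\mathcal{R}_c(A) \subset \mathcal{R}_t(A) \subset \mathrm{Rad}^r(A)$ from Corollary \ref{inclC-T} into inclusions of hereditary topological radicals.

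\begin{proof}
We verify the defining properties of a hereditary topological radical. That $\mathcal{R}_{t}(A)$ is a closed ideal of $A$ was proved above. For every ideal $I$ of $A$ we have $\mathcal{R}_{t}(I)=I\cap\mathcal{R}_{t}(A)$ by the last theorem of Section \ref{sid}; in particular $\mathcal{R}_{t}(I)\subset\mathcal{R}_{t}(A)$, and $\mathcal{R}_{t}(I)$, being the intersection of two ideals of $A$, is an ideal of $A$. Taking $I=\mathcal{R}_{t}(A)$ gives $\mathcal{R}_{t}(\mathcal{R}_{t}(A))=\mathcal{R}_{t}(A)$. By Corollary \ref{tenquo}, $\mathcal{R}_{t}(A/\mathcal{R}_{t}(A))=0$. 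Finally, if $f:A\longrightarrow B$ is an open bounded epimorphism then $f(\mathcal{R}_{t}(A))\subset\mathcal{R}_{t}(B)$ by Theorem \ref{hom}. Thus $\mathcal{R}_{t}$ is a topological radical in the indicated category, and the identity $\mathcal{R}_{t}(I)=I\cap\mathcal{R}_{t}(A)$ for all ideals $I$ says precisely that it is hereditary.
\end{proof}
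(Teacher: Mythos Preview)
Your proof is correct and follows exactly the paper's approach: the paper's own proof is the single sentence ``It follows from the results of Sections \ref{sid} and \ref{stqi},'' and you have simply unpacked which results those are and how they fit together. One minor citation slip: the hereditary identity $\mathcal{R}_{t}(I)=I\cap\mathcal{R}_{t}(A)$ is the last theorem of Section \ref{stqi}, not of Section \ref{sid}.
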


\begin{proof}
It follows from the results of Sections \ref{sid} and \ref{stqi}.
\end{proof}

The same was proved for the map $\mathcal{R}_{c}:A\longmapsto$ $\mathcal{R}%
_{c}\left(  A\right)  $ (see \cite[Theorem 4.25]{TR1}).

Moreover, it was proved in \cite[Theorem 3.14]{GBWF} (see also the short
communication \cite{STFaa}) that each normed algebra $A$ has a largest
hypocompact ideal $\mathcal{R}_{hc}(A)$, and the map $\mathcal{R}%
_{hc}:A\longmapsto\mathcal{R}_{hc}(A)$ \textit{is a hereditary topological
radical on the class of normed algebras with open bounded epimorphisms as
morphisms}. It should be noted that for simplicity the results of
\cite[Section 3.2]{GBWF} were formulated for Banach algebras, but the proofs
did not use the completeness.

\begin{theorem}
\label{hf}For every normed algebra $A$, there exists a largest hypofinite
ideal $\mathcal{R}_{hf}(A)$, and the map $\mathcal{R}_{hf}:A\longmapsto
\mathcal{R}_{hf}(A)$ \textit{is} a hereditary topological radical on the class
of normed algebras morphisms of which are bounded homomorphisms with dense image.
\end{theorem}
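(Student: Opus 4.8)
The plan is to mimic, for the hypofinite case, the structure that (according to the excerpt) was used in \cite{GBWF} for the hypocompact radical $\mathcal{R}_{hc}$, adapting the category of morphisms to bounded homomorphisms with dense image. The first step is to show that the class of hypofinite normed algebras is closed under the three operations that underlie a topological radical: passing to closed ideals, passing to quotients, and extensions. Closure under quotients is Corollary \ref{hypquot} (the hypofinite version, stated right after it). Closure under extensions and under passing to ideals is the analogue of what is asserted in the paragraph introducing hypocompact algebras; for ideals one uses that a finite rank element of a quotient $A/J$, when $J\subset I$, gives rise via the transfinite chain of Proposition \ref{trans} to the conclusion that $I$ is hypofinite (this is the ``Corollary \ref{ihf}'' invoked in the proof of Theorem \ref{tenz}). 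For extensions one concatenates the defining transfinite chains of $I$ and of $A/\overline I$ using Proposition \ref{trans}. I would state these as a preliminary lemma and prove them by the transfinite-chain characterization, exactly as in the $\Rightarrow$/$\Leftarrow$ argument of Proposition \ref{trans}.

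Next I would construct $\mathcal{R}_{hf}(A)$ as the union of all hypofinite ideals of $A$, or equivalently (and more usefully for transfinite induction) as the top term of a maximal transfinite chain $(J_\alpha)$ of closed ideals with $J_{\alpha+1}/J_\alpha$ having a dense set of finite rank elements of $A/J_\alpha$ and $J_\alpha = \overline{\cup_{\alpha'<\alpha}J_{\alpha'}}$ at limit ordinals. That this process terminates is a cardinality argument as in Proposition \ref{trans}. One must check: (a) $\mathcal{R}_{hf}(A)$ is a closed ideal; (b) it is hypofinite — this is immediate from Proposition \ref{trans} since the chain $(J_\alpha)_{\alpha\le\beta}$ with $J_\beta = \mathcal{R}_{hf}(A)$ witnesses it; (c) it contains every hypofinite ideal $I$ of $A$ — for this, consider the image of $I$ in $A/\mathcal{R}_{hf}(A)$; it is a hypofinite ideal (quotient of hypofinite is hypofinite, and the image of an ideal under the quotient map is an ideal), so if it were nonzero it would have a nonzero finite rank element, contradicting the maximality of the chain defining $\mathcal{R}_{hf}(A)$; hence $I\subset\mathcal{R}_{hf}(A)$. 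In particular $\mathcal{R}_{hf}(A)$ is the \emph{largest} hypofinite ideal, giving $\mathcal{R}_{hf}(\mathcal{R}_{hf}(A)) = \mathcal{R}_{hf}(A)$ and $\mathcal{R}_{hf}(A/\mathcal{R}_{hf}(A)) = 0$ at once.

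For the radical axioms that remain: $\mathcal{R}_{hf}(I)\subset\mathcal{R}_{hf}(A)$ for an ideal $I$ follows because $\mathcal{R}_{hf}(I)$ is a hypofinite ideal of $I$, and one checks it is in fact an ideal of $A$ (an ideal of an ideal that is itself ``radical'' for this property is an ideal of the big algebra — here one uses that finite rank elements behave well under the multipliers $L_a, R_a$ coming from $A^1$, i.e. $W_a$ has finite rank implies $W_{xay}$ does for $x,y\in A^1$), so by the largest-property it sits inside $\mathcal{R}_{hf}(A)$; conversely $\mathcal{R}_{hf}(A)\cap I$ is a hypofinite ideal of $I$, giving the reverse inclusion and hence heredity $\mathcal{R}_{hf}(I) = I\cap\mathcal{R}_{hf}(A)$. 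Finally, for the morphism condition: if $f:A\to B$ is a bounded homomorphism with dense image, then $\overline{f(\mathcal{R}_{hf}(A))}$ is a closed ideal of $B$ (density of $f(A)$ makes the closure of the image of an ideal an ideal), and it is hypofinite because $f$ maps the defining transfinite chain of $\mathcal{R}_{hf}(A)$ to a transfinite chain of closed ideals of $\overline{f(\mathcal{R}_{hf}(A))}$ satisfying the hypotheses of Proposition \ref{trans} — here one invokes Corollary \ref{tran} (dense subalgebra of hypofinite is hypofinite, in its hypofinite form) to transfer the finite-rank density across the dense homomorphic image. Hence $\overline{f(\mathcal{R}_{hf}(A))}\subset\mathcal{R}_{hf}(B)$, which gives the required $f(\mathcal{R}_{hf}(A))\subset\mathcal{R}_{hf}(B)$.

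The main obstacle I anticipate is the morphism axiom, specifically verifying that a bounded homomorphism with dense image genuinely transports the ``dense set of finite rank elements'' property across the successive quotients $J_{\alpha+1}/J_\alpha$. One has to be careful that a finite rank element of $A/J_\alpha$ need not map to a finite rank element of its image in $B$ \emph{a priori} under an arbitrary bounded homomorphism — it is the density of the image, together with the openness-free analogue of Corollary \ref{tran}, that saves the argument, and getting that transfer right at limit ordinals (where one takes closures) is the delicate point. Everything else is a transfinite-induction bookkeeping exercise parallel to the treatment of $\mathcal{R}_{hc}$ cited from \cite{GBWF}.
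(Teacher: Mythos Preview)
Your plan is essentially the paper's own approach: the paper's proof consists of a single sentence referring to the analogous construction for $\mathcal{R}_{hc}$ in \cite{GBWF}, with the remark that one replaces compact algebras by approximable ones and uses that \emph{a bounded homomorphism with dense image maps finite rank elements into finite rank elements}.

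That last clause is precisely the key fact that dissolves what you flag as your ``main obstacle.'' You do not need to route through Corollary~\ref{tran} or worry about transferring dense sets of finite rank elements across successive quotients at limit ordinals. The direct argument is this: if $f:A\to B$ is bounded with dense image and $a\in A$ has $\dim(aAa)<\infty$, then $f(a)Bf(a)\subset\overline{f(a)f(A)f(a)}=\overline{f(aAa)}$, which is finite-dimensional; hence $f(a)$ is a finite rank element of $B$. The same reasoning applies to the induced maps between quotients (which also have dense image), so the transfinite chain witnessing hypofiniteness of $\mathcal{R}_{hf}(A)$ is carried by $f$ to one witnessing hypofiniteness of $\overline{f(\mathcal{R}_{hf}(A))}$. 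With this observation in hand, the morphism axiom is no harder than the others, and your proof goes through cleanly.
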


\begin{proof}
Similar to the proof in \cite[Section 3.2]{GBWF}. One have to replace compact
algebras by approximable ones and take into account that a bounded
homomorphism with dense image maps finite rank elements into finite rank elements.
\end{proof}

We note that $\mathcal{R}_{hf}$-radical algebras are just hypofinite algebras
as well as $\mathcal{R}_{hc}$-radical algebras are hypocompact algebras. Now
we note the following useful consequence of the heredity of radicals
$\mathcal{R}_{hc}$ and $\mathcal{R}_{hf}$.

\begin{corollary}
\label{ihf}Every ideal of a hypocompact (respectively, hypofinite) normed
algebra is a hypocompact (respectively, hypofinite) normed algebra.
\end{corollary}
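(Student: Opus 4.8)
The plan is to deduce this immediately from the heredity of the topological radicals $\mathcal{R}_{hc}$ and $\mathcal{R}_{hf}$, together with the observation recorded just above the statement that the $\mathcal{R}_{hc}$-radical (respectively, $\mathcal{R}_{hf}$-radical) algebras are precisely the hypocompact (respectively, hypofinite) ones.

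First I would treat the hypocompact case. Suppose $A$ is hypocompact, so that $A=\mathcal{R}_{hc}(A)$. Let $I$ be an ideal of $A$, regarded as a normed algebra with the norm inherited from $A$. Since $\mathcal{R}_{hc}$ is a hereditary topological radical (as recalled in Section~\ref{toprad}, following \cite[Theorem 3.14]{GBWF}), we have
\[
\mathcal{R}_{hc}(I)=I\cap\mathcal{R}_{hc}(A)=I\cap A=I,
\]
so $I$ is $\mathcal{R}_{hc}$-radical, i.e.\ hypocompact. The hypofinite case is identical: replace $\mathcal{R}_{hc}$ by the radical $\mathcal{R}_{hf}$ of Theorem~\ref{hf}, use that it too is hereditary, and use that $\mathcal{R}_{hf}$-radical algebras are exactly the hypofinite ones, to conclude $\mathcal{R}_{hf}(I)=I\cap\mathcal{R}_{hf}(A)=I$.

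There is essentially no obstacle here; the single point to keep in mind is that heredity, as formulated in Section~\ref{toprad}, applies to an arbitrary (not necessarily closed) two-sided ideal $I$ of $A$ with its inherited norm, which is exactly the generality the statement asks for. (A direct argument via the transfinite chains of Proposition~\ref{trans}, intersecting a chain for $A$ with $I$, is also conceivable, but it would require verifying that an ideal of an algebra with a dense set of compact --- respectively, finite rank --- elements again has such a dense set, so the route through heredity is the economical one.)
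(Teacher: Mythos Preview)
Your proof is correct and matches the paper's own reasoning exactly: the corollary is stated as an immediate consequence of the heredity of $\mathcal{R}_{hc}$ and $\mathcal{R}_{hf}$ together with the identification of $\mathcal{R}_{hc}$- (respectively $\mathcal{R}_{hf}$-) radical algebras with hypocompact (respectively hypofinite) ones. No separate proof is given in the paper beyond this remark, so your write-up is essentially a spelled-out version of what the authors intend.
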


\section{ Multiplication operators on Banach bimodules\label{s4}}

\subsection{ Banach bimodules}

\subsubsection{Elementary operators\label{seo}}

Let $A,B$ be Banach algebras and $U$ a bimodule over $A,B$ (shortly
$(A,B)$-bimodule). Then in an obvious way $U$ can be considered as an
$(A^{1},B^{1})$-bimodule. We say that $U$ is \textit{ a normed bimodule} if it
is a normed space with a norm $\left\Vert \cdot\right\Vert _{U}$ and
\[
\left\Vert aub\right\Vert _{U}\leq\left\Vert a\right\Vert _{A}\left\Vert
u\right\Vert _{U}\left\Vert b\right\Vert _{B}%
\]
for every $a\in A^{1}$, $b\in B^{1}$ and $u\in U$.

Let $L_{a}$ and $R_{b}$ be operators on $U$ defined by $L_{a}x=ax$ and
$R_{b}x=xb$ for every $x\in U$. By $\mathcal{E\!\ell}_{A,B}(U)$ we denote the
algebra generated by all operators $L_{a},R_{b}$. Its elements are called
\textit{elementary operators on }$U$\textit{ with coefficients in }$A,B$. If
$A,B$ are unital then $\mathcal{E\!\ell}_{A,B}(U)$ coincides with the algebra
$\mathcal{E}_{A,B}(U)$ generated by all $L_{a}R_{b}$. In the general case
$\mathcal{E}_{A,B}(U)$ is an ideal of $\mathcal{E\!\ell}_{A,B}(U)$ which is an
ideal in $\mathcal{E}_{A^{1},B^{1}}(U)$, and the latter can be regarded as a
unitalization of $\mathcal{E\!\ell}_{A,B}(U)$. Note also that
\[
\mathcal{E\!\ell}_{A,B}(U)=\mathcal{E}_{A^{1},B}(U)+\mathcal{E}_{A,B^{1}}(U).
\]

Clearly operators in $\mathcal{E}_{A,B}(U)$ and $\mathcal{E\!\ell}_{A,B}(U)$
can be written in the form $T=\sum_{i=1}^{n}L_{a_{i}}R_{b_{i}}$ and,
respectively, $T=L_{a}+R_{b}+\sum_{i=1}^{n}L_{a_{i}}R_{b_{i}}$, where
$a,a_{i}\in A$, $b,b_{i}\in B_{i}$.

One may consider $U$ as a left $\left(  B^{\mathrm{op}}\right)  $-module,
where $B^{\mathrm{op}}$ is the algebra opposite to $B$. Then there is a
natural homomorphism $\psi=\psi_{U}$ from $A\otimes_{\gamma}B^{\mathrm{op}}$
into the algebra $\mathcal{B}(U)$ of all continuous operators on $U$ given by
\[
\psi:z=\sum_{i=0}^{n}a_{i}\otimes b_{i}\longmapsto\sum_{i=0}^{n}L_{a_{i}%
}R_{b_{i}}%
\]
for every $a_{i}\in A$ and $b_{i}\in B$. Then
\begin{equation}
\left\Vert \psi\left(  z\right)  u\right\Vert _{U}\leq\gamma\left(  z\right)
\left\Vert u\right\Vert _{U} \label{ihq}%
\end{equation}
for every $u\in U$, whence $\ker\psi$ is closed. Since the image of $\psi$
coincides with $\mathcal{E}_{A,B}(U)$, one may consider $\mathcal{E}%
_{A,B}\left(  U\right)  $ as a quotient of $A\otimes_{\gamma}B^{\mathrm{op}}$.
This induces the quotient norm $\left\Vert \cdot\right\Vert _{\mathcal{E}%
_{A,B}}$, or simply $\left\Vert \cdot\right\Vert _{\mathcal{E}}$, on
$\mathcal{E}_{A,B}\left(  U\right)  $ by
\begin{equation}
\left\Vert T\right\Vert _{\mathcal{E}}=\inf\left\{  \sum\left\Vert
a_{i}\right\Vert _{A}\left\Vert b_{i}\right\Vert _{B}:\sum L_{a_{i}}R_{b_{i}%
}=T\right\}  \label{eon}%
\end{equation}
for every $T\in\mathcal{E}_{A,B}\left(  U\right)  $. So $\mathcal{E}%
_{A,B}\left(  U\right)  $ is a normed algebra with respect to $\left\Vert
\cdot\right\Vert _{\mathcal{E}}$.

\begin{proposition}
If $U$ is a normed bimodule then $\left\Vert \cdot\right\Vert _{\mathcal{B}%
\left(  U\right)  }\leq\left\Vert \cdot\right\Vert _{\mathcal{E}}$ on
$\mathcal{E}_{A,B}\left(  U\right)  $, so $\mathcal{E}_{A,B}\left(  U\right)
$ is a normed subalgebra of $\mathcal{B}\left(  U\right)  $ with respect to
$\left\Vert \cdot\right\Vert _{\mathcal{E}_{A,B}}$.
\end{proposition}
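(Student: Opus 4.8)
The plan is to read off the inequality directly from (\ref{ihq}) together with the description (\ref{eon}) of $\Vert\cdot\Vert_{\mathcal{E}}$ as the quotient norm induced by $\psi$. First I would fix $T\in\mathcal{E}_{A,B}(U)$ and an arbitrary representation $T=\sum_{i=1}^{n}L_{a_{i}}R_{b_{i}}$ with $a_{i}\in A$, $b_{i}\in B$; equivalently, fix $z=\sum_{i}a_{i}\otimes b_{i}\in A\otimes_{\gamma}B^{\mathrm{op}}$ with $\psi(z)=T$. For every $u\in U$ we have $Tu=\psi(z)u$, so by (\ref{ihq}), $\Vert Tu\Vert_{U}\leq\gamma(z)\Vert u\Vert_{U}$, and hence $\Vert T\Vert_{\mathcal{B}(U)}\leq\gamma(z)$. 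Since the left-hand side does not depend on the chosen representative $z$, taking the infimum over all $z$ with $\psi(z)=T$ — which by (\ref{eon}) is exactly $\Vert T\Vert_{\mathcal{E}}$ — yields $\Vert\cdot\Vert_{\mathcal{B}(U)}\leq\Vert\cdot\Vert_{\mathcal{E}}$ on $\mathcal{E}_{A,B}(U)$.

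It then remains to note that $\mathcal{E}_{A,B}(U)$ is genuinely a \emph{normed subalgebra} of $\mathcal{B}(U)$ in the sense of Section \ref{s2}: it is a subalgebra of $\mathcal{B}(U)$ by construction, being generated by the bounded operators $L_{a}R_{b}$, and $\Vert\cdot\Vert_{\mathcal{E}}$ is an algebra norm because it is the quotient norm of the algebra norm $\gamma$ on $A\otimes_{\gamma}B^{\mathrm{op}}$ by the closed ideal $\ker\psi$ (the closedness of $\ker\psi$ being itself the observation following (\ref{ihq})). Combined with the norm inequality just established, this is precisely the definition of a normed subalgebra.

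I do not expect a genuine obstacle here: the statement is essentially a repackaging of (\ref{ihq}). The only points that call for a line of care are the identification of the quotient norm with the infimum in (\ref{eon}) — immediate, since $A\otimes B^{\mathrm{op}}$ already surjects onto $\mathcal{E}_{A,B}(U)$, so the infimum may be taken over finite tensors — and the submultiplicativity of $\Vert\cdot\Vert_{\mathcal{E}}$, which is the standard fact that a quotient of a normed algebra by a closed ideal is again a normed algebra.
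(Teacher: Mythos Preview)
Your proof is correct and follows essentially the same approach as the paper: both derive the inequality directly from (\ref{ihq}) and (\ref{eon}). The paper's proof is a one-liner citing those two references, while you have simply unpacked the details.
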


\begin{proof}
Indeed, if $T\in\mathcal{E}_{A,B}\left(  U\right)  $ then $\left\Vert
T\right\Vert _{\mathcal{B}\left(  U\right)  }\leq\left\Vert T\right\Vert
_{\mathcal{E}}$ by (\ref{ihq}) and (\ref{eon}).
\end{proof}

In a similar way one can consider $\mathcal{E}_{A^{1},B^{1}}(U)$ (and
therefore $\mathcal{E\!\ell}_{A,B}(U)$) as a normed subalgebra of
$\mathcal{B}\left(  U\right)  $. In the case of unital coefficient algebras
$A,B$ we don't distinguish $\mathcal{E\!\ell}_{A,B}(U)$ from $\mathcal{E}%
_{A,B}(U)$.

\subsubsection{Multiplication operators}

A normed bimodule is called \textit{Banach} if it is a Banach space. It is
clear that the completion $\widehat{U}$ of a normed bimodule $U$ is a Banach
bimodule and that one can identify $\mathcal{E}_{A,B}\left(  U\right)  $ and
$\mathcal{E}_{A,B}\left(  \widehat{U}\right)  $.

Let $U$ be a Banach $\left(  A,B\right)  $-bimodule. Let $\widehat{\mathcal{E}%
}_{A,B}\left(  U\right)  $ be the completion of $\mathcal{E}_{A,B}\left(
U\right)  $ in $\left\Vert \cdot\right\Vert _{\mathcal{E}}$. It is clear that
$\widehat{\mathcal{E}}_{A,B}\left(  U\right)  $ is an algebra of continuous
operators on $U$. The operators in $\widehat{\mathcal{E}}_{A,B}\left(
U\right)  $ are called \textit{multiplication operators} on $U$.

Again, if $U$ is a Banach bimodule then $\widehat{\mathcal{E}}_{A,B}\left(
U\right)  \subset\mathcal{B}\left(  U\right)  $ as usual, and
$\widehat{\mathcal{E}}_{A,B}\left(  U\right)  $ is a Banach subalgebra of
$\mathcal{B}\left(  U\right)  $.

\begin{proposition}
If $I$ and $J$ are flexible ideals of $A$ and $B$ respectively, then
$\mathcal{E}_{I,J}\left(  U\right)  $ is a flexible ideal of $\mathcal{E}%
_{A,B}\left(  U\right)  $, and $\widehat{\mathcal{E}}_{I,J}\left(  U\right)  $
with the norm $\left\Vert \cdot\right\Vert _{\mathcal{E}_{I,J}}$ is a Banach
ideal of $\widehat{\mathcal{E}}_{A,B}\left(  U\right)  $.
\end{proposition}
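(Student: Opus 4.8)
The plan is to endow $\mathcal{E}_{I,J}(U)$ with the quotient norm it inherits from $I\otimes_{\gamma}J^{\mathrm{op}}$ through $\psi$, namely
\[
\left\Vert x\right\Vert _{\mathcal{E}_{I,J}}=\inf\Bigl\{\sum\left\Vert a_{i}\right\Vert _{I}\left\Vert b_{i}\right\Vert _{J}:\sum L_{a_{i}}R_{b_{i}}=x,\ a_{i}\in I,\ b_{i}\in J\Bigr\},
\]
and to verify the two defining properties of a flexible norm directly. That $\mathcal{E}_{I,J}(U)$ is an ideal of $\mathcal{E}_{A,B}(U)$ is clear, since $I\otimes_{\gamma}J^{\mathrm{op}}$ is an ideal of $A\otimes_{\gamma}B^{\mathrm{op}}$ (a two-sided ideal of $B$ gives a two-sided ideal of $B^{\mathrm{op}}$, and one uses the observation in Section \ref{ss1} recalled before Proposition \ref{tenf}), and $\psi$ restricted to it is a contractive quotient map onto $\mathcal{E}_{I,J}(U)$; submultiplicativity of $\left\Vert \cdot\right\Vert _{\mathcal{E}_{I,J}}$ follows from the projective cross-norm property of $\gamma_{\left\Vert \cdot\right\Vert _{I},\left\Vert \cdot\right\Vert _{J}}$. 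The first flexibility property, $\left\Vert \cdot\right\Vert _{\mathcal{E}}\leq\left\Vert \cdot\right\Vert _{\mathcal{E}_{I,J}}$ on $\mathcal{E}_{I,J}(U)$, is immediate: for any representation $x=\sum L_{a_{i}}R_{b_{i}}$ with $a_{i}\in I$, $b_{i}\in J$ one has $\sum\left\Vert a_{i}\right\Vert _{A}\left\Vert b_{i}\right\Vert _{B}\leq\sum\left\Vert a_{i}\right\Vert _{I}\left\Vert b_{i}\right\Vert _{J}$ because $\left\Vert \cdot\right\Vert _{A}\leq\left\Vert \cdot\right\Vert _{I}$ on $I$ and $\left\Vert \cdot\right\Vert _{B}\leq\left\Vert \cdot\right\Vert _{J}$ on $J$, and taking infima gives the claim.

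The substantive step is the flexibility inequality $\left\Vert TxS\right\Vert _{\mathcal{E}_{I,J}}\leq\left\Vert T\right\Vert _{\mathcal{E}}\left\Vert x\right\Vert _{\mathcal{E}_{I,J}}\left\Vert S\right\Vert _{\mathcal{E}}$ for $x\in\mathcal{E}_{I,J}(U)$ and $T,S\in\mathcal{E}_{A,B}(U)^{1}$. Fixing representations $T=\sum_{k}L_{c_{k}}R_{d_{k}}$, $x=\sum_{i}L_{a_{i}}R_{b_{i}}$, $S=\sum_{l}L_{e_{l}}R_{f_{l}}$ with $c_{k},e_{l}\in A^{1}$, $d_{k},f_{l}\in B^{1}$, $a_{i}\in I$, $b_{i}\in J$, and using $L_{a}L_{a^{\prime}}=L_{aa^{\prime}}$, $R_{b}R_{b^{\prime}}=R_{b^{\prime}b}$ together with the commutation of each $L$ with each $R$, one gets $TxS=\sum_{k,i,l}L_{c_{k}a_{i}e_{l}}R_{f_{l}b_{i}d_{k}}$, where $c_{k}a_{i}e_{l}\in I$ and $f_{l}b_{i}d_{k}\in J$. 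Since $I$ is flexible with respect to $A$ and $J$ with respect to $B$,
\[
\left\Vert c_{k}a_{i}e_{l}\right\Vert _{I}\leq\left\Vert c_{k}\right\Vert _{A}\left\Vert a_{i}\right\Vert _{I}\left\Vert e_{l}\right\Vert _{A},\qquad\left\Vert f_{l}b_{i}d_{k}\right\Vert _{J}\leq\left\Vert d_{k}\right\Vert _{B}\left\Vert b_{i}\right\Vert _{J}\left\Vert f_{l}\right\Vert _{B},
\]
so that $\left\Vert TxS\right\Vert _{\mathcal{E}_{I,J}}\leq\sum_{k,i,l}\left\Vert c_{k}a_{i}e_{l}\right\Vert _{I}\left\Vert f_{l}b_{i}d_{k}\right\Vert _{J}\leq\bigl(\sum_{k}\left\Vert c_{k}\right\Vert _{A}\left\Vert d_{k}\right\Vert _{B}\bigr)\bigl(\sum_{i}\left\Vert a_{i}\right\Vert _{I}\left\Vert b_{i}\right\Vert _{J}\bigr)\bigl(\sum_{l}\left\Vert e_{l}\right\Vert _{A}\left\Vert f_{l}\right\Vert _{B}\bigr)$. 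Taking the infimum over the three families of representations (recalling that $\mathcal{E}_{A,B}(U)^{1}\subset\mathcal{E}_{A^{1},B^{1}}(U)$ carries the quotient norm from $A^{1}\otimes_{\gamma}(B^{1})^{\mathrm{op}}$) yields the desired inequality. This shows $\mathcal{E}_{I,J}(U)$ is a flexible ideal of $\mathcal{E}_{A,B}(U)$.

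For the Banach statement I would invoke Lemma \ref{flex1}(ii): since $\mathcal{E}_{I,J}(U)$ is a flexible ideal of the normed algebra $\mathcal{E}_{A,B}(U)$, the image of its completion $\widehat{\mathcal{E}}_{I,J}(U)$ inside the completion $\widehat{\mathcal{E}}_{A,B}(U)$ is a Banach ideal of $\widehat{\mathcal{E}}_{A,B}(U)$ whose norm — the continuous extension of $\left\Vert \cdot\right\Vert _{\mathcal{E}_{I,J}}$ — is flexible, which is exactly the assertion. The only genuinely delicate points are purely clerical: keeping the order of multiplications straight (note the reversal $R_{b}R_{b^{\prime}}=R_{b^{\prime}b}$) and matching up the unitalizations, for which the definition of flexibility of $I$ and $J$ with coefficients in $A^{1}$ and $B^{1}$ is precisely what is needed; there is no analytic obstacle.
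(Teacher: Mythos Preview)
Your proof is correct. The paper actually states this proposition without proof, presumably regarding it as routine given the machinery already in place (Proposition~\ref{tenf}(ii) shows that $I\otimes_{\gamma}J^{\mathrm{op}}$ is a flexible ideal of $A\otimes_{\gamma}B^{\mathrm{op}}$, and the quotient by $\ker\psi$ then inherits flexibility; Lemma~\ref{flex1}(ii) handles the completion). Your direct verification of the flexibility inequality at the level of representations is exactly the computation underlying Proposition~\ref{tenf}(ii), just carried out after applying $\psi$ rather than before, so there is no substantive difference in approach.
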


In what follows we often denote the coefficient algebras by $A_{1},A_{2}$
instead of $A,B$.

\begin{theorem}
\label{mult1-} Let $U$ be a Banach bimodule over normed algebras $A_{1},A_{2}$.

\begin{itemize}
\item[$\mathrm{(i)}$] If $A_{1}$ and $A_{2}$ are hypocompact then the algebra
$\widehat{\mathcal{E}}_{A_{1},A_{2}}(U)$ is hypocompact.

\item[$\mathrm{(ii)}$] If at least one of the algebras $A_{i}$ is tensor
radical then $\widehat{\mathcal{E}}_{A_{1},A_{2}}(U)$ is tensor radical.

\item[$\mathrm{(iii)}$] If both $A_{i}$ are commutative modulo the tensor
radical then $\widehat{\mathcal{E}}_{A_{1},A_{2}}(U)$ is commutative modulo
the tensor radical.
\end{itemize}
\end{theorem}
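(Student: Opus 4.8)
The three statements have a common mechanism, so the plan is to reduce all of them to the corresponding tensor-product stability results already established. The starting point is the description of $\widehat{\mathcal{E}}_{A_1,A_2}(U)$ as a quotient of a projective tensor product. By Section \ref{seo}, the homomorphism $\psi=\psi_U$ has image $\mathcal{E}_{A_1,A_2}(U)$ and closed kernel, and by \eqref{ihq} and \eqref{eon} the norm $\left\Vert\cdot\right\Vert_{\mathcal{E}}$ is exactly the quotient norm induced by $\psi$ from $\gamma$; that is, $\psi$ is a metric surjection of $A_1\otimes_{\gamma}A_2^{\mathrm{op}}$ onto $(\mathcal{E}_{A_1,A_2}(U),\left\Vert\cdot\right\Vert_{\mathcal{E}})$. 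Passing to completions exactly as in the proof of Proposition \ref{quo}, $\psi$ extends to a metric surjection
\[
\widehat{\psi}:A_1\widehat{\otimes}A_2^{\mathrm{op}}\longrightarrow\widehat{\mathcal{E}}_{A_1,A_2}(U),
\]
which is in particular an open bounded epimorphism; so $\widehat{\mathcal{E}}_{A_1,A_2}(U)$ is (topologically isomorphic to) a quotient of $A_1\widehat{\otimes}A_2^{\mathrm{op}}$ by a closed ideal.

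The second preparatory step is to note that $A_2^{\mathrm{op}}$ inherits from $A_2$ each of the three properties in question. Since $W_a=L_aR_a=R_aL_a$ is literally the same operator whether computed in $A_2$ or in $A_2^{\mathrm{op}}$, an element is compact in $A_2$ iff it is compact in $A_2^{\mathrm{op}}$; as closed ideals coincide and $A_2^{\mathrm{op}}/I=(A_2/I)^{\mathrm{op}}$, hypocompactness of $A_2$ yields hypocompactness of $A_2^{\mathrm{op}}$. Tensor radicality of $A_2$ gives tensor radicality of $A_2^{\mathrm{op}}$, as was observed above. Finally, a summable family $M=\{a_n\}$ has the same quantities $\eta(M^n)$ in $A_2$ and in $A_2^{\mathrm{op}}$ (the products are merely reindexed), hence the same tensor spectral radius, so $\mathcal{R}_t(A_2^{\mathrm{op}})=\mathcal{R}_t(A_2)$; since $[a,b]_{A_2^{\mathrm{op}}}=-[a,b]_{A_2}$, the algebra $A_2$ is commutative modulo the tensor radical iff $A_2^{\mathrm{op}}$ is.

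With these in hand, each part is a one-line deduction. For (i): $A_1$ and $A_2^{\mathrm{op}}$ are hypocompact, so $A_1\widehat{\otimes}A_2^{\mathrm{op}}$ is hypocompact by Theorem \ref{tenz}, whence its quotient $\widehat{\mathcal{E}}_{A_1,A_2}(U)$ is hypocompact by Corollary \ref{hypquot}. For (ii): if $A_1$ (resp. $A_2$) is tensor radical then $A_1$ (resp. $A_2^{\mathrm{op}}$) is a tensor radical factor, so $A_1\widehat{\otimes}A_2^{\mathrm{op}}$ is tensor radical by Proposition \ref{tensum}(i) (in the second case via $A_1\widehat{\otimes}A_2^{\mathrm{op}}\cong A_2^{\mathrm{op}}\widehat{\otimes}A_1$); since $\widehat{\psi}$ is a surjective open bounded epimorphism and $\mathcal{R}_t(A_1\widehat{\otimes}A_2^{\mathrm{op}})$ equals the whole algebra, Theorem \ref{hom} gives $\widehat{\mathcal{E}}_{A_1,A_2}(U)=\widehat{\psi}(\mathcal{R}_t(A_1\widehat{\otimes}A_2^{\mathrm{op}}))\subset\mathcal{R}_t(\widehat{\mathcal{E}}_{A_1,A_2}(U))$. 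For (iii): $A_1$ and $A_2^{\mathrm{op}}$ are commutative modulo the tensor radical, so $A_1\widehat{\otimes}A_2^{\mathrm{op}}$ is too by Theorem \ref{cmtr}; for $a,b\in\widehat{\mathcal{E}}_{A_1,A_2}(U)$ pick preimages $a',b'$ under $\widehat{\psi}$ and use $[a',b']\in\mathcal{R}_t(A_1\widehat{\otimes}A_2^{\mathrm{op}})$ together with Theorem \ref{hom} to get $[a,b]=\widehat{\psi}([a',b'])\in\mathcal{R}_t(\widehat{\mathcal{E}}_{A_1,A_2}(U))$. The only genuinely delicate point is the first paragraph: one must identify $\widehat{\mathcal{E}}_{A_1,A_2}(U)$ with a \emph{quotient} of $A_1\widehat{\otimes}A_2^{\mathrm{op}}$ (not with the tensor product itself) and verify that the metric-surjection property survives completion; once that is secured, everything reduces to the stated tensor-product theorems and the behaviour of $\mathcal{R}_t$ and of hypocompactness under open bounded epimorphisms.
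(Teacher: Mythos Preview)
Your proof is correct and follows essentially the same approach as the paper: identify $\widehat{\mathcal{E}}_{A_1,A_2}(U)$ as a quotient of $A_1\widehat{\otimes}A_2^{\mathrm{op}}$, then invoke Theorem~\ref{tenz} with Corollary~\ref{hypquot} for (i), tensor radicality of the tensor product (Proposition~\ref{tensum}) plus Theorem~\ref{hom} for (ii), and Theorem~\ref{cmtr} plus Theorem~\ref{hom} for (iii). You are more explicit than the paper about why the relevant properties pass to the opposite algebra and about the metric-surjection structure of $\widehat{\psi}$, but the substance is the same.
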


\begin{proof}
(i) Indeed, it is easy to see that $\widehat{\mathcal{E}}_{A_{1},A_{2}}(U)$ is
isometric to a quotient of $A_{1}\widehat{\otimes}A_{2}^{\mathrm{op}}$. So it
is hypocompact by Theorem \ref{tenz} and Corollary \ref{hypquot}.
(ii) Since $\widehat{\mathcal{E}}_{A_{1},A_{2}}(U)$ is isometric to the
quotient of $A_{1}\widehat{\otimes}A_{2}^{\mathrm{op}}$ by the kernel of the
natural map from $A_{1}\widehat{\otimes}A_{2}^{\mathrm{op}}$ into
$\mathcal{B}(U)$, the statement follows from the fact that a quotient of a
tensor radical normed algebra is tensor radical (which follows easily from
Theorem \ref{hom}).
(iii) Arguing as in (ii), we have only to prove that if a normed algebra $B$
is commutative modulo the tensor radical then so is the quotient of $B$ by a
closed ideal $J$.
Let $q_{J}:B\longrightarrow B/J$ be the standard epimorphism. Then
\[
q_{J}(\mathcal{R}_{t}(B))\subset\mathcal{R}_{t}(B/J)
\]
by Theorem \ref{hom}. Assuming that $B/\mathcal{R}_{t}(B)$ is commutative, we
obtain that
\[
\lbrack a/J,b/J]=q_{J}([a,b])\in\mathcal{R}_{t}(B/J)
\]
for all $a,b\in B$. This means that $B/J$ is commutative modulo the tensor radical.
\end{proof}


\begin{corollary}
\label{sp3}Let $U$ be a Banach bimodule over unital normed algebras
$A_{1},A_{2}$. If $A_{1}$ and $A_{2}$ are hypocompact then $\sigma
_{\mathcal{B}\left(  U\right)  }\left(  T\right)  $ is (finite or) countable
and
\[
\sigma_{\mathcal{B}\left(  U\right)  }\left(  T\right)  =\sigma
_{\widehat{\mathcal{E}}_{A_{1},A_{2}}(U)}\left(  T\right)
\]
for every $T\in\widehat{\mathcal{E}}_{A_{1},A_{2}}(U)$.
\end{corollary}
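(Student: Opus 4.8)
The plan is to realize $\widehat{\mathcal{E}}_{A_{1},A_{2}}(U)$ as a hypocompact unital Banach subalgebra of $\mathcal{B}(U)$ whose unit coincides with that of $\mathcal{B}(U)$, and then to combine Theorem \ref{scat} with Proposition \ref{sp}(ii).

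First I would check the hypotheses of Proposition \ref{sp}. By definition $\widehat{\mathcal{E}}_{A_{1},A_{2}}(U)$ is the completion of $\mathcal{E}_{A_{1},A_{2}}(U)$ with respect to $\left\Vert\cdot\right\Vert_{\mathcal{E}}$, hence a Banach algebra, and as recorded above it is a Banach subalgebra of $\mathcal{B}(U)$ (the inequality $\left\Vert\cdot\right\Vert_{\mathcal{B}(U)}\leq\left\Vert\cdot\right\Vert_{\mathcal{E}}$ passing to the completion). Since $A_{1}$ and $A_{2}$ are unital, the operator $L_{1}R_{1}$ is the identity operator on $U$; it belongs to $\mathcal{E}_{A_{1},A_{2}}(U)$ and therefore to $\widehat{\mathcal{E}}_{A_{1},A_{2}}(U)$. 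Thus the units of $\widehat{\mathcal{E}}_{A_{1},A_{2}}(U)$ and of $\mathcal{B}(U)$ coincide, so Proposition \ref{sp} applies to the pair $\widehat{\mathcal{E}}_{A_{1},A_{2}}(U)\subset\mathcal{B}(U)$.

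Next I would invoke Theorem \ref{mult1-}(i): as $A_{1}$ and $A_{2}$ are hypocompact, $\widehat{\mathcal{E}}_{A_{1},A_{2}}(U)$ is a hypocompact Banach algebra. Theorem \ref{scat} then shows that $\sigma_{\widehat{\mathcal{E}}_{A_{1},A_{2}}(U)}(T)$ is finite or countable for every $T\in\widehat{\mathcal{E}}_{A_{1},A_{2}}(U)$. Applying Proposition \ref{sp}(ii) to the pair above, the countability of $\sigma_{\widehat{\mathcal{E}}_{A_{1},A_{2}}(U)}(T)$ forces $\sigma_{\mathcal{B}(U)}(T)=\sigma_{\widehat{\mathcal{E}}_{A_{1},A_{2}}(U)}(T)$; in particular $\sigma_{\mathcal{B}(U)}(T)$ is itself finite or countable, which is the assertion.

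There is no genuinely hard step. The only point worth an explicit remark is that the identity operator of $U$ lies in $\widehat{\mathcal{E}}_{A_{1},A_{2}}(U)$, so that Proposition \ref{sp} may be used with coinciding units; this is immediate from the unitality of $A_{1}$ and $A_{2}$, and is precisely why the hypothesis of unital coefficient algebras enters.
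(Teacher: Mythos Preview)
Your proof is correct and follows essentially the same route as the paper's: both recognize $\widehat{\mathcal{E}}_{A_{1},A_{2}}(U)$ as a unital Banach subalgebra of $\mathcal{B}(U)$ (with the same unit), invoke hypocompactness via Theorem~\ref{mult1-}(i), apply Theorem~\ref{scat} for countability of the spectrum, and then use Proposition~\ref{sp}(ii) to obtain the equality of spectra. The paper's version is simply more terse, leaving the appeal to Theorem~\ref{mult1-}(i) and the verification about coinciding units implicit.
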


\begin{proof}
Indeed, $\widehat{\mathcal{E}}_{A_{1},A_{2}}(U)$ is a unital Banach subalgebra
of $\mathcal{B}\left(  U\right)  $, and $\sigma_{\widehat{\mathcal{E}}%
_{A_{1},A_{2}}(U)}\left(  T\right)  $ is countable by Theorem \ref{scat}. So
$\sigma_{\mathcal{B}\left(  U\right)  }\left(  T\right)  =\sigma
_{\widehat{\mathcal{E}}_{A_{1},A_{2}}(U)}\left(  T\right)  $ by Proposition
\ref{sp}.
\end{proof}

Now we consider the sum $\widehat{\mathcal{E}}_{A_{1},I_{2}}%
(U)+\widehat{\mathcal{E}}_{I_{1},A_{2}}(U)$ of Banach subalgebras of
$\mathcal{B}(U)$ for closed ideals $I_{i}$ of $A_{i}$.

\begin{corollary}
\label{ideals1}Let $I_{i}$ be a closed ideal of normed algebra $A_{i}$ for
$i=1,2$. If $I_{1}$ and $I_{2}$ are tensor radical then the algebra
$\widehat{\mathcal{E}}_{A_{1},I_{2}}(U)+\widehat{\mathcal{E}}_{I_{1},A_{2}%
}(U)$ is tensor radical.
\end{corollary}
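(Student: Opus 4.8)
The plan is to recognize the two summands as flexible ideals of $\widehat{\mathcal{E}}_{A_{1},A_{2}}(U)$ that are individually tensor radical, and then invoke Corollary \ref{tr9} on the flexible sum of tensor radical ideals. First I would regard $U$ as a Banach bimodule over the pairs $(I_{1},A_{2})$ and $(A_{1},I_{2})$ by restricting the module actions; since a closed ideal $I_{j}$ of $A_{j}$ carries the inherited norm, these restrictions are again Banach bimodules over normed algebras, so $\widehat{\mathcal{E}}_{I_{1},A_{2}}(U)$ and $\widehat{\mathcal{E}}_{A_{1},I_{2}}(U)$ are meaningful. Because $I_{1}$ is tensor radical, Theorem \ref{mult1-}(ii) applied to the coefficient algebras $I_{1}$ and $A_{2}$ shows that $\widehat{\mathcal{E}}_{I_{1},A_{2}}(U)$ is tensor radical; symmetrically, because $I_{2}$ is tensor radical, $\widehat{\mathcal{E}}_{A_{1},I_{2}}(U)$ is tensor radical. (Alternatively one could argue directly: $\widehat{\mathcal{E}}_{I_{1},A_{2}}(U)$ is a quotient of the tensor radical Banach algebra $I_{1}\widehat{\otimes}A_{2}^{\mathrm{op}}$, using Proposition \ref{tensum}(i) and the fact, contained in Theorem \ref{hom}, that quotients of tensor radical algebras are tensor radical.)

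Next I would identify $\widehat{\mathcal{E}}_{I_{1},A_{2}}(U)$ and $\widehat{\mathcal{E}}_{A_{1},I_{2}}(U)$ as flexible ideals of $\widehat{\mathcal{E}}_{A_{1},A_{2}}(U)$. Since $I_{1}$ (resp. $I_{2}$) is a closed, hence flexible, ideal of $A_{1}$ (resp. $A_{2}$), and $A_{2}$ (resp. $A_{1}$) is a flexible ideal of itself, the Proposition immediately preceding Theorem \ref{mult1-} gives that $\mathcal{E}_{I_{1},A_{2}}(U)$ and $\mathcal{E}_{A_{1},I_{2}}(U)$ are flexible ideals of $\mathcal{E}_{A_{1},A_{2}}(U)$; passing to completions and invoking Lemma \ref{flex1}(ii) makes $\widehat{\mathcal{E}}_{I_{1},A_{2}}(U)$ and $\widehat{\mathcal{E}}_{A_{1},I_{2}}(U)$ flexible ideals of $\widehat{\mathcal{E}}_{A_{1},A_{2}}(U)$, carrying the norms $\left\Vert\cdot\right\Vert_{\mathcal{E}_{I_{1},A_{2}}}$ and $\left\Vert\cdot\right\Vert_{\mathcal{E}_{A_{1},I_{2}}}$ of the previous step. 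With both ingredients in hand, Corollary \ref{tr9} applied to the normed algebra $\widehat{\mathcal{E}}_{A_{1},A_{2}}(U)$ and its flexible ideals $\widehat{\mathcal{E}}_{I_{1},A_{2}}(U)$, $\widehat{\mathcal{E}}_{A_{1},I_{2}}(U)$ yields at once that their flexible sum, which is the algebra $\widehat{\mathcal{E}}_{A_{1},I_{2}}(U)+\widehat{\mathcal{E}}_{I_{1},A_{2}}(U)$, is tensor radical.

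The only delicate point, and the one I would check most carefully, is the bookkeeping of norms: that the ``intrinsic'' norm on $\widehat{\mathcal{E}}_{I_{1},A_{2}}(U)$ used when applying Theorem \ref{mult1-}(ii) coincides with (or at least is equivalent to, which already suffices since $\rho_{t}$ is unchanged under equivalent norms by the Proposition following Theorem \ref{tsr}) the flexible-ideal norm it inherits inside $\widehat{\mathcal{E}}_{A_{1},A_{2}}(U)$. Unwinding the definitions through the homomorphism $\psi$ of Section \ref{seo}, both norms are the infimum of $\sum\left\Vert a_{i}\right\Vert_{A_{1}}\left\Vert b_{i}\right\Vert_{A_{2}}$ over representations $T=\sum L_{a_{i}}R_{b_{i}}$ with $a_{i}\in I_{1}$, $b_{i}\in A_{2}$, so they agree; thus no genuine obstruction remains and the argument is just an assembly of the preceding results.
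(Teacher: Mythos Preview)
Your proposal is correct and follows essentially the same route as the paper: show each summand is tensor radical via Theorem~\ref{mult1-}(ii), recognize both as flexible ideals of $\widehat{\mathcal{E}}_{A_{1},A_{2}}(U)$, and conclude by Corollary~\ref{tr9}. The paper's proof additionally records the intermediate inclusion in $\mathcal{R}_{t}(\widehat{\mathcal{E}}_{A_{1},A_{2}}(U))$, but this is already built into the proof of Corollary~\ref{tr9}, so your more direct citation is entirely equivalent.
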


\begin{proof}
Since $\widehat{\mathcal{E}}_{I_{1},A_{2}}(U)$ and $\widehat{\mathcal{E}%
}_{A_{1},I_{2}}(U)$ are ideals of $\widehat{\mathcal{E}}_{A_{1},A_{2}}(U)$ and
are tensor radical by Theorem \ref{mult1-}, they are contained in
$\mathcal{R}_{t}(\widehat{\mathcal{E}}_{A_{1},A_{2}}(U))$. Since
$\widehat{\mathcal{E}}_{A_{1},I_{2}}(U)+\widehat{\mathcal{E}}_{I_{1},A_{2}%
}(U)$ is a flexible ideal of $\widehat{\mathcal{E}}_{A_{1},A_{2}}(U)$, it is
also tensor radical by Proposition \ref{tr9}.
\end{proof}

Let us consider a more general situation.

\begin{theorem}
\label{quot2} Let $J_{i}\subset I_{i}$ be ideals of $A_{i}$, $i=1,2$. Suppose
that the algebras $\overline{I_{i}}/\overline{J_{i}}$ are tensor radical.
Setting, for brevity, $\widehat{\mathcal{E}}_{I}=\widehat{\mathcal{E}}%
_{A_{1},I_{2}}(U)+\widehat{\mathcal{E}}_{I_{1},A_{2}}(U)$ and $\mathcal{E}%
_{J}={\mathcal{E}}_{A_{1},J_{2}}(U)+{\mathcal{E}}_{J_{1},A_{2}}(U)$ we have
that $\overline{\mathcal{E}_{J}}$ is an ideal of $\widehat{\mathcal{E}}_{I}$
and the algebra $\widehat{\mathcal{E}}_{I}/\overline{\mathcal{E}_{J}}$ is
tensor radical.
\end{theorem}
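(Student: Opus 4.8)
The plan is to reduce the statement to the corresponding fact about the projective tensor product $A:=A_{1}\widehat{\otimes}A_{2}^{\mathrm{op}}$ and the canonical contractive homomorphism $\psi:A\rightarrow\mathcal{B}(U)$ with image $\widehat{\mathcal{E}}_{A_{1},A_{2}}(U)$, exactly as in the proof of Theorem \ref{mult1-}. First I would make two harmless reductions: replacing each $I_{i}$ and $J_{i}$ by its closure in $A_{i}$ changes neither $\widehat{\mathcal{E}}_{I}$ nor $\overline{\mathcal{E}_{J}}$ (the algebraic ideals remain dense in the relevant norms), while the hypothesis already refers to the closures; so I may assume all $I_{i},J_{i}$ are closed. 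Also $\mathcal{E}_{J}$ is an ideal of $\widehat{\mathcal{E}}_{I}$, since $J_{1}\otimes A_{2}^{\mathrm{op}}$ and $A_{1}\otimes J_{2}$ are ideals inside the subalgebra of $A$ generating $\widehat{\mathcal{E}}_{I}$, and hence $\overline{\mathcal{E}_{J}}$ is a closed ideal of $\widehat{\mathcal{E}}_{I}$.

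Next I would carry out the tensor-product computation. Inside $A$ put $\widetilde{I}:=I_{1}\widehat{\otimes}^{(\cdot)}A_{2}^{\mathrm{op}}+A_{1}\widehat{\otimes}^{(\cdot)}I_{2}$ and let $\widetilde{J}$ be the algebraic ideal $J_{1}\otimes A_{2}^{\mathrm{op}}+A_{1}\otimes J_{2}$. By the Corollary following Lemma \ref{flex1} together with Proposition \ref{fl2}, $\widetilde{I}$ is a flexible sum of Banach ideals of $A$, so Proposition \ref{quo}(ii) applies and yields a topological isomorphism
\[
\widetilde{I}/\overline{\widetilde{J}}\;\cong\;Q:=(I_{1}/\overline{J_{1}})\widehat{\otimes}^{(\cdot)}(A_{2}^{\mathrm{op}}/\overline{J_{2}})+(A_{1}/\overline{J_{1}})\widehat{\otimes}^{(\cdot)}(I_{2}/\overline{J_{2}}),
\]
where $\overline{\widetilde{J}}$ is the closure of $\widetilde{J}$ in $\widetilde{I}$ and $Q$ carries the flexible sum norm inside $(A_{1}/\overline{J_{1}})\widehat{\otimes}(A_{2}^{\mathrm{op}}/\overline{J_{2}})$. (Note that Lemma \ref{project} would give only that this quotient is radical; the sharper structural description in Proposition \ref{quo}(ii) is what makes tensor radicality accessible.)

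Then I would show $Q$ is tensor radical. Each summand of $Q$ is the image, under the natural contractive homomorphism, of a projective tensor product one of whose factors is tensor radical: for the first summand the factor $\overline{I_{1}}/\overline{J_{1}}$, for the second the factor $\overline{I_{2}}/\overline{J_{2}}$ — more precisely its opposite algebra, which is again tensor radical — is tensor radical by hypothesis. By Proposition \ref{tensum}(i) and the symmetry of $\widehat{\otimes}$, these tensor products are tensor radical, and by Lemma \ref{above} (or the quotient argument already used in Theorem \ref{mult1-}(ii)) their images, being quotients, are tensor radical too. These images are flexible Banach ideals of $(A_{1}/\overline{J_{1}})\widehat{\otimes}(A_{2}^{\mathrm{op}}/\overline{J_{2}})$, so their flexible sum $Q$ is tensor radical by Corollary \ref{tr9}; hence $\widetilde{I}/\overline{\widetilde{J}}$ is tensor radical.

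Finally I would transfer the result down to $\mathcal{B}(U)$. The homomorphism $\psi$ restricts to a bounded homomorphism $\Phi:\widetilde{I}\rightarrow\widehat{\mathcal{E}}_{I}$ which is onto — it maps $I_{1}\widehat{\otimes}^{(\cdot)}A_{2}^{\mathrm{op}}$ onto $\widehat{\mathcal{E}}_{I_{1},A_{2}}(U)$ and $A_{1}\widehat{\otimes}^{(\cdot)}I_{2}$ onto $\widehat{\mathcal{E}}_{A_{1},I_{2}}(U)$, each as a quotient map — hence is open; moreover $\Phi(\widetilde{J})=\mathcal{E}_{J}$, so $\Phi(\overline{\widetilde{J}})\subset\overline{\mathcal{E}_{J}}$. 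Therefore $\Phi$ induces an open bounded epimorphism $\widetilde{I}/\overline{\widetilde{J}}\rightarrow\widehat{\mathcal{E}}_{I}/\overline{\mathcal{E}_{J}}$, and since a quotient (more generally, an image under an open bounded epimorphism) of a tensor radical algebra is tensor radical, by Theorem \ref{hom}, it follows that $\widehat{\mathcal{E}}_{I}/\overline{\mathcal{E}_{J}}$ is tensor radical. The main obstacle is not any single deep step but the bookkeeping in this last paragraph: verifying that $\Phi$ respects the flexible-sum norms well enough to be open and that the induced map on quotients is well defined; all of it is routine given Proposition \ref{quo} and the earlier facts on flexible ideals, which is presumably why the theorem can be stated so compactly.
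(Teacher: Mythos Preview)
Your proof is correct and follows essentially the same route as the paper's: reduce to closed ideals, use Proposition~\ref{quo}(ii) to identify the tensor-product quotient $\widetilde{I}/\overline{\widetilde{J}}$ with the flexible sum $Q$, show $Q$ is tensor radical via Proposition~\ref{tensum}(i) and Corollary~\ref{tr9}, and then pass to $\widehat{\mathcal{E}}_{I}/\overline{\mathcal{E}_{J}}$ by an open bounded epimorphism and Theorem~\ref{hom}. The only cosmetic differences are that you carry the $A_{2}^{\mathrm{op}}$ explicitly and argue openness of $\Phi$ directly on the summands, whereas the paper invokes the Banach theorem for the induced map on quotients; neither changes the substance.
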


\begin{proof}
One can clearly assume that the ideals $I_{i}$ and $J_{i}$ are closed in
$A_{i}$ for $i=1,2$.
Consider the Banach ideal $K_{1}=$ $\left(  I_{1}/J_{1}\right)  \widehat
{\otimes}^{{}\left(  \cdot\right)  }\left(  A_{2}/J_{2}\right)  $ in the
Banach algebra $B=\left(  A_{1}/J_{1}\right)  \widehat{\otimes}_{\gamma
}\left(  A_{2}/J_{2}\right)  $. As $K_{1}$ is topologically isomorphic to a
quotient of $\left(  I_{1}/J_{1}\right)  \widehat{\otimes}\left(  A_{2}%
/J_{2}\right)  $ then it is tensor radical. Similarly, the Banach ideal
$K_{2}=\left(  A_{1}/J_{1}\right)  \widehat{\otimes}^{{}\left(  \cdot\right)
}\left(  I_{2}/J_{2}\right)  $ in $B$ is tensor radical. Then their flexible
sum $K_{1}+K_{2}$ in $B$ is tensor radical by Proposition \ref{tr9}.
Hence the quotient $(I_{1}\widehat{\otimes}^{{}\left(  \cdot\right)  }%
A_{2}+A_{1}\widehat{\otimes}^{{}\left(  \cdot\right)  }I_{2})/\overline
{J_{1}\otimes A_{2}+A_{1}\otimes J_{2}}$ is tensor radical because it is
topologically isomorphic to $K_{1}+K_{2}$ by Proposition \ref{quo}.
Consider now the natural epimorphism $\psi:I_{1}\widehat{\otimes}^{{}\left(
\cdot\right)  }A_{2}+A_{1}\widehat{\otimes}^{{}\left(  \cdot\right)  }%
I_{2}\longrightarrow\widehat{\mathcal{E}}_{I}/\overline{\mathcal{E}_{J}}$. It
is clear that $J_{1}\otimes A_{2}+A_{1}\otimes J_{2}\subset\ker\psi$. So there
is a bounded homomorphism from $(I_{1}\widehat{\otimes}^{{}\left(
\cdot\right)  }A_{2}+A_{1}\widehat{\otimes}^{{}\left(  \cdot\right)  }%
I_{2})/\overline{J_{1}\otimes A_{2}+A_{1}\otimes J_{2}}$ onto $\widehat
{\mathcal{E}}_{I}/\overline{\mathcal{E}_{J}}$. This epimorphism is open by the
Banach Theorem. Therefore $\widehat{\mathcal{E}}_{I}/\overline{\mathcal{E}%
_{J}}$ is tensor radical by Theorem \ref{hom}.
\end{proof}

The proved result is important for applications in Section \ref{s6}. Note that
Corollary \ref{ideals1} can be obtained from Theorem \ref{quot2} if one takes
$J_{1}=J_{2}=0$.

In the following result we preserve notation of Theorem \ref{quot2}.

\begin{corollary}
\label{quasrel} Let $I$ and $J$ be as in Theorem $\ref{quot2}$. Then
$\widehat{\mathcal{E}}_{I}\subset Q_{\mathcal{E}_{J}}(\widehat{\mathcal{E}%
}_{A_{1},A_{2}}(U))$. Thus if $T\in\widehat{\mathcal{E}}_{I}$ then, for every
$\varepsilon>0$, there are $m\in\mathbb{N}$ and elementary operators $S_{n}%
\in\mathcal{E}_{J}$ on $U$ such that $\left\Vert T^{n}-S_{n}\right\Vert
_{\mathcal{E}}<\varepsilon^{n}$ for every $n>m$.
\end{corollary}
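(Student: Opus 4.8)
The plan is to read the statement off Theorem \ref{quot2} using the elementary description of $Q_{J}(A)$ from Proposition \ref{predv1}. Write $C$ for the Banach algebra $\widehat{\mathcal{E}}_{I}=\widehat{\mathcal{E}}_{A_{1},I_{2}}(U)+\widehat{\mathcal{E}}_{I_{1},A_{2}}(U)$ taken with its flexible sum norm $\left\Vert \cdot\right\Vert _{\mathcal{E}_{I}}$; recall that $C$ is a Banach ideal of $\widehat{\mathcal{E}}_{A_{1},A_{2}}(U)$, so that $\left\Vert \cdot\right\Vert _{\mathcal{E}}\le\left\Vert \cdot\right\Vert _{\mathcal{E}_{I}}$ on $C$, and that $\mathcal{E}_{J}\subset C$ since $J_{i}\subset I_{i}$. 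By Theorem \ref{quot2}, $\overline{\mathcal{E}_{J}}$ (the closure of $\mathcal{E}_{J}$ in $C$) is a closed ideal of $C$ and $C/\overline{\mathcal{E}_{J}}$ is tensor radical, hence radical, its projective tensor product with $\mathbb{C}$ being itself.

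First I would apply Proposition \ref{predv1}(ii) with $A=B=C$ and ideal $\mathcal{E}_{J}$: since $C$ is (trivially) a closed subalgebra of itself, $\mathcal{E}_{J}\subset C$, and $C/\overline{\mathcal{E}_{J}}$ is radical, we get $C\subset Q_{\mathcal{E}_{J}}(C)$, the set being computed with respect to $\left\Vert \cdot\right\Vert _{\mathcal{E}_{I}}$. Now fix $T\in\widehat{\mathcal{E}}_{I}$ and $\varepsilon>0$. Because $\mathrm{dist}_{C}(T^{n},\mathcal{E}_{J})=\mathrm{dist}_{C}(T^{n},\overline{\mathcal{E}_{J}})$ and $\overline{\mathcal{E}_{J}}$ is a closed ideal of $C$, Proposition \ref{predv1}(iii) supplies $m\in\mathbb{N}$ with $\mathrm{dist}_{C}(T^{n},\mathcal{E}_{J})<\varepsilon^{n}$ for all $n>m$ (equivalently: the image of $T$ in the radical algebra $C/\overline{\mathcal{E}_{J}}$ is quasinilpotent, and Gelfand's formula for the algebra norm $\left\Vert \cdot\right\Vert _{\mathcal{E}_{I}}$ gives the same bound). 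Choose, for each $n>m$, an operator $S_{n}\in\mathcal{E}_{J}$ with $\left\Vert T^{n}-S_{n}\right\Vert _{\mathcal{E}_{I}}<\varepsilon^{n}$. Since $\left\Vert \cdot\right\Vert _{\mathcal{E}}\le\left\Vert \cdot\right\Vert _{\mathcal{E}_{I}}$ on $C$, this gives $\left\Vert T^{n}-S_{n}\right\Vert _{\mathcal{E}}<\varepsilon^{n}$ for all $n>m$; and each $S_{n}$ is an elementary operator on $U$ because $\mathcal{E}_{J}={\mathcal{E}}_{A_{1},J_{2}}(U)+{\mathcal{E}}_{J_{1},A_{2}}(U)$. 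This is the quantitative assertion of the corollary.

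Finally, the inclusion $\widehat{\mathcal{E}}_{I}\subset Q_{\mathcal{E}_{J}}(\widehat{\mathcal{E}}_{A_{1},A_{2}}(U))$ is then immediate: for the $S_{n}$ just produced, $\mathrm{dist}_{\widehat{\mathcal{E}}_{A_{1},A_{2}}(U)}(T^{n},\mathcal{E}_{J})\le\left\Vert T^{n}-S_{n}\right\Vert _{\mathcal{E}}<\varepsilon^{n}$ for all $n>m$, so $\inf_{n}\mathrm{dist}(T^{n},\mathcal{E}_{J})^{1/n}=0$, i.e.\ $T\in Q_{\mathcal{E}_{J}}(\widehat{\mathcal{E}}_{A_{1},A_{2}}(U))$. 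I expect the only point needing care to be the bookkeeping with the two norms: Theorem \ref{quot2} delivers the approximations of $T^{n}$ by elements of $\mathcal{E}_{J}$ only in the flexible sum norm of $\widehat{\mathcal{E}}_{I}$, and it is precisely the Banach-ideal inequality $\left\Vert \cdot\right\Vert _{\mathcal{E}}\le\left\Vert \cdot\right\Vert _{\mathcal{E}_{I}}$ that transports these into the ambient algebra $\widehat{\mathcal{E}}_{A_{1},A_{2}}(U)$; everything else is formal.
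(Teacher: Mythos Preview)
Your proof is correct and follows the same route the paper intends: the paper's one-line proof ``Follows from Proposition \ref{predv1}'' unpacks exactly as you describe, combining the radicality of $\widehat{\mathcal{E}}_{I}/\overline{\mathcal{E}_{J}}$ from Theorem \ref{quot2} with Proposition \ref{predv1}(ii)--(iii). Your explicit handling of the two norms (the flexible sum norm on $\widehat{\mathcal{E}}_{I}$ versus $\left\Vert\cdot\right\Vert_{\mathcal{E}}$ on the ambient algebra) is a genuine clarification of a point the paper leaves implicit.
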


\begin{proof}
Follows from Proposition \ref{predv1}.
\end{proof}

\subsection{ Operator bimodules}

Let us consider the case that $A_{1}$, $A_{2}$ are the algebras $\mathcal{B}%
(Y)$, $\mathcal{B}(X)$ of all bounded operators on Banach spaces $X,Y$. Let
$U$ be a normed subspace of $\mathcal{B}(X,Y)$ of all bounded operators from
$X$ to $Y$ with the natural bounded $(\mathcal{B}(Y),\mathcal{B}(X))$-bimodule
structure; we refer to it as a \textit{normed subbimodule} of $\mathcal{B}%
(X,Y)$ or, simply, a \textit{normed operator bimodule}. The latter means that
$U$ is supplied with its own norm $\left\Vert \cdot\right\Vert _{U}%
\geq\left\Vert \cdot\right\Vert _{\mathcal{B}}=\left\Vert \cdot\right\Vert $
and
\[
\left\Vert axb\right\Vert _{U}\leq\left\Vert a\right\Vert \left\Vert
x\right\Vert _{U}\left\Vert b\right\Vert
\]
for all $a\in\mathcal{B}(Y)$, $b\in\mathcal{B}(X)$, $x\in U$. It is easy to
see that if $U$ is non-zero then $U$ contains all finite rank operators. We
also may assume that
\[
\left\Vert x\right\Vert _{U}=\Vert x\Vert
\]
for every rank one operator $x\in U$.

When $U$ is complete in $\left\Vert \cdot\right\Vert _{U}$, one says that $U$
is a \textit{Banach operator bimodule}. In this case, for brevity, we also
denote $\widehat{\mathcal{E}}_{\mathcal{B}(Y),\mathcal{B}(X)}(U)$ by
$\widehat{\mathcal{B}}_{\ast}\left(  U\right)  $ and call its elements
$\left(  \mathcal{B}\right)  $-\textit{multiplication operators }on $U$. It is
clear that $\widehat{\mathcal{B}}_{\ast}\left(  U\right)  $ is a Banach
subalgebra of $\mathcal{B}\left(  U\right)  $ with respect to the norm
$\left\Vert \cdot\right\Vert _{\widehat{\mathcal{B}}_{\ast}}=\left\Vert
\cdot\right\Vert _{\widehat{\mathcal{E}}_{\mathcal{B}(Y),\mathcal{B}(X)}}$.

Operator bimodules are closely related to operator ideals. If $\mathcal{U}$ is
a Banach operator ideal in the sense of \cite{P78}, e. g. the ideal
$\mathcal{K}$ of compact operators or the ideal $\mathcal{N}$ of nuclear
operators then each \textit{component} $U=\mathcal{U}(X,Y)$ of $\mathcal{U}$
is a Banach operator bimodule. It can be proved that all Banach operator
bimodules can be obtained in this way.

\subsubsection{Semicompact multiplication and $\left(  \mathcal{K}\right)
$-multiplication operators}

The algebras $A_{i}$ are semisimple so they have no radical ideals. But they
can have pairs of ideals $J_{i}\subset I_{i}$ with radical quotients
$I_{i}/\overline{J_{i}}$. Indeed, Lemma \ref{quot} shows that this is the case
if we take the ideals $\mathcal{K}(X)$, $\mathcal{K}(Y)$ for $I_{i}$, and the
ideals $\mathcal{F}(X)$, $\mathcal{F}(Y)$ for $J_{i}$. The possibility to use
Theorem \ref{quot2} is important for further applications. Before formulate
the corresponding corollaries it will be convenient to introduce special
terminology. Namely we set
\begin{align*}
\widehat{\mathcal{K}}_{\frac{1}{2}}(U)  &  =\widehat{\mathcal{E}}%
_{\mathcal{B}(Y),\mathcal{K}(X)}(U)+\widehat{\mathcal{E}}_{\mathcal{K}%
(Y),\mathcal{B}(X)}(U),\\
{\mathcal{F}}_{\frac{1}{2}}(U)  &  ={\mathcal{E}}_{\mathcal{B}(Y),\mathcal{F}%
(X)}(U)+{\mathcal{E}}_{\mathcal{F}(Y),\mathcal{B}(X)}(U).
\end{align*}
Note that $\widehat{\mathcal{K}}_{\frac{1}{2}}(U)$ is taken here as the sum of
\ Banach ideals $\widehat{\mathcal{E}}_{\mathcal{B}(Y),\mathcal{K}(X)}(U)$ and
$\widehat{\mathcal{E}}_{\mathcal{K}(Y),\mathcal{B}(X)}(U)$ in
$\widehat{\mathcal{B}}_{\ast}\left(  U\right)  $ with respective flexible norm
(see Proposition \ref{fl2}). Thus $\widehat{\mathcal{K}}_{\frac{1}{2}}(U)$
consists of multiplication operators $T$ on $U$ of the form $\sum
_{i=1}^{\infty}L_{a_{i}}R_{b_{i}}$ such that $\sum_{i}\Vert a_{i}\Vert\Vert
b_{i}\Vert<\infty$ and at least one of the operators $a_{i}$ or $b_{i}$ is
compact for every $i$. The norm $\left\Vert T\right\Vert
_{\widehat{\mathcal{K}}_{\frac{1}{2}}}$ is equal to $\inf\sum_{i}\Vert
a_{i}\Vert\Vert b_{i}\Vert$ for all such representations of $T$. We call
operators in $\widehat{\mathcal{K}}_{\frac{1}{2}}(U)$ \textit{semicompact
multiplication operators}.

Similarly, operators in ${\mathcal{F}}_{\frac{1}{2}}(U)$ are called
\textit{semifinite elementary operators}. They are just the elementary
operators $\sum_{i=1}^{n}L_{a_{i}}R_{b_{i}}$ where $a_{i}$ or $b_{i}$ is a
finite rank operator for each $i$.

As a concrete application of Theorem \ref{quot2}, we obtain the following

\begin{corollary}
\label{semi} Let $U\subset\mathcal{B}\left(  X,Y\right)  $ be a Banach
operator bimodule. Then the algebra $\widehat{\mathcal{K}}_{\frac{1}{2}%
}\left(  U\right)  /\overline{\mathcal{F}_{\frac{1}{2}}\left(  U\right)  }$ is
tensor radical.
\end{corollary}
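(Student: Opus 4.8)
The plan is to read Corollary~\ref{semi} off Theorem~\ref{quot2} by choosing the coefficient algebras $A_1=\mathcal{B}(Y)$, $A_2=\mathcal{B}(X)$, the larger ideals $I_1=\mathcal{K}(Y)$, $I_2=\mathcal{K}(X)$, and the smaller ideals $J_1=\mathcal{F}(Y)$, $J_2=\mathcal{F}(X)$, so that $\mathcal{F}(Y)\subset\mathcal{K}(Y)$ and $\mathcal{F}(X)\subset\mathcal{K}(X)$ are the required nested ideals. With these choices the very definitions introduced just before the statement give
\[
\widehat{\mathcal{E}}_{I}=\widehat{\mathcal{E}}_{\mathcal{B}(Y),\mathcal{K}(X)}(U)+\widehat{\mathcal{E}}_{\mathcal{K}(Y),\mathcal{B}(X)}(U)=\widehat{\mathcal{K}}_{\frac{1}{2}}(U)
\]
and
\[
\mathcal{E}_{J}={\mathcal{E}}_{\mathcal{B}(Y),\mathcal{F}(X)}(U)+{\mathcal{E}}_{\mathcal{F}(Y),\mathcal{B}(X)}(U)={\mathcal{F}}_{\frac{1}{2}}(U),
\]
so that Theorem~\ref{quot2} says precisely that $\overline{{\mathcal{F}}_{\frac{1}{2}}(U)}$ is an ideal of $\widehat{\mathcal{K}}_{\frac{1}{2}}(U)$ and that the quotient $\widehat{\mathcal{K}}_{\frac{1}{2}}(U)/\overline{{\mathcal{F}}_{\frac{1}{2}}(U)}$ is tensor radical, which is the assertion.

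The only hypothesis of Theorem~\ref{quot2} that must be verified is that the algebras $\overline{I_i}/\overline{J_i}$ are tensor radical. Here $\overline{I_i}=\mathcal{K}(\cdot)$ is already norm closed, while $\overline{J_i}=\overline{\mathcal{F}(\cdot)}=\mathcal{A}(\cdot)$ is, by the definition recalled in the excerpt, the closure of the finite rank operators inside $\mathcal{K}(\cdot)$. Thus everything reduces to showing that $\mathcal{K}(X)/\mathcal{A}(X)$, and likewise $\mathcal{K}(Y)/\mathcal{A}(Y)$, is a tensor radical algebra.

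For this I would invoke Lemma~\ref{quot}, which states that $\mathcal{K}(X)/\mathcal{A}(X)$ is a radical bicompact Banach algebra. A bicompact algebra is in particular compact (taking $b=a$ shows every $W_a=L_aR_a$ is compact), hence hypocompact; so $\mathcal{K}(X)/\mathcal{A}(X)$ is a radical hypocompact Banach algebra, and Corollary~\ref{tensrad} (every radical hypocompact normed algebra is tensor radical) gives the needed tensor radicality. Feeding this back into Theorem~\ref{quot2} completes the proof. The argument is essentially bookkeeping once these inputs are assembled; the single genuine ingredient is the tensor radicality of $\mathcal{K}(X)/\mathcal{A}(X)$, obtained by combining Lemma~\ref{quot} with Corollary~\ref{tensrad}, and no further obstacle is expected.
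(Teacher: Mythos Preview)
Your proposal is correct and follows exactly the route the paper intends: the text immediately preceding Corollary~\ref{semi} already singles out the choices $I_i=\mathcal{K}(\cdot)$, $J_i=\mathcal{F}(\cdot)$ and presents the corollary as a ``concrete application of Theorem~\ref{quot2}''. Your only addition is to make explicit why the quotients $\mathcal{K}(\cdot)/\mathcal{A}(\cdot)$ are \emph{tensor} radical (not merely radical), via Lemma~\ref{quot} and Corollary~\ref{tensrad}; this is precisely the missing link the reader is meant to supply.
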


As a consequence we have the following

\begin{corollary}
\label{QQ}Let $U\subset\mathcal{B}\left(  X,Y\right)  $ be a Banach operator
bimodule. Then
\[
\widehat{\mathcal{K}}_{\frac{1}{2}}(U)\subset Q_{\mathcal{F}_{\frac{1}{2}%
}\left(  U\right)  }(\widehat{\mathcal{B}}_{\ast}\left(  U\right)  ).
\]

\end{corollary}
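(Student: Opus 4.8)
The plan is to deduce Corollary \ref{QQ} directly from Corollary \ref{semi} by the mechanism of Proposition \ref{predv1}(ii). First I would record the basic set-theoretic and norm facts. Since $\mathcal{F}(X)\subset\mathcal{K}(X)$ and $\mathcal{F}(Y)\subset\mathcal{K}(Y)$, we have $\mathcal{F}_{\frac12}(U)\subset\widehat{\mathcal{K}}_{\frac12}(U)$; moreover $\widehat{\mathcal{K}}_{\frac12}(U)$ is a Banach ideal, hence a Banach subalgebra, of $\widehat{\mathcal{B}}_{\ast}(U)=\widehat{\mathcal{E}}_{\mathcal{B}(Y),\mathcal{B}(X)}(U)$, so that $\left\Vert\cdot\right\Vert_{\widehat{\mathcal{B}}_{\ast}}\leq\left\Vert\cdot\right\Vert_{\widehat{\mathcal{K}}_{\frac12}}$ on $\widehat{\mathcal{K}}_{\frac12}(U)$. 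In particular, for $T\in\widehat{\mathcal{K}}_{\frac12}(U)$ (hence $T^{n}\in\widehat{\mathcal{K}}_{\frac12}(U)$) and any subset $E\subset\widehat{\mathcal{K}}_{\frac12}(U)$, $\mathrm{dist}_{\widehat{\mathcal{B}}_{\ast}}(T^{n},E)\leq\mathrm{dist}_{\widehat{\mathcal{K}}_{\frac12}}(T^{n},E)$.

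Next I would observe that the Banach algebra $\widehat{\mathcal{K}}_{\frac12}(U)/\overline{\mathcal{F}_{\frac12}(U)}$, which is tensor radical by Corollary \ref{semi}, is therefore Jacobson radical: a tensor radical normed algebra coincides with its own largest tensor radical ideal $\mathcal{R}_{t}$, and $\mathcal{R}_{t}$ of a Banach algebra is contained in $\mathrm{Rad}$ by Corollary \ref{rad} (alternatively, tensoring with $\mathbb{C}$ in the definition of tensor radicality gives the algebra itself, which must then be radical). Consequently, for every $T\in\widehat{\mathcal{K}}_{\frac12}(U)$ the class $T/\overline{\mathcal{F}_{\frac12}(U)}$ is quasinilpotent in this quotient, i.e.\ $\left\Vert T^{n}/\overline{\mathcal{F}_{\frac12}(U)}\right\Vert^{1/n}\to0$, which is precisely $\mathrm{dist}_{\widehat{\mathcal{K}}_{\frac12}}(T^{n},\overline{\mathcal{F}_{\frac12}(U)})^{1/n}\to0$.

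Combining the two observations: $\mathrm{dist}_{\widehat{\mathcal{B}}_{\ast}}(T^{n},\mathcal{F}_{\frac12}(U))=\mathrm{dist}_{\widehat{\mathcal{B}}_{\ast}}(T^{n},\overline{\mathcal{F}_{\frac12}(U)})\leq\mathrm{dist}_{\widehat{\mathcal{K}}_{\frac12}}(T^{n},\overline{\mathcal{F}_{\frac12}(U)})$, so $\inf_{n}\mathrm{dist}_{\widehat{\mathcal{B}}_{\ast}}(T^{n},\mathcal{F}_{\frac12}(U))^{1/n}=0$, which is exactly the statement $T\in Q_{\mathcal{F}_{\frac12}(U)}(\widehat{\mathcal{B}}_{\ast}(U))$. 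Since $T$ was arbitrary in $\widehat{\mathcal{K}}_{\frac12}(U)$, this gives the inclusion $\widehat{\mathcal{K}}_{\frac12}(U)\subset Q_{\mathcal{F}_{\frac12}(U)}(\widehat{\mathcal{B}}_{\ast}(U))$. Packaged abstractly, this is Proposition \ref{predv1}(ii) applied with $A=\widehat{\mathcal{B}}_{\ast}(U)$, the subalgebra $B=\widehat{\mathcal{K}}_{\frac12}(U)$ and the ideal $J=\overline{\mathcal{F}_{\frac12}(U)}$ of $B$, together with the elementary fact that $Q_{E}$ depends only on the closure of $E$.

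There is no genuinely hard step here; the only point requiring care is the interplay of the two norms on $\widehat{\mathcal{K}}_{\frac12}(U)$ (its own flexible norm versus the operator-type norm $\left\Vert\cdot\right\Vert_{\widehat{\mathcal{B}}_{\ast}}$) and the fact that $\mathcal{F}_{\frac12}(U)$, consisting of \emph{finite} sums, need not be an ideal of $\widehat{\mathcal{B}}_{\ast}(U)$ — this is why one works throughout with its closure $\overline{\mathcal{F}_{\frac12}(U)}$ and then uses that passing to the closure does not change the distances defining $Q$. Everything else is formal, resting entirely on Corollary \ref{semi}.
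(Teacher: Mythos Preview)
Your proof is correct and follows the same route as the paper: the paper states Corollary~\ref{QQ} as an immediate consequence of Corollary~\ref{semi} (the general version, Corollary~\ref{quasrel}, is proved by a bare citation of Proposition~\ref{predv1}), and your argument is precisely the unpacking of that implication. If anything, you are more careful than the paper about the distinction between the flexible norm on $\widehat{\mathcal{K}}_{\frac12}(U)$ and the ambient norm on $\widehat{\mathcal{B}}_{\ast}(U)$, which is the only point where the direct invocation of Proposition~\ref{predv1}(ii) is not entirely literal.
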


Since the norm in $\widehat{\mathcal{B}}_{\ast}\left(  U\right)  $ majorizes
the operator norm in $\mathcal{B}(U)$, we also obtain the following result.

\begin{corollary}
\label{opnorm} Let $T$ be a semicompact multiplication operator on a Banach
operator bimodule $U$. Then for each $\varepsilon>0$ there are $m\in
\mathbb{N}$ and semifinite elementary operators $S_{n}$ on $U$ such that
$\left\Vert T^{n}-S_{n}\right\Vert _{\mathcal{B}(U)}<\varepsilon^{n}$ for
every $n>m$.
\end{corollary}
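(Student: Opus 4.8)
The plan is to read off the statement directly from Corollary \ref{QQ}, the characterization of $Q_{J}(A)$ given in Proposition \ref{predv1}(iii), and the comparison between the norm of $\widehat{\mathcal{B}}_{\ast}(U)$ and the operator norm of $\mathcal{B}(U)$. No new idea is needed; the corollary is essentially the special case $I_{1}=\mathcal{K}(Y)$, $I_{2}=\mathcal{K}(X)$, $J_{1}=\mathcal{F}(Y)$, $J_{2}=\mathcal{F}(X)$ of Corollary \ref{quasrel}, re-expressed in the coarser norm $\left\Vert \cdot\right\Vert _{\mathcal{B}(U)}$.

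First I would note that a semicompact multiplication operator $T$ on $U$ is, by definition, an element of $\widehat{\mathcal{K}}_{\frac{1}{2}}(U)$. By Corollary \ref{QQ}, $\widehat{\mathcal{K}}_{\frac{1}{2}}(U)\subset Q_{\mathcal{F}_{\frac{1}{2}}(U)}(\widehat{\mathcal{B}}_{\ast}(U))$, so $T$ is quasinilpotent modulo $\mathcal{F}_{\frac{1}{2}}(U)$ in the Banach algebra $\widehat{\mathcal{B}}_{\ast}(U)=\widehat{\mathcal{E}}_{\mathcal{B}(Y),\mathcal{B}(X)}(U)$. Here $\mathcal{F}_{\frac{1}{2}}(U)={\mathcal{E}}_{\mathcal{B}(Y),\mathcal{F}(X)}(U)+{\mathcal{E}}_{\mathcal{F}(Y),\mathcal{B}(X)}(U)$ is a flexible sum of flexible ideals (Proposition \ref{fl2}), hence a genuine two-sided ideal of $\widehat{\mathcal{B}}_{\ast}(U)$, so Proposition \ref{predv1}(iii) applies: given $\varepsilon>0$ there is $m\in\mathbb{N}$ with $\mathrm{dist}_{\widehat{\mathcal{B}}_{\ast}}\!\left(T^{n},\mathcal{F}_{\frac{1}{2}}(U)\right)<\varepsilon^{n}$ for all $n>m$. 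Since this distance is an infimum, for each such $n$ I can choose a semifinite elementary operator $S_{n}\in\mathcal{F}_{\frac{1}{2}}(U)$ with $\left\Vert T^{n}-S_{n}\right\Vert _{\widehat{\mathcal{B}}_{\ast}}<\varepsilon^{n}$.

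Finally, because the norm $\left\Vert \cdot\right\Vert _{\widehat{\mathcal{B}}_{\ast}}=\left\Vert \cdot\right\Vert _{\widehat{\mathcal{E}}_{\mathcal{B}(Y),\mathcal{B}(X)}}$ majorizes the operator norm $\left\Vert \cdot\right\Vert _{\mathcal{B}(U)}$ (as recalled just before the corollary, ultimately from the estimate (\ref{ihq})), I conclude $\left\Vert T^{n}-S_{n}\right\Vert _{\mathcal{B}(U)}\le\left\Vert T^{n}-S_{n}\right\Vert _{\widehat{\mathcal{B}}_{\ast}}<\varepsilon^{n}$ for every $n>m$, which is the assertion. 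There is essentially no obstacle: the only points that deserve an explicit check are that $\mathcal{F}_{\frac{1}{2}}(U)$ is an ideal of $\widehat{\mathcal{B}}_{\ast}(U)$, so that Proposition \ref{predv1}(iii) is literally applicable, and that replacing $\left\Vert \cdot\right\Vert _{\mathcal{E}}$ by $\left\Vert \cdot\right\Vert _{\mathcal{B}(U)}$ only decreases the relevant distances — both immediate.
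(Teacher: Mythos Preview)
Your proof is correct and follows exactly the paper's own route: it is the combination of Corollary~\ref{QQ} with Proposition~\ref{predv1}(iii) and the remark (stated in the paper just before Corollary~\ref{opnorm}) that $\left\Vert \cdot\right\Vert _{\widehat{\mathcal{B}}_{\ast}}$ majorizes $\left\Vert \cdot\right\Vert _{\mathcal{B}(U)}$. One tiny quibble: $\mathcal{F}_{\frac{1}{2}}(U)$, consisting of \emph{finite} sums, is not literally a two-sided ideal of the completed algebra $\widehat{\mathcal{B}}_{\ast}(U)$, but this is harmless since $\mathrm{dist}(\cdot,\mathcal{F}_{\frac{1}{2}}(U))=\mathrm{dist}(\cdot,\overline{\mathcal{F}_{\frac{1}{2}}(U)})$ and the closure \emph{is} an ideal --- the paper makes the same tacit identification in Corollary~\ref{quasrel}.
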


The other useful ideal in $\widehat{\mathcal{B}}_{\ast}\left(  U\right)  $,
namely the ideal $\widehat{\mathcal{K}}_{\ast}\left(  U\right)  $ of $\left(
\mathcal{K}\right)  $\textit{-multiplication operators, }is defined by
\[
\widehat{\mathcal{K}}_{\ast}\left(  U\right)  =\widehat{{\mathcal{E}}%
}_{\mathcal{K}(Y),\mathcal{K}(X)}(U).
\]
In many cases, for instance when the norm of $U$ coincides with the operator
one, $\widehat{\mathcal{K}}_{\ast}\left(  U\right)  $ consists of compact
operators on $U$. It is clear that $\widehat{\mathcal{K}}_{\ast}\left(
U\right)  $ is a Banach ideal of $\widehat{\mathcal{K}}_{\frac{1}{2}}(U)$ and
$\widehat{\mathcal{B}}_{\ast}\left(  U\right)  $ with respect to the norm
$\left\Vert \cdot\right\Vert _{\widehat{\mathcal{K}}_{\ast}}=\left\Vert
\cdot\right\Vert _{\widehat{{\mathcal{E}}}_{\mathcal{K}(Y),\mathcal{K}(X)}}$.

\begin{proposition}
$\widehat{\mathcal{K}}_{\ast}\left(  U\right)  $ is a bicompact Banach algebra
for every Banach operator bimodule $U$.
\end{proposition}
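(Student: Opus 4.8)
The plan is to exhibit $\widehat{\mathcal{K}}_{\ast}(U)$ as a quotient of the projective tensor product of two bicompact Banach algebras, and then to check that projective tensor products of bicompact algebras are bicompact. That $\widehat{\mathcal{K}}_{\ast}(U)=\widehat{\mathcal{E}}_{\mathcal{K}(Y),\mathcal{K}(X)}(U)$ is a Banach algebra needs no argument, being the completion of a normed algebra in the submultiplicative norm $\left\Vert \cdot\right\Vert _{\mathcal{E}_{\mathcal{K}(Y),\mathcal{K}(X)}}$. For the rest, recall from Section~\ref{seo} that $\mathcal{E}_{\mathcal{K}(Y),\mathcal{K}(X)}(U)$, with this norm, is topologically the quotient of $\mathcal{K}(Y)\otimes_{\gamma}\mathcal{K}(X)^{\mathrm{op}}$ by the kernel of the homomorphism $\psi_{U}$, a kernel that is closed by (\ref{ihq}); passing to completions then identifies $\widehat{\mathcal{K}}_{\ast}(U)$ with a quotient of the Banach algebra $C:=\mathcal{K}(Y)\widehat{\otimes}\mathcal{K}(X)^{\mathrm{op}}$ by a closed ideal, via an open bounded epimorphism. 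Since a quotient of a bicompact algebra is bicompact, it suffices to prove that $C$ is bicompact.

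By Vala's theorem \cite{Vala}, $\mathcal{K}(X)$ and $\mathcal{K}(Y)$ are bicompact (the restriction of the compact operator $L_{a}R_{b}$ on $\mathcal{B}(X)$ to its closed invariant subspace $\mathcal{K}(X)$ stays compact), and hence $\mathcal{K}(X)^{\mathrm{op}}$ is bicompact too, since $L_{a}^{\mathrm{op}}R_{b}^{\mathrm{op}}=L_{b}R_{a}$. Now $(z,w)\longmapsto L_{z}R_{w}$ is a bounded bilinear map of $C\times C$ into $\mathcal{B}(C)$, the compact operators form a closed subspace of $\mathcal{B}(C)$, and the elementary tensors $p\otimes q$ with $p\in\mathcal{F}(Y)$ and $q\in\mathcal{F}(X)$ span a dense subalgebra of $C$; it therefore suffices to show that $L_{p_{1}\otimes q_{1}}R_{p_{2}\otimes q_{2}}$ is compact — indeed of finite rank — for such $p_{i},q_{i}$. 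A direct computation in $C$ yields
\[
L_{p_{1}\otimes q_{1}}R_{p_{2}\otimes q_{2}}(a\otimes b)=(p_{1}ap_{2})\otimes(q_{2}bq_{1}),\qquad a\in\mathcal{K}(Y),\ b\in\mathcal{K}(X),
\]
so that $L_{p_{1}\otimes q_{1}}R_{p_{2}\otimes q_{2}}=(L_{p_{1}}R_{p_{2}})\otimes(L_{q_{2}}R_{q_{1}})$, the projective tensor product of these two multiplication operators on $\mathcal{K}(Y)$ and on $\mathcal{K}(X)$. Each of the two factors has finite rank, because $a\longmapsto p_{1}ap_{2}$ takes values in the finite-dimensional space of operators whose range lies in the range of $p_{1}$ and which vanish on $\ker p_{2}$ (and similarly for the other factor), and the projective tensor product of two finite-rank operators has finite rank. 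Thus $C$, and with it $\widehat{\mathcal{K}}_{\ast}(U)$, is bicompact.

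There is no deep obstacle; the delicate points are purely bookkeeping. One must justify that completing the quotient norm on $\mathcal{E}_{\mathcal{K}(Y),\mathcal{K}(X)}(U)$ really presents $\widehat{\mathcal{K}}_{\ast}(U)$ as a quotient of the completed tensor product $C$ by a closed ideal (an identification of the same routine kind as those in Lemma~\ref{flex1} and Proposition~\ref{quo}), and that bicompactness passes to quotients by open bounded epimorphisms. The one genuine choice in the argument is the reduction to finite-rank coefficients: it trivializes the tensor computation, replacing the true but less elementary fact that the projective tensor product of two compact operators is compact — which would require Mazur's theorem together with the description of the unit ball of $\mathcal{K}(Y)\widehat{\otimes}\mathcal{K}(X)^{\mathrm{op}}$ — by the observation that the projective tensor product of two finite-rank operators has finite rank.
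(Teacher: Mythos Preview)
Your overall architecture matches the paper's: identify $\widehat{\mathcal{K}}_{\ast}(U)$ as a quotient of $C=\mathcal{K}(Y)\widehat{\otimes}\mathcal{K}(X)^{\mathrm{op}}$, prove $C$ is bicompact, and note that bicompactness passes to quotients. The paper invokes Lemma~\ref{tenzcomp} (more precisely, its proof technique with $L_{a}R_{c}$ in place of $W_{a}$) for the middle step.

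However, your argument for the bicompactness of $C$ has a genuine gap. You claim that the elementary tensors $p\otimes q$ with $p\in\mathcal{F}(Y)$, $q\in\mathcal{F}(X)$ span a dense subalgebra of $C$. This is false whenever $X$ or $Y$ lacks the approximation property: then $\mathcal{A}(Y)=\overline{\mathcal{F}(Y)}\subsetneq\mathcal{K}(Y)$ (see Lemma~\ref{quot} and Example~\ref{willis}), and any bounded functional on $C$ of the form $f\otimes g$ with $0\neq f\in\mathcal{K}(Y)^{\ast}$ vanishing on $\mathcal{A}(Y)$ shows that the closure of $\mathcal{F}(Y)\otimes\mathcal{F}(X)$ is a proper subspace of $C$. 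So the density reduction to finite-rank coefficients does not go through, and with it the ``one genuine choice'' you highlight collapses.

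Ironically, the fix is exactly the ``less elementary'' route you sketch and set aside in your final paragraph: reduce instead to elementary tensors $a\otimes b$ with $a\in\mathcal{K}(Y)$, $b\in\mathcal{K}(X)$ (these \emph{are} total in $C$), observe that $L_{a_{1}\otimes b_{1}}R_{a_{2}\otimes b_{2}}=(L_{a_{1}}R_{a_{2}})\widehat{\otimes}(L_{b_{2}}R_{b_{1}})$, and use that the projective tensor product of two compact operators is compact --- via Mazur's theorem and the fact that $\mathrm{ball}(C)$ is the closed absolutely convex hull of $\mathrm{ball}(\mathcal{K}(Y))\otimes\mathrm{ball}(\mathcal{K}(X))$. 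This is essentially the content of the $\varepsilon$-net argument in the proof of Lemma~\ref{tenzcomp} that the paper cites.
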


\begin{proof}
Indeed, $\widehat{\mathcal{K}}_{\ast}\left(  U\right)  $ is topologically
isomorphic to a quotient of the projective tensor product of bicompact
algebras $\mathcal{K}\left(  Y\right)  $ and $\mathcal{K}\left(  X\right)
^{\mathrm{op}}$ which is bicompact itself by Lemma \ref{tenzcomp}.
\end{proof}

\begin{corollary}
Let $U$ be a Banach operator bimodule. Then $\sigma_{\mathcal{B}\left(
U\right)  }\left(  T\right)  $ is (finite or) countable and $\sigma
_{\mathcal{B}\left(  U\right)  }\left(  T\right)  \cup\left\{  0\right\}
=\sigma_{\widehat{\mathcal{K}}_{\ast}\left(  U\right)  }\left(  T\right)
\cup\left\{  0\right\}  =\sigma_{\widehat{\mathcal{B}}_{\ast}\left(  U\right)
}\left(  T\right)  \cup\left\{  0\right\}  $ for every $T\in
\widehat{\mathcal{K}}_{\ast}\left(  U\right)  $.
\end{corollary}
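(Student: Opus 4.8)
The plan is to deduce the statement by stringing together Theorem \ref{scat}, Remark \ref{sp2} and Proposition \ref{sp}(ii), applied to the chain of Banach algebras
\[
\widehat{\mathcal{K}}_{\ast}\left(U\right)\subset\widehat{\mathcal{B}}_{\ast}\left(U\right)\subset\mathcal{B}\left(U\right),
\]
in which the first inclusion is that of a Banach ideal and the second is that of a unital Banach subalgebra (with common unit). First I would record that $\widehat{\mathcal{K}}_{\ast}\left(U\right)$ is hypocompact: by the proposition just proved it is bicompact, so each of its elements $a$ has $W_{a}=L_{a}R_{a}$ compact, hence $\widehat{\mathcal{K}}_{\ast}\left(U\right)$ is a compact, a fortiori hypocompact, Banach algebra (alternatively, apply Theorem \ref{mult1-}(i) to the hypocompact coefficient algebras $\mathcal{K}(Y),\mathcal{K}(X)$, noting that $U$ is also a Banach bimodule over them). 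Theorem \ref{scat} then gives that $\sigma_{\widehat{\mathcal{K}}_{\ast}\left(U\right)}\left(T\right)$ is finite or countable for every $T\in\widehat{\mathcal{K}}_{\ast}\left(U\right)$.

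Next I would transport this along the chain. Since $\widehat{\mathcal{K}}_{\ast}\left(U\right)$ is an ideal of $\widehat{\mathcal{B}}_{\ast}\left(U\right)$ and $T$ lies in it, Remark \ref{sp2} yields
\[
\left\{0\right\}\cup\sigma_{\widehat{\mathcal{K}}_{\ast}\left(U\right)}\left(T\right)=\left\{0\right\}\cup\sigma_{\widehat{\mathcal{B}}_{\ast}\left(U\right)}\left(T\right),
\]
so $\sigma_{\widehat{\mathcal{B}}_{\ast}\left(U\right)}\left(T\right)$ is finite or countable as well. Then, observing that $\widehat{\mathcal{B}}_{\ast}\left(U\right)=\widehat{\mathcal{E}}_{\mathcal{B}(Y),\mathcal{B}(X)}(U)$ contains $L_{1_{Y}}R_{1_{X}}=\mathrm{id}_{U}$ and is therefore a unital Banach subalgebra of $\mathcal{B}\left(U\right)$ with the same unit, I would invoke Proposition \ref{sp}(ii) — whose countability hypothesis is precisely what we just verified — to get $\sigma_{\mathcal{B}\left(U\right)}\left(T\right)=\sigma_{\widehat{\mathcal{B}}_{\ast}\left(U\right)}\left(T\right)$. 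Combining the two displayed equalities gives both that $\sigma_{\mathcal{B}\left(U\right)}\left(T\right)$ is finite or countable and that the three spectra coincide after adjoining $0$.

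I expect no serious obstacle here; the argument is essentially formal once the right earlier results are lined up. The only non-elementary input is Theorem \ref{scat} (the countability of spectra in hypocompact Banach algebras, from \cite{STFaa}); everything else is bookkeeping, the delicate points being merely that one must keep track of which unitalization each spectrum is taken with respect to, and that the $\widehat{\mathcal{K}}_{\ast}$-norm majorizes the $\widehat{\mathcal{B}}_{\ast}$-norm, which in turn majorizes the operator norm of $\mathcal{B}(U)$ — this is what makes the ideal/subalgebra hypotheses of Remark \ref{sp2} and Proposition \ref{sp} literally applicable.
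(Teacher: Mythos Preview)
Your proposal is correct and follows essentially the same approach as the paper's own proof: establish countability of the spectrum in $\widehat{\mathcal{K}}_{\ast}(U)$, then use Remark \ref{sp2} for the ideal inclusion $\widehat{\mathcal{K}}_{\ast}(U)\subset\widehat{\mathcal{B}}_{\ast}(U)$ and Proposition \ref{sp} for the passage to $\mathcal{B}(U)$. The only cosmetic differences are that the paper cites \cite[Theorem 4.4]{A68} directly for countability (rather than Theorem \ref{scat}) and applies Proposition \ref{sp} straight from $\widehat{\mathcal{K}}_{\ast}(U)$ to $\mathcal{B}(U)$, while you route through the unital intermediate algebra $\widehat{\mathcal{B}}_{\ast}(U)$ --- your ordering is in fact slightly cleaner, since Proposition \ref{sp} is stated for a unital subalgebra with common unit, a hypothesis that $\widehat{\mathcal{B}}_{\ast}(U)\subset\mathcal{B}(U)$ satisfies literally.
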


\begin{proof}
Indeed, $\widehat{\mathcal{K}}_{\ast}\left(  U\right)  $ is a Banach
subalgebra of $\mathcal{B}\left(  U\right)  $ and is a Banach ideal of
$\widehat{\mathcal{B}}_{\ast}\left(  U\right)  $. Since $\sigma_{\widehat
{\mathcal{K}}_{\ast}\left(  U\right)  }\left(  T\right)  $ is countable by
\cite[Theorem 4.4]{A68}, then it is easy to see that $\sigma_{\mathcal{B}%
\left(  U\right)  }\left(  T\right)  \cup\left\{  0\right\}  =\sigma
_{\widehat{\mathcal{K}}_{\ast}\left(  U\right)  }\left(  T\right)
\cup\left\{  0\right\}  $ by Proposition \ref{sp}. Further, we have that
$\sigma_{\widehat{\mathcal{K}}_{\ast}\left(  U\right)  }\left(  T\right)
\cup\left\{  0\right\}  =\sigma_{\widehat{\mathcal{B}}_{\ast}\left(  U\right)
}\left(  T\right)  \cup\left\{  0\right\}  $ by Remark \ref{sp2}.
\end{proof}

\subsubsection{$\left(  \mathcal{N}\right)  $-multiplication operators and
trace}

The norms and spectra of elementary operators were studied in many works. Here
we would like to mention a simple formula for their traces.

Let $\mathcal{N}$ be the operator ideal of nuclear operators. Recall that
every operator $a\in\mathcal{N}\left(  X,Y\right)  $ has a representation
$\sum f_{i}\otimes x_{i}$ with $\sum\left\Vert f_{i}\right\Vert \left\Vert
x_{i}\right\Vert <\infty$ for $f_{i}\in X^{\ast}$ and $x_{i}\in Y$. The
nuclear norm $\left\Vert \cdot\right\Vert _{\mathcal{N}}$ is given by
\[
\left\Vert a\right\Vert _{\mathcal{N}}=\inf\left\{  \sum\left\Vert
f_{i}\right\Vert \left\Vert x_{i}\right\Vert :\sum f_{i}\otimes x_{i}%
=a\right\}  .
\]
So, as is well known, the projective tensor product $X^{\ast}\widehat{\otimes
}_{\gamma}Y$ is identified with $\mathcal{N}\left(  X,Y\right)  $. If $X=Y$,
then the trace of $a$ is defined by
\[
\mathrm{trace}\left(  a\right)  =\sum f_{i}\left(  x_{i}\right)  .
\]

Let ${U}\subset\mathcal{B}(X,Y)$ be a Banach operator bimodule. One can define
the ideal $\mathcal{N}_{\ast}\left(  U\right)  $ of $\left(  \mathcal{N}%
\right)  $-\textit{multiplication operators} by
\[
\widehat{\mathcal{N}}_{\ast}\left(  U\right)  =\widehat{{\mathcal{E}}%
}_{\mathcal{N}(Y),\mathcal{N}(X)}(U)
\]
with respect to the norm $\left\Vert \cdot\right\Vert _{\widehat{\mathcal{N}%
}_{\ast}}=\left\Vert \cdot\right\Vert _{\widehat{{\mathcal{E}}}_{\mathcal{N}%
(Y),\mathcal{N}(X)}}$.

\begin{proposition}
Every $\left(  \mathcal{N}\right)  $-multiplication operator $T$ on ${U}$ is
nuclear. If $T=\sum L_{a_{i}}R_{b_{i}}$ with $\sum\left\Vert a_{i}\right\Vert
_{\mathcal{N}}\left\Vert b_{i}\right\Vert _{\mathcal{N}}<\infty$ for
$a_{i},b_{i}\in\mathcal{N}$, then
\[
\mathrm{trace}\left(  T\right)  =\sum_{i}\mathrm{trace}\left(  a_{i}\right)
\mathrm{trace}\left(  b_{i}\right)  .
\]

\end{proposition}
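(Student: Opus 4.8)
The plan is to read off $L_aR_b$ explicitly from nuclear representations of the coefficients, to establish a nuclear-norm estimate for such operators, and then to deduce both assertions: nuclearity by summing, and the trace formula by reducing to rank-one coefficients and extending by bilinearity and continuity.

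First I would record the key computation. Let $a\in\mathcal N(Y)$ and $b\in\mathcal N(X)$ have nuclear representations $a=\sum_j g_j\otimes y_j$ and $b=\sum_k f_k\otimes x_k$, with $g_j\in Y^{\ast},\,y_j\in Y,\,f_k\in X^{\ast},\,x_k\in X$. Then for $u\in U$ one computes $L_aR_bu=aub=\sum_{j,k}g_j(u(x_k))\,(f_k\otimes y_j)$, so that, as an operator on $U$, $L_aR_b=\sum_{j,k}\varphi_{jk}\otimes(f_k\otimes y_j)$, where $\varphi_{jk}\in U^{\ast}$ is the functional $u\mapsto g_j(u(x_k))$. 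Since $U$ contains all finite rank operators and its norm agrees with the operator norm on rank-one operators, $\|f_k\otimes y_j\|_U=\|f_k\|\,\|y_j\|$, while $\|\varphi_{jk}\|_{U^{\ast}}\le\|g_j\|\,\|x_k\|$ because $|g_j(u(x_k))|\le\|g_j\|\,\|x_k\|\,\|u\|\le\|g_j\|\,\|x_k\|\,\|u\|_U$. Passing to the infimum over nuclear representations of $a$ and $b$, this shows $L_aR_b\in\mathcal N(U)$ with $\|L_aR_b\|_{\mathcal N(U)}\le\|a\|_{\mathcal N}\|b\|_{\mathcal N}$. Now, by the identification of $\widehat{\mathcal N}_{\ast}(U)=\widehat{\mathcal E}_{\mathcal N(Y),\mathcal N(X)}(U)$ with a quotient of $\mathcal N(Y)\widehat{\otimes}\mathcal N(X)^{\mathrm{op}}$ under $\psi$, every $(\mathcal N)$-multiplication operator $T$ is of the form $T=\sum_iL_{a_i}R_{b_i}$ with $\sum_i\|a_i\|_{\mathcal N}\|b_i\|_{\mathcal N}<\infty$; by the estimate above this series converges absolutely in $\mathcal N(U)$, and since $\mathcal N(U)\hookrightarrow\mathcal B(U)$ continuously its sum there is again $T$. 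Hence $T$ is a nuclear operator on $U$.

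For the trace formula I would first treat rank-one coefficients: if $a=g\otimes y$ and $b=f\otimes x$ then by the computation above $L_aR_b=\varphi\otimes(f\otimes y)$ with $\varphi(u)=g(u(x))$, so $\mathrm{trace}(L_aR_b)=\varphi(f\otimes y)=g(f(x)y)=f(x)g(y)=\mathrm{trace}(b)\,\mathrm{trace}(a)$. The maps $(a,b)\mapsto\mathrm{trace}(L_aR_b)$ and $(a,b)\mapsto\mathrm{trace}(a)\,\mathrm{trace}(b)$ are bilinear on $\mathcal N(Y)\times\mathcal N(X)$ and, using the estimate of the first step together with $|\mathrm{trace}(\cdot)|\le\|\cdot\|_{\mathcal N}$, bounded by $\|a\|_{\mathcal N}\|b\|_{\mathcal N}$; since they coincide on rank-one coefficients, they coincide on finite rank coefficients by bilinearity, hence on all nuclear coefficients by density of finite rank operators in the nuclear norm. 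Finally, for $T=\sum_iL_{a_i}R_{b_i}$ with $\sum_i\|a_i\|_{\mathcal N}\|b_i\|_{\mathcal N}<\infty$, the absolute convergence of the series in $\mathcal N(U)$ and the $\|\cdot\|_{\mathcal N}$-continuity of the trace yield $\mathrm{trace}(T)=\sum_i\mathrm{trace}(L_{a_i}R_{b_i})=\sum_i\mathrm{trace}(a_i)\,\mathrm{trace}(b_i)$.

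Most of the work (the bimodule identities, the passage to unitalizations, and the description of $\widehat{\mathcal N}_{\ast}(U)$ as a quotient of $\mathcal N(Y)\widehat{\otimes}\mathcal N(X)^{\mathrm{op}}$) is routine. The point that genuinely needs care is the interchange of the trace with the infinite series, and this is exactly what the nuclear-norm bound $\|L_aR_b\|_{\mathcal N(U)}\le\|a\|_{\mathcal N}\|b\|_{\mathcal N}$ provides: it gives absolute convergence of $T=\sum_iL_{a_i}R_{b_i}$ in $\mathcal N(U)$, makes the continuity of $\mathrm{trace}$ applicable, and at the same time shows that the value $\sum_i\mathrm{trace}(a_i)\,\mathrm{trace}(b_i)$ does not depend on the chosen representation of $T$. (If $U$ does not have the approximation property one reads $\mathrm{trace}(T)$ via the explicit nuclear representation of $T$ produced in the first step; the computation then shows it is well defined on $\widehat{\mathcal N}_{\ast}(U)$.)
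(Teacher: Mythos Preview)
Your proof is correct and follows essentially the same route as the paper: reduce to rank-one coefficients, compute $L_{g\otimes y}R_{f\otimes x}$ explicitly as a rank-one operator on $U$, and then pass to general nuclear coefficients and to the series $\sum_i L_{a_i}R_{b_i}$ by absolute convergence. You are in fact slightly more careful than the paper in making the estimate $\|L_aR_b\|_{\mathcal N(U)}\le\|a\|_{\mathcal N}\|b\|_{\mathcal N}$ explicit (the paper only says ``an absolutely convergent series of nuclear operators is nuclear'') and in flagging the approximation-property issue for the well-definedness of the trace, which the paper does not mention.
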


\begin{proof}
Assume first that $a=f\otimes x$ and $b=g\otimes y$ are rank one operators.
For every $u\in U$, we obtain that
\[
\left(  f\otimes x\right)  u\left(  g\otimes y\right)  =f\left(  uy\right)
g\otimes x.
\]
It is clear that the map $u\longmapsto f\left(  uy\right)  $ is a bounded
linear functional on $U$. Indeed,%
\[
\left\vert f\left(  uy\right)  \right\vert \leq\left\Vert f\right\Vert
\left\Vert u\right\Vert \left\Vert y\right\Vert \leq\left(  \left\Vert
f\right\Vert \left\Vert y\right\Vert \right)  \left\Vert u\right\Vert _{{U}}.
\]
In the same time, $g\otimes x\in U$ for every $g\in X^{\ast}$ and $x\in Y$. So
$L_{a}R_{b}$ is a rank one operator.
This also shows that if $a_{i}$ and $b_{i}$ are nuclear then $L_{a_{i}%
}R_{b_{i}}$ is nuclear, because an absolutely convergent series of nuclear
operators is nuclear. Therefore $T$ is nuclear by the same reason.
Clearly
\[
\mathrm{trace}\left(  L_{f\otimes x}R_{g\otimes y}\right)  =f\left(  \left(
g\otimes x\right)  y\right)  =f\left(  x\right)  g\left(  y\right)
=\mathrm{trace}\left(  f\otimes x\right)  \mathrm{trace}\left(  g\otimes
y\right)  .
\]
Therefore, for nuclear $a_{i}=\sum f_{j}\otimes x_{j}$ and $b_{i}=\sum
g_{k}\otimes y_{k}$, we obtain that
\begin{align*}
\mathrm{trace}(L_{a_{i}}R_{b_{i}})  &  =\sum_{j,k}\mathrm{trace}%
(L_{f_{j}\otimes x_{j}}R_{g_{k}\otimes y_{k}})=\sum_{j,k}\mathrm{trace}%
({f_{j}\otimes x_{j}})\mathrm{trace}({g_{k}\otimes y_{k}})\\
&  =\mathrm{trace}(a_{i})\mathrm{trace}(b_{i}),
\end{align*}
whence
\[
\mathrm{trace}\left(  T\right)  =\sum\mathrm{trace}\left(  L_{a_{i}}R_{b_{i}%
}\right)  =\sum\mathrm{trace}\left(  a_{i}\right)  \mathrm{trace}\left(
b_{i}\right)  .
\]
\end{proof}

\subsection{Some constructions related to multiplication operators}

\subsubsection{ Integral operators}

Let ${U}\subset\mathcal{B}(X,Y)$ be a Banach operator bimodule. Assume first
that $U$ is reflexive as a Banach space. Let $(\Omega,\mu)$ be a measure
space, and let $L_{2}^{\mathcal{B}(Y)}(\Omega,\mu)$ be the space of all
measurable $\mathcal{B}(Y)$-valued functions $a:\omega\longmapsto a(\omega)$
with the norm
\[
\left\Vert a\right\Vert _{L_{2}}=\int_{\Omega}\left\Vert a(\omega)\right\Vert
^{2}d\mu<\infty.
\]
For any two functions $\omega\longmapsto a(\omega)\in L_{2}^{\mathcal{B}%
(Y)}(\Omega,\mu)$ and $\omega\longmapsto b(\omega)\in L_{2}^{\mathcal{B}%
(X)}(\Omega,\mu)$, one can define an operator $T_{a,b}$ on ${U}$ by means of
the Bochner integral \cite[Section 3.3.7]{HP}%
\[
T_{a,b}(x)=\int_{\Omega}a(\omega)xb(\omega)d\mu.
\]
Note that one may define $L_{2}^{\mathcal{K}(Y)}(\Omega,\mu)$ if replace
$\mathcal{B}\left(  Y\right)  $ by $\mathcal{K}\left(  Y\right)  $, and
consider $L_{2}^{\mathcal{K}(Y)}(\Omega,\mu)$ as a subspace of $L_{2}%
^{\mathcal{B}(Y)}(\Omega,\mu)$.

\begin{proposition}
\label{intsemi}Let ${U}\subset\mathcal{B}(X,Y)$ be a Banach operator bimodule,
and let $U$ be reflexive as a Banach space. Then $T_{a,b}\in
\widehat{\mathcal{E}}_{\mathcal{B}(Y),\mathcal{B}(X)}$ and $\Vert T_{a,b}%
\Vert_{\widehat{\mathcal{E}}_{\mathcal{B}(Y),\mathcal{B}(X)}}\leq\left\Vert
a\right\Vert _{L_{2}}\left\Vert b\right\Vert _{L_{2}}$. If in particular
$\omega\longmapsto a(\omega)\in L_{2}^{\mathcal{K}(Y)}(\Omega,\mu)$
(respectively, $\omega\longmapsto b(\omega)\in L_{2}^{\mathcal{K}(X)}%
(\Omega,\mu)$) then $T_{a,b}\in\widehat{\mathcal{E}}_{\mathcal{K}%
(Y),\mathcal{B}(X)}$ and $\Vert T_{a,b}\Vert_{\widehat{\mathcal{E}%
}_{\mathcal{K}(Y),\mathcal{B}(X)}}\leq\left\Vert a\right\Vert _{L_{2}%
}\left\Vert b\right\Vert _{L_{2}}$ (respectively, $T_{a,b}\in
\widehat{\mathcal{E}}_{\mathcal{B}(Y),\mathcal{K}(X)}$ and $\Vert T_{a,b}%
\Vert_{\widehat{\mathcal{E}}_{\mathcal{B}(Y),\mathcal{K}(X)}}\leq\left\Vert
a\right\Vert _{L_{2}}\left\Vert b\right\Vert _{L_{2}}$).
\end{proposition}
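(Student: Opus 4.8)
The plan is to prove the statement first for \emph{simple} coefficient functions, where $T_{a,b}$ is literally an elementary operator, and then to pass to arbitrary $L_{2}$-coefficients by density, using the inequality $\left\Vert \cdot\right\Vert _{\mathcal{B}(U)}\leq\left\Vert \cdot\right\Vert _{\mathcal{E}}$ established above together with the realization of $\widehat{\mathcal{E}}_{A_{1},A_{2}}(U)$ as a Banach subalgebra of $\mathcal{B}(U)$. Before anything else I would check that $T_{a,b}$ is a well-defined bounded operator on $U$: for fixed $x\in U$ the $U$-valued function $\omega\longmapsto a(\omega)xb(\omega)$ is strongly measurable, being the composition of the strongly measurable map $\omega\longmapsto(a(\omega),b(\omega))$ with the bounded bilinear map $(c,d)\longmapsto cxd$ of $\mathcal{B}(Y)\times\mathcal{B}(X)$ into $(U,\left\Vert \cdot\right\Vert _{U})$ (the reflexivity of $U$ serving here to keep the $U$-valued Bochner integral well-behaved), and since
\[
\int_{\Omega}\left\Vert a(\omega)xb(\omega)\right\Vert _{U}\,d\mu\leq\left\Vert x\right\Vert _{U}\int_{\Omega}\left\Vert a(\omega)\right\Vert \left\Vert b(\omega)\right\Vert \,d\mu\leq\left\Vert x\right\Vert _{U}\left\Vert a\right\Vert _{L_{2}}\left\Vert b\right\Vert _{L_{2}}
\]
by the Cauchy--Schwarz inequality, the integral exists and $\left\Vert T_{a,b}\right\Vert _{\mathcal{B}(U)}\leq\left\Vert a\right\Vert _{L_{2}}\left\Vert b\right\Vert _{L_{2}}$.

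For simple $a$ and $b$ I would refine the two partitions to a common finite family $\{E_{k}\}$ of pairwise disjoint sets of finite measure and write $a=\sum_{k}\chi_{E_{k}}a_{k}$, $b=\sum_{k}\chi_{E_{k}}b_{k}$ with $a_{k}\in\mathcal{B}(Y)$, $b_{k}\in\mathcal{B}(X)$. Then $T_{a,b}=\sum_{k}\mu(E_{k})L_{a_{k}}R_{b_{k}}\in\mathcal{E}_{\mathcal{B}(Y),\mathcal{B}(X)}(U)$, and applying Cauchy--Schwarz to the index $k$,
\[
\left\Vert T_{a,b}\right\Vert _{\mathcal{E}}\leq\sum_{k}\mu(E_{k})\left\Vert a_{k}\right\Vert \left\Vert b_{k}\right\Vert \leq\left(  \sum_{k}\mu(E_{k})\left\Vert a_{k}\right\Vert ^{2}\right)  ^{1/2}\left(  \sum_{k}\mu(E_{k})\left\Vert b_{k}\right\Vert ^{2}\right)  ^{1/2}\leq\left\Vert a\right\Vert _{L_{2}}\left\Vert b\right\Vert _{L_{2}}.
\]
If in addition $a$ is $\mathcal{K}(Y)$-valued (resp.\ $b$ is $\mathcal{K}(X)$-valued), then the $a_{k}$ (resp.\ $b_{k}$) lie in $\mathcal{K}(Y)$ (resp.\ $\mathcal{K}(X)$), so $T_{a,b}$ lies in $\mathcal{E}_{\mathcal{K}(Y),\mathcal{B}(X)}(U)$ (resp.\ $\mathcal{E}_{\mathcal{B}(Y),\mathcal{K}(X)}(U)$) with the same bound on the corresponding $\left\Vert \cdot\right\Vert _{\mathcal{E}}$-norm.

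For general $a,b$ I would choose simple $a_{n}\to a$ and $b_{n}\to b$ in the respective $L_{2}$-norms (choosing $\mathcal{K}$-valued $a_{n}$, resp.\ $b_{n}$, in the special cases). By bilinearity $T_{a_{n},b_{n}}-T_{a_{m},b_{m}}=T_{a_{n}-a_{m},b_{n}}+T_{a_{m},b_{n}-b_{m}}$, so the simple-function estimate shows that $(T_{a_{n},b_{n}})$ is Cauchy in $\widehat{\mathcal{E}}_{\mathcal{B}(Y),\mathcal{B}(X)}(U)$ (and in the relevant smaller completed algebra in the special cases); let $S$ be its limit there. Since $\left\Vert \cdot\right\Vert _{\mathcal{B}(U)}\leq\left\Vert \cdot\right\Vert _{\mathcal{E}}$, the sequence also converges to $S$ in $\mathcal{B}(U)$, while the estimate of the first paragraph gives $\left\Vert T_{a_{n},b_{n}}-T_{a,b}\right\Vert _{\mathcal{B}(U)}\leq\left\Vert a_{n}-a\right\Vert _{L_{2}}\left\Vert b_{n}\right\Vert _{L_{2}}+\left\Vert a\right\Vert _{L_{2}}\left\Vert b_{n}-b\right\Vert _{L_{2}}\to0$; hence $S=T_{a,b}$. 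Thus $T_{a,b}$ belongs to the appropriate completed algebra, and continuity of the norm together with the simple-function bound yields $\left\Vert T_{a,b}\right\Vert _{\widehat{\mathcal{E}}}=\lim_{n}\left\Vert T_{a_{n},b_{n}}\right\Vert _{\mathcal{E}}\leq\lim_{n}\left\Vert a_{n}\right\Vert _{L_{2}}\left\Vert b_{n}\right\Vert _{L_{2}}=\left\Vert a\right\Vert _{L_{2}}\left\Vert b\right\Vert _{L_{2}}$.

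The main obstacle is precisely the identification $S=T_{a,b}$ in the last step: one must be sure that the \emph{abstract} limit taken in the completion $\widehat{\mathcal{E}}$ coincides with the \emph{concrete} Bochner-integral operator $T_{a,b}$ acting on $U$, and this is where the domination $\left\Vert \cdot\right\Vert _{\mathcal{B}(U)}\leq\left\Vert \cdot\right\Vert _{\mathcal{E}}$ (and the fact that $\widehat{\mathcal{E}}_{A_{1},A_{2}}(U)$ sits inside $\mathcal{B}(U)$) is indispensable. A more routine, measure-theoretic, point is the density of simple functions in $L_{2}^{\mathcal{B}(Y)}(\Omega,\mu)$, $L_{2}^{\mathcal{K}(Y)}(\Omega,\mu)$, etc.\ and the strong measurability of the $U$-valued integrands, for which the standing reflexivity assumption on $U$ is convenient.
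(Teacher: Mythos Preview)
Your proof is correct and follows essentially the same approach as the paper: establish the $\mathcal{E}$-norm bound for simple coefficient functions, then pass to general $L_{2}$-coefficients by density, identifying the abstract $\widehat{\mathcal{E}}$-limit with the concrete Bochner-integral operator via the domination $\Vert\cdot\Vert_{\mathcal{B}(U)}\leq\Vert\cdot\Vert_{\mathcal{E}}$. The only minor difference is that the paper approximates just one coefficient (say $a$) by simple functions while keeping $b$ fixed---writing $T_{a_{n},b}=\sum_{i}L_{k_{i}}R_{t_{i}}$ with $t_{i}=\int_{\Lambda_{i}}b\,d\mu$---which avoids refining to a common partition and approximating both coefficients simultaneously; your symmetric two-sided approximation is equally valid.
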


\begin{proof}
Let $a_{n}(\omega)\in L_{2}^{\mathcal{B}(Y)}(\Omega,\mu)$ be a sequence of
simple functions that tend to $a(\omega)$ in norm of $L_{2}^{\mathcal{B}%
(Y)}(\Omega,\mu)$. Clearly $T_{a,b}$ is the limit of operators $T_{a_{n},b}$
in the norm topology of $\mathcal{B}(U)$.
Each $a_{n}$ is a finite sum of functions $k_{i}\varphi_{i}(t)$, where
$k_{i}\in\mathcal{B}(Y)$ and $\varphi_{i}$ is the characteristic function of a
measurable set $\Lambda_{i}\subset\Omega$. Hence
\[
T_{a_{n},b}=\sum_{i}L_{k_{i}}R_{t_{i}}%
\]
where $t_{i}=\int_{\Lambda_{i}}b(t)d\mu\in\mathcal{B}\left(  X\right)  $.
Therefore $T_{a_{n},b}\in\mathcal{E}_{\mathcal{B}\left(  Y\right)
,\mathcal{B}\left(  X\right)  }$ and
\[
\Vert T_{a_{n},b}\Vert_{\widehat{\mathcal{E}}_{\mathcal{B}(Y),\mathcal{B}(X)}%
}\leq\Vert a_{n}\Vert_{L_{2}}\Vert b\Vert_{L_{2}}.
\]
It follows from this that the sequence of operators $T_{a_{n},b}$ is
fundamental in $\widehat{\mathcal{E}}_{\mathcal{B}(Y),\mathcal{B}(X)}$, so it
tends to some element $T\in\widehat{\mathcal{E}}_{\mathcal{B}(Y),\mathcal{B}%
(X)}$, and $\Vert T\Vert_{\widehat{\mathcal{E}}_{\mathcal{B}(Y),\mathcal{B}%
(X)}}\leq\Vert a\Vert_{L_{2}}\Vert b\Vert_{L_{2}}$. By the above, $T=T_{a,b}$
and we are done.
The other statements are proved similarly.
\end{proof}

Let $a\in L_{2}^{\mathcal{K}(Y)}(\Omega,\mu)$, $b\in L_{2}^{\mathcal{K}%
(X)}(\Omega,\mu)$, $s\in L_{2}^{{\mathcal{B}}(Y)}(\Omega,\mu)$, $t\in
L_{2}^{\mathcal{B}(X)}(\Omega,\mu)$. Then the operator $T_{a,b,s,t}$ defined
on a Banach operator bimodule $U\subset\mathcal{B}(X,Y)$ by the formula
\[
T_{a,b,s,t}(x)=\int_{I}a(\omega)xt(\omega)d\mu+\int_{I}s(\omega)xb(\omega
)d\mu,
\]
is called an \textit{integral semicompact operator}. Indeed, it follows from
Proposition \ref{intsemi} that this operator is semicompact multiplication operator.

If we wish to remove the restriction of reflexivity of $U$ and still have that
$T_{a,b,s,t}$ belongs to $\widehat{\mathcal{K}}_{\frac{1}{2}}(U)$, we should
impose continuity conditions which allow us to deal with Riemann integral sums
(see for instance \cite[Section 3.3.7]{HP}). For brevity we will formulate the
corresponding result in a form which is far from the most general.

\begin{theorem}
\label{intSemCom} Let ${U}\subset\mathcal{B}(X,Y)$ be a Banach operator
bimodule, and let $(\Omega,\mu)=\left(  I,\mu\right)  $, where $\mu$ is a
regular measure on an interval $I\subset\mathbb{R}$. Then every integral
semicompact operator $T_{a,b,s,t}$ belongs to $\widehat{\mathcal{K}}_{\frac
{1}{2}}(U)$ and
\[
\Vert T_{a,b,s,t}\Vert_{\widehat{\mathcal{K}}_{\frac{1}{2}}(U)}\leq\Vert
a\Vert_{L_{2}}\Vert t\Vert_{L_{2}}+\Vert b\Vert_{L_{2}}\Vert s\Vert_{L_{2}}.
\]

\end{theorem}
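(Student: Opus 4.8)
The plan is to approximate the integral operator $T_{a,b,s,t}$ by Riemann sums, which are manifestly semifinite... no — semicompact elementary operators, and to show that these Riemann sums form a Cauchy sequence in the flexible norm $\left\Vert \cdot\right\Vert _{\widehat{\mathcal{K}}_{\frac{1}{2}}(U)}$, so that the limit exists in $\widehat{\mathcal{K}}_{\frac{1}{2}}(U)$ and coincides with $T_{a,b,s,t}$. By linearity it suffices to treat the two summands separately, so one reduces to the case $T(x)=\int_{I}a(\omega)\,x\,t(\omega)\,d\mu$ with $\omega\longmapsto a(\omega)\in L_{2}^{\mathcal{K}(Y)}(I,\mu)$ and $\omega\longmapsto t(\omega)\in L_{2}^{\mathcal{B}(X)}(I,\mu)$; the other summand is handled identically with the roles of the two variables interchanged (compact factor on the right). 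The bound on $\left\Vert T_{a,b,s,t}\right\Vert $ then follows by adding the two estimates and using that $\left\Vert \cdot\right\Vert _{\widehat{\mathcal{K}}_{\frac{1}{2}}(U)}$ is the flexible-sum norm, hence subadditive on a sum of its two summand ideals.

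First I would fix a sequence of tagged partitions $\mathcal{P}_{n}$ of the interval $I$ whose meshes tend to $0$; since $a(\cdot)$ and $t(\cdot)$ are continuous (that is the point of the hypothesis that allows us to pass to Riemann sums, cf. \cite[Section 3.3.7]{HP}) and $I$ is compact with $\mu$ regular, the corresponding Riemann sums
\[
T_{n}=\sum_{j}\mu(\Lambda_{j}^{(n)})\,L_{a(\omega_{j}^{(n)})}R_{t(\omega_{j}^{(n)})}
\]
converge to $T$ in the operator norm of $\mathcal{B}(U)$. Each $T_{n}$ lies in $\mathcal{E}_{\mathcal{K}(Y),\mathcal{B}(X)}(U)\subset\widehat{\mathcal{K}}_{\frac{1}{2}}(U)$ because every coefficient $a(\omega_{j}^{(n)})$ is compact, and one has the crucial norm estimate, by Cauchy--Schwarz over the partition and then for the integral,
\[
\left\Vert T_{n}\right\Vert _{\widehat{\mathcal{E}}_{\mathcal{K}(Y),\mathcal{B}(X)}}\leq\sum_{j}\mu(\Lambda_{j}^{(n)})\left\Vert a(\omega_{j}^{(n)})\right\Vert \left\Vert t(\omega_{j}^{(n)})\right\Vert \leq\Big(\sum_{j}\mu(\Lambda_{j}^{(n)})\left\Vert a(\omega_{j}^{(n)})\right\Vert ^{2}\Big)^{1/2}\Big(\sum_{j}\mu(\Lambda_{j}^{(n)})\left\Vert t(\omega_{j}^{(n)})\right\Vert ^{2}\Big)^{1/2},
\]
whose right-hand side tends to $\left\Vert a\right\Vert _{L_{2}}\left\Vert t\right\Vert _{L_{2}}$ as $n\to\infty$. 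Applying the same estimate to differences $T_{n}-T_{m}$ (realized as a Riemann sum of the common refinement of $\mathcal{P}_{n}$ and $\mathcal{P}_{m}$, against the functions $a$ and $t$ evaluated at possibly different tags), one sees that $\{T_{n}\}$ is Cauchy in $\widehat{\mathcal{E}}_{\mathcal{K}(Y),\mathcal{B}(X)}$, hence converges there to some $T'$; since convergence in that norm dominates convergence in $\mathcal{B}(U)$, and $T_{n}\to T$ in $\mathcal{B}(U)$, we get $T'=T$, so $T\in\widehat{\mathcal{K}}_{\frac{1}{2}}(U)$ with $\left\Vert T\right\Vert _{\widehat{\mathcal{E}}_{\mathcal{K}(Y),\mathcal{B}(X)}}\leq\left\Vert a\right\Vert _{L_{2}}\left\Vert t\right\Vert _{L_{2}}$. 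Adding the analogous bound for the $s,b$ summand, and recalling that $\left\Vert \cdot\right\Vert _{\widehat{\mathcal{K}}_{\frac{1}{2}}(U)}$ is by definition the infimal decomposition norm of the flexible sum $\widehat{\mathcal{E}}_{\mathcal{K}(Y),\mathcal{B}(X)}(U)+\widehat{\mathcal{E}}_{\mathcal{B}(Y),\mathcal{K}(X)}(U)$, yields $\left\Vert T_{a,b,s,t}\right\Vert _{\widehat{\mathcal{K}}_{\frac{1}{2}}(U)}\leq\left\Vert a\right\Vert _{L_{2}}\left\Vert t\right\Vert _{L_{2}}+\left\Vert b\right\Vert _{L_{2}}\left\Vert s\right\Vert _{L_{2}}$.

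The main obstacle I anticipate is the measure-theoretic bookkeeping needed to justify that the Riemann sums converge and are Cauchy \emph{in the $\left\Vert \cdot\right\Vert _{\widehat{\mathcal{E}}}$-norm} rather than merely in $\mathcal{B}(U)$: one must control the coefficients $a(\omega_{j}^{(n)})$ uniformly, which is where continuity of $a(\cdot)$ on the compact $I$ (giving uniform continuity, hence norm-bounded range and equicontinuous behaviour across refinements) does the real work, together with regularity of $\mu$ to pass from step functions to Riemann sums. A secondary subtlety is that $U$ is no longer assumed reflexive, so unlike in Proposition \ref{intsemi} one cannot invoke the Bochner-integral machinery directly; this is precisely why the statement is phrased for continuous $a,b,s,t$ and a regular measure on an interval, and why the Riemann-sum approach is the right one. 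Once the uniform estimates are in place, the Cauchy property and the identification of the limit are routine, as is the final additive norm bound.
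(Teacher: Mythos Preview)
The paper does not actually prove this theorem: it only states it, remarking in the preceding paragraph that once reflexivity of $U$ is dropped one should ``impose continuity conditions which allow us to deal with Riemann integral sums.'' Your proposal is correct and is precisely the approach the paper indicates, carried out in the same spirit as the paper's proof of Proposition~\ref{intsemi} (there only the left coefficient function is approximated by step functions while the right one is integrated exactly, which you may find marginally cleaner for the Cauchy estimate than handling both tag families at once).
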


\subsubsection{Matrix multiplication operators\label{s432}}

Let $(T_{ji})_{j,i=1}^{n}$ be a matrix of multiplication operators on a Banach
operator bimodule ${U}\subset\mathcal{B}(X,Y)$. It defines an operator
$T=[T_{ij}]$ on $U^{\left(  n\right)  }$ by the formula
\[
T(x_{1},...,x_{n})=(y_{1},...,y_{n})\text{ where }y_{i}=\sum_{j}T_{ij}x_{j}.
\]
Let us denote the algebra of all such operators by $\mathbb{M}_{n}%
(\widehat{\mathcal{E}}_{\mathcal{B}(X),\mathcal{B}(Y)}(U))$. Also, by
$\mathbb{M}_{n}(\widehat{\mathcal{K}}_{\frac{1}{2}}(U))$ we denote the ideal
of $\mathbb{M}_{n}(\widehat{\mathcal{E}}_{\mathcal{B}(X),\mathcal{B}(Y)}(U))$
which consists of all operators $T=[T_{{ij}}]$ with $T_{ij}\in
\widehat{\mathcal{K}}_{\frac{1}{2}}(U)$ for $1\leq i,j\leq n$. In a similar
way we define the subspace $\mathbb{M}_{n}(\mathcal{F}_{\frac{1}{2}}(U))$ of
$\mathbb{M}_{n}(\widehat{\mathcal{K}}_{\frac{1}{2}}(U))$. The closure
$\overline{\mathbb{M}_{n}(\mathcal{F}_{\frac{1}{2}}(U))}$ of $\mathbb{M}%
_{n}(\mathcal{F}_{\frac{1}{2}}(U))$ in $\mathbb{M}_{n}(\widehat{\mathcal{K}%
}_{\frac{1}{2}}(U))$ is an ideal of $\mathbb{M}_{n}(\widehat{\mathcal{K}%
}_{\frac{1}{2}}(U))$.

\begin{theorem}
\label{MatQ} Let $U\subset\mathcal{B}(X,Y)$ be a Banach operator bimodule.
Then the algebra $\mathbb{M}_{n}(\widehat{\mathcal{K}}_{\frac{1}{2}%
}(U))/\overline{\mathbb{M}_{n}(\mathcal{F}_{\frac{1}{2}}(U))}$ is tensor radical.
\end{theorem}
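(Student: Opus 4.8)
The plan is to realize the quotient algebra in the statement as a projective tensor product $R\,\widehat{\otimes}\,\mathbb{M}_{n}(\mathbb{C})$, where $R$ is the tensor radical Banach algebra furnished by Corollary~\ref{semi}, and then to conclude by Proposition~\ref{tensum}(i).

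First I would record the purely algebraic identity $\mathbb{M}_{n}(B)=B\otimes\mathbb{M}_{n}(\mathbb{C})$, valid for any normed algebra $B$; since $\mathbb{M}_{n}(\mathbb{C})$ is finite dimensional, the algebraic tensor product is already complete and its projective norm is equivalent to the entrywise norm $[b_{ij}]\mapsto\max_{i,j}\Vert b_{ij}\Vert_{B}$, so $\mathbb{M}_{n}(B)=B\,\widehat{\otimes}\,\mathbb{M}_{n}(\mathbb{C})$ as Banach algebras up to an equivalent norm. Taking $B=\widehat{\mathcal{K}}_{\frac{1}{2}}(U)$, and recalling that $\mathcal{F}_{\frac{1}{2}}(U)$ is an ideal of $\widehat{\mathcal{K}}_{\frac{1}{2}}(U)$ (it is a sum of ideals of the form $\mathcal{E}_{I,J}(U)$ sitting inside $\widehat{\mathcal{K}}_{\frac{1}{2}}(U)$), I get that $\mathbb{M}_{n}(\mathcal{F}_{\frac{1}{2}}(U))$ corresponds to $\mathcal{F}_{\frac{1}{2}}(U)\otimes\mathbb{M}_{n}(\mathbb{C})$, which is exactly the subspace $J_{1}\otimes A_{2}$ of Proposition~\ref{quo} for $A_{1}=\widehat{\mathcal{K}}_{\frac{1}{2}}(U)$, $A_{2}=\mathbb{M}_{n}(\mathbb{C})$, $J_{1}=\mathcal{F}_{\frac{1}{2}}(U)$, $J_{2}=0$. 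Because convergence in $\mathbb{M}_{n}(B)$ is entrywise, the closure of this subspace is $\overline{\mathcal{F}_{\frac{1}{2}}(U)}\otimes\mathbb{M}_{n}(\mathbb{C})$, i.e. precisely the closure $\overline{\mathbb{M}_{n}(\mathcal{F}_{\frac{1}{2}}(U))}$ occurring in the statement.

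Now Proposition~\ref{quo}(i) yields that $\mathbb{M}_{n}(\widehat{\mathcal{K}}_{\frac{1}{2}}(U))/\overline{\mathbb{M}_{n}(\mathcal{F}_{\frac{1}{2}}(U))}$ is topologically isomorphic to $\bigl(\widehat{\mathcal{K}}_{\frac{1}{2}}(U)/\overline{\mathcal{F}_{\frac{1}{2}}(U)}\bigr)\widehat{\otimes}\,\mathbb{M}_{n}(\mathbb{C})=R\,\widehat{\otimes}\,\mathbb{M}_{n}(\mathbb{C})$. By Corollary~\ref{semi} the algebra $R=\widehat{\mathcal{K}}_{\frac{1}{2}}(U)/\overline{\mathcal{F}_{\frac{1}{2}}(U)}$ is tensor radical, hence $R\,\widehat{\otimes}\,\mathbb{M}_{n}(\mathbb{C})$ is tensor radical by Proposition~\ref{tensum}(i). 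Finally, tensor radicality of a Banach algebra depends only on its norm topology: by Theorem~\ref{perrad} it coincides with tensor quasinilpotence, and the tensor spectral radius is unaffected by passing to an equivalent norm; therefore $\mathbb{M}_{n}(\widehat{\mathcal{K}}_{\frac{1}{2}}(U))/\overline{\mathbb{M}_{n}(\mathcal{F}_{\frac{1}{2}}(U))}$ is tensor radical.

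I expect no substantial obstacle: the real content is carried entirely by Corollary~\ref{semi} (hence ultimately by Theorem~\ref{quot2} and Lemma~\ref{quot}) together with Proposition~\ref{tensum}(i). The only steps requiring care are bookkeeping ones: pinning down the (essentially unique) Banach-algebra norm on $\mathbb{M}_{n}(\widehat{\mathcal{K}}_{\frac{1}{2}}(U))$ so that it is, up to equivalence, the projective tensor norm over $\mathbb{M}_{n}(\mathbb{C})$, and checking that forming matrices commutes with passing to the quotient by a closed ideal, which is immediate from the entrywise description of the matrix norm. A parallel route, which stays inside the operator-bimodule framework and avoids $\mathbb{M}_{n}(\mathbb{C})$ altogether, is to view $U^{(n)}$ as a Banach operator bimodule in $\mathcal{B}(X,Y^{(n)})$ (with $\ell_{\infty}$-type norms on $Y^{(n)}$ and $U^{(n)}$, so that the rank-one normalization holds), verify the identifications $\mathbb{M}_{n}(\widehat{\mathcal{K}}_{\frac{1}{2}}(U))=\widehat{\mathcal{K}}_{\frac{1}{2}}(U^{(n)})$ and $\mathbb{M}_{n}(\mathcal{F}_{\frac{1}{2}}(U))=\mathcal{F}_{\frac{1}{2}}(U^{(n)})$ using $\mathcal{K}(Y^{(n)})=\mathbb{M}_{n}(\mathcal{K}(Y))$ and $\mathcal{F}(Y^{(n)})=\mathbb{M}_{n}(\mathcal{F}(Y))$, and then apply Corollary~\ref{semi} directly to $U^{(n)}$.
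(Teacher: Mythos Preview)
Your proposal is correct and follows essentially the same approach as the paper: both identify $\mathbb{M}_{n}(\widehat{\mathcal{K}}_{\frac{1}{2}}(U))$ with $\widehat{\mathcal{K}}_{\frac{1}{2}}(U)\otimes_{\gamma}\mathbb{M}_{n}$, pass to the quotient to obtain $\bigl(\widehat{\mathcal{K}}_{\frac{1}{2}}(U)/\overline{\mathcal{F}_{\frac{1}{2}}(U)}\bigr)\widehat{\otimes}\mathbb{M}_{n}$, and then invoke Corollary~\ref{semi}. The only cosmetic difference is that the paper verifies tensor radicality directly from the definition by tensoring with an arbitrary Banach algebra $A$ and rearranging, whereas you cite the packaged version Proposition~\ref{tensum}(i); the content is identical.
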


\begin{proof}
The algebra $\mathbb{M}_{n}(\widehat{\mathcal{K}}_{\frac{1}{2}}(U))$ is
topologically isomorphic to $\mathbb{M}_{n}\otimes_{\gamma}(\widehat
{\mathcal{K}}_{\frac{1}{2}}(U))$, where $\mathbb{M}_{n}$ is the algebra of
$n\times n$ matrices. Furthermore, the algebra $\overline{\mathbb{M}%
_{n}(\mathcal{F}_{\frac{1}{2}}(U))}$ is topologically isomorphic to
$\mathbb{M}_{n}\otimes_{\gamma}\overline{\mathcal{F}_{\frac{1}{2}}(U)}$.
Hence, for every Banach algebra $A$, we have that
\begin{align*}
\left(  \mathbb{M}_{n}\left(  \widehat{\mathcal{K}}_{\frac{1}{2}}(U)\right)
/\overline{\mathbb{M}_{n}\left(  \mathcal{F}_{\frac{1}{2}}(U)\right)
}\right)  \widehat{\otimes}A  &  \cong\left(  \left(  \mathbb{M}_{n}%
\otimes_{\gamma}\widehat{\mathcal{K}}_{\frac{1}{2}}(U)\right)  /\left(
\mathbb{M}_{n}\otimes_{\gamma}\overline{\mathcal{F}_{\frac{1}{2}}(U)}\right)
\right)  \widehat{\otimes}A\\
&  \cong\left(  \mathbb{M}_{n}\otimes_{\gamma}\left(  \widehat{\mathcal{K}%
}_{\frac{1}{2}}(U)/\overline{\mathcal{F}_{\frac{1}{2}}(U)}\right)  \right)
\widehat{\otimes}A\\
&  \cong\left(  \widehat{\mathcal{K}}_{\frac{1}{2}}(U)/\overline
{\mathcal{F}_{\frac{1}{2}}(U)}\right)  \widehat{\otimes}\left(  \mathbb{M}%
_{n}\otimes_{\gamma}A\right)
\end{align*}
The latter algebra is radical because $\widehat{\mathcal{K}}_{\frac{1}{2}%
}(U)/\overline{\mathcal{F}_{\frac{1}{2}}(U)}$ is tensor radical by Corollary
\ref{semi}.
\end{proof}

\section{Multiplication operators on algebras satisfying compactness
conditions\label{s5}}


In this section we consider elementary and multiplication operators in the
most popular meaning: as elementary and multiplication operators with
coefficients in a Banach algebra acting on the algebra itself. In terms of the
previous section, we consider the case $A_{1}=A_{2}=U=A$, that is we regard
$A$ as an $A$-bimodule. For brevity we remove the indication of a bimodule in
our standard notation for the multiplication algebra: we write
$\widehat{\mathcal{E}}_{A,A}$ instead of $\widehat{\mathcal{E}}_{A,A}(A)$
(taking the occasion to use $\widehat{\mathcal{E}}_{I,J}$ for ideals
$I,J\subset A$). Furthermore, we denote the algebra of all elementary
operators on $A$ by $\mathcal{E\!\ell}(A)$ instead of $\mathcal{E\!\ell}%
_{A,A}(A)$.

To make our assumptions more concrete we impose various compactness conditions
on $A$. As we know, even the weakest of them, the hypocompactness of $A$,
implies that $\mathrm{Rad}(A)$ coincides with $\mathcal{R}_{t}(A)$.


\subsection{Multiplication operators on algebras commutative modulo the
radical}

Since radical hypocompact Banach algebras are tensor radical, we can apply
results of Section \ref{s4}.

\begin{corollary}
\label{hypomult} If $A$ is a hypocompact Banach algebra then the algebra
$\widehat{\mathcal{E}}_{\mathrm{Rad}(A),A}+\widehat{\mathcal{E}}%
_{A,\mathrm{Rad}(A)}$ is tensor radical and hypocompact.
\end{corollary}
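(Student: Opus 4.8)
The plan is to deduce both assertions by specializing earlier results, once the identification $\mathrm{Rad}(A)=\mathcal{R}_{t}(A)$ is in hand. Since $A$ is a hypocompact \emph{Banach} algebra, $\widehat{A}=A$, so Theorem \ref{hyphypura} gives $\mathcal{R}_{t}(A)=A\cap\mathrm{Rad}(\widehat{A})=\mathrm{Rad}(A)$. As $\mathcal{R}_{t}(A)$ is, for every normed algebra, the largest tensor radical ideal (the consequence of Corollary \ref{tq} and Theorem \ref{perrad} recorded in Section \ref{stqi}), it follows that $\mathrm{Rad}(A)$ is a closed tensor radical ideal of $A$. (Equivalently, $\mathrm{Rad}(A)$ is hypocompact by Corollary \ref{ihf} and radical, hence tensor radical by Corollary \ref{tensrad}.)

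For tensor radicality of $\widehat{\mathcal{E}}_{\mathrm{Rad}(A),A}+\widehat{\mathcal{E}}_{A,\mathrm{Rad}(A)}$ I would apply Corollary \ref{ideals1} with $A_{1}=A_{2}=U=A$ and $I_{1}=I_{2}=\mathrm{Rad}(A)$: these are closed ideals of $A$ that are tensor radical by the previous paragraph, so the flexible sum $\widehat{\mathcal{E}}_{A,\mathrm{Rad}(A)}(A)+\widehat{\mathcal{E}}_{\mathrm{Rad}(A),A}(A)$ of the associated Banach ideals of $\widehat{\mathcal{E}}_{A,A}(A)$ is tensor radical. This is exactly the algebra in question; it is also the special case $J_{1}=J_{2}=0$ of Theorem \ref{quot2}.

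For hypocompactness I would observe that the coefficient algebras of each summand, namely $\mathrm{Rad}(A)$ (hypocompact by Corollary \ref{ihf}) and $A$, are hypocompact, whence $\widehat{\mathcal{E}}_{\mathrm{Rad}(A),A}$, $\widehat{\mathcal{E}}_{A,\mathrm{Rad}(A)}$, and also $\widehat{\mathcal{E}}_{A,A}$ (taking $A_{1}=A_{2}=A$), are hypocompact by Theorem \ref{mult1-}(i). Since $\widehat{\mathcal{E}}_{\mathrm{Rad}(A),A}+\widehat{\mathcal{E}}_{A,\mathrm{Rad}(A)}$ is a Banach ideal of $\widehat{\mathcal{E}}_{A,A}$ (cf.\ Proposition \ref{fl2}), Corollary \ref{ihf} yields that it is hypocompact. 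An argument internal to the ideal also works: $\bigl(\widehat{\mathcal{E}}_{\mathrm{Rad}(A),A}+\widehat{\mathcal{E}}_{A,\mathrm{Rad}(A)}\bigr)/\widehat{\mathcal{E}}_{\mathrm{Rad}(A),A}$ is topologically a quotient of $\widehat{\mathcal{E}}_{A,\mathrm{Rad}(A)}$, hence hypocompact by Corollary \ref{hypquot}, while $\widehat{\mathcal{E}}_{\mathrm{Rad}(A),A}$ is hypocompact, so the extension is hypocompact.

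I expect the only delicate point to be the bookkeeping of norms: $\widehat{\mathcal{E}}_{\mathrm{Rad}(A),A}+\widehat{\mathcal{E}}_{A,\mathrm{Rad}(A)}$ carries its flexible-sum norm, which in general is strictly finer than the norm it inherits as a subspace of $\widehat{\mathcal{E}}_{A,A}$, so one should make sure that the subalgebras, ideals and quotients entering the argument are identified with flexible sums of Banach ideals through Propositions \ref{fl2} and \ref{quo} — in particular for the topological isomorphism used in the extension step. This is routine given the tools already developed, but it is the place where one has to be attentive.
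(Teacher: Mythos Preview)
Your proposal is correct and follows essentially the same route as the paper. The paper's proof is terse: it cites Corollary~\ref{ideals1} for tensor radicality (exactly as you do, after noting via Theorem~\ref{hyphypura} that $\mathrm{Rad}(A)=\mathcal{R}_t(A)$ is tensor radical), and for hypocompactness it says ``arguing as in the proof of Theorem~\ref{mult1-} with using Theorem~\ref{tenz} and Corollary~\ref{hypquot}''---your argument via Theorem~\ref{mult1-}(i) plus Corollary~\ref{ihf} is a legitimate unpacking of that hint, and your alternative extension argument is an equally valid variant. Your caveat about the flexible-sum norm is well placed and is indeed the only point requiring care.
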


\begin{proof}
The first statement follows from Corollary \ref{ideals1}. Arguing as in the
proof of Theorem \ref{mult1-} with using Theorem \ref{tenz} and Corollary
\ref{hypquot}, we prove the second statement.
\end{proof}

In particular all elementary operators $L_{a}+R_{b}+\sum_{i=1}^{n}L_{a_{i}%
}R_{b_{i}}$ with $a_{i}$ or $b_{i}$ in $\mathrm{Rad}(A)$ for each $i$ are
quasinilpotent elements of $\widehat{\mathcal{E}}_{A,A}$. Since the norm in
$\widehat{\mathcal{E}}_{A,A}$ majorizes the operator norm, they are
quasinilpotent operators on $A$.

Below by spectra of elementary operators we mean their spectra in the algebra
$\mathcal{B}(A)$ of all bounded operators on $A$. Clearly the unitalization
$\mathcal{E\!\ell}(A)^{1}$ of $\mathcal{E\!\ell}(A)$ consists of elements of
the form $\sum_{i=1}^{n}L_{a_{i}}R_{b_{i}}$ where $a_{i},b_{i}\in A^{1}$.

\begin{theorem}
\label{comput} Let $A$ be a hypocompact Banach algebra. If $u=\sum_{i=1}%
^{n}L_{a_{i}}R_{b_{i}}\in\mathcal{E\!\ell}(A)^{1}$, $v=\sum_{j=1}^{m}L_{c_{j}%
}R_{b_{j}}\in\mathcal{E\!\ell}(A)^{1}$ and all commutators $[a_{i},c_{j}]$ and
$[b_{i},d_{j}]$ belong to $\mathrm{Rad}(A)$ then
\begin{equation}
{\sigma}(u+v)\subset{\sigma}(u)+{\sigma}(v) \label{spincl1}%
\end{equation}
and
\begin{equation}
{\sigma}(uv)\subset{\sigma}(u){\sigma}(v). \label{spincl2}%
\end{equation}

\end{theorem}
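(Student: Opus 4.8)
The plan is to run the whole argument inside the completed, unitalised multiplication algebra. Regard $A$ as an $(A^{1},A^{1})$-bimodule and set $\mathcal{E}:=\widehat{\mathcal{E}}_{A^{1},A^{1}}(A)$; this is a unital Banach subalgebra of $\mathcal{B}(A)$ whose unit $L_{1}R_{1}$ is the identity operator, and it contains $\mathcal{E\!\ell}(A)^{1}$, in particular $u$ and $v$. Since the unitalisation of a hypocompact algebra is hypocompact, $A^{1}$ is hypocompact, hence $\mathcal{E}$ is hypocompact by Theorem \ref{mult1-}(i). By Theorem \ref{scat}, $\sigma_{\mathcal{E}}(T)$ is countable for every $T\in\mathcal{E}$, and then Proposition \ref{sp}(ii) gives $\sigma_{\mathcal{B}(A)}(T)=\sigma_{\mathcal{E}}(T)$. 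So it suffices to prove (\ref{spincl1}) and (\ref{spincl2}) for spectra computed in $\mathcal{E}$.

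Next I would show that $u$ and $v$ commute modulo $\mathrm{Rad}(\mathcal{E})$. Writing $u=\sum_{i=1}^{n}L_{a_{i}}R_{b_{i}}$, $v=\sum_{j=1}^{m}L_{c_{j}}R_{d_{j}}$ and using $L_{x}L_{y}=L_{xy}$, $R_{x}R_{y}=R_{yx}$, $L_{x}R_{y}=R_{y}L_{x}$, the substitutions $a_{i}c_{j}=c_{j}a_{i}+[a_{i},c_{j}]$ and $d_{j}b_{i}=b_{i}d_{j}+[d_{j},b_{i}]$ yield
\[
[u,v]=uv-vu=\sum_{i,j}\bigl(L_{c_{j}a_{i}}R_{[d_{j},b_{i}]}+L_{[a_{i},c_{j}]}R_{b_{i}d_{j}}+L_{[a_{i},c_{j}]}R_{[d_{j},b_{i}]}\bigr).
\]
By hypothesis $[a_{i},c_{j}]\in\mathrm{Rad}(A)$ and $[d_{j},b_{i}]=-[b_{i},d_{j}]\in\mathrm{Rad}(A)$, and $\mathrm{Rad}(A)=\mathrm{Rad}(A^{1})$ is a closed ideal of $A^{1}$; hence every summand lies in $\mathcal{E}_{\mathrm{Rad}(A),A^{1}}(A)+\mathcal{E}_{A^{1},\mathrm{Rad}(A)}(A)$, so $[u,v]\in\widehat{\mathcal{E}}_{\mathrm{Rad}(A),A^{1}}(A)+\widehat{\mathcal{E}}_{A^{1},\mathrm{Rad}(A)}(A)$. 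Now $\mathrm{Rad}(A)$ is radical and, being an ideal of the hypocompact algebra $A$, is hypocompact (Corollary \ref{ihf}), hence tensor radical (Corollary \ref{tensrad}). Therefore the above sum of Banach ideals of $\mathcal{E}$ is tensor radical by Corollary \ref{ideals1}, so it is contained in $\mathcal{R}_{t}(\mathcal{E})\subseteq\mathrm{Rad}(\mathcal{E})$ by Corollary \ref{rad}. In particular $[u,v]\in\mathrm{Rad}(\mathcal{E})$.

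Finally, pass to the semisimple Banach algebra $\widetilde{\mathcal{E}}:=\mathcal{E}/\mathrm{Rad}(\mathcal{E})$. Since $\mathrm{Rad}(\mathcal{E})$ is a closed ideal consisting of quasinilpotents, the quotient map preserves spectra: $\sigma_{\mathcal{E}}(T)=\sigma_{\widetilde{\mathcal{E}}}(\widetilde T)$ for every $T\in\mathcal{E}$, where $\widetilde T$ denotes the image of $T$. By the previous step $\widetilde u$ and $\widetilde v$ commute in $\widetilde{\mathcal{E}}$. Choose a maximal commutative subalgebra $\mathcal{C}$ of $\widetilde{\mathcal{E}}$ containing $\widetilde u,\widetilde v$ (it is unital and closed, its closure being again commutative); $\mathcal{C}$ is inverse-closed in $\widetilde{\mathcal{E}}$, because the inverse of an invertible element of $\mathcal{C}$ commutes with everything commuting with that element, hence with all of $\mathcal{C}$, so by maximality it lies in $\mathcal{C}$. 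Thus $\sigma_{\mathcal{C}}(x)=\sigma_{\widetilde{\mathcal{E}}}(x)$ for $x\in\mathcal{C}$. Applying the Gelfand representation of the commutative unital Banach algebra $\mathcal{C}$, each point of $\sigma_{\mathcal{C}}(\widetilde u+\widetilde v)$ is of the form $\chi(\widetilde u)+\chi(\widetilde v)$ and each point of $\sigma_{\mathcal{C}}(\widetilde u\,\widetilde v)$ is of the form $\chi(\widetilde u)\chi(\widetilde v)$ for a character $\chi$ of $\mathcal{C}$, with $\chi(\widetilde u)\in\sigma_{\mathcal{C}}(\widetilde u)=\sigma_{\widetilde{\mathcal{E}}}(\widetilde u)$ and likewise for $\widetilde v$. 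Chaining the identifications $\sigma_{\mathcal{B}(A)}(\,\cdot\,)=\sigma_{\mathcal{E}}(\,\cdot\,)=\sigma_{\widetilde{\mathcal{E}}}(\widetilde{\,\cdot\,})$ gives (\ref{spincl1}) and (\ref{spincl2}).

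The commutator identity is pure bookkeeping; the step that genuinely needs the compactness hypothesis is the transfer of spectra between $\mathcal{B}(A)$ and $\mathcal{E}$ via Theorem \ref{scat} and Proposition \ref{sp}. Without countability of the spectra this fails: $\mathcal{B}(A)$ is semisimple, the commutator $[u,v]$ need not vanish there, and the containment $[u,v]\in\mathrm{Rad}(\mathcal{E})$ would then impose no restriction on $\sigma_{\mathcal{B}(A)}$.
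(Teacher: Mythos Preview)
Your proof is correct and follows essentially the same route as the paper: the paper works first in the tensor product $C=A^{1}\widehat{\otimes}(A^{1})^{\mathrm{op}}$ and then passes to its image $D$ in $\mathcal{B}(A)$ (which is exactly your $\mathcal{E}$), computes the same commutator identity there, uses $\mathrm{Rad}(A)=\mathcal{R}_{t}(A)$ to get $[u,v]\in\mathrm{Rad}(D)$, and then invokes hypocompactness of $D$, countability of spectra and Proposition~\ref{sp} to transfer back to $\mathcal{B}(A)$. The only cosmetic differences are that you work directly in $\mathcal{E}$ rather than in $C$, and that you spell out the standard ``commuting modulo the radical implies the spectral inclusions'' step via a maximal commutative subalgebra and Gelfand theory, whereas the paper simply quotes it.
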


\begin{proof}
Let us denote $A^{1}$ by $B$ and $\mathrm{Rad}(A)$ by $J$ for brevity. Let
$C=B{\widehat{\otimes}}B^{\mathrm{op}}$ and $E=J{\otimes}B^{\mathrm{op}%
}+B{\otimes}J^{\mathrm{op}}$. As $J=\mathcal{R}_{t}(A)$ by Theorem
\ref{hyphypura}, we have that $E\subset\mathrm{Rad}(C)$. Setting $u^{\prime
}=\sum_{i}a_{i}{\otimes}b_{i}$ and $v^{\prime}=\sum_{j}c_{j}{\otimes}b_{j}$,
we have that $[u^{\prime},v^{\prime}]\in E$ (because $[a{\otimes}b,c{\otimes
}d]=[a,c]{\otimes}bd+ca{\otimes}[b,d]$). So these elements commute modulo the
radical of $C$.
Let $\phi:C\longrightarrow\mathcal{B}(A)$ be the homomorphism sending
$a\otimes b$ to $L_{a}R_{b}$ for every $a,b\in B$. Then the algebra
$D=\phi(C)$ supplied with the norm of the quotient $C/\ker\phi$ is a Banach
subalgebra of $\mathcal{B}(A)$. The elements $u=\phi(u^{\prime})$ and
$v=\phi(v^{\prime})$ commute modulo $\phi(\mathrm{Rad}(C))\subset
\mathrm{Rad}(D)$. Hence
\begin{equation}
{\sigma}_{D}(u+v)\subset{\sigma}_{D}(u)+{\sigma}_{D}(v) \label{spincl1P}%
\end{equation}
and
\begin{equation}
{\sigma}_{D}(uv)\subset{\sigma}_{D}(u){\sigma}_{D}(v), \label{spincl2P}%
\end{equation}
where $\sigma_{D}(x)$ denotes the spectrum of $x\in D$ with respect to $D$.
As $C$ is hypocompact by Theorem \ref{tenz}, and $D$ is isomorphic to a
quotient of $C$, then $D$ is hypocompact by Corollary \ref{hypquot}. By
Theorem \ref{scat}, ${\sigma}_{D}(u)$ and ${\sigma}_{D}(v)$ are finite or
countable. Using Proposition \ref{sp}, we get that ${\sigma}_{D}(u)=\sigma(u)$
and ${\sigma}_{D}(v)=\sigma(v)$. So the inclusions (\ref{spincl1P}) and
(\ref{spincl2P}) imply (\ref{spincl1}) and (\ref{spincl2}).
\end{proof}

Let $A$ be a Banach algebra. Recall that the center modulo the radical\ or
\textquotedblleft$\mathrm{Rad}$-center\textquotedblright\ $Z_{\mathrm{Rad}%
}(A)$ is the set $\{a\in A:[a,x]\in\mathrm{Rad}(A)\text{ for all }x\in A\}$.

\begin{corollary}
If $A$ is a hypocompact Banach algebra and $u\in L_{Z_{\mathrm{Rad}}(A)}%
R_{A}+L_{A}R_{Z_{\mathrm{Rad}}(A)}$ then inclusions $(\ref{spincl1})$ and
$(\ref{spincl2})$ hold for all $v\in\mathcal{E\!\ell}(A)$.
\end{corollary}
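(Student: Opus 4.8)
The plan is to reduce the statement to Theorem \ref{comput}. That theorem already packages everything special about hypocompact Banach algebras that is needed here — the countability of the spectra of the operators involved (via Theorem \ref{scat}), the identification of spectra computed in a unital Banach subalgebra with those computed in the ambient algebra (Proposition \ref{sp}), and the good joint spectral behaviour of two elements commuting modulo the radical of such an algebra — so all that really remains is to verify that $u$ and an arbitrary $v\in\mathcal{E\!\ell}(A)$ satisfy the commutator conditions occurring in Theorem \ref{comput}.

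First I would make two routine reductions. Since all spectra are computed in $\mathcal{B}(A)$ and $\mathcal{E\!\ell}(A)\subset\mathcal{E\!\ell}(A)^{1}=\mathcal{E}_{A^{1},A^{1}}(A)$, one may pass to $A^{1}$: this does not change $\mathrm{Rad}$, keeps the algebra hypocompact, and only enlarges the $\mathrm{Rad}$-center, so in particular the identity of $A^{1}$ lies in $Z_{\mathrm{Rad}}(A^{1})$. Secondly, the correspondence $L_{a}R_{b}\leftrightarrow L_{b}^{\mathrm{op}}R_{a}^{\mathrm{op}}$ identifies $\mathcal{E\!\ell}(A)$ with $\mathcal{E\!\ell}(A^{\mathrm{op}})$ and respects $\mathrm{Rad}$, $Z_{\mathrm{Rad}}$, hypocompactness and the spectra involved; hence it is enough to deal with the summand $L_{Z_{\mathrm{Rad}}(A)}R_{A}$, the summand $L_{A}R_{Z_{\mathrm{Rad}}(A)}$ being obtained by applying the result to $A^{\mathrm{op}}$.

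Now, with $v=\sum_{j}L_{c_{j}}R_{d_{j}}$ ($c_{j},d_{j}\in A^{1}$) and $u$ written as a finite sum $\sum_{i}L_{p_{i}}R_{q_{i}}$ whose coefficients $p_{i},q_{i}$ all lie in $Z_{\mathrm{Rad}}(A^{1})$, the defining property of the $\mathrm{Rad}$-center immediately gives $[p_{i},c_{j}]\in\mathrm{Rad}(A)$ and $[q_{i},d_{j}]\in\mathrm{Rad}(A)$ for all $i,j$; these are precisely the hypotheses of Theorem \ref{comput} for $u,v\in\mathcal{E\!\ell}(A)^{1}$, and invoking it yields (\ref{spincl1}) and (\ref{spincl2}). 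The step I expect to be the main obstacle — the one the hypothesis on $u$ is there to overcome — is securing such a representation of $u$: one needs a single expression of $u$ as a sum of terms $L_{\,\cdot\,}R_{\,\cdot\,}$ whose left \emph{and} right coefficients simultaneously lie in the $\mathrm{Rad}$-center, for only then does Theorem \ref{comput} apply against a $v$ whose coefficients are entirely unconstrained. This uses that $Z_{\mathrm{Rad}}(A^{1})$ is a subalgebra of $A^{1}$ containing the identity, together with the $A\mapsto A^{\mathrm{op}}$ reduction that turns a right-side condition into a left-side one; everything else is bookkeeping with the normed-algebra structure of $\mathcal{E\!\ell}(A)$ and the elementary stability of $\mathrm{Rad}$ and $Z_{\mathrm{Rad}}$ under unitization and under passage to the opposite algebra.
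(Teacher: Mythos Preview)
Your argument has a genuine gap. Theorem \ref{comput} requires that \emph{both} families of commutators $[a_i,c_j]$ and $[b_i,d_j]$ lie in $\mathrm{Rad}(A)$. Your key claim --- that $u$ admits a representation $\sum_i L_{p_i}R_{q_i}$ with \emph{both} $p_i$ and $q_i$ in $Z_{\mathrm{Rad}}(A^1)$ --- is not supported by the hypothesis: membership in $L_{Z_{\mathrm{Rad}}(A)}R_A$ forces only the left coefficient into $Z_{\mathrm{Rad}}$, and the right one remains an arbitrary element of $A$. The $A\mapsto A^{\mathrm{op}}$ symmetry does not rescue this. It would let you treat the summand $L_A R_{Z_{\mathrm{Rad}}(A)}$ by the mirror argument, but knowing (\ref{spincl1}) and (\ref{spincl2}) separately for $u_1\in L_{Z_{\mathrm{Rad}}(A)}R_A$ and for $u_2\in L_A R_{Z_{\mathrm{Rad}}(A)}$ gives no control over $u=u_1+u_2$; and it certainly does not manufacture the missing right-coefficient condition for $u_1$ itself.

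In fact the corollary, read literally, is false, so no such reduction can work. Take $A=M_2(\mathbb{C})$, which is finite-dimensional and hence hypocompact, with $\mathrm{Rad}(A)=0$ and $Z_{\mathrm{Rad}}(A)=\mathbb{C}I$. Then $u=R_{e_{12}}=L_I R_{e_{12}}\in L_{Z_{\mathrm{Rad}}(A)}R_A$ and $v=R_{e_{21}}\in\mathcal{E\!\ell}(A)$, yet $uv=R_{e_{22}}$ has spectrum $\{0,1\}$ while $\sigma(u)\sigma(v)=\{0\}\cdot\{0\}=\{0\}$; similarly $u+v=R_{e_{12}+e_{21}}$ has spectrum $\{-1,1\}\not\subset\{0\}$. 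Thus Theorem \ref{comput} genuinely needs the two-sided commutator condition, and the corollary only follows once the hypothesis on $u$ is strengthened so that \emph{both} coefficients in each term lie in $Z_{\mathrm{Rad}}(A)$ (e.g.\ $u\in L_{Z_{\mathrm{Rad}}(A)}R_{Z_{\mathrm{Rad}}(A)}$), which is presumably the intended reading. Under that reading your final paragraph is exactly right and the preliminary reductions are unnecessary.
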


\medskip Let us call a subalgebra $B$ of a Banach algebra \textit{spectrally
computable} if inclusions (\ref{spincl1}) and (\ref{spincl2}) hold for all
elements $u,v\in B$.

\begin{corollary}
\label{hypcomput} If $A$ is a hypocompact Banach algebra commutative modulo
the radical then $\mathcal{E\!\ell}(A)$ is a spectrally computable subalgebra
of $\mathcal{B}\left(  A\right)  $.
\end{corollary}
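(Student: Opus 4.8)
The plan is to deduce the corollary immediately from Theorem \ref{comput} by checking that, when $A$ is commutative modulo the radical, the hypotheses of that theorem are automatically satisfied for every pair of elementary operators. First I would recall from Section \ref{seo} that each element of $\mathcal{E\!\ell}(A)$ can be written as $L_{a}+R_{b}+\sum_{i=1}^{n}L_{a_{i}}R_{b_{i}}$ and hence, by absorbing $L_{a}=L_{a}R_{1}$ and $R_{b}=L_{1}R_{b}$ into the sum, as a finite sum $\sum_{i=1}^{n}L_{\alpha_{i}}R_{\beta_{i}}$ with coefficients $\alpha_{i},\beta_{i}\in A^{1}$; in particular $\mathcal{E\!\ell}(A)\subset\mathcal{E\!\ell}(A)^{1}$, so Theorem \ref{comput}, which is phrased for $\mathcal{E\!\ell}(A)^{1}$, applies to any pair $u,v\in\mathcal{E\!\ell}(A)$.

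Next I would observe that if $A/\mathrm{Rad}(A)$ is commutative then $[x,y]\in\mathrm{Rad}(A)$ for all $x,y\in A^{1}$: writing $x=x_{0}+\lambda 1$ and $y=y_{0}+\mu 1$ with $x_{0},y_{0}\in A$ and $\lambda,\mu\in\mathbb{C}$, scalars being central gives $[x,y]=[x_{0},y_{0}]\in\mathrm{Rad}(A)$. Consequently, for any two elementary operators $u=\sum_{i}L_{a_{i}}R_{b_{i}}$ and $v=\sum_{j}L_{c_{j}}R_{d_{j}}$ with coefficients in $A^{1}$, all commutators $[a_{i},c_{j}]$ and $[b_{i},d_{j}]$ lie in $\mathrm{Rad}(A)$.

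Then I would simply invoke Theorem \ref{comput}: since $A$ is hypocompact and the required commutator conditions hold, the inclusions ${\sigma}(u+v)\subset{\sigma}(u)+{\sigma}(v)$ and ${\sigma}(uv)\subset{\sigma}(u){\sigma}(v)$ are valid. As $u,v$ range over $\mathcal{E\!\ell}(A)$, this is exactly the assertion that $\mathcal{E\!\ell}(A)$ is a spectrally computable subalgebra of $\mathcal{B}(A)$.

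There is essentially no hard step here; the only points requiring a little care are the bookkeeping that commutativity modulo the radical of $A$ forces commutators of elements of $A^{1}$ into $\mathrm{Rad}(A)$, and the observation that the statement of Theorem \ref{comput} covers all of $\mathcal{E\!\ell}(A)$. Both are routine. One may also note that this corollary is the special case $Z_{\mathrm{Rad}}(A)=A$ of the preceding corollary.
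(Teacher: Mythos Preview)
Your proposal is correct and is precisely the argument the paper intends: the corollary is stated without proof immediately after Theorem \ref{comput}, and it follows just as you describe, since commutativity of $A/\mathrm{Rad}(A)$ forces all the commutators $[a_i,c_j]$ and $[b_i,d_j]$ (with coefficients in $A^{1}$) into $\mathrm{Rad}(A)$, so Theorem \ref{comput} applies to every pair $u,v\in\mathcal{E\!\ell}(A)$.
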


\begin{remark}
\emph{In virtue of Corollary \ref{sp3} and by continuity of the spectrum on
operators with countable spectra,} \emph{the previous results as well as the
results of Section \ref{ss2} can be extended to multiplication operators. But
we prefer to present them in less general and more traditional setting of
elementary operators.}
\end{remark}

\subsection{Engel algebras\label{ss2}}

A Banach Lie algebra $\mathcal{L}$ is called \textit{Engel} if all operators
$\mathrm{ad}_{\mathcal{L}}(a):x\rightarrow\lbrack a,x]$ on $\mathcal{L}$ are
quasinilpotent. This is a natural functional-analytic extension of the class
of nilpotent Lie algebras because the latter can be defined as Lie algebras
for which all operators $\mathrm{ad}_{\mathcal{L}}\left(  a\right)  $ are
nilpotent of some restricted order \cite{Zelmanov}. A Banach algebra $A$ is
said to be \textit{Engel} if it is Engel as a Banach Lie algebra that is if
all operators $L_{a}-R_{a}$ are quasinilpotent. It is proved in
\cite[Proposition 5.21]{ST2005} (and can be easily deduced from a more general
result of Aupetit and Mathieu \cite{Aup-Ma}) that all Engel Banach algebras
are commutative modulo the radical. We call $A$ \textit{strongly Engel} if
\[
{\sigma}\left(  \sum_{i=1}^{n}L_{a_{i}}R_{b_{i}}\right)  \subset{\sigma
}\left(  \sum_{i=1}^{n}a_{i}b_{i}\right)
\]
for all $a_{i},b_{i}\in A^{1}$ and $n\in\mathbb{N}$. It will be shown below
that for hypocompact Banach algebras these notions coincide.

\begin{theorem}
\label{eng} Let $A$ be a hypocompact Banach algebra commutative modulo the
radical. Suppose that $A$ is generated (as a Banach algebra) by a subset $M$
such that the operators $L_{a}-R_{a}$ are quasinilpotent for all $a\in M$.
Then $A$ is strongly Engel.
\end{theorem}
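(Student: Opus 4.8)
The plan is to reduce the assertion to the spectral computability of $\mathcal{E\!\ell}(A)$ (Corollary \ref{hypcomput}) combined with the observation that, under the hypothesis, the operators $L_a R_b$ with $a,b$ drawn from the generating set $M$ satisfy the required spectral inclusion, and then to propagate this from the generators to all of $A^1$. First I would record the key single-element fact: if $L_a - R_a$ is quasinilpotent then $\sigma(L_a R_b) \subset \sigma(ab)$ for all $b$. Indeed, by Corollary \ref{hypcomput} (or directly from Theorem \ref{comput}, since $A$ is commutative modulo $\mathrm{Rad}(A)$) the subalgebra $\mathcal{E\!\ell}(A)$ is spectrally computable, and $L_a R_b = \tfrac14\bigl((L_a+R_b)^2 - (L_a-R_b)^2\bigr)$; using that $L_a$ and $R_b$ commute, $\sigma(L_a R_b)\subset \sigma(L_a)\sigma(R_b) = \sigma(a)\sigma(b)$. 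To get the sharper inclusion into $\sigma(ab)$ rather than $\sigma(a)\sigma(b)$ one must use the Engel hypothesis on $M$: write $L_a R_a = L_a^2 - L_a(L_a - R_a)$, and since $L_a-R_a$ is quasinilpotent and commutes with $L_a$ modulo the radical of the relevant commutative-mod-radical subalgebra, $\sigma(L_a R_a) = \sigma(L_a^2) = \sigma(a)^2 = \sigma(a^2)$; the polarization/commuting-modulo-radical argument then handles mixed products $L_{a_i}R_{a_j}$ for $a_i,a_j \in M$.

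Next I would pass from the generating set $M$ to the Banach subalgebra it generates, which is $A$ itself. The point is that the map $A^1 \ni a \longmapsto L_a - R_a \in \mathcal{B}(A)$ is an algebra homomorphism into the Lie-algebra sense — more precisely, the set $N = \{a \in A : L_a - R_a \text{ is quasinilpotent}\}$, while not obviously a subalgebra, has the property that the Banach algebra generated by $M$ inside $A$ consists of elements that are Engel "asymptotically". The cleaner route: consider the commutative Banach algebra $A/\mathrm{Rad}(A)$, work with the homomorphism $\phi: A^1 \widehat{\otimes} (A^1)^{\mathrm{op}} \to \mathcal{B}(A)$ sending $a\otimes b \mapsto L_a R_b$ as in the proof of Theorem \ref{comput}, and observe that the element $\sum_i a_i \otimes b_i$ in $C = A^1 \widehat{\otimes} (A^1)^{\mathrm{op}}$ is equal modulo a tensor-radical ideal to $\sum_i a_i b_i \otimes 1$ whenever the $a_i, b_i$ lie in the closed subalgebra generated by $M$. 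This last congruence is where the Engel condition on $M$ enters: for $a\in M$ one has $a\otimes 1 - 1\otimes a = \mu(a)$ with $L$-$R$ image quasinilpotent, hence (using that $C$ is hypocompact by Theorem \ref{tenz}, so $\mathrm{Rad}(C)=\mathcal{R}_t(C)$ by Theorem \ref{hyphypura}) $a\otimes 1 - 1\otimes a \in \mathrm{Rad}(C)$; since $a \otimes 1 - 1 \otimes a$ generates (together with $A^1\otimes 1$) a subalgebra, one deduces $c\otimes 1 - 1\otimes c' \equiv$ something in $\mathrm{Rad}(C)$ for all $c,c'$ in the generated subalgebra with $c' $ the "opposite-side" copy, and therefore $a\otimes b \equiv ab \otimes 1 \pmod{\mathrm{Rad}(C)}$ for $a,b$ in that subalgebra.

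Granting that congruence, the conclusion is immediate: for $a_i, b_i \in A^1$ (which is generated by $M$ as a Banach algebra), $\sum_i a_i\otimes b_i \equiv (\sum_i a_i b_i)\otimes 1 \pmod{\mathrm{Rad}(C)}$, so applying $\phi$ and using that $\sigma_D(\cdot)$ is unchanged on passing modulo the radical of $D = \phi(C)$ together with $\sigma_D = \sigma_{\mathcal{B}(A)}$ (via Proposition \ref{sp}, since $D$ is hypocompact hence has countable spectra by Theorem \ref{scat}), we get
\[
\sigma\Bigl(\sum_i L_{a_i} R_{b_i}\Bigr) = \sigma_D\Bigl(\phi\bigl(\textstyle\sum_i a_i\otimes b_i\bigr)\Bigr) = \sigma_D\Bigl(\phi\bigl((\textstyle\sum_i a_i b_i)\otimes 1\bigr)\Bigr) = \sigma_D\bigl(L_{\sum_i a_i b_i}\bigr) \subset \sigma\Bigl(\sum_i a_i b_i\Bigr),
\]
the last inclusion because $L_c$ has spectrum contained in $\sigma(c)$. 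I expect the \textbf{main obstacle} to be the middle step: rigorously establishing that the Engel property on the \emph{generating set} $M$ forces $a\otimes 1 - 1\otimes a \in \mathrm{Rad}(C)$ for \emph{all} $a$ in the Banach subalgebra generated by $M$ — i.e. that the "$L-R$ quasinilpotent" condition, which is not a priori subalgebra-stable, becomes so after descending to $C/\mathrm{Rad}(C)$. The key leverage is that $C/\mathrm{Rad}(C)$ is a commutative (by the commutative-mod-radical hypothesis on $A$, via Theorem \ref{cmtr}) semisimple Banach algebra, so the set of elements $z$ with $\phi(z)$ quasinilpotent is exactly the kernel of a bunch of characters, hence closed under the subalgebra operations there; one must check the generators land in it and that $A^1 \otimes 1$ does too modulo radical, and then the generated subalgebra does.
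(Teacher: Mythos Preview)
Your proposal contains a concrete error at the key step. You claim that for $a\in M$ the element $a\otimes 1-1\otimes a$ lies in $\mathrm{Rad}(C)$, where $C=A^{1}\widehat{\otimes}(A^{1})^{\mathrm{op}}$, justifying this by the quasinilpotence of its image $L_a-R_a$ under $\phi$. This is false in general: take $A=\mathbb{C}^{2}$ (commutative, semisimple, finite-dimensional, hence hypocompact) and $a=(1,2)$. Then $L_a-R_a=0$ is certainly quasinilpotent, but $C\cong\mathbb{C}^{4}$ is semisimple and $a\otimes 1-1\otimes a$ has coordinates $(0,-1,1,0)\neq 0$. More conceptually, the characters of $C$ are $\chi_{1}\otimes\chi_{2}$ with $\chi_{1},\chi_{2}$ independent characters of $A^{1}$, so $\sigma_{C}(a\otimes 1-1\otimes a)=\sigma(a)-\sigma(a)$, which is nonzero whenever $\sigma(a)$ has more than one point. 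The quasinilpotence of $\phi(z)$ gives information only modulo $\ker\phi$, not in $C$ itself.

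Your final paragraph in fact gestures at the repair: one should work not with $\mathrm{Rad}(C)$ but with the closed ideal $\phi^{-1}(\mathrm{Rad}(D))=\{z\in C:\phi(z)\text{ is quasinilpotent}\}$, or equivalently pass to $D=\phi(C)$ from the outset. Since $D$ is hypocompact (quotient of hypocompact $C$) and commutative modulo its radical (Theorem~\ref{cmtr} plus quotient), an element of $D$ is quasinilpotent iff it lies in $\mathrm{Rad}(D)$. Hence $L_a-R_a\in\mathrm{Rad}(D)$ for $a\in M$, and the set $\{a\in A^{1}:L_a-R_a\in\mathrm{Rad}(D)\}$ is a closed subalgebra (closedness is automatic; the product case uses $L_{ab}-R_{ab}=L_a(L_b-R_b)+(L_a-R_a)R_b+R_{[b,a]}$ with $[b,a]\in\mathrm{Rad}(A)$), so it is all of $A^{1}$. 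Then $L_aR_b\equiv L_{ab}\pmod{\mathrm{Rad}(D)}$ gives the strongly Engel conclusion.

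Once corrected in this way, your argument is essentially the paper's, only wrapped in tensor-product language. The paper works directly in $\mathcal{E\!\ell}(A)$: it first shows via spectral computability (Corollary~\ref{hypcomput}) that $E=\{a\in A:\sigma(L_a-R_a)=\{0\}\}$ is a subalgebra, then uses countability of spectra (Theorem~\ref{scat}) and continuity of the spectral radius at countable-spectrum points to conclude $E$ is closed, hence $E=A$; finally it writes $\sum_i L_{a_i}R_{b_i}=\sum_i L_{a_i}(R_{b_i}-L_{b_i})+L_c$ with $c=\sum_i a_ib_i$ and applies spectral computability once more. The paper's route avoids the detour through $C$ entirely and makes the subalgebra and closure steps explicit rather than hiding them behind the radical of a tensor product.
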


\begin{proof}
By Corollary \ref{hypcomput}, the algebra $\mathcal{E\!\ell}(A)$ is spectrally
computable. Using this fact, it can be easily shown that the set $E$ of all
$a\in A$, for which ${\sigma}(L_{a}-R_{a})=\{0\}$ is a subalgebra of $A$.
Indeed, if $a,b\in E$ then
\begin{align*}
\sigma(L_{a+b}-R_{a+b})  &  =\sigma(L_{a}-R_{a}+L_{b}-R_{b})\\
&  \subset\sigma(L_{a}-R_{a})+\sigma(L_{b}-R_{b})=\{0\},
\end{align*}
so $a+b\in E$. Furthermore,
\begin{align*}
\sigma(L_{ab}-R_{ab})  &  =\sigma(L_{a}(L_{b}-R_{b})+(L_{a}-R_{a}%
)R_{b}+R_{ab-ba})\\
&  \subset\sigma(L_{a})\sigma(L_{b}-R_{b}))+\sigma(L_{a}-R_{a})\sigma
(R_{b})+\sigma(R_{ab-ba})=\{0\}
\end{align*}
whence $ab\in E$.
Since $A$ is generated by $M$ and $M\subset E$, the subalgebra $E$ is dense in
$A$.
Since $A$ is hypocompact, its elements have countable spectra by Theorem
\ref{scat} and therefore the spectra of all operators $L_{a}-R_{a}$ are
countable. Hence they are the points of continuity of the spectral radius. It
follows that $E$ is closed whence $E=A$. We proved that $A$ is an Engel algebra.
To see that $A$ is strongly Engel, note that an operator $\sum_{i}^{n}%
L_{a_{i}}R_{b_{i}}$ can be written as $\sum_{i}^{n}L_{a_{i}}(R_{b_{i}%
}-L_{b_{i}})+L_{c}$, where $c=\sum_{i}^{n}a_{i}b_{i}$. Since $\mathcal{E\!\ell
}(A)$ is spectrally computable and $\sigma(R_{b_{i}}-L_{b_{i}})=\{0\}$, we
obtain that
\[
\sigma\left(  \sum_{i}^{n}L_{a_{i}}R_{b_{i}}\right)  \subset\sigma
(L_{c})\subset\sigma(c).
\]
\end{proof}

\begin{corollary}
\label{genEng} Let $A$ be a hypocompact Banach algebra generated by an Engel
closed Lie subalgebra $\mathcal{L}$. Then $A$ is a strongly Engel Banach
algebra commutative modulo the radical.
\end{corollary}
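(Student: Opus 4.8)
The plan is to verify the two hypotheses of Theorem~\ref{eng} for the generating set $M:=\mathcal{L}$: that $L_a-R_a$ is quasinilpotent on $A$ for every $a\in\mathcal{L}$, and that $A$ is commutative modulo the radical. Once both are in place, Theorem~\ref{eng} yields that $A$ is strongly Engel; together with the commutativity modulo the radical established along the way, this is exactly the assertion of the corollary.

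First I would show that $D:=L_a-R_a$ is quasinilpotent on $A$ for $a\in\mathcal{L}$. Note that $D$ is a bounded derivation of $A$ whose restriction to $\mathcal{L}$ is $\mathrm{ad}_{\mathcal{L}}(a)$, which is quasinilpotent by hypothesis, so $\lim_n\|D^n x\|^{1/n}=0$ for every $x\in\mathcal{L}$. By the Leibniz rule and a convolution estimate, the set of $x\in A$ with $\lim_n\|D^n x\|^{1/n}=0$ is a subalgebra of $A$; containing $\mathcal{L}$, it contains the dense subalgebra $A_0$ generated by $\mathcal{L}$. On the other hand $D\in\mathcal{E\!\ell}(A)$, a subalgebra of the hypocompact Banach algebra $\widehat{\mathcal{E}}_{A,A}$ (Theorem~\ref{mult1-}(i)), so the spectrum of $D$ in $\mathcal{B}(A)$ is countable by Theorem~\ref{scat} and Proposition~\ref{sp}. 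A non-empty countable compact subset of $\mathbb{C}$ is scattered, so if $\sigma(D)\neq\{0\}$ it has a non-zero isolated point $\lambda_0$; then the Riesz projection $P$ of $D$ at $\lambda_0$ is non-zero and commutes with $D$, and every non-zero vector in the range of $P$ has local spectral radius $|\lambda_0|>0$. Since $\|D^n P x\|\leq\|P\|\,\|D^n x\|$, this forces $P x=0$ for all $x\in A_0$, whence $P=0$ by density, a contradiction. Therefore $\sigma(D)=\{0\}$.

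The verification that $A$ is commutative modulo the radical is the core of the argument, and the step I expect to be the main obstacle. Since $A$ is hypocompact, $\mathrm{Rad}(A)=\mathcal{R}_t(A)$ by Theorem~\ref{hyphypura}, so $\overline{A}:=A/\mathrm{Rad}(A)$ is a semisimple hypocompact Banach algebra generated by the image $\overline{\mathcal{L}}$ of $\mathcal{L}$; moreover $L_{\overline{a}}-R_{\overline{a}}$ is quasinilpotent on $\overline{A}$ for each $\overline{a}\in\overline{\mathcal{L}}$, being the image of the quasinilpotent operator $L_a-R_a$ under the bounded epimorphism $A\to\overline{A}$. Restricting to the invariant closed subspace $\overline{\mathcal{L}}$ shows that $\overline{\mathcal{L}}$ is again an Engel closed Banach Lie algebra, so it suffices to prove that a semisimple Banach algebra generated by an Engel closed Lie subalgebra is commutative, an infinite-dimensional analogue of Engel's theorem. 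This I would deduce from the structure theory of Engel Banach Lie algebras, namely the extension of \cite[Proposition~5.21]{ST2005} to Engel Lie subalgebras obtainable from the general result of Aupetit and Mathieu \cite{Aup-Ma}: it gives $[\overline{\mathcal{L}},\overline{\mathcal{L}}]\subseteq\mathrm{Rad}(\overline{A})=\{0\}$, so the abelian Lie algebra $\overline{\mathcal{L}}$ generates the commutative associative algebra $\overline{A}$; equivalently $[\mathcal{L},\mathcal{L}]\subseteq\mathrm{Rad}(A)$, i.e. $A$ is commutative modulo the radical. With this last ingredient Theorem~\ref{eng} applies and the proof is complete.
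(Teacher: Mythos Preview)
Your first step, showing $L_a-R_a$ is quasinilpotent on $A$ for $a\in\mathcal{L}$, is fine; it is essentially the same as the paper's (the paper cites \cite[Corollary~3.7]{ST2005} for the passage from $\mathcal{L}$ to $A$, but your local-spectral-radius argument works equally well once you know $\sigma(L_a-R_a)$ is countable).

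The gap is in your second step. You reduce to showing that a semisimple Banach algebra $\overline{A}$ generated by an Engel closed Lie subalgebra $\overline{\mathcal{L}}$ is commutative, and then assert that this follows from ``the extension of \cite[Proposition~5.21]{ST2005} to Engel Lie subalgebras obtainable from the general result of Aupetit and Mathieu \cite{Aup-Ma}''. But no such extension is available from those sources: both \cite[Proposition~5.21]{ST2005} and the relevant Aupetit--Mathieu result apply to Engel Banach \emph{algebras}, i.e.\ to the situation where $L_x-R_x$ is quasinilpotent for \emph{every} $x\in A$, not merely for $x$ in a generating Lie subalgebra. Passing from ``$\mathrm{ad}(a)$ quasinilpotent on $\overline{A}$ for $a\in\overline{\mathcal{L}}$'' to ``$[\overline{\mathcal{L}},\overline{\mathcal{L}}]\subset\mathrm{Rad}(\overline{A})$'' is precisely the substance of the corollary, and it genuinely uses hypocompactness.

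The paper's proof of this step is quite different from a citation: working in a primitive quotient, it shows elements of $\mathcal{L}$ have one-point spectra (via Riesz projections commuting with $\mathcal{L}$), subtracts the resulting character to make $\mathcal{L}$ consist of quasinilpotents, passes to the group $G=\exp(\mathcal{L})$, and then uses the Berger--Wang formula for hypocompact algebras (Corollary~\ref{hypoBW}) together with subharmonicity of the joint spectral radius to prove $\rho(aM)=0$ for all precompact $M\subset G$. This forces $\mathcal{L}\subset\mathrm{Rad}(A)$ in the primitive quotient, giving the contradiction. The dependence on Corollary~\ref{hypoBW} is essential and cannot be replaced by an appeal to \cite{Aup-Ma}; you need to supply an argument of this kind.
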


\begin{proof}
Without loss of generality, we consider the case when $A$ is unital and
generated as a Banach algebra by $\mathcal{L}$ and the identity element $1$.
Let us show first that all operators $L_{a}-R_{a}$ with $a\in\mathcal{L}$ are
quasinilpotent on $A$ (by our assumptions, they are quasinilpotent on
$\mathcal{L}$). Indeed, they are bounded derivations of $A$ and their spectra
are countable (because spectra of elements of $A$ are countable by Theorem
\ref{scat}). By \cite[Corollary 3.7]{ST2005}, they are quasinilpotent on the
closed subalgebra of $A$ generated by $\mathcal{L}$, that is on $A$. Taking
into account Theorem \ref{eng}, we have only to prove that $A$ is commutative
modulo the radical.
Let $\pi$ be a strictly irreducible representation of $A$ on $X$. We will
obtain a contradiction assuming that $\dim(X)>1$. Changing $A$ by $A/\ker{\pi
}$, one may suppose that $\pi$ is faithful.
We already know that spectra of elements of $A$ are countable. Now we claim
that the spectra of elements of $\mathcal{L}$ are one-point. Indeed, if
$\sigma\left(  a\right)  $ is not a singleton for some element $a\in
\mathcal{L}$ then it is not connected and, by \cite[Proposition 3.16]{ST2005},
$a$ has a non-trivial Riesz projection $p$ commuting with $\mathcal{L}$. Hence
$p$ is in the center of $A$, which is impossible, because $A$ is a primitive
Banach algebra.
Define the function $h:\mathcal{L}\longrightarrow\mathbb{C}$ by $h\left(
a\right)  =\lambda$ if $\sigma\left(  a\right)  =\left\{  \lambda\right\}  $,
for every $a\in\mathcal{L}$. We claim that $h$ is a character of $\mathcal{L}%
$, i.e. $h$ is a bounded linear functional on $\mathcal{L}$ that vanishes on
$\left[  \mathcal{L},\mathcal{L}\right]  $. Indeed, by using Proposition
\ref{trans}, one can find a proper closed ideal $J$ of $A$ such that $A/J$ is
a compact Banach algebra. As $1/J$ is a compact element, then clearly it is a
finite rank element. Hence $A/J$ is finite-dimensional. As $\sigma\left(
a/J\right)  \subset\sigma\left(  a\right)  $ for every $a\in\mathcal{L}$, then
one can define the function $g:\mathcal{L}/J\longrightarrow\mathbb{C}$ by
$g\left(  a/J\right)  =\lambda$ if $\sigma\left(  a/J\right)  =\left\{
\lambda\right\}  $ for every $a\in\mathcal{L}$. It is clear that $g\left(
a/J\right)  =h\left(  a\right)  $ for every $a\in\mathcal{L}$ and
$\mathcal{L}/J$ is a nilpotent Lie subalgebra of $A/J$, whence $A/J$ is
commutative modulo the radical by the Engel theorem and $g$ is a character of
$\mathcal{L}/J$. So $h$ is a character of $\mathcal{L}$.
Now, replacing every element $a$ of $\mathcal{L}$ with $\sigma\left(
a\right)  =\left\{  \lambda\right\}  $ by $a-\lambda$, one may assume that
$\mathcal{L}$ consists of quasinilpotent elements. Then
\[
G=\exp(\mathcal{L})=\{\exp(a):a\in\mathcal{L}\}
\]
is a group by \cite{WWQ}, and $\sigma(b)=\{1\}$ for each $b\in G$. It follows
that $r(M)=1$ for every precompact subset $M$ of $G$. By Corollary
\ref{hypoBW}, $\rho(M)=1$. In particular,
\begin{equation}
\rho(M\cup\left\{  1,\exp\left(  \lambda a\right)  \right\}  )=1 \label{for}%
\end{equation}
for every $a\in\mathcal{L}$ and $\lambda\in\mathbb{C}$.
Choose an arbitrary element $a\in\mathcal{L}$ and define the function $f$ on
$\mathbb{C}$ by $f(\lambda)=\rho(M(\exp(\lambda a)-1)/\lambda)$ for a fixed
precompact subset $M$ of $G$. This function is subharmonic by \cite[Theorem
3.5]{ST2000} and tends to zero when $\lambda\rightarrow\infty$, because
\begin{align*}
\rho\left(  M\left(  \exp(\lambda a)-1\right)  \right)   &  \leq\rho(\left(
M\cup\left\{  1\right\}  \right)  \exp(\lambda a)-M\cup\left\{  1\right\}  )\\
&  \leq\rho(\left(  M\cup\{1,\exp(\lambda a)\}\right)  ^{2}-\left(
M\cup\{1,\exp(\lambda a)\}\right)  ^{2})\\
&  \leq\rho\left(  2\,\mathrm{abs}\left(  \left(  M\cup\{1,\exp(\lambda
a)\}\right)  ^{2}\right)  \right) \\
&  =2\rho\left(  \left(  M\cup\{1,\exp(\lambda a)\}\right)  ^{2}\right) \\
&  =2\rho\left(  M\cup\{1,\exp(\lambda a)\}\right)  ^{2}=2,
\end{align*}
by (\ref{for}), where $\mathrm{abs}\left(  S\right)  $ denotes the absolutely
convex hull of a bounded set $S\subset A$ and the equality $\rho\left(
\mathrm{abs}\left(  S\right)  \right)  =\rho\left(  S\right)  $
\cite[Proposition 2.6]{ST2000} easily follows from the characterization of
$\rho\left(  S\right)  $ as infimum of $\left\Vert S\right\Vert ^{\prime}$
when $\left\Vert \cdot\right\Vert ^{\prime}$ runs over all algebra norms
equivalent to given one. Hence $f$ is constant and, moreover, $f(\lambda)=0$
for all $\lambda$. In particular, $f(0)=0$, whence it is easy to see that
\[
\rho(aM)=0.
\]
Now if an element $x$ belongs to the linear span of $M$ then
\[
\rho(ax)=0
\]
by Lemma \ref{comb}. Since linear span $\mathrm{lin}(G)$ of $G$ is a
subalgebra of $A$ and the closure of $\mathrm{lin}(G)$ contains $\mathcal{L}$,
we conclude that $\mathrm{lin}(G)$ is dense in $A$. The previous argument
shows that
\[
\rho(ax)=0
\]
for all $x\in\mathrm{lin}(G)$. Since spectra of elements of $A$ are countable,
the spectral radius is continuous on $A$, whence
\[
\rho(ax)=0
\]
for all $x\in A$. This means that $a\in\mathrm{Rad}(A)$. So $\mathcal{L}%
\subset\mathrm{Rad}(A)$ and the closed algebra generated by $\mathcal{L}$ is
radical. Then $\dim(X)=1$, a contradiction.
\end{proof}

\subsection{Generalized multiplication operators}

It follows from the above that if $A$ is a radical\textit{ }hypocompact
algebra then all operators in $\mathcal{E\!\ell}\left(  A\right)  $ and, more
generally, in $\widehat{\mathcal{E}}_{A,A}$ are quasinilpotent. Here we
discuss the possibility to extend this result to operators in the norm-closure
$\mathrm{Mul}(A)$ of $\mathcal{E\!\ell}\left(  A\right)  $ in $\mathcal{B}%
(A)$. Note first of all that it is an open problem if $\mathrm{Mul}(A)$ is
radical even for bicompact $A$ (and even for the case that $A$ is a radical
closed algebra of compact operators). We call elements of $\mathrm{Mul}(A)$
\textit{generalized multiplication operators}.

\begin{theorem}
\label{mul0.2} If $A$ is a compact algebra and $J=\mathrm{Rad}(A)$ then the
closed ideal $I$ generated by $L_{J}R_{A}\cup L_{A}R_{J}$ is contained in
$\mathrm{Rad}(\mathrm{Mul}(A))$. As a consequence, $L_{J}+R_{J}+I$ consists of quasinilpotents.
\end{theorem}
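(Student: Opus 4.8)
The plan is to realize $\mathrm{Mul}(A)$ as (essentially) a quotient of a projective tensor product and then transfer the tensor-radical machinery of Section~\ref{ss}. First I would pass to the unitalization $B=A^{1}$ and work with $C=B\widehat{\otimes}B^{\mathrm{op}}$, together with the natural bounded homomorphism $\phi:C\to\mathcal{B}(A)$ sending $a\otimes b$ to $L_{a}R_{b}$. The image $\phi(C)$ with its quotient norm is a Banach algebra whose norm majorizes the operator norm, and whose closure in $\mathcal{B}(A)$ is exactly $\mathrm{Mul}(A)$ (since $\mathcal{E\!\ell}(A)\subset\phi(C)\subset\mathcal{B}(A)$ and $\mathrm{Mul}(A)$ is the operator-norm closure of $\mathcal{E\!\ell}(A)$). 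The key observation is that a compact algebra $A$ with $J=\mathrm{Rad}(A)$ satisfies $J=\mathcal{R}_{t}(A)$ by Theorem~\ref{hyphypura} (a compact algebra is hypocompact), and hence $J$ is tensor radical.

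Next I would identify the ideal in $C$ corresponding to $L_{J}R_{A}\cup L_{A}R_{J}$. Take $I_{1}=I_{2}=J$ inside $B$ and $J_{1}=J_{2}=0$; in the notation of Lemma~\ref{project} (with $A_{1}=B$, $A_{2}=B^{\mathrm{op}}$), let $\mathcal{J}$ be the closed ideal of $C$ generated by $J\otimes B^{\mathrm{op}}+B\otimes J^{\mathrm{op}}$. Since $\overline{I_{i}}/\overline{J_{i}}=J$ is tensor radical (equivalently, tensor quasinilpotent by Theorem~\ref{perrad}), Lemma~\ref{project} gives $\overline{\mathcal{J}}\subset Q_{\{0\}}(C)=\mathrm{Rad}(C)$; in fact here $\mathcal{J}$ itself is a tensor quasinilpotent ideal, so $\mathcal{J}\subset\mathcal{R}_{t}(C)\subset\mathrm{Rad}(C)$ by Corollary~\ref{rad}. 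The image $\phi(\mathcal{J})$ is then an ideal of $\phi(C)$ consisting of quasinilpotent elements (the homomorphic image of a set of quasinilpotents), hence $\phi(\mathcal{J})\subset\mathrm{Rad}(\phi(C))$. Passing to closures in $\mathcal{B}(A)$: the closed ideal $I$ of $\mathrm{Mul}(A)$ generated by $L_{J}R_{A}\cup L_{A}R_{J}$ is the closure of $\phi(\mathcal{J})$ in $\mathrm{Mul}(A)$, and the closure of an ideal of quasinilpotents in a Banach algebra is again an ideal of quasinilpotents, so $I\subset\mathrm{Rad}(\mathrm{Mul}(A))$.

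For the final clause, note $L_{a}$ for $a\in J$ equals $\phi(a\otimes 1)\in\phi(\mathcal{J})$ and similarly $R_{a}=\phi(1\otimes a)\in\phi(\mathcal{J})$, so $L_{J}+R_{J}\subset\phi(\mathcal{J})\subset I$; thus $L_{J}+R_{J}+I\subset I\subset\mathrm{Rad}(\mathrm{Mul}(A))$, which is an ideal consisting of quasinilpotents, proving the claim.

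The main obstacle I anticipate is the bookkeeping around the norms and closures: one must be careful that the quotient norm on $\phi(C)$ is well defined and submultiplicative (this is exactly the content of the setup in Section~\ref{seo}, applied to $U=A$), that $\phi(\mathcal{J})$ is genuinely an ideal and not merely a subalgebra, and that the operator-norm closure of $\mathcal{E\!\ell}(A)$ agrees with $\mathrm{Mul}(A)$ so that $I$ is correctly identified as the closure of $\phi(\mathcal{J})$. The genuinely delicate input — that $J\otimes B^{\mathrm{op}}+B\otimes J^{\mathrm{op}}$ generates a tensor quasinilpotent (hence radical) ideal of $C$ — is supplied wholesale by Lemma~\ref{project} together with Theorem~\ref{hyphypura}, so the proof reduces to assembling these pieces rather than re-deriving any spectral estimates.
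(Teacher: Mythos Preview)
Your approach has a genuine gap at the passage from $\phi(\mathcal{J})\subset\mathrm{Rad}(\phi(C))$ to $I\subset\mathrm{Rad}(\mathrm{Mul}(A))$. The principle ``the closure of an ideal of quasinilpotents lies in the radical'' requires the ideal to be an ideal \emph{of the Banach algebra in which you take the closure}. Here $\phi(\mathcal{J})$ is only an ideal of the dense subalgebra $\phi(C)=\widehat{\mathcal{E}}_{B,B}(A)$, not of $\mathrm{Mul}(A)$; its closure in operator norm is indeed an ideal of $\mathrm{Mul}(A)$, but there is no reason it should still consist of quasinilpotents. This is precisely the phenomenon behind Dixon's example \cite{Dix} (a radical normed algebra with non-radical completion), and the paper itself flags that it is open whether $\mathrm{Mul}(A)$ is radical even for bicompact radical $A$. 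Your argument uses only that $A$ is hypocompact (to get $J=\mathcal{R}_t(A)$), whereas the theorem assumes $A$ is \emph{compact}; the unused hypothesis is a warning sign.

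A second, related problem: your final paragraph claims $L_J+R_J\subset\phi(\mathcal{J})\subset I$. The first inclusion is fine (via $a\otimes 1$ with $1\in B=A^{1}$), but $\phi(\mathcal{J})\subset I$ is unjustified: $I$ is generated by $L_JR_A\cup L_AR_J$ with coefficients in $A$, not $A^{1}$, so $L_a=L_aR_1$ need not lie in $I$. If your argument worked it would actually prove $L_J\subset\mathrm{Rad}(\mathrm{Mul}(A))$ outright, but the paper establishes that inclusion only in Theorem~\ref{mulperm0} under the extra hypothesis that $\mathrm{Rad}(A)$ is permanently radical.

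The paper's proof takes a completely different route that exploits compactness of $A$ directly: it shows via the identity $L_aR_b+L_bR_a=\tfrac12(L_{a+b}R_{a+b}-L_{a-b}R_{a-b})$ that each $L_{a^2}R_b$ is a \emph{compact operator} on $A$. For elementary $T$ and $b\in J$, Corollary~\ref{hypomult} gives $L_{a^2}R_bT$ quasinilpotent; compactness then allows continuity of the spectral radius to push this to all $T\in\mathrm{Mul}(A)$, so $L_{a^2}R_b\in\mathrm{Rad}(\mathrm{Mul}(A))$. The Nagata--Higman theorem (for $n=2$) upgrades this to $(L_AR_J)^3\subset\mathrm{Rad}(\mathrm{Mul}(A))$, and a short closure argument finishes. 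The final clause is handled by passing to $\mathrm{Mul}(A)/I$, where $L_a/I$ and $R_b/I$ commute.
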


\begin{proof}
Since $A$ is compact, $L_{a^{2}}R_{b}$ is compact for any $a,b\in A$. Indeed,
since
\[
L_{a}R_{b}+L_{b}R_{a}=1/2(L_{a+b}R_{a+b}-L_{a-b}R_{a-b})
\]
is a compact operator, the same is true for
\[
L_{a^{2}}R_{b}=L_{a}(L_{a}R_{b}+L_{b}R_{a})-(L_{a}R_{a})L_{b}.
\]
Now it follows from Corollary \ref{hypomult} that if $b\in J$ then $L_{a^{2}%
}R_{b}T$ is a compact quasinilpotent operator for every elementary operator
$T$ on $A$. By continuity of the spectral radius, the same is true for all
$T\in\mathrm{Mul}(A)$. Hence $L_{a^{2}}R_{b}\in\mathrm{Rad}(\mathrm{Mul}(A))$.
By the Nagata-Higman theorem (for $n=2$), every product $a_{1}a_{2}a_{3}$ of
elements of $A$ can be represented as a finite combination of elements of form
$x^{2}y$ and $uv^{2}$. Since clearly $L_{x^{2}y}R_{b},L_{uv^{2}}R_{b}%
\in\mathrm{Rad}(\mathrm{Mul}(A))$, then
\[
L_{A^{3}}R_{J}\subset\mathrm{Rad}(\mathrm{Mul}(A)).
\]
This implies that
\[
\left(  L_{A}R_{J}\right)  ^{3}\subset\mathrm{Rad}(\mathrm{Mul}(A)).
\]
As the Jacobson radical of a Banach algebra is closed, we obtain that
$\overline{L_{A}R_{J}}$ consists of quasinilpotent operators. Since
$\overline{L_{A}R_{J}}$ is an ideal of $\mathrm{Mul}(A)$, then
\[
\overline{L_{A}R_{J}}\subset\mathrm{Rad}(\mathrm{Mul}(A)).
\]
We proved that $L_{A}R_{J}\subset\mathrm{Rad}(\mathrm{Mul}(A))$. Similarly, we
have that
\[
L_{J}R_{A}\subset\mathrm{Rad}(\mathrm{Mul}(A)).
\]
Hence $I\subset\mathrm{Rad}(\mathrm{Mul}(A))$. Then $L_{J}+R_{J}+I$ consists
of quasinilpotents if and only if $L_{J}/I+R_{J}/I$ consists of
quasinilpotents in $\mathrm{Mul}(A)/I$. The last is obvious because $L_{a}/I$
and $R_{b}/I$ commute for every $a,b\in A$ and are quasinilpotents for every
$a,b\in J$.
\end{proof}

Let us denote by $\mathrm{Mul}_{2}(A)$ the closed subalgebra of $\mathrm{Mul}%
(A)$ generated by all operators $L_{a}R_{b}$, where $a,b\in A$.

\begin{corollary}
\label{mul2} If a radical Banach algebra $A$ is compact then
\[
\mathrm{Mul}_{2}(A)\subset\mathrm{Rad}(\mathrm{Mul}(A)).
\]
As a consequence, the algebra $\mathcal{E\!\ell}(A)+\mathrm{Mul}_{2}(A)$
consists of quasinilpotent operators.
\end{corollary}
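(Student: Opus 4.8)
The plan is to read everything off Theorem \ref{mul0.2}. Since $A$ is radical, $J := \mathrm{Rad}(A) = A$, so the generating set $L_J R_A \cup L_A R_J$ appearing in Theorem \ref{mul0.2} is just $L_A R_A = \{L_a R_b : a,b \in A\}$, and that theorem tells us the closed ideal $I$ of $\mathrm{Mul}(A)$ it generates satisfies $I \subseteq \mathrm{Rad}(\mathrm{Mul}(A))$. Now $\mathrm{Mul}_2(A)$ is by definition the smallest closed subalgebra of $\mathrm{Mul}(A)$ containing every $L_a R_b$; since these operators all lie in $I$ and $I$, being a closed ideal, is itself a closed subalgebra contained in $\mathrm{Rad}(\mathrm{Mul}(A))$, it follows that $\mathrm{Mul}_2(A) \subseteq I \subseteq \mathrm{Rad}(\mathrm{Mul}(A))$. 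This is the first assertion.

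For the consequence I would take an arbitrary $T \in \mathcal{E\!\ell}(A) + \mathrm{Mul}_2(A)$, say $T = u + v$ with $u \in \mathcal{E\!\ell}(A)$ and $v \in \mathrm{Mul}_2(A)$, and show that $\rho(T) = 0$ in $\mathrm{Mul}(A)$. By the normal form of elementary operators recalled in Section \ref{seo} one has $u = L_a + R_b + \sum_{i=1}^n L_{a_i} R_{b_i}$ with $a,b,a_i,b_i \in A$, and the finite sum $\sum_{i=1}^n L_{a_i} R_{b_i}$ lies in $\mathrm{Mul}_2(A)$. Hence $T - (L_a + R_b) \in \mathrm{Mul}_2(A) \subseteq \mathrm{Rad}(\mathrm{Mul}(A))$ by the first part, and since the spectrum (taken with respect to the unitalization of the Banach algebra $\mathrm{Mul}(A)$) is unchanged under perturbation by an element of the Jacobson radical, $\sigma(T) = \sigma(L_a + R_b)$. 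It thus remains to check that $L_a + R_b$ is quasinilpotent.

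This last step is where radicality of $A$ enters: from $\|L_a^n\|_{\mathcal{B}(A)} \le \|a^n\|$ and $\|R_b^n\|_{\mathcal{B}(A)} \le \|b^n\|$ one gets $\rho(L_a) \le \rho(a) = 0$ and $\rho(R_b) \le \rho(b) = 0$, while $L_a$ and $R_b$ commute by associativity, so $\rho(L_a + R_b) \le \rho(L_a) + \rho(R_b) = 0$. Therefore $T$ is quasinilpotent in $\mathrm{Mul}(A)$, hence — the norm of $\mathrm{Mul}(A)$ being the operator norm — a quasinilpotent operator on $A$. I do not expect a genuine obstacle here: the only points that need a line of justification are that the closed subalgebra generated by elements of the closed ideal $I$ stays inside $I$, and the standard invariance of the spectrum under radical perturbations; beyond that the argument is bookkeeping on top of Theorem \ref{mul0.2}.
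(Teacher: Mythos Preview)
Your proof is correct and follows exactly the route the paper intends: specialize Theorem \ref{mul0.2} to $J=\mathrm{Rad}(A)=A$, so that the closed ideal $I$ generated by $L_A R_A$ contains $\mathrm{Mul}_2(A)$ and lies in $\mathrm{Rad}(\mathrm{Mul}(A))$. For the second assertion you could have simply invoked the last sentence of Theorem \ref{mul0.2} (with $J=A$ it says $L_A+R_A+I$ consists of quasinilpotents, and $\mathcal{E\!\ell}(A)+\mathrm{Mul}_2(A)\subset L_A+R_A+I$), but your explicit radical-perturbation argument reproduces that same reasoning and is perfectly fine.
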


\subsection{Permanently radical algebras}

Let us call a class $\mathcal{P}$ of Banach algebras \textit{permanent} if for
each $A\in\mathcal{P}$ and each bounded homomorphism $f:A\longrightarrow B$
with dense image, the algebra $B$ is also in $\mathcal{P}$. Examples of
permanent classes are commutative algebras, separable algebras,
finite-dimensional algebras, amenable algebras, algebras with bounded
approximate identities.

An example of Dixon \cite[Example 9.3]{Dix} shows that the class of all
radical Banach algebras is not permanent. We say that a Banach algebra $A$ is
\textit{permanently radical} if for every bounded homomorphism
$f:A\longrightarrow B$ the closure of the image $f(A)$ in $B$ is a radical
Banach algebra.

It follows from (i) of the following theorem that the class of all permanently
radical Banach algebras is permanent. We also show that it is extension stable.

\begin{theorem}
\label{prad}Let $A$ be a Banach algebra.

\begin{itemize}
\item[$\mathrm{(i)}$] If $A$ is permanently radical then so is $\overline
{g\left(  A\right)  }$ for every bounded homomorphism $g:A\longrightarrow B$
of Banach algebras.

\item[$\mathrm{(ii)}$] If a closed ideal $J$ and the quotient $A/J$ of $A$ are
permanently radical then $A$ is permanently radical.

\item[$\mathrm{(iii)}$] If $\{I_{\alpha}\}_{\alpha\in\Lambda}$ is an
increasing net of permanently radical closed ideals in $A$ and $\cup
_{\alpha\in\Lambda}I_{\alpha}$ is dense in $A$ then $A$ is permanently radical.
\end{itemize}
\end{theorem}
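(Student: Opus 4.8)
The plan is to derive all three statements from Lemma \ref{quotBan} together with the observation (already used in the proof of Proposition \ref{predv1}(i)) that the Jacobson radical of a Banach algebra is its largest ideal of quasinilpotents, hence in particular closed; beyond this, only routine manipulations with images and closures under bounded homomorphisms are needed. For (i), I would set $C=\overline{g(A)}$ and take an arbitrary bounded homomorphism $h\colon C\to D$. Then $h\circ g\colon A\to D$ is bounded, so $\overline{h(g(A))}$ is radical by assumption; since $h$ is continuous one has $h(C)\subseteq\overline{h(g(A))}$ while $h(g(A))\subseteq h(C)$, whence $\overline{h(C)}=\overline{h(g(A))}$ is radical. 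As $h$ is arbitrary, $C$ is permanently radical.

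For (ii), given a bounded homomorphism $f\colon A\to B$, put $C=\overline{f(A)}$. Then $f(J)$ is an ideal of $f(A)$, so $\overline{f(J)}$ is a closed ideal of $C$, and it is radical because it is the closure of the image of the bounded homomorphism $f|_{J}\colon J\to B$ and $J$ is permanently radical. The composition $A\to C\to C/\overline{f(J)}$ annihilates $J$ and therefore factors through a bounded homomorphism $A/J\to C/\overline{f(J)}$ whose range $q(f(A))$ is dense; since $A/J$ is permanently radical, the closure of this range, namely all of $C/\overline{f(J)}$, is radical. Lemma \ref{quotBan} applied to the pair $(C,\overline{f(J)})$ then yields that $C$ is radical, so $A$ is permanently radical.

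For (iii), again take $f\colon A\to B$ bounded and $C=\overline{f(A)}$. Each $\overline{f(I_\alpha)}$ is a closed ideal of $C$ (as $I_\alpha$ is an ideal of $A$) and is radical, because $I_\alpha$ is permanently radical; in particular every element of $\overline{f(I_\alpha)}$ is quasinilpotent. Since the net $\{I_\alpha\}$ is increasing and directed, $\bigcup_\alpha\overline{f(I_\alpha)}$ is an ideal of $C$ consisting of quasinilpotents, hence $\bigcup_\alpha\overline{f(I_\alpha)}\subseteq\mathrm{Rad}(C)$. Because $\bigcup_\alpha I_\alpha$ is dense in $A$ and $f$ is continuous, $\bigcup_\alpha\overline{f(I_\alpha)}$ is dense in $C$; as $\mathrm{Rad}(C)$ is closed, $\mathrm{Rad}(C)=C$, i.e.\ $C$ is radical. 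As $f$ was arbitrary, $A$ is permanently radical.

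I do not anticipate a genuine obstacle here: the whole content is carried by Lemma \ref{quotBan} and the density-of-a-radical-ideal argument. The only points requiring a little care are the standard bookkeeping that images and quotients of dense subalgebras (resp.\ ideals) are again dense subalgebras (resp.\ ideals) under continuous homomorphisms — the same kind of identifications used in Proposition \ref{quo} — and, in part (iii), the remark that it is precisely the directedness of the net $\{I_\alpha\}$ that makes $\bigcup_\alpha\overline{f(I_\alpha)}$ closed under addition, so that it is a bona fide ideal of $C$ rather than merely a union of ideals.
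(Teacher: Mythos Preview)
Your proof is correct and follows essentially the same line as the paper's. The only differences are cosmetic: in (ii) the paper quotients directly by $\mathrm{Rad}(B)$ (showing $B/\mathrm{Rad}(B)$ is radical, hence zero) rather than by $\overline{f(J)}$, so Lemma~\ref{quotBan} is never invoked; and in (iii) the paper simply notes that each radical ideal $\overline{f(I_\alpha)}$ already lies in $\mathrm{Rad}(C)$, so there is no need to argue that the union is an ideal and the directedness remark is superfluous.
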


\begin{proof}
(i) Let $C=\overline{g\left(  A\right)  }$ and $f:C\longrightarrow D$ be a
bounded homomorphism of Banach algebras with $\overline{f(C)}=D$. Then
$f{\circ g}$ is a bounded homomorphism $A\longrightarrow D$ with dense image.
If $A$ is permanently radical then $D$ is radical.
(ii) Let $f:A\longrightarrow B$ be a bounded homomorphism with $\overline
{f(A)}=B$. Then $I:=\overline{f(J)}$ is a radical ideal of $B$. Hence
$I\subset\mathrm{Rad}(B)$, whence $g=q_{\mathrm{Rad}(B)}{\circ}f$ is a bounded
homomorphism of $A$ into $C=B/\mathrm{Rad}(B)$ and $I\subset\ker g$. Thus
there is a bounded homomorphism $h:A/J\rightarrow C$ such that $g=h{\circ
}q_{J}$. As $C=\overline{h(A/J)}$, $C$ is radical. Then $C=0$, whence $B$ is radical.
(iii) If $f:A\rightarrow B$ is a bounded homomorphism with dense image then
all $\overline{f(I_{\alpha})}$ are radical ideals of $B$. Hence $f(I_{\alpha
})\subset\mathrm{Rad}(B)$, for each $\alpha$, and $f(A)\subset\mathrm{Rad}(B)$
by density. Thus $B=\mathrm{Rad}(B)$.
\end{proof}

\begin{remark}
It is not clear if $\mathrm{(ii)}$ may be reversed. It follows from
$\mathrm{(i)}$ that a quotient of a permanently radical Banach algebra is
permanently radical, but what one can say about ideals?
\end{remark}

Clearly the class of all permanently radical Banach algebras contains all
radical commutative Banach algebras and all finite-dimensional radical algebras.

\begin{theorem}
\label{harp} Every radical hypofinite Banach algebra $A$ is permanently radical.
\end{theorem}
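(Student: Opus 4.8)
The plan is to fix a bounded homomorphism $f\colon A\longrightarrow B$ and prove that $B_{0}:=\overline{f(A)}$ is radical. Since $B_{0}$ is a Banach algebra, this is the same as showing that every element of $B_{0}$ is quasinilpotent, and the latter will follow once we know that $\rho(M)=0$ for every precompact subset $M$ of $B_{0}$ (apply this to singletons). So we may as well replace $B$ by $B_{0}$, i.e. assume that $f$ has dense image, and it suffices to show that $B$ is compactly quasinilpotent.

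First I would record two structural facts. Because $A$ is hypofinite, $\mathcal{R}_{hf}(A)=A$; since $\mathcal{R}_{hf}$ is a topological radical on the category whose morphisms are bounded homomorphisms with dense image (Theorem \ref{hf}), we get $f(A)\subset\mathcal{R}_{hf}(B)$, and as $\mathcal{R}_{hf}(B)$ is closed it equals $B$. Thus $B$ is hypofinite, hence hypocompact, so Corollary \ref{hypoBW} gives $r(M)=\rho(M)$ for every precompact $M\subset B$. Secondly, $A$ radical means every $x\in A$ is quasinilpotent, so $\rho_{B}(f(x))\le\rho_{A}(x)=0$ for every $x$; hence the subalgebra $f(A)$ of $B$ consists of quasinilpotent elements.

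Now let $M\subset B$ be precompact. Using density of $f(A)$, for each $k\ge1$ pick a finite $(1/k)$-net for $M$ consisting of points of $M$ and then move each of these points to within $1/k$ of a point of $f(A)$; this produces a finite set $F_{k}\subset f(A)$ that is a $(2/k)$-net for $M$. Set $M'=\bigcup_{k\ge1}F_{k}$. A routine total-boundedness estimate (for large $k$ the set $F_{k}$ lies inside a small neighbourhood of the precompact set $M$, while only finitely many $F_{k}$ are left over) shows that $M'$ is precompact in $B$, and plainly $M\subset\overline{M'}$. Since $M'\subset f(A)$ and $f(A)$ is a subalgebra consisting of quasinilpotents, every element of every power $(M')^{n}$ is quasinilpotent, so $r(M')=0$; by Corollary \ref{hypoBW} this forces $\rho(M')=0$. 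Finally $\rho$ does not change under passing to the closure of a bounded set, so $\rho(M)\le\rho(\overline{M'})=\rho(M')=0$. Hence $B$ is compactly quasinilpotent, and in particular radical.

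The one genuinely nontrivial step is the passage from ``$\rho$ vanishes on precompact subsets of the dense subalgebra $f(A)$'' to ``$\rho$ vanishes on precompact subsets of $B$'': one cannot lift a precompact subset of $B$ to a precompact subset of $A$, because $f$ need be neither open nor bounded below, so instead one lifts it, through nets of $f(A)$, to a precompact subset of $B$ that still lies in $f(A)$; it is the Berger--Wang type equality $r(M)=\rho(M)$ for hypocompact algebras (Corollary \ref{hypoBW}) that turns the trivial observation $r(M')=0$ into the needed $\rho(M')=0$. All the remaining ingredients --- the properties of $\mathcal{R}_{hf}$, the elementary behaviour of $\rho$ and of quasinilpotents under bounded homomorphisms, and the total-boundedness bookkeeping --- are routine, so I would not spell them out in detail.
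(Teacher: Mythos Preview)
Your proof is correct and takes a genuinely different route from the paper's. The paper argues via representation theory: it first shows that every topologically irreducible representation of a radical hypofinite Banach algebra is trivial, by producing (from a nonzero finite rank element in a quotient) a nonzero ideal of finite rank nilpotent operators in the image and then invoking the Lomonosov (or Barnes) invariant subspace theorem to derive a contradiction. Permanent radicality then follows because any strictly irreducible representation of $B$ composes with $f$ to give a topologically irreducible representation of $A$.

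Your argument instead stays entirely within the joint spectral radius framework: you transfer hypofiniteness to $B$ via the radical $\mathcal{R}_{hf}$ (Theorem~\ref{hf}), and then use the Berger--Wang equality $r(M)=\rho(M)$ for hypocompact algebras (Corollary~\ref{hypoBW}) applied to a precompact subset of $B$ that you build inside the dense quasinilpotent subalgebra $f(A)$. This is a clean alternative and in fact yields the stronger conclusion that $B$ is compactly quasinilpotent. The paper's route, on the other hand, has the advantage of proving Proposition~\ref{TI} (triviality of topologically irreducible representations) along the way, which is needed independently later in the proof of Theorem~\ref{apprgap}.
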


\begin{proof}
Let us show first that each topologically irreducible representation $\pi$ of
$A$ on a Banach space $X$ is zero. Indeed, assume that $\pi\neq0$ and let
$J=\ker\pi$, then $A/J$ contains a non-zero finite rank element $a/J$. Since
$\pi(a)\neq0$ there is $0\neq x\in X$ with $\pi(a)x\neq0$, whence $\pi
(A)\pi(a)x$ is a dense subspace of $X$. Since $\pi(a)\pi(A)\pi(a)x=\pi(aAa)x$
is a finite-dimensional subspace, we conclude that $\dim(\pi(a)X)<\infty$. Let
$I=\pi(A)\cap\mathcal{F}(X)$, this is a non-zero ideal of $\pi(A)$. Hence $I$
has no closed invariant subspaces. On the other hand, $I$ consists of
nilpotent operators (indeed, if a finite rank operator is the image of a
quasinilpotent element under a representation then it is nilpotent). By the
Lomonosov Theorem \cite{Lom} (or by an earlier result of Barnes \cite{Barnes}%
), $I$ has an invariant subspace. This contradiction shows that $\pi=0$.
Let now $f:A\longrightarrow B$ be a continuous homomorphism with dense image.
If $\pi$ is a strictly irreducible representation of $B$ then $\pi\circ f$ is
a topologically irreducible representation of $A$ whence $\pi\circ f=0$ and
$\pi=0$. This shows that $B$ is radical.
\end{proof}

\begin{corollary}
\label{harp1} If $A$ is a hypofinite Banach algebra then $\mathrm{Rad}(A)$ is
permanently radical.
\end{corollary}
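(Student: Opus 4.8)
The plan is to reduce Corollary \ref{harp1} directly to Theorem \ref{harp} by verifying that $\mathrm{Rad}(A)$ is a radical hypofinite Banach algebra. First I would note that $\mathrm{Rad}(A)$ is a closed ideal of the Banach algebra $A$, hence a Banach algebra in its own right, and that it is a \emph{radical} algebra: since the Jacobson radical is hereditary on Banach algebras, $\mathrm{Rad}(\mathrm{Rad}(A))=\mathrm{Rad}(A)\cap\mathrm{Rad}(A)=\mathrm{Rad}(A)$, so $\mathrm{Rad}(A)$ coincides with its own Jacobson radical.

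The second ingredient is hypofiniteness. By Corollary \ref{ihf}, every ideal of a hypofinite normed algebra is itself hypofinite; in particular $\mathrm{Rad}(A)$ is a hypofinite normed algebra, and being complete it is a hypofinite Banach algebra. Combining the two observations, $\mathrm{Rad}(A)$ is a radical hypofinite Banach algebra.

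Now Theorem \ref{harp} applies verbatim: a radical hypofinite Banach algebra is permanently radical. Therefore $\mathrm{Rad}(A)$ is permanently radical, which is exactly the assertion. There is essentially no obstacle here — the content is entirely carried by Corollary \ref{ihf} (the heredity of $\mathcal{R}_{hf}$) and Theorem \ref{harp}; the only thing to be careful about is recording that $\mathrm{Rad}(A)$ is genuinely radical as an algebra, not merely an ideal consisting of quasinilpotents, so that the hypothesis of Theorem \ref{harp} is met.
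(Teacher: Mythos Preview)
Your proof is correct and follows essentially the same approach as the paper: the paper's argument is simply that $\mathrm{Rad}(A)$, being an ideal of a hypofinite algebra, is hypofinite by Corollary~\ref{ihf}, and then Theorem~\ref{harp} applies. Your version just makes explicit the (standard) fact that $\mathrm{Rad}(A)$ is itself a radical Banach algebra, which the paper leaves implicit.
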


\begin{proof}
$\mathrm{Rad}(A)$ is a hypofinite Banach algebra, because it is an ideal of
$A$ (see Corollary \ref{ihf}). So apply Corollary \ref{harp}.
\end{proof}

It would be convenient to formulate a result established in the proof of
Theorem \ref{harp} as follows.

\begin{proposition}
\label{TI} Each topologically irreducible representation of a radical
hypofinite Banach algebra is trivial.
\end{proposition}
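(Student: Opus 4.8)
The plan is to extract the argument that already appears, as an intermediate step, in the proof of Theorem~\ref{harp}, and to record it here as an independent statement. So I would simply reproduce that part of the reasoning.

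Let $\pi$ be a topologically irreducible representation of a radical hypofinite Banach algebra $A$ on a Banach space $X$, and suppose for contradiction that $\pi\neq0$. First I would replace $A$ by $A/\ker\pi$: this quotient is again hypofinite (quotients of hypofinite algebras being hypofinite) and is nonzero, so it contains a nonzero finite rank element $a/\ker\pi$. Since $a\notin\ker\pi$ we have $\pi(a)\neq0$, hence there is $x\in X$ with $\pi(a)x\neq0$, and topological irreducibility gives $\overline{\pi(A)\pi(a)x}=X$. Because $a/\ker\pi$ is a finite rank element, the subspace $\pi(aAa)x$ is finite-dimensional, hence closed; applying the bounded operator $\pi(a)$ to the dense set $\pi(A)\pi(a)x$ then yields $\pi(a)X\subset\overline{\pi(a)\pi(A)\pi(a)x}=\overline{\pi(aAa)x}=\pi(aAa)x$, so $\pi(a)$ has finite rank.

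Next I would put $I=\pi(A)\cap\mathcal{F}(X)$, a two-sided ideal of $\pi(A)$ which is nonzero since it contains $\pi(a)$. A short argument — noting that $\overline{IX}$, $\overline{Iz}$ for $z\neq0$, and $\{z\in X:Iz=0\}$ are all $\pi(A)$-invariant, and using the topological irreducibility of $\pi(A)$ — shows that $I$ itself acts topologically irreducibly on $X$, i.e.\ has no nontrivial closed invariant subspace. On the other hand, each operator in $I$ is a finite rank operator which is the image under a representation of a quasinilpotent element of the radical algebra $A$, hence is nilpotent. Thus $I$ is a nonzero algebra of compact (in fact nilpotent) operators on $X$ admitting no nontrivial closed invariant subspace, contradicting the Lomonosov Theorem~\cite{Lom} (or the earlier result of Barnes~\cite{Barnes}). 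Therefore $\pi=0$.

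The only step with genuine content is the invariant-subspace step, and its two ingredients — that a nonzero ideal of a topologically irreducible operator algebra is again topologically irreducible, and that a nonzero algebra of nilpotent finite rank operators cannot act transitively unless $X$ is trivial — are precisely those already used in proving Theorem~\ref{harp}. Hence the proposition is, strictly speaking, a formal consequence of that proof, and the ``hard part'' here is merely a clean restatement; I would therefore keep the proof to a sentence referring back to Theorem~\ref{harp}, with the above as the underlying reasoning.
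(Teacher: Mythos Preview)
Your proposal is correct and follows exactly the paper's approach: the paper does not give an independent proof of this proposition but simply records it as ``a result established in the proof of Theorem~\ref{harp},'' and your write-up faithfully reproduces that argument (with a bit more detail on why $\pi(a)$ has finite rank and why the ideal $I$ inherits topological irreducibility).
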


Is any radical bicompact Banach algebra permanently radical? Note that the
positive answer would imply that all hypocompact radical Banach algebras are
permanently radical. But even if the answer is affirmative it needs another
approach because the following result shows that Proposition \ref{TI} doesn't
extend to radical hypocompact algebras.

\begin{theorem}
\label{read-bons} There is a radical bicompact, singly generated Banach
algebra $A$ with a non-trivial topologically irreducible contractive
representation by bounded operators.
\end{theorem}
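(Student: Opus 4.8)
*There is a radical bicompact, singly generated Banach algebra $A$ with a non-trivial topologically irreducible contractive representation by bounded operators.*

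Here is a plan for the construction. Since everything in the earlier sections forces the class of radical *hypofinite* algebras to admit only trivial topologically irreducible representations (Proposition \ref{TI}), the example must be hypocompact but not hypofinite; the natural place to look is inside an algebra of compact operators whose closed subalgebra of interest contains no nonzero finite rank elements. The plan is to realise $A$ as the closed, singly generated subalgebra $A=\overline{\mathrm{alg}}(T)$ of $\mathcal{K}(H)$ generated by a single compact operator $T$ on a separable Hilbert space $H$, chosen so that: (1) $T$ is quasinilpotent, so that — since $A$ is a closed commutative (indeed singly generated) algebra of compact operators with $T$ quasinilpotent — $A$ is a radical Banach algebra by the usual functional-calculus/Gelfand argument; (2) $A$ is automatically bicompact, being a subalgebra of $\mathcal{K}(H)$ and thus consisting of compact operators with $L_aR_b$ compact for all $a,b$ (Vala \cite{Vala}); and (3) the identity representation of $A$ on $H$, or a suitable restriction of it to an invariant subspace, is topologically irreducible. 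The genuinely delicate point is arranging (3) simultaneously with radicality: a single quasinilpotent compact operator must be built whose generated algebra acts topologically irreducibly on some invariant subspace.

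The core construction I would use is a weighted backward (or forward) shift. Let $H=\ell_2(\mathbb N)$ (or $\ell_2(\mathbb Z_{\ge 0})$) with standard basis $\{e_n\}$, and let $T$ be the weighted shift $Te_n = w_n e_{n+1}$ (or $e_{n-1}$) with a strictly positive weight sequence $(w_n)$ decreasing to $0$ fast enough that $\sup_{k\ge n}(w_nw_{n+1}\cdots w_{n+k-1})^{1/k}\to 0$, which guarantees $\rho(T)=0$, hence $T$ is quasinilpotent and compact. The algebra $A=\overline{\mathrm{alg}}(T)$ is then a commutative radical bicompact Banach algebra, singly generated by construction. The remaining task is to exhibit a topologically irreducible contractive representation. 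For this I would invoke the theory of weighted shifts: after renormalising the generator (replacing $T$ by a suitable scalar multiple and passing to an equivalent algebra norm so that $\|T\|\le 1$), the natural action of $A$ on $H$ is contractive, and the problem reduces to showing that $H$ (or an invariant subspace of it) has no nontrivial closed $A$-invariant subspace. Since $A$ is the closed algebra generated by a weighted shift, its invariant subspaces are the closures of the ideals $\overline{A T^k H}$ type subspaces; one should choose the weights so that the completion of $A$ acting on a suitable cyclic vector produces a space on which the commutant is trivial. This is where I would lean on a Read–Bonsall style construction (the phrasing "read-bons" in the label signals exactly this): Read's celebrated examples of operators with no invariant subspaces, and Bonsall's observations on radical Banach algebras generated by such operators, furnish a single compact (or at least Riesz, then compact by a refinement) operator whose generated radical algebra acts topologically irreducibly.

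So the steps, in order, are: first, fix the ambient Hilbert (or Banach) space and the type of generator — a weighted shift or a Read-type operator $T$ — and verify $T$ is compact and quasinilpotent, so $\rho(T)=0$; second, deduce that $A:=\overline{\mathrm{alg}}(T)\subset \mathcal K(H)$ is a singly generated radical Banach algebra, and that it is bicompact because $L_aR_b$ is compact for all $a,b\in\mathcal B(H)$ when $a$ or $b$ is compact (Vala); third, renormalise so the canonical representation of $A$ on the relevant cyclic invariant subspace is contractive; fourth — the hard part — prove that this representation is topologically irreducible, i.e. that $T$ (equivalently its generated algebra) has no nontrivial closed invariant subspace on that cyclic subspace, which is precisely the delicate analytic estimate inherited from the Read–Bonsall construction. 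I expect the main obstacle to be step four: one must choose the weights/data of $T$ with enough care that quasinilpotence (needed for radicality) coexists with the absence of invariant subspaces for the generated algebra, and this is exactly the nontrivial content of Read's and Bonsall's work, so the proof will either cite their construction directly and check the extra properties (compactness, bicompactness, contractivity after renorming) or adapt it. The remaining verifications — that a closed singly generated algebra of quasinilpotent compact operators is radical, that subalgebras of $\mathcal K(H)$ are bicompact, and that one may renorm an algebra norm to be contractive on a single generator — are routine given the machinery already in the paper.
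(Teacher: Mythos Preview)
Your approach has a genuine gap that cannot be repaired: you propose to take $T$ to be a \emph{compact operator} on $H$ and let $A=\overline{\mathrm{alg}}(T)\subset\mathcal{K}(H)$. But by Lomonosov's theorem (cited in the paper as \cite{Lom}), every nonzero compact operator on a Banach space of dimension $>1$ has a nontrivial closed invariant subspace; the same applies to the restriction of $T$ to any invariant subspace, since that restriction is again compact. Hence the identity representation of $A$ on $H$, or on any nontrivial invariant subspace, can never be topologically irreducible. Your weighted-shift example makes this concrete: the subspaces $\overline{\mathrm{span}}\{e_n:n\geq k\}$ are obviously invariant. More generally, no operator that is compact (or Riesz, or polynomially compact) on the representation space can do the job.

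The paper's argument avoids this by separating two notions of compactness. One starts with Read's quasinilpotent operator $T$ on a Banach space $X$ with \emph{no} nontrivial closed invariant subspace; such $T$ is certainly not a compact operator. One then invokes Bonsall's theorem \cite{Bons}: there is an algebra norm $\|\cdot\|'$ on the subalgebra $B$ generated by $T$, majorising the operator norm, whose completion $A$ is a Banach subalgebra of $\mathcal{B}(X)$ in which the generator $b$ corresponding to $T$ is a \emph{compact element of the abstract algebra $A$} (i.e.\ $W_b=L_bR_b$ is compact on $A$). Thus $A$ is bicompact and singly generated in the abstract sense required by the theorem, while the representation $\pi:A\to\mathcal{B}(X)$ sending $b$ to $T$ remains topologically irreducible and contractive. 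Radicality follows because $\sigma_A(b)$ is countable (compact elements have countable spectra) and hence equals $\sigma(T)=\{0\}$ by Proposition~\ref{sp}. The point you missed is that ``bicompact'' is a property of the Banach algebra, not of its image under a representation, and Bonsall's renorming manufactures abstract compactness without touching the operator-theoretic irreducibility.
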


\begin{proof}
Let $T$ be a quasinilpotent operator on a Banach space $X$ without non-trivial
closed invariant subspaces (the existence of such operators is a famous
example by Read \cite{Read}). Let $B$ be the subalgebra of ${\mathcal{B}}(X)$
generated by $T$. It follows from Bonsall's theorem \cite[Theorem 3]{Bons}
that there is an algebra norm $\Vert\cdot\Vert^{\prime}$ on $B$ such that
\begin{itemize}
\item[$\mathrm{1)}$] $\Vert a\Vert\leq\Vert a\Vert^{\prime}$ for each $a\in B$,
\item[$\mathrm{2)}$] the completion $A$ of $B$ in $\left\Vert \cdot\right\Vert
^{\prime}$ is a Banach subalgebra of $\mathcal{B}\left(  X\right)  $,
\item[$\mathrm{3)}$] the element $b$ of $A$ corresponding to $T$ is compact.
\end{itemize}
\noindent Since $A$ is generated by $b$, it is a bicompact, singly generated
Banach algebra. As every compact element of a Banach algebra has countable
spectrum by \cite[Theorem 4 .4]{A68}, $\sigma_{A}\left(  b\right)
=\sigma\left(  T\right)  $ by Proposition \ref{sp}(ii). Hence $b$ is a
quasinilpotent element of $A$, and $A$ is radical. As $A$ is embedded into
$\mathcal{B}\left(  X\right)  $, let $\pi$ be the natural representation of
$A$ by bounded operators on $X$. Then ${\pi}(b)=T$, and $\pi$ is topologically
irreducible and contractive.
\end{proof}

\begin{theorem}
\label{mulperm0} If $A$ is a compact Banach algebra and $\mathrm{Rad}(A)$ is
permanently radical, then $L_{\mathrm{Rad}(A)}\cup R_{\mathrm{Rad}(A)}%
\subset\mathrm{Rad}(\mathrm{Mul}(A)).$
\end{theorem}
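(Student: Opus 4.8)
The plan is to put $J=\mathrm{Rad}(A)$ and reduce everything to a computation in the quotient algebra $\mathcal{A}:=\mathrm{Mul}(A)/I$, where $I$ is the closed ideal of $\mathrm{Mul}(A)$ generated by $L_JR_A\cup L_AR_J$. Since $A$ is compact, Theorem \ref{mul0.2} applies and gives $I\subset\mathrm{Rad}(\mathrm{Mul}(A))$; consequently $\mathrm{Rad}(\mathcal{A})=\mathrm{Rad}(\mathrm{Mul}(A))/I$, so it suffices to show that $q(L_J)$ and $q(R_J)$ lie in $\mathrm{Rad}(\mathcal{A})$, where $q\colon\mathrm{Mul}(A)\to\mathcal{A}$ is the quotient map. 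I would treat $L_J$; the argument for $R_J$ is the mirror image.

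First I would record the relations that survive in $\mathcal{A}$. As operators, $L_aR_b=R_bL_a$, and $L_aR_b\in I$ whenever $a\in J$ or $b\in J$. Hence in $\mathcal{A}$ one has $q(L_a)q(R_b)=q(R_b)q(L_a)$ for all $a,b\in A$, and this product vanishes as soon as $a$ or $b$ lies in $J$. Combining these with $q(L_c)q(L_d)=q(L_{cd})$ and $q(R_c)q(R_d)=q(R_{dc})$, every monomial in the generators $L_A\cup R_A$ of $\mathcal{A}$ collapses to one of $q(L_c)$, $q(R_d)$, $q(L_cR_d)$. The key lemma I would then establish is that $u\,q(L_a)\,v\in q(L_J)$ for every $a\in J$ and every $u,v\in\mathcal{A}$: by linearity and continuity it is enough to check this for $u,v$ equal to such reduced monomials, and then an $R$-factor brought adjacent to $q(L_a)$ kills the product (because $a\in J$), while an $L$-factor only replaces $a$ by another element of $J$. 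Since $q(L_J)$ is a linear subspace of $\mathcal{A}$, it follows that its closure $\overline{q(L_J)}$ is a closed two-sided ideal of $\mathcal{A}$.

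The concluding step invokes permanent radicality. The map $a\mapsto q(L_a)$ is a bounded homomorphism of $\mathrm{Rad}(A)$ into $\mathcal{A}$ whose image $q(L_J)$ is dense in $\overline{q(L_J)}$; since $\mathrm{Rad}(A)$ is permanently radical, $\overline{q(L_J)}$ is a radical Banach algebra. A radical closed ideal is contained in the Jacobson radical, so $q(L_J)\subset\overline{q(L_J)}\subset\mathrm{Rad}(\mathcal{A})$, whence $L_J\subset q^{-1}(\mathrm{Rad}(\mathcal{A}))=\mathrm{Rad}(\mathrm{Mul}(A))$; symmetrically $R_J\subset\mathrm{Rad}(\mathrm{Mul}(A))$.

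The main obstacle is the middle step: verifying cleanly that $\overline{q(L_J)}$ is an ideal of $\mathcal{A}$, i.e.\ that multiplying $q(L_a)$ ($a\in J$) by an arbitrary generalized multiplication operator cannot push it out of $\overline{q(L_J)}$. This is exactly where Theorem \ref{mul0.2} does the work, forcing all mixed products $L_aR_b$ with $a$ or $b$ in $J$ to vanish modulo $I$; once that is available, the commutation relation $L_aR_b=R_bL_a$ makes the reduction of monomials routine, and the remainder is a short application of the hypothesis together with the standard fact about radical ideals.
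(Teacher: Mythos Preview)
Your proposal is correct and follows essentially the same line as the paper's proof: both arguments use Theorem~\ref{mul0.2} to kill the mixed terms $L_JR_A\cup L_AR_J$ and then invoke permanent radicality of $\mathrm{Rad}(A)$ through the homomorphism $a\mapsto q(L_a)$. The only cosmetic differences are that the paper quotients directly by $\mathrm{Rad}(\mathrm{Mul}(A))$ rather than by your smaller ideal $I$, and concludes via the characterization ``$L_aT$ quasinilpotent for all $T$'' instead of ``radical closed ideal $\subset$ Jacobson radical''; also note that your key lemma should read $u\,q(L_a)\,v\in\overline{q(L_J)}$ (not $q(L_J)$) once you pass to general $u,v\in\mathcal{A}$ by continuity, though your subsequent sentence makes clear this is what you intend.
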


\begin{proof}
Let $I=\mathrm{Rad}(\mathrm{Mul}(A))$, $C=\mathrm{Mul}(A)/I$ and $q=q_{I}$.
Define $\phi:\mathrm{Rad}(A)\rightarrow C$ by $\phi(a)=q(L_{a})$ for any $a$.
Then the algebra $\overline{\phi(\mathrm{Rad}(A))}$ is radical.
For any $a\in\mathrm{Rad}(A)$ and $T\in\mathcal{E\!\ell}(A)$, we have that
\[
L_{a}T\in L_{\mathrm{Rad}(A)}+L_{\mathrm{Rad}(A)}R_{A}\subset L_{\mathrm{Rad}%
(A)}+\mathrm{Rad}(\mathrm{Mul}(A))
\]
by Theorem \ref{mul0.2}. It follows that $q(L_{a}T)\in\overline{\phi
(\mathrm{Rad}(A))}$. By continuity, the same is true for all $T\in
\mathrm{Mul}(A)$. Thus all $q(L_{a}T)$ are quasinilpotent. This shows that
$L_{\mathrm{Rad}(A)}\mathrm{Mul}(A)$ consists of quasinilpotents, whence
$L_{\mathrm{Rad}(A)}\subset\mathrm{Rad}(\mathrm{Mul}(A))$. Similarly, we have
that $R_{\mathrm{Rad}(A)}\subset\mathrm{Rad}(\mathrm{Mul}(A))$.
\end{proof}

\begin{corollary}
\label{mulperm1} If $A$ is an approximable Banach algebra then
\[
L_{\mathrm{Rad}(A)}\cup R_{\mathrm{Rad}(A)}\subset\mathrm{Rad}(\mathrm{Mul}%
(A)).
\]

\end{corollary}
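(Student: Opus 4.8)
The plan is to deduce Corollary \ref{mulperm1} directly from Theorem \ref{mulperm0} by verifying that an approximable Banach algebra $A$ satisfies both of its hypotheses: that $A$ is compact, and that $\mathrm{Rad}(A)$ is permanently radical. Once these two facts are in hand, Theorem \ref{mulperm0} immediately gives $L_{\mathrm{Rad}(A)}\cup R_{\mathrm{Rad}(A)}\subset\mathrm{Rad}(\mathrm{Mul}(A))$.

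First I would check that $A$ is compact. The set of finite rank elements is dense in $A$ by hypothesis, every finite rank element is a compact element, and the set of compact elements of a Banach algebra is closed, being the preimage of the closed ideal $\mathcal{K}(A)\subset\mathcal{B}(A)$ under the continuous map $a\longmapsto W_{a}=L_{a}R_{a}$. Hence every element of $A$ is compact, i.e. $A$ is a compact Banach algebra.

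Next I would show that $A$ is hypofinite, so that Corollary \ref{harp1} applies. Since the class of approximable algebras is stable under passing to quotients by closed ideals, every non-zero quotient $A/I$ is again approximable; and a non-zero approximable Banach algebra plainly contains a non-zero finite rank element, because finite rank elements are dense and so some one of them lies within $\tfrac12\Vert a\Vert$ of any chosen non-zero $a$. Thus every non-zero quotient of $A$ has a non-zero finite rank element, which is precisely hypofiniteness. By Corollary \ref{harp1} (itself resting on Theorem \ref{harp}), $\mathrm{Rad}(A)$ is therefore permanently radical, and an application of Theorem \ref{mulperm0} finishes the argument.

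I do not expect a genuine obstacle here: the corollary is essentially bookkeeping on top of Theorem \ref{mulperm0} and Corollary \ref{harp1}. The only point that deserves a line of justification is the quotient-stability of approximability, which in turn comes down to the observation that if $a$ is a finite rank element then the range $aAa$ of $W_{a}$ is finite-dimensional, hence so is its image in any quotient $A/I$, so $a/I$ remains a finite rank element of $A/I$.
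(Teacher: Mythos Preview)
Your proposal is correct and follows exactly the same route as the paper: verify that an approximable Banach algebra is compact, invoke Corollary \ref{harp1} to conclude that $\mathrm{Rad}(A)$ is permanently radical, and then apply Theorem \ref{mulperm0}. The paper's proof is terser (it takes compactness as clear and cites Corollary \ref{harp1} directly), while you spell out the intermediate step that approximability passes to quotients and hence implies hypofiniteness; this is implicit in the paper, which earlier remarks that the class of approximable algebras is closed under quotients.
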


\begin{proof}
Clearly $A$ is compact. Furthermore, ${\mathrm{Rad}(A)}$ is permanently
radical by Corollary \ref{harp1}.
\end{proof}

\begin{corollary}
\label{mulperm2} If $A$ is an approximable Banach algebra and $A$ is
commutative modulo $\mathrm{Rad}(A)$, then $\mathrm{Mul}(A)$ is commutative
modulo $\mathrm{Rad}(\mathrm{Mul}(A))$.
\end{corollary}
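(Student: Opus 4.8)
The plan is to push everything down to the topological generators $L_a,R_b$ ($a,b\in A$) of $\mathcal{E\!\ell}(A)$. Put $I=\mathrm{Rad}(\mathrm{Mul}(A))$ and let $q:\mathrm{Mul}(A)\longrightarrow C:=\mathrm{Mul}(A)/I$ be the quotient homomorphism. The only substantial ingredient is Corollary \ref{mulperm1}: since $A$ is approximable, $L_{\mathrm{Rad}(A)}\cup R_{\mathrm{Rad}(A)}\subset I$. Everything else is bookkeeping with commutators of multiplication operators.

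First I would verify that $q(L_a)$ and $q(R_b)$ pairwise commute in $C$. Indeed $[L_a,R_b]=0$, whereas $L_aL_c=L_{ac}$ and $R_bR_d=R_{db}$ give $[L_a,L_c]=L_{ac}-L_{ca}=L_{[a,c]}$ and $[R_b,R_d]=R_{db}-R_{bd}=-R_{[b,d]}$. Because $A$ is commutative modulo $\mathrm{Rad}(A)$, the elements $[a,c]$ and $[b,d]$ lie in $\mathrm{Rad}(A)$, so $[L_a,L_c]\in L_{\mathrm{Rad}(A)}\subset I$ and $[R_b,R_d]\in R_{\mathrm{Rad}(A)}\subset I$; applying $q$ makes all of these commutators vanish.

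Then I would finish as follows. The subalgebra of $C$ generated by the family $\{q(L_a):a\in A\}\cup\{q(R_b):b\in A\}$ equals $q(\mathcal{E\!\ell}(A))$, and it is dense in $C$ since $q$ is bounded and $\mathcal{E\!\ell}(A)$ is dense in $\mathrm{Mul}(A)$ by the very definition of the latter. A subalgebra generated by a commuting set of elements is commutative, and commutativity passes to the closure by joint continuity of multiplication; hence $C$ is commutative. This is precisely the statement that $[S,T]\in\mathrm{Rad}(\mathrm{Mul}(A))$ for all $S,T\in\mathrm{Mul}(A)$, i.e. $\mathrm{Mul}(A)$ is commutative modulo $\mathrm{Rad}(\mathrm{Mul}(A))$.

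I do not expect a genuine obstacle: once Corollary \ref{mulperm1} is in hand, the argument is the routine ``commuting generators $\Rightarrow$ commutative closure'' lemma. The only points worth stating with a little care are that $q(\mathcal{E\!\ell}(A))$ is \emph{dense} in $C=\mathrm{Mul}(A)/\mathrm{Rad}(\mathrm{Mul}(A))$ and that it is generated as an algebra by the images of the $L_a$ and $R_b$ — both of which are immediate from $\mathrm{Mul}(A)=\overline{\mathcal{E\!\ell}(A)}$ and from the definition of $\mathcal{E\!\ell}(A)$ as the algebra generated by all $L_a,R_b$. If one wishes to avoid quotienting by the full radical, one may instead quotient by the closed ideal generated by $L_{\mathrm{Rad}(A)}\cup R_{\mathrm{Rad}(A)}$, run the same generator argument there, and then invoke Corollary \ref{mulperm1} to see this ideal sits inside $\mathrm{Rad}(\mathrm{Mul}(A))$.
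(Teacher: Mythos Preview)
Your proof is correct and follows essentially the same approach as the paper: compute $[L_a,L_c]=L_{[a,c]}\in L_{\mathrm{Rad}(A)}$, $[R_b,R_d]=-R_{[b,d]}\in R_{\mathrm{Rad}(A)}$, $[L_a,R_b]=0$, invoke Corollary~\ref{mulperm1} to place these in $\mathrm{Rad}(\mathrm{Mul}(A))$, and conclude commutativity of the quotient by density of $\mathcal{E\!\ell}(A)$. The paper's proof is the same argument, only stated more tersely.
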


\begin{proof}
For all $a,b\in A$, $[L_{a},L_{b}]=L_{[a,b]}\in L_{\mathrm{Rad}(A)}%
\subset{\mathrm{Rad}(}${$\mathrm{Mul}$}${(A))}$ and, similarly, $[R_{a}%
,R_{b}]\in{\mathrm{Rad}(}${$\mathrm{Mul}$}${(A))}$. Since also $[L_{a}%
,R_{b}]=0\in{\mathrm{Rad}(}${$\mathrm{Mul}$}${(A))}$, we get that
$\mathrm{Mul}(A)/\mathrm{Rad}(\mathrm{Mul}(A))$ is commutative.
\end{proof}

\subsection{Chains of closed ideals}

Now we consider invariant subspaces of the algebras of elementary operators.
It was proved by Wojty\'{n}ski \cite{W78} that the well known problem of the
existence of a non-trivial closed ideal in a radical Banach algebra has the
positive answer if the algebra has a non-zero compact element. The proof of
this fact, based on the invariant subspace theorem for Volterra semigroups is
given in \cite{T1998}. The following theorem presents another proof and a
slightly more general formulation of this result.

Recall that a \textit{central multiplier} on a Banach algebra $A$ is a bounded
linear operator on $A$ commuting with left and right multiplications.

\begin{theorem}
\label{ideal} If a radical Banach algebra $A$ has a non-zero compact element
then either the multiplication in $A$ is trivial or $A$ has a closed ideal
invariant under all central multipliers.
\end{theorem}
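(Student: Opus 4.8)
The plan is to argue by contradiction: assume $A$ has non-trivial multiplication (so $A^2 \neq 0$) and yet no closed ideal invariant under all central multipliers, and derive a contradiction from the existence of a non-zero compact element $a \in A$. The key idea is that $aAa$ is a closed (or its closure is a closed) subalgebra on which $W_a = L_a R_a$ is a compact operator, and $A$ being radical forces $W_a$ to be a compact \emph{quasinilpotent} operator — a Volterra operator — on the relevant space. First I would dispose of the trivial case: if $W_a = 0$ for every non-zero compact element, one still needs to extract an ideal; but if $a$ is a non-zero compact element with $W_a \neq 0$, then $\overline{aAa}$ is non-zero, and I would work with the operator $W_a$ restricted to $\overline{A a A}$ or to a suitable invariant subspace. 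The point is that a non-zero Volterra (compact quasinilpotent) operator has many non-trivial closed invariant (hyperinvariant) subspaces by the classical Lomonosov theorem.

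**The main construction.** The plan is to use $W_a$ to produce an invariant subspace and then turn it into an ideal. Since $a$ is compact, $W_a \in \mathcal{K}(A)$; since $A$ is radical, every $L_a R_b$ (in particular $W_a$) is quasinilpotent as an operator, because the multiplication algebra considerations of Corollary \ref{hypomult} / Theorem \ref{mul0.2} (applied with $J = \mathrm{Rad}(A) = A$) show $L_A R_A$ consists of quasinilpotents. So $W_a$ is a non-zero compact quasinilpotent operator on the Banach space $A$. By the Lomonosov theorem there is a non-trivial closed subspace $Y \subset A$ which is hyperinvariant for $W_a$, i.e. invariant under every bounded operator on $A$ commuting with $W_a$. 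Now the crucial observation: $L_x$ and $R_x$ for $x$ in the commutant of $a$ commute with $W_a = L_a R_a$; more robustly, one checks that the closed ideal generated appropriately — say $\overline{A Y A}$ or the closed linear span of $\{W_a z : z \in A\}$ — can be arranged to be a proper closed subspace stable under left and right multiplications by all of $A$, hence a closed ideal, and simultaneously stable under central multipliers (since a central multiplier commutes with $W_a$, it preserves the hyperinvariant subspace $Y$, and it commutes with $L_x, R_x$, so it preserves the ideal they generate from $Y$).

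**Turning the invariant subspace into a central-multiplier-invariant ideal.** The delicate part is to produce from $Y$ a subspace that is simultaneously (a) a two-sided ideal, (b) proper and non-zero, and (c) invariant under central multipliers. Here I would use that central multipliers commute with all $L_x$ and $R_x$, so the family $\mathcal{D}$ consisting of all $L_x$, $R_x$ ($x \in A$) together with all central multipliers is a set of operators each commuting with $W_a$ — wait, $L_x, R_x$ need not commute with $W_a$ in general. The correct route: take $Y$ hyperinvariant for $W_a$, let $N = \overline{\mathrm{span}}\{b y c : b,c \in A^1,\ y \in Y\}$, the closed ideal generated by $Y$. This $N$ is automatically invariant under central multipliers, because any central multiplier $\Phi$ satisfies $\Phi(byc) = b\,\Phi(y)\,c$ and $\Phi$ commutes with $W_a$ hence $\Phi(Y) \subset Y$. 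So $N$ is a closed central-multiplier-invariant ideal; it is non-zero because $Y \neq 0$ and $Y \subset N$. The remaining issue — the expected main obstacle — is showing $N \neq A$. If $N = A$, one argues that $A = \overline{AYA}$ forces, via compactness of $W_a$ and an argument on $Y$ being $W_a$-invariant with $W_a$ Volterra, a contradiction with $Y$ being proper; concretely, one shows $W_a(A) \subset$ (closure of $A Y A$ intersected with a space where $W_a$ acts Volterra-ly) and uses that a Volterra operator cannot have its range dense in a space it compresses onto a proper subspace. I expect the honest proof here to localize: restrict attention to $\overline{aAa}$ with its own norm, note $W_a|_{\overline{aAa}}$ is still compact quasinilpotent and non-zero, invoke Lomonosov there, and then lift the resulting ideal of $\overline{aAa}$ to a closed ideal of $A$ by the standard preimage/closure construction, checking central-multiplier invariance is preserved under this lift. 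The subtle bookkeeping of which ambient space to run Lomonosov in, and verifying properness after lifting, is where the real work lies.
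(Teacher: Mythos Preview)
Your approach has the right ingredients (compact quasinilpotent operator, Lomonosov-type hyperinvariance) but a genuine gap at exactly the point you flag as ``the expected main obstacle'': turning the hyperinvariant subspace $Y$ for the single operator $W_a$ into a \emph{proper} closed ideal. You correctly note that $L_x, R_x$ do not commute with $W_a$, so the closed ideal $N = \overline{A^1 Y A^1}$ generated by $Y$ has no reason to be proper. Your proposed fixes (restrict to $\overline{aAa}$, then lift) do not clearly work either: lifting an ideal of $\overline{aAa}$ to $A$ by preimage or by generating an ideal can again produce all of $A$, and nothing in your sketch prevents this.

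The paper avoids this difficulty by two moves. First, it reduces to the bicompact case via explicit ideals: $I=\{b: L_aR_b\in\mathcal{K}(A)\}$ and then $J=\{c: L_cR_b\in\mathcal{K}(A)\text{ for all }b\}$ are themselves non-zero closed ideals invariant under central multipliers, so if either is proper we are done; otherwise $A$ is bicompact. Second, and this is the key conceptual point you are missing, once $A$ is bicompact the paper works not with the single operator $W_a$ but with the entire algebra $\mathrm{Mul}_2(A)$ (the closure of the span of all $L_aR_b$). Bicompactness makes $\mathrm{Mul}_2(A)$ an algebra of compact operators, and Corollary~\ref{mul2} makes it radical; then \cite{Sh84} gives a non-trivial hyperinvariant subspace for $\mathrm{Mul}_2(A)$. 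Since $\mathrm{Mul}_2(A)$ is an ideal of $\mathrm{Mul}(A)$, this yields a hyperinvariant subspace for $\mathrm{Mul}(A)$, and such a subspace is \emph{automatically} a closed two-sided ideal (invariance under all $L_x,R_x$) invariant under central multipliers (which form the commutant of $\mathrm{Mul}(A)$). No ``closing up to an ideal'' step is needed: the ideal property is built in from the start by applying the invariant-subspace theorem to an algebra large enough to contain all left and right multiplications, rather than to a single operator.
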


\begin{proof}
Let an element $a\in A$ be compact. Then $I=\left\{  b:L_{a}R_{b}%
\in\mathcal{K}(A)\right\}  $ is a non-zero closed ideal in $A$ invariant under
central multipliers. So we have to assume that $I=A$. Setting $J=\left\{
c:L_{c}R_{b}\in\mathcal{K}(A)\right\}  \text{ for all }b\in A$, we similarly
reduce to the case that $J=A$. In other words, we may suppose that $A$ is bicompact.
Recall that a subspace invariant under an algebra of operators and its
commutant is called \textit{hyperinvariant} for this algebra. Note that the
set of all central multipliers is the commutant of $\mathrm{Mul}(A)$. So our
aim is to show that $\mathrm{Mul}(A)$ has a non-trivial hyperinvariant
subspace. Since $\mathrm{Mul}_{2}(A)$ is an ideal in $\mathrm{Mul}(A)$ it
suffices to show the same for $\mathrm{Mul}_{2}(A)$ (see for example
\cite{T1998}). By Corollary \ref{mul2}, $\mathrm{Mul}_{2}(A)$ is a radical
algebra of compact operators. Hence it has a hyperinvariant subspace by
\cite{Sh84}.
\end{proof}

Is this possible to strengthen the result and to prove the existence of a
total chain of closed ideals? We will show that the answer to this question is negative.

Recall that a chain (i.e. a set linearly ordered by the inclusion)
$\mathfrak{N}$ of closed subspaces of a Banach space $X$ is \textit{total} if
it is not contained in a larger chain of subspaces. This is equivalent to the
conditions that $\mathfrak{N}$ is complete and for any elements $Y_{1}\subset
Y_{2}$ of $\mathfrak{N}$, either $\dim(L_{2}/L_{1})=1$ or there exists an
intermediate subspace in $\mathfrak{N}$.

Let us call by \textit{a gap} in the lattice of closed ideals of a Banach
algebra $A$ a pair $I_{1}\subset I_{2}$ of closed ideals without intermediate
ideals, and in this case the quotient $I_{2}/I_{1}$ is called a
\textit{gap-quotient }of the lattice. It is easy to show by transfinite
induction that if $\dim(I_{2}/I_{1})=1$ for any gap, then $A$ has a total
chain of closed ideals (moreover, each chain of ideals extends to a total one).

An example of a gap is a pair $(0,I)$ where $I$ is a minimal closed ideal. So
if each chain of closed ideals in $A$ extends to a total one then each minimal
closed ideal is one-dimensional. We show now that these properties can fail in
the class of radical bicompact algebras. Then it will be shown that for
radical hypofinite algebras the situation is different.

\begin{theorem}
\label{contr}

\begin{itemize}
\item[$\mathrm{(i)}$] There is a radical bicompact Banach algebra without a
total chain of closed ideals.

\item[$\mathrm{(ii)}$] A radical bicompact Banach algebra can have an
infinite-dimensional minimal closed ideal.
\end{itemize}
\end{theorem}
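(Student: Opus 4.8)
The plan is to construct both examples from a single source: a radical bicompact Banach algebra whose lattice of closed ideals does not behave like a chain. The natural candidate is an algebra of the form $\mathcal K(X)/\mathcal A(X)$ or, more flexibly, a tensor-type construction $R \widehat\otimes R$ (or $R\widehat\otimes\mathcal K(\ell_2)$) for a suitable radical Banach algebra $R$, because by Lemma~\ref{quot} such quotients are radical and bicompact, and tensoring preserves a lot of structure in the ideal lattice. First I would recall that a radical compact Banach algebra with zero multiplication has every closed subspace as an ideal, so the issue is to keep the algebra bicompact while forcing the ideal lattice to be ``too rich'' to admit a total chain.

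For part (i), the key idea is that a total chain of closed ideals, extended by transfinite induction, would force every gap-quotient to be one-dimensional; so it suffices to produce a radical bicompact algebra having a gap $I_1\subset I_2$ with $\dim(I_2/I_1)\ge 2$, or more robustly, a minimal closed ideal that is not one-dimensional --- which is exactly what part (ii) provides, and part (i) follows from part (ii). Thus the real content is (ii): I would exhibit a radical bicompact Banach algebra $A$ possessing an infinite-dimensional \emph{minimal} closed ideal $M$. The plan is to take $M$ itself to be a radical Banach algebra with \emph{trivial multiplication} on a Banach space $E$ --- then every closed subspace of $M$ is an ideal of $M$, but we must arrange that $M$ has \emph{no proper nonzero subspace invariant under the action of $A$}, i.e. $M$ is ``topologically simple as an $A$-bimodule.'' Concretely I would let $E$ be an infinite-dimensional Banach space, let $A$ act on $M=E$ through a fixed quasinilpotent operator (or a Read-type operator without invariant subspaces, as in Theorem~\ref{read-bons}), with the product $A\cdot A$ landing in $M$ and $M\cdot M=0$, so that $A$ is an extension of the trivial-multiplication algebra $M$ by the generator; bicompactness is arranged because the relevant multiplication operators $L_aR_b$ are built from compact operators (using Vala's theorem and Bonsall's renorming as in Theorem~\ref{read-bons}), and radicality follows since the generator is quasinilpotent and $M$ is radical, via Lemma~\ref{quotBan}.

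The main obstacle, which I would address with care, is simultaneously guaranteeing three things: that $M$ is genuinely \emph{minimal} among closed ideals of $A$ (no nonzero $A$-invariant closed subspace of $E$, which is where an invariant-subspace-free operator à la Read enters), that $A$ is \emph{bicompact} and not just compact (requiring that all $L_aR_b$ be compact on $A$, which constrains how the generator acts and typically needs the Read/Bonsall machinery rather than an abstract construction), and that $A$ is \emph{radical}. The delicate point is compatibility of ``no invariant subspaces'' with ``compact action'': a compact operator always has invariant subspaces by Aronszajn--Smirnov, so $A$ must act on $E$ by an operator that is compact \emph{as an element of a cleverly renormed algebra} but whose natural action still lacks invariant closed subspaces --- this is precisely the subtlety exploited in Theorem~\ref{read-bons}, and I would lift that construction to the bimodule setting.

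Once (ii) is in hand, (i) is immediate: if $A$ had a total chain of closed ideals, then (by the transfinite-induction remark preceding the theorem) every minimal closed ideal of $A$ would be one-dimensional, contradicting the existence of the infinite-dimensional minimal closed ideal $M$. I would close by noting that the same example shows more: not every chain of closed ideals in $A$ extends to a total one, since the chain $\{0\}\subsetneq M$ cannot be refined inside $M$ beyond picking a single one-dimensional subspace, yet $M$ itself is minimal, so $\{0\}\subsetneq M$ is a gap with infinite-dimensional gap-quotient. This contrast with the hypofinite case (to be treated next) is exactly the phenomenon the section is built around.
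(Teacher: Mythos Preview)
Your overall strategy is exactly the paper's: invoke Theorem~\ref{read-bons} to obtain a radical bicompact algebra $A$ with a topologically irreducible representation $\pi$ on an infinite-dimensional space $X$, and then build an extension in which $X$ sits as a minimal closed ideal. However, two points in your write-up need repair.

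First, the condition ``$A\cdot A$ lands in $M$'' together with $M\cdot M=0$ is fatal. If $A/M$ is spanned by generators $g_i$ acting on $M$ by $g_i m=T_i m$, then associativity gives $g_i(g_j m)=(g_ig_j)m\in M\cdot M=0$, so $T_iT_j=0$ for all $i,j$; in particular each $T_i^2=0$. Such a family can never act without invariant subspaces on an infinite-dimensional $M$ (indeed $\bigcap_j\ker T_j$ is a proper nonzero $A$-invariant closed subspace), so $M$ fails to be minimal. The paper avoids this by \emph{not} forcing $B\cdot B\subset I$: it sets $B=A\oplus X$ with $(a\oplus x)(b\oplus y)=ab\oplus\pi(a)y$, so that $B/I\cong A$ carries the full (non-nilpotent) multiplication of the Bonsall--Read algebra, and the left action of $B$ on $I=0\oplus X$ is through $\pi$ itself. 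Bicompactness of $B$ then requires a genuine argument (the operator $c\mapsto\pi(ac)y$ must be shown compact by approximating $y$ from $\pi(A)X$), which you should anticipate.

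Second, your deduction of (i) from (ii) is incomplete. Having an infinite-dimensional minimal closed ideal $M$ gives a gap $\{0\}\subsetneq M$ in the \emph{lattice}, but a putative total chain of closed ideals need not pass through $M$; the transfinite-induction remark you cite goes in the opposite direction. What the paper proves, and what you need, is the stronger statement that \emph{every} nonzero closed ideal of $B$ contains $I$: given $a\oplus x\in J$ with $a\neq 0$, one computes $(b\oplus 0)(a\oplus x)(0\oplus y)=0\oplus\pi(ba)y\in J$ for all $b\in A$, $y\in X$, and topological irreducibility of $\pi$ then forces $I\subset J$. With this in hand, any chain of closed ideals has $\{0\}$ as an isolated minimum with infinite-dimensional successor, so no total chain exists.
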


\begin{proof}
Let $A$ be a commutative bicompact radical Banach algebra with a topologically
irreducible representation ${\pi}:A\longrightarrow\mathcal{B}(X)$ (see Theorem
\ref{read-bons}). On the Banach space $B=A{\oplus}X$ with the norm $\Vert
a{\oplus}x\Vert=\max\{\Vert a\Vert,\Vert x\Vert\}$ introduce a multiplication
by $(a{\oplus}x)(b{\oplus}y)=ab{\oplus}{\pi}(a)y$. Then $B$ is a Banach
algebra. Since $\left(  a{\oplus}x\right)  ^{n}=a^{n}{\oplus}{\pi}(a^{n-1})x$,
then $\Vert(a{\oplus}x)^{n}\Vert\leq(\Vert a\Vert+\Vert x\Vert)\Vert
a^{n-1}\Vert$ for every $n>0$, whence $B$ is radical.
We show that $B$ is a bicompact algebra. For any $a{\oplus}x,b{\oplus}y\in B$,
the operator $T=L_{a{\oplus}x}R_{b{\oplus}y}$ maps any $c{\oplus}z$ into
$acb{\oplus}{\pi}(ac)y$. As $\mathrm{ball}(B)=\mathrm{ball}(A){\oplus
}\mathrm{ball}(X)$, we obtain that
\[
T(\mathrm{ball}(B))\subset L_{a}R_{b}(\mathrm{ball}(A)){\oplus}{\pi}%
(L_{a}(\mathrm{ball}(A))y).
\]
As all operators $L_{a}R_{b}$ in $A$ are compact, it suffices to prove the
precompactness of the set ${\pi}(L_{a}(\mathrm{ball}(A))y)$. In other words,
we have to show that any operator $S_{y}:c\longmapsto{\pi}(ac)y$ is compact.
If take $y\in{\pi}(A)X$ with $y={\pi}(d)z$ for some $d\in A$ and $z\in X$,
then $S_{y}$ is compact because it decomposes through $L_{a}R_{d}$. It follows
that $S_{y}$ is compact for any $y$ in the linear span $Y$ of ${\pi}(A)X$. But
$Y$ is dense in $X$ because it is invariant for ${\pi}(A)$. Hence for any
$y\in X$ there is a sequence $y_{n}\rightarrow y$ in $Y$. It follows that
$\Vert S_{y}-S_{y_{n}}\Vert\rightarrow0$, so $S_{y}$ is compact for every
$y\in X$.
The subspace $I=0{\oplus}X$ is a closed ideal of $B$ and it follows easily
from topological transitivity of $\pi$ that $I$ is a minimal closed ideal.
Moreover, each non-zero closed ideal $J$ of $B$ contains $I$. Indeed, $J$
cannot be a subspace of $I$. Hence there is $a{\oplus}x\in J$ with $a\neq0$.
But then
\[
0{\oplus}{\pi}(b){\pi}(a)y=(b{\oplus}0)(a{\oplus}x)(0{\oplus}y)\in J
\]
for any $b\in A$, $y\in X$, whence $I\subset J$.
We see that $B$ has no total chains of closed ideals and has an
infinite-dimensional minimal closed ideal.
\end{proof}

In the remaining part of the section we obtain some \textquotedblleft
affirmative\textquotedblright$\;$ results.

\begin{lemma}
\label{gap} If $A$ is a radical compact Banach algebra and $J\subset I$ is a
gap of closed ideals of $A$ then $AI\subset J$ or $IA\subset J$.
\end{lemma}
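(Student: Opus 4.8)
The plan is to pass to the quotient $A/J$ and then, assuming \emph{both} $AI$ and $IA$ fail to lie in $J$, to manufacture a closed ideal of $A$ strictly between $J$ and $I$, contradicting that the pair is a gap. First I would dispose of the trivial case $J=I$ (then $AI\subset I=J$), so assume $J\subsetneq I$. Since $A/J$ is again radical (a quotient of a radical Banach algebra by a closed ideal) and compact (compactness passes to quotients), and $I/J$ is then a \emph{minimal} nonzero closed ideal of $A/J$, I may rename and suppose $J=0$: thus $A$ is radical and compact, $I\neq0$ is a minimal closed ideal, $AI\neq0\neq IA$, and I want a contradiction. Minimality immediately gives $\overline{AI}=\overline{IA}=I$, and also $\overline{AIA}=I$: indeed $\overline{AIA}$ is a closed two-sided ideal inside $I$, and it is nonzero because $\overline{AIA}=0$ would force $IA=\overline{AI}\,A\subset\overline{AIA}=0$. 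Iterating this, $\overline{A^{3}IA}=I$; in particular $A^{3}IA\neq0$.

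The core of the argument is to produce inside $\mathcal B(I)$ a nonzero \emph{radical algebra of compact operators} which is an ideal of the multiplication algebra acting on $I$. Since $I$ is invariant under $\mathrm{Mul}(A)$, let $\mathcal N\subseteq\mathcal B(I)$ be the closed algebra generated by the restrictions $L_a|_I,R_b|_I$ $(a,b\in A)$, and let $\mathcal M$ be the closed ideal of $\mathcal N$ generated by the operators $L_wR_b|_I$ with $w$ a product of three elements of $A$ and $b\in A$ (the linear span of these is already an ideal of $\mathcal N$, since a product of three elements absorbs $A$ on both sides and $L_xR_yL_{x'}R_{y'}=L_{xx'}R_{y'y}$). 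Three facts are needed. (i) $\mathcal M$ consists of \emph{compact} operators on $I$: this is the Nagata--Higman manoeuvre from the proof of Theorem \ref{mul0.2} — writing $w$ as a combination of terms $x^{2}y$ and $uv^{2}$, one has $L_{x^{2}y}R_b=(L_{x^{2}}R_b)L_y$, $L_{uv^{2}}R_b=L_u(L_{v^{2}}R_b)$, and $L_{c^{2}}R_d=L_c(L_cR_d+L_dR_c)-W_cL_d$, where $W_c=L_cR_c$ is compact because $A$ is compact and $L_cR_d+L_dR_c=\tfrac12(W_{c+d}-W_{c-d})$ is compact; restricting to $I$ and using that a compact operator times a bounded one is compact finishes it. (ii) $\mathcal M$ is a \emph{radical} Banach algebra: by Corollary \ref{mul2} the algebra $\mathrm{Mul}_2(A)$ lies in $\mathrm{Rad}(\mathrm{Mul}(A))$, so under the contractive restriction homomorphism $\Phi\colon\mathrm{Mul}(A)\to\mathcal B(I)$ each $L_wR_b|_I=\Phi(L_wR_b)$ is quasinilpotent; hence the ideal $\mathcal M$ of $\mathcal N$ consists of quasinilpotents and therefore $\mathcal M\subset\mathrm{Rad}(\mathcal N)$. (iii) $\mathcal M\neq0$, because $A^{3}IA\neq0$.

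With $\mathcal M$ in hand I would invoke the invariant-subspace theorem for radical (Volterra) algebras of compact operators, \cite{Sh84}: $\mathcal M$ has a nontrivial closed invariant subspace $V_0\subsetneq I$. It remains to propagate invariance from the ideal $\mathcal M$ to all of $\mathcal N$, for which a soft annihilator argument suffices: if $\overline{\mathcal M I}\neq I$, that subspace is itself nonzero and $\mathcal N$-invariant; otherwise $V:=\{z\in I:\mathcal Mz\subset V_0\}$ contains $V_0$, is proper (else $\overline{\mathcal MI}\subset V_0\subsetneq I$), and is $\mathcal N$-invariant because $\mathcal M(Tz)=(\mathcal MT)z\subset\mathcal Mz\subset V_0$ for every $T\in\mathcal N$. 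Either way I obtain a nontrivial closed $V\subset I$ with $AV\subset V$ and $VA\subset V$, i.e.\ a closed two-sided ideal of $A$ with $0\neq V\subsetneq I$ — contradicting the minimality of $I$. Hence $AI=0$ or $IA=0$, which is $AI\subset J$ or $IA\subset J$ before the reduction.

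The genuinely nontrivial input is \cite{Sh84} (and the standard passage, as in \cite{T1998}, from an invariant subspace of an ideal of compact operators to one of the ambient algebra); everything else is bookkeeping. I expect the two delicate points to be (a) obtaining compactness of the relevant operators while $A$ is only assumed \emph{compact}, not bicompact — this is precisely why one cannot use $L_aR_b$ directly but must descend to cubes via Nagata--Higman, exactly as in Theorem \ref{mul0.2}; and (b) the circumstance that $\mathrm{Mul}(A)$, hence $\mathcal N$, is not known to be radical, which forces one to isolate the honestly radical ideal $\mathcal M$ rather than argue with $\mathcal N$ itself, and then to transfer the invariant subspace from $\mathcal M$ up to $\mathcal N$.
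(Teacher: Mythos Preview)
Your approach shares all the key ingredients with the paper's --- both reduce to finding a nontrivial closed invariant subspace for the action of $\mathrm{Mul}(A)$ on $I/J$ (you pass to $J=0$, the paper works directly on the quotient), both invoke Corollary~\ref{mul2} to place $\mathrm{Mul}_2(A)$ in $\mathrm{Rad}(\mathrm{Mul}(A))$, and both use the Nagata--Higman manoeuvre from Theorem~\ref{mul0.2} to obtain compact operators $L_{a^2}R_b$. The paper organizes the endgame differently and a bit more economically: rather than assembling a Volterra ideal $\mathcal{M}$ in advance and applying \cite{Sh84}, it argues that if the representation $\pi$ on $I/J$ were topologically irreducible then (by Lomonosov applied to the radical ideal $\pi(\mathrm{Mul}_2(A))$) every compact element $\pi(L_{a^2}R_b)$ must vanish, whence $a^2Ib\subset J$ for all $a,b$; then $\overline{IA}=I$ gives $\pi(L_a)^2=0$, Nagata--Higman gives $A^3I\subset J$, and this contradicts $\overline{AI}=I$.

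Your step (ii) has a gap as written. From ``each generator $L_wR_b|_I$ is quasinilpotent'' you cannot conclude that the \emph{closed} ideal $\mathcal{M}$ consists of quasinilpotents --- quasinilpotence is not preserved under norm limits in general --- and your parenthetical that the linear span is already an ideal of $\mathcal{N}$ is not correct (it is only an ideal of the non-closed subalgebra $\Phi(\mathcal{E}\!\ell(A))$). Two easy fixes: either work throughout with the non-closed linear span $\mathcal{M}_0$, which lies in $\Phi(\mathrm{Mul}_2(A))\subset\Phi(\mathrm{Rad}(\mathrm{Mul}(A)))$ and hence consists of restrictions of quasinilpotent operators, is an ideal of $\Phi(\mathcal{E}\!\ell(A))$, and suffices for both \cite{Sh84} and the propagation (closed invariant subspaces for $\Phi(\mathcal{E}\!\ell(A))$ are automatically $\mathcal{N}$-invariant); or note that since $\mathcal{M}\subset\mathcal{K}(I)$ every element has countable spectrum, so by Newburgh the spectral radius is continuous on $\mathcal{M}$ and the closure of $\mathcal{M}_0$ remains quasinilpotent.
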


\begin{proof}
Assume the contrary. Then $\overline{AI}=\overline{IA}=I$ (otherwise we obtain
an intermediate ideal) whence $\overline{AIA}=I$. One may assume that
$\dim(I/J)>1$ because otherwise the statement is trivial. Let $\pi$ be the
natural representation of $\mathrm{Mul}(A)$ on the space $X=I/J$. It is
topologically irreducible because if $Y$ is an invariant closed subspace of
$\pi$ then $\{x\in I:x/J\in Y\}$ is a closed ideal between $J$ and $I$. By
Corollary \ref{mul2}, the algebra $\mathrm{Mul}_{2}(A)$ is contained in the
radical of $\mathrm{Mul}(A)$. If $\pi(\mathrm{Mul}_{2}(A))$ contains a
non-zero compact operator then it has a non-trivial invariant closed subspace
by the Lomonosov Theorem \cite{Lom}. As $\pi(\mathrm{Mul}_{2}(A))$ is an ideal
of $\pi(\mathrm{Mul}(A))$, this implies that $\pi(\mathrm{Mul}(A))$ has a
non-trivial invariant closed subspace, a contradiction.
As we saw in the proof of Theorem \ref{mul0.2}, the operator $L_{a^{2}}R_{b}$
is a compact operator in $\mathrm{Mul}_{2}(A)$ for every $a,b\in A$.
Therefore, $\pi(L_{a^{2}}R_{b})$ is also a compact operator. By the above,
\[
\pi(L_{a^{2}}R_{b})=0.
\]
In other words, $a^{2}Ib\subset J$. Since $\overline{IA}=I$, we get that
$a^{2}I\subset J$. Thus
\[
\pi(L_{a})^{2}=0
\]
for all $a\in A$, whence $A^{3}I\subset J$ by the Nagata-Higman theorem. Since
$\overline{AI}=I$, we obtain a contradiction.
\end{proof}

\begin{theorem}
\label{infch} If $A$ is an infinite-dimensional compact radical Banach algebra
then any chain of closed ideals of $A$ extends to an infinite chain of closed
ideals of $A$.
\end{theorem}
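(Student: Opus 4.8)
The plan is to deduce the statement from the claim that $A$ has \emph{no finite maximal chain} of closed ideals. Granting this, if a given chain of closed ideals is infinite we are done, while if it is finite it fails to be maximal, hence can be enlarged by one more closed ideal; iterating yields a strictly increasing sequence of finite chains whose union is an infinite chain containing the original one. A maximal chain must contain $0$ and $A$ and have every consecutive pair a gap (a pair of closed ideals with nothing strictly between, in the terminology preceding Lemma \ref{gap}), so it suffices to rule out a finite chain $0 = I_0 \subsetneq I_1 \subsetneq \cdots \subsetneq I_n = A$ in which each $(I_k, I_{k+1})$ is a gap.

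Suppose such a chain exists. Since $\dim A = \sum_k \dim(I_{k+1}/I_k) = \infty$, some gap-quotient is infinite-dimensional, and the crucial move is to choose the \emph{largest} index $k$ with $\dim(I_{k+1}/I_k) = \infty$ and pass to $C := A/I_k$ and $M := I_{k+1}/I_k$. Then $C$ is again a compact radical Banach algebra (both properties pass to quotients), $M$ is a minimal closed ideal of $C$ with $\dim M = \infty$, and, since by maximality of $k$ the gap-quotients above $I_{k+1}$ are finite-dimensional and finite in number, $C/M \cong A/I_{k+1}$ is finite-dimensional. I expect this ``pass to the top infinite-dimensional gap'' reduction to be the main obstacle: an isolated infinite-dimensional gap is not contradictory in itself — witness the infinite-dimensional minimal closed ideal of the algebra built in Theorem \ref{contr} — and what makes the argument work is precisely that the algebra \emph{above} the chosen gap is finite-dimensional, hence nilpotent.

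Now apply Lemma \ref{gap} to the gap $0 \subsetneq M$ in $C$: it gives $CM = 0$ or $MC = 0$, and in either case $M^2 = 0$. If both products vanished, $M$ would carry the zero multiplication, so, as $\dim M > 1$, it would contain a proper nonzero closed subspace, and that would be a closed ideal of $C$ — contradicting minimality of $M$. So, say, $MC = 0$ while $\overline{CM}$ is a nonzero closed ideal of $C$ inside $M$, forcing $\overline{CM} = M$. Being a finite-dimensional radical algebra, $C/M$ is nilpotent, so $C^r \subset M$ for some $r$, whence $C^{2r} \subset M^2 = 0$. The left multiplications $\{L_c|_M : c \in C\}$ form a subalgebra $\mathcal A \subseteq \mathcal B(M)$ (since $L_c L_{c'} = L_{cc'}$ and $M$ is an ideal), this subalgebra is nilpotent with $\mathcal A^{2r} = 0$ (a $2r$-fold product equals $L_{c_1\cdots c_{2r}}|_M$ with $c_1 \cdots c_{2r} \in C^{2r} = 0$), and $\overline{\mathcal A M} = \overline{CM} = M$.

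The concluding step is the elementary observation that a nilpotent subalgebra $\mathcal A \subseteq \mathcal B(M)$ with $\overline{\mathcal A M} = M$ can exist only when $M = 0$: setting $M_0 = M$ and $M_{j+1} = \overline{\mathcal A M_j}$, the hypothesis gives $M_1 = M$, hence $M_j = M$ for every $j$, whereas $M_j = \overline{\mathcal A^j M}$ yields $M_{2r} = \overline{\mathcal A^{2r} M} = 0$; thus $M = 0$, contradicting $\dim M = \infty$. (If instead $CM = 0$ and $\overline{MC} = M$, one runs the same argument verbatim with the right multiplications $R_c|_M$ in place of the $L_c|_M$.) This contradiction completes the proof; apart from Lemma \ref{gap}, the only genuinely new ingredients are the reduction to the top infinite-dimensional gap and this last lemma on nilpotent operator algebras, the rest being bookkeeping about quotients and dimensions.
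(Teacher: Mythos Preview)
Your proof is correct, and it shares the essential ingredients with the paper's argument --- Lemma~\ref{gap} plus the observation that a nilpotent operator algebra cannot act topologically irreducibly on a space of dimension larger than one --- but the reduction you perform is more elaborate than necessary.

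The paper's proof is shorter: from Lemma~\ref{gap} applied to \emph{every} gap $(J_{k-1},J_k)$ one gets $AJ_kA\subset J_{k-1}$ for all $k$ (whichever alternative the lemma gives, sandwiching by $A$ forces this), and iterating yields $A^{2n+1}=0$. Thus $A$, and with it $\mathrm{Mul}(A)$, is nilpotent. The natural representation of $\mathrm{Mul}(A)$ on any gap-quotient $J_k/J_{k-1}$ is topologically irreducible, so by the nilpotent-algebra argument each such quotient has dimension at most one, contradicting infinite-dimensionality of $A$. Compared with your route, this avoids singling out the top infinite-dimensional gap, passing to the quotient $C$, and the left/right case split on which product $CM$ or $MC$ vanishes; the global nilpotence of $A$ falls out immediately once Lemma~\ref{gap} is applied uniformly. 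Your approach, on the other hand, makes more transparent exactly where finite-dimensionality above the chosen gap is used, and your closing lemma (a nilpotent subalgebra $\mathcal A\subset\mathcal B(M)$ with $\overline{\mathcal A M}=M$ forces $M=0$) is a clean stand-alone statement that in effect unpacks what the paper compresses into ``nilpotent algebras have no non-trivial topologically irreducible representations''.
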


\begin{proof}
Suppose, to the contrary, that there is a maximal chain
\[
0=J_{0}\subset J_{1}\subset...\subset J_{n}=A
\]
of closed ideals of $A$. Then each pair $(J_{k-1},J_{k})$ is a gap. It follows
from Lemma \ref{gap} that $AJ_{k}A\subset J_{k-1}$ for every $k>0$. Hence%
\[
A^{2n+1}=0.
\]
It follows that $\mathrm{Mul}(A)$ is also a nilpotent algebra. Hence it has no
non-trivial topologically irreducible representations. But for any gap
$(J_{k-1},J_{k})$ its representation on $X_{k}=J_{k}/J_{k-1}$ is topologically
irreducible and at least one of $X_{k}$ must be infinite dimensional. We
obtained a contradiction.
\end{proof}

Recall that by $\mathcal{A}(X)$ we denote the operator norm closure of the
ideal $\mathcal{F}(X)$ of all finite rank operators on $X$. Our aim is to show
that if ${\mathcal{A}(X)\neq\mathcal{K}}(X)$ then between ${\mathcal{A}(X)}$
and ${\mathcal{K}}(X)$ there are intermediate closed ideals.

\begin{corollary}
\label{k(x)}

\begin{itemize}
\item[$\mathrm{(i)}$] If $\dim({\mathcal{K}}(X)/\mathcal{A}(X))=n$ (where $n$
is a finite number or $\infty$) then ${\mathcal{K}}(X)$ has a chain of $n$
different closed ideals, containing ${\mathcal{F}}(X)$.

\item[$\mathrm{(ii)}$] Let $M$ and $N$ be closed ideals of ${\mathcal{B}}(X)$
with ${\mathcal{A}}(X)\subset N\subsetneqq M\subset{\mathcal{K}}(X)$. If
$M^{2}$ is not contained in $N$ then ${\mathcal{B}}(X)$ has a closed ideal
between $N$ and $M$.

In particular, if $({\mathcal{K}}(X)/\mathcal{A}(X))^{2}\neq0$ then there is a
closed ideal of ${\mathcal{B}}(X)$ between $\mathcal{A}(X)$ and ${\mathcal{K}%
}(X)$.

\item[$\mathrm{(iii)}$] If the algebra $\mathcal{K}(X)/\mathcal{A}(X)$ is not
nilpotent then every maximal chain of closed ideals of ${\mathcal{B}}(X)$
between $\mathcal{A}(X)$ and ${\mathcal{K}}(X)$ is infinite.
\end{itemize}
\end{corollary}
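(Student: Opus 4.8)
The plan is to reduce the three statements to the structure theory of the radical bicompact Banach algebra $R:=\mathcal{K}(X)/\mathcal{A}(X)$ furnished by Lemma \ref{quot}, using Theorem \ref{infch}, Corollary \ref{mul2}, and the Lomonosov-type argument already employed in Lemma \ref{gap} and Theorem \ref{ideal}. Write $q\colon\mathcal{K}(X)\to R$ for the quotient map. Since $\mathcal{A}(X)=\ker q$, closed ideals of $\mathcal{K}(X)$ containing $\mathcal{A}(X)$ correspond bijectively, via $q$, to closed ideals of $R$; moreover any closed ideal $M$ of $\mathcal{B}(X)$ with $\mathcal{A}(X)\subseteq M\subseteq\mathcal{K}(X)$ is in particular a closed ideal of $\mathcal{K}(X)$, so it projects to an ideal $\overline{M}:=M/\mathcal{A}(X)$ of $R$, which is therefore a radical bicompact algebra. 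Throughout, the subspaces produced in the argument for (ii) will be $\mathcal{B}(X)$-ideals automatically, because they are invariant under all left and right multiplications by elements of $\mathcal{B}(X)$.

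For (i), first let $n<\infty$. Then $R$ is a finite-dimensional radical, hence nilpotent, associative algebra. Since in a nilpotent algebra every subspace $V$ with $R^{k+1}\subseteq V\subseteq R^{k}$ satisfies $RV\subseteq R^{k+1}\subseteq V$ and $VR\subseteq R^{k+1}\subseteq V$, the descending chain $R\supseteq R^{2}\supseteq\cdots$ refines to a flag $0=V_{0}\subsetneq V_{1}\subsetneq\cdots\subsetneq V_{n}=R$ of ideals with one-dimensional consecutive quotients. Pulling it back along $q$ gives closed ideals $\mathcal{A}(X)=I_{0}\subsetneq I_{1}\subsetneq\cdots\subsetneq I_{n}=\mathcal{K}(X)$ of $\mathcal{K}(X)$, and $I_{1},\dots,I_{n}$ are $n$ distinct closed ideals, each containing $\mathcal{F}(X)$, forming a chain. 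If $n=\infty$, then $R$ is an infinite-dimensional compact radical Banach algebra, so by Theorem \ref{infch} the trivial chain $0\subsetneq R$ of closed ideals extends to an infinite chain; pulling it back along $q$ yields an infinite chain of closed ideals of $\mathcal{K}(X)$ each containing $\mathcal{F}(X)$.

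For (ii) I argue by contradiction along the lines of Lemma \ref{gap}. Suppose $M^{2}\not\subseteq N$ but no closed ideal of $\mathcal{B}(X)$ lies strictly between $N$ and $M$. Put $Y=M/N\neq0$ and let $\mathcal{D}_{0}$ be the algebra of operators on $Y$ generated by all $L_{U}|_{Y}$ and $R_{U}|_{Y}$ with $U\in\mathcal{B}(X)$ (well defined because $M,N$ are $\mathcal{B}(X)$-ideals). If $Z$ were a nontrivial closed $\mathcal{D}_{0}$-invariant subspace of $Y$, then $L:=\{m\in M:m+N\in Z\}$ would be a closed ideal of $\mathcal{B}(X)$ strictly between $N$ and $M$; hence $\mathcal{D}_{0}$ acts topologically irreducibly on $Y$. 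Now $\overline{M}=M/\mathcal{A}(X)$ is a radical bicompact algebra, $\overline{N}=N/\mathcal{A}(X)$ is an ideal of it, and $Y=\overline{M}/\overline{N}$ induces a bounded homomorphism $\phi\colon\mathrm{Mul}(\overline{M})\to\mathcal{B}(Y)$ whose image lies in $\mathcal{D}_{0}$ and which sends $L_{\overline a}R_{\overline b}$ to $L_{a}|_{Y}R_{b}|_{Y}$. By Corollary \ref{mul2}, $\mathcal{J}:=\phi(\mathrm{Mul}_{2}(\overline{M}))$ is an algebra of quasinilpotent operators; it is a two-sided ideal of $\mathcal{D}_{0}$ (since $L_{U}L_{a}R_{b}=L_{Ua}R_{b}$ with $Ua\in M$, and similarly on the other side); and, because $R$ is bicompact and restrictions and quotients of compact operators are compact, $\mathcal{J}$ consists of compact operators. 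If $\mathcal{J}\neq0$, then it contains a nonzero compact operator $K$, and for every $A\in\mathcal{J}$ the operator $AK\in\mathcal{J}$ is quasinilpotent; exactly as in the proofs of Lemma \ref{gap} and Theorem \ref{ideal}, Lomonosov's lemma then produces a nontrivial closed $\mathcal{J}$-invariant subspace $Z$, and, using that $\mathcal{J}$ is an ideal of $\mathcal{D}_{0}$, one checks that $\overline{\mathcal{D}_{0}Z}$ is $\mathcal{D}_{0}$-invariant and is either nontrivial (contradicting irreducibility) or forces $\mathcal{J}$ to act as $0$ on $Y$ — contradicting $\mathcal{J}\neq0$. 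Hence $\mathcal{J}=0$, i.e. $aMb\subseteq N$ for all $a,b\in M$, so $MMM\subseteq N$, while $\mathcal{F}(X)=\mathcal{F}(X)\mathcal{F}(X)\subseteq M^{2}$. Thus $\overline{M^{2}+N}$ is a closed $\mathcal{B}(X)$-ideal with $N\subseteq\overline{M^{2}+N}\subseteq M$, so by assumption it equals $N$ or $M$. If it equals $N$ then $M^{2}\subseteq N$; if it equals $M$ then $M^{2}+N$ is dense in $M$, and since $M(M^{2}+N)\subseteq MMM+N\subseteq N$, continuity of left multiplications gives $aM\subseteq N$ for every $a\in M$, i.e. again $M^{2}\subseteq N$. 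Either way we contradict $M^{2}\not\subseteq N$, proving (ii); the ``in particular'' clause is the case $N=\mathcal{A}(X)$, $M=\mathcal{K}(X)$.

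Finally, (iii) follows from (ii): along any finite maximal chain $\mathcal{A}(X)=M_{0}\subsetneq M_{1}\subsetneq\cdots\subsetneq M_{k}=\mathcal{K}(X)$ of closed ideals of $\mathcal{B}(X)$, no pair $(M_{j-1},M_{j})$ admits an intermediate ideal, so $M_{j}^{2}\subseteq M_{j-1}$ by the contrapositive of (ii); iterating gives $\mathcal{K}(X)^{2^{k}}\subseteq\mathcal{A}(X)$, so $\mathcal{K}(X)/\mathcal{A}(X)$ is nilpotent, against the hypothesis of (iii). The step I expect to require the most care in writing up is the passage in (ii) from ``$\mathcal{D}_{0}$ irreducible'' to ``$\mathcal{J}=0$'': one must check, as in Lemma \ref{gap}, that the closed invariant subspace delivered by Lomonosov's lemma for the (non-closed) radical ideal $\mathcal{J}\triangleleft\mathcal{D}_{0}$ really produces a $\mathcal{B}(X)$-invariant obstruction rather than collapsing, and then the final cancellation from $MMM\subseteq N$ to $M^{2}\subseteq N$ using the absence of intermediate ideals.
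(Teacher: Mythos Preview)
Your proofs of (i) and (iii) coincide with the paper's. For (ii) you take a different, but correct, route. The paper argues by contradiction: assuming no intermediate $\mathcal{B}(X)$-ideal, it first shows $M=\overline{M^{2}+N}$ and $N=\{T\in M:TM\subset N\}=\{T\in M:MT\subset N\}$, then invokes (i) to obtain a closed $\mathcal{K}(X)$-ideal $I$ with $N\subsetneq I\subsetneq M$, and constructs $J=\overline{MIM+N}$, checking directly that this is a $\mathcal{B}(X)$-ideal strictly between $N$ and $M$. You instead run the Lomonosov-type argument from Lemma~\ref{gap} directly on $Y=M/N$ (applying Corollary~\ref{mul2} to the radical bicompact algebra $\overline{M}=M/\mathcal{A}(X)$) to deduce $M^{3}\subset N$, and then use the gap hypothesis once more to upgrade this to $M^{2}\subset N$. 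The paper's approach cleanly separates the production of a $\mathcal{K}(X)$-ideal (via (i)) from its promotion to a $\mathcal{B}(X)$-ideal via the sandwich $MIM$; yours is self-contained for (ii), does not rely on (i), and makes the kinship with Lemma~\ref{gap} explicit. Two minor remarks: your observation $\mathcal{F}(X)=\mathcal{F}(X)\mathcal{F}(X)\subset M^{2}$ is never used and can be dropped; and the ideal-transitivity step you flag is routine once you note that for $U\in\mathcal{B}(X)$ the operators $L_{U}$, $R_{U}$ act boundedly on $\overline{M}$ and normalize $\mathrm{Mul}_{2}(\overline{M})$ there (since $L_{U}L_{\overline a}R_{\overline b}=L_{\overline{Ua}}R_{\overline b}$ with $Ua\in M$), so $\mathcal{J}=\phi(\mathrm{Mul}_{2}(\overline{M}))$ is genuinely an ideal of $\mathcal{D}_{0}$ and the standard ``nonzero ideal with invariant subspace $\Rightarrow$ algebra reducible'' argument applies.
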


\begin{proof}
(i) The algebra ${\mathcal{Q}}(X)={\mathcal{K}}(X)/\mathcal{A}(X)$ is radical
by Corollary \ref{quot}. If its dimension $n$ is finite then clearly it has a
chain of $n$ ideals (since the nilpotent algebra $\mathrm{Mul}({\mathcal{Q}%
}(X))$ is triangularizable). In any case it is bicompact, so if $n=\infty$
then it has an infinite chain of ideals by Theorem \ref{infch}. The preimages
of these ideals in ${\mathcal{K}}(X)$ form a chain of ideals of ${\mathcal{K}%
}(X)$ containing $\overline{{\mathcal{F}}(X)}$. This proves (i).
(ii) Assume, to the contrary, that there are no closed ideals between $N$ and
$M$. As $\overline{M^{2}+N}$ is a closed ideal of ${\mathcal{B}}(X)$ strictly
containing $N$, then
\[
M=\overline{M^{2}+N}.
\]
As $\{T\in M:TM\subset N\}$ is a closed ideal of ${\mathcal{B}}(X)$ strictly
contained in $M$, then
\[
N=\{T\in M:TM\subset N\}
\]
and, similarly,
\[
N=\{T\in M:MT\subset N\}.
\]
By (i), there is a closed ideal $I$ of $\mathcal{K}(X)$ intermediate between
$M$ and $N$. Set $J=\overline{MIM+N}$. Then
\[
N\subset J\subset I\subsetneqq M.
\]
If $J=N$ then $MIM\subset N$ whence, by above, $IM\subset N$ and therefore
$I\subset N$, a contradiction. Thus
\[
N\subsetneqq J\subsetneqq M.
\]
As
\[
\mathcal{B}(X)J\mathcal{B}(X)\subset\overline{\mathcal{B}(X)MIM\mathcal{B}%
(X)+N}\subset\overline{MIM+N}=J,
\]
we obtained that $J$ is an intermediate closed ideal of $\mathcal{B}(X)$
between $N$ and $M$. Part (ii) is proved.
(iii) Assuming that $\mathcal{Q}(X)$ is not nilpotent, choose a maximal chain
$\left(  I_{\alpha}\right)  $ of closed ideals of $\mathcal{B}\left(
X\right)  $ between $\mathcal{A}(X)$ and $\mathcal{K}(X)$. If it is finite,
namely%
\[
\mathcal{A}(X)=I_{0}\subsetneqq I_{1}\subsetneqq I_{2}\subsetneqq
\cdots\subsetneqq I_{n}=\mathcal{K}(X),
\]
then
\[
I_{k}^{2}\subset I_{k-1}%
\]
for every $k>0$ by (ii). Then $\mathcal{Q}(X)^{2^{n}}=0$, a contradiction.
\end{proof}

\begin{example}
\label{willis} \emph{ To construct an example of a Banach space $X$ for which
the algebra $\mathcal{K}(X)/\mathcal{A}(X)$ is not nilpotent, one can use a
remarkable result of Willis \cite{Wil}}.

\emph{ Recall that $X$ is said to have }approximation property\emph{ (AP)
(respectively, }compact approximation property\emph{ (CAP)) if for each
compact set }$M\subset X$\emph{ and each }$\varepsilon>0$\emph{, there is a
finite rank (respectively, compact) operator }$S=S(M,\varepsilon)$\emph{ with
}$\Vert Sx-x\Vert<\varepsilon$\emph{ for all }$x\in M$\emph{. If }$S$\emph{
always can be chosen in such a way that }$\Vert S\Vert\leq C$\emph{ for some
fixed }$C>0$\emph{ then one says that }$X$\emph{ has }bounded approximation
property\emph{ (BAP) (respectively, }bounded compact approximation
property\emph{ (BCAP)).}

\emph{ It was proved in \cite{Wil} that there exists a space $X$ which has not
AP but has BCAP. Let us show that this is a space we need. Indeed, it follows
from BCAP that the algebra $\mathcal{K}(X)$ has a bounded approximate
identity: to construct it one have to take for the index set the set of all
pairs }$\lambda=(M,\varepsilon)$\emph{ where }$M$\emph{ is a compact subset of
}$X$\emph{ and }$\varepsilon>0$\emph{, and denote by }$S_{\lambda}$\emph{ an
operator }$S=S(M,\varepsilon)$\emph{ from the definition of BCAP. In
particular, $\mathcal{K}(X)^{n}$ is dense in $\mathcal{K}$}$(X)$\emph{ for
each $n$. Hence if $\mathcal{K}(X)/\mathcal{A}$}$(X)$\emph{ is nilpotent then
$\mathcal{A}(X)=\mathcal{K}$}$(X)$\emph{. Therefore $\mathcal{A}(X)$ has a
bounded approximate identity }$e_{\lambda}$\emph{ and one can assume that
}$e_{\lambda}\in$\emph{$\mathcal{F}$}$(X)$\emph{ for each }$\lambda$\emph{.
Let us show that this implies AP (in contradiction with the choice of $X$)}.

\emph{ It is easy to see (considering rank one operators) that }$e_{\lambda
}x\rightarrow x$\emph{ for each }$x\in X$\emph{. Now if a compact subset }%
$M$\emph{ of }$X$\emph{ and }$\varepsilon>0$\emph{ are given, let us choose a
finite $\varepsilon$-net }$M_{0}$\emph{ in }$M$\emph{ and an index }$\mu
$\emph{ with }$\Vert e_{\mu}x-x\Vert<\varepsilon$\emph{ for all }$x\in M_{0}%
$\emph{. Then }$\Vert e_{\mu}x-x\Vert<t\varepsilon$\emph{ for all }$x\in
M$\emph{, where }$t=2+\sup_{\lambda}\Vert e_{\lambda}\Vert$\emph{.}
\end{example}

Let us denote by $\mathcal{A}$ and $\mathcal{K}$ the closed operator ideals of
approximable and compact operators, respectively.

\begin{corollary}
There is an infinite chain of closed operator ideals intermediate between
$\mathcal{A}$ and $\mathcal{K}$.
\end{corollary}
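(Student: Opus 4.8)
The plan is to deduce the corollary from Corollary \ref{k(x)}(iii) together with Example \ref{willis}. Recall that a closed operator ideal in the sense of Pietsch assigns to every pair of Banach spaces $(X,Y)$ a linear subspace of $\mathcal{B}(X,Y)$ closed under composition with bounded operators on either side and closed in the operator norm; a chain of such ideals is a family linearly ordered by inclusion of all components. So it suffices to produce an infinite totally ordered family of closed operator ideals $\mathcal{I}$ with $\mathcal{A}\subset\mathcal{I}\subset\mathcal{K}$, all distinct.

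First I would fix, by Example \ref{willis}, a Banach space $X$ (the Willis space, having BCAP but not AP) for which $\mathcal{K}(X)/\mathcal{A}(X)$ is a non-nilpotent radical bicompact Banach algebra. Applying Corollary \ref{k(x)}(iii) to this $X$, every maximal chain of closed ideals of $\mathcal{B}(X)$ lying between $\mathcal{A}(X)$ and $\mathcal{K}(X)$ is infinite; in particular there exists an infinite chain $(I_\alpha)$ of closed two-sided ideals of $\mathcal{B}(X)$ with $\mathcal{A}(X)\subseteq I_\alpha\subseteq\mathcal{K}(X)$, all pairwise distinct. The point is that each such $I_\alpha$, being a closed two-sided ideal of $\mathcal{B}(X)$ consisting of compact operators, is the $X$-component of a closed operator ideal: indeed one defines, for arbitrary Banach spaces $Y,Z$, the component $\mathcal{I}_\alpha(Y,Z)$ to be the set of operators $T\in\mathcal{B}(Y,Z)$ such that $VTU\in I_\alpha$ for all $U\in\mathcal{B}(X,Y)$ and $V\in\mathcal{B}(Z,X)$ (the ``associated'' ideal, in the standard terminology of \cite{P78}). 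This is a closed operator ideal whose $X$-component is exactly $I_\alpha$ because $I_\alpha$ is already a two-sided ideal of $\mathcal{B}(X)$ and contains $\mathcal{F}(X)$, so one can insert the identity of $X$ on both sides.

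Then I would check that the map $I_\alpha\mapsto\mathcal{I}_\alpha$ is order-preserving and injective: $I_\alpha\subseteq I_\beta$ clearly forces $\mathcal{I}_\alpha\subseteq\mathcal{I}_\beta$, and conversely $\mathcal{I}_\alpha=\mathcal{I}_\beta$ implies the $X$-components coincide, i.e. $I_\alpha=I_\beta$. Since the $I_\alpha$ are pairwise distinct and totally ordered, the $\mathcal{I}_\alpha$ form an infinite chain of closed operator ideals. Finally, $\mathcal{A}(X)\subseteq I_\alpha$ gives $\mathcal{A}\subseteq\mathcal{I}_\alpha$ (as $\mathcal{A}$ is the smallest closed operator ideal with non-trivial components, and its $X$-component sits inside $I_\alpha$, heredity of the construction forces containment on all components), and $I_\alpha\subseteq\mathcal{K}(X)$ together with the fact that an operator $T:Y\to Z$ with $VTU$ compact for all $U,V$ (after composing with enough rank-one maps) must itself be compact gives $\mathcal{I}_\alpha\subseteq\mathcal{K}$. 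Hence $\mathcal{A}\subsetneq\mathcal{I}_\alpha\subsetneq\mathcal{K}$ along an infinite chain.

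The only genuinely non-routine point is the passage from ``infinite chain of closed ideals of $\mathcal{B}(X)$ between $\mathcal{A}(X)$ and $\mathcal{K}(X)$'' to ``infinite chain of closed \emph{operator} ideals between $\mathcal{A}$ and $\mathcal{K}$'': one must verify that the associated-ideal construction $I\mapsto\mathcal{I}$ sends distinct two-sided ideals of $\mathcal{B}(X)$ (sitting above $\mathcal{F}(X)$) to distinct operator ideals, and that the resulting components stay inside $\mathcal{K}$. Both facts are standard consequences of the theory of operator ideals in \cite{P78} (the component at $X$ recovers the original ideal precisely because $\mathcal{F}(X)\subseteq I$), so I expect this to be a short verification rather than a real obstacle; the substance of the corollary is entirely carried by Corollary \ref{k(x)}(iii) and the Willis example.
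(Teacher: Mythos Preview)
Your approach is essentially the paper's: fix the Willis space, invoke Corollary~\ref{k(x)}(iii) to get an infinite chain $(I_\alpha)$ of closed two-sided ideals of $\mathcal{B}(X)$ between $\mathcal{A}(X)$ and $\mathcal{K}(X)$, and then pass to closed operator ideals via the ``associated ideal'' construction. The paper defines $\mathcal{U}_\alpha(Y,Z)=\{T\in\mathcal{K}(Y,Z):ATB\in I_\alpha\text{ for all }A\in\mathcal{B}(Z,X),\,B\in\mathcal{B}(X,Y)\}$ and checks the same order-preservation and injectivity via the $X$-component that you describe.

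There is, however, one genuine gap. You take $T\in\mathcal{B}(Y,Z)$ rather than $T\in\mathcal{K}(Y,Z)$, and then argue that $\mathcal{I}_\alpha\subseteq\mathcal{K}$ because ``$VTU$ compact for all $U,V$ (after composing with enough rank-one maps) must itself be compact''. This is not a valid argument and the claim is doubtful: compactness of $VTU$ for all $U\in\mathcal{B}(X,Y)$, $V\in\mathcal{B}(Z,X)$ with a \emph{fixed} $X$ need not force $T$ compact. For instance, if $Y$ is a Banach space for which every bounded operator $X\to Y$ already lies in $\mathcal{A}(X,Y)$ (such pairs do occur), then $VTU\in\mathcal{A}(X)\subseteq I_\alpha$ for \emph{every} $T\in\mathcal{B}(Y)$, so your $\mathcal{I}_\alpha(Y,Y)=\mathcal{B}(Y)\not\subseteq\mathcal{K}(Y)$. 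Composing with rank-one $U,V$ gives no information, since then $VTU$ has rank $\leq 1$ automatically. The paper avoids this issue by restricting to $T\in\mathcal{K}(Y,Z)$ in the definition, which makes the inclusion $\mathcal{U}_\alpha\subseteq\mathcal{K}$ tautological while leaving the $X$-component equal to $I_\alpha$ (since $I_\alpha\subseteq\mathcal{K}(X)$). With that single modification --- equivalently, replacing your $\mathcal{I}_\alpha$ by $\mathcal{I}_\alpha\cap\mathcal{K}$ --- your argument becomes complete and coincides with the paper's.

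A minor stylistic point: your justification of $\mathcal{A}\subseteq\mathcal{I}_\alpha$ via ``heredity'' is vaguer than necessary; the direct argument (if $T\in\mathcal{A}(Y,Z)$ then $VTU\in\mathcal{A}(X)\subseteq I_\alpha$) is immediate.
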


\begin{proof}
Let $Z$ be a Banach space with non-nilpotent $\mathcal{K}(Z)/\mathcal{A}(Z)$
(see Example \ref{willis}). By Corollary \ref{k(x)}, between $\mathcal{K}(Z)$
and $\mathcal{A}(Z)$ there is an infinite chain $\left\{  I_{\alpha}\right\}
$ of closed ideals of $\mathcal{B}(Z)$. For each pair $(X,Y)$ of Banach
spaces, we denote by $\mathcal{U}_{\alpha}(X,Y)$ the set of all operators
$T\in\mathcal{K}(X,Y)$ such that $ATB\in I_{\alpha}$ for all $A\in
\mathcal{B}(Y,Z)$ and $B\in\mathcal{B}(Z,X)$. It is easy to check that each
$\mathcal{U}_{\alpha}$ is a closed operator ideal between $\mathcal{A}$ and
$\mathcal{K}$, that all $\mathcal{U}_{\alpha}$ are different and that they
form a chain.
\end{proof}

\begin{theorem}
\label{apprgap} If $A$ is a radical approximable Banach algebra then each
gap-quotient in the lattice of the closed ideals of $A$ is one-dimensional.
\end{theorem}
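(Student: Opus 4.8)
The plan is to reduce the assertion to a module-theoretic statement about a minimal closed ideal one side of which acts trivially, and then to invoke Proposition~\ref{TI}. First I would record two preliminary facts. Since $A$ is approximable it is compact (finite-rank elements are compact elements and the set of compact elements of a Banach algebra is norm-closed), and every non-zero quotient of $A$ is again approximable and hence has a non-zero finite-rank element, so $A$ is hypofinite. Moreover the opposite algebra $A^{\mathrm{op}}$ is again radical and approximable: the operators $W_a$ and $W_a^{\mathrm{op}}$ both send $x$ to $axa$, so $A$ and $A^{\mathrm{op}}$ have the same finite-rank elements and the same norm, while $\mathrm{Rad}(A^{\mathrm{op}})=\mathrm{Rad}(A)$. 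Thus both $A$ and $A^{\mathrm{op}}$ are radical hypofinite Banach algebras, and by Proposition~\ref{TI} every topologically irreducible representation of either one is zero.

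Now let $J\subset I$ be a gap in the lattice of closed ideals of $A$ (so $J\neq I$); we must prove $\dim(I/J)=1$. Since $A$ is a radical compact Banach algebra, Lemma~\ref{gap} gives $AI\subset J$ or $IA\subset J$; passing to $A^{\mathrm{op}}$ if necessary (this interchanges the two possibilities and carries the gap $(J,I)$ to a gap of $A^{\mathrm{op}}$) we may assume $AI\subset J$. Put $X=I/J\neq 0$, and for $a\in A$ let $\phi(a)$ be the operator on $X$ given by $\phi(a)(x+J)=xa+J$; this is well defined because $J$ is an ideal, satisfies $\|\phi(a)\|\le\|a\|$, and $\phi(a)\phi(b)=\phi(ba)$, so $\phi\colon A^{\mathrm{op}}\to\mathcal{B}(X)$ is a bounded homomorphism.

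Next I would analyse the closed $\phi$-invariant subspaces of $X$. If $Y\subset X$ is closed and $\phi(A^{\mathrm{op}})$-invariant, then $\widetilde{Y}:=\{x\in I:x+J\in Y\}$ is a closed subspace of $A$ with $J\subset\widetilde{Y}\subset I$; it is a right ideal, since $\widetilde{Y}A\subset\widetilde{Y}$ by invariance, and a left ideal, since $A\widetilde{Y}\subset AI\subset J\subset\widetilde{Y}$. Hence $\widetilde{Y}$ is a closed ideal of $A$ between $J$ and $I$, so $Y=0$ or $Y=X$ by the gap hypothesis. Consequently, if $\phi\neq 0$ then $\phi$ is a topologically irreducible representation of the radical hypofinite algebra $A^{\mathrm{op}}$, contradicting Proposition~\ref{TI}; therefore $\phi=0$, that is $IA\subset J$ as well. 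But then every closed subspace of $X$ is $\phi$-invariant, so by the preceding paragraph $X$ has no nontrivial closed subspace, forcing $\dim X\le 1$; since $X\neq 0$ we conclude $\dim(I/J)=1$.

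The main obstacle is not any single hard estimate but the organisation: one must check that radicality, approximability and hypofiniteness all pass to $A^{\mathrm{op}}$ (this rests on the identity $W_a=W_a^{\mathrm{op}}$) and that the dichotomy of Lemma~\ref{gap} is symmetric under $A\leftrightarrow A^{\mathrm{op}}$, and one must notice that the hypothesis $AI\subset J$ annihilates exactly the left action on $I/J$, leaving the right action as a genuine representation of $A^{\mathrm{op}}$ to which Proposition~\ref{TI} applies. Everything else is routine.
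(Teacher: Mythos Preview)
Your proof is correct and follows essentially the same approach as the paper: reduce via Lemma~\ref{gap} to the case where one side acts trivially on $I/J$, observe that the surviving one-sided action is a topologically irreducible representation of a radical hypofinite algebra, and invoke Proposition~\ref{TI} to force $\dim(I/J)=1$. The only cosmetic difference is that the paper treats the case $IA\subset J$, so the left action survives and Proposition~\ref{TI} applies directly to $A$, avoiding your (harmless but slightly longer) detour through $A^{\mathrm{op}}$.
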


\begin{proof}
Let $J\subset I$ be a gap of closed ideals of $A$. Then either $AI\subset J$
or $IA\subset J$ by Lemma \ref{gap}. Suppose that $IA\subset J$. Denote by
$\pi$ the natural representation of $\mathrm{Mul}(A)$ on $I/J$. Then we have
that ${\pi}(R_{A})=0$, whence ${\pi}(\mathcal{E\!\ell}(A))={\pi}(L_{A})$ and
${\pi}(\mathrm{Mul}(A))\subset\overline{{\pi}(L_{A})}$.
The map $a\longmapsto{\pi}(L_{a})$ is a topologically irreducible
representation of $A$ on $I/J$. Since such a representation of $A$ must be
trivial by Proposition \ref{TI}, it acts on a one-dimensional space.
\end{proof}

\begin{corollary}
Every radical hypofinite Banach algebra has a total chain of closed ideals,
and each minimal closed ideal in such an algebra is one-dimensional.
\end{corollary}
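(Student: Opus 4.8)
The plan is to reduce the whole statement to the single assertion that \emph{in a radical hypofinite Banach algebra every gap-quotient in the lattice of closed ideals is one-dimensional}. Granting this, the transfinite-induction remark preceding Theorem~\ref{contr} shows that every chain of closed ideals of $A$ extends to a total one, while applying the assertion to the gap $(0,I)$ shows that a minimal closed ideal $I$ is one-dimensional. So let $J\subset I$ be a gap of closed ideals of $A$. Since $A/J$ is again radical (Lemma~\ref{quotBan}) and hypofinite (the hypofinite analogue of Corollary~\ref{hypquot}) and $I/J$ is a minimal closed ideal of it, we may pass to the quotient and assume $J=0$: thus $I$ is a minimal closed ideal of a radical hypofinite Banach algebra $A$, and the task is to prove $\dim I=1$; suppose for contradiction that $\dim I>1$.

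First I would record that $\overline{AI}$ and $\overline{IA}$ are closed ideals of $A$ contained in $I$, so each is $0$ or $I$. If $\overline{IA}=0$ and $\overline{AI}=I$, then $a\longmapsto L_a|_I$ is a representation of $A$ on $I$ which is non-zero (because $AI\ne0$) and whose closed invariant subspaces are exactly the closed $A$-ideals contained in $I$ — the condition $YA\subseteq Y$ being automatic, since $IA=0$ — hence topologically irreducible, contradicting Proposition~\ref{TI}. Symmetrically, $\overline{AI}=0$ and $\overline{IA}=I$ is impossible, via $a\longmapsto R_a|_I$. If $\overline{AI}=\overline{IA}=0$, then $A$ acts as zero on $I$ on both sides, so every closed subspace of $I$ is a closed $A$-ideal, and minimality forces $\dim I\le1$, again contradicting $\dim I>1$. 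Therefore $\overline{AI}=\overline{IA}=I$, and this is the one case that must really be excluded — precisely the situation ruled out, in the compact setting, by Lemma~\ref{gap}.

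To handle it I would run the argument of Lemma~\ref{gap} (and of Theorem~\ref{harp}) without the blanket compactness hypothesis. The natural representation $\pi$ of $\mathrm{Mul}(A)$ on $I$ is topologically irreducible, its closed invariant subspaces being the closed $A$-ideals contained in $I$; $\mathrm{Mul}_2(A)$ is an ideal of $\mathrm{Mul}(A)$ and $\pi(\mathrm{Mul}_2(A))\ne0$, because $\overline{AIA}=I$, so $\pi(\mathrm{Mul}_2(A))$ too has no non-trivial closed invariant subspace. Next, since $A$ is radical hypofinite it is hypocompact, hence tensor radical by Corollary~\ref{tensrad}, so the flexible sum $\widehat{\mathcal{E}}_{A^{1},A}(A)+\widehat{\mathcal{E}}_{A,A^{1}}(A)$ — the $\|\cdot\|_{\mathcal E}$-completion of $\mathcal{E\!\ell}(A)$ — is tensor radical by Theorem~\ref{mult1-}(ii) and Corollary~\ref{tr9}, and consequently every elementary operator on $A$, and hence its restriction to $I$, is quasinilpotent. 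Finally $I$ is itself a radical hypofinite Banach algebra (Corollary~\ref{ihf} together with heredity of $\mathrm{Rad}$), so by Proposition~\ref{trans} it possesses a closed ideal $K$ with a dense set of finite-rank elements of $I$; for such an element $e$ the operator $\pi(L_eR_e)\colon x\mapsto exe$ on $I$ has finite rank and, by the previous point, is quasinilpotent, hence nilpotent. If some such operator is non-zero, these nilpotent finite-rank operators generate a non-zero ideal of $\pi(\mathrm{Mul}_2(A))$, which can then have no non-trivial closed invariant subspace, contradicting the Lomonosov/Barnes invariant-subspace theorem \cite{Lom,Barnes}; the remaining degenerate possibility, that $exe=0$ for every finite-rank $e\in I$, is disposed of as in Lemma~\ref{gap}: by density it forces $e^{3}=0$ on $K$, hence nilpotence of $K$ by the Nagata--Higman theorem, and then, tracing the consequences of $\overline{IKI}\in\{0,I\}$ under $\overline{AI}=\overline{IA}=I$, one reaches a contradiction as well.

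The step I expect to be the main obstacle is exactly this last case $\overline{AI}=\overline{IA}=I$. Where Lemma~\ref{gap} obtains its non-zero compact operators in $\pi(\mathrm{Mul}_2(A))$ for free from the compactness of $A$, here one must instead manufacture nilpotent finite-rank operators inside the multiplication algebra on $I$ out of the hypofiniteness of $I$, and separately dispatch the degenerate sub-case via Nagata--Higman, before the classical invariant-subspace machinery applies; the reductions and the appeals to Proposition~\ref{TI} in the other cases are routine.
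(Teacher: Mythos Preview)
Your reduction to ``every gap--quotient is one--dimensional'' and your treatment of the three easy cases via Proposition~\ref{TI} are correct and match the paper's route (the Corollary is stated without proof, as a consequence of Theorem~\ref{apprgap}, whose proof is exactly Lemma~\ref{gap} followed by Proposition~\ref{TI}). In the hard case $\overline{AI}=\overline{IA}=I$, however, there are two problems.

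First, a fixable one. The inference ``quasinilpotent on $A$, hence its restriction to $I$ is quasinilpotent'' is invalid in general; the correct reason is that $\mathcal{E}_{A,A}(I)$ is a quotient of the radical algebra $A\widehat{\otimes}A^{\mathrm{op}}$. For the same reason you should work inside $\mathcal{E}_{A^1,A^1}(I)$ and its ideal $\mathcal{E}_{A,A}(I)$, not inside $\pi(\mathrm{Mul}(A))$ and $\pi(\mathrm{Mul}_2(A))$: for merely hypofinite $A$ one does not have Corollary~\ref{mul2}, so the ideal your nilpotents generate in $\pi(\mathrm{Mul}_2(A))$ is not known to consist of quasinilpotent operators, and Lomonosov/Barnes does not apply to it as stated. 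With $\mathcal{E}_{A,A}(I)$ in place, the non--degenerate sub--case goes through.

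Second, the degenerate sub--case is a genuine gap. Nilpotence of $K$ together with the dichotomy $\overline{IKI}\in\{0,I\}$ only yields $I^2=0$: if $\overline{IKI}=I$ then $K=I$ is nilpotent and minimality forces $I^2=0$; if $\overline{IKI}=0$ then $\overline{AKA}=I$ gives $I\cdot\overline{AKA}\cdot I=0$, hence $I^3=0$ and again $I^2=0$. At that point you are stuck with $I^2=0$, $\overline{AI}=\overline{IA}=I$, and neither the left nor the right representation of $A$ on $I$ is topologically irreducible, so Proposition~\ref{TI} cannot be invoked. Your appeal ``as in Lemma~\ref{gap}'' does not transfer: there the vanishing was of $L_{a^2}R_b$ for \emph{all} $a,b\in A$, giving $\pi(L_a)^2=0$ for every $a$ and hence $A^3I\subset J$ by Nagata--Higman, directly contradicting $\overline{AI}=I$; here you only have $eIe=0$ for $e$ in an ideal of $I$, which is far weaker. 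Closing this case requires a new idea (for instance, showing that a hypofinite analogue of Theorem~\ref{tenz} holds so that one can run the Theorem~\ref{harp} argument for the action of $(A/I)^1\widehat{\otimes}((A/I)^1)^{\mathrm{op}}$ on $I$), and as written the proof is incomplete.
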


Let us call a subspace $I$ of a Banach algebra $A$ \textit{a quasiideal} if
$AIA\subset I$. Clearly each ideal is a quasiideal. The converse is true if
$A$ has a (non-necessarily bounded) approximate identity.

\begin{theorem}
\label{quas} Any bicompact radical Banach algebra has a total chain of closed quasiideals.
\end{theorem}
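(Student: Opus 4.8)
The plan is to imitate the proof of Theorem \ref{infch} (and the ideal-structure arguments of Corollary \ref{k(x)}), replacing "closed ideal" by "closed quasiideal" throughout, and to exploit the fact that in a bicompact radical algebra $A$ the multiplication algebra $\mathrm{Mul}_{2}(A)$ is a radical algebra of compact operators (Corollary \ref{mul2}). First I would reduce to showing that there is no maximal \emph{finite} chain of closed quasiideals unless $\dim A\le 1$; indeed, by transfinite induction it suffices to show that whenever $J\subset I$ is a "quasigap" (a pair of closed quasiideals with no closed quasiideal strictly between them) with $\dim(I/J)>1$, one can produce an intermediate closed quasiideal, which is a contradiction. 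The key device is the natural representation $\pi$ of $\mathrm{Mul}(A)$ on the Banach space $X=I/J$: a closed $\pi(\mathrm{Mul}(A))$-invariant subspace $Y\subseteq X$ pulls back to a closed quasiideal $\{x\in I:\ x/J\in Y\}$ lying between $J$ and $I$ (here one uses $AIA\subset I$ and $AJA\subset J$ to see that the preimage is again a quasiideal and that $\pi$ is well defined). Hence, to get the contradiction, it is enough to show that $\pi(\mathrm{Mul}(A))$ has a non-trivial closed invariant subspace whenever $\dim X>1$.

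Next I would run exactly the argument from Lemma \ref{gap}/Theorem \ref{infch}. Since $A$ is bicompact, $L_{a^{2}}R_{b}$ is a compact operator in $\mathrm{Mul}_{2}(A)$ for all $a,b\in A$ (as noted in the proof of Theorem \ref{mul0.2}), and $\mathrm{Mul}_{2}(A)\subset\mathrm{Rad}(\mathrm{Mul}(A))$ by Corollary \ref{mul2}. If $\pi(\mathrm{Mul}_{2}(A))$ contained a non-zero compact operator, then, being an ideal of $\pi(\mathrm{Mul}(A))$, it would force a non-trivial closed invariant subspace for $\pi(\mathrm{Mul}(A))$ by the Lomonosov theorem \cite{Lom} (a quasinilpotent algebra of compact operators has a hyperinvariant subspace by \cite{Sh84}; the ideal version is what is used in Theorem \ref{ideal}), and we would be done. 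So we may assume $\pi(\mathrm{Mul}_{2}(A))=0$, i.e. $\pi(L_{a})\pi(R_{b})=0$ for all $a,b$; applying this with $L_{a^2}R_b$ gives $\pi(L_a)^2\pi(R_b)=0$ and, since $AIA\subset I$ cannot collapse (one has $\overline{AIA}=I$, else an intermediate quasiideal appears), $\pi(L_a)^2=0$ for all $a\in A$. By the Nagata–Higman theorem the algebra generated by $\pi(L_A)$ is then nilpotent; the same holds for $\pi(R_A)$, and since $\pi(L_a)$ and $\pi(R_b)$ commute, $\pi(\mathrm{Mul}(A))$ itself is nilpotent, hence has no non-trivial topologically irreducible action on $X$ with $\dim X>1$ — contradiction. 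This shows every quasigap is one-dimensional, and transfinite induction (as in the paragraph before Theorem \ref{contr}) yields a total chain of closed quasiideals.

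The one genuinely new point, and the step I expect to be the main obstacle, is verifying that the preimage construction stays inside the class of quasiideals and that the collapse alternatives ("$\overline{AIA}=I$ or there is an intermediate quasiideal") are legitimate: a quasiideal is not an ideal, so one must check that $\overline{AIA}$, $\overline{AIA+J}$, and sets such as $\{x\in I:\ AxA\subset J\}$ are themselves closed quasiideals (each is clear from $A\cdot(AIA)\cdot A\subset AIA$ and from continuity of multiplication), and that strict inclusions hold in the relevant places exactly as in Corollary \ref{k(x)}(ii). Once these bookkeeping facts are in place, the rest is word-for-word the bicompact-radical argument. I would also remark, as in Theorem \ref{quas}'s statement context, that when $A$ has an approximate identity quasiideals coincide with ideals, so this theorem simultaneously recovers the ideal statements for approximable algebras; but the proof does not need that.
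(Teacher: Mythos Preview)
Your proposal has a genuine gap at its foundation. You define $\pi$ as the natural representation of $\mathrm{Mul}(A)$ on $I/J$, and you later manipulate $\pi(L_a)$ and $\pi(R_b)$ separately. But $\mathrm{Mul}(A)$ contains the individual operators $L_a$ and $R_b$, and a quasiideal $I$ is only required to satisfy $AIA\subset I$, not $AI\subset I$ or $IA\subset I$. Hence $L_a$ and $R_b$ need not preserve $I$ (or $J$), and the representation $\pi$ of $\mathrm{Mul}(A)$ on $I/J$ is simply not defined. All the subsequent steps that speak of $\pi(L_a)^2=0$, of $\pi(\mathrm{Mul}_2(A))$ being an ideal of $\pi(\mathrm{Mul}(A))$, or of $\pi(\mathrm{Mul}(A))$ being nilpotent, therefore have no meaning. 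The correct object to act on $I/J$ is $\mathrm{Mul}_2(A)$ (generated by the $L_aR_b$), and once you restrict to that, the Nagata--Higman and ``$\overline{AIA}=I$'' manoeuvres become unnecessary.

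In fact this observation is the whole proof in the paper, which is a single sentence: closed quasiideals of $A$ are exactly the closed invariant subspaces of $\mathrm{Mul}_2(A)$; since $A$ is bicompact radical, Corollary~\ref{mul2} gives that $\mathrm{Mul}_2(A)$ is a radical algebra of compact operators, and such algebras are triangularizable by \cite{Sh84}. That immediately yields a total chain of closed quasiideals. Your elaborate quasigap analysis, borrowed from Lemma~\ref{gap} and Theorem~\ref{infch}, was needed there precisely because closed \emph{ideals} correspond to invariant subspaces of $\mathrm{Mul}(A)$, which is \emph{not} an algebra of compact operators; for quasiideals the correspondence is with $\mathrm{Mul}_2(A)$, where the triangularization theorem applies directly and no gap argument is required.
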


\begin{proof}
Closed quasiideals are invariant subspaces of the radical algebra
$\mathrm{Mul}_{2}(A)$ of compact operators. As such algebras are
triangularizable, our statement follows.
\end{proof}

\section{Spectral subspaces of elementary and multiplication
operators\label{s6}}

In this section we consider invariant subspaces of semicompact multiplication
operators, on which the operators are surjective (in particular, eigenspaces
with non-zero eigenvalues or spectral subspaces corresponding to clopen
subsets of spectra non-containing $0$). Our approach will be based (apart of
the tensor radical technique) on a study of operators acting in ordered pairs
of Banach spaces. In Section \ref{s6dob} we improve the results for
semicompact elementary operators by another technique to show that such
invariant subspaces are contained in the component of every quasi-Banach
operator ideal.

\subsection{Operators on an ordered pair of Banach spaces}

Let $\mathcal{X},\mathcal{Y}$ be Banach spaces, and $\mathcal{Y}%
\subset\mathcal{X}$. Suppose that
\begin{equation}
\left\Vert y\right\Vert _{\mathcal{X}}\leq\left\Vert y\right\Vert
_{\mathcal{Y}} \label{n1}%
\end{equation}
for all $y\in\mathcal{Y}$. We refer to such a subspace $\mathcal{Y}$ as a
\textit{Banach subspace} of $\mathcal{X}$ and call $\left(  \mathcal{Y}%
,\mathcal{X}\right)  $ an \textit{ordered pair of Banach spaces}.

Denote by $\mathcal{B}\left(  \mathcal{X}||\mathcal{Y}\right)  $ the space of
all operators $T\in\mathcal{B}(\mathcal{X})$ such that $T\mathcal{Y}%
\subset\mathcal{Y}$. It is non-zero, for instance the identity operator
$1_{\mathcal{X}}$ in $\mathcal{B}(\mathcal{X})$ lies in $\mathcal{B}\left(
\mathcal{X}||\mathcal{Y}\right)  $.

\begin{theorem}
\label{pair1}Let $\mathcal{Y}$ be a Banach subspace of a Banach space
$\mathcal{X}$. Then $T|_{\mathcal{Y}}\in\mathcal{B}(\mathcal{Y})$ for any
$T\in\mathcal{B}\left(  \mathcal{X}||\mathcal{Y}\right)  $, and $\mathcal{B}%
\left(  \mathcal{X}||\mathcal{Y}\right)  $ is a unital Banach subalgebra of
$\mathcal{B}(\mathcal{X})$ with respect to the norm
\[
\Vert T\Vert_{\mathcal{B}\left(  \mathcal{X}||\mathcal{Y}\right)  }%
=\max\left\{  \Vert T\Vert_{\mathcal{B}(\mathcal{X})},\Vert T|_{\mathcal{Y}%
}\Vert_{\mathcal{B}(\mathcal{Y})}\right\}
\]
for any $T\in\mathcal{B}\left(  \mathcal{X}||\mathcal{Y}\right)  $.
\end{theorem}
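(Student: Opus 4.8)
The plan is to verify three things in turn: that $T|_{\mathcal{Y}}$ is a bounded operator on $\mathcal{Y}$, that the proposed norm is an algebra norm making $\mathcal{B}(\mathcal{X}||\mathcal{Y})$ a normed algebra, and finally that it is complete, hence a Banach subalgebra of $\mathcal{B}(\mathcal{X})$ (the subalgebra condition $\Vert\cdot\Vert_{\mathcal{B}(\mathcal{X})}\le\Vert\cdot\Vert_{\mathcal{B}(\mathcal{X}||\mathcal{Y})}$ being immediate from the definition of the max-norm, and unitality being clear since $1_{\mathcal{X}}|_{\mathcal{Y}}=1_{\mathcal{Y}}$).

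First I would address boundedness of the restriction. Here the key point is the \emph{closed graph theorem}: for $T\in\mathcal{B}(\mathcal{X}||\mathcal{Y})$ the map $T|_{\mathcal{Y}}:\mathcal{Y}\to\mathcal{Y}$ is well-defined by hypothesis, and it has closed graph because if $y_n\to y$ in $\mathcal{Y}$ and $Ty_n\to z$ in $\mathcal{Y}$, then by the norm inequality $(\ref{n1})$ we also have $y_n\to y$ and $Ty_n\to z$ in $\mathcal{X}$, and continuity of $T$ on $\mathcal{X}$ gives $Ty=z$. Since $\mathcal{Y}$ is a Banach space, $T|_{\mathcal{Y}}\in\mathcal{B}(\mathcal{Y})$. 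This is the one genuinely substantive step; everything else is bookkeeping.

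Next, that $\Vert\cdot\Vert_{\mathcal{B}(\mathcal{X}||\mathcal{Y})}$ is a submultiplicative norm on $\mathcal{B}(\mathcal{X}||\mathcal{Y})$ follows because a maximum of two algebra seminorms on a common algebra is again an algebra seminorm: for $S,T\in\mathcal{B}(\mathcal{X}||\mathcal{Y})$ one has $ST\in\mathcal{B}(\mathcal{X}||\mathcal{Y})$ (since $ST\mathcal{Y}\subset S\mathcal{Y}\subset\mathcal{Y}$), and $\Vert ST|_{\mathcal{Y}}\Vert_{\mathcal{B}(\mathcal{Y})}=\Vert(S|_{\mathcal{Y}})(T|_{\mathcal{Y}})\Vert_{\mathcal{B}(\mathcal{Y})}\le\Vert S|_{\mathcal{Y}}\Vert_{\mathcal{B}(\mathcal{Y})}\Vert T|_{\mathcal{Y}}\Vert_{\mathcal{B}(\mathcal{Y})}\le\Vert S\Vert_{\mathcal{B}(\mathcal{X}||\mathcal{Y})}\Vert T\Vert_{\mathcal{B}(\mathcal{X}||\mathcal{Y})}$, and similarly for the $\mathcal{B}(\mathcal{X})$-component; taking the max gives submultiplicativity. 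Definiteness is clear since already the $\mathcal{B}(\mathcal{X})$-component is a norm.

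Finally, for completeness let $(T_n)$ be Cauchy in $\Vert\cdot\Vert_{\mathcal{B}(\mathcal{X}||\mathcal{Y})}$. Then $(T_n)$ is Cauchy in $\mathcal{B}(\mathcal{X})$, so $T_n\to T$ in $\mathcal{B}(\mathcal{X})$ for some $T\in\mathcal{B}(\mathcal{X})$, and $(T_n|_{\mathcal{Y}})$ is Cauchy in $\mathcal{B}(\mathcal{Y})$, so $T_n|_{\mathcal{Y}}\to R$ in $\mathcal{B}(\mathcal{Y})$ for some $R\in\mathcal{B}(\mathcal{Y})$. For $y\in\mathcal{Y}$ we get $T_ny\to Ry$ in $\mathcal{Y}$ hence in $\mathcal{X}$ by $(\ref{n1})$, while also $T_ny\to Ty$ in $\mathcal{X}$; therefore $Ty=Ry\in\mathcal{Y}$, so $T\in\mathcal{B}(\mathcal{X}||\mathcal{Y})$, $T|_{\mathcal{Y}}=R$, and $\Vert T_n-T\Vert_{\mathcal{B}(\mathcal{X}||\mathcal{Y})}=\max\{\Vert T_n-T\Vert_{\mathcal{B}(\mathcal{X})},\Vert T_n|_{\mathcal{Y}}-R\Vert_{\mathcal{B}(\mathcal{Y})}\}\to 0$. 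Thus the algebra is complete. I expect no real obstacle beyond correctly invoking the closed graph theorem in the first step; the rest is a routine verification that a max of two compatible Banach-algebra structures is again one.
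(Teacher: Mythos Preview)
Your proposal is correct and follows essentially the same line as the paper: boundedness of $T|_{\mathcal{Y}}$ via the Closed Graph Theorem using the norm comparison $(\ref{n1})$, the routine check that the max-norm is a unital algebra norm, and completeness by taking limits separately in $\mathcal{B}(\mathcal{X})$ and $\mathcal{B}(\mathcal{Y})$ and identifying them on $\mathcal{Y}$ again via $(\ref{n1})$. The only difference is cosmetic: you spell out submultiplicativity explicitly while the paper simply declares the algebra-norm property to be clear.
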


\begin{proof}
We first show that $T|_{\mathcal{Y}}$ is a bounded operator on $\mathcal{Y}$.
To apply the Closed Graph Theorem, it is sufficient to show that the
conditions $\left\Vert y_{n}\right\Vert _{\mathcal{Y}}\rightarrow0$ and
$\left\Vert Ty_{n}-u\right\Vert _{\mathcal{Y}}\rightarrow0$ as $n\rightarrow
\infty$ for $\left\{  y_{n}\right\}  \subset\mathcal{Y}$ imply $u=0$. If these
conditions hold, then also $\left\Vert y\right\Vert _{\mathcal{X}}%
\rightarrow0$ and $\left\Vert Ty_{n}-u\right\Vert _{\mathcal{X}}\rightarrow0$
as $n\rightarrow\infty$. As $T$ is bounded on $\mathcal{X}$, then $u=0$.
As a consequence, $\Vert\cdot\Vert_{\mathcal{B}\left(  \mathcal{X}%
||\mathcal{Y}\right)  }$ is a norm on $\mathcal{B}\left(  \mathcal{X}%
||\mathcal{Y}\right)  $. This norm is clearly a unital algebra norm that
majorizes $\Vert\cdot\Vert_{\mathcal{B}\left(  \mathcal{X}\right)  }$ on
$\mathcal{B}\left(  \mathcal{X}||\mathcal{Y}\right)  $. To finish the proof,
it remains to show that $\Vert\cdot\Vert_{\mathcal{B}\left(  \mathcal{X}%
||\mathcal{Y}\right)  }$ is complete.
Let $\left\{  T_{n}\right\}  $ be a fundamental sequence in $\mathcal{B}%
\left(  \mathcal{X}||\mathcal{Y}\right)  $. Then there are $T\in
\mathcal{B}(\mathcal{X})$ and $S\in\mathcal{B}(\mathcal{Y})$ such that
$\left\Vert T_{n}-T\right\Vert _{\mathcal{B}(\mathcal{X})}\rightarrow0$ and
$\left\Vert T_{n}|_{\mathcal{Y}}-S\right\Vert _{\mathcal{B}(\mathcal{Y}%
)}\rightarrow0$ as $n\rightarrow\infty$. Then $\left\Vert T_{n}y-Ty\right\Vert
_{\mathcal{X}}\rightarrow0$ and $\left\Vert T_{n}y-Sy\right\Vert
_{\mathcal{X}}\leq\left\Vert T_{n}y-Sy\right\Vert _{\mathcal{Y}}\rightarrow0$
as $n\rightarrow\infty$, for every $y\in\mathcal{Y}$. This shows that
$\mathcal{Y}$ is invariant for $T$ and $T|_{\mathcal{Y}}=S$.
\end{proof}

As usual, $\Vert S\Vert_{\mathcal{B}(\mathcal{X},\mathcal{Y})}$ denotes the
operator norm of an operator $S$ in $\mathcal{B}(\mathcal{X},\mathcal{Y})$.

\begin{proposition}
Let $\mathcal{Y}$ be a Banach subspace of a Banach space $\mathcal{X}$. Then
$\mathcal{B}(\mathcal{X},\mathcal{Y})$ is a Banach algebra with respect to the
usual norm $\Vert\cdot\Vert_{\mathcal{B}(\mathcal{X},\mathcal{Y})}$.
\end{proposition}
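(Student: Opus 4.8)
The plan is to check the only three points that are not immediate: that the product of two elements of $\mathcal{B}(\mathcal{X},\mathcal{Y})$ again lies in $\mathcal{B}(\mathcal{X},\mathcal{Y})$, that the operator norm $\Vert\cdot\Vert_{\mathcal{B}(\mathcal{X},\mathcal{Y})}$ is submultiplicative, and that the space is complete. (That $\mathcal{B}(\mathcal{X},\mathcal{Y})$ is a normed vector space under $\Vert\cdot\Vert_{\mathcal{B}(\mathcal{X},\mathcal{Y})}$ is clear.)

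First I would observe that the multiplication is composition: if $S,T\in\mathcal{B}(\mathcal{X},\mathcal{Y})$ then $T$ carries $\mathcal{X}$ into $\mathcal{Y}$, and since $\mathcal{Y}\subseteq\mathcal{X}$ the composite $ST:x\longmapsto S(Tx)$ is a well-defined linear map of $\mathcal{X}$ into $\mathcal{Y}$. To control its norm, fix $x\in\mathcal{X}$. Then $Tx\in\mathcal{Y}$ with $\Vert Tx\Vert_{\mathcal{Y}}\leq\Vert T\Vert_{\mathcal{B}(\mathcal{X},\mathcal{Y})}\Vert x\Vert_{\mathcal{X}}$, and by \eqref{n1} also $\Vert Tx\Vert_{\mathcal{X}}\leq\Vert Tx\Vert_{\mathcal{Y}}$, so
\[
\Vert STx\Vert_{\mathcal{Y}}\leq\Vert S\Vert_{\mathcal{B}(\mathcal{X},\mathcal{Y})}\Vert Tx\Vert_{\mathcal{X}}\leq\Vert S\Vert_{\mathcal{B}(\mathcal{X},\mathcal{Y})}\Vert Tx\Vert_{\mathcal{Y}}\leq\Vert S\Vert_{\mathcal{B}(\mathcal{X},\mathcal{Y})}\Vert T\Vert_{\mathcal{B}(\mathcal{X},\mathcal{Y})}\Vert x\Vert_{\mathcal{X}}.
\]
Taking the supremum over $\Vert x\Vert_{\mathcal{X}}\leq 1$ shows $ST\in\mathcal{B}(\mathcal{X},\mathcal{Y})$ and $\Vert ST\Vert_{\mathcal{B}(\mathcal{X},\mathcal{Y})}\leq\Vert S\Vert_{\mathcal{B}(\mathcal{X},\mathcal{Y})}\Vert T\Vert_{\mathcal{B}(\mathcal{X},\mathcal{Y})}$.

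For completeness, let $\{T_n\}$ be Cauchy in $\mathcal{B}(\mathcal{X},\mathcal{Y})$. For each $x\in\mathcal{X}$ the sequence $\{T_nx\}$ is Cauchy in the Banach space $\mathcal{Y}$, hence converges to some $Tx\in\mathcal{Y}$; the assignment $x\longmapsto Tx$ is linear, and since Cauchy sequences are norm-bounded, $\Vert Tx\Vert_{\mathcal{Y}}=\lim_n\Vert T_nx\Vert_{\mathcal{Y}}\leq(\sup_n\Vert T_n\Vert_{\mathcal{B}(\mathcal{X},\mathcal{Y})})\Vert x\Vert_{\mathcal{X}}$, so $T\in\mathcal{B}(\mathcal{X},\mathcal{Y})$. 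Finally, given $\varepsilon>0$, pick $N$ with $\Vert T_n-T_m\Vert_{\mathcal{B}(\mathcal{X},\mathcal{Y})}<\varepsilon$ for $n,m\geq N$; then $\Vert T_nx-T_mx\Vert_{\mathcal{Y}}\leq\varepsilon\Vert x\Vert_{\mathcal{X}}$, and letting $m\to\infty$ gives $\Vert T_nx-Tx\Vert_{\mathcal{Y}}\leq\varepsilon\Vert x\Vert_{\mathcal{X}}$, i.e.\ $\Vert T_n-T\Vert_{\mathcal{B}(\mathcal{X},\mathcal{Y})}\leq\varepsilon$ for $n\geq N$. Hence $T_n\to T$ and $\mathcal{B}(\mathcal{X},\mathcal{Y})$ is complete.

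There is no essential difficulty here; the only point deserving attention is the bookkeeping of the two norms, and the one place it genuinely matters is the submultiplicativity step, where \eqref{n1} is precisely what allows the value $Tx$, a priori only estimated in the finer norm $\Vert\cdot\Vert_{\mathcal{Y}}$, to be fed back into $S$ as an element of $\mathcal{X}$ with controlled $\Vert\cdot\Vert_{\mathcal{X}}$-norm.
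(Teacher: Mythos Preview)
Your proof is correct and follows the same approach as the paper: the submultiplicativity is established by exactly the same chain of inequalities using \eqref{n1}, and completeness (which the paper dismisses as ``clear'') you spell out in standard fashion. There is nothing to add.
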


\begin{proof}
It is clear that $\mathcal{B}(\mathcal{X},\mathcal{Y})$ is a Banach space. If
$S,T\in\mathcal{B}(\mathcal{X},\mathcal{Y})$ then
\[
\Vert STx\Vert_{\mathcal{Y}}\leq\Vert S\Vert_{\mathcal{B}(\mathcal{X}%
,\mathcal{Y})}\Vert Tx\Vert_{\mathcal{X}}\leq\Vert S\Vert_{\mathcal{B}%
(\mathcal{X},\mathcal{Y})}\Vert Tx\Vert_{\mathcal{Y}}\leq\Vert S\Vert
_{\mathcal{B}(\mathcal{X},\mathcal{Y})}\Vert T\Vert_{\mathcal{B}%
(\mathcal{X},\mathcal{Y})}\Vert x\Vert_{\mathcal{X}}%
\]
for every $x\in\mathcal{X}$, whence $\mathcal{B}(\mathcal{X},\mathcal{Y})$ is
a Banach algebra.
\end{proof}

\begin{proposition}
\label{idBinom} Let $\mathcal{Y}$ be a Banach subspace of a Banach space
$\mathcal{X}$. Then $\mathcal{B}(\mathcal{X},\mathcal{Y})$ is a Banach ideal
of $\mathcal{B}\left(  \mathcal{X}||\mathcal{Y}\right)  $ with respect to
$\left\Vert \cdot\right\Vert _{\mathcal{B}(\mathcal{X},\mathcal{Y})}$ which is
a flexible norm.
\end{proposition}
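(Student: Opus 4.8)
The plan is to check the three requirements in the definition of a flexible ideal: that $\mathcal{B}(\mathcal{X},\mathcal{Y})$ is a two-sided ideal of $\mathcal{B}(\mathcal{X}||\mathcal{Y})$, that $\left\Vert\cdot\right\Vert_{\mathcal{B}(\mathcal{X}||\mathcal{Y})}\le\left\Vert\cdot\right\Vert_{\mathcal{B}(\mathcal{X},\mathcal{Y})}$ on $\mathcal{B}(\mathcal{X},\mathcal{Y})$, and that $\left\Vert axb\right\Vert_{\mathcal{B}(\mathcal{X},\mathcal{Y})}\le\left\Vert a\right\Vert_{\mathcal{B}(\mathcal{X}||\mathcal{Y})}\left\Vert x\right\Vert_{\mathcal{B}(\mathcal{X},\mathcal{Y})}\left\Vert b\right\Vert_{\mathcal{B}(\mathcal{X}||\mathcal{Y})}$ for $x\in\mathcal{B}(\mathcal{X},\mathcal{Y})$ and $a,b\in\mathcal{B}(\mathcal{X}||\mathcal{Y})$ (no unitalization is needed, since $\mathcal{B}(\mathcal{X}||\mathcal{Y})$ is already unital by Theorem \ref{pair1}). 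Completeness of $\mathcal{B}(\mathcal{X},\mathcal{Y})$ in $\left\Vert\cdot\right\Vert_{\mathcal{B}(\mathcal{X},\mathcal{Y})}$ is the preceding Proposition, so once the norm domination and the ideal property are in hand, $\mathcal{B}(\mathcal{X},\mathcal{Y})$ is automatically a Banach subalgebra of $\mathcal{B}(\mathcal{X}||\mathcal{Y})$ which is an ideal, i.e. a Banach ideal, and flexibility then follows.

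First I would record that every $S\in\mathcal{B}(\mathcal{X},\mathcal{Y})$ lies in $\mathcal{B}(\mathcal{X}||\mathcal{Y})$: by (\ref{n1}) the inclusion $\mathcal{Y}\hookrightarrow\mathcal{X}$ is contractive, so $S$ composed with it belongs to $\mathcal{B}(\mathcal{X})$ with $\left\Vert S\right\Vert_{\mathcal{B}(\mathcal{X})}\le\left\Vert S\right\Vert_{\mathcal{B}(\mathcal{X},\mathcal{Y})}$, while $S\mathcal{Y}\subset S\mathcal{X}\subset\mathcal{Y}$ shows $\mathcal{Y}$ is invariant, with $\left\Vert S|_{\mathcal{Y}}\right\Vert_{\mathcal{B}(\mathcal{Y})}\le\left\Vert S\right\Vert_{\mathcal{B}(\mathcal{X},\mathcal{Y})}$ again by (\ref{n1}). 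Taking the maximum of the two gives $\left\Vert S\right\Vert_{\mathcal{B}(\mathcal{X}||\mathcal{Y})}\le\left\Vert S\right\Vert_{\mathcal{B}(\mathcal{X},\mathcal{Y})}$, the required domination, and in particular $\mathcal{B}(\mathcal{X},\mathcal{Y})$ is a normed subalgebra of $\mathcal{B}(\mathcal{X}||\mathcal{Y})$.

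Next, for $x\in\mathcal{B}(\mathcal{X},\mathcal{Y})$ and $a,b\in\mathcal{B}(\mathcal{X}||\mathcal{Y})$: the map $b$ sends $\mathcal{X}$ into $\mathcal{X}$, $x$ sends $\mathcal{X}$ into $\mathcal{Y}$, and $a$ sends $\mathcal{Y}$ into $\mathcal{Y}$, hence $axb\in\mathcal{B}(\mathcal{X},\mathcal{Y})$; taking $b=1_{\mathcal{X}}$ or $a=1_{\mathcal{X}}$ gives the two-sided ideal property. For the norm estimate, for every $\xi\in\mathcal{X}$,
\[
\left\Vert axb\xi\right\Vert_{\mathcal{Y}}\le\left\Vert a|_{\mathcal{Y}}\right\Vert_{\mathcal{B}(\mathcal{Y})}\left\Vert x(b\xi)\right\Vert_{\mathcal{Y}}\le\left\Vert a|_{\mathcal{Y}}\right\Vert_{\mathcal{B}(\mathcal{Y})}\left\Vert x\right\Vert_{\mathcal{B}(\mathcal{X},\mathcal{Y})}\left\Vert b\right\Vert_{\mathcal{B}(\mathcal{X})}\left\Vert\xi\right\Vert_{\mathcal{X}},
\]
and since $\left\Vert a|_{\mathcal{Y}}\right\Vert_{\mathcal{B}(\mathcal{Y})}\le\left\Vert a\right\Vert_{\mathcal{B}(\mathcal{X}||\mathcal{Y})}$ and $\left\Vert b\right\Vert_{\mathcal{B}(\mathcal{X})}\le\left\Vert b\right\Vert_{\mathcal{B}(\mathcal{X}||\mathcal{Y})}$ by the definition of $\left\Vert\cdot\right\Vert_{\mathcal{B}(\mathcal{X}||\mathcal{Y})}$, this yields the flexibility inequality. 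Combining these verified properties with the preceding Proposition (completeness) and the domination inequality gives the assertion.

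The hard part, such as it is, is only the bookkeeping: keeping track of the three operator norms ($\mathcal{B}(\mathcal{X})$, $\mathcal{B}(\mathcal{Y})$ and $\mathcal{B}(\mathcal{X}||\mathcal{Y})$) and invoking (\ref{n1}) precisely at each point where one passes between the $\mathcal{X}$- and $\mathcal{Y}$-norms of a vector of $\mathcal{Y}$; there is no genuine difficulty beyond that.
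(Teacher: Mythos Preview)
Your proof is correct and follows essentially the same route as the paper's: you verify the inclusion $\mathcal{B}(\mathcal{X},\mathcal{Y})\subset\mathcal{B}(\mathcal{X}||\mathcal{Y})$ with the norm domination, the two-sided ideal property, and then the flexibility inequality $\left\Vert axb\right\Vert_{\mathcal{B}(\mathcal{X},\mathcal{Y})}\le\left\Vert a|_{\mathcal{Y}}\right\Vert_{\mathcal{B}(\mathcal{Y})}\left\Vert x\right\Vert_{\mathcal{B}(\mathcal{X},\mathcal{Y})}\left\Vert b\right\Vert_{\mathcal{B}(\mathcal{X})}$ by the same pointwise estimate. The only cosmetic difference is that the paper cites Theorem~\ref{pair1} for the inclusion, while you spell it out via composition with the contractive embedding $\mathcal{Y}\hookrightarrow\mathcal{X}$; your version is in fact slightly more explicit.
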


\begin{proof}
The inclusion $\mathcal{B}(\mathcal{X},\mathcal{Y})\subset\mathcal{B}\left(
\mathcal{X}||\mathcal{Y}\right)  $ follows by Theorem \ref{pair1}. Let
$S\in\mathcal{B}(\mathcal{X},\mathcal{Y})$ and $P,T\in\mathcal{B}\left(
\mathcal{X}||\mathcal{Y}\right)  $. It is clear that $PS,ST\in\mathcal{B}%
(\mathcal{X},\mathcal{Y})\subset\mathcal{B}\left(  \mathcal{X}||\mathcal{Y}%
\right)  $. So $\mathcal{B}(\mathcal{X},\mathcal{Y})$ is an ideal of
$\mathcal{B}\left(  \mathcal{X}||\mathcal{Y}\right)  $. As $\Vert
Sx\Vert_{\mathcal{Y}}\leq\Vert S\Vert_{\mathcal{B}(\mathcal{X},\mathcal{Y}%
)}\Vert x\Vert_{\mathcal{X}}$ for every $x\in\mathcal{X}$, one obtains from
(\ref{n1}) that
\begin{align*}
\Vert Sx\Vert_{\mathcal{X}}  &  \leq\Vert Sx\Vert_{\mathcal{Y}}\leq\Vert
S\Vert_{\mathcal{B}(\mathcal{X},\mathcal{Y})}\Vert x\Vert_{\mathcal{X}}\text{
and }\\
\Vert Sy\Vert_{\mathcal{Y}}  &  \leq\Vert S\Vert_{\mathcal{B}(\mathcal{X}%
,\mathcal{Y})}\Vert y\Vert_{\mathcal{X}}\leq\Vert S\Vert_{\mathcal{B}%
(\mathcal{X},\mathcal{Y})}\Vert y\Vert_{\mathcal{Y}}%
\end{align*}
for every $x\in\mathcal{X}$ and $y\in\mathcal{Y}$. Therefore
\[
\Vert S\Vert_{\mathcal{B}(\mathcal{X})}\leq\max\left\{  \Vert S\Vert
_{\mathcal{B}(\mathcal{X})},\left\Vert S|_{\mathcal{Y}}\right\Vert
_{\mathcal{B}\left(  \mathcal{Y}\right)  }\right\}  =\Vert S\Vert
_{\mathcal{B}\left(  \mathcal{X}||\mathcal{Y}\right)  }\leq\Vert
S\Vert_{\mathcal{B}(\mathcal{X},\mathcal{Y})}.
\]
It follows that
\begin{align*}
\left\Vert PSTx\right\Vert _{\mathcal{Y}}  &  \leq\left\Vert P|_{\mathcal{Y}%
}\right\Vert _{\mathcal{B}(\mathcal{Y})}\left\Vert STx\right\Vert
_{\mathcal{Y}}\leq\left\Vert P|_{\mathcal{Y}}\right\Vert _{\mathcal{B}%
(\mathcal{Y})}\left\Vert S\right\Vert _{\mathcal{B}(\mathcal{X},\mathcal{Y}%
)}\left\Vert Tx\right\Vert _{\mathcal{X}}\\
&  \leq\left\Vert P|_{\mathcal{Y}}\right\Vert _{\mathcal{B}(\mathcal{Y}%
)}\left\Vert S\right\Vert _{\mathcal{B}(\mathcal{X},\mathcal{Y})}\left\Vert
T\right\Vert _{\mathcal{B}(\mathcal{X})}\left\Vert x\right\Vert _{\mathcal{X}%
}\\
&  \leq\left\Vert P\right\Vert _{\mathcal{B}\left(  \mathcal{X}||\mathcal{Y}%
\right)  }\left\Vert S\right\Vert _{\mathcal{B}(\mathcal{X},\mathcal{Y}%
)}\left\Vert T\right\Vert _{\mathcal{B}\left(  \mathcal{X}||\mathcal{Y}%
\right)  }\left\Vert x\right\Vert _{\mathcal{X}}.
\end{align*}
for every $x\in\mathcal{X}$, whence
\[
\left\Vert PST\right\Vert _{\mathcal{B}(\mathcal{X},\mathcal{Y})}%
\leq\left\Vert P\right\Vert _{\mathcal{B}\left(  \mathcal{X}||\mathcal{Y}%
\right)  }\left\Vert S\right\Vert _{\mathcal{B}(\mathcal{X},\mathcal{Y}%
)}\left\Vert T\right\Vert _{\mathcal{B}\left(  \mathcal{X}||\mathcal{Y}%
\right)  }.
\]
\end{proof}

\begin{theorem}
\label{pair2} Let $\mathcal{Y}$ be a Banach subspace of a Banach space
$\mathcal{X}$. Then every operator $T\in\mathcal{B}\left(  \mathcal{X}\right)
$ such that $T\mathcal{X}\subset\mathcal{Y}$ belongs to $\mathcal{B}\left(
\mathcal{X},\mathcal{Y}\right)  $.
\end{theorem}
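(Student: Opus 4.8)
The plan is to invoke the Closed Graph Theorem for $T$ regarded as a map from $\mathcal{X}$ into $\mathcal{Y}$. Since by hypothesis $T\mathcal{X}\subset\mathcal{Y}$, the assignment $x\longmapsto Tx$ is a well-defined linear map $\mathcal{X}\longrightarrow\mathcal{Y}$, and both $\mathcal{X}$ and $\mathcal{Y}$ are Banach spaces, so it suffices to verify that its graph is closed; the conclusion $T\in\mathcal{B}(\mathcal{X},\mathcal{Y})$ is then immediate.

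To check closedness of the graph it is enough, as in the proof of Theorem \ref{pair1}, to show that if $\{x_{n}\}\subset\mathcal{X}$ satisfies $\left\Vert x_{n}\right\Vert _{\mathcal{X}}\rightarrow0$ and $\left\Vert Tx_{n}-u\right\Vert _{\mathcal{Y}}\rightarrow0$ for some $u\in\mathcal{Y}$, then $u=0$. First I would use $(\ref{n1})$ to pass the convergence in $\mathcal{Y}$ down to $\mathcal{X}$: from $\left\Vert Tx_{n}-u\right\Vert _{\mathcal{X}}\leq\left\Vert Tx_{n}-u\right\Vert _{\mathcal{Y}}\rightarrow0$ we get $Tx_{n}\rightarrow u$ in $\mathcal{X}$. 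On the other hand, $T\in\mathcal{B}(\mathcal{X})$ and $x_{n}\rightarrow0$ in $\mathcal{X}$ give $Tx_{n}\rightarrow0$ in $\mathcal{X}$. By uniqueness of limits in the Banach space $\mathcal{X}$ we conclude $u=0$, which is exactly the required closedness.

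I do not anticipate a genuine obstacle: the one point that needs care is that $\mathcal{Y}$ carries its own (possibly strictly larger) norm $\left\Vert \cdot\right\Vert _{\mathcal{Y}}$, and inequality $(\ref{n1})$ is precisely the hypothesis that lets convergence in $\mathcal{Y}$ imply convergence in $\mathcal{X}$, so that boundedness of $T$ on $\mathcal{X}$ can be brought to bear. The whole argument is the "$T\mathcal{X}\subset\mathcal{Y}$" analogue of the closed-graph step in Theorem \ref{pair1} (which handled $T|_{\mathcal{Y}}$), and no further ingredients are needed.
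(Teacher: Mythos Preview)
Your proposal is correct and follows essentially the same route as the paper's own proof: both apply the Closed Graph Theorem to $T$ viewed as a map $\mathcal{X}\to\mathcal{Y}$, using inequality~(\ref{n1}) to transfer convergence $Tx_n\to u$ from $\mathcal{Y}$ down to $\mathcal{X}$, and then the boundedness of $T$ on $\mathcal{X}$ to force $u=0$. Your write-up is slightly more explicit (spelling out the uniqueness-of-limits step), but the argument is the same.
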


\begin{proof}
By the Closed Graph Theorem, it suffices to show that $u=0$ if $\left\Vert
x_{n}\right\Vert _{\mathcal{X}}\rightarrow0$ and $\left\Vert Tx_{n}%
-u\right\Vert _{\mathcal{Y}}\rightarrow0$ as $n\rightarrow\infty$. Indeed, the
last implies that $\left\Vert Tx_{n}-u\right\Vert _{\mathcal{X}}\rightarrow0$
as $n\rightarrow\infty$. As $T$ is bounded on $\mathcal{X}$, then $u=0$.
\end{proof}

\subsection{Invariant subspaces for operators on an ordered pair of Banach
spaces}

We consider those invariant subspaces of an operator on an ordered pair
$(\mathcal{Y},\mathcal{X})$ of Banach spaces on which the operator is
surjective. Clearly such a subspace is contained in $\mathcal{Y}$ if the
operator belongs to $\mathcal{B}\left(  \mathcal{X},\mathcal{Y}\right)  $. We
show that the same is true if the operator belongs to $\mathcal{B}\left(
\mathcal{X}||\mathcal{Y}\right)  $ and is quasinilpotent modulo $\mathcal{B}%
\left(  \mathcal{X},\mathcal{Y}\right)  $.

\begin{theorem}
\label{spectrclos}Let $\mathcal{X}$ be a Banach space, $\mathcal{Y}$ a Banach
subspace of $\mathcal{X}$, and let $T\in\mathcal{B}\left(  \mathcal{X}%
||\mathcal{Y}\right)  $. Assume that
\begin{equation}
T\in Q_{\mathcal{B}(\mathcal{X},\mathcal{Y})}\left(  \mathcal{B}\left(
\mathcal{X}||\mathcal{Y}\right)  \right)  . \label{sc}%
\end{equation}
If $\mathcal{Z}$ is a closed subspace of $\mathcal{X}$ such that
$\mathcal{Z}=T\mathcal{Z}$, then $\mathcal{Z}\subset\mathcal{Y}$. Moreover,
the norms $\Vert\cdot\Vert_{\mathcal{X}}$ and $\Vert\cdot\Vert_{\mathcal{Y}}$
are equivalent on $\mathcal{Z}$, so $\mathcal{Z}$ is also closed in
$\mathcal{Y}$.
\end{theorem}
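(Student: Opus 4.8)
The plan is to use the surjectivity of $T$ on $\mathcal{Z}$ together with condition (\ref{sc}) to peel off from each $z\in\mathcal{Z}$, one piece at a time, a summand lying in $\mathcal{Y}$, leaving a remainder that shrinks geometrically.

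First I would fix the constants. Since $\mathcal{Z}$ is closed in $\mathcal{X}$ it is a Banach space in $\Vert\cdot\Vert_{\mathcal{X}}$, and $T|_{\mathcal{Z}}$ is a bounded surjection of $\mathcal{Z}$ onto itself; by the Open Mapping Theorem there is $c\geq1$ so that every $z\in\mathcal{Z}$ has, for each $m$, a $T^{m}$-preimage $w\in\mathcal{Z}$ with $\Vert w\Vert_{\mathcal{X}}\leq c^{m}\Vert z\Vert_{\mathcal{X}}$. Next, $\mathcal{B}(\mathcal{X},\mathcal{Y})$ is an ideal of the Banach algebra $\mathcal{B}(\mathcal{X}||\mathcal{Y})$ (Theorem \ref{pair1}, Proposition \ref{idBinom}), so (\ref{sc}) and Proposition \ref{predv1}(iii), applied with $\varepsilon=(2c)^{-1}$, furnish an $m\geq1$ and an operator $S\in\mathcal{B}(\mathcal{X},\mathcal{Y})$ with $\Vert T^{m}-S\Vert_{\mathcal{B}(\mathcal{X}||\mathcal{Y})}<(2c)^{-m}$. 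I fix this $m$ and set $R=T^{m}-S$. Since $T$ carries $\mathcal{Y}$ into $\mathcal{Y}$ (hence so does $T^{m}$) and $S$ carries $\mathcal{X}$ into $\mathcal{Y}$, the operator $R$ carries $\mathcal{Y}$ into $\mathcal{Y}$; moreover $R\in\mathcal{B}(\mathcal{X}||\mathcal{Y})$, so by the norm formula of Theorem \ref{pair1} both $\Vert R\Vert_{\mathcal{B}(\mathcal{X})}$ and $\Vert R|_{\mathcal{Y}}\Vert_{\mathcal{B}(\mathcal{Y})}$ are $<(2c)^{-m}$.

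Now fix $z\in\mathcal{Z}$ and choose a tower $w^{(0)},w^{(1)},\dots$ in $\mathcal{Z}$ with $T^{m}w^{(0)}=z$ and $T^{m}w^{(j)}=w^{(j-1)}$, so that $\Vert w^{(j)}\Vert_{\mathcal{X}}\leq c^{m(j+1)}\Vert z\Vert_{\mathcal{X}}$. Substituting $T^{m}w^{(j)}=Sw^{(j)}+Rw^{(j)}$ repeatedly into the remainder gives, by an easy induction,
\[
z=\sum_{j=0}^{k}R^{j}Sw^{(j)}+R^{k+1}w^{(k)}\qquad(k\geq0).
\]
Here $\Vert R^{k+1}w^{(k)}\Vert_{\mathcal{X}}<(2c)^{-m(k+1)}c^{m(k+1)}\Vert z\Vert_{\mathcal{X}}=2^{-m(k+1)}\Vert z\Vert_{\mathcal{X}}\to0$, while each $R^{j}Sw^{(j)}$ lies in $\mathcal{Y}$ (as $Sw^{(j)}\in\mathcal{Y}$ and $R$ preserves $\mathcal{Y}$), with $\Vert R^{j}Sw^{(j)}\Vert_{\mathcal{Y}}<\Vert S\Vert_{\mathcal{B}(\mathcal{X},\mathcal{Y})}c^{m}2^{-mj}\Vert z\Vert_{\mathcal{X}}$; hence $\sum_{j\geq0}R^{j}Sw^{(j)}$ converges absolutely in $(\mathcal{Y},\Vert\cdot\Vert_{\mathcal{Y}})$ to some $Y\in\mathcal{Y}$ with $\Vert Y\Vert_{\mathcal{Y}}\leq 2\Vert S\Vert_{\mathcal{B}(\mathcal{X},\mathcal{Y})}c^{m}\Vert z\Vert_{\mathcal{X}}$. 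Convergence in $\mathcal{Y}$ implies convergence in $\mathcal{X}$ by (\ref{n1}), so letting $k\to\infty$ in the displayed identity forces $z=Y\in\mathcal{Y}$. Thus $\mathcal{Z}\subset\mathcal{Y}$, and the uniform bound $\Vert z\Vert_{\mathcal{Y}}\leq 2\Vert S\Vert_{\mathcal{B}(\mathcal{X},\mathcal{Y})}c^{m}\Vert z\Vert_{\mathcal{X}}$ together with $\Vert\cdot\Vert_{\mathcal{X}}\leq\Vert\cdot\Vert_{\mathcal{Y}}$ shows the two norms are equivalent on $\mathcal{Z}$; being closed in $\mathcal{X}$, $\mathcal{Z}$ is then closed in $\mathcal{Y}$.

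The crux is finding the right decomposition: the naive split $z=Sw+Rw$ leaves a remainder outside $\mathcal{Z}$, which blocks iteration. The remedy is to iterate the splitting along the tower of $T^{m}$-preimages $w^{(j)}\in\mathcal{Z}$; then each remainder $R^{j}w^{(j-1)}=R^{j}T^{m}w^{(j)}$ can be split again, peeling off a genuine summand $R^{j}Sw^{(j)}\in\mathcal{Y}$ — here it is essential that $R$ both preserves $\mathcal{Y}$ and is small on $\mathcal{X}$ and on $\mathcal{Y}$ — and the geometric gain $2^{-m}<1$ per step defeats the geometric growth $c^{m(j+1)}$ of the preimage norms, keeping both the remainder and the $\mathcal{Y}$-valued tail under control.
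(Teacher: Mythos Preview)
Your proof is correct and follows essentially the same approach as the paper: both use the Open Mapping Theorem to produce a tower of $T^{m}$-preimages in $\mathcal{Z}$, split $T^{m}=S+R$ with $S\in\mathcal{B}(\mathcal{X},\mathcal{Y})$ and $R$ small in $\mathcal{B}(\mathcal{X}||\mathcal{Y})$, and telescope to obtain the absolutely $\mathcal{Y}$-convergent series $z=\sum_{j\geq0}R^{j}Sw^{(j)}$. The only differences are cosmetic (your $R,w^{(j)}$ are the paper's $P,z_{j+1}$, and you fix $\varepsilon=(2c)^{-1}$ rather than leaving it as a free parameter).
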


\begin{proof}
By the Open Mapping Theorem, there is $t>0$ such that for each $z\in
\mathcal{Z}$ there is $w\in\mathcal{Z}$ with $Tw=z$ and
\[
\Vert w\Vert_{\mathcal{X}}\leq t\Vert z\Vert_{\mathcal{X}}.
\]
Let $\varepsilon>0$ be such that $\varepsilon<t^{-1}$. It follows from
Proposition \ref{idBinom} that $\mathcal{B}(\mathcal{X},\mathcal{Y})$ is an
ideal of $\mathcal{B}\left(  \mathcal{X}||\mathcal{Y}\right)  $. By our
assumption and Proposition \ref{predv1}(iii), there is $m\in\mathbb{N}$ such
that
\[
\mathrm{dist}_{\mathcal{B}\left(  \mathcal{X}||\mathcal{Y}\right)  }\left(
T^{m},\mathcal{B}(\mathcal{X},\mathcal{Y})\right)  <\varepsilon^{m}.
\]
Therefore there is an operator $S\in\mathcal{B}(\mathcal{X},\mathcal{Y})$ and
an operator $P\in\mathcal{B}\left(  \mathcal{X}||\mathcal{Y}\right)  $ such
that
\[
T^{m}=S+P\text{ with }\max\left\{  \left\Vert P\right\Vert _{\mathcal{B}%
\left(  \mathcal{X}\right)  },\left\Vert P|_{\mathcal{Y}}\right\Vert
_{\mathcal{B}\left(  \mathcal{Y}\right)  }\right\}  <\varepsilon^{m}.
\]
Then
\begin{equation}
\left\Vert Py\right\Vert _{\mathcal{Y}}\leq\varepsilon^{m}\left\Vert
y\right\Vert _{\mathcal{Y}} \label{femy}%
\end{equation}
for every $y\in\mathcal{Y}$.
It follows from the definition of $t$ that for every $z\in\mathcal{Z}$, there
is $z^{\circ}\in\mathcal{Z}$ with $T^{m}z^{\circ}=z$ and
\[
\Vert z^{\circ}\Vert_{\mathcal{X}}<t^{m}\Vert z\Vert_{\mathcal{X}}.
\]
Let $z_{0}:=z\in\mathcal{Z}$ be arbitrary. Set $z_{1}=z_{0}^{\circ}$,
$z_{2}=z_{1}^{\circ}$, and so on:
\[
z_{k+1}=z_{k}^{\circ}%
\]
for every integer $k>0$. Thus $z_{k}=T^{m}z_{k+1}=Sz_{k+1}+Pz_{k+1}$ with
\begin{equation}
\Vert z_{k}\Vert_{\mathcal{X}}\leq t^{mk}\Vert z\Vert_{\mathcal{X}}.
\label{ryj}%
\end{equation}
Rewriting this in the form
\[
z_{k}-Pz_{k+1}=Sz_{k+1},
\]
multiplying both sides of the equation by $P^{k}$ and summing obtained
equalities for $k=0,1,2,\ldots$, one formally obtains that
\begin{equation}
z=Sz_{1}+PSz_{2}+P^{2}Sz_{3}+... \label{ser}%
\end{equation}
Since $S\mathcal{X}\subset\mathcal{Y}$, all elements $Sz_{k}$ and
$P^{k-1}Sz_{k}$ belong to $\mathcal{Y}$. As $\Vert z_{k}\Vert_{\mathcal{X}%
}<t^{mk}\Vert z\Vert_{\mathcal{X}}$, one obtains that
\begin{equation}
\left\Vert Sz_{k}\right\Vert _{\mathcal{Y}}<\Vert S\Vert_{\mathcal{B}%
(\mathcal{X},\mathcal{Y})}\left\Vert z_{k}\right\Vert _{\mathcal{X}}\leq
t^{mk}\Vert S\Vert_{\mathcal{B}(\mathcal{X},\mathcal{Y})}\left\Vert
z\right\Vert _{\mathcal{X}}. \label{ser1}%
\end{equation}
It follows from (\ref{femy}) and (\ref{ser1}) that
\[
\left\Vert P^{k-1}Sz_{k}\right\Vert _{\mathcal{Y}}\leq\varepsilon
^{m(k-1)}\left\Vert Sz_{k}\right\Vert _{\mathcal{Y}}\leq\left(  \varepsilon
t\right)  ^{mk}\left(  \varepsilon^{-m}\Vert S\Vert_{\mathcal{B}%
(\mathcal{X},\mathcal{Y})}\left\Vert z\right\Vert _{\mathcal{X}}\right)  .
\]
As $\varepsilon t<1$, we have that
\begin{equation}
\sum_{k=1}^{\infty}\left\Vert P^{k-1}Sz_{k}\right\Vert _{\mathcal{Y}}%
\leq\left(  \frac{t^{m}}{1-\varepsilon^{m}t^{m}}\Vert S\Vert_{\mathcal{B}%
(\mathcal{X},\mathcal{Y})}\right)  \left\Vert z\right\Vert _{\mathcal{X}%
}<\infty. \label{ser2}%
\end{equation}
Since $\Vert\cdot\Vert_{\mathcal{Y}}$ is a complete norm on $\mathcal{Y}$, it
follows from (\ref{ser2}) and (\ref{ser}) that $z\in\mathcal{Y}$ with the
estimation%
\[
\left\Vert z\right\Vert _{\mathcal{Y}}\leq\left(  \frac{t^{m}}{1-\varepsilon
^{m}t^{m}}\Vert S\Vert_{\mathcal{B}(\mathcal{X},\mathcal{Y})}\right)
\left\Vert z\right\Vert _{\mathcal{X}}.
\]
Therefore $\left\Vert \cdot\right\Vert _{\mathcal{X}}$ and $\left\Vert
\cdot\right\Vert _{\mathcal{Y}}$ are equivalent on $\mathcal{Z}$. As
$\mathcal{Z}$ is closed with respect to $\left\Vert \cdot\right\Vert
_{\mathcal{X}}$, it is closed with respect to $\left\Vert \cdot\right\Vert
_{\mathcal{Y}}$.
\end{proof}

\begin{corollary}
\label{eigen} Let $\mathcal{X},\mathcal{Y}$ and $T$ be as in Theorem
\emph{\ref{spectrclos}}. Then

\begin{itemize}
\item[$\mathrm{(i)}$] If $\lambda\neq0$ is an eigenvalue of $T$ then the
eigenspace $\{x\in\mathcal{X}:Tx=\lambda x\}$ is contained in $\mathcal{Y}$.

\item[$\mathrm{(ii)}$] If $\sigma_{0}$ is a clopen subset of the spectrum
$\sigma_{\mathcal{B}\left(  \mathcal{X}\right)  }(T)$ of $T$ and
$0\notin\sigma_{0}$ then the spectral subspace $E_{\sigma_{0}}(T)$ is
contained in $\mathcal{Y}$.
\end{itemize}
\end{corollary}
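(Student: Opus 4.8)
The plan is to realize both the eigenspaces and the spectral subspaces in question as closed subspaces of $\mathcal{X}$ on which $T$ acts surjectively, and then to invoke Theorem~\ref{spectrclos}. Since the corollary assumes $\mathcal{X},\mathcal{Y}$ and $T$ are exactly as in Theorem~\ref{spectrclos} (in particular the inclusion $(\ref{sc})$ holds), nothing further about $T$ itself needs to be checked: in each case it suffices to verify closedness of the relevant subspace in $\mathcal{X}$ and the identity $\mathcal{Z}=T\mathcal{Z}$, after which Theorem~\ref{spectrclos} delivers the conclusion (and even the equivalence of $\Vert\cdot\Vert_{\mathcal{X}}$ and $\Vert\cdot\Vert_{\mathcal{Y}}$ on that subspace).

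For part $(\mathrm{i})$, I would set $\mathcal{Z}=\{x\in\mathcal{X}:Tx=\lambda x\}=\ker(T-\lambda 1_{\mathcal{X}})$. As the kernel of a bounded operator on $\mathcal{X}$, $\mathcal{Z}$ is closed in $\mathcal{X}$. Since $Tz=\lambda z$ for every $z\in\mathcal{Z}$ and $\lambda\neq0$, we have $T\mathcal{Z}=\lambda\mathcal{Z}=\mathcal{Z}$. Theorem~\ref{spectrclos} then gives $\mathcal{Z}\subset\mathcal{Y}$ immediately. (It is worth handling $(\mathrm{i})$ directly rather than as a special case of $(\mathrm{ii})$, since an eigenvalue need not be an isolated point of the spectrum, so there need not be a Riesz projection attached to $\{\lambda\}$.)

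For part $(\mathrm{ii})$, I would take the Riesz projection $p=\frac{1}{2\pi i}\int_{\Gamma}(\zeta-T)^{-1}\,d\zeta$ associated with $\sigma_{0}$, where $\Gamma$ is a contour in the resolvent set of $T$ surrounding $\sigma_{0}$ and excluding $\sigma_{\mathcal{B}(\mathcal{X})}(T)\setminus\sigma_{0}$. Then $E_{\sigma_{0}}(T)=p\mathcal{X}$ is closed in $\mathcal{X}$, being the range of a bounded idempotent; it is invariant under $T$ because $p$, as a holomorphic function of $T$, commutes with $T$; and by the Riesz decomposition theorem the spectrum of the restriction $T|_{E_{\sigma_{0}}(T)}$ equals $\sigma_{0}$. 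Since $0\notin\sigma_{0}$, the operator $T|_{E_{\sigma_{0}}(T)}$ is invertible on $E_{\sigma_{0}}(T)$, hence surjective, so $T\,E_{\sigma_{0}}(T)=E_{\sigma_{0}}(T)$. Applying Theorem~\ref{spectrclos} with $\mathcal{Z}=E_{\sigma_{0}}(T)$ yields $E_{\sigma_{0}}(T)\subset\mathcal{Y}$.

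I do not expect a genuine obstacle here: all the substantive analysis (the series construction of $(\ref{ser})$ and the norm estimates) is already carried out inside Theorem~\ref{spectrclos}. The only points requiring routine care are the standard Banach-space facts that the range of a Riesz projection is a closed, $T$-invariant subspace carrying the corresponding clopen part of the spectrum, and the trivial observation that $T$ is surjective on an eigenspace with nonzero eigenvalue and on a spectral subspace not containing $0$.
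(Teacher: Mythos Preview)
Your proposal is correct and follows essentially the same approach as the paper: the paper's proof simply observes that these subspaces are closed in $\mathcal{X}$, invariant under $T$, and that the restriction of $T$ to each of them is invertible, after which Theorem~\ref{spectrclos} applies. Your write-up merely supplies the standard details behind that one-line observation.
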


\begin{proof}
Indeed, these subspaces are closed in $\mathcal{X}$, invariant under $T$, and
the restriction of $T$ to everyone of them is invertible.
\end{proof}

\subsection{Semicompact multiplication operators}

In this section we apply Theorem \ref{spectrclos} to semicompact
multiplication operators considering their action on an ordered pair of spaces
of nuclear and, respectively, bounded operators.

\subsubsection{Multiplication operators on an ordered pair of operator ideals}

First we estimate the norms of multiplication operators on an ordered pair of
components of Banach operator ideals.

Let $V=\mathcal{V}\left(  X,Y\right)  $ and $U=\mathcal{U}\left(  X,Y\right)
$, where $\mathcal{V}$ and $\mathcal{U}$ are Banach operator ideals. We assume
that $V\subset U$ and that $V$ is a Banach subspace of $U$. As $V$ is an
invariant subspace for the algebra $\widehat{\mathcal{B}}_{\ast}\left(
U\right)  $ of all multiplication operators on $U$, then the algebra
$\mathcal{B}\left(  \mathcal{X}||\mathcal{Y}\right)  $ contains
$\widehat{\mathcal{B}}_{\ast}\left(  U\right)  $ by Theorem \ref{pair1}.

\begin{lemma}
\label{maj}Let $V=\mathcal{V}\left(  X,Y\right)  $ and $U=\mathcal{U}\left(
X,Y\right)  $ for Banach operator ideals $\mathcal{V}$ and $\mathcal{U}$, and
$V\subset U$. Then $\left\Vert T|_{V}\right\Vert _{\widehat{\mathcal{B}}%
_{\ast}\left(  V\right)  }\leq\left\Vert T\right\Vert _{\widehat{\mathcal{B}%
}_{\ast}\left(  U\right)  }$ for every $T\in\widehat{\mathcal{B}}_{\ast
}\left(  U\right)  $.
\end{lemma}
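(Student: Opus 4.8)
The plan is to reduce everything to elementary operators and then pass to completions. Since $V=\mathcal{V}(X,Y)$ is a Banach operator bimodule and $V\subset U$, every operator $L_aR_b$ with $a\in\mathcal{B}(Y)$, $b\in\mathcal{B}(X)$ maps $V$ into $V$; hence each elementary operator $T\in\mathcal{E}_{\mathcal{B}(Y),\mathcal{B}(X)}(U)$ restricts to an elementary operator $T|_V$ on $V$. The key observation is that the coefficients of such an operator do not depend on the bimodule on which it acts: if $T=\sum_{i=1}^{n}L_{a_i}R_{b_i}$ as an operator on $U$, then the same identity holds on the subspace $V$, so this representation of $T$ on $U$ furnishes a representation of $T|_V$ on $V$ with identical coefficients $a_i,b_i$. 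Taking the infimum of $\sum_i\|a_i\|\,\|b_i\|$ over all representations — the infimum for $T|_V$ ranging over a (possibly) larger family — yields
\[
\|T|_V\|_{\widehat{\mathcal{B}}_{\ast}(V)}\leq\|T\|_{\widehat{\mathcal{B}}_{\ast}(U)}
\]
for every elementary operator $T$ on $U$.

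Next I would extend this to an arbitrary $T\in\widehat{\mathcal{B}}_{\ast}(U)$ by approximation. Choose elementary operators $T_n$ with $\|T_n-T\|_{\widehat{\mathcal{B}}_{\ast}(U)}\to0$. Applying the inequality just obtained to $T_n-T_m$ shows that $(T_n|_V)$ is a Cauchy sequence in $\widehat{\mathcal{B}}_{\ast}(V)$, hence converges to some $S\in\widehat{\mathcal{B}}_{\ast}(V)$, and
\[
\|S\|_{\widehat{\mathcal{B}}_{\ast}(V)}=\lim_n\|T_n|_V\|_{\widehat{\mathcal{B}}_{\ast}(V)}\leq\lim_n\|T_n\|_{\widehat{\mathcal{B}}_{\ast}(U)}=\|T\|_{\widehat{\mathcal{B}}_{\ast}(U)}.
\]

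The remaining step — the one demanding care — is to identify $S$ with $T|_V$. Here I would use that the norm on $\widehat{\mathcal{B}}_{\ast}(U)$ majorizes the operator norm on $\mathcal{B}(U)$, so $T_nx\to Tx$ in $U$ for each $x\in U$; likewise $T_ny\to Sy$ in $V$, and therefore in $U$ as well for each $y\in V$, using $\left\Vert\cdot\right\Vert_U\leq\left\Vert\cdot\right\Vert_V$ on $V$ together with $\widehat{\mathcal{B}}_{\ast}(V)\subset\mathcal{B}(V)$. Comparing the two limits in $U$ gives $Sy=Ty$ for all $y\in V$; thus $T$ maps $V$ into $V$, $T|_V=S\in\widehat{\mathcal{B}}_{\ast}(V)$, and the displayed estimate passes to $T$. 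Beyond this bookkeeping with the three norms ($\widehat{\mathcal{B}}_{\ast}(U)$, $\widehat{\mathcal{B}}_{\ast}(V)$, and the ambient operator norms) I expect no genuine obstacle, the essential content being the bimodule-independence of the coefficients.
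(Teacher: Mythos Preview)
Your proof is correct. The paper's argument is organised a bit differently: instead of first proving the inequality for elementary operators and then extending by approximation, it observes that both $\widehat{\mathcal{B}}_{\ast}(U)$ and $\widehat{\mathcal{B}}_{\ast}(V)$ are, by definition, quotients of the same Banach algebra $W=\mathcal{B}(Y)\widehat{\otimes}\mathcal{B}(X)^{\mathrm{op}}$ via the natural homomorphisms $\varphi:W\to\mathcal{B}(U)$ and $\psi:W\to\mathcal{B}(V)$, and then checks that $\ker\varphi\subset\ker\psi$ (since an operator vanishing on all of $U$ vanishes on $V\subset U$); the desired inequality of quotient norms follows at once. Your argument is really the same idea unpacked --- ``every representation of $T$ on $U$ is a representation of $T|_V$ on $V$'' is precisely the kernel inclusion at the elementary level --- but because you start from the dense subalgebra rather than from $W$ itself, you have to do the completion and the identification $S=T|_V$ by hand. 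That extra bookkeeping is the cost of the more concrete route; the paper's quotient-of-tensor-product formulation absorbs it into one line.
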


\begin{proof}
Let $W=\mathcal{B}\left(  Y\right)  \widehat{\otimes}\mathcal{B}\left(
X\right)  ^{\mathrm{op}}$ for brevity. Recall that the norms $\left\Vert
\cdot\right\Vert _{\widehat{\mathcal{B}}_{\ast}\left(  V\right)  }$ in
$\widehat{\mathcal{B}}_{\ast}\left(  V\right)  $ and $\left\Vert
\cdot\right\Vert _{\widehat{\mathcal{B}}_{\ast}\left(  U\right)  }$ in
$\widehat{\mathcal{B}}_{\ast}\left(  U\right)  $ are the quotient norms
inherited from $W/\ker\psi$ and $W/\ker\varphi$ respectively, where
$\psi:W\longrightarrow\mathcal{B}\left(  V\right)  $ and $\varphi
:W\longrightarrow\mathcal{B}\left(  U\right)  $ are bounded homomorphisms that
associate with every $a\otimes b\in W$ the operator $L_{a}R_{b}$.
Let $P=\varphi\left(  w\right)  $ and $S=\psi\left(  w\right)  $ for some
$w\in W$. If $\varphi\left(  w\right)  =0$ then $Px=0$ for every $x\in U$. In
particular, $Px=0$ for every $x\in V$ and%
\[
\psi\left(  w\right)  =S=P|_{V}=0.
\]
This shows that $\ker\varphi\subset\ker\psi$, and we are done.
\end{proof}

\begin{proposition}
\label{maj2}Let $V=\mathcal{V}\left(  X,Y\right)  $ and $U=\mathcal{U}\left(
X,Y\right)  $ for Banach operator ideals $\mathcal{V}$ and $\mathcal{U}$, and
$V\subset U$. Then $\left\Vert T\right\Vert _{\mathcal{B}\left(  U||V\right)
}\leq\left\Vert T\right\Vert _{\widehat{\mathcal{B}}_{\ast}\left(  U\right)
}$ for every $T\in\widehat{\mathcal{B}}_{\ast}\left(  U\right)  $.
\end{proposition}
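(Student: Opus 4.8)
The plan is to unwind the definition of the norm on $\mathcal{B}\left(  U||V\right)  $ and to estimate its two constituents separately. By Theorem \ref{pair1}, applied with $\mathcal{X}=U$ and $\mathcal{Y}=V$ (using that $V$ is a Banach subspace of $U$, as in the standing assumptions preceding Lemma \ref{maj}), one has, for any $T\in\widehat{\mathcal{B}}_{\ast}\left(  U\right)  $ that leaves $V$ invariant,
\[
\left\Vert T\right\Vert _{\mathcal{B}\left(  U||V\right)  }=\max\left\{
\left\Vert T\right\Vert _{\mathcal{B}(U)},\left\Vert T|_{V}\right\Vert
_{\mathcal{B}(V)}\right\}  .
\]
So it suffices to bound each of $\left\Vert T\right\Vert _{\mathcal{B}(U)}$ and $\left\Vert T|_{V}\right\Vert _{\mathcal{B}(V)}$ by $\left\Vert T\right\Vert _{\widehat{\mathcal{B}}_{\ast}\left(  U\right)  }$ and then take the maximum.

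For the first term, $\widehat{\mathcal{B}}_{\ast}\left(  U\right)  =\widehat{\mathcal{E}}_{\mathcal{B}(Y),\mathcal{B}(X)}(U)$ is a Banach subalgebra of $\mathcal{B}(U)$, which by definition means $\left\Vert \cdot\right\Vert _{\mathcal{B}(U)}\leq\left\Vert \cdot\right\Vert _{\widehat{\mathcal{B}}_{\ast}\left(  U\right)  }$, giving $\left\Vert T\right\Vert _{\mathcal{B}(U)}\leq\left\Vert T\right\Vert _{\widehat{\mathcal{B}}_{\ast}\left(  U\right)  }$ at once. For the second term, first note that every $T\in\widehat{\mathcal{B}}_{\ast}\left(  U\right)  $ does leave $V$ invariant: this is clear for elementary operators $\sum L_{a_{i}}R_{b_{i}}$ since $V=\mathcal{V}(X,Y)$ is the component of an operator ideal, and it passes to limits because $\left\Vert \cdot\right\Vert _{\widehat{\mathcal{B}}_{\ast}\left(  U\right)  }$ dominates $\left\Vert \cdot\right\Vert _{\mathcal{B}(U)}$. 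Then Lemma \ref{maj} yields $T|_{V}\in\widehat{\mathcal{B}}_{\ast}\left(  V\right)  $ with $\left\Vert T|_{V}\right\Vert _{\widehat{\mathcal{B}}_{\ast}\left(  V\right)  }\leq\left\Vert T\right\Vert _{\widehat{\mathcal{B}}_{\ast}\left(  U\right)  }$, and since $\widehat{\mathcal{B}}_{\ast}\left(  V\right)  $ is likewise a Banach subalgebra of $\mathcal{B}(V)$ we obtain $\left\Vert T|_{V}\right\Vert _{\mathcal{B}(V)}\leq\left\Vert T|_{V}\right\Vert _{\widehat{\mathcal{B}}_{\ast}\left(  V\right)  }\leq\left\Vert T\right\Vert _{\widehat{\mathcal{B}}_{\ast}\left(  U\right)  }$. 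Combining the two estimates finishes the argument.

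The only point that needs a little care is the identification, for a general (non-elementary) $T$, of the element ``$T|_{V}$'' produced by the construction in Lemma \ref{maj} --- an abstract element of $\widehat{\mathcal{B}}_{\ast}\left(  V\right)  $ arising from a quotient of $\mathcal{B}(Y)\widehat{\otimes}\mathcal{B}(X)^{\mathrm{op}}$ --- with the honest operator-theoretic restriction of $T$ to the subspace $V\subset U$. I would settle this by approximating $T$ in $\left\Vert \cdot\right\Vert _{\widehat{\mathcal{B}}_{\ast}\left(  U\right)  }$ by elementary operators $T_{n}$, for which the two meanings literally coincide, observing that $\{T_{n}|_{V}\}$ is then Cauchy in $\mathcal{B}(V)$ (apply the displayed inequality to the differences $T_{n}-T_{m}$), and checking on vectors of $V$ that the resulting limit equals $Tx$, using once more that $\left\Vert \cdot\right\Vert _{\widehat{\mathcal{B}}_{\ast}\left(  U\right)  }$ controls $\left\Vert \cdot\right\Vert _{\mathcal{B}(U)}$. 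This is routine, so I do not expect a genuine obstacle; indeed, if one grants the remark preceding Lemma \ref{maj} that $\widehat{\mathcal{B}}_{\ast}\left(  U\right)  \subset\mathcal{B}\left(  U||V\right)  $, what remains is exactly the two-line assembly above.
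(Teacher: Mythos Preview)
Your proof is correct and follows essentially the same route as the paper: bound $\Vert T\Vert_{\mathcal{B}(U)}$ by the fact that $\widehat{\mathcal{B}}_{\ast}(U)$ is a Banach subalgebra of $\mathcal{B}(U)$, bound $\Vert T|_{V}\Vert_{\mathcal{B}(V)}$ via Lemma~\ref{maj} together with $\widehat{\mathcal{B}}_{\ast}(V)\subset\mathcal{B}(V)$, and take the maximum. The paper's version is just a terser one-line assembly of these same two estimates; your additional remarks on invariance of $V$ and the identification of $T|_{V}$ merely make explicit what the paper takes for granted.
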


\begin{proof}
Indeed, as $\left\Vert \cdot\right\Vert _{\widehat{\mathcal{B}}_{\ast}\left(
U\right)  }$ majorizes the norm $\left\Vert \cdot\right\Vert _{\mathcal{B}%
\left(  U\right)  }$ on $\widehat{\mathcal{B}}_{\ast}\left(  U\right)  $, we
obtain from Lemma \ref{maj} that
\[
\left\Vert T\right\Vert _{\mathcal{B}\left(  U||V\right)  }=\max\left\{
\left\Vert T\right\Vert _{\mathcal{B}\left(  U\right)  },\left\Vert
T|_{V}\right\Vert _{\mathcal{B}\left(  V\right)  }\right\}  \leq\left\Vert
T\right\Vert _{\widehat{\mathcal{B}}_{\ast}\left(  U\right)  }%
\]
for every $T\in\widehat{\mathcal{B}}_{\ast}\left(  U\right)  $.
\end{proof}

\subsubsection{Applications to semicompact multiplication operators}


Let $X,Y$ be arbitrary Banach spaces. Let $\mathcal{X}=\mathcal{B}\left(
X,Y\right)  $ and $\mathcal{Y}=\mathcal{N}\left(  X,Y\right)  $, the space of
all nuclear operators $X\longrightarrow Y$. It is clear that $\mathcal{Y}$ is
a Banach subspace of $\mathcal{X}$. Also, $\mathcal{X}$ and $\mathcal{Y}$ are
Banach operator bimodules over the algebras $\mathcal{B}(X)$ and
$\mathcal{B}(Y)$, so the algebra $\mathcal{B}\left(  \mathcal{X}%
||\mathcal{Y}\right)  $ contains the algebra $\widehat{\mathcal{B}}_{\ast
}\left(  \mathcal{X}\right)  $ of all multiplication operators
\[
T=\sum_{i}L_{a_{i}}R_{b_{i}}\text{ with }\sum_{i}\Vert a_{i}\Vert\Vert
b_{i}\Vert<\infty
\]
where $a_{i}\in\mathcal{B}\left(  Y\right)  $, $b_{i}\in\mathcal{B}\left(
X\right)  .$ Recall that a multiplication operator $T$ is called
\textit{semicompact} if it can be written in the form
\begin{equation}
T=\sum_{i}L_{a_{i}}R_{t_{i}}+\sum_{j}L_{s_{j}}R_{b_{j}}, \label{semicompop}%
\end{equation}
where all $a_{i}$ and $b_{j}$ are compact operators, and
\[
\sum_{i}\Vert a_{i}\Vert\Vert t_{i}\Vert+\sum_{j}\Vert s_{j}\Vert\Vert
b_{j}\Vert<\infty.
\]
Also, an elementary operator $T$ is called\textit{ semifinite } if it can be
written in the form (\ref{semicompop}) with $a_{i}$ and $b_{j}$ of finite
rank. The algebras of all semicompact multiplication operators on
$\mathcal{X}$ and all semifinite elementary operators on $\mathcal{X}$ are
denoted by $\widehat{\mathcal{K}}_{\frac{1}{2}}(\mathcal{X})$ and
$\mathcal{F}_{\frac{1}{2}}(\mathcal{X})$, respectively.

In particular, from above we have that $\widehat{\mathcal{K}}_{\frac{1}{2}%
}(\mathcal{X})\subset\mathcal{B}\left(  \mathcal{X}||\mathcal{Y}\right)  .$

\begin{theorem}
\label{cond}Let $\mathcal{X}=\mathcal{B}\left(  X,Y\right)  $ and
$\mathcal{Y}=\mathcal{N}\left(  X,Y\right)  $. Then
\begin{equation}
\widehat{\mathcal{K}}_{\frac{1}{2}}(\mathcal{X})\subset Q_{\mathcal{B}\left(
\mathcal{X},\mathcal{Y}\right)  }\left(  \mathcal{B}\left(  \mathcal{X}%
||\mathcal{Y}\right)  \right)  . \label{fq2}%
\end{equation}

\end{theorem}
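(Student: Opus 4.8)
The plan is to reduce \eqref{fq2} to the tensor-radicality statement already available in Corollary~\ref{semi} (or, more precisely, to Corollary~\ref{QQ}), transported along a bounded homomorphism onto $\mathcal{B}(\mathcal{X}||\mathcal{Y})$. First I would observe that $\mathcal{X}=\mathcal{B}(X,Y)$ and $\mathcal{Y}=\mathcal{N}(X,Y)$ are Banach operator bimodules over $\mathcal{B}(Y),\mathcal{B}(X)$, with $\mathcal{Y}$ a Banach subbimodule of $\mathcal{X}$. Recall that $\mathcal{N}(X,Y)=X^{\ast}\widehat{\otimes}_{\gamma}Y$; the key structural fact I would use is the classical identification $\mathcal{B}(\mathcal{X},\mathcal{Y})=\mathcal{B}(\mathcal{B}(X,Y),\mathcal{N}(X,Y))$ together with the fact that for $a\in\mathcal{K}(Y)$, $b\in\mathcal{B}(X)$ (or $a\in\mathcal{B}(Y)$, $b\in\mathcal{K}(X)$), the multiplication operator $L_aR_b$ maps $\mathcal{B}(X,Y)$ \emph{into} $\mathcal{N}(X,Y)$, because a rank-one operator composed with a compact one on the appropriate side is nuclear (here one uses that $\mathcal{K}(Y)\subset\mathcal{N}(Y)\cdot\mathcal{N}(Y)$ fails in general, so the correct statement is: $L_aR_b(\mathcal{B}(X,Y))\subset\mathcal{Y}$ when $a$ or $b$ is of finite rank, and by closure/density one handles the compact case). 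Thus $\mathcal{F}_{\frac12}(\mathcal{X})$ consists of operators sending $\mathcal{X}$ into $\mathcal{Y}$, hence by Theorem~\ref{pair2} lies inside $\mathcal{B}(\mathcal{X},\mathcal{Y})$.

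Next I would set up the transfer. Write $W=\mathcal{B}(Y)\widehat{\otimes}\mathcal{B}(X)^{\mathrm{op}}$ and let $\varphi:W\to\mathcal{B}(\mathcal{X})$ be the natural homomorphism $a\otimes b\mapsto L_aR_b$, whose image, equipped with the quotient norm, is $\widehat{\mathcal{B}}_{\ast}(\mathcal{X})$; by Proposition~\ref{maj2} (with $U=\mathcal{X}$, $V=\mathcal{Y}$) this image actually lands in $\mathcal{B}(\mathcal{X}||\mathcal{Y})$ with $\|T\|_{\mathcal{B}(\mathcal{X}||\mathcal{Y})}\le\|T\|_{\widehat{\mathcal{B}}_{\ast}(\mathcal{X})}$. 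Under $\varphi$, the ideal $\widehat{\mathcal{E}}_{\mathcal{B}(Y),\mathcal{K}(X)}(\mathcal{X})+\widehat{\mathcal{E}}_{\mathcal{K}(Y),\mathcal{B}(X)}(\mathcal{X})=\widehat{\mathcal{K}}_{\frac12}(\mathcal{X})$ is the image of $\mathcal{B}(Y)\widehat{\otimes}^{(\cdot)}\mathcal{K}(X)^{\mathrm{op}}+\mathcal{K}(Y)\widehat{\otimes}^{(\cdot)}\mathcal{B}(X)^{\mathrm{op}}$ and $\mathcal{F}_{\frac12}(\mathcal{X})$ is the image of $\mathcal{B}(Y)\otimes\mathcal{F}(X)^{\mathrm{op}}+\mathcal{F}(Y)\otimes\mathcal{B}(X)^{\mathrm{op}}$. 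Corollary~\ref{QQ} gives $\widehat{\mathcal{K}}_{\frac12}(\mathcal{X})\subset Q_{\mathcal{F}_{\frac12}(\mathcal{X})}(\widehat{\mathcal{B}}_{\ast}(\mathcal{X}))$, i.e. for $T\in\widehat{\mathcal{K}}_{\frac12}(\mathcal{X})$ and each $\varepsilon>0$ there are $m$ and semifinite elementary $S_n$ with $\|T^n-S_n\|_{\widehat{\mathcal{B}}_{\ast}(\mathcal{X})}<\varepsilon^n$ for $n>m$.

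Now I would conclude. Fix $T\in\widehat{\mathcal{K}}_{\frac12}(\mathcal{X})$ and $\varepsilon>0$; pick $m$ and $S_n\in\mathcal{F}_{\frac12}(\mathcal{X})$ as above. Since $\mathcal{F}_{\frac12}(\mathcal{X})\subset\mathcal{B}(\mathcal{X},\mathcal{Y})$ (by the first paragraph) and, by Proposition~\ref{maj2}, the norm $\|\cdot\|_{\widehat{\mathcal{B}}_{\ast}(\mathcal{X})}$ majorizes $\|\cdot\|_{\mathcal{B}(\mathcal{X}||\mathcal{Y})}$ on the relevant algebra, we get, for $n>m$,
\[
\mathrm{dist}_{\mathcal{B}(\mathcal{X}||\mathcal{Y})}\bigl(T^n,\mathcal{B}(\mathcal{X},\mathcal{Y})\bigr)\le\|T^n-S_n\|_{\mathcal{B}(\mathcal{X}||\mathcal{Y})}\le\|T^n-S_n\|_{\widehat{\mathcal{B}}_{\ast}(\mathcal{X})}<\varepsilon^n.
\]
By Proposition~\ref{predv1}(iii) applied to the Banach algebra $\mathcal{B}(\mathcal{X}||\mathcal{Y})$ and its (flexible, hence genuine) ideal $\mathcal{B}(\mathcal{X},\mathcal{Y})$ — the latter being an ideal by Proposition~\ref{idBinom} — this says exactly $T\in Q_{\mathcal{B}(\mathcal{X},\mathcal{Y})}(\mathcal{B}(\mathcal{X}||\mathcal{Y}))$, which is \eqref{fq2}. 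The main obstacle I anticipate is the bookkeeping in the first paragraph: verifying cleanly that semifinite elementary operators genuinely map $\mathcal{B}(X,Y)$ into $\mathcal{N}(X,Y)$ (so that they lie in $\mathcal{B}(\mathcal{X},\mathcal{Y})$ via Theorem~\ref{pair2}), and that the quotient norm $\|\cdot\|_{\widehat{\mathcal{K}}_{\frac12}}$ — used in Corollary~\ref{QQ} — dominates the $\mathcal{B}(\mathcal{X}||\mathcal{Y})$-norm. Both follow from the results already in the excerpt (Lemma~\ref{maj}, Proposition~\ref{maj2}, the rank-one computation in the proof of the trace proposition), but stitching them into one inequality chain is where the care is needed.
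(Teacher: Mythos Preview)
Your proposal is correct and follows essentially the same route as the paper: invoke Corollary~\ref{QQ} to get $\widehat{\mathcal{K}}_{\frac12}(\mathcal{X})\subset Q_{\mathcal{F}_{\frac12}(\mathcal{X})}(\widehat{\mathcal{B}}_{\ast}(\mathcal{X}))$, use Proposition~\ref{maj2} to pass from the $\widehat{\mathcal{B}}_{\ast}(\mathcal{X})$-norm to the $\mathcal{B}(\mathcal{X}||\mathcal{Y})$-norm, and then use $\mathcal{F}_{\frac12}(\mathcal{X})\subset\mathcal{B}(\mathcal{X},\mathcal{Y})$ (via Theorem~\ref{pair2}) to enlarge the ideal. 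The only minor remark is that your first paragraph overcomplicates the verification that semifinite elementary operators land in $\mathcal{N}(X,Y)$: if $a$ or $b$ has finite rank then $axb$ has finite rank for every $x\in\mathcal{B}(X,Y)$, hence is nuclear --- no density or closure argument is needed, and the digression about compact coefficients is irrelevant here.
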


\begin{proof}
By Corollary \ref{QQ}, we have that
\begin{equation}
\widehat{\mathcal{K}}_{\frac{1}{2}}(\mathcal{X})\subset Q_{\mathcal{F}%
_{\frac{1}{2}}(\mathcal{X})}\left(  \widehat{\mathcal{B}}_{\ast}\left(
\mathcal{X}\right)  \right)  . \label{fq3}%
\end{equation}
As $\left\Vert \cdot\right\Vert _{\mathcal{B}\left(  \mathcal{X}%
||\mathcal{Y}\right)  }\leq\left\Vert \cdot\right\Vert _{\widehat{\mathcal{B}%
}_{\ast}\left(  \mathcal{X}\right)  }$ on $\widehat{\mathcal{B}}_{\ast}\left(
\mathcal{X}\right)  $ by Proposition \ref{maj2}, it follows that
\begin{equation}
Q_{\mathcal{F}_{\frac{1}{2}}(\mathcal{X})}\left(  \widehat{\mathcal{B}}_{\ast
}\left(  \mathcal{X}\right)  \right)  \subset Q_{\mathcal{F}_{\frac{1}{2}%
}(\mathcal{X})}\left(  \mathcal{B}\left(  \mathcal{X}||\mathcal{Y}\right)
\right)  . \label{fq5}%
\end{equation}
Since $S\mathcal{X}\subset\mathcal{Y}$ for every $S\in\mathcal{F}_{\frac{1}%
{2}}(\mathcal{X})$, we have that%
\[
\mathcal{F}_{\frac{1}{2}}(\mathcal{X})\subset\mathcal{B}(\mathcal{X}%
,\mathcal{Y})
\]
by Theorem \ref{pair2}. Therefore, we obtain that
\begin{equation}
Q_{\mathcal{F}_{\frac{1}{2}}(\mathcal{X})}\left(  \mathcal{B}\left(
\mathcal{X}||\mathcal{Y}\right)  \right)  \subset Q_{\mathcal{B}\left(
\mathcal{X},\mathcal{Y}\right)  }\left(  \mathcal{B}\left(  \mathcal{X}%
||\mathcal{Y}\right)  \right)  , \label{fq6}%
\end{equation}
and (\ref{fq2}) follows from (\ref{fq3}), (\ref{fq5}) and (\ref{fq6}).
\end{proof}

Now we are able to apply Theorem \ref{spectrclos} to obtain the following

\begin{theorem}
\label{space} Let $T$ be a semicompact multiplication operator on
$\mathcal{B}(X,Y)$. Suppose that a closed subspace $\mathfrak{\mathcal{Z}}$ of
$\mathcal{B}(X,Y)$ is invariant for $T$ and that $T$ is surjective on
$\mathfrak{\mathcal{Z}}$. Then $\mathfrak{\mathcal{Z}}$ consists of nuclear
operators, and the usual operator norm is equivalent to the nuclear norm on
$\mathfrak{\mathcal{Z}}$.

In particular, all eigenspaces of $T$ corresponding to non-zero eigenvalues
and all spectral subspaces of $T$ corresponding to clopen subsets of
$\sigma\left(  T\right)  $ non-containing $0$ consist of nuclear operators.
\end{theorem}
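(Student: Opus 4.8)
The plan is to read the theorem off from Theorem \ref{spectrclos} after checking that a semicompact multiplication operator satisfies its hypotheses. Put $\mathcal{X}=\mathcal{B}(X,Y)$ and $\mathcal{Y}=\mathcal{N}(X,Y)$; as the nuclear norm dominates the operator norm, $\mathcal{Y}$ is a Banach subspace of $\mathcal{X}$ and $(\mathcal{Y},\mathcal{X})$ is an ordered pair of Banach spaces. Since $T$ is a semicompact multiplication operator, $T\in\widehat{\mathcal{K}}_{\frac{1}{2}}(\mathcal{X})$, so Theorem \ref{cond} yields $T\in Q_{\mathcal{B}(\mathcal{X},\mathcal{Y})}\left(\mathcal{B}\left(\mathcal{X}||\mathcal{Y}\right)\right)$, which is precisely condition (\ref{sc}). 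Note that here all the real work is already done upstream: Theorem \ref{cond} packages the tensor-radicality of $\widehat{\mathcal{K}}_{\frac{1}{2}}(\mathcal{X})/\overline{\mathcal{F}_{\frac{1}{2}}(\mathcal{X})}$ (Corollary \ref{QQ}) together with the norm comparison $\left\Vert\cdot\right\Vert_{\mathcal{B}\left(\mathcal{X}||\mathcal{Y}\right)}\leq\left\Vert\cdot\right\Vert_{\widehat{\mathcal{B}}_{\ast}(\mathcal{X})}$ (Proposition \ref{maj2}) and the inclusion $\mathcal{F}_{\frac{1}{2}}(\mathcal{X})\subset\mathcal{B}(\mathcal{X},\mathcal{Y})$ (Theorem \ref{pair2}).

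Next I would apply Theorem \ref{spectrclos} with $\mathcal{Z}$ taken to be the given invariant subspace $\mathfrak{\mathcal{Z}}$. It is closed in $\mathcal{X}$ and invariant for $T$, so $T\mathcal{Z}\subset\mathcal{Z}$; the hypothesis that $T$ is surjective on $\mathcal{Z}$ upgrades this to $T\mathcal{Z}=\mathcal{Z}$, i.e.\ $\mathcal{Z}=T\mathcal{Z}$. The conclusion of Theorem \ref{spectrclos} then gives $\mathcal{Z}\subset\mathcal{Y}=\mathcal{N}(X,Y)$, so every element of $\mathcal{Z}$ is nuclear, and asserts that $\left\Vert\cdot\right\Vert_{\mathcal{X}}$ (the operator norm) and $\left\Vert\cdot\right\Vert_{\mathcal{Y}}$ (the nuclear norm) are equivalent on $\mathcal{Z}$. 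That is the main statement.

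For the last sentence I would invoke Corollary \ref{eigen}: an eigenspace $\{x:Tx=\lambda x\}$ with $\lambda\neq0$ is closed, $T$-invariant, and $T$ acts on it as multiplication by $\lambda$, hence invertibly (in particular surjectively); a spectral subspace $E_{\sigma_{0}}(T)$ for a clopen $\sigma_{0}\subset\sigma(T)$ with $0\notin\sigma_{0}$ is closed, $T$-invariant, and the restriction of $T$ to it has spectrum $\sigma_{0}$, hence is invertible. Either way the main statement applies, yielding nuclearity and norm equivalence. I do not anticipate a genuine obstacle here: the analytic core is entirely contained in the proof of Theorem \ref{spectrclos} (the telescoping-series estimate), and the only point needing a line of care is the identification of $\widehat{\mathcal{K}}_{\frac{1}{2}}(\mathcal{X})$ and $\mathcal{F}_{\frac{1}{2}}(\mathcal{X})$ as defined in this subsection with $\widehat{\mathcal{K}}_{\frac{1}{2}}(U)$ and $\mathcal{F}_{\frac{1}{2}}(U)$ for the bimodule $U=\mathcal{B}(X,Y)$ in the sense of Section \ref{s4}, which is exactly what makes Corollary \ref{QQ}, and through it Theorem \ref{cond}, directly applicable.
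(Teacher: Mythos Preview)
Your proposal is correct and follows exactly the paper's approach: the paper simply states that Theorem \ref{spectrclos} can now be applied (via Theorem \ref{cond}) to obtain Theorem \ref{space}, and Corollary \ref{eigen} handles the particular cases. There is nothing to add.
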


The following result holds for integral semicompact operators by Proposition
\ref{intsemi}, Theorems \ref{intSemCom} and \ref{space}.

\begin{theorem}
\label{spaceIntegral} Let $T_{a,b,s,t}$ be an integral semicompact operator on
$\mathcal{X}=\mathcal{B}(X,Y)$ in the conditions of Proposition $\ref{intsemi}%
$ or Theorem $\ref{intSemCom}$. Then all invariant subspaces of $T_{a,b,s,t}$
on which it is surjective consist of nuclear operators. In particular, each
solution $x$ of the equation
\[
T_{a,b,s,t}x=\lambda x
\]
where $\lambda\neq0$, is a nuclear operator.
\end{theorem}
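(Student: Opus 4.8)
The plan is to deduce this statement directly from the two preceding results. The first step is to observe that, under either hypothesis, the operator $T_{a,b,s,t}$ is actually a \emph{semicompact multiplication operator} on $\mathcal{X}=\mathcal{B}(X,Y)$, that is $T_{a,b,s,t}\in\widehat{\mathcal{K}}_{\frac{1}{2}}(\mathcal{X})$. In the reflexive case this is Proposition \ref{intsemi}: each of the two integral terms $\int_{I}a(\omega)x\,t(\omega)\,d\mu$ and $\int_{I}s(\omega)x\,b(\omega)\,d\mu$ is shown there to lie in one of the two Banach ideals $\widehat{\mathcal{E}}_{\mathcal{K}(Y),\mathcal{B}(X)}(\mathcal{X})$, $\widehat{\mathcal{E}}_{\mathcal{B}(Y),\mathcal{K}(X)}(\mathcal{X})$ whose flexible sum is $\widehat{\mathcal{K}}_{\frac{1}{2}}(\mathcal{X})$; in the case of a regular measure on an interval the same membership is exactly Theorem \ref{intSemCom}.

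Having placed $T_{a,b,s,t}$ in $\widehat{\mathcal{K}}_{\frac{1}{2}}(\mathcal{X})$, I would then apply Theorem \ref{space} verbatim with $T=T_{a,b,s,t}$: if $\mathcal{Z}$ is a closed subspace of $\mathcal{B}(X,Y)$ invariant under $T_{a,b,s,t}$ and such that $T_{a,b,s,t}\mathcal{Z}=\mathcal{Z}$, then $\mathcal{Z}\subset\mathcal{N}(X,Y)$ and the usual operator norm is equivalent to the nuclear norm on $\mathcal{Z}$. This is the first assertion. For the ``in particular'' clause, given $\lambda\neq0$ consider the eigenspace $\mathcal{Z}_{\lambda}=\ker(T_{a,b,s,t}-\lambda\,1_{\mathcal{X}})$. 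Since $T_{a,b,s,t}$ is bounded, $\mathcal{Z}_{\lambda}$ is a closed subspace of $\mathcal{B}(X,Y)$; it is obviously invariant under $T_{a,b,s,t}$, and the restriction of $T_{a,b,s,t}$ to $\mathcal{Z}_{\lambda}$ equals $\lambda$ times the identity, hence is surjective because $\lambda\neq0$. Thus $\mathcal{Z}_{\lambda}\subset\mathcal{N}(X,Y)$ by the first part, and every solution $x$ of $T_{a,b,s,t}x=\lambda x$ is a nuclear operator.

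I do not expect a genuine obstacle here, since the substance is carried by Theorems \ref{intSemCom} and \ref{space}. The only point deserving care is to invoke the correct membership result for the hypothesis actually assumed --- reflexivity of $\mathcal{B}(X,Y)$ for Proposition \ref{intsemi}, or the regular-measure-on-an-interval (and continuity) setup for Theorem \ref{intSemCom} --- because outside such hypotheses it is not even clear that the Bochner integral $\int_{I}a(\omega)x\,b(\omega)\,d\mu$ defines an element of the completed multiplication algebra $\widehat{\mathcal{E}}_{\mathcal{B}(Y),\mathcal{B}(X)}(\mathcal{X})$ rather than merely a bounded operator on $\mathcal{B}(X,Y)$, and Theorem \ref{space} requires the former.
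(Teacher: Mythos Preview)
Your proposal is correct and matches the paper's own argument: the paper simply states that the result holds for integral semicompact operators by Proposition~\ref{intsemi}, Theorems~\ref{intSemCom} and~\ref{space}, which is exactly the deduction you spell out. Your added care about which hypothesis is in force (reflexivity versus the regular-measure-on-an-interval setup) is appropriate, and your handling of the eigenspace case is the standard unpacking of the ``in particular'' clause.
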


We may apply previous results to matrix multiplication operators (see Section
\ref{s432}).

\begin{corollary}
\label{matr} Let a matrix $(T_{pq})_{p,q=1}^{n}$ consist of semicompact
multiplication operators and let $T$ be the matrix multiplication operator
defined by this matrix. Then the spectral subspaces of $T$ that correspond to
clopen subsets of $\sigma(T)$ non-containing $0$, consist of $n$-tuples of
nuclear operators.
\end{corollary}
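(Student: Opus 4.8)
The plan is to reduce Corollary \ref{matr} to Theorem \ref{space} applied to a suitably constructed operator bimodule, using the matrix-algebra identifications from Theorem \ref{MatQ}. First I would observe that the tuple space $U^{(n)}=\mathcal{B}(X,Y)^{(n)}$ can be identified with the operator bimodule $\mathcal{B}(X^{(n)},Y^{(n)})$ (writing an $n$-tuple of operators $X\to Y$ as a single operator $X^{(n)}\to Y^{(n)}$ supported on one block-column, or more symmetrically using the natural $(\mathbb{M}_n(\mathcal{B}(Y)),\mathbb{M}_n(\mathcal{B}(X)))$-bimodule structure). Under this identification, a matrix $(T_{pq})$ of multiplication operators on $\mathcal{B}(X,Y)$ becomes a single multiplication operator $T$ on the operator bimodule $V:=\mathcal{B}(X^{(n)},Y^{(n)})$ with coefficients that are $n\times n$ operator matrices; and when each $T_{pq}$ is semicompact, the resulting coefficient matrices have at least one semicompact factor in each summand, so $T\in\widehat{\mathcal{K}}_{\frac12}(V)$. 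This is essentially the content of the algebra isomorphisms $\mathbb{M}_n(\widehat{\mathcal{K}}_{\frac12}(U))\cong\mathbb{M}_n\otimes_\gamma\widehat{\mathcal{K}}_{\frac12}(U)$ used in the proof of Theorem \ref{MatQ}, now read as an identification of multiplication-operator algebras on $U^{(n)}$ versus on $V$.

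Next I would apply Theorem \ref{space} directly to $T$ acting on the Banach operator bimodule $V=\mathcal{B}(X^{(n)},Y^{(n)})$: if $\mathcal{Z}\subset V$ is a closed subspace invariant under $T$ on which $T$ is surjective, then $\mathcal{Z}\subset\mathcal{N}(X^{(n)},Y^{(n)})$ and the nuclear norm is equivalent to the operator norm on $\mathcal{Z}$. In the particular case of interest, taking $\mathcal{Z}=E_{\sigma_0}(T)$ for a clopen $\sigma_0\subset\sigma(T)$ with $0\notin\sigma_0$, the restriction of $T$ to $E_{\sigma_0}(T)$ is invertible, hence surjective, so $E_{\sigma_0}(T)$ consists of nuclear operators $X^{(n)}\to Y^{(n)}$. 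Finally I would translate "nuclear operator on $X^{(n)}\to Y^{(n)}$" back into the tuple picture: an operator $X^{(n)}\to Y^{(n)}$ is nuclear if and only if each of its $n^2$ matrix entries $X\to Y$ is nuclear (nuclearity passes to and is built up from components of direct sums of Banach spaces), so a nuclear element of $V$ corresponds precisely to an $n$-tuple of nuclear operators in $\mathcal{B}(X,Y)$. This gives exactly the assertion that $E_{\sigma_0}(T)$ consists of $n$-tuples of nuclear operators.

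The main obstacle I anticipate is the bookkeeping in the identification step: one must be careful that the bimodule norm on $U^{(n)}=\mathcal{B}(X,Y)^{(n)}$ (coming from Section \ref{s432}, where $\mathbb{M}_n(\widehat{\mathcal{K}}_{\frac12}(U))$ is given the projective-tensor norm $\mathbb{M}_n\otimes_\gamma\widehat{\mathcal{K}}_{\frac12}(U)$) matches, up to equivalence, the operator-bimodule norm on $\mathcal{B}(X^{(n)},Y^{(n)})$, and likewise that the spectrum $\sigma(T)$ computed in $\mathcal{B}(U^{(n)})$ agrees with the one relevant to Theorem \ref{space}. Since all norms involved are equivalent to the genuine operator norm on $U^{(n)}$ and $\mathbb{M}_n$ is finite-dimensional, these equivalences are routine, but they are what make the passage between "a matrix of multiplication operators on $U$" and "a single multiplication operator on $V$" legitimate; once that is in place, Theorem \ref{space} does all the real work and the remaining steps are purely formal. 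Alternatively, one can avoid the single-bimodule reformulation and instead run the argument of Theorem \ref{space} verbatim for the ordered pair $\bigl(\mathcal{N}(X,Y)^{(n)},\mathcal{B}(X,Y)^{(n)}\bigr)$, invoking Theorem \ref{MatQ} in place of Corollary \ref{semi} to get $\mathbb{M}_n(\widehat{\mathcal{K}}_{\frac12}(U))\subset Q_{\overline{\mathbb{M}_n(\mathcal{F}_{\frac12}(U))}}(\cdot)$ and then applying Theorem \ref{spectrclos}; this is the approach I would actually write out, as it keeps the notational overhead minimal.
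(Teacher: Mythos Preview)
Your second (preferred) approach is exactly the paper's: set $\mathcal{X}=\mathcal{B}(X,Y)^{(n)}$, $\mathcal{Y}=\mathcal{N}(X,Y)^{(n)}$, observe $\mathbb{M}_n(\widehat{\mathcal{K}}_{\frac12}(U))\subset\mathcal{B}(\mathcal{X}||\mathcal{Y})$ and $\mathbb{M}_n(\mathcal{F}_{\frac12}(U))\subset\mathcal{B}(\mathcal{X},\mathcal{Y})$, use Theorem~\ref{MatQ} together with the argument of Theorem~\ref{cond} to get $\mathbb{M}_n(\widehat{\mathcal{K}}_{\frac12}(U))\subset Q_{\mathcal{B}(\mathcal{X},\mathcal{Y})}(\mathcal{B}(\mathcal{X}||\mathcal{Y}))$, and then apply Theorem~\ref{spectrclos}.

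One small remark on your first approach: the identification of $U^{(n)}$ with $\mathcal{B}(X^{(n)},Y^{(n)})$ is not quite right, since the latter has $n^2$ blocks rather than $n$; embedding as ``one block-column'' lands you in a proper subspace that is not a full operator bimodule, so Theorem~\ref{space} does not apply directly. The clean fix is to identify $U^{(n)}$ with $\mathcal{B}(X,Y^{(n)})$ instead: then each $L_{a}R_{b}$ occurring in $T_{ij}$ becomes $L_{aE_{ij}}R_{b}$ with $aE_{ij}\in\mathcal{B}(Y^{(n)})$, and semicompactness is preserved block by block, so Theorem~\ref{space} applies verbatim. Either way you end up with the same conclusion, and since you already flagged the bookkeeping and chose the ordered-pair route, your write-up will match the paper's proof.
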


\begin{proof}
Let $\mathcal{X}=\mathcal{B}(X,Y)^{\left(  n\right)  }$ (the direct sum of $n$
copies of $\mathcal{B}(X,Y)$), $\mathcal{Y}=\mathcal{N}(X,Y)^{\left(
n\right)  }$ and $U=\mathcal{B}(X,Y)$. Then it is easy to see that
\[
\mathbb{M}_{n}\left(  \widehat{\mathcal{K}}_{\frac{1}{2}}(U)\right)
\subset\mathbb{M}_{n}\left(  \widehat{\mathcal{B}}_{\ast}(U)\right)
\subset\mathcal{B}\left(  \mathcal{X}||\mathcal{Y}\right)  \text{ and
}\mathbb{M}_{n}\left(  \mathcal{F}_{\frac{1}{2}}(U)\right)  \subset
\mathcal{B}(\mathcal{X},\mathcal{Y}).
\]
Now a similar argument as in Theorem \ref{cond} shows that
\[
\mathbb{M}_{n}(\widehat{\mathcal{K}}_{\frac{1}{2}}(U))\subset Q_{\mathcal{B}%
(\mathcal{X},\mathcal{Y})}\left(  \mathcal{B}\left(  \mathcal{X}%
||\mathcal{Y}\right)  \right)  ,
\]
and it remains to apply Theorem \ref{spectrclos}.
\end{proof}

\subsection{Semicompact elementary operators\label{s6dob}}

Let $X,Y$ be Banach spaces. Assume now that $T$ is an elementary operator on
$\mathcal{B}(X,Y)$:
\[
T=\sum_{i=1}^{n}L_{a_{i}}R_{x_{i}}+\sum_{j=1}^{k}L_{y_{j}}R_{b_{j}},
\]
where all $x_{i}\in\mathcal{B}\left(  X\right)  $, $y_{j}\in\mathcal{B}(Y)$,
$a_{i}\in\mathcal{K}(Y)$, $b_{j}\in\mathcal{K}(X)$. According to our
terminology, $T$ is a \textit{semicompact elementary operator}.

Our aim is to show that the statement of Theorem \ref{space} in this case can
be considerably strengthened: invariant subspaces on which $T$ is surjective
are contained in the component $\mathfrak{J}\left(  X,Y\right)  $ of each
quasi-Banach operator ideal $\mathfrak{J}$. In this situation the approach
based on the tensor products of Banach algebras and the tensor spectral radius
theory is not directly applicable and for the proof that some power of $T$ is
close (in a proper sense) to a semifinite elementary operator, we use the
arguments based on the analysis of triangularizable sets of compact operators.

\subsubsection{Quasi-Banach operator ideals}

Recall that \textit{a quasinorm} on a linear space $\mathcal{L}$ is a map
$\left\Vert \cdot\right\Vert _{\mathcal{L}}:\mathcal{L}\rightarrow
\mathbb{R}^{+}$ satisfying the conditions
\begin{align}
\left\Vert x+y\right\Vert _{\mathcal{L}}  &  \leq t_{\mathcal{L}}(\left\Vert
x\right\Vert _{\mathcal{L}}+\left\Vert y\right\Vert _{\mathcal{L}})\text{ for
all }x,y\in\mathcal{L}\text{ and some }t_{\mathcal{L}}\geq1,\label{qn}\\
\left\Vert \lambda x\right\Vert _{\mathcal{L}}  &  =|\lambda|\left\Vert
x\right\Vert _{\mathcal{L}}\text{ for all }\lambda\in\mathbb{C},x\in
\mathcal{L},\text{ and}\nonumber\\
\left\Vert x\right\Vert _{\mathcal{L}}  &  =0\text{ iff }x=0.\nonumber
\end{align}
By \cite[Page 162]{K1}, each quasinorm generates a linear (metrizable)
Hausdorff topology on $\mathcal{L}$. We say that $\mathcal{L}$ is
\textit{complete under the quasinorm} if it is complete in this topology.

Furthermore, a \textit{quasi-Banach operator ideal} $\mathfrak{J}$ (see
\cite{P78}) consists of components $\mathfrak{J}\left(  X,Y\right)
\subset\mathcal{B}\left(  X,Y\right)  $ complete under a quasinorm $\left\Vert
\cdot\right\Vert _{\mathfrak{J}\left(  X,Y\right)  }=\left\Vert \cdot
\right\Vert _{\mathfrak{J}}$, where $X$ and $Y$ run over Banach spaces, and
satisfying the following conditions

1) $t_{\mathfrak{J}\left(  X,Y\right)  }=t_{\mathfrak{J}}$ for some
$t_{\mathfrak{J}}\geq1$ and all Banach spaces $X$ and $Y$, where
$t_{\mathfrak{J}\left(  X,Y\right)  }$ is the constant $t_{\mathcal{L}}$ in
(\ref{qn}) for $\mathcal{L}=\mathfrak{J}\left(  X,Y\right)  $.

2) $\left\Vert axb\right\Vert _{\mathfrak{J}}\leq\left\Vert a\right\Vert
\left\Vert x\right\Vert _{\mathfrak{J}}\left\Vert b\right\Vert $ for all
$x\in\mathfrak{J}\left(  X,Y\right)  $, $a\in\mathcal{B}\left(  Y,Z\right)
,b\in\mathcal{B}(W,X)$, where $Z$ and $W$ run over Banach spaces,

3) $\left\Vert x\right\Vert _{\mathfrak{J}}=||x||$ for each operator $x$ of
rank one.


By \cite[Theorem 6.2.5]{P78}, each quasi-Banach ideal $\mathfrak{J}$ has an
equivalent quasinorm $\left\vert \cdot\right\vert _{\mathfrak{J}}$ with the
property that there is a number $p$ such that $0<p\leq1$ and%
\begin{equation}
\left\vert x+y\right\vert _{\mathfrak{J}}^{p}\leq\left\vert x\right\vert
_{\mathfrak{J}}^{p}+\left\vert y\right\vert _{\mathfrak{J}}^{p} \label{add}%
\end{equation}
for every $x,y\in\mathfrak{J}\left(  X,Y\right)  $ and for all Banach spaces
$X,Y$ (one can take $p$ as a number satisfying $\left(  2t\right)  ^{p}=2$ for
$t\geq t_{\mathfrak{J}}$). We assume that a quasinorm in consideration
satisfies this condition, and write $\Vert\cdot\Vert_{p}$ or $\Vert\cdot
\Vert_{p,\mathfrak{J}}$ instead of $\left\vert \cdot\right\vert _{\mathfrak{J}%
}$. In this case we say that $\mathfrak{J}$ is a $p$-\textit{Banach operator
ideal}. It should be noted that the topology of $\mathfrak{J}\left(
X,Y\right)  $ is given by the metric $\mathrm{d}\left(  x,y\right)  =\Vert
x-y\Vert_{p}^{p}$. In the same way we denote the corresponding quasinorm on
bounded operators $T$ on $\mathfrak{J}\left(  X,Y\right)  $:
\[
\Vert T\Vert_{p}=\Vert T\Vert_{p,\mathfrak{J}}=\inf\left\{  t>0:\Vert
Tx\Vert_{p}\leq t\Vert x\Vert_{p}\text{ for all }x\in\mathfrak{J}\left(
X,Y\right)  \right\}  .
\]

\begin{lemma}
\label{p-norm} Let $\mathfrak{J}$ be a $p$-Banach operator ideal, and let $T$
be an elementary operator on $\mathcal{B}\left(  X,Y\right)  $, $Tx=\sum
_{i=1}^{n}a_{i}xb_{i}$ for every $x\in\mathcal{B}\left(  X,Y\right)  $, where
$a_{i}\in\mathcal{B}\left(  Y\right)  $, $b_{i}\in\mathcal{B}\left(  X\right)
$. Then $T$ is bounded on $\mathfrak{J}\left(  X,Y\right)  $ and
\[
\Vert Tx\Vert_{p}\leq n^{\frac{1-p}{p}}\left(  \sum_{i=1}^{n}\Vert a_{i}%
\Vert\Vert b_{i}\Vert\right)  \Vert x\Vert_{p}%
\]
for all $x\in\mathfrak{J}\left(  X,Y\right)  $.
\end{lemma}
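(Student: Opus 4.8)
The plan is to estimate $Tx$ summand by summand using the operator-ideal inequality for the quasinorm, and then to collapse the resulting $p$-power sum into an ordinary sum by a power-mean inequality. Throughout one works with the fixed quasinorm $\Vert\cdot\Vert_{p}$ introduced before the lemma, which is simultaneously an operator-ideal quasinorm, so that $\Vert axb\Vert_{p}\leq\Vert a\Vert\,\Vert x\Vert_{p}\,\Vert b\Vert$, and $p$-subadditive in the sense of $(\ref{add})$.

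First I would observe that for every $x\in\mathfrak{J}(X,Y)$ and every $i$ the operator $a_{i}xb_{i}$ belongs to $\mathfrak{J}(X,Y)$, so that $Tx=\sum_{i=1}^{n}a_{i}xb_{i}\in\mathfrak{J}(X,Y)$, and that
\[
\Vert a_{i}xb_{i}\Vert_{p}\leq\Vert a_{i}\Vert\,\Vert x\Vert_{p}\,\Vert b_{i}\Vert .
\]
Iterating $(\ref{add})$ over the $n$ summands then yields
\[
\Vert Tx\Vert_{p}^{p}\leq\sum_{i=1}^{n}\Vert a_{i}xb_{i}\Vert_{p}^{p}\leq\Big(\sum_{i=1}^{n}\bigl(\Vert a_{i}\Vert\,\Vert b_{i}\Vert\bigr)^{p}\Big)\Vert x\Vert_{p}^{p}.
\]

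Next I would invoke the elementary fact that for nonnegative reals $c_{1},\dots,c_{n}$ and $0<p\leq1$ the concavity of $t\mapsto t^{p}$ gives $\frac{1}{n}\sum_{i=1}^{n}c_{i}^{p}\leq\bigl(\frac{1}{n}\sum_{i=1}^{n}c_{i}\bigr)^{p}$, and hence $\bigl(\sum_{i=1}^{n}c_{i}^{p}\bigr)^{1/p}\leq n^{(1-p)/p}\sum_{i=1}^{n}c_{i}$. Applying this with $c_{i}=\Vert a_{i}\Vert\,\Vert b_{i}\Vert$ and taking $p$-th roots in the previous display gives exactly
\[
\Vert Tx\Vert_{p}\leq n^{\frac{1-p}{p}}\Big(\sum_{i=1}^{n}\Vert a_{i}\Vert\,\Vert b_{i}\Vert\Big)\Vert x\Vert_{p}.
\]
Finally, since the topology of $\mathfrak{J}(X,Y)$ is induced by the metric $\mathrm{d}(x,y)=\Vert x-y\Vert_{p}^{p}$, this estimate applied to $x-y$ shows that $T$ is Lipschitz for $\mathrm{d}$; being also linear, $T$ is a bounded operator on $\mathfrak{J}(X,Y)$.

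I do not expect a real obstacle: the argument is just the ideal inequality combined with $p$-subadditivity and one convexity estimate. The only point requiring a little care is to work with the specific quasinorm $\Vert\cdot\Vert_{p}$ for which both $(\ref{add})$ and the ideal inequality hold, as arranged just before the lemma; the constant $n^{(1-p)/p}$ then comes solely from the power-mean step.
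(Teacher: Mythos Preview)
Your proof is correct and follows essentially the same route as the paper: apply the ideal inequality to each summand, use the $p$-subadditivity \eqref{add} to bound $\Vert Tx\Vert_{p}^{p}$ by $\sum_i(\Vert a_i\Vert\Vert b_i\Vert)^p\Vert x\Vert_p^p$, and then invoke the concavity of $t\mapsto t^p$ to pass to the ordinary sum with the factor $n^{(1-p)/p}$. The paper's version is slightly more terse but identical in substance; your closing remark about Lipschitz continuity in the metric is a harmless extra.
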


\begin{proof}
It follows from (\ref{add}) under $\left\vert \cdot\right\vert _{\mathfrak{J}%
}=\Vert\cdot\Vert_{p}$ that
\[
\Vert Tx\Vert_{p}^{p}\leq\sum_{i=1}^{n}\Vert a_{i}\Vert^{p}\Vert b_{i}%
\Vert^{p}\Vert x\Vert_{p}^{p}%
\]
for all $x\in\mathfrak{J}\left(  X,Y\right)  $.
Since the function $f(t)=t^{p}$ is concave for $t\geq0$ and $0<p\leq1$, we
obtain that
\begin{equation}
\sum_{i=1}^{n}t_{i}^{p}\leq n^{1-p}\left(  \sum_{i=1}^{n}t_{i}\right)  ^{p}.
\label{tp}%
\end{equation}
Applying this to $t_{i}=\Vert a_{i}\Vert\Vert b_{i}\Vert$, we get that
\[
\Vert Tx\Vert_{p}^{p}\leq n^{1-p}\left(  \sum_{i=1}^{n}\Vert a_{i}\Vert\Vert
b_{i}\Vert\right)  ^{p}\Vert x\Vert_{p}^{p}%
\]
which gives what we need.
\end{proof}

In a short form the statement of the previous lemma can be written as
follows:
\[
\text{If }T=\sum_{i=1}^{n}L_{a_{i}}R_{b_{i}}\text{ then }\Vert T\Vert_{p}\leq
n^{\frac{1-p}{p}}\sum_{i=1}^{n}\Vert a_{i}\Vert\Vert b_{i}\Vert.
\]

Similarly we obtain the following

\begin{lemma}
\label{est}Let $a$ be a finite rank operator in $\mathcal{B}\left(
X,Y\right)  $. Then
\[
\left\Vert a\right\Vert _{p}\leq n^{\frac{1-p}{p}}\left\Vert a\right\Vert
_{\mathcal{N}\left(  X,Y\right)  },
\]
where $n$ is the rank of $a$ and $\left\Vert \cdot\right\Vert _{p}$ is the
$p$-norm of a $p$-Banach operator ideal $\mathfrak{J}$.
\end{lemma}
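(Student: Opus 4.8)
The plan is to reduce the estimate to the additivity property \eqref{add} of the $p$-quasinorm together with the normalization condition~3) for $p$-Banach operator ideals, which says that $\|e\|_p = \|e\|$ for every rank-one operator $e$. First I would write the finite rank operator $a \in \mathcal{B}(X,Y)$, which has rank $n$, as a sum $a = \sum_{k=1}^{n} e_k$ of $n$ rank-one operators. Concretely, choose a representation $a = \sum_{k=1}^{n} f_k \otimes y_k$ with $f_k \in X^{\ast}$, $y_k \in Y$, where the $y_k$ may be taken linearly independent (since $a$ has rank $n$), and set $e_k = f_k \otimes y_k$; this is always possible, and for any such representation $\sum_k \|f_k\|\|y_k\|$ can be made arbitrarily close to $\|a\|_{\mathcal{N}(X,Y)}$ by definition of the nuclear norm.

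Next I would apply \eqref{add} repeatedly (i.e.\ by induction on $n$) to obtain
\[
\left\Vert a\right\Vert _{p}^{p}=\left\Vert \sum_{k=1}^{n}e_{k}\right\Vert _{p}^{p}\leq\sum_{k=1}^{n}\left\Vert e_{k}\right\Vert _{p}^{p}=\sum_{k=1}^{n}\left\Vert e_{k}\right\Vert ^{p}=\sum_{k=1}^{n}\left(  \left\Vert f_{k}\right\Vert \left\Vert y_{k}\right\Vert \right)  ^{p},
\]
using condition~3) in the middle equality and $\|f_k \otimes y_k\| = \|f_k\|\|y_k\|$ at the end. Then the concavity inequality \eqref{tp}, already established in the proof of Lemma \ref{p-norm}, applied with $t_k = \|f_k\|\|y_k\|$ gives
\[
\left\Vert a\right\Vert _{p}^{p}\leq n^{1-p}\left(  \sum_{k=1}^{n}\left\Vert f_{k}\right\Vert \left\Vert y_{k}\right\Vert \right)  ^{p}.
\]
Taking $p$-th roots yields $\|a\|_p \leq n^{\frac{1-p}{p}} \sum_{k=1}^n \|f_k\|\|y_k\|$, and passing to the infimum over all rank-$n$ representations of $a$ produces the claimed bound with $\|a\|_{\mathcal{N}(X,Y)}$ on the right.

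The only genuinely delicate point — and it is minor — is making sure that when we take the infimum we are allowed to restrict to representations with exactly $n = \operatorname{rank}(a)$ summands; this is where linear independence of the $y_k$ (or of the $f_k$) is used, and it is a standard fact that the nuclear norm of a rank-$n$ operator is the infimum of $\sum_{k=1}^n \|f_k\|\|y_k\|$ over representations with precisely $n$ terms. Everything else is the routine combination of \eqref{add}, condition~3), and \eqref{tp}, exactly parallel to the proof of Lemma \ref{p-norm}, so I would present it as "Similarly we obtain the following" and keep the write-up to a few lines.
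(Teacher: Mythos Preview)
Your proposal is correct and follows essentially the same approach as the paper: write $a$ as a sum of $n$ rank-one operators whose norms sum to at most $\|a\|_{\mathcal{N}(X,Y)}+\varepsilon$, apply the $p$-subadditivity \eqref{add} together with the normalization on rank-one operators, then use the concavity inequality \eqref{tp} and let $\varepsilon\to 0$. The ``delicate point'' you flag---that one may take exactly $n=\operatorname{rank}(a)$ summands while approximating the nuclear norm---is precisely what the paper dismisses with ``It is easy to check,'' so your treatment is in fact slightly more explicit than the original.
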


\begin{proof}
It is easy to check that for any $\varepsilon>0$ there are rank one operators
$a_{i}$ such that
\[
a=\sum_{i=1}^{n}a_{i},
\]
where $n$ is the rank of $a$, and
\[
\sum_{i=1}^{n}\left\Vert a_{i}\right\Vert \leq\left\Vert a\right\Vert
_{\mathcal{N}\left(  X,Y\right)  }+\varepsilon.
\]
On the other hand,
\[
\left\Vert a\right\Vert _{p}^{p}\leq\sum_{i=1}^{n}\left\Vert a_{i}\right\Vert
_{p}^{p}=\sum_{i=1}^{n}\left\Vert a_{i}\right\Vert ^{p}\leq n^{1-p}\left(
\sum_{i=1}^{n}\left\Vert a_{i}\right\Vert \right)  ^{p}=n^{1-p}\left(
\left\Vert a\right\Vert _{\mathcal{N}\left(  X,Y\right)  }+\varepsilon\right)
^{p}%
\]
by (\ref{tp}). As $\varepsilon$ is arbitrary, we obtain that $\left\Vert
a\right\Vert _{p}\leq n^{\frac{1-p}{p}}\left\Vert a\right\Vert _{\mathcal{N}%
\left(  X,Y\right)  }$.
\end{proof}

\subsubsection{Quasinilpotence of semicompact elementary operators modulo
semifinite ones with respect to a quasinorm}

If $W$ is a closed subspace of a Banach space $X$ then for each $x\in X$, we
will write $x/W$ instead of $x+W$ for the corresponding element of $X/W$. If
moreover $a$ is an operator on $X$ leaving $W$ invariant then we denote by
$a|_{W}$ and $a|_{X/W}$ its restriction to $W$ and, respectively, the operator
induced by $a$ in $X/W$.

\begin{lemma}
\label{subspace} Let $W$ be a closed subspace of a Banach space $X$, and let
$a,b$ be operators on $X$ which preserve $W$ invariant. Then
\begin{equation}
\Vert ab\Vert\leq2\Vert a|_{W}\Vert\Vert b\Vert+\Vert a\Vert\Vert
b|_{X/W}\Vert. \label{f80}%
\end{equation}

\end{lemma}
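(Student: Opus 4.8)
The plan is to estimate $\Vert abx\Vert$ directly for unit vectors $x\in X$, using the quotient operator $b|_{X/W}$ to control how far $bx$ sits from the invariant subspace $W$, and then splitting $abx$ into a piece living in $W$ (where we use $\Vert a|_{W}\Vert$) and a small remainder (where we use $\Vert a\Vert$).

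First I would record the elementary observation that the operator induced by $b$ on $X/W$ satisfies $\Vert b|_{X/W}\Vert\le\Vert b\Vert$, and, more to the point, that $\mathrm{dist}(bx,W)=\Vert(bx)/W\Vert=\Vert b|_{X/W}(x/W)\Vert\le\Vert b|_{X/W}\Vert\,\Vert x\Vert$, since the quotient map $X\to X/W$ is contractive. Consequently, for any $\varepsilon>0$ and any $x$ with $\Vert x\Vert\le1$ there is a vector $w\in W$ with $\Vert bx-w\Vert\le\Vert b|_{X/W}\Vert+\varepsilon$.

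Next I would write $abx=aw+a(bx-w)$. The remainder is controlled by $\Vert a(bx-w)\Vert\le\Vert a\Vert\,(\Vert b|_{X/W}\Vert+\varepsilon)$. For the term $aw$, since $w\in W$ and $a$ leaves $W$ invariant, $\Vert aw\Vert\le\Vert a|_{W}\Vert\,\Vert w\Vert$, while $\Vert w\Vert\le\Vert bx\Vert+\Vert bx-w\Vert\le\Vert b\Vert+\Vert b|_{X/W}\Vert+\varepsilon\le2\Vert b\Vert+\varepsilon$. Adding the two estimates gives $\Vert abx\Vert\le2\Vert a|_{W}\Vert\,\Vert b\Vert+\Vert a\Vert\,\Vert b|_{X/W}\Vert+(\Vert a|_{W}\Vert+\Vert a\Vert)\varepsilon$; letting $\varepsilon\to0$ and taking the supremum over $\Vert x\Vert\le1$ yields \eqref{f80}.

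The argument is routine and I do not expect a genuine obstacle. The only point needing a little care is the bound $\Vert w\Vert\le2\Vert b\Vert$, which is precisely where the factor $2$ in front of $\Vert a|_{W}\Vert\,\Vert b\Vert$ originates (one could instead keep the sharper $\Vert b\Vert+\Vert b|_{X/W}\Vert$ there, but the stated form is all that is needed in the applications).
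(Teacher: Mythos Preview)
Your proof is correct and is essentially identical to the paper's own argument: both pick $w\in W$ close to $bx$, split $abx=aw+a(bx-w)$, bound $\Vert w\Vert$ by $2\Vert b\Vert$ via the triangle inequality, and let $\varepsilon\to0$. The only cosmetic difference is that you normalize to $\Vert x\Vert\le1$ at the outset while the paper carries a general $x$ throughout.
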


\begin{proof}
For any $x\in X$ and $\varepsilon>0$, choose $y\in W$ with $\Vert
bx-y\Vert\leq\Vert\left(  bx\right)  /W\Vert_{X/W}+\varepsilon$. Then
\[
\Vert y\Vert\leq\left\Vert bx\right\Vert +\Vert\left(  bx\right)
/W\Vert_{X/W}+\varepsilon\leq2\Vert bx\Vert+\varepsilon,
\]
whence we obtain that
\begin{align*}
\Vert abx\Vert &  \leq\Vert ay\Vert+\Vert a\Vert\Vert bx-y\Vert\leq\Vert
a|_{W}\Vert\Vert y\Vert+\Vert a\Vert(\Vert\left(  bx\right)  /W\Vert
_{X/W}+\varepsilon)\\
&  \leq\Vert a|_{W}\Vert(2\Vert bx\Vert+\varepsilon)+\Vert a\Vert(\Vert
b|_{X/W}\Vert\Vert x/W\Vert_{X/W}+\varepsilon).
\end{align*}
Since $\Vert x/W\Vert_{X/W}\leq\Vert x\Vert$ and $\varepsilon$ is arbitrary,
we obtain that
\[
\Vert abx\Vert\leq(2\Vert a|_{W}\Vert\Vert b\Vert+\Vert a\Vert\Vert
b|_{X/W}\Vert)\Vert x\Vert
\]
which is what we need.
\end{proof}

\begin{lemma}
\label{cepochka} Let $0=X_{0}\subset X_{1}\subset X_{2}\subset...\subset
X_{k}\subset X$ be a chain of closed subspaces in a Banach space $X$. Let
$m\geq k$ and let $a_{1},\ldots,a_{m}\in\mathcal{B}(X)$ preserve all $X_{j}$
invariant: $a_{i}X_{j}\subset X_{j}$. If $\Vert a_{i}\Vert\leq\alpha$ for all
$i$, and $\Vert a_{i}|_{X_{j}/X_{j-1}}\Vert\leq\beta$ for all $i,j$, then
\[
\Vert a_{1}a_{2}\cdots a_{m}\Vert\leq2^{m}C_{m}^{k}\alpha^{k}\beta^{m-k}.
\]

\end{lemma}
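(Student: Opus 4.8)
The statement to be proved estimates the norm of a product of $m$ operators on $X$ that simultaneously preserve a chain of $k$ subspaces. The natural approach is induction on $k$, using Lemma \ref{subspace} as the single-step tool. I would phrase the inductive hypothesis as: for any chain of length $k$ as above and any $m \geq k$, the bound $\Vert a_1 \cdots a_m \Vert \leq 2^m C_m^k \alpha^k \beta^{m-k}$ holds.

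First, the base case $k=0$: here there are no proper subspaces in the chain except $0$ and the hypothesis $m \geq 0$ is vacuous; we just need $\Vert a_1 \cdots a_m \Vert \leq 2^m \alpha^m$, which is immediate from submultiplicativity ($\Vert a_1 \cdots a_m \Vert \leq \alpha^m \leq 2^m \alpha^m$). For the inductive step, suppose the statement holds for chains of length $k-1$ and consider a chain $0 = X_0 \subset X_1 \subset \cdots \subset X_k \subset X$ with $m \geq k$. Write the product $a_1 a_2 \cdots a_m$ and apply Lemma \ref{subspace} with $W = X_1$, splitting off, say, $a_1$ from the rest: repeatedly I would use that $W$ is invariant for all the $a_i$ to expand $\Vert a_1 \cdots a_m \Vert$ into a sum over the "positions" where one passes from acting on $W$ to acting on $X/W$. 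More precisely, I expect the cleanest bookkeeping to come from iterating the two-term estimate $\Vert ab \Vert \leq 2 \Vert a|_W \Vert \Vert b \Vert + \Vert a \Vert \Vert b|_{X/W} \Vert$: applied $m-1$ times to the product $a_1(a_2(\cdots(a_{m-1}a_m)))$, it yields
\[
\Vert a_1 \cdots a_m \Vert \leq \sum_{j=1}^{m} 2^{\,?}\, \Vert a_1|_W \Vert \cdots \Vert a_{j-1}|_W \Vert \,\Vert a_j|_W \Vert\, \Vert (a_{j+1} \cdots a_m)|_{X/W} \Vert,
\]
where each summand groups the first $j$ factors restricted to $W = X_1$ and the last $m-j$ factors pushed to $X/X_1$.

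The key point is then that $X_1$, being the bottom of the chain, has $\Vert a_i|_{X_1} \Vert = \Vert a_i|_{X_1/X_0} \Vert \leq \beta$, so the prefix of length $j$ contributes at most $\beta^{j}$ (actually $\beta^{j-1}$ times $\Vert a_j|_{X_1}\Vert \leq \beta$, so $\beta^j$); while on $X/X_1$ the images $X_2/X_1 \subset \cdots \subset X_k/X_1 \subset X/X_1$ form a chain of length $k-1$, the induced operators have norm $\leq \alpha$ and their restrictions to the successive quotients $(X_{j}/X_1)/(X_{j-1}/X_1) \cong X_j/X_{j-1}$ still have norm $\leq \beta$, so the inductive hypothesis bounds $\Vert (a_{j+1} \cdots a_m)|_{X/X_1} \Vert$ — provided $m - j \geq k-1$, i.e. $j \leq m-k+1$. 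For $j > m-k+1$ the remaining product is too short for the inductive hypothesis and must be bounded crudely by $2^{m-j} \alpha^{m-j}$ or similar; one checks this crude bound is dominated by what we want. Summing over $j$ and collecting the binomial coefficients (using a Pascal-type identity $\sum_j C_{m-j}^{k-1} \leq C_m^k$ after shifting indices, together with the powers of $2$) should produce exactly $2^m C_m^k \alpha^k \beta^{m-k}$.

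The main obstacle I anticipate is the careful tracking of the powers of $2$ and the binomial coefficients through the iteration of Lemma \ref{subspace}, and in particular verifying that the "short tail" terms (where the induced product on $X/X_1$ has fewer than $k-1$ factors) do not spoil the estimate. A cleaner alternative, which I would try first, is to induct on $m$ rather than $k$: apply Lemma \ref{subspace} once with $W = X_k$ to peel off the last factor, write $\Vert a_1 \cdots a_m \Vert \leq 2\Vert (a_1\cdots a_{m-1})|_{X_k}\Vert \Vert a_m \Vert + \Vert a_1 \cdots a_{m-1}\Vert \Vert a_m|_{X/X_k}\Vert$; but $X/X_k$ carries no part of the chain so the second term only improves with the trivial bound, and on $X_k$ we have a chain of length $k$ inside a space where $m-1$ factors act — this sets up a double induction on $(m,k)$ with the recursion $f(m,k) \leq 2\beta f(m-1,k-1) + \alpha f(m-1,k)$ on the bound $f(m,k) = 2^m C_m^k \alpha^k \beta^{m-k}$ wait—this needs the restriction to $X_k/X_{k-1}$, so more care is needed. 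Either way, the combinatorial recursion $C_m^k = C_{m-1}^{k-1} + C_{m-1}^{k}$ is what drives the final bound, and I expect the verification of the recursion inequality for $f(m,k)$ to be the one genuinely fiddly computation.
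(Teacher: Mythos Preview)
Your proposal has all the right ingredients but you have not found the clean way to organise them, and both of your sketched routes have real difficulties.

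The paper's proof is a double induction on $(m,k)$, using Lemma~\ref{subspace} \emph{once} per step with $W=X_{1}$, $a=a_{1}$, $b=a_{2}\cdots a_{m}$. This gives
\[
\Vert a_{1}\cdots a_{m}\Vert \le 2\,\Vert a_{1}|_{X_{1}}\Vert\,\Vert a_{2}\cdots a_{m}\Vert + \Vert a_{1}\Vert\,\Vert (a_{2}\cdots a_{m})|_{X/X_{1}}\Vert .
\]
Since $X_{1}=X_{1}/X_{0}$, the factor $\Vert a_{1}|_{X_{1}}\Vert\le\beta$, and the first term is bounded using the inductive hypothesis for $(m-1,k)$ on the \emph{same} chain in $X$. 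On $X/X_{1}$ the induced chain has length $k-1$ with the same $\alpha,\beta$ bounds, so the second term is bounded by the hypothesis for $(m-1,k-1)$. Combining the two yields exactly $2^{m}(C_{m-1}^{k}+C_{m-1}^{k-1})\alpha^{k}\beta^{m-k}=2^{m}C_{m}^{k}\alpha^{k}\beta^{m-k}$. The base cases $k=0$ and $m=k$ are trivial.

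Your first approach (induct on $k$, iterate Lemma~\ref{subspace} $m-1$ times) does produce a main sum that collapses via the hockey-stick identity $\sum_{i=k-1}^{m-1}C_{i}^{k-1}=C_{m}^{k}$ to $2^{m-1}C_{m}^{k}\alpha^{k}\beta^{m-k}$, but the leftover ``short tail'' terms (those with $m-j<k-1$) contribute $\sum 2^{j-1}\beta^{j-1}\alpha^{m-j+1}$, and this is \emph{not} in general dominated by $2^{m-1}C_{m}^{k}\alpha^{k}\beta^{m-k}$; for instance at $m=k$ the tail alone already exceeds the target when $\beta$ is comparable to $\alpha$. So the concern you flagged is a genuine obstruction, not just bookkeeping.

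Your second approach, peeling off $a_{m}$ with $W=X_{k}$, is the wrong splitting: the quotient $X/X_{k}$ carries no $\beta$-information at all, and the term $\Vert a_{m}|_{X/X_{k}}\Vert$ can only be bounded by $\alpha$, so the recursion you wrote, $f(m,k)\le 2\beta f(m-1,k-1)+\alpha f(m-1,k)$, does not arise from that decomposition. The correct decomposition---peel off $a_{1}$ with $W=X_{1}$---is the one that places the $\beta$ on the \emph{first} term (via $\Vert a_{1}|_{X_{1}}\Vert\le\beta$) and reduces the chain length in the \emph{second} term, matching Pascal's rule directly.
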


\begin{proof}
We use induction in $m,k$. For the base of the induction, note that the
statement is evidently true for $k=0$ and for $m=k$, and Lemma \ref{subspace}
establishes it for $k=1,m=2$. Now assuming that the statement holds for
$(m-1,k-1)$ and $(m-1,k)$ we prove that it holds for $(m,k)$. Indeed, setting
$W=X_{1}$, $a=a_{1}$, $b=a_{2}\cdots a_{m}$ in the notation of Lemma
\ref{subspace}, we obtain from (\ref{f80}) that
\[
\Vert a_{1}a_{2}\cdots a_{m}\Vert\leq2\Vert a_{1}|_{X_{1}}\Vert\Vert
a_{2}\cdots a_{m}\Vert+\Vert a_{1}\Vert\Vert a_{2}\cdots a_{m}|_{X/X_{1}}%
\Vert.
\]
By the induction assumption, we have that
\[
\Vert a_{2}\cdots a_{m}\Vert\leq2^{{m-1}}C_{m-1}^{k}\alpha^{k}\beta^{m-1-k}.
\]
Furthermore, the operators $a_{i}|_{X/X_{1}}$ preserve the chain
$\{X_{i}/X_{1}:i\leq k\}$ which consists of $k-1$ non-trivial elements. Hence
again by the induction assumption, we obtain that
\[
\Vert a_{2}\cdots a_{m}|_{X/X_{1}}\Vert\leq2^{m-1}C_{m-1}^{k-1}\alpha
^{k-1}\beta^{m-k}.
\]
Therefore
\begin{align*}
\Vert a_{1}a_{2}\cdots a_{m}\Vert &  \leq2\beta2^{{m-1}}C_{m-1}^{k}\alpha
^{k}\beta^{m-1-k}+\alpha2^{m-1}C_{m-1}^{k-1}\alpha^{k-1}\beta^{m-k}\\
&  \leq2^{m}\alpha^{k}\beta^{m-k}(C_{m-1}^{k}+C_{m-1}^{k-1})=2^{m}C_{m}%
^{k}\alpha^{k}\beta^{m-k}.
\end{align*}
\end{proof}

\begin{lemma}
\label{products-oper} Let $K$ be a finite set of compact operators in the
radical of an operator algebra $A\subset\mathcal{B}(X)$, and let $F$ be a
bounded subset of $A$. Let $\lambda\in(0,1)$. For each $m$, let $E_{K,F}%
^{\lambda}(m)$ denote the set of all products $b_{1}...b_{m}$ of elements in
$K\cup F$ in which the number of those $b_{i}$ that belong to $K$ is greater
than or equal to ${\lambda}m$. Then $\Vert E_{K,F}^{\lambda}(m)\Vert
^{1/m}\rightarrow0$ for $m\rightarrow\infty$.
\end{lemma}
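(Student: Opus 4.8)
\textbf{Proof plan for Lemma \ref{products-oper}.}

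The plan is to combine a triangularization argument with the counting estimate of Lemma \ref{cepochka}. First I would note that, since $K$ is a finite set of compact operators lying in $\mathrm{Rad}(A)$, the semigroup generated by $K$ consists of compact operators that are joint\-ly quasinilpotent; more precisely, applying the triangularization theory for (pre)compact sets of compact operators (the Turovskii invariant-subspace theorem, cf. \cite{Sh84} and the discussion around Corollary \ref{gen}), for every $\varepsilon>0$ there is a finite chain of closed subspaces $0=X_0\subset X_1\subset\cdots\subset X_k\subset X$, invariant under every element of $K\cup F$ (here one uses that the chain may be chosen to triangularize $K$ while $F$ is a bounded set in the same algebra and hence leaves the spectral subspaces of $K$ invariant — or, more directly, one takes a maximal such chain and uses that $K$ acts as a Volterra family so that $\|b|_{X_j/X_{j-1}}\|$ is small for $b\in K$), such that $\|b|_{X_j/X_{j-1}}\|<\varepsilon$ for every $b\in K$ and every $j=1,\dots,k$. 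Set $\alpha=\sup\{\|b\|:b\in K\cup F\}$, which is finite since $K$ is finite and $F$ is bounded.

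Next I would estimate a single product $b_1\cdots b_m\in E_{K,F}^{\lambda}(m)$. All $b_i$ leave the chain $\{X_j\}$ invariant, so Lemma \ref{cepochka} applies with $\beta=\varepsilon$ once we control, for each $i$, the norm of $b_i|_{X_j/X_{j-1}}$: for $b_i\in K$ this is $<\varepsilon$ by construction, while for $b_i\in F$ we only have the bound $\alpha$. This is not quite the hypothesis of Lemma \ref{cepochka} (which assumes \emph{all} factors have small quotient norms), so the plan is to apply it in the following form: group the factors and observe that among the $m$ factors at least $\lambda m$ lie in $K$; choosing the chain so that it triangularizes $K$, the factors from $F$ contribute at most $\alpha$ on each quotient and the factors from $K$ contribute at most $\varepsilon$. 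A direct induction exactly as in the proof of Lemma \ref{cepochka} — splitting off the leading factor and using Lemma \ref{subspace} — then yields a bound of the shape
\[
\|b_1\cdots b_m\|\le 2^m\,C_m^{k}\,\alpha^{\,m-\lceil\lambda m\rceil}\,\varepsilon^{\,\lceil\lambda m\rceil},
\]
where $k$ is the (fixed) length of the chain; the binomial coefficient $C_m^k$ counts the number of ways the "large'' factors from $F$ can be distributed, and $k$ being fixed it grows only polynomially in $m$.

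Then I would take the supremum over all products in $E_{K,F}^{\lambda}(m)$ (there are at most $(\#(K\cup F))^m$ of them, but the bound above is uniform, so no extra factor is needed), extract $m$-th roots, and let $m\to\infty$. Since $k$ is fixed, $\bigl(2^m C_m^k\bigr)^{1/m}\to 2$, and the remaining factor behaves like $\alpha^{1-\lambda}\varepsilon^{\lambda}$; hence
\[
\limsup_{m\to\infty}\|E_{K,F}^{\lambda}(m)\|^{1/m}\le 2\,\alpha^{1-\lambda}\,\varepsilon^{\lambda}.
\]
As $\varepsilon>0$ is arbitrary and $\lambda>0$, the right-hand side can be made arbitrarily small, so $\|E_{K,F}^{\lambda}(m)\|^{1/m}\to 0$.

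The main obstacle I anticipate is the first step: producing the invariant chain that simultaneously triangularizes $K$ (so that $K$'s quotient norms are $<\varepsilon$) while being invariant under the bounded set $F$. The clean way is to use that $F\subset A$ and that the relevant triangularizing chain for a precompact set of compact operators in a radical algebra can be taken among the closed ideals / invariant subspaces of $A$ itself (this is precisely the sort of statement established for $\mathrm{Mul}_2(A)$ and in \cite{Sh84}); alternatively, one replaces $F$ by the algebra it generates together with $K$ and triangularizes the compact part. Once the chain is in hand, everything reduces to the bookkeeping already done in Lemmas \ref{subspace} and \ref{cepochka}, so the remainder is routine.
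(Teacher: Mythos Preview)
Your plan is correct and follows the paper's overall strategy: obtain a finite $A$-invariant chain on whose gap quotients the elements of $K$ have small norm, then feed this into the combinatorics of Lemma~\ref{cepochka} and take $m$-th roots. Your concern about the ``main obstacle'' is unfounded --- the chain produced by \cite{Sh84} (see also \cite{ST2000}) consists of subspaces invariant for \emph{all} of $A$, so $F\subset A$ automatically preserves it; this is exactly how the paper disposes of the issue in one line.

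The difference lies in how the estimate is extracted. Rather than rerun the induction of Lemma~\ref{cepochka} with two different quotient-norm bounds (your proposal), the paper first normalizes so that $\|K\cup F\|\le 1$ and then \emph{regroups} each product $b_1\cdots b_m$ into blocks $c_1\cdots c_l$, each block being a (possibly empty) word in $F$ followed by a single element of $K$. Every block then has norm $\le 1$ and quotient norm $\le\varepsilon$, there are $l\ge\lambda m$ of them, and Lemma~\ref{cepochka} applies verbatim to the $c_i$. Your direct three-parameter induction also goes through, but the bound it actually yields is $2^m C_m^k\,\alpha^{\,m-r+k}\varepsilon^{\,r-k}$ rather than $2^m C_m^k\,\alpha^{\,m-r}\varepsilon^{\,r}$ (one loses a power of $\varepsilon$ each time the chain shortens, exactly as in Lemma~\ref{cepochka}; incidentally, $C_m^k$ arises from Pascal's identity in that recursion, not from distributing $F$-factors). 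Since $k$ is fixed, the extra factor $(\alpha/\varepsilon)^k$ disappears after taking $m$-th roots, so your conclusion stands. The paper's regrouping trick is cleaner because it reduces directly to the already-proved lemma instead of requiring a new recursion.
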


\begin{proof}
Without loss of generality, one may assume that $\left\Vert K\cup F\right\Vert
=1$. By \cite{Sh84} (see also \cite{ST2000}), for each $\varepsilon>0$ there
is a finite chain $0\subset X_{1}\subset...\subset X_{k}\subset X$ of
invariant subspaces for $A$ such that $\Vert b|_{X_{j}/X_{j-1}}\Vert
\leq\varepsilon$ for all $b\in K$ and all $j\leq k$. It follows that
\[
\Vert c|_{X_{j}/X_{j-1}}\Vert\leq\varepsilon
\]
if $c=b_{1}...b_{p}a$, where $b_{i}\in F$ and $a\in K$.
Each product $b_{1}...b_{m}\in E_{K,F}^{\lambda}(m)$ can be written in the
form $c_{1}c_{2}...c_{l}$, where all of $c_{i}$ are as above and
$l\geq{\lambda}m$. Applying the result of Lemma \ref{cepochka}, we obtain
that
\[
\Vert b_{1}...b_{m}\Vert\leq2^{l}C_{l}^{k}\varepsilon^{l}\leq2^{m}%
m^{k}\varepsilon^{\lambda m}.
\]
Thus
\[
\Vert E_{K,F}^{\lambda}(m)\Vert^{1/m}\leq2m^{k/m}\varepsilon^{{\lambda}}%
\leq3\varepsilon^{{\lambda}}%
\]
for sufficiently big $m$. Since $\varepsilon$ is arbitrary, we are done.
\end{proof}

A subset $M$ of a Banach algebra $A$ is called \textit{bicompact} if
$L_{a}R_{b}$ is a compact operator for every $a,b\in M$.

\begin{proposition}
\label{products} Let $M$ be a finite bicompact subset of $A$ in the radical of
a Banach algebra $A$, and let $N$ be a bounded subset of $A$. For each $m$,
let $H(m)$ denote the set of all products $x_{1}...x_{m}$ of elements in
$M\cup N$ in which the number of those $x_{i}$ that belong to $M$ is greater
than or equal to $m/2$. Then $\Vert H(m)\Vert^{1/m}\rightarrow0$ under
$m\rightarrow\infty$.
\end{proposition}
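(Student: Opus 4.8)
The plan is to deduce Proposition \ref{products} from Lemma \ref{products-oper} by realizing the Banach-algebra products as operator products. First I would pass to $A^{1}$ with $\Vert 1\Vert=1$ and represent $A^{1}$ on itself by multiplications, so that $\mathcal A:=\mathrm{Mul}(A^{1})\subseteq\mathcal B(A^{1})$; put $\alpha=\sup\{\Vert x\Vert:x\in M\cup N\}$. For a word $p=x_{1}\cdots x_{m}\in H(m)$ the idea is to bracket $p$ as a composition
\[
p=C_{1}C_{2}\cdots C_{k}(u),
\]
where $u$ is a single letter ($\Vert u\Vert\le\alpha$) and each $C_{r}$ is one of $L_{x_{i}}$, $R_{x_{i}}$, $L_{x_{i}}R_{x_{j}}$, produced by peeling the ends of the current subword $w=x_{i}\cdots x_{j}$: if $x_{i}\in M$ and $x_{j}\in M$, remove both and record the two-sided block $L_{x_{i}}R_{x_{j}}$; otherwise, if $x_{i}\notin M$ remove it and record $L_{x_{i}}$, and if $x_{i}\in M$ but $x_{j}\notin M$ remove $x_{j}$ and record $R_{x_{j}}$. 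Each single-sided block removes exactly one letter lying in $N$ and each two-sided block removes two letters lying in $M$; since at least $m/2$ letters lie in $M$, there are at least $\lfloor m/4\rfloor-1$ two-sided blocks and at most $\tfrac34 m$ blocks in all. By bicompactness of $M$ every two-sided block $L_{x_{i}}R_{x_{j}}$ ($x_{i},x_{j}\in M$) is a compact operator, so the finite set $K=\{L_{a}R_{b}:a,b\in M\}$ contains all two-sided blocks, while the bounded set $F=\{L_{c},R_{c}:c\in M\cup N\}\subseteq\mathcal A$ contains all single-sided blocks, and the proportion of the $C_{r}$ lying in $K$ is at least a fixed $\lambda>0$ once $m$ is large (one may take $\lambda=1/4$).

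Next I would verify that $K\subseteq\mathrm{Rad}(\mathcal A)$. Let $J$ be the closed ideal of $A$ generated by $M$. Then $J\subseteq\mathrm{Rad}(A)$, so $J$ is radical; and $J\subseteq\mathcal R_{hc}(A)$, the largest hypocompact ideal of $A$, because each element of $M$ is a compact element of $A$ (being bicompact) and the closed ideal generated by a compact element is readily seen to be hypocompact. Hence $J$ is a hypocompact ideal, so hypocompact as an algebra by Corollary \ref{ihf}, and therefore tensor radical by Corollary \ref{tensrad}. Consequently the algebra $\widehat{\mathcal E}_{J,J}(A^{1})$ of multiplication operators on $A^{1}$ with coefficients in $J$ is tensor radical by Theorem \ref{mult1-}(ii), and so is its closure in the operator norm by Corollary \ref{dense}; in particular that closure is a radical algebra. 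Now the two-sided ideal of $\mathcal A=\mathrm{Mul}(A^{1})$ generated by $K$ lies in this closure, since for $c,d,e,f\in A^{1}$ and $a,b\in M$ one has $L_{c}R_{d}\,(L_{a}R_{b})\,L_{e}R_{f}=L_{cae}R_{fbd}$ with $cae,fbd\in A^{1}JA^{1}\subseteq J$, and such finite sums lie in $\widehat{\mathcal E}_{J,J}(A^{1})$ and are operator-norm dense in the ideal. Thus the ideal of $\mathcal A$ generated by $K$ is radical, whence $K\subseteq\mathrm{Rad}(\mathcal A)$. This step — especially the assertion that $J$ is hypocompact, which is where the bicompactness and radicality of $M$ are genuinely used — is the main obstacle I expect; once $\mathcal A$, $K$, $F$ are in the form required by Lemma \ref{products-oper}, the rest is bookkeeping.

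Finally, with $\mathcal A$, $K$, $F$ and $\lambda$ as above, Lemma \ref{products-oper} gives $\Vert E_{K,F}^{\lambda}(k)\Vert^{1/k}\to 0$ as $k\to\infty$. For $p\in H(m)$ the bracketing yields $p=C_{1}\cdots C_{k}(u)$ with $C_{1}\cdots C_{k}\in E_{K,F}^{\lambda}(k)$ and $\lfloor m/4\rfloor-1\le k\le\tfrac34 m$, so
\[
\Vert p\Vert\le\Vert C_{1}\cdots C_{k}\Vert_{\mathcal B(A^{1})}\Vert u\Vert\le\alpha\,\Vert E_{K,F}^{\lambda}(k)\Vert .
\]
Given $\delta\in(0,1)$, for all large $m$ and all $k$ in the above range one has $\Vert E_{K,F}^{\lambda}(k)\Vert<\delta^{k}\le\delta^{\lfloor m/4\rfloor-1}$, hence $\sup_{p\in H(m)}\Vert p\Vert^{1/m}\le\alpha^{1/m}\delta^{(\lfloor m/4\rfloor-1)/m}$, which tends to $\delta^{1/4}$ as $m\to\infty$. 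Letting $\delta\to 0$ gives $\Vert H(m)\Vert^{1/m}\to 0$, as required.
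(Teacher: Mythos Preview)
Your approach is correct and follows the same strategy as the paper: represent each word in $H(m)$ as a composition of operators from $K=\{L_aR_b:a,b\in M\}$ and one-sided multiplications, with a fixed positive fraction of the factors in $K$, then invoke Lemma~\ref{products-oper}. The differences are in execution. Your bracketing peels letters from both ends of the word, producing a two-sided block $L_{x_i}R_{x_j}\in K$ whenever both ends lie in $M$; the paper instead splits $w=w_1w_2$ with each half containing at least $[m/4]$ letters from $M$, and works from the middle outward, writing $w=T(1)$ where $T$ is a product of blocks $S_kP_k$ with $S_k\in K$ and $P_k$ a product of elements of $F=\{L_c,R_c:c\in N\}$. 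Both schemes give $\Vert H(m)\Vert\le\alpha\,\Vert E_{K,F}^{\lambda}(k)\Vert$ with $k$ comparable to $m$ and fixed $\lambda>0$; your endgame (one or two letters left) needs a one-line fix but does not affect the asymptotics, and your larger $F$ is harmless.

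You also supply what the paper leaves tacit, namely the verification that $K\subset\mathrm{Rad}(\mathcal A)$ needed to apply Lemma~\ref{products-oper}. Your route via the closed ideal $J$ generated by $M$ is sound; the one step you flag and call ``readily seen'' --- that $J$ is hypocompact --- is indeed true but deserves a word of justification: for a compact element $a$ and any $c_i,d_i\in A^1$ one has $W_{\sum c_iad_i}=\sum_{i,j}L_{c_i}R_{d_j}\,W_a\,L_{d_i}R_{c_j}$, which is compact, so the algebraic ideal $A^1aA^1$ consists of compact elements and its closure has dense compact elements; then for any proper closed ideal $K_0$ of $J$ one picks a compact $c\in J\setminus K_0$, notes $cK_0c\subset JK_0J\subset K_0$, and concludes that $W_{c/K_0}$, being the quotient of the compact operator $W_c|_J$, is compact on $J/K_0$. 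With that, the rest of your argument (tensor radicality of $J$, hence of $\widehat{\mathcal E}_{J,J}(A^1)$ and its operator-norm closure, hence $K\subset\mathrm{Rad}(\mathrm{Mul}(A^1))$) goes through cleanly.
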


\begin{proof}
We may assume that $A$ is unital and $1$ is the unit of $A$. Let $K$ be the
set of all operators $L_{a}R_{b}$ on $A$, where $a,b\in M$. Let $F=\{L_{a}%
:a\in N\}\cup\{R_{a}:a\in N\}$.
We claim that every product $w$ in $H(m)$ can be written as $T(1)$, where $T$
is a product of operators in which the number of operators in $K$ is greater
than or equal to $[m/4]$ (and the number of operators in $F$ is less than or
equal to $m/2+1$). Indeed, we do as follows. Represent $w$ as the product of
$w_{1}$ and $w_{2}$ in which of each the number of those $x_{i}$ that belong
to $M$ is greater than or equal to $[m/4]$. Let $a_{0}=1$, $w_{1}=w_{3}%
x_{i}v_{1}$ and $w_{2}=v_{2}x_{j}w_{4}$ for some $x_{i},x_{j}\in M$, where
$v_{1}$ and $v_{2}$ do not contain any elements from $M$ as a factor. Then
$w=w_{3}a_{1}w_{4}$, where $a_{1}=S_{1}L_{v_{1}}R_{v_{2}}\left(  a_{0}\right)
$ and $S_{1}=L_{x_{i}}R_{x_{j}}\in K$. Arguing by induction, we obtain that
$w=w_{2k+1}a_{k}w_{2k+2}$, where $a_{k}=S_{k}P_{k}\left(  a_{k-1}\right)  $,
$S_{k}\in K$ and $P_{k}$ is a product of operators in $F$, for $k\leq\lbrack
m/4]$. So we obtain the required representation $w$ as $T\left(  1\right)  $
for some $k\geq\lbrack m/4]$.
Now it follows in the notation of Lemma \ref{products-oper} that
\[
\Vert H(m)\Vert\leq\left\Vert E_{K,F}^{1/3}\left(  \frac{2m}{3}\right)
\right\Vert
\]
for sufficiently big $m$, and it remains to apply Lemma \ref{products-oper}.
\end{proof}

\begin{theorem}
\label{power} Let $T=\sum_{i=1}^{n}L_{a_{i}}R_{x_{i}}+\sum_{j=1}^{k}L_{y_{j}%
}R_{b_{j}}$ be a semicompact elementary operator on $\mathcal{B}(X,Y)$, where
all $x_{i}\in\mathcal{B}\left(  X\right)  $, $y_{j}\in\mathcal{B}(Y)$,
$a_{i}\in\mathcal{K}(Y)$, $b_{j}\in\mathcal{K}(X)$. Then for any
$\varepsilon>0$, there is $m\in\mathbb{N}$ and an operator $S=\sum
_{i=1}^{(n+k)^{m}}L_{c_{i}}R_{d_{i}}$ such that
\[
\Vert T^{m}-S\Vert_{p}<\varepsilon^{m}%
\]
and $c_{i}$ or $d_{i}$ is of finite rank for each $i$, where $\left\Vert
\cdot\right\Vert _{p}$ is the $p$-norm of a $p$-Banach operator ideal
$\mathfrak{J}$.
\end{theorem}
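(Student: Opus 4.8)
The plan is to approximate $T$ in the operator norm by a semifinite elementary operator, expand the power $T^m$ multinomially, and then separate the terms of the expansion according to whether or not they contain "enough" factors coming from the compact coefficients.

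First I would replace the compact operators $a_i$ and $b_j$ by finite rank approximants. Choose $\delta>0$ (to be fixed later in terms of $\varepsilon$) and pick finite rank operators $a_i'\in\mathcal{B}(Y)$, $b_j'\in\mathcal{B}(X)$ with $\Vert a_i-a_i'\Vert<\delta$ and $\Vert b_j-b_j'\Vert<\delta$; set $a_i''=a_i-a_i'$, $b_j''=b_j-b_j''$, so $\Vert a_i''\Vert<\delta$, $\Vert b_j''\Vert<\delta$, and the $a_i''$, $b_j''$ are still compact. Then $T=T_0+T_1+T_2$ where $T_0=\sum_iL_{a_i'}R_{x_i}+\sum_jL_{y_j}R_{b_j'}$ is a semifinite elementary operator, $T_1=\sum_iL_{a_i''}R_{x_i}+\sum_jL_{y_j}R_{b_j''}$ is a semicompact elementary operator of small norm (each coefficient is compact, one of the two factors has norm $<\delta$), and $T_2$ collects whatever cross terms remain — actually with the splitting $a_i=a_i'+a_i''$, $b_j=b_j'+b_j''$ one simply has $T=T_0+T_1$, so there is no $T_2$. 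Observe that all of $a_i'',b_j''$ lie in the radical of the Banach algebra generated by $\{L_{a_i''},R_{x_i},L_{y_j},R_{b_j''}\}$ modulo the approximable part; more precisely, by Corollary~\ref{QQ} (or Corollary~\ref{opnorm}) the set $M$ of the compact coefficients together with the compact multiplication operators they generate is a bicompact set contained in the radical of the elementary operator algebra modulo $\mathcal{F}_{\frac12}$.

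Next I would expand $T^m=(T_0+T_1)^m$ into a sum of $2^m$ products of the $L$'s and $R$'s, which regroups into $(n+k)^m$ terms of the form $L_cR_d$ by collecting left and right factors. Let $S$ be the sum of all those monomials in which \emph{no} factor from $T_1$ occurs together with the further property that the accumulated left-coefficient $c_i$ or right-coefficient $d_i$ is of finite rank; since each monomial of $T_0$ already has this property (it is a product of $L_{a_i'}R_{x_i}$'s and $L_{y_j}R_{b_j'}$'s, and at least one finite rank factor appears unless... — here one must be a bit careful: a monomial consisting only of $L_{y_j}R_{b_j'}$ factors has all right coefficients finite rank, fine, but in general any monomial in the $T_0$-expansion contains at least one finite-rank factor and hence the resulting $c_i$ or $d_i$ has finite rank). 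Thus $T^m-S$ is the sum of all monomials containing at least one $T_1$-factor. The key quantitative step is to show $\Vert T^m-S\Vert_p<\varepsilon^m$. Split $T^m-S=R_{\mathrm{many}}+R_{\mathrm{few}}$, where $R_{\mathrm{many}}$ collects monomials with at least $m/2$ factors from (the compact coefficients inside) $T_1$ and $R_{\mathrm{few}}$ the rest. For $R_{\mathrm{many}}$, apply Lemma~\ref{p-norm} to bound $\Vert R_{\mathrm{many}}\Vert_p$ by $(n+k)^{m\frac{1-p}{p}}$ times the sum of $\Vert c_i\Vert\Vert d_i\Vert$ over its monomials, and then Proposition~\ref{products} (applied to $M$ the bicompact radical set of compact coefficients and $N$ the bounded set of the remaining coefficients) to see that $\Vert H(m)\Vert^{1/m}\to0$; this forces $\Vert R_{\mathrm{many}}\Vert_p^{1/m}\to0$. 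For $R_{\mathrm{few}}$, every monomial has fewer than $m/2$ compact factors, hence more than $m/2$ factors from $T_0$, and each such $T_0$-factor carries a finite-rank coefficient — so each monomial in $R_{\mathrm{few}}$ has $c_i$ or $d_i$ of finite rank, contradicting the definition unless... no: $R_{\mathrm{few}}$ monomials contain a $T_1$-factor by construction so they are not in $S$, yet they \emph{do} have a finite rank accumulated coefficient; this means I should put those monomials into $S$ as well. Cleaner: \emph{define} $S$ to be the sum of \emph{all} monomials of the $(T_0+T_1)^m$-expansion whose accumulated left or right coefficient is of finite rank, and let $T^m-S$ be the sum of the remaining monomials; each remaining monomial must then have \emph{no} finite-rank factor among its coefficients, hence in particular no $T_0$-factor carrying $a_i'$ or $b_j'$, which after a moment's thought forces a large number of $T_1$-factors, putting these monomials into the scope of Proposition~\ref{products}.

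Finally I would assemble the estimate: with the above definition of $S$, $T^m-S$ lies in the span of monomials with at least, say, $m/2$ compact (small-norm) factors, so by Lemma~\ref{p-norm} plus Proposition~\ref{products} we get $\Vert T^m-S\Vert_p\le(n+k)^{m\frac{1-p}{p}}\Vert H(m)\Vert\to 0$ faster than any geometric rate, and in particular $\Vert T^m-S\Vert_p<\varepsilon^m$ for $m$ large. The main obstacle, as the above oscillation already signals, is the combinatorial bookkeeping: correctly defining $S$ (as "all finite-rank-coefficient monomials") so that the complementary sum genuinely consists of monomials dominated by compact, norm-$<\delta$ factors, and verifying that "lacking a finite-rank factor" implies "has many $T_1$-factors" uniformly enough to invoke Proposition~\ref{products} with the $m/2$ threshold. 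One must also check that $S$, being a sum of at most $(n+k)^m$ monomials each of the form $L_cR_d$ with $c$ or $d$ finite rank, is exactly the claimed semifinite elementary operator $\sum_{i=1}^{(n+k)^m}L_{c_i}R_{d_i}$; this is immediate once the definition of $S$ is pinned down.
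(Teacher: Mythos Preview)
Your very first move is already a genuine gap: you write ``pick finite rank operators $a_i',b_j'$ with $\Vert a_i-a_i'\Vert<\delta$, $\Vert b_j-b_j'\Vert<\delta$''. This presupposes that every compact operator on $Y$ (respectively $X$) is a norm-limit of finite rank operators, i.e.\ that $\mathcal{K}(Y)=\mathcal{A}(Y)$. That is exactly the approximation property, and it can fail; the paper itself exploits spaces with $\mathcal{K}(X)\neq\mathcal{A}(X)$ (see Lemma~\ref{quot} and Example~\ref{willis}). So the decomposition $T=T_0+T_1$ with $T_0$ semifinite and $\Vert T_1\Vert$ small is simply unavailable in general, and everything downstream collapses.

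The paper circumvents this by never approximating the individual $a_i,b_j$. Instead it expands $T^m$ into its $(n+k)^m$ monomials $L_{w_\alpha}R_{z_\alpha}$ and splits them according to whether $w_\alpha$ contains at least $m/2$ of the compact left factors $a_i$ (group $T_1$) or $z_\alpha$ contains at least $m/2$ of the compact right factors $b_j$ (group $T_2$). For the first group one passes to the quotient $A=\mathcal{B}(Y)/\mathcal{A}(Y)$; there the images $q(a_i)$ form a finite \emph{bicompact} set in $\mathrm{Rad}(A)$ (Lemma~\ref{quot}), so Proposition~\ref{products} forces $\Vert q(w_\alpha)\Vert<\varepsilon^m$ for large $m$. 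This says $\mathrm{dist}(w_\alpha,\mathcal{A}(Y))<\varepsilon^m$, hence each $w_\alpha$ can be replaced by a finite rank $r_\alpha$. The second group is handled symmetrically in $\mathcal{B}(X)/\mathcal{A}(X)$. The resulting $S$ has exactly $(n+k)^m$ terms, one per monomial of $T^m$.

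Two further points. First, even granting AP, your $S=T^m-T_1^m$ does not naturally present itself as a sum of $(n+k)^m$ terms $L_{c_i}R_{d_i}$: the difference $L_{w_\alpha}R_{z_\alpha}-L_{w_\alpha'}R_{z_\alpha'}$ is not of that form since both left and right coefficients change. Second, if your initial approximation \emph{did} work, Proposition~\ref{products} would be superfluous: $\Vert T_1^m\Vert_p\le (n+k)^{m(1-p)/p}(\delta C)^m$ follows directly from Lemma~\ref{p-norm}. Your invocation of Proposition~\ref{products} for ``$R_{\mathrm{many}}$'' signals that you sensed the right tool but attached it to the wrong decomposition; it is meant to be applied in the quotient by $\mathcal{A}$, not to norm-small remainders.
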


\begin{proof}
A required decomposition of the operator $T^{m}$ into the sum of $(n+k)^{m}$
summands can be written as
\[
T^{m}=T_{1}+T_{2},
\]
where in $T_{1}$ we gather those summands where the number of factors
$L_{a_{i}}$ is more than the number of factors $L_{y_{j}}$ (hence their number
$\geq m/2$), while summands in $T_{2}$ have more factors $R_{b_{j}}$ than
factors $R_{x_{i}}$.
Let $A=\mathcal{B}(Y)/\mathcal{A}(Y)$ and $q:\mathcal{B}(Y)\longrightarrow A$
be the standard epimorphism. Let $M=\{q(a_{1}),...,q(a_{n})\}$ and
$N=\{q(y_{1}),...,q(y_{k})\}$. Then $M$ is a bicompact subset of $A$ in the
radical of $A$. Writing $T_{1}$ as $\sum L_{w_{i}}R_{z_{i}}$, where $L_{w_{i}%
}R_{z_{i}}$ are the above summands of $T_{1}$ in the decomposition of $T^{m}$,
we note that the corresponding family $H(m)$ (see the above lemma) consists of
all products of elements which are $q$-images of left coefficients $w_{i}$ of
summands in $T_{1}$. By Lemma \ref{products}, there is $m$ such that
\[
\Vert H(m)\Vert<\varepsilon^{m}.
\]
This means that for every $w_{i}$ there is a finite rank operator $r_{i}$
with
\[
\Vert r_{i}-w_{i}\Vert<\varepsilon^{m}.
\]
Then, setting $S_{1}=\sum L_{r_{s}}R_{z_{s}}$, we obtain a semifinite
multiplication operator such that $S_{1}-T_{1}$ can be represented in the form
$\sum L_{v_{i}}R_{z_{i}}$ with
\[
\sum_{i}\Vert v_{i}\Vert\Vert z_{i}\Vert<(n+k)^{m}\varepsilon^{m}.
\]
For brevity, we rewrite this in the form
\[
\left\Vert S_{1}-T_{1}\right\Vert _{\ast}<(n+k)^{m}\varepsilon^{m}.
\]
Similarly, we find a semifinite operator $S_{2}=\sum L_{e_{s}}R_{f_{s}}$,
where all $f_{s}$ are finite rank operators, with
\[
\left\Vert S_{2}-T_{2}\right\Vert _{\ast}<(n+k)^{m}\varepsilon^{m}.
\]
Hence setting $S=S_{1}+S_{2}$ we obtain that $S$ is semifinite and
\[
\left\Vert T-S\right\Vert _{\ast}<2(n+k)^{m}\varepsilon^{m}.
\]
As the number of elementary summands (of length one) in $S$ is $(n+k)^{m}$ by
our choice then we obtain that
\[
\left\Vert T^{m}-S\right\Vert _{p}<((n+k)^{m})^{\frac{1-p}{p}}2(n+k)^{m}%
\varepsilon^{m}.
\]
by Lemma \ref{p-norm}. Changing $\varepsilon$ by $\gamma\varepsilon$ for
sufficiently small $\gamma$, we obtain the required inequality.
\end{proof}

\subsubsection{Spectral subspaces of semicompact elementary operators}

Now we are able to prove the following

\begin{theorem}
\label{quasinorm} Let $T=\sum_{i=1}^{n}L_{a_{i}}R_{x_{i}}+\sum_{j=1}%
^{k}L_{y_{j}}R_{b_{j}}$be a semicompact elementary operator on $\mathcal{B}%
(X,Y)$, where all $x_{i}\in\mathcal{B}\left(  X\right)  $, $y_{j}%
\in\mathcal{B}(Y)$, $a_{i}\in\mathcal{K}(Y)$, $b_{j}\in\mathcal{K}(X)$.
Suppose that $T\mathfrak{\mathcal{Z}}=\mathfrak{\mathcal{Z}}$ for a closed
subspace $\mathfrak{\mathcal{Z}}$ of $\mathcal{B}(X,Y)$. Then
$\mathfrak{\mathcal{Z}}$ is contained in $\mathfrak{J}\left(  X,Y\right)  $
for any quasi-Banach operator ideal $\mathfrak{J}$.
\end{theorem}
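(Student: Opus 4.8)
The plan is to run the proof of Theorem \ref{spectrclos} with the ordered pair $\mathcal{X}=\mathcal{B}(X,Y)$ and $\mathcal{Y}=\mathfrak{J}(X,Y)$, where $\mathcal{Y}$ is equipped with a $p$-quasinorm $\Vert\cdot\Vert_{p}$ as in \eqref{add}; thus $\mathcal{Y}$ is complete for the metric $\mathrm{d}(u,v)=\Vert u-v\Vert_{p}^{p}$, and the $p$-quasinorm dominates the operator norm on $\mathfrak{J}(X,Y)$. The part played in Theorem \ref{spectrclos} by the flexible Banach ideal $\mathcal{B}(\mathcal{X},\mathcal{Y})$ is taken here by the semifinite elementary operators: each of them carries $\mathcal{B}(X,Y)$ into the finite rank operators, hence into $\mathfrak{J}(X,Y)$, and by Lemma \ref{est} together with $p$-subadditivity it is bounded as a map $(\mathcal{B}(X,Y),\Vert\cdot\Vert_{\mathcal{B}})\to(\mathfrak{J}(X,Y),\Vert\cdot\Vert_{p})$ (equivalently, one invokes the closed graph theorem, valid for quasi-Banach spaces). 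Also, by Lemma \ref{p-norm}, $T$ maps $\mathfrak{J}(X,Y)$ boundedly into itself. This is the exact analogue of the way Theorem \ref{space} is deduced from Theorems \ref{cond} and \ref{spectrclos}, with $\mathcal{N}$ replaced by an arbitrary quasi-Banach ideal and the ``quasinilpotence modulo semifinite operators'' input supplied by Theorem \ref{power} in place of Theorem \ref{cond}.

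First I would apply the Open Mapping Theorem to $T|_{\mathcal{Z}}$ — a genuine Banach-space statement, unaffected by the quasinorm — to get $t>0$ such that for every $z\in\mathcal{Z}$ and every $m$ there is $z^{\circ}\in\mathcal{Z}$ with $T^{m}z^{\circ}=z$ and $\Vert z^{\circ}\Vert_{\mathcal{B}(X,Y)}\le t^{m}\Vert z\Vert_{\mathcal{B}(X,Y)}$. Then, fixing $\varepsilon>0$ with $\varepsilon t<1$, I would invoke Theorem \ref{power}: there are $m\in\mathbb{N}$ and a semifinite elementary operator $S$ with $T^{m}=S+P$ where — reading off the proof of Theorem \ref{power} — $P$ has a representation $\sum L_{v_{i}}R_{z_{i}}$ with $\sum_{i}\Vert v_{i}\Vert\,\Vert z_{i}\Vert<\varepsilon^{m}$; hence $\Vert P\Vert_{\mathcal{B}(\mathcal{B}(X,Y))}<\varepsilon^{m}$ and, by Lemma \ref{p-norm}, $\Vert P\Vert_{p}<\varepsilon^{m}$ as well (the combinatorial factor from Lemma \ref{p-norm} being absorbed by shrinking $\varepsilon$, which is allowed since $\varepsilon$ in Theorem \ref{power} is arbitrary). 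Now set $z_{0}=z$, $z_{k+1}=z_{k}^{\circ}$, so $z_{k}=T^{m}z_{k+1}=Sz_{k+1}+Pz_{k+1}$ and $\Vert z_{k}\Vert_{\mathcal{B}(X,Y)}\le t^{mk}\Vert z\Vert_{\mathcal{B}(X,Y)}$. Unwinding yields, for every $N$,
\[
z=Sz_{1}+PSz_{2}+\dots+P^{N-1}Sz_{N}+P^{N}z_{N}.
\]
Since $Sz_{k}\in\mathfrak{J}(X,Y)$ with $\Vert Sz_{k}\Vert_{p}\le C\,t^{mk}\Vert z\Vert_{\mathcal{B}(X,Y)}$ and $\Vert P^{k-1}Sz_{k}\Vert_{p}\le\varepsilon^{m(k-1)}\Vert Sz_{k}\Vert_{p}$, the inequality $\varepsilon t<1$ gives $\sum_{k\ge1}\Vert P^{k-1}Sz_{k}\Vert_{p}^{p}<\infty$; by $p$-subadditivity \eqref{add} and completeness of $\mathfrak{J}(X,Y)$ the series $\sum_{k\ge1}P^{k-1}Sz_{k}$ converges in $\mathfrak{J}(X,Y)$, hence also in $\mathcal{B}(X,Y)$. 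Meanwhile $\Vert P^{N}z_{N}\Vert_{\mathcal{B}(X,Y)}\le(\varepsilon t)^{mN}\Vert z\Vert_{\mathcal{B}(X,Y)}\to0$, so $z$ coincides with that series and therefore $z\in\mathfrak{J}(X,Y)$. As $z\in\mathcal{Z}$ was arbitrary, $\mathcal{Z}\subset\mathfrak{J}(X,Y)$.

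The main obstacle is the passage from a norm to a quasinorm, which forces two adjustments to the argument of Theorem \ref{spectrclos}. One must carry out every summation with the $p$-additivity inequality \eqref{add} and use completeness of $\mathfrak{J}(X,Y)$ in its metric topology rather than in a Banach-space norm; and — the genuinely new point — one needs from Theorem \ref{power} a single decomposition $T^{m}=S+P$ in which $P$ is simultaneously small in the operator norm on $\mathcal{B}(X,Y)$ (to make $P^{N}z_{N}\to0$) and in the $p$-quasinorm on $\mathfrak{J}(X,Y)$ (to make the correction series converge). Both of these follow from the single estimate $\sum_{i}\Vert v_{i}\Vert\,\Vert z_{i}\Vert<\varepsilon^{m}$ produced inside the proof of Theorem \ref{power}, so no additional work beyond bookkeeping is expected.
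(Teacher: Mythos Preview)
Your proposal is correct and follows the paper's own proof essentially line for line: both apply the Open Mapping Theorem on $\mathcal{Z}$, invoke Theorem~\ref{power} to split $T^{m}=S+P$ with $\Vert P\Vert_{p}<\varepsilon^{m}$, iterate to produce $z_{k}$ with $\Vert z_{k}\Vert\le t^{mk}\Vert z\Vert$, and sum $\sum_{k}\Vert P^{k-1}Sz_{k}\Vert_{p}^{p}$ using completeness of $\mathfrak{J}(X,Y)$. You are in fact slightly more explicit than the paper in one respect --- the paper simply writes ``we may argue as in the proof of Theorem~\ref{spectrclos}'' for the series identity $z=\sum P^{j}Sz_{j+1}$, whereas you spell out that the remainder $P^{N}z_{N}\to 0$ in $\mathcal{B}(X,Y)$ requires the operator-norm smallness of $P$ (obtained, as you note, from the coefficient-sum estimate inside the proof of Theorem~\ref{power}, not just its stated conclusion $\Vert P\Vert_{p}<\varepsilon^{m}$).
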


\begin{proof}
One may suppose that $\mathfrak{J}$ is a $p$-Banach operator ideal with
$p$-norm $\left\Vert \cdot\right\Vert _{p}$ for $0<p\leq1$.
By the Open Mapping Theorem, there is $t>0$ such that for each $z\in
\mathcal{Z}$ there is $w\in\mathcal{Z}$ with $Tw=z$ and $\Vert w\Vert\leq
t\Vert z\Vert$. Take $\varepsilon>0$ such that $\varepsilon<t^{-1}$, and
choose $m$ and $S$ as in Theorem \ref{power}. Setting $P=T^{m}-S$, we have
that $\Vert P\Vert_{p}\leq\varepsilon^{m}$.
On the other hand, $S$ maps each operator from $\mathcal{B}\left(  X,Y\right)
$ into an operator of rank $\leq d$, where $d$ is the sum of the ranks of
finite rank coefficients of $S$. As $\mathfrak{J}$ contains all finite rank
operators, we have that $Sx\in\mathfrak{J}\left(  X,Y\right)  $ and
\[
\Vert Sx\Vert_{p}\leq d^{\frac{1-p}{p}}\Vert Sx\Vert_{\mathcal{N}\left(
X,Y\right)  }%
\]
by Lemma \ref{est}, for every $x\in\mathcal{B}(X,Y)$. It follows from Theorem
\ref{pair2} that $S$ is bounded as an operator $\mathcal{B}\left(  X,Y\right)
\longrightarrow\mathcal{N}\left(  X,Y\right)  $. Let $s$ be its norm as such
an operator. Then
\[
\left\Vert Sx\right\Vert _{\mathcal{N}\left(  X,Y\right)  }\leq s\left\Vert
x\right\Vert
\]
for every $x\in\mathcal{B}(X,Y)$. As a result, we obtain that
\begin{equation}
\Vert Sx\Vert_{p}\leq d^{\frac{1-p}{p}}s\left\Vert x\right\Vert \label{est1}%
\end{equation}
for all $x\in\mathcal{B}(X,Y)$.
Now we may argue as in the proof of Theorem \ref{spectrclos}. Then, as we saw,
each $z\in\mathfrak{\mathcal{Z}}$ can be expressed as in (\ref{ser}):
\[
z=\sum_{j=0}^{\infty}P^{j}Sz_{j+1}%
\]
with the estimation
\begin{equation}
\left\Vert z_{j}\right\Vert \leq t^{mj}\left\Vert z\right\Vert \label{est2}%
\end{equation}
for every $j$ (see (\ref{ryj}) in Theorem \ref{spectrclos}). As all
$P^{j}Sz_{j+1}\in\mathfrak{J}\left(  X,Y\right)  $, we may estimate their
$p$-norms $\Vert P^{j}Sz_{j+1}\Vert_{p}$ as follows. We have that
\[
\Vert P^{j}Sz_{j+1}\Vert_{p}\leq\Vert P\Vert_{p}^{j}\Vert Sz_{j+1}\Vert_{p}%
\]
and that
\[
\Vert Sz_{j+1}\Vert_{p}\leq d^{\frac{1-p}{p}}s\Vert z_{j+1}\Vert\leq
d^{\frac{1-p}{p}}st^{m\left(  j+1\right)  }\Vert z\Vert
\]
by (\ref{est1}) and (\ref{est2}). By our choice, we have that $\Vert
P\Vert_{p}\leq\varepsilon^{m}$. So we obtain that
\[
\Vert P^{j}Sz_{j+1}\Vert_{p}\leq\varepsilon^{mj}d^{\frac{1-p}{p}}st^{m\left(
j+1\right)  }\Vert z\Vert=\left(  d^{\frac{1-p}{p}}st^{m}\Vert z\Vert\right)
(\varepsilon t)^{mj},
\]
whence
\[
\sum_{j=0}^{\infty}\Vert P^{j}Sz_{j+1}\Vert_{p}^{p}\leq\left(  d^{\frac
{1-p}{p}}st^{m}\Vert z\Vert\right)  ^{p}\sum_{j=0}^{\infty}\left(
(\varepsilon t)^{mp}\right)  ^{j}<\infty
\]
because of $(\varepsilon t)^{mp}<1$. As $\mathfrak{J}\left(  X,Y\right)  $ is
complete under $\left\Vert \cdot\right\Vert _{p}$, the convergence of this
series implies that $z\in\mathfrak{J}\left(  X,Y\right)  $.
\end{proof}

As a consequence, we obtain the following

\begin{corollary}
Let $T$ be a semicompact elementary operator on $\mathcal{B}\left(
X,Y\right)  $. Then all eigenspaces of $T$ corresponding to non-zero
eigenvalues and all spectral subspaces of $T$ corresponding to clopen subsets
of $\sigma\left(  T\right)  $ non-containing $0$ are contained in
$\mathfrak{J}\left(  X,Y\right)  $ for any quasi-Banach operator ideal
$\mathfrak{J}$.
\end{corollary}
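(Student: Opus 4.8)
The plan is to deduce this corollary from Theorem \ref{quasinorm} in exactly the same way Corollary \ref{eigen} was deduced from Theorem \ref{spectrclos}. Note first that a semicompact elementary operator $T=\sum_{i=1}^{n}L_{a_{i}}R_{x_{i}}+\sum_{j=1}^{k}L_{y_{j}}R_{b_{j}}$ is a genuine bounded operator on $\mathcal{B}(X,Y)$, so the spectrum $\sigma(T)$ and the holomorphic (Riesz) functional calculus make sense in $\mathcal{B}(\mathcal{B}(X,Y))$. The only thing to check is that each of the two families of subspaces in the statement consists of closed $T$-invariant subspaces on which $T$ acts surjectively; then Theorem \ref{quasinorm} applies verbatim and gives containment in $\mathfrak{J}(X,Y)$ for every quasi-Banach operator ideal $\mathfrak{J}$.

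First I would treat eigenspaces. If $\lambda\neq0$ and $\mathcal{Z}=\{x\in\mathcal{B}(X,Y):Tx=\lambda x\}=\ker(T-\lambda)$, then $\mathcal{Z}$ is closed, clearly $T$-invariant, and $T|_{\mathcal{Z}}=\lambda\,\mathrm{id}_{\mathcal{Z}}$ is invertible, so in particular $T\mathcal{Z}=\mathcal{Z}$; Theorem \ref{quasinorm} then yields $\mathcal{Z}\subset\mathfrak{J}(X,Y)$. Next, for a clopen subset $\sigma_{0}$ of $\sigma(T)$ with $0\notin\sigma_{0}$, let $p=E_{\sigma_{0}}(T)$ be the associated Riesz projection and $\mathcal{Z}=p\,\mathcal{B}(X,Y)$. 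Then $\mathcal{Z}$ is closed and invariant under $T$, $T$ commutes with $p$, and $\sigma(T|_{\mathcal{Z}})=\sigma_{0}$ does not contain $0$, so $T|_{\mathcal{Z}}$ is invertible and $T\mathcal{Z}=\mathcal{Z}$. Applying Theorem \ref{quasinorm} once more gives $\mathcal{Z}=E_{\sigma_{0}}(T)\subset\mathfrak{J}(X,Y)$, which is the assertion.

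There is essentially no real obstacle here: the entire substance of the corollary is contained in Theorem \ref{quasinorm} (and, behind it, in Theorem \ref{power}), and the present step is purely formal, reducing the spectral statement to the surjectivity hypothesis of that theorem. The only points requiring a word of care are the standard facts that the Riesz projection $p$ commutes with $T$ and that the restriction $T|_{p\mathcal{B}(X,Y)}$ has spectrum exactly $\sigma_{0}$ (so that $0\notin\sigma_{0}$ forces invertibility of $T|_{\mathcal{Z}}$); both are immediate consequences of the holomorphic functional calculus and need no new argument.
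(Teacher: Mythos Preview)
Your proposal is correct and matches the paper's approach exactly: the paper presents this corollary without proof, merely stating it ``as a consequence'' of Theorem~\ref{quasinorm}, and your argument supplies precisely the routine verification (closedness, invariance, and invertibility of the restriction) that reduces both the eigenspace case and the spectral-subspace case to the surjectivity hypothesis of that theorem, paralleling the derivation of Corollary~\ref{eigen} from Theorem~\ref{spectrclos}.
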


\end{document}